\theoremstyle{plain}
\newtheorem*{theorem*}{Theorem}
\newtheorem*{lemma*}{Lemma}
\newtheorem*{corollary*}{Corollary}
\newtheorem*{corollary-1}{Corollary 1}
\newtheorem*{corollary-2}{Corollary 2}
\newtheorem*{proposition*}{Proposition}
\newtheorem*{proposition-1}{Proposition 1}
\newtheorem*{proposition-2}{Proposition 2}
\newtheorem*{proposition-3}{Proposition 3}
\newtheorem*{proposition-4}{Proposition 4}
\newtheorem{conjecture*}{Conjecture}
\newtheorem{theorem}{Theorem}[chapter]
\newtheorem{lemma}[theorem]{Lemma}
\newtheorem{corollary}[theorem]{Corollary}
\newtheorem{proposition}[theorem]{Proposition}
\theoremstyle{remark}
\newtheorem*{remark}{Remark}
\newtheorem*{remarks}{Remarks}
\newtheorem*{definition}{Definition}
\newtheorem*{notation}{Notation}
\newtheorem*{example}{Example}
\newtheorem*{examples}{Examples}
\newtheorem*{claim}{Claim}
\theoremstyle{definition}
\newtheorem{defn}[theorem]{Definition}
\def\scrg{\mathcal{G}}
\def\scry{\mathcal{Y}}
\def\scrx{\mathcal{X}}
\def\scrh{\mathcal{H}}
\def\scrn{\mathcal{N}}
\def\scrz{\mathcal{Z}}
\def \Spec{\operatorname{Spec}}
\def \Reg{\operatorname{Reg}}
\def \chr{\operatorname{char}}
\def\pp{\mathfrak{p}}
\def\mm{\mathfrak{m}}
\def\nn{\mathfrak{n}}
\def\pp{\mathfrak{p}}
\def\GL{\operatorname{GL}} \def\SL{\operatorname{SL}} \def\Q{\mathbb{Q}} \def\F{\mathbb{F}} \def\Z{\mathbb{Z}} \def\R{\mathbb{R}} \def\C{\mathbb{C}}
\def\N{\mathbb{N}}  \def\l{\lambda} \def\ll{\langle} \def\rr{\rangle}
 \def\a{\alpha}   \def\bp{\begin{pmatrix}}
\def\sm{\setminus} \def\ep{\end{pmatrix}} \def\bn{\begin{enumerate}} \def\Hom{\operatorname{Hom}}
 \def\rank{\operatorname{rank}}  \def\en{\end{enumerate}}
\def\ba{\begin{array}} \def\ea{\end{array}}  
   \def\a{\alpha} \def\b{\beta} 
\def\id{\operatorname{id}} \def\Aut{\operatorname{Aut}} \def\im{\operatorname{Im}} 
\def\ker{\operatorname{Ker}}\def\be{\begin{equation}} \def\ee{\end{equation}} \def\tr{\operatorname{tr}}
 \def\dim{\operatorname{dim}}
     \def\GL{\operatorname{GL}}
    \def\fr12{\frac{1}{2}} \def\z12{\Z[\fr12]}
\def\ol{\overline}
\def\G{\Gamma}
\def\chr{\operatorname{char}}
\newcommand\ad{\operatorname{ad}}
\renewcommand\epsilon{\varepsilon}
\DeclareMathAlphabet{\mathbf}{OML}{cmm}{b}{it}
\numberwithin{equation}{chapter}
\begin{document}

\frontmatter

\title{$3$-Manifold Groups are Virtually Residually $p$}
\author{Matthias Aschenbrenner}
\address{University of California, Los Angeles, California, USA}
\email{matthias@math.ucla.edu}

\author{Stefan Friedl}
\address{Mathematisches Institut\\ Universit\"at zu K\"oln\\   Germany}
\email{sfriedl@gmail.com}
%\date{February 20, 2012}

\subjclass[2000]{Primary 20E26, 57M05; Secondary 20E06, 20E22}
\keywords{$3$-manifolds, fundamental groups, virtually residually $p$}

\begin{abstract}
Given a prime $p$, a group is called residually $p$ if the intersection of its $p$-power index normal subgroups is trivial. A group is called virtually residually $p$ if it has a finite index subgroup which is residually $p$.  It is well-known  that finitely generated linear groups over fields of characteristic zero  are virtually residually $p$ for all but finitely many $p$. In particular, fundamental groups of hyperbolic $3$-manifolds are virtually residually $p$. It is also well-known that  fundamental groups of $3$-manifolds are residually finite. In this paper we prove a common generalization of these results: every $3$-manifold group is virtually residually $p$ for all but finitely many~$p$. This gives evidence for the conjecture (Thurston) that fundamental groups of $3$-manifolds are linear groups.
\end{abstract}

\maketitle

\cleardoublepage
\thispagestyle{empty}
\vspace*{13.5pc}
\begin{center}
  Dedicated to the memory of \\[2pt]

 {\large Sr.~M.~Benigna~Fehlner (1922--2008)}\\[2pt]

   who, as their teacher in kindergarten, first introduced the authors to each other. %(use \\[2pt] for line break if necessary)
\end{center}
\cleardoublepage

%    Change page number to 7 if a dedication is present.
\setcounter{page}{7}

\tableofcontents

\mainmatter

\chapter*{Introduction}

\section*{The main result}

\noindent
Let $p$ be a prime. In this paper, by a \emph{$p$-group} we mean a finite group whose order is a power of $p$.
Recall that every $p$-group is nilpotent, and in particular solvable.
Given a property $\mathfrak P$ of groups, a group $G$ is called \emph{residually $\mathfrak P$} if for every $g\in G$ with $g\neq 1$ there exists a morphism $\phi\colon G\to H$ onto a group $H$ enjoying property $\mathfrak P$ such that $\phi(g)\neq 1$.
For example, a group $G$ being residually a $p$-group (residually $p$, for short) simply means that the intersection of all $p$-power index normal subgroups of~$G$ is trivial. % if $G$ is finitely generated, then for $G$ to be residually $p$ it is enough to require that the intersection of all $p$-power index (not necessarily normal) subgroups of $G$ is trivial.\mkc{Wenn der Sate stimmen wuerde, dan haette `mein' Beweis auch funktioniert.}
Being residually $p$ is a very strong property: e.g., every finite subgroup of a residually $p$ group is a $p$-group, and
every non-abelian subgroup of a residually $p$ group has a quotient isomorphic to $\F_p\times\F_p$.  Free groups  and finitely generated torsion free nilpotent groups
are well-known to be residually~$p$ for all $p$.

\index{group!residually $\mathfrak P$|textbf}
\index{group!residually finite}
\index{group!residually $p$}

In this paper we study $3$-manifold groups (i.e., groups which are fundamental groups of $3$-manifolds).
Throughout this paper we assume that the manifolds are connected and compact, but unless otherwise stated, we make no assumptions on orientability or on the type of boundary.
Compactness is not a serious restriction:  every finitely generated group which is the fundamental group of a non-compact $3$-manifold also appears as the fundamental group of a compact $3$-manifold (and hence is finitely presentable), cf.~\cite{RS90, Sc73}.
It is well-known that every finitely presentable group can be
realized as the fundamental group of a compact, closed, connected, smooth manifold of dimension $4$ (or higher).
However, there are severe restrictions on which groups may occur as fundamental groups of $3$-dimensional
manifolds; for example, it is well-known that
every $3$-manifold group is  residually finite. (This was first shown for Haken manifolds \cite{Th82,He87}, and since Perelman's proof of the Geometrization Conjecture, the proof now extends to include all $3$-manifolds.) Residual finiteness of $3$-manifold groups leads to
easy proofs for the decidability of the word problem in $3$-manifold groups, and for the decidability of knot triviality.
In general, however, $3$-manifold groups tend to be far from being residually $p$  or even just being residually finite solvable.
Indeed, let $K\subseteq S^3$ be a  knot and $G=\pi_1(S^3\sm \nu K)$, where $\nu K$ is a tubular neighborhood of $K$ in $S^3$. It  follows from Dehn's Lemma that $G\cong\Z$ if and only if $K$ is the trivial knot.
But it is a consequence of Stallings' theorem \cite[Theorem~3.4]{St65} that
any map from $G$ to a nilpotent group necessarily factors through the abelianization $G/[G,G]\cong\Z$. Furthermore, if $K$ is a knot with trivial Alexander polynomial, then in fact
any map from $G$ to a solvable group necessarily factors through $\Z$.

\index{Hempel}
\index{Perelman}

The purpose of this paper is to show that virtually the picture is very different.
One says that a group $G$ is \emph{virtually $\mathfrak P$} if there exists a finite index subgroup of $G$ which is $\mathfrak P$.
(Subgroups of  residually $p$ groups are again residually $p$, so virtually residually $p$ is equivalent to residually $p$-by-finite.) %Let $g_1,\dots,g_n\in\G$ such that $\G/\G'=\{g_1\G',\dots,g_n\G'\}$. Then after  replacing $\G'$ by $\bigcap_{i=1}^n g_i\G'g_i^{-1}$ we can assume that $\G'$ is in fact normal in $\G$.
We call a finitely generated group \emph{linear} if it admits an embedding into $\GL(n,K)$ for some $n$ and some field~$K$ of characteristic zero (which may always chosen to be $K=\mathbb C$).
Many linear groups are not residually solvable (for example, $\operatorname{SL}(n,\Z)$ for $n > 2$, since these groups are perfect). On the other hand, it is well-known that
linear groups are, for all but finitely many $p$, virtually residually $p$; see, e.g., \cite[Theorem~4.7~(i)]{We73} or \cite[Window~7,~Proposition~9]{LS03}.
(This is essentially due to Mal$'$cev \cite{Mal40}; however, he only showed the weaker statement that linear groups are residually finite.)
Therefore, each $3$-manifold group which is linear is virtually residually $p$ for all but finitely many $p$. As is  well-known,  the fundamental group of each hyperbolic $3$-manifold and of each Seifert fibered manifold is linear (cf.~Sections~\ref{sec:hyperbolic} and \ref{sec:seifert} below), but it is  not known whether this is true for all $3$-manifold groups.

\index{group!virtually $\mathfrak P$}
\index{group!virtually residually $p$}
\index{group!residually solvable}
\index{group!linear}
\index{Mal$'$cev}

\medskip

Our main result now is the following.

\begin{theorem*} %\label{mainthm}
Let $N$ be a  $3$-manifold. Then for all but finitely many primes $p$ the group $\pi_1(N)$ is virtually residually $p$.
\end{theorem*}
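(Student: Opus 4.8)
The plan is to reduce the theorem to its three geometric building blocks via the Geometrization Conjecture, and then glue. First I would invoke the JSJ decomposition together with Geometrization: every compact $3$-manifold $N$ can be cut along a finite collection of incompressible tori and annuli into pieces which are either Seifert fibered or hyperbolic (after passing to the prime decomposition and handling $S^2\times S^1$ and the non-orientable analogues separately). For the hyperbolic pieces, $\pi_1$ is a finitely generated subgroup of $\GL(2,\C)$, hence linear, so by the classical Mal$'$cev-type result cited in the introduction (\cite[Theorem~4.7~(i)]{We73}) it is virtually residually $p$ for all but finitely many $p$. For the Seifert fibered pieces, $\pi_1$ is again linear (Section~\ref{sec:seifert}), so the same conclusion applies. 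Thus each vertex group in the graph-of-groups decomposition of $\pi_1(N)$ is virtually residually $p$ for all but finitely many $p$, and the edge groups are $\Z$ or $\Z^2$, which are residually $p$ for every $p$.

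The heart of the argument is then a gluing lemma for graphs of groups: if $\pi_1(N)$ is the fundamental group of a finite graph of groups whose vertex groups are virtually residually $p$ and whose edge groups are (finitely generated, free abelian, hence) residually $p$, and the edge-to-vertex inclusions are sufficiently compatible, then $\pi_1(N)$ itself is virtually residually $p$ — for all $p$ outside a finite exceptional set depending only on the pieces. The strategy I would use is to first pass to a finite-index subgroup $N'\to N$ so that each piece becomes residually $p$ on the nose and, crucially, so that the inclusion-induced maps on the relevant abelian edge groups become ``unipotent'' or at least well-controlled modulo $p$; one wants the peripheral structure to survive into a compatible system of $p$-congruence quotients. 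Then I would build an inverse system of $p$-group quotients of $\pi_1(N')$ by inducting up the graph of groups, at each amalgamation or HNN step using that a free product of residually $p$ groups amalgamated over a subgroup that is ``$p$-closed'' in both factors is again residually $p$ — this is a result in the spirit of Higman's theorems on residual nilpotence of free products, and the abelian edge groups make the compatibility checks tractable.

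The main obstacle, and where I expect most of the work to go, is exactly the edge-group compatibility: the finite-index subgroup that makes a hyperbolic piece residually $p$ is produced by a congruence-subgroup / $p$-adic completion argument and has no a priori relation to the finite-index subgroup chosen for an adjacent Seifert piece, nor do the two chosen subgroups restrict to the same finite-index subgroup of the shared torus. One must therefore do a simultaneous construction: find a single prime-power-index normal subgroup of $\pi_1(N)$ whose intersection with each peripheral $\Z^2$ is prescribed, and which is residually $p$ on each piece. This requires showing that the $p$-congruence quotients of the pieces can be chosen to agree along the gluing tori, which in turn forces a careful study of how the peripheral subgroups sit inside the pro-$p$ (or pro-$p$-by-finite) completions of the piece groups — the linear-group argument must be run ``relative to'' the boundary. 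Handling this uniformly, and tracking the finitely many bad primes contributed at each stage so that their union stays finite, is the technical core; the non-orientable cases and the small exceptional Seifert and Sol geometries need separate but routine treatment.
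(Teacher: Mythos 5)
Your high-level outline correctly matches the paper's strategy (reduce to closed orientable prime manifolds, decompose along the JSJ tori, use linearity of the hyperbolic and Seifert pieces, then glue), and you have correctly located where the difficulty lies — the edge-group compatibility. But the proposal has a genuine gap at precisely that point, and it is not a filling-in-details gap: you invoke, as the key gluing step, that ``a free product of residually $p$ groups amalgamated over a subgroup that is `$p$-closed' in both factors is again residually $p$,'' citing Higman's theorem as supporting this. That is not what Higman's theorem says and, as stated, the claim is false. If $G$ and $H$ are $p$-groups, every subgroup is trivially closed in their (discrete) pro-$p$ topologies, yet the amalgam $G\ast_U H$ of two $p$-groups over a common subgroup $U$ frequently fails to be residually $p$. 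Higman's actual criterion is that the amalgam of $p$-groups embeds into a $p$-group if and only if $G$ and $H$ admit chief filtrations (equivalently, finite-length central $p$-filtrations) which induce the \emph{same} filtration on $U$; the obstruction lives in whether such compatible filtrations exist, not in a closedness condition. This distinction is the whole ballgame: the substance of the paper is a machinery for manufacturing such compatible filtrations uniformly across all the vertex groups of the JSJ graph.

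Concretely, to repair the gap you would need at least the following, none of which is present in the proposal even in sketch form. First, for a linear vertex group $G_v$ you need not merely that $G_v$ is virtually residually $p$, but a central $p$-filtration $\{G_{v,n}\}$ which is \emph{$p$-potent} and which intersects each peripheral torus subgroup in its lower central $p$-filtration $\{\gamma^p_n\}$ shifted by a fixed amount; producing this from a congruence-subgroup filtration of $\SL(2,\Lambda)$ requires a commutative-algebra input (that for all but finitely many $p$ there is a maximal ideal $\mathfrak m$ of the coefficient ring with $R_{\mathfrak m}$ regular unramified of residue characteristic $p$, so that $\operatorname{char}(R/\mathfrak m^k)=p^k$ exactly). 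Second, the HNN loops in the graph (the non-tree edges) cannot be treated the same way as the amalgamations: even for elementary abelian $p$-groups, extending two partial automorphisms each of $p$-power order to commuting (or jointly $p$-generating) automorphisms of a common overgroup can fail, and you need Chatzidakis's criterion for HNN extensions of $p$-groups to be residually $p$. Third, and most seriously, the edge identifications $\varphi_e$ introduce a \emph{global} obstruction — a homomorphism from the fundamental group of the graph to $\Aut(\Sigma)$ — and one must pass to a further finite cover that kills this obstruction before the filtrations can be made compatible simultaneously around all loops; this is the ``unfolding'' construction, and without something like it the induction up the graph does not close. Your proposal correctly diagnoses all of these pressure points as ``the technical core'' but supplies neither the filtration machinery, the HNN criterion, nor the global compatibility step, so as written it is a strategy document rather than a proof.
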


Our proof of this theorem is substantially more involved than the proof of residual finiteness in \cite{He87}. Indeed, in order to show that the fundamental group~$G=\pi_1(N)$ of a given $3$-manifold $N$ is residually finite, one first easily reduces to the case that $N$ is closed, orientable and prime. Then $N$ admits a JSJ decomposition, hence may be viewed as the topological realization of a finite graph of based CW-complexes whose vertex spaces $N_v$ are either Seifert fibered or hyperbolic $3$-manifolds, and whose edge spaces $N_e$ are tori. By virtue of the Seifert-van Kampen Theorem, this gives rise to a description of $G$ as the fundamental group~$\pi_1(\scrg)$ of a (finite) graph of groups $\scrg$ whose vertex groups $G_v=\pi_1(N_v)$ are linear groups, and whose edge groups $G_e=\pi_1(N_e)$ are free abelian of rank~$2$. (See Section~\ref{sec:Graphs of Groups} below for a definition of a graph of groups.) As remarked, finitely generated linear groups are residually finite (in fact, virtually residually $p$ for all but finitely many $p$). So one may assume that the JSJ decomposition of $N$ is non-trivial. Let now $g\in G$, $g\neq 1$. By choosing suitable filtrations of the vertex groups~$G_v$, one then constructs a group morphism $\phi\colon G\to \widetilde{G}$ where $\widetilde{G}=\pi_1(\widetilde{\scrg})$ is the fundamental group of a graph $\widetilde{\scrg}$ of \emph{finite} groups (with the same underlying graph as $\scrg$), such that $\phi(g)\neq 1$. It now simply remains to prove that such a graph of finite groups has residually finite fundamental group \cite[proof of Theorem~3.1]{He87}. (For the special cases of amalgamated products or HNN extensions, this was known much earlier \cite{Ba63, BT78, Co77}.)
%In fact, the fundamental group of a graph of finite groups is even virtually free, cf.~\cite[II.2.6, Proposition~11]{Se80}.
In contrast to this, amalgamated products and HNN extensions of $p$-groups, although always residually finite, are rarely residually~$p$~\cite{Hi64, Mo07}, making it necessary to deviate from the flow of the argument in \cite{He87} for the proof of our main theorem.
Another subtle difficulty is posed by the fact that for an infinite group $G$, the intersection of all $p$-power index subgroups of $G$ being trivial does not guarantee that the intersection of all $p$-power index {\it normal}\/ subgroups of $G$ is trivial (i.e., that $G$ is residually $p$):
It is easy to see that for every fibered $3$-manifold $N$ the intersection of all $p$-power index subgroups of the fundamental group $G$ of $N$ is trivial (cf.~Proposition~\ref{prop:fibered implies weakly residually p}); but there are fibered $3$-manifolds which are not residually nilpotent (e.g., as mentioned above, the fundamental group of a non-trivial fibered knot complement).

\index{Hempel}
\index{$3$-manifold!fibered}
\index{amalgamated product}
\index{HNN extension}
\index{JSJ decomposition}

\medskip

We outline the overall strategy of our proof after discussing a few applications, and some commonalities between $3$-manifold groups and linear groups.

\section*{Applications}

\noindent
Our theorem may be used to significantly simplify the  proof \cite{FV08} that twisted Alexander polynomials detect fibered knots.  More precisely, our main theorem replaces the ad hoc argument given in Sections 5 and 6 of \cite{FV08}.
This is explained in detail in \cite{FV10}.
Here, we restrict ourselves to discussing two more immediate applications.

First, combining the theorem above with the fact (proved in  \cite{Lu80}) that the automorphism group of every finitely generated virtually residually $p$ group is also virtually residually $p$, we obtain:

\begin{corollary-1}\label{cor:Corollary 1}
The automorphism group $\Aut(G)$ of every $3$-manifold group $G$ is virtually residually $p$ for all but finitely many $p$, and hence virtually torsion free.
Therefore, every torsion subgroup of $\Aut(G)$ is finite, and there is an upper bound on the orders of the finite subgroups of $\Aut(G)$.
\end{corollary-1}

This corollary complements \cite{Ko84} where it was shown that if $N$ is a Haken $3$-manifold which is not Seifert fibered, then there is an upper bound on the orders of the finite subgroups of the group $\operatorname{Out}(G)$ of outer automorphisms of~$G=\pi_1(N)$. However, since not every finite subgroup of $\operatorname{Out}(G)$ lifts to a finite subgroup of~$\Aut(G)$ (see~\cite{Zi05}), this result is not immediately subsumed by Corollary~1. It seems not to be known whether the group of outer automorphisms of a $3$-manifold group is virtually residually $p$.

\begin{corollary-2}
Suppose $N$ is a closed aspherical $3$-manifold. Then there is a bound on the size
of finite groups of self-homeomorphisms of $N$ having a common fixed point.
\end{corollary-2}

This follows immediately from Corollary~1 and the fact that under the assumptions  made here, for every effective action of a finite group $\G$ on $N$ with a fixed point, the natural morphism $\G\to\Aut(\pi_1(N))$ is injective, cf.~\cite{CR72}.
(Questions similar to the one addressed in Corollary~2 are posed in Kirby's list \cite{Ki93}, see Problem~3.39.)

\section*{Properties of linear groups and $3$-manifold groups}

\noindent
The main theorem of this paper stated above can also been seen as further evidence to the conjecture (due to Thurston) that $3$-manifold groups are linear.
(This is Problem~3.33~(A) in \cite{Ki93}.)
It is known that every  linear group $G$ has the following group-theoretic properties. (Recall from above that in our parlance, ``linear'' includes the condition of being finitely generated.)
\begin{enumerate}
\item $G$ is virtually residually $p$ for all but finitely many primes $p$ (in particular, $G$ is residually finite and virtually torsion free).
\item $G$ is either  virtually solvable of finite rank (hence has polynomial subgroup growth) or the subgroup growth of $G$ is at least of type $2^{(\log n)^2/\log\log n}$.
\item $G$ satisfies the Tits alternative: either $G$ is virtually solvable or $G$ contains a non-abelian free group.
\item $G$ satisfies the Lubotzky alternative: either $G$ is virtually solvable, or $G$ has ``large finite quotients.'' (See \cite[Window~9]{LS03} for a precise statement.)
\item $G$ has property $\nu$: if $K$ is a non-nilpotent normal subgroup (not necessarily finitely generated) of $G$, then there is a normal subgroup $L$ of $G$ such that $L\leq K$ and $K/L$ is finite and non-nilpotent.
\end{enumerate}
We refer to   \cite{Ti72}, \cite{We73} and \cite[Windows~7~and~9]{LS03} for proofs of statements (1)--(5) above.
Property $\nu$ was isolated and studied in \cite{BR91}.
Examples  for groups with property $\nu$ (besides all linear groups) include all polycyclic-by-finite groups. This class of groups is of interest  since for a finitely presented group with property $\nu$, one can algorithmically decide whether a given finitely generated normal subgroup is nilpotent. Moreover,
every group $G$ with property $\nu$ is an FGH (`Frattini-Gasch\"utz-Hall') group in the sense of \cite{Le73}, meaning that various important characteristic subgroups of $G$ (among them the Fitting subgroup of $G$ and the Hirsch-Plotkin radical of $G$) agree and are nilpotent.

\index{Thurston}
\index{Lubotzky}
\index{group!linear}
\index{group!with property $\nu$}

It is a consequence of the proof of the Geometrization Conjecture and the work of various authors that  statements (1)--(4) also hold for all $3$-manifold groups $G$:
Property (1) is our main theorem. %and implies residual finiteness and virtual torsion freeness.
(Virtual torsion freeness also follows from the fact that irreducible $3$-manifolds with infinite fundamental group $G$ are Eilenberg-Mac~Lane spaces of type $K(G,1)$.)
Property (1) also yields property (2) by \cite[Theorem~8.3]{LS03}. However, a stronger estimate is proved in \cite[Theorem~1.1]{La05}: if $G$ is a $3$-manifold group which is not virtually solvable, then the subgroup growth of $G$ is at least of type $2^{n/\sqrt{\log n}\log\log n}$.
The fact that (3) holds for $3$-manifold groups is well-known to the experts.
It is also known (and implicit in \cite[Theorem~4.1]{FV11c}) that every $3$-manifold group $G$ satisfies a `weak Lubotzky alternative': either $G$ is virtually solvable or $vb_1(G,\F_p) = \infty$ for every prime $p$.
%\footnote{Das ist implizit schon in F, Vidussi: Symplectic 4-manifolds with a free circle action (Theorem 4.1)}
Here,  given a ring $R$ the virtual first  $R$-Betti number $vb_1(G,R)$ of a group $G$ is the defined as the supremum (possibly $\infty$) of all first $R$-Betti numbers of normal, finite index subgroups of $G$.
(The weak Lubotzky alternative for linear groups is a consequence of the Lubotzky alternative, cf.~\cite[Theorem 1.3]{La05}.)
Property (5) appears not to be known for arbitrary $3$-manifold groups; in a future publication we plan address this issue.

In \cite{Lub88}, Lubotzky gave a group-theoretic characterization of linear groups among finitely generated virtually residually $p$ groups, and
it is tempting to employ it in combination with the methods in the present paper to prove the linearity of $3$-manifold groups. However, in general it seems to be difficult to verify that a given group satisfies this criterion.
(For some rare cases where this has been done successfully see \cite{CCLP, Ta93}.)

\section*{Outline of the proof strategy}

\noindent
Let $N$ be a $3$-manifold. In order to show that the finitely presentable group $G=\pi_1(N)$ is virtually residually $p$ for all but finitely many $p$, we proceed in five steps.

\subsection*{Step 1. First reductions.} We first show that it suffices to consider only $3$-manifolds which are closed, orientable and prime.
Suppose $N$ satisfies these requirements. Then $N$ admits a JSJ decomposition, so $G=\pi_1(\scrg)$ for a graph of groups $\scrg$ whose vertex groups $G_v=\pi_1(N_v)$ are fundamental groups of hyperbolic or Seifert fibered $3$-manifolds (and hence are linear groups), and whose edge groups~$G_e=\pi_1(N_e)$ are free abelian of rank $2$.
(Below  we identify the fundamental group $G_e$ of every boundary torus $N_e$ of a JSJ component $N_v$ of $N$ with a subgroup of $G_v=\pi_1(N_v)$ in the natural way.) Since finitely generated linear groups are virtually residually $p$ for all but finitely many $p$, we may assume from now on that the JSJ decomposition of $N$ is non-trivial. We say that $N$ is {\it Seifert simple}\/ if every Seifert fibered component in its JSJ decomposition is of the form $S^1\times F$ for some orientable surface $F$ which is either closed or has at least two boundary components.
After passing to a finite cover of $N$, we can further assume that $N$ is Seifert simple. (This step of the proof is analogous to the first part of the proof of residual finiteness in \cite{He87}.)

\index{JSJ decomposition}

\subsection*{Step 2. Existence of suitable filtrations.}
Next, we show that with the exception of finitely many primes $p$, after passing to another finite cover of $N$, we can construct what we call a {\it complete boundary compatible $p$-filtration $\mathbf G_v=\{G_{v,n}\}_{n\geq 1}$}\/ on  each vertex group $G_v=\pi_1(N_v)$. This is a descending sequence of normal subgroups $G_{v,n}$ of $G_v=G_{v,1}$ such that $G_{v,n}/G_{v,n+1}$ is an elementary abelian $p$-group, which exhausts $G_v$ (i.e., $\bigcap_n G_{v,n}=1$),
subject to further conditions, which we refrain from spelling out completely here (see Chapter~\ref{ch:proof of the main results} for the precise definition); for example, we require that $\mathbf G_v$ separates the fundamental group $G_e=\pi_1(N_e)$ of every boundary torus $N_e$ of $N_v$ (i.e., for each $g\in G_v\setminus G_e$ there is some $n\geq 1$ with $g\notin G_{v,n}\cdot G_e$),
and that it intersects to the lower central $p$-filtration of $G_e$ (i.e., $G_e^{p^{n-1}} = G_{v,n}\cap G_e$ for all $n\geq 1$).
The construction of these filtrations is inspired by the proof of the fact that linear groups are virtually residually $p$ for all but finitely many $p$,
 but needs  further ingredients: a localization theorem for finitely generated integral domains (which we prove by using
a combination of elementary commutative algebra and model theory) and some facts about congruence subgroups. From now on until the end of this discussion we assume that on each $G_v$ we are given such a  complete boundary compatible $p$-filtration $\mathbf G_v$. (In some cases, e.g., for graph manifolds, we can arrange for the lower central $p$-filtration $\gamma^p(G_v)=\{\gamma^p_n(G_v)\}_{n\geq 1}$ of $G_v$ to be a complete boundary compatible $p$-filtration.)

\index{filtration}

%\marginpar{\small Hier musste ich die Erkl\"arung \"andern, da die neue Definition von ``$p$-potent'' restriktiver ist.}

\subsection*{Step 3. Residual properties of  fundamental groups of graphs of groups.}
A general criterion for the fundamental group of a graph of groups to be residually $p$ (Proposition~\ref{Hempel criterion residually p}) reduces our task to showing that for all $n\geq 1$, the graph of groups $\scrg_n$, obtained by replacing each vertex group $G_v$  and each edge group $G_e$ by their ($p$-power) quotients $G_v/G_{v,n}$ respectively $G_e/G_e^{p^{n-1}}$, has residually $p$ fundamental group. The case $n=2$ plays an important role.
If $\mathbf G_v=\gamma^p(G_v)$ for each vertex $v$, then $\scrg_2$ is the ``mod $p$ homology graph  of $\scrg$'': its vertex and edge groups are the first homology groups with $\F_p$-coefficients $H_1(G_v;\F_p)=G_v/\gamma^p_2(G_v)$ and $H_1(G_e;\F_p)=\F_p\oplus\F_p$ of the vertex respectively edge groups of $\scrg$.

\subsection*{Step 4. Reduction to $\scrg_2$.}
Let $T$ be a maximal subtree of the graph underlying the graph of groups $\scrg$.
The fundamental group $\pi_1(\scrg_2)$ of $\scrg_2$ can be described as an iterated HNN extension of the fundamental group $\pi_1(\scrg_2|T)$  of the restriction~$\scrg_2|T$ of $\scrg_2$ to $T$. Replacing $\pi_1(\scrg_2|T)$ in this iterated HNN extension by the
 fiber sum  $\Sigma=\Sigma(\scrg_2|T)$ (colimit in the category of abelian groups) of the tree~$\scrg_2|T$ of elementary abelian $p$-groups, we obtain a ``partial abelianization'' of~$\pi_1(\scrg_2)$ along~$T$, which we denote by  $\pi_1^*(\scrg_2,T)$. In this step we show that, possibly excluding finitely many more primes, if $\pi_1^*(\scrg_2,T)$ is residually $p$, then  all the fundamental groups of the quotient graphs $\scrg_n$ are residually $p$ (and hence $G=\pi_1(N)$ is residually $p$ by Step~3); cf.~Theorem~\ref{thm:reduction theorem}. This requires two main ingredients: an amalgamation theorem for {\it filtered}\/ $p$-groups (Theorem~\ref{thm:higman}) which sharpens a result of Higman~\cite{Hi64}, the proof of which requires some facts about wreath products and group rings; and analogous criteria for iterated HNN extensions of $p$-groups to be residually $p$ (Corollary~\ref{cor:chatzidakis}), related to work of Chatzidakis~\cite{Ch94}. %and Moldavanski{\u\i} \cite{Mo07}.

\index{graph!of groups}
\index{graph of groups}
\index{partial abelianization}
\index{HNN extension}
\index{graph!mod $p$ homology}
\index{Higman}
\index{Chatzidakis}

\subsection*{Step 5. Unfolding a graph of groups.}
A sufficient condition for $\pi_1^*(\scrg_2,T)$ to be residually $p$ is the existence of a group morphism $\pi_1(\scrg_2,T)\to P$ to a finite abelian $p$-group $P$ which is injective on each vertex groups of $\scrg_2$.
Hence, in order to finish the proof, we show that after passing to a suitable finite cover of $N$, we can achieve exactly this.
Now, the identification morphisms  between the fundamental groups of boundary tori yield a collection of isomorphisms $\varphi_e$ between subgroups of~$\Sigma$ which provide obstructions to the existence of such a morphism.
Extending each $\varphi_e$ to an automorphism of $\Sigma$ yields a morphism $\psi\colon\pi_1(\scrg)\to \Aut(\Sigma)$ which is trivial on each vertex group of $\scrg$. To obtain the desired finite cover of $N$, we introduce a construction, which we call the {\it unfolding of $\scrg$ along $\psi$,}\/ which produces a morphism of graphs of groups $\phi\colon\widetilde{\scrg}\to\scrg$ and complete boundary compatible $p$-filtrations on the vertex groups of $\widetilde{\scrg}$ (which give rise to the associated quotient graph  of elementary abelian $p$-groups $\widetilde{\scrg}_2$) such that the induced morphism $\phi_*\colon \pi_1(\widetilde{\scrg})\to\pi_1(\scrg)$ is injective with image $\ker(\psi)$, and such that for some maximal subtree $\widetilde{T}$ of the underlying graph of $\widetilde{\scrg}$, we do have a morphism $\pi_1(\widetilde{\scrg}_2,\widetilde{T})\to \Sigma$
to a $p$-group which is injective on the vertex groups of $\widetilde{\scrg}_2$ as required.

\index{unfolding}

\section*{A more general theorem?}

\noindent
It is legitimate to wonder whether our main theorem is but the echo of a more general fact. For example, one may ask: {\it Is every finitely presented residually finite torsion free group, for all but finitely many $p$, virtually residually $p$?}\/ If we replace ``finitely presented'' by ``finitely generated'' and drop the condition ``torsion free,'' then the answer to this question is ``no'': if $G$ is a group which is virtually residually~$p$ for two distinct primes~$p$, then $G$ is virtually torsion free; however, for every prime $p$ there are examples of $p$-torsion groups which are
infinite, finitely generated, and residually finite, constructed by Golod \cite{Golod}. (On the other hand, it seems to be unknown whether there exist infinite {\it finitely presented}\/ periodic groups.) If we replace ``finitely presented'' by ``finitely generated,'' then the answer is also negative:
It is a well-known theorem of Grigorchuk \cite{Gri89} (see also \cite[Interlude~E]{DdSMS}) that
if $G$ is a finitely generated residually $p$ group which has word growth $\prec e^{\sqrt{n}}$, then $G$ is virtually nilpotent (and hence has polynomial growth).
Grigorchuk \cite{GriMaki, Gri85} also constructed a finitely generated residually finite torsion free group $\Gamma$ whose word growth is intermediate (i.e., between polynomial and exponential) and which is therefore not virtually residually $p$ for {\it any}\/ $p$. (%The residual finiteness of $\Gamma$ is not explicitly stated in \cite{GriMaki}; see \cite{AF1}.
For a while, the question whether every finitely generated group of intermediate growth is residually finite was open \cite[VI.64]{dlH}, but this turned out to be false \cite{Er}.)

\medskip

We also note that it might be difficult to find a $3$-manifold $N$ for which the set of exceptional primes in our main theorem is non-empty: it has been conjectured (by Thurston) that every irreducible $3$-manifold which is not a graph manifold has a finite cover which is fibered; on the other hand, our techniques easily yield:

\index{Thurston}

\begin{proposition-1}
Let $N$ be a fibered $3$-manifold.  Then for \emph{every} prime $p$ the group $\pi_1(N)$ is virtually residually $p$.
\end{proposition-1}

\index{$3$-manifold!fibered}

Topics of this kind are also discussed, employing other arguments, in \cite{Ko11}.

\index{Grigorchuk}

\section*{Graph manifolds}

\noindent
The method for proving residual properties of $3$-manifold groups introduced in this paper is quite flexible. For example, it yields the following improvement of our main theorem for graph manifolds:

\begin{proposition-2}
Let $N$ be a graph manifold. Then for \emph{every} prime $p$ the group $\pi_1(N)$ is virtually residually $p$.
\end{proposition-2}

\index{graph manifold}

In \cite{PS99}, Perron and Shalen introduce the notion of a `weakly residually $p$-nilpotent' group, and  show that all graph manifold groups have a finite index subgroup which is weakly residually $p$-nilpotent for every $p$ \cite[Proposition~0.3]{PS99}. However, the proof for that claim  has a gap \cite[p.~36]{PS99}: in fact, one may show that a group is weakly residually $p$-nilpotent if and only if it is residually $p$, and  if $N$ is a Sol-manifold, then $\pi_1(N)$ does {\it not}\/ have a finite index subgroup which is
residually $p$ for every $p$. These matters will be discussed further in \cite{AF2}.

\index{Perron}
\index{Shalen}

\medskip

Proposition~2 may be refined to show that virtually, the pro-$p$ topology on the fundamental group  of a graph manifold $N$ interacts well with the structure of the graph of groups arising from the JSJ decomposition of $N$. To make this precise, suppose we are interested in a property $\mathfrak P$ of groups. The pro-$\mathfrak P$ topology  on a group~$G$ is the group topology with fundamental system of neighborhoods of the identity given by the normal subgroups $H\trianglelefteq G$ such that $G/H$ is a $\mathfrak P$-group. Inspired by a notion introduced in \cite{WZ98},
we say that a  $3$-manifold $N$ is \emph{$\mathfrak P$-efficient} if $N$ is closed prime orientable (so the JSJ decomposition of $N$ exists), its fundamental group $G=\pi_1(N)$ is residually $\mathfrak P$, and for each JSJ component~$N_v$ of~$N$ and each component $N_e$ of the boundary of $N_v$, the fundamental groups~$G_v=\pi_1(N_v)$ and $G_e=\pi_1(N_e)$, identified with subgroups of $G$ in the natural way, the following conditions hold:

\index{topology!pro-$\mathfrak P$}
\index{topology!pro-$p$}
\index{$p$-efficient}
\index{$\mathfrak P$-efficient}

\begin{enumerate}
\item $G_v$ and $G_e$ are closed in the pro-$\mathfrak P$ topology on $G$;
\item the pro-$\mathfrak P$ topology on $G$ induces the pro-$\mathfrak P$ topology on $G_v$ and on $G_e$.
\end{enumerate}
Wilton and Zalesskii \cite{WZ10} showed that all closed prime orientable $3$-mani\-folds are $\mathfrak P$-efficient, for $\mathfrak P$ the property of being finite.
We show:

\index{Wilton}
\index{Zalesskii}

\begin{proposition-3}
Let $N$ be a  closed graph manifold. Then for every $p$, $N$ has a finite cover which is $p$-efficient. %If $N$ is a graph manifold, then a finite cover with these properties exists for every $p$.
\end{proposition-3}

It seems plausible that this result might play the same role in studying the (virtual) conjugacy $p$-separability of graph manifold groups as finite efficiency does in the proof of their conjugacy separability, which was also established in \cite{WZ10}. We do not pursue this issue further in the present paper; however, we note that Proposition~3 also implies that virtually, the fundamental group of a closed graph manifold group and its pro-$p$ completion have the same mod $p$ cohomology in a certain sense, explained in Section~\ref{sec:coh p-completeness}.

\medskip

We do not know whether every closed prime $3$-manifold is, for all but finitely many $p$ (or even just for infinitely many $p$), virtually $p$-efficient. This question will be discussed further in Section~\ref{sec:p-efficiency in general}. We note here that if $N$ is a  $p$-efficient $3$-manifold with fundamental group $G$, then
condition~(2) of $p$-efficiency implies that there exists some $n$ such that for every JSJ component $N_v$ of $N$ with fundamental group $G_v$ (identified in the natural way with a subgroup of $G$), we have $\gamma^p_2(G_v)\geq G_v\cap\gamma^p_n(G)$. As an indication that every closed prime $3$-manifold  might indeed be virtually $p$-efficient for all but finitely many $p$, we show that we can achieve this condition on $\gamma^p_2$ in a finite cover. In fact, we may take $n=2$, and so our result may be phrased as a statement about the mod $p$ homology of finite covers of $3$-manifolds as follows:

\begin{proposition-4}
Every closed prime $3$-manifold $N$ has, for all but finitely many~$p$, a finite cover $\widetilde{N}$
such that for every JSJ component $\widetilde{N}_{\widetilde{v}}$ of $\widetilde{N}$, the natural morphism $H_1(\widetilde{N}_{\widetilde{v}};\F_p)\to H_1(\widetilde{N};\F_p)$ is injective.
\end{proposition-4}

\section*{Guide for the reader}

\noindent
As should be apparent from the outline of the strategy given above, the proof of the main result of this paper  involves going through several technical prerequisites; some of those can be assumed as black boxes in a first reading. We recommend that the reader should first only skim the preliminary Chapter~\ref{ch:preliminaries}, and  take the main results of Chapter~\ref{ch:embedding theorems} for granted before immediately moving to Chapters~\ref{ch:residual properties of graphs of groups} and \ref{ch:proof of the main results}, which contain the essence of Steps~3--5 in the proof strategy indicated above. The various subsections labeled ``digression'' may also be skipped upon a first reading of the paper.
In the final Chapter~\ref{ch:graph manifolds} we give proofs of Propositions~2--4 from above.

\section*{Conventions and notations}

\noindent
Throughout this paper, $m$, $n$ range over the set $\N=\{0,1,2,\dots\}$ of natural numbers. Unless otherwise noted, $p$ denotes a prime number. By a $p$-group we mean a finite group whose order is a power of $p$.

\index{$p$-group}

\section*{Acknowledgments}

\noindent
The Fields Institute in Toronto provided kind hospitality to the first-named author while a part of the work on this paper was done.
He was also partially supported by National Science Foundation grant DMS-0556197.
The second-named author was partially supported by a CRM-ISM Fellowship and by CIRGET.

We would like to thank Steve Boyer for many insightful conversations. % and for providing us with the proof of Proposition \ref{prop:sfslinear}.
We are also grateful to Ian Agol, Christian Haesemeyer, Marc Lackenby, Yehuda Shalom, and Stefano Vidussi for helpful comments.
We also thank the referee for a careful reading of the manuscript.

\chapter{Preliminaries}\label{ch:preliminaries}

\section{Filtrations of groups}
\label{sec:filtrations of groups}

\noindent
We begin by introducing basic terminology and fundamental facts concerning filtrations of groups. Facts stated without proof or reference are standard; for more information on filtrations (in particular the connection to Lie algebras, not explicitly exploited here), see \cite{BL94}. Throughout this section we let $G$ be a group.

\subsection{Filtrations}\label{subsec:filtrations}
We call a descending sequence
$$G \geq G_1 \geq G_2 \geq \cdots \geq G_n \geq G_{n+1}\geq \cdots$$
of subgroups of $G$ a {\it filtration} of $G$.
We say that a filtration ${\mathbf G}=\{G_n\}$ as above is {\it complete} if $G_1=G$. %(Most but not all filtrations considered below will be complete.)
We call a filtration $\mathbf G$ of $G$ {\it normal} if
each subgroup $G_n$ is normal in $G$; in this case, we write $L_n({\mathbf G}):=G_n/G_{n+1}$ for the $n$th ``layer'' of $\mathbf G$.

\index{filtration|textbf}
\index{filtration!normal}
\index{filtration!complete}
\index{filtration!layer}
\index{$L_n(\mathbf G)$}

\medskip
A {\it filtered group} is a pair $(G,\mathbf G)$ where $G$ is a group and $\mathbf G$ is a filtration of~$G$. The class of filtered groups are the objects of a category, where a morphism of filtered groups $(G,\mathbf G)\to (H,\mathbf H)$ is a group morphism $\varphi\colon G\to H$ such that $\varphi^{-1}(H_n)=G_n$ for all $n$. Note that if $\varphi\colon (G,\mathbf G)\to (H,\mathbf H)$ is a morphism between normal filtered groups, then for each $n$, $\varphi$ induces an injective group morphism $L_n(\varphi)\colon L_n(\mathbf G)\to L_n(\mathbf H)$.
The category of filtered groups has direct products: if $\mathbf G=\{G_n\}$ and $\mathbf H=\{H_n\}$ are filtrations of $G$ respectively $H$, then $\mathbf G\times\mathbf H=\{G_n\times H_n\}$ is a filtration of $G\times H$ (the {\it product}\/ of the filtrations $\mathbf G$ and $\mathbf H$). If both $\mathbf G$ and $\mathbf H$ are normal, then so is $\mathbf G\times\mathbf H$, with $L_n(\mathbf G\times\mathbf H)=L_n(\mathbf G)\times L_n(\mathbf H)$ for all $n$.

\index{group!filtered}
\index{morphism!of filtered groups}

\medskip
Let $\mathbf G=\{G_n\}$ be a filtration of $G$, and let $K\leq G$.
We call the filtration $\mathbf G\cap K:=\{G_n\cap K\}$ of $K$ the {\it intersection}\/ of $\mathbf G$ with $K$.
If $\mathbf K=\{K_n\}$ is a filtration of a subgroup of $G$
with  $\mathbf G\cap K=\mathbf K$, or equivalently, if the natural inclusion $K\to G$ is a morphism of filtered groups $(K,\mathbf K)\to (G,\mathbf G)$, then we say that $\mathbf G$ {\it intersects to $\mathbf K$} on $K$.

\index{filtration!intersection}

\medskip

We say that filtrations $\mathbf G=\{G_n\}$ and $\mathbf G^*=\{G_n^*\}$ of $G$ are \emph{equivalent} if
$$\{G_n: n\geq 1\}=\{G_n^*: n\geq 1\};$$
notation: $\mathbf G\sim \mathbf G^*$.
Clearly $\sim$ is an equivalence relation on filtrations of $G$.
We say that $\mathbf G$ {\it induces}\/ a filtration $\mathbf K$ on a subgroup $K$ of $G$ if $\mathbf G\cap K\sim\mathbf K$.
The following is obvious:

\begin{lemma}\label{lem:refine}
Let $K$ be a  common subgroup of $G$ and another group $H$. Let $\mathbf G$ be a filtration of $G$ and $\mathbf H$ be a filtration of $H$ with $\mathbf G\cap K\sim\mathbf H\cap K$. \textup{(}We say that $\mathbf G$, $\mathbf H$ \emph{induce the same filtration on $K$}.\textup{)} Then there are
filtrations $\mathbf G^*\sim\mathbf G$ of $G$ and $\mathbf H^*\sim\mathbf H$ of $H$ with $\mathbf G^*\cap K=\mathbf H^*\cap K$.
\end{lemma}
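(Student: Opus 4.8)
The plan is to build $\mathbf G^*$ and $\mathbf H^*$ by "shuffling together" the two sequences $\mathbf G\cap K$ and $\mathbf H\cap K$ without changing the set of subgroups occurring in either filtration. Write $\mathbf G=\{G_n\}$, $\mathbf H=\{H_n\}$. By hypothesis $\{G_n\cap K:n\geq 1\}=\{H_n\cap K:n\geq 1\}$ as sets of subgroups of $K$. First I would observe that a filtration is, after passing to the $\sim$-class, essentially the same data as the (nested, countable) set of its terms together with the rule that $G_n$ is decreasing; what $\sim$ lets us do freely is repeat terms and re-index. So the idea is to interleave: produce a single common index set on which both the $G$-side and the $H$-side are displayed, so that whenever the $K$-intersection of one jumps, the other is allowed to "wait."

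Concretely, I would proceed as follows. Consider the two descending sequences $A_n:=G_n\cap K$ and $B_n:=H_n\cap K$ in $K$, which have the same set of terms. Define a new strictly-compatible reindexing: for each $n$, let $m(n)$ be chosen so that $B_{m(n)}\subseteq A_n$ and, conversely interleaving, control $A$ by $B$; more carefully, build by induction an increasing sequence of indices realizing the common refinement of the two chains. Then set $G^*_k$ to be $G_n$ for the appropriate block of $k$'s (repeating $G_n$ as long as the refined chain stays at level $n$ on the $G$-side) and similarly $H^*_k:=H_{n'}$; since we only ever repeat existing terms, $\mathbf G^*\sim\mathbf G$ and $\mathbf H^*\sim\mathbf H$. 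The point of the interleaving is that by construction, for every $k$ we will have $G^*_k\cap K=H^*_k\cap K$: both equal the $k$th term of the common refinement of $A_\bullet$ and $B_\bullet$. That gives $\mathbf G^*\cap K=\mathbf H^*\cap K$ on the nose.

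The one genuinely delicate point — the main obstacle — is that having the same set of terms does not by itself make the two chains mutually cofinal in a way that interleaves: one must check that for each $A_n$ there is some $B_m$ with $B_m\subseteq A_n$ (and symmetrically), which is exactly where I use that the sets of terms coincide, so $A_n$ itself appears as some $B_m$, whence $B_m=A_n\subseteq A_n$; likewise each $B_m$ appears as some $A_n$. This makes the common refinement well-defined as a decreasing sequence exhausting both sets of terms, and the bookkeeping of which original index $G_n$ (resp.\ $H_{n'}$) sits over a given refined index $k$ is then routine. I would spell out the induction defining the refined chain and the two reindexings, verify $G^*_k\cap K=H^*_k\cap K$ from the defining property, and note that completeness is not claimed so no normalization at the top is needed. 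The whole argument is elementary; the only thing to be careful about is not to conflate "same set of terms" with "termwise equal," which is precisely why passing to $\sim$-equivalent filtrations is necessary.
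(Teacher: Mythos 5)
The paper gives no proof of this lemma (it is prefaced by ``The following is obvious''), so there is no argument to compare against; the merge/interleaving approach you sketch is the right one and can be made to work. There is, however, a gap in your justification that $\mathbf G^*\sim\mathbf G$ (and $\mathbf H^*\sim\mathbf H$). The phrase ``since we only ever repeat existing terms'' gives only the inclusion $\{G^*_k:k\geq1\}\subseteq\{G_n:n\geq1\}$; for $\sim$-equivalence you need \emph{equality} of the term sets, and that does not follow from repetition alone. Indeed, a literal reading of ``realizing the common refinement'' --- pick, for each distinct value $S_i$ of the common chain $\{G_n\cap K\}=\{H_n\cap K\}$, one $G$-index $m_i$ with $G_{m_i}\cap K=S_i$ and set $G^*_i:=G_{m_i}$ --- skips $G$-indices and can lose terms of $\mathbf G$. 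To repair this, write $G^*_k=G_{\alpha(k)}$ and $H^*_k=H_{\beta(k)}$ and build the merge so that $\alpha,\beta\colon\N^{>0}\to\N^{>0}$ are non-decreasing, start at $1$, increase by at most $1$ at each step, and are unbounded; then $\alpha,\beta$ are onto, which is what equality of term sets actually requires.

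Two facts then carry the weight of the hypothesis $\{G_n\cap K\}=\{H_n\cap K\}$, and neither is quite the ``mutual cofinality'' you single out: (i)~if $G_m\cap K=H_n\cap K$ while $G_{m+1}\cap K\subsetneq G_m\cap K$ and $H_{n+1}\cap K\subsetneq H_n\cap K$, then $G_{m+1}\cap K=H_{n+1}\cap K$; this licenses the diagonal step of the merge (sandwich: $G_{m+1}\cap K$ occurs as some $H_\ell\cap K$, necessarily with $\ell\geq n+1$, so $G_{m+1}\cap K\subseteq H_{n+1}\cap K$, and symmetrically). And (ii)~the merge never leaves $\alpha$ (say) stuck at a value $m$: otherwise $H_n\cap K=G_m\cap K$ for all large $n$, making $G_m\cap K$ the minimum of $\{H_n\cap K\}=\{G_n\cap K\}$, yet $G_{m+1}\cap K\subsetneq G_m\cap K$ would then be a strictly smaller member. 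With (i) and (ii) in hand, the bookkeeping you describe is indeed routine.
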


\index{filtration!induced}
\index{filtration!equivalent}

Let $\mathbf G=\{G_n\}$ and $\mathbf G^*=\{G_n^*\}$ be filtrations of $G$.
We say that \emph{$\mathbf G^*$ refines~$\mathbf G$} (and $\mathbf G^*$ is called a \emph{refinement} of $\mathbf G$) if there exists a strictly increasing map $\iota\colon\N^{>0}\to\N^{>0}$ such that $\iota(1)=1$ and
$G_n = G^*_{\iota(n)}$ for all $n\geq 1$.
If in addition $$G_n = G^*_{\iota(n)} = G^*_{\iota(n)+1} = \cdots = G^*_{\iota(n+1)-1}\qquad\text{for all $n\geq 1$,}$$
then we say that \emph{$\mathbf G^*$ stretches $\mathbf G$} (and $\mathbf G^*$ is called a \emph{stretching} of $\mathbf G$).   Note that in this case $\mathbf G^*\sim\mathbf G$, and the layers of ${\mathbf G}^*$ are given by
$$L_{m}({\mathbf G^*}) =
\begin{cases}
L_{\iota^{-1}(m+1)-1}({\mathbf G}) & \text{if $m+1\in\im\iota$} \\
1 & \text{if $m+1\notin\im\iota$.}
\end{cases}
$$
Given a collection $\mathcal G$ of groups, and for each $G\in\mathcal G$ a filtration
${\mathbf G}$ of $G$ and a stretching ${\mathbf G}^*$ of $\mathbf G$, we say that the  ${\mathbf G}^*$ (where $G$ ranges over $\mathcal G$) are {\it compatible}\/ if~$\iota$ can be chosen uniformly for all pairs $(\mathbf G,\mathbf G^*)$.

\index{filtration!refinement}
\index{filtration!stretching|textbf}
\index{stretching|textbf}

\medskip

Given $n\geq 0$, we say that a filtration $\mathbf G=\{G_n\}$ of $G$ has {\it length $\leq n$} if $G_{n+1}=1$; the smallest such $n$ is called the length of $\mathbf G$, and if no such $n$ exists, then $\mathbf G$ is said to have infinite length. We sometimes write a finite-length filtration of $G$ simply in the form
$$G\geq G_1 \geq G_2 \geq \cdots \geq G_n \geq G_{n+1}=1,$$
with the understanding that $G_m=1$ for all $m>n$.

\index{filtration!length}

\medskip

Given a subgroup $H$ of $G$
we say that a filtration $\mathbf G=\{G_n\}$ of $G$  {\it separates $H$} if $\bigcap_n HG_n=H$, and we simply say that $\mathbf G$ is {\it separating} if it separates the trivial subgroup $1$ of $G$. We say that a filtration of $G$  separates a collection  of subgroups of $G$ if it separates each member.

\index{filtration!separating|textbf}

\medskip

Given two filtrations $\mathbf G=\{G_n\}$ and $\mathbf G^*=\{G_n^*\}$ of $G$, we say that {\it $\mathbf G$ descends faster than $\mathbf G^*$}\/ (in symbols: $\mathbf G\leq\mathbf G^*$) if $G_n\leq G_n^*$ for all $n$.

\subsection{Central filtrations}
Let $\mathbf G=\{G_n\}$ be a complete filtration of $G$. We say that $\mathbf G$ is
a {\it central filtration} of $G$ if $[G,G_n]\leq G_{n+1}$ for all $n$. %(Weakly central filtrations are usually called central series  in the literature; we reserve the term ``central filtration'' for a stronger concept, see below\footnote{We might decide later to revert back to a more usual terminology; it seemed as if central filtrations might play a bigger role, but perhaps they don't really.}.)
Every such central filtration $\mathbf G$ of $G$ is automatically normal, and  the successive quotients $L_n(\mathbf G)=G_n/G_{n+1}$  are central, that is,  $G_n/G_{n+1}\leq Z(G/G_{n+1})$ for all $n$. In particular, the groups $G_n/G_{n+1}$ are abelian, and every refinement of a central filtration is central.

\index{filtration!central}

\medskip

The {\it lower central filtration} of $G$   is the fastest descending central filtration $\gamma(G)=\{\gamma_n(G)\}$ of $G$, defined by
$$\gamma_1(G):=G,\qquad \gamma_{n+1}(G):=[G,\gamma_{n}(G)]\text{ for $n\geq 1$.}$$
The $\gamma_n(G)$ are totally invariant subgroups of $G$. The first quotient $G/\gamma_2(G)=G/[G,G]$ is the abelianization $G_{\operatorname{ab}}=H_1(G;\Z)$ of $G$. For each $n\geq 1$, the commutator map $$(g_n,g)\mapsto [g_n,g]\colon \gamma_n(G)\times G\to\gamma_{n+1}(G)$$ yields a surjective morphism $$(\gamma_n(G)/\gamma_{n+1}(G))\otimes_\Z G_{\operatorname{ab}}\to \gamma_{n+1}(G)/\gamma_{n+2}(G).$$
Thus
if $G_{\operatorname{ab}}$ is finitely generated (in particular, if $G$ is finitely generated), then so is each quotient $\gamma_n(G)/\gamma_{n+1}(G)$. (See, e.g., \cite[(4.6)]{BL94}.)

\index{filtration!lower central}
\index{$\gamma_n(G)$}

\medskip

Given $n$,
there exists a central separating filtration of $G$ of length $\leq n$ if and only if $\gamma_{n+1}(G)=1$; in this case, $G$ is said to be nilpotent of class $\leq n$. Moreover, the group~$G$ is residually nilpotent if and only if there exists a separating central filtration of $G$, and in this case the lower central filtration of $G$ is separating.

\medskip

A {\it strongly central filtration} of $G$ is a complete filtration $\{G_n\}$ of $G$ with the property that $[G_n,G_m]\leq G_{n+m}$ for all $n$ and $m$. (This is called an ``$N$-series'' in \cite{La54}, and sometimes called a ``Lazard filtration.'') The lower central filtration of $G$ is indeed strongly central.

\index{filtration!strongly central}

%\begin{remark}
%Let ${\mathbf G}=\{G_n\}$ be a strongly central filtration of $G$, and write $L_n({\mathbf G})=G_n/G_{n+1}$ additively. Although we won't use this, it may be worth noting that
%the commutators $$(x,y)\mapsto [x,y]\colon G_n\times G_m\to G_{n+m}$$ induce maps $L_n({\mathbf G})\times L_m({\mathbf G})\to L_{n+m}({\mathbf G})$ which descend to a bilinear map on the external direct sum
%$$L({\mathbf G}) := \bigoplus_{n\geq 1} L_n({\mathbf G})$$
%making $L({\mathbf G})$ into a graded Lie $\Z$-algebra (cf.~\cite{La54}).
%\end{remark}

\subsection{$p$-Filtrations}
Let $\mathbf G=\{G_n\}$ be a filtration of $G$.
We say that $\mathbf G$ is a {\it $p$-filtration} if $(G_n)^p\leq G_{n+1}$ for all $n\geq 1$. Thus if $\mathbf G$ is a normal $p$-filtration of~$G$ then each non-trivial element of the quotient $L_n(\mathbf G)=G_n/G_{n+1}$ has order $p$. A filtration of $G$ equivalent to a $p$-filtration of $G$ is itself a $p$-filtration. Similarly, the intersection of a $p$-filtration of $G$ with any subgroup $K$ of $G$ is a $p$-filtration of~$K$, and the product of two $p$-filtrations is a $p$-filtration. (All these statements, of course, also hold with ``central filtration'' respectively ``strongly central filtration'' in place of ``$p$-filtration'' everywhere.)

\index{filtration!$p$-filtration}
\index{$p$-filtration}

\medskip

An important role in this paper is played by central $p$-filtrations: $\mathbf G$ is a central $p$-filtration of $G$ precisely if
$$G_1=G\quad\text{and}\quad [G, G_n] (G_n)^p \leq G_{n+1}\text{ for all $n\geq 1$.}$$
In this case, each successive quotient $L_n(\mathbf G)$ has a natural structure of an $\F_p$-linear space.
%are elementary abelian $p$-groups.
The {\it lower central $p$-filtration} $\gamma^p(G)=\{\gamma^p_n(G)\}$ of $G$   is  defined by
$$\gamma^p_1(G):=G,\qquad \gamma^p_{n+1}(G):=[G,\gamma^p_n(G)] \gamma^p_n(G)^p\text{ for $n\geq 1$.}$$
The $\gamma^p_n(G)$ are totally invariant: if $\alpha\colon G\to H$ is a group morphism, then $\alpha(\gamma^p_n(G))\leq \gamma^p_n(H)$ for all $n\geq 1$. We have $$G/\gamma^p_2(G)=H_1(G;\mathbb F_p)=H_1(G;\Z)\otimes_\Z \F_p,$$ and if $G$ is a $p$-group, then  $\gamma^p_2(G)=[G, G]\, G^p$ is the Frattini subgroup of $G$.
The lower central $p$-filtration is the fastest descending central $p$-filtration of $G$. It was introduced in \cite{Sk50, La54}, and some of its basic properties are described in detail in \cite[Chapter~VIII]{HB82} (under the name ``$\lambda$-series''), for example:

\begin{proposition}\label{prop:lower p}
For all $m,n\geq 1$:
\begin{enumerate}
\item $[\gamma^p_m(G),\gamma^p_n(G)]\leq \gamma^p_{m+n}(G)$;
%\item $\gamma^p_n(G)^{p^m} \leq \gamma^p_{m+n}(G)$;
\item $\gamma^p_n(G) = \gamma_1(G)^{p^{n-1}} \gamma_2(G)^{p^{n-2}}\cdots\gamma_n(G)$.
\end{enumerate}
\end{proposition}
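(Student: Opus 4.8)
The plan is to prove both parts together by induction on $n$, exploiting the fact that $\gamma^p(G)$ is the fastest descending central $p$-filtration, so that any central $p$-filtration dominates it term by term. For part (1), since $[\gamma^p_m(G),\gamma^p_n(G)]$ should land in $\gamma^p_{m+n}(G)$, the natural approach is to fix $m$ and show that the filtration $\{\gamma^p_{m+n}(G)\}_{n\geq 0}$ (reindexed) is a central $p$-filtration ``relative to'' $\gamma^p_m(G)$, using the commutator identities. Concretely, I would first establish that $\gamma^p(G)$ is strongly central in the $p$-sense, i.e.\ verify the defining inequality $[G_n,G_m](G_n)^p \le G_{n+m}$ — wait, that is not quite what (1) says; (1) is precisely the statement that $\gamma^p(G)$ is a ``strongly central $p$-filtration'' in the appropriate sense. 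So the cleanest route is: show by induction on $n$ (with $m$ fixed) that $[\gamma^p_m(G),\gamma^p_n(G)]\le\gamma^p_{m+n}(G)$, using the three-subgroups lemma (Hall--Witt identity) together with the recursion $\gamma^p_{n+1}(G)=[G,\gamma^p_n(G)]\gamma^p_n(G)^p$. The base case $n=1$ is $[\gamma^p_m(G),G]\le\gamma^p_{m+1}(G)$, which is immediate from the definition. For the inductive step, expand $[\gamma^p_m(G),\gamma^p_{n+1}(G)]$ using that $\gamma^p_{n+1}(G)$ is generated by commutators $[g,h]$ with $g\in G$, $h\in\gamma^p_n(G)$, and by $p$-th powers $h^p$; handle the commutator generators via the three-subgroups lemma applied to $\gamma^p_m(G)$, $G$, $\gamma^p_n(G)$, and handle the $p$-th power generators via the standard commutator-with-powers expansion $[x,y^p]\equiv [x,y]^p \pmod{\text{higher terms}}$, controlling the error terms by the already-established lower-index cases of (1).

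For part (2), the claim is that $\gamma^p_n(G)$ coincides with the product $\prod_{i=1}^{n}\gamma_i(G)^{p^{n-i}}$ of powers of the ordinary lower central subgroups. I would prove the two inclusions separately. The inclusion $\supseteq$ follows by showing each factor $\gamma_i(G)^{p^{n-i}}$ lies in $\gamma^p_n(G)$: since $\gamma_i(G)\le\gamma^p_i(G)$ (an easy induction, because $\gamma^p$ descends faster), it suffices to know that $(\gamma^p_i(G))^{p^{n-i}}\le\gamma^p_n(G)$, which is itself a short induction using $(\gamma^p_j(G))^p\le\gamma^p_{j+1}(G)$ from the defining recursion. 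The reverse inclusion $\subseteq$ is proved by induction on $n$: assuming $\gamma^p_n(G)=\prod_i\gamma_i(G)^{p^{n-i}}$, we compute $\gamma^p_{n+1}(G)=[G,\gamma^p_n(G)](\gamma^p_n(G))^p$ and must show this is contained in $\prod_i\gamma_i(G)^{p^{n+1-i}}$. The term $(\gamma^p_n(G))^p$: using the inductive description and a lemma that $(AB)^p\subseteq A^pB^p\cdot[\text{commutators}]$ modulo $\gamma$-terms of higher weight, each $(\gamma_i(G)^{p^{n-i}})^p=\gamma_i(G)^{p^{n+1-i}}$ is already a factor. The term $[G,\gamma^p_n(G)]$: distribute the commutator over the product, so $[G,\gamma_i(G)^{p^{n-i}}]$, and use $[G,\gamma_i(G)^{p^k}]\subseteq [G,\gamma_i(G)]^{p^k}\cdot(\text{higher }\gamma\text{-weight}) = \gamma_{i+1}(G)^{p^k}\cdot(\cdots)$; reindexing, $\gamma_{i+1}(G)^{p^{n-i}}=\gamma_{i+1}(G)^{p^{(n+1)-(i+1)}}$ is again a factor. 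The ``higher $\gamma$-weight'' error terms must be shown to have high enough $p$-power as well, which is where part (1) gets used as a black box.

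The main obstacle will be the careful bookkeeping of the error terms arising from the noncommutativity of passing powers through products and commutators — precisely the identities $[x,yz]=[x,z][x,y]^z$, $[xy,z]=[x,z]^y[y,z]$, and the Hall--Petrescu (``collection'') formula $(xy)^p = x^py^p\,c_2^{\binom p2}\cdots c_{p-1}^{\binom p{p-1}}\,c_p$ with $c_j\in\gamma_j(\langle x,y\rangle)$. One has to check that every time a commutator or power is ``moved,'' the weight in the lower central series increases by exactly the amount needed to compensate the loss of one power of $p$ (heuristically, one factor of $p$ is ``worth'' one unit of $\gamma$-weight, which is exactly the content of the formula in (2)). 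Rather than reprove these classical facts, I would cite \cite[Chapter~VIII]{HB82} or \cite{La54} for the collection formula and the structure of the $\lambda$-series, and present the induction at the level of which factors land where, relegating the commutator gymnastics to a remark. Part (1) is genuinely needed as an input to close the induction in part (2) (to absorb the cross terms), so the logical order is: prove (1) first by the three-subgroups-lemma induction described above, then deduce (2).
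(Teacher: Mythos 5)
The paper does not give a proof of this proposition: it consists of the single sentence ``For a proof see \cite[Chapter~VIII, Theorem~1.5]{HB82}.'' Since you also close your argument by citing the same reference (and \cite{La54}) for the collection formula and the structure of the $\lambda$-series, your proposal is at bottom the same as the paper's ``proof.''

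Your preliminary sketch is a reasonable outline of the argument that actually appears in \cite[Chapter~VIII]{HB82}, but it contains one genuine slip in the induction structure for part~(1). You propose to fix $m$ and induct on $n$, using the three-subgroups lemma for $A=\gamma^p_m(G)$, $B=G$, $C=\gamma^p_n(G)$. That lemma reduces $[A,[B,C]]\le\gamma^p_{m+n+1}(G)$ to the two inclusions $[G,[\gamma^p_n(G),\gamma^p_m(G)]]\le\gamma^p_{m+n+1}(G)$ and $[\gamma^p_n(G),[\gamma^p_m(G),G]]\le\gamma^p_{m+n+1}(G)$. The first is fine, since $[\gamma^p_m(G),\gamma^p_n(G)]\le\gamma^p_{m+n}(G)$ is your inductive hypothesis. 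But the second requires $[\gamma^p_n(G),\gamma^p_{m+1}(G)]\le\gamma^p_{m+n+1}(G)$, which is the assertion for the pair $(m+1,n)$ and is \emph{not} covered by an induction in which $m$ is held fixed. The fix is routine --- induct on $m+n$, or induct on $n$ while proving the statement simultaneously for all $m$ --- but as written the induction does not close. Beyond this, the remaining gaps you flag (controlling the Hall--Petrescu error terms when passing $p$-th powers through commutators, and the distribution of $[\,\cdot\,,\,\cdot\,]$ and $(\,\cdot\,)^p$ over the product in the inductive step of~(2)) are real bookkeeping work, and deferring them to \cite{HB82} is exactly what the paper itself does.
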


For a proof see \cite[Chapter~VIII, Theorem~1.5]{HB82}.

\index{filtration!lower central $p$-filtration}
\index{$p$-filtration!lower central}
\index{$\gamma^p_n(G)$}

\begin{lemma}\label{lem:canonical filtration}
Let $T\leq G$.
Suppose there exists a central $p$-filtration $\mathbf G=\{G_n\}$ of $G$ which intersects to the lower central $p$-filtration on $T$.
Then every central $p$-filtration $\mathbf G^*$ of $G$ with $\mathbf G^*\leq\mathbf G$ intersects to the lower central $p$-filtration of~$T$.
In particular, the lower central $p$-filtration of $G$ intersects to the lower central $p$-filtration of $T$.
\end{lemma}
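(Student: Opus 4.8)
The plan is to prove the two displayed inclusions $\gamma^p_n(T)\leq G^*_n\cap T$ and $G^*_n\cap T\leq\gamma^p_n(T)$ by induction on $n$, for an arbitrary central $p$-filtration $\mathbf G^*=\{G^*_n\}$ of $G$ satisfying $\mathbf G^*\leq\mathbf G$; the final assertion about the lower central $p$-filtration then follows by taking $\mathbf G^*=\gamma^p(G)$, since $\gamma^p(G)$ is the fastest-descending central $p$-filtration of $G$ and hence satisfies $\gamma^p(G)\leq\mathbf G$ automatically. (Here I use that $\mathbf G^*\leq\mathbf G$ means $G^*_n\leq G_n$ for all $n$, so in particular $G^*_n\cap T\leq G_n\cap T=\gamma^p_n(T)$ by hypothesis; that already gives one of the two inclusions for free, and reduces the lemma to showing $\gamma^p_n(T)\leq G^*_n\cap T$ for all $n$.)

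For the remaining inclusion, I would argue as follows. Since $T\leq G$, any group morphism applied to $T$ sends $\gamma^p_n(T)$ into $\gamma^p_n$ of the target; more directly, I claim $\gamma^p_n(T)\leq G^*_n$ for all $n\geq 1$ by induction. The base case $n=1$ is $\gamma^p_1(T)=T\leq G=G^*_1$, using completeness of $\mathbf G^*$. For the inductive step, suppose $\gamma^p_n(T)\leq G^*_n$. Then
$$\gamma^p_{n+1}(T)=[T,\gamma^p_n(T)]\,\gamma^p_n(T)^p\leq [G,G^*_n]\,(G^*_n)^p\leq G^*_{n+1},$$
where the last inclusion is exactly the defining property of a central $p$-filtration of $G$ recalled just before the statement ($[G,G^*_n](G^*_n)^p\leq G^*_{n+1}$). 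Since also $\gamma^p_{n+1}(T)\leq T$ trivially, this gives $\gamma^p_{n+1}(T)\leq G^*_{n+1}\cap T$, completing the induction. Combined with the reverse inclusion noted above, we conclude $G^*_n\cap T=\gamma^p_n(T)$ for all $n$, i.e. $\mathbf G^*$ intersects to the lower central $p$-filtration on $T$.

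I do not expect any serious obstacle here: the argument is a direct two-line induction using only the definition of a central $p$-filtration and the functoriality-style inequality $G^*_n\leq G_n$. The only point that deserves a moment's care is that the hypothesis ``$\mathbf G$ intersects to the lower central $p$-filtration on $T$'' is used solely to supply the easy inclusion $G^*_n\cap T\leq G_n\cap T=\gamma^p_n(T)$; the harder inclusion $\gamma^p_n(T)\leq G^*_n$ needs only that $\mathbf G^*$ is a central $p$-filtration of $G$ and does not use $\mathbf G$ at all. It is worth remarking explicitly in the write-up that this shows the conclusion in fact holds for every central $p$-filtration $\mathbf G^*$ of $G$ that descends at least as fast as $\mathbf G$ — which is what the statement asserts — and that the ``in particular'' clause is the instance $\mathbf G^*=\gamma^p(G)$.
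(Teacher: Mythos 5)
Your proof is correct and follows essentially the same two-step route as the paper: the easy inclusion $G^*_n\cap T\leq G_n\cap T=\gamma^p_n(T)$ from $\mathbf G^*\leq\mathbf G$ plus the hypothesis, and the reverse inclusion from the fact that $T\cap\mathbf G^*$ is a central $p$-filtration of $T$, hence contains $\gamma^p(T)$ (the paper invokes the ``fastest descending'' property directly, while you unwind it into the short induction that proves it — same content).
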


\begin{proof}
Suppose $\mathbf G^*$ is a central $p$-filtration of $G$ with $\mathbf G^*\leq\mathbf G$. Then $T\cap \mathbf G^*\leq T\cap \mathbf G=\gamma^p(T)$. On the other hand, $T\cap\mathbf G^*$  is a central $p$-filtration of $T$, hence $\gamma^p(T)\leq T\cap \mathbf G^*$.
\end{proof}

The following is shown by induction on $m$ (simultaneously for all $n$); we leave the details to the reader.

\begin{lemma}\label{lem:iterated gamma}
Let $\mathbf G=\{G_n\}$ be a central $p$-filtration of $G$. Then
$$\gamma^p_m(G_n) \leq G_{m+n-1}\qquad\text{for all $m,n\geq 1$.}$$
\end{lemma}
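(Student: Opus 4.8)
The plan is to prove the inclusion $\gamma^p_m(G_n)\leq G_{m+n-1}$ by induction on $m$, with $n$ treated uniformly (i.e.\ the inductive statement quantifies over all $n\geq 1$ at once). The base case $m=1$ is immediate: $\gamma^p_1(G_n)=G_n=G_{1+n-1}$, so equality holds for every $n$.

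For the inductive step, suppose the claim holds for some $m\geq 1$ and all $n\geq 1$; I want $\gamma^p_{m+1}(G_n)\leq G_{m+n}$ for all $n$. By definition $\gamma^p_{m+1}(G_n)=[G_n,\gamma^p_m(G_n)]\,\gamma^p_m(G_n)^p$, so it suffices to bound the two factors separately. For the power term, the inductive hypothesis gives $\gamma^p_m(G_n)\leq G_{m+n-1}$, and since $\mathbf G$ is a $p$-filtration we have $(G_{m+n-1})^p\leq G_{m+n}$; hence $\gamma^p_m(G_n)^p\leq G_{m+n}$. For the commutator term, again $\gamma^p_m(G_n)\leq G_{m+n-1}$, so $[G_n,\gamma^p_m(G_n)]\leq[G_n,G_{m+n-1}]\leq[G,G_{m+n-1}]\leq G_{m+n}$, using that $G_n\leq G=G_1$ together with the centrality of $\mathbf G$. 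Combining, $\gamma^p_{m+1}(G_n)\leq G_{m+n}$, which completes the induction.

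There is no real obstacle here: the only two properties of $\mathbf G$ invoked are that it is a $p$-filtration (for the $p$-th power step) and that it is central (for the commutator step), exactly the two defining conditions of a central $p$-filtration as recorded above; the reason the induction must be run with $n$ free rather than fixed is simply that the inductive step for a given $n$ appeals to the hypothesis at index $m$ but at the same value of $n$, so no genuine uniformity issue arises. Accordingly the details are routine and can safely be left to the reader as the text does.
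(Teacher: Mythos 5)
Your proof is correct and follows exactly the approach the paper indicates (induction on $m$, with $n$ quantified throughout), using precisely the two defining properties of a central $p$-filtration — $(G_k)^p\leq G_{k+1}$ for the power part and $[G,G_k]\leq G_{k+1}$ for the commutator part, together with $G_n\leq G_1=G$. One small remark on your closing sentence: since your inductive step for a given $n$ invokes the hypothesis only at that same $n$, the induction could actually be run with $n$ fixed; treating all $n$ simultaneously is harmless but, as your own argument shows, not forced.
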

%\begin{proof}
%By induction on $m$. The case $m=1$ being trivial, suppose the claim holds for a given value of $m$. Then
%$$\gamma^p_{m+1}(G_n) = (\gamma^p_m(G_n))^p[G_n,\gamma^p_m(G_n)] \leq (G_{m+n-1})^p[G,G_{m+n-1}] \leq G_{m+n}$$
%as required.
%\end{proof}

If there exists a central separating $p$-filtration  of $G$ of length $\leq n$, then we have $\gamma^p_{n+1}(G)=1$, and in this case we say that $G$ has \emph{lower $p$-length $\leq n$;}\/ the smallest such $n$ (if it exists) is the \emph{lower $p$-length}\/ of $G$.
So for example, $G$ has lower $p$-length~$1$ if and only if $G$ is a non-trivial elementary abelian $p$-group. In general,
a finite group is a $p$-group if and only if it has finite lower
$p$-length.
A finitely generated group $G$ is residually $p$ if and only if there exists a separating central $p$-filtration of $G$; in this case the lower central $p$-filtration of $G$ is separating.

\index{lower $p$-length}

\subsection{Dimensional $p$-filtrations}
A strongly central filtration~$\{G_n\}$ of $G$ is called a {\it dimensional $p$-filtration} if $(G_n)^p\leq G_{np}$ for all $n\geq 1$. (So every dimensional $p$-filtration is indeed a $p$-filtration.)
The fastest descending dimensional $p$-filtration~$\{D_n\}$ of $G$ is given by
$$D_1=G,\qquad D_n = (D_{\lceil n/p \rceil})^p \prod_{i+j=n} [D_i, D_j]\quad \text{for $n>1$.}$$
The $D_n$ are called the {\it dimension subgroups}\/ of $G$ in characteristic $p$. If we want to stress the dependence of $D_n$ on $G$ and $p$, we write $D_n^p(G)=D_n$. We call $\{D_n\}$ the {\it lower dimensional $p$-filtration}\/ of $G$. (This is also variously called the Zassenhaus, Jennings or Lazard series of $G$ in the literature. Dimensional $p$-filtrations are called ``$N_p$-series'' in \cite{Pa77}.) Its length  will be called the {\it lower dimensional $p$-length} of the group $G$.
Note that the $D_n$ are totally invariant, and clearly $D_n \geq \gamma^p_n(G) \geq \gamma_n(G)$ for all $n\geq 1$. A closed formula for $D$ is due to Lazard (cf.~\cite[Theorem~11.2]{DdSMS}):
$$D_n(G) = \prod_{ip^j\geq n} \gamma_i(G)^{p^j} \qquad\text{for each $n\geq 1$.}
$$
(The groups $D_n$ will only appear in Chapter~\ref{ch:embedding theorems}.)

\index{$p$-filtration!dimensional}
\index{dimension subgroup}
\index{$p$-filtration!lower dimensional}
\index{$D_n^p(G)$}

\subsection{Chief filtrations}
We say that a complete normal finite-length filtration $\mathbf G=\{G_n\}$ of $G$ is a {\it chief filtration}\/ of $G$ if each  quotient $L_n(\mathbf G)=G_n/G_{n+1}$ is either trivial or  a minimal non-trivial subgroup of $G/G_{n+1}$.  Every chief filtration of $G$ induces a chief filtration on any subgroup of $G$.
The quotients $L_n(\mathbf G)$ in a chief filtration of $G$ are called the {\it chief factors}\/ of the filtration. Every finite group has a chief filtration of finite length. %A version of the Jordan-H\"older Theorem holds for the chief factors of finite groups.  \footnote{[S] Dieser Satz h\"angt etwas in der Luft.}
 Every normal filtration of finite length of a finite group may be refined to a chief filtration of finite length. In particular, if $G$ admits a  finite-length chief filtration and $N$ is any normal subgroup of $G$, then one can always find a finite-length chief filtration in which $N$ is one of the terms. Every chief factor of a finite nilpotent group is central, and every chief factor of a $p$-group has order $p$. (Hence every chief filtration of a $p$-group is a central $p$-filtration.)

\index{filtration!chief}

\subsection{Filtrations and split morphisms}
Let $A\mapsto \Sigma(A)$ be an operation which associates to a group $A$ a totally invariant subgroup $\Sigma(A)$ of $A$, i.e., every group morphism $A\to B$ restricts to a group morphism $\Sigma(A)\to\Sigma(B)$. %(In particular, $\Sigma(A)$ is a fully invariant subgroup of $A$.)
Recall that a subgroup $H$ of $G$ is called a \emph{retract} of $G$ if  the natural inclusion $H\to G$ left splits. Note that if $H$ is a free factor of $G$, i.e., if there exists a subgroup $H'$ of $G$ such that $G$ is (isomorphic to) the free product $H\ast H'$, then $H$ is a retract of $G$.

\begin{lemma}\label{lem:lower central for semidirect products}
If $H\leq G$ is a retract of $G$, then $$\Sigma(H)=\Sigma(G)\cap H.$$
If $G=B\rtimes H$ where $B\trianglelefteq G$ and $H\leq G$, then $$\Sigma(G) = \Sigma(H)(\Sigma(G)\cap B).$$
\end{lemma}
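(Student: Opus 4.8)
I would split the statement into the two displayed identities and prove them in order, since the second one uses the first. The key hypotheses are that $\Sigma$ is defined by a \emph{totally invariant} assignment, so it is functorial with respect to \emph{all} group morphisms, and (for the second identity) that $H$ retracts $G$ automatically when $G=B\rtimes H$ via the projection $G\to H$ killing $B$. The main obstacle — really the only place requiring care — is the reverse inclusion $\Sigma(G)\cap B \supseteq \dots$, i.e., controlling $\Sigma(G)\cap H$ and the interplay of $\Sigma$ with the semidirect product decomposition; everything else is formal chasing of the retraction maps.

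\textbf{Step 1 (the retract identity).} Let $r\colon G\to H$ be a retraction, so $r\circ\iota=\id_H$ where $\iota\colon H\hookrightarrow G$ is the inclusion. Total invariance applied to $\iota$ gives $\Sigma(H)\leq\Sigma(G)$, hence $\Sigma(H)\leq\Sigma(G)\cap H$. For the other inclusion, take $x\in\Sigma(G)\cap H$. Total invariance applied to $r$ gives $r(x)\in\Sigma(H)$; but $x\in H$, so $r(x)=x$, whence $x\in\Sigma(H)$. Thus $\Sigma(G)\cap H=\Sigma(H)$.

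\textbf{Step 2 (the semidirect product identity).} Write $G=B\rtimes H$ with $B\trianglelefteq G$, and let $r\colon G\to H$ be the projection with kernel $B$; this exhibits $H$ as a retract of $G$, so Step 1 applies. For ``$\supseteq$'': $\Sigma(H)\leq\Sigma(G)$ by Step 1, and $\Sigma(G)\cap B\leq\Sigma(G)$; since $\Sigma(G)$ is a subgroup, $\Sigma(H)(\Sigma(G)\cap B)\leq\Sigma(G)$. For ``$\subseteq$'': let $g\in\Sigma(G)$ and write $g=bh$ uniquely with $b\in B$, $h\in H$. Applying $r$ and using total invariance, $h=r(g)=r(bh)\in\Sigma(H)$. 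Then $b=gh^{-1}$, and both $g\in\Sigma(G)$ and $h\in\Sigma(H)\leq\Sigma(G)$, so $b\in\Sigma(G)$; since also $b\in B$, we get $b\in\Sigma(G)\cap B$. Hence $g=bh\in\Sigma(H)(\Sigma(G)\cap B)$, completing the proof.

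\textbf{Remark on the difficulty.} There is essentially no hard step here: the whole argument is the observation that a totally invariant subgroup functor, applied to a retraction and its section, forces $\Sigma(G)\cap H=\Sigma(H)$, and then a one-line unique-factorization computation in the semidirect product. The only thing to be attentive to is that $\Sigma(H)$ and $\Sigma(G)\cap B$ need not commute elementwise, so the product $\Sigma(H)(\Sigma(G)\cap B)$ should be read as the subset $\{h'b'\}$ (equivalently $\{b'h'\}$, since $B$ is normal) rather than as an internal direct product; but that is exactly how the conclusion is phrased, so no issue arises.
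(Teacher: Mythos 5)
Your proof is correct and follows essentially the same route as the paper: the first identity is obtained by applying the retraction to an element of $\Sigma(G)\cap H$, and the second by applying the projection $\pi\colon G\to H$ to $g\in\Sigma(G)$, noting $\pi(\Sigma(G))\leq\Sigma(H)\leq\Sigma(G)$, and then peeling off the $B$-component. Your remark about the two orderings of the product set is also sound, since $\Sigma(G)\cap B$ is normal in $G$ ($\Sigma(G)$ being characteristic and $B$ normal).
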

\begin{proof}
Let $H\leq G$.
We have $\Sigma(H)\leq\Sigma(G)\cap H$, and if $\phi\colon G\to H$ is a morphism which is the identity on $H$, then $\Sigma(G)\cap H = \phi(\Sigma(G)\cap H)\leq\Sigma(H)$. This shows the first statement. For the second statement,
assume $G=B\rtimes H$, where $H\leq G$, and let $\pi\colon G\to H$ be the natural morphism. Then
$\pi(\Sigma(G))\leq \Sigma(H)\leq \Sigma(G)$, thus $\Sigma(G) = \Sigma(H) (\Sigma(G)\cap B)$ as required.
\end{proof}

\index{retract}

This lemma applies in particular to the operation $\Sigma$ which, for given $n\geq 1$, yields the $n$th term in the lower central filtration $\gamma$, lower central $p$-filtration $\gamma^p$, or lower dimensional $p$-filtration $D$, respectively.
We also note:

\begin{lemma}\label{lem:retract}
Let $H\leq G$ be a retract of $G$, and suppose $G$ is residually $p$. Then $\gamma^p(G)$ separates $H$.
\end{lemma}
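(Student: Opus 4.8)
The plan is to exploit the retraction to reduce statements about $\gamma^p(G)$ to statements about $G$ itself, combined with the characterization (stated in the subsection on $p$-filtrations) that a finitely generated group is residually $p$ precisely when its lower central $p$-filtration is separating. Write $r\colon G\to H$ for a retraction, i.e., a group morphism with $r|_H=\id_H$, and let $\iota\colon H\hookrightarrow G$ be the inclusion. I want to show $\bigcap_n H\,\gamma^p_n(G)=H$.

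First I would observe that the inclusion $\iota$ is automatically a morphism of filtered groups $(H,\gamma^p(H))\to(G,\gamma^p(G))$ in the sense that $\gamma^p(G)$ intersects to $\gamma^p(H)$ on $H$: indeed, by Lemma~\ref{lem:lower central for semidirect products} applied to the totally invariant operation $\Sigma=\gamma^p_n$, we have $\gamma^p_n(H)=\gamma^p_n(G)\cap H$ for every $n\geq 1$. Next, since $G$ is residually $p$ and finitely generated (being a retract, $H$ inherits finite generation, and residual $p$-ness of $G$ gives a separating central $p$-filtration), the lower central $p$-filtration $\gamma^p(G)$ is separating, i.e.\ $\bigcap_n\gamma^p_n(G)=1$. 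Now take $g\in\bigcap_n H\,\gamma^p_n(G)$; I must show $g\in H$. For each $n$ write $g=h_n\,x_n$ with $h_n\in H$ and $x_n\in\gamma^p_n(G)$. Applying the retraction $r$ and using $r(\gamma^p_n(G))\leq\gamma^p_n(H)=\gamma^p_n(G)\cap H\leq\gamma^p_n(G)$, we get $r(g)=h_n\,r(x_n)$ with $r(x_n)\in\gamma^p_n(G)$, hence $g\,r(g)^{-1}=h_n\,x_n\,r(x_n)^{-1}h_n^{-1}\in h_n\gamma^p_n(G)h_n^{-1}=\gamma^p_n(G)$ by normality of $\gamma^p_n(G)$ in $G$. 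Since this holds for all $n$ and $\gamma^p(G)$ is separating, $g\,r(g)^{-1}=1$, so $g=r(g)\in H$. This proves $\bigcap_n H\,\gamma^p_n(G)\subseteq H$; the reverse inclusion is trivial, so $\gamma^p(G)$ separates $H$.

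I expect the only mild subtlety to be the bookkeeping in the computation $g\,r(g)^{-1}\in\gamma^p_n(G)$, which rests on two facts used together: total invariance of $\gamma^p_n$ (so $r$ maps $\gamma^p_n(G)$ into itself) and normality of $\gamma^p_n(G)$ in $G$ (so that conjugating the coset representative by $h_n$ stays inside $\gamma^p_n(G)$). Both are recorded in the excerpt. No deeper obstacle arises; the statement is essentially a formal consequence of the retraction together with the separating property of $\gamma^p(G)$ for residually $p$ groups.
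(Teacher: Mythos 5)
Your proof is correct and follows essentially the same route as the paper's: take $g$ in the intersection, compare $g$ with $r(g)$ modulo $\gamma^p_n(G)$ using functoriality of $\gamma^p_n$ and normality, and conclude $g=r(g)\in H$ from $\bigcap_n\gamma^p_n(G)=1$. One small remark: the parenthetical appeal to finite generation is unnecessary (and a bit garbled) — residual $p$-ness of $G$ alone already gives $\bigcap_n\gamma^p_n(G)=1$, since any $p$-group quotient $G/N$ satisfies $\gamma^p_m(G/N)=1$ for $m$ large, forcing $\gamma^p_m(G)\leq N$.
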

\begin{proof}
Let $\phi\colon G\to H$ be a morphism which is the identity on $H$.
Let $g\in\bigcap_{n\geq 1} \gamma^p_n(G)\cdot H$. Let $n\geq 1$, and take $h\in H$ with $g\equiv h\bmod \gamma^p_n(G)$. Then $\phi(g)\equiv \phi(h)\bmod\gamma^p_n(H)$ and hence $g\equiv\phi(g)\bmod\gamma^p_n(G)$. Since this holds for every~$n\geq 1$, and $\bigcap_{n\geq 1}\gamma^p_n(G)=1$, we obtain $g=\phi(g)\in H$ as required.
\end{proof}

%For bounds on the nilpotency class of $G$ in the case where $B$ and $H$ are $p$-groups (which is the one of particular interest to us) cf.~\cite{MM77, Mr76}.

\subsection{The layers of the lower central $p$-filtration}
For every $n\geq 1$ we write $L^p_{n}(G):=L_n(\gamma^p(G))$. Recall that each $L^p_n(G)$ is in a natural way an $\F_p$-linear space. If $G_{\operatorname{ab}}$ is finitely generated, then each of the layers $L^p_n(G)$ is finite (cf.~\cite[Ch.~II]{La54}; see also \cite[\S{}14]{BL94}).
Note that if $G$ is abelian and written additively, then $\gamma^p_n(G)=p^{n-1}G$ and so $L^p_n(G)=p^{n-1}G/p^n G$, for all $n$.
If $H$ denotes the abelianization $G_{\operatorname{ab}}=G/[G,G]$ of $G$, with natural morphism $\pi\colon G\to H$, then $\pi(\gamma^p_n(G)) = \gamma^p_n(H)=p^{n-1}H$  for every $n\geq 1$, and the restriction of $\pi$ to $\gamma^p_n(G)$ induces a surjective morphism
$$L^p_n(G)=\gamma^p_n(G)/\gamma^p_{n+1}(G) \to  p^{n-1}H/p^n H=L^p_n(H),$$
which is an isomorphism for $n=1$. %(However, the morphisms $L^p_n(G)\to L^p_n(H)$ do not combine to a Lie algebra morphism $L^p(G)\to L^p(H)$.)

\index{$L^p_n(G)$}

In the rest of this subsection we develop some further basic facts about the groups $L^p_n(G)$. The main tool is a precise version of the Hall-Petrescu formula \cite[Theo\-rem~1.1.30]{LGMcK}. In the theorem and its corollaries below,  we let~$x$,~$y$ range over~$G$.

\begin{theorem}\label{thm:HP}
For all $m,n\geq 1$,
$$(xy)^{m} = x^{m}\, y^{m}\, [y,x]^{m\choose 2}\, r_{mn}\mod \gamma_{n+1}(G)$$
where $r_{mn}$ is a product of powers $c^e$ with $c\in\gamma_i(G)$ and $e\in\N$ is a $\Z$-linear combination of ${m\choose 1}, {m\choose 2},\dots,{m\choose i}$,
for some $i$ with $3\leq i\leq m$.
\end{theorem}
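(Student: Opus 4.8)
The plan is to prove the Hall--Petrescu collection formula by induction on $m$ for each fixed $n$, working modulo $\gamma_{n+1}(G)$, where the quotient $G/\gamma_{n+1}(G)$ is nilpotent of class $\leq n$, so that all the relevant commutators eventually vanish and the congruences are honest identities in a finitely generated nilpotent group. First I would set up the base case $m=1$ (trivial, with $r_{1n}=1$) and $m=2$: here $(xy)^2 = x^2 y^2 [y,x]$ modulo $\gamma_3(G)$ since $xyxy = x^2 (x^{-1}yx) y = x^2 y [y,x^{-1}]^{-1}\cdots$; more cleanly, $(xy)^2 = x^2 y^2 (y^{-1}x^{-1}yx)= x^2 y^2 [y,x]$ holds exactly modulo $[[y,x],G]\le\gamma_3(G)$, which matches the asserted form with $\binom{2}{2}=1$ and no higher-weight terms ($r_{2n}\equiv 1$).

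For the inductive step, the key move is to write $(xy)^{m+1} = (xy)^m\cdot(xy)$, substitute the inductive formula for $(xy)^m$, and then carry the trailing factor $xy$ leftward past the already-collected factors $x^m, y^m, [y,x]^{\binom m2}$ and the remainder $r_{mn}$, using repeatedly the basic identities $ab = ba[b,a]$ (i.e. $[b,a]=b^{-1}a^{-1}ba$), $[ab,c]\equiv[a,c][b,c]\bmod\gamma_{i+j+1}$ when $a\in\gamma_i,c\in\gamma_j$, and $[a^e,c]\equiv[a,c]^e$ modulo higher weight. The heart of the bookkeeping is that when $x^m$ is pushed past $x$ and $y^m$ past $y$ one gets $x^{m+1}$ and $y^{m+1}$ for free; when $y^m$ must be pushed past the new $x$ one picks up $[y,x]^m$ (modulo $\gamma_3$), which combines with the old $[y,x]^{\binom m2}$ to give $[y,x]^{\binom m2 + m} = [y,x]^{\binom{m+1}{2}}$ as required; every remaining correction is a product of commutators of weight $\geq 3$ whose exponents are integer linear combinations of $\binom m1,\dots,\binom mi$, and since $\binom mj + \binom{m}{j-1} = \binom{m+1}{j}$ (Pascal) and differences/sums of such binomials stay within the $\Z$-span of the $\binom{m+1}{1},\dots,\binom{m+1}{i}$, the exponent condition is preserved. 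One also has to absorb the conjugate $r_{mn}^{(xy)}$ of the old remainder; since $r_{mn}$ is a product of powers of elements of weight $\geq 3$, conjugation by $xy$ changes each such $c\in\gamma_i(G)$ by an element of $\gamma_{i+1}(G)$, so the new remainder is still of the required shape (and the exponents are unchanged, hence still in the right $\Z$-span).

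The main obstacle I anticipate is purely combinatorial: keeping careful track, through the iterated collection process, of exactly which commutators of which weights arise and verifying that every exponent that shows up is an integer linear combination of $\binom{m+1}{1},\dots,\binom{m+1}{i}$ for an appropriate $i\leq m+1$. This is where Pascal's rule and the stability of binomial-coefficient $\Z$-spans under the operations $e\mapsto e+e'$, $e\mapsto e\cdot(\text{weight-one binomial})$ do all the work, but it requires organizing the induction so that the weight of a commutator and the index $i$ bounding its exponent stay synchronized. Since we may freely pass to $G/\gamma_{n+1}(G)$, which is nilpotent and (if $G$ is finitely generated) has finitely generated lower central quotients by the discussion following the definition of $\gamma(G)$, there are no convergence or infinite-product subtleties: the collection process terminates after finitely many steps modulo $\gamma_{n+1}(G)$, and I would present the argument as a finite manipulation of group-word identities in this nilpotent quotient, citing \cite[Theorem~1.1.30]{LGMcK} for the precise form if a fully self-contained derivation becomes unwieldy.
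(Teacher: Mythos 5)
The paper does not actually prove Theorem~\ref{thm:HP}: the sentence immediately preceding it simply says ``The main tool is a precise version of the Hall--Petrescu formula \cite[Theorem~1.1.30]{LGMcK},'' and no argument follows. So you are supplying a proof where the authors deferred to the literature; your fallback of citing \cite{LGMcK} is exactly what the paper does, and your collection-process sketch is the right approach in spirit (it is essentially Hall's original argument, carried out in the nilpotent quotient $G/\gamma_{n+1}(G)$).

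There is, however, a genuine gap in the exponent bookkeeping. You claim that ``differences/sums of such binomials stay within the $\Z$-span of the $\binom{m+1}{1},\dots,\binom{m+1}{i}$,'' but as a statement about integers for a fixed $m$ this is false. For example, with $m=4$ and $i=3$, the $\Z$-span of $\binom{4}{1},\binom{4}{2},\binom{4}{3}=4,6,4$ is $2\Z$, while the $\Z$-span of $\binom{5}{1},\binom{5}{2},\binom{5}{3}=5,10,10$ is $5\Z$, and $2\Z\not\subseteq 5\Z$. More generally, whenever $m+1$ is prime and $i<m+1$, the target span is $(m+1)\Z$, whereas $\binom{m}{j}\equiv(-1)^j\not\equiv 0\pmod{m+1}$, so the old exponents cannot land in it. Hence the inductive step, as you have set it up, cannot close pointwise: the exponent of a surviving factor of $r_{mn}$, after conjugation by $xy$, is unchanged as an integer and need not lie in the $(m+1)$-span. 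The correct repair --- and the way the classical proof really runs --- is to track the exponent $e_c(m)$ of each (basic) commutator $c$ of weight $i$ as a function of $m$, and prove by induction that $e_c$ is an integer-valued polynomial of degree $\le i$ vanishing at $0$; such polynomials are precisely the $\Z$-linear combinations, with \emph{constant} integer coefficients, of the binomial polynomials $\binom{\cdot}{1},\dots,\binom{\cdot}{i}$, and evaluating at a particular $m$ then gives the pointwise assertion of the theorem. In that formulation Pascal's rule is used as a polynomial identity and the recursion shows $e_c(m+1)-e_c(m)$ has degree $\le i-1$. As written, your step conflates the polynomial-level and pointwise claims, and the pointwise one is what fails. (A minor further wrinkle: for $m=2$ one has exactly $(xy)^2=x^2y^2[y,x]\,[[y,x],y]$, so $r_{2n}\ne 1$ as soon as $n\ge 3$; the bound ``$3\le i\le m$'' in the paper's statement is best read with the understanding that $\binom{m}{j}=0$ for $j>m$, otherwise the base case you describe does not literally match.)
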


This yields a sharpening of \cite[Lemma~11.9]{DdSMS}:

\begin{corollary}\label{cor:HP}
Let $m\geq 1$. Then
$$(xy)^{2^m} \equiv x^{2^m}\, y^{2^m}\, [x,y]^{2^{m-1}} \mod \gamma_2(G)^{2^m}\gamma_3(G)^{2^{m-1}}\prod_{d=2}^m \gamma_{2^d}(G)^{2^{m-d}},$$
and if $p$ is odd, then
$$(xy)^{p^m} \equiv x^{p^m}\, y^{p^m} \mod \gamma_2(G)^{p^m}\prod_{d=1}^m \gamma_{p^d}(G)^{p^{m-d}}.$$
\end{corollary}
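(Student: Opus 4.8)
The plan is to derive Corollary~\ref{cor:HP} directly from the Hall--Petrescu formula of Theorem~\ref{thm:HP} by specializing $m$ to $p^m$ and $n$ to $p^m+1$, and then carefully analyzing which powers $c^e$ can survive in the error term $r_{p^m,\,p^m+1}$. Concretely, I would first apply Theorem~\ref{thm:HP} with the pair $(p^m, p^m+1)$ in place of $(m,n)$, so that
$$(xy)^{p^m} = x^{p^m}\, y^{p^m}\, [y,x]^{\binom{p^m}{2}}\, r \mod \gamma_{p^m+2}(G),$$
where $r$ is a product of powers $c^e$ with $c\in\gamma_i(G)$, $3\leq i\leq p^m$, and $e$ a $\Z$-linear combination of $\binom{p^m}{1},\dots,\binom{p^m}{i}$. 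I would then handle the $[y,x]^{\binom{p^m}{2}}$ term separately: for $p$ odd, $p^m \mid \binom{p^m}{2}$, so this factor lies in $\gamma_2(G)^{p^m}$; for $p=2$, $\binom{2^m}{2} = 2^{m-1}(2^m-1)$, which is $2^{m-1}$ times a unit mod $2^m$, so up to the subgroup $\gamma_2(G)^{2^m}$ (note $\gamma_3(G)\le\gamma_2(G)$ already absorbs the correction) the factor equals $[x,y]^{2^{m-1}}$ modulo $\gamma_2(G)^{2^m}\gamma_3(G)^{2^{m-1}}$.

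The heart of the argument — and the step I expect to be the main obstacle — is the $p$-adic valuation bookkeeping for the generic term $c^e$ with $c\in\gamma_i(G)$. Here one needs the standard fact (Kummer's theorem, or direct estimation) that $v_p\!\left(\binom{p^m}{k}\right) = m - v_p(k)$ for $1\le k\le p^m$, so $\binom{p^m}{k}$ is divisible by $p^{m-d}$ whenever $p^d \le k$, equivalently whenever $v_p(k)\ge d$ is \emph{not} forced, but more usefully: $p^{m-d}\mid\binom{p^m}{k}$ for all $k$ with $1\le k\le p^m$ as soon as $k < p^{d+1}$ is false — I should phrase this as: the gcd of $\binom{p^m}{1},\dots,\binom{p^m}{i}$ is $p^{m-d}$ where $d = \lfloor\log_p i\rfloor$, i.e.\ $p^d\le i<p^{d+1}$. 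Since $e$ is a $\Z$-linear combination of $\binom{p^m}{1},\dots,\binom{p^m}{i}$, we get $p^{m-d}\mid e$, hence $c^e\in\gamma_i(G)^{p^{m-d}}$. Thus every surviving factor lands in some $\gamma_i(G)^{p^{m-d}}$ with $p^d\le i$. For $p$ odd, using $\gamma_i(G)\le\gamma_{p^d}(G)$ for $i\ge p^d$, this shows $c^e\in\gamma_{p^d}(G)^{p^{m-d}}$ for some $d\ge 1$ (since $i\ge 3>1=p^0$ forces $d\ge 1$), so $r$ lies in $\prod_{d=1}^m\gamma_{p^d}(G)^{p^{m-d}}$, giving the odd-$p$ congruence after also absorbing $[y,x]^{\binom{p^m}{2}}$ into $\gamma_2(G)^{p^m}$ and noting $\gamma_2(G)^{p^m}$ is the $d=1$ analogue would overlap — actually $\gamma_2(G)^{p^m}$ is listed separately, which is fine. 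For $p=2$, the same analysis with $i\ge 3$ gives $d\ge 1$; separating the $d=1$ contributions ($i=3$, giving $\gamma_3(G)^{2^{m-1}}$) from $d\ge 2$ (giving $\prod_{d=2}^m\gamma_{2^d}(G)^{2^{m-d}}$) and combining with the $[x,y]^{2^{m-1}}$ main term yields the stated $p=2$ congruence.

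A few details need care in the write-up. First, one must check that the modulus $\gamma_{p^m+2}(G)$ (resp.\ $\gamma_{2^m+2}(G)$) from Theorem~\ref{thm:HP} is itself contained in the claimed modulus: for $p$ odd, $\gamma_{p^m+2}(G)\le\gamma_{p^m}(G)=\gamma_{p^m}(G)^{p^0}$ is the $d=m$ term $\gamma_{p^m}(G)^{p^{m-m}}=\gamma_{p^m}(G)$, so it is absorbed; similarly for $p=2$, $\gamma_{2^m+2}(G)\le\gamma_{2^m}(G)$ is the $d=m$ term of $\prod_{d=2}^m\gamma_{2^d}(G)^{2^{m-d}}$ (when $m\ge 2$; the case $m=1$ should be checked directly, where the formula reads $(xy)^2\equiv x^2y^2[x,y]\bmod\gamma_2(G)^2$, which is immediate). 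Second, I should confirm that $\gamma_2(G)^{p^m}$ normalizes everything appropriately so that the product of subgroups on the right-hand side is itself a subgroup — this follows since all the $\gamma_i(G)$ are normal (indeed totally invariant) in $G$, so any product of such subgroups and their powers is a normal subgroup, and congruences modulo it are well-defined. With these points addressed, the corollary follows by collecting the factors.
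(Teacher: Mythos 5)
Your approach is the same as the paper's (apply Theorem~\ref{thm:HP} with $p^m$ in place of $m$, peel off $[y,x]^{\binom{p^m}{2}}$, then estimate $p$-adic valuations of binomial coefficients to bound the error term), but there is a genuine gap in your handling of the odd-$p$ case. You claim that $i\geq 3>1=p^0$ forces $d=\lfloor\log_p i\rfloor\geq 1$. This is false for primes $p\geq 5$: when $3\leq i<p$ one has $p^0\leq i<p^1$, so $d=0$. For such a factor $c^e$ with $c\in\gamma_i(G)$ you only get $c^e\in\gamma_i(G)^{p^m}$, and your inclusion $\gamma_i(G)\leq\gamma_{p^d}(G)$ degenerates to the vacuous $\gamma_i(G)\leq\gamma_1(G)=G$; nor does $\gamma_i(G)^{p^m}\leq\gamma_p(G)^{p^{m-1}}$ hold in general (already fails in a free group). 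So your claim that $r\in\prod_{d=1}^m\gamma_{p^d}(G)^{p^{m-d}}$ is not justified. The $d=0$ factors must instead be absorbed into $\gamma_2(G)^{p^m}$ via $i\geq 3\geq 2$, which is precisely why $\gamma_2(G)^{p^m}$ appears separately in the modulus --- a point you flag as puzzling but do not resolve. The $p=2$ side is fine (there $i\geq 3$ does force $d\geq 1$, and your separate treatment of $d=1$, $i=3$ gives the $\gamma_3(G)^{2^{m-1}}$ term).

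Two smaller points. Your worry about $m=1$ is unfounded: with $n=2^m+1$ the theorem's modulus $\gamma_{2^m+2}(G)=\gamma_4(G)$ already sits inside the $\gamma_3(G)^{2^0}$ factor, so no separate check is needed. And your observation that $p^m\mid\binom{p^m}{2}$ for odd $p$ is the correct justification for $[y,x]^{\binom{p^m}{2}}\in\gamma_2(G)^{p^m}$; the paper's remark that $\binom{p^m}{2}\geq p^m$ is, on its own, not sufficient, so here you actually tighten the exposition.
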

\begin{proof}
We apply the previous theorem with $n=p^m-1$ and $p^m$ in place of~$m$.
If $p$ is odd then ${p^m\choose 2} \geq p^m$ and hence $[y,x]^{p^m\choose 2}\in\gamma_2(G)^{p^m}$.
For $p=2$ note that $[y,x]^{2^m\choose 2}\equiv [x,y]^{2^{m-1}}\bmod\gamma_2(G)^{2^m}$ since
$$[y,x]^{2^m\choose 2} = [y,x]^{2^{2m-1}-2^{m-1}}=[y,x]^{2^{2m-1}}[x,y]^{2^{m-1}}.$$
It is well-known that if $j=p^{d_1} j_1$ where $d_1,j_1\in\N$, $j_1>0$  not divisible by $p$, then  ${p^m\choose j}$ is divisible by $p^{m-d_1}$. Therefore every $\Z$-linear combination of the binomial coefficients ${p^m\choose 1}, {p^m\choose 2},\dots,{p^m\choose i}$ is divisible by $p^{m-d}$ where $d=\lfloor \log_p i \rfloor$. So each of the powers $c^e$ in Theorem~\ref{thm:HP} lies in $\gamma_i(G)^{p^{m-d}}$. Therefore each $c^e$ lies in $\gamma_{p^d}(G)^{p^{m-d}}$, since $p^d\leq i$. We might have $d=0$, but in this case  $c^e$ actually lies in $\gamma_2(G)^{p^m}$, since $i\geq 3$ and so $\gamma_i(G)^{p^m}\subseteq\gamma_2(G)^{p^m}$.
Similarly, we have that if $p=2$ and $d=1$, then $c_e\in\gamma_3(G)^{2^{m-1}}$.
Therefore the corollary follows from Theorem~\ref{thm:HP}.
\end{proof}

We note some consequences of this corollary:

\begin{corollary}\label{cor:HP, 2}
Let $m\geq 1$. Then
$$(xy)^{2^m} \equiv x^{2^m}\, y^{2^m}\, [x,y]^{2^{m-1}} \mod \gamma^2_{m+2}(G),$$
and if $p$ is odd, then
$$(xy)^{p^m} \equiv x^{p^m}\, y^{p^m} \mod \gamma^p_{m+2}(G).$$
\end{corollary}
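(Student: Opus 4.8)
The plan is to derive Corollary~\ref{cor:HP, 2} from Corollary~\ref{cor:HP} by absorbing each of the correction subgroups appearing there into a single term $\gamma^p_{m+2}(G)$, using part~(2) of Proposition~\ref{prop:lower p} and the monotonicity $\gamma_j(G)\supseteq\gamma_{j+1}(G)$. The key observation is that part~(2) of Proposition~\ref{prop:lower p} gives $\gamma^p_{k}(G)=\gamma_1(G)^{p^{k-1}}\gamma_2(G)^{p^{k-2}}\cdots\gamma_k(G)$, so to show a product of powers $\gamma_i(G)^{p^{e}}$ lies in $\gamma^p_{k}(G)$ it suffices to check that each individual factor $\gamma_i(G)^{p^e}$ does; and $\gamma_i(G)^{p^e}\leq\gamma^p_{k}(G)$ holds as soon as $i\cdot p^e$ is, in the appropriate sense, at least $k$ — more precisely, whenever the factor $\gamma_j(G)^{p^{k-j}}$ with $j=\min(i,k)$ dominates it, which (since $i\geq j$ implies $\gamma_i(G)\leq\gamma_j(G)$, and $e\geq k-j$ implies $\gamma_j(G)^{p^e}\leq\gamma_j(G)^{p^{k-j}}$) amounts to the inequality $i+e\geq k$ together with $i\leq k$, or simply $e\geq k-i$ when $i<k$, and $i\geq k$ otherwise.

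Concretely, take $k=m+2$. First I would handle the odd case. The correction modulus in Corollary~\ref{cor:HP} is $\gamma_2(G)^{p^m}\prod_{d=1}^m\gamma_{p^d}(G)^{p^{m-d}}$. For the leading factor: $\gamma_2(G)^{p^m}\leq\gamma^p_{m+2}(G)$ because in the expansion of $\gamma^p_{m+2}(G)$ the term with $j=2$ is $\gamma_2(G)^{p^{m}}$. For a factor $\gamma_{p^d}(G)^{p^{m-d}}$ with $1\leq d\leq m$: set $i=p^d$ and $e=m-d$; since $d\geq 1$ we have $i=p^d\geq 2^d\geq d+2$ for $d\geq 2$ and $i=p\geq 2+0$... — more cleanly, $i+e=p^d+m-d\geq m+2$ for all $d\geq 1$ (as $p^d-d\geq 2$ when $p\geq 3$, $d\geq 1$), so the factor $\gamma_{p^d}(G)^{p^{m-d}}$ is contained in $\gamma_{\min(i,m+2)}(G)^{p^{m+2-\min(i,m+2)}}$ and hence in $\gamma^p_{m+2}(G)$. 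Thus the entire modulus lies in $\gamma^p_{m+2}(G)$, and the odd-$p$ congruence of Corollary~\ref{cor:HP, 2} follows from that of Corollary~\ref{cor:HP}.

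For $p=2$, the modulus is $\gamma_2(G)^{2^m}\gamma_3(G)^{2^{m-1}}\prod_{d=2}^m\gamma_{2^d}(G)^{2^{m-d}}$. As before $\gamma_2(G)^{2^m}\leq\gamma^2_{m+2}(G)$ directly from Proposition~\ref{prop:lower p}(2). For $\gamma_3(G)^{2^{m-1}}$: here $i=3$, $e=m-1$, and $i+e=m+2$, so this factor equals $\gamma_3(G)^{2^{(m+2)-3}}$, exactly the $j=3$ term in the expansion of $\gamma^2_{m+2}(G)$. For $\gamma_{2^d}(G)^{2^{m-d}}$ with $2\leq d\leq m$: $i=2^d$, $e=m-d$, and $i+e=2^d+m-d\geq m+2$ since $2^d-d\geq 2$ for $d\geq 2$; arguing as in the odd case, this factor lies in $\gamma^2_{m+2}(G)$. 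Hence the whole modulus is contained in $\gamma^2_{m+2}(G)$, and the $p=2$ statement of Corollary~\ref{cor:HP, 2} follows.

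The only mild subtlety — and the step I would be most careful about — is the precise bookkeeping that $\gamma_i(G)^{p^e}\leq\gamma^p_{m+2}(G)$ whenever $i+e\geq m+2$: one must note that if $i\geq m+2$ then $\gamma_i(G)\leq\gamma_{m+2}(G)\leq\gamma^p_{m+2}(G)$ outright (the $j=m+2$ term in Proposition~\ref{prop:lower p}(2)), and if $i<m+2$ then $e\geq m+2-i$, so $\gamma_i(G)^{p^e}\leq\gamma_i(G)^{p^{m+2-i}}$, which is literally one of the displayed factors of $\gamma^p_{m+2}(G)$. Everything else is routine comparison of exponents; there is no real obstacle beyond this.
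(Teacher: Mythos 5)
Your proposal is correct and follows essentially the same route as the paper: both derive the statement from Corollary~\ref{cor:HP} by using Proposition~\ref{prop:lower p}(2) to show each correction factor $\gamma_i(G)^{p^e}$ appearing in the modulus lies in $\gamma^p_{m+2}(G)$ once $i+e\geq m+2$. The paper phrases this slightly more compactly via the observation $\gamma_{p^d}(G)^{p^{m-d}}\leq\gamma^p_{p^d+m-d}(G)\leq\gamma^p_{m+2}(G)$, which subsumes your case distinction on whether $i\geq m+2$ or $i<m+2$, but the content is identical.
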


\begin{proof}
We have $\gamma_2(G)^{p^{m}}\leq \gamma^p_{m+2}(G)$ as well as
$\gamma_3(G)^{2^{m-1}}\leq\gamma^2_{m+2}(G)$
and $$\gamma_{p^d}(G)^{p^{m-d}}\leq \gamma^p_{p^d+m-d}(G)\qquad\text{for $d=1,\dots,m$.}$$
If $p\geq 3$ and $d\geq 1$,  or if $p=2$ and $d\geq 2$,
then we have $p^d+m-d\geq m+2$ and thus $\gamma_{p^d}(G)^{p^{m-d}}\leq \gamma^p_{m+2}(G)$. Hence the corollary follows from Corollary~\ref{cor:HP}.
\end{proof}

If $G$ is abelian (written additively), then $L^p_n(G)=p^{n-1}G/p^n G$ for every $n\geq 1$, and multiplication by $p$ on $G$ induces a group morphism $L^p_n(G)\to L^p_{n+1}(G)$. (If in addition $G$ does not have $p$-torsion, then for every $n\geq 1$, this morphism is actually an isomorphism.)
More generally, we have:

\begin{corollary}\label{cor:phi_n}
Let $\mathbf G=\{G_n\}$ be a central $p$-filtration of $G$, and let $n\geq 1$.
Suppose $p>2$ or $[G_n,G_n]\leq G_{n+2}$. Then for every $m\geq 1$ the map
\begin{equation}\label{eq:phi_n}
x\mapsto x^{p^m}\, G_{n+m+1}\colon G_n \to G_{n+m}/G_{n+m+1}
\end{equation}
is a group morphism \textup{(}whose kernel contains $G_{n+1}$\textup{)}.
%\begin{enumerate}
%\item Suppose $p>2$ or $[G_n,G_n]\leq G_{n+2}$. Then the map
%\begin{equation}\label{eq:phi_n}
%x\mapsto x^p\, G_{n+2}\colon
%G_n \to G_{n+1}/G_{n+2}
%\end{equation}
%is a group morphism \textup{(}whose kernel contains $G_{n+1}$\textup{)}.
%\item Suppose $p>2$ or  $[G,G]\leq G_3$. Then the map
%$$x\mapsto x^{p^n}\, G_{n+2}\colon
%G \to G_{n+1}/G_{n+2}
%$$
%is a group morphism \textup{(}whose kernel contains $G_{2}$\textup{)}.
%\end{enumerate}
\end{corollary}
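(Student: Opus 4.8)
The plan is to show that the map in \eqref{eq:phi_n} is a group morphism by computing $(xy)^{p^m}$ modulo $G_{n+m+1}$ for $x,y\in G_n$ and comparing it with $x^{p^m}y^{p^m}$. The natural tool is Corollary~\ref{cor:HP, 2}, but applied \emph{inside} the group $G_n$ rather than $G$: since $\mathbf G$ is a central $p$-filtration of $G$, the intersection $\mathbf G\cap G_n = \{G_{n+k-1}\}_{k\geq 1}$ is a central $p$-filtration of $G_n$, so $\gamma^p_k(G_n)\leq G_{n+k-1}$ by Lemma~\ref{lem:iterated gamma}. Thus working with the lower central $p$-filtration of $G_n$ and pushing forward to the quotients $G_{n+m}/G_{n+m+1}$ is exactly what is needed.

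First I would handle the case $p>2$. Applying the second congruence of Corollary~\ref{cor:HP, 2} with $G_n$ in place of $G$ gives $(xy)^{p^m}\equiv x^{p^m}y^{p^m}\bmod\gamma^p_{m+2}(G_n)$ for all $x,y\in G_n$. By Lemma~\ref{lem:iterated gamma}, $\gamma^p_{m+2}(G_n)\leq G_{n+m+1}$, so $(xy)^{p^m}\equiv x^{p^m}y^{p^m}\bmod G_{n+m+1}$, which says precisely that \eqref{eq:phi_n} is a homomorphism. For $p=2$, the first congruence of Corollary~\ref{cor:HP, 2} yields $(xy)^{2^m}\equiv x^{2^m}y^{2^m}[x,y]^{2^{m-1}}\bmod\gamma^2_{m+2}(G_n)$, and again $\gamma^2_{m+2}(G_n)\leq G_{n+m+1}$. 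So the obstruction to being a homomorphism is the term $[x,y]^{2^{m-1}}$, and here the hypothesis $[G_n,G_n]\leq G_{n+2}$ comes in: for $x,y\in G_n$ we get $[x,y]\in G_{n+2}$, hence $[x,y]^{2^{m-1}}\in (G_{n+2})^{2^{m-1}}\leq\gamma^2_{m}(G_{n+2})$ (using that $(H)^{2^{m-1}}\leq\gamma^2_m(H)$ for any group $H$), and then $\gamma^2_m(G_{n+2})\leq G_{n+m+1}$ by Lemma~\ref{lem:iterated gamma} applied to the central $p$-filtration $\mathbf G\cap G_{n+2}$ of $G_{n+2}$. Therefore $[x,y]^{2^{m-1}}\in G_{n+m+1}$ and the term vanishes in the quotient, so \eqref{eq:phi_n} is a homomorphism in this case as well.

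Finally, the kernel contains $G_{n+1}$: if $x\in G_{n+1}$ then $x\in\gamma^p_2(G_n)$ (since $\mathbf G\cap G_n$ is a central $p$-filtration of $G_n$, so its second term $G_{n+1}$ contains $\gamma^p_2(G_n)$; in fact they need not be equal, but the inclusion $G_{n+1}\supseteq\gamma^p_2(G_n)$ is what a central $p$-filtration gives after noting $G_{n+1}$ is the first proper term — more simply, $x^p\in(G_{n+1})^p\leq G_{n+2}$ and $[G,x]\leq G_{n+2}$, so $x$ maps into the kernel after one application of the morphism structure, and iterating, $x^{p^m}\in G_{n+m+1}$). The cleanest route is: for $x\in G_{n+1}$, repeatedly use $(G_{n+k})^p\leq G_{n+k+1}$ to get $x^{p^m}\in (G_{n+1})^{p^m}\leq G_{n+m+1}$, so $x$ lies in the kernel. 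The only real subtlety — the ``hard part'' — is bookkeeping the $p=2$ correction term and confirming the estimate $(H)^{2^{m-1}}\leq\gamma^2_m(H)$, which follows by an easy induction on $m$ from the definition of $\gamma^2$; everything else is a direct application of the already-established Corollary~\ref{cor:HP, 2} and Lemma~\ref{lem:iterated gamma}.
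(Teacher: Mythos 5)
Your proof is correct and follows exactly the route the paper indicates — applying Corollary~\ref{cor:HP, 2} with $G_n$ in place of $G$, using Lemma~\ref{lem:iterated gamma} to land in $G_{n+m+1}$, and killing the $[x,y]^{2^{m-1}}$ term in the $p=2$ case via the extra hypothesis $[G_n,G_n]\leq G_{n+2}$ together with the $p$-filtration property. The only blemish is the momentarily muddled first attempt at the kernel argument ($x\in G_{n+1}$ does \emph{not} imply $x\in\gamma^p_2(G_n)$), but you recognize this and the replacement argument via $(G_{n+1})^{p^m}\leq G_{n+m+1}$ is the right one.
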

%\begin{proof}
%Suppose the hypotheses of (1) hold, and let  $x,y\in G_n$. It is enough to show that
%$$(xy)^{p} \equiv x^{p}\, y^{p} \mod G_{n+2}.$$
%This follows immediately
%by applying the previous corollary with $m=1$ and with $G_n$ in place of $G$, and using Lemma~\ref{lem:iterated gamma}. Part~(2) follows similarly from Corollary~\ref{cor:HP, 2}.
%\end{proof}

This follows from Corollary~\ref{cor:HP, 2} (applied to $G_n$ in place of $G$)
and Lemma~\ref{lem:iterated gamma}.
For $\mathbf G=\gamma^p(G)$, the previous corollary shows that
unless $p=2$, $n=1$, and $\gamma_2(G)\not\leq\gamma^2_3(G)$, the map in \eqref{eq:phi_n} induces group morphisms
$$\Phi_{n,m}\colon L^p_n(G)\to L^p_{n+m}(G) \qquad (m\geq 1).$$
In general, the morphisms $\Phi_{n,m}$ are not injective. However, we do have:

\begin{lemma}\label{lem:phin injective}
Suppose $p>2$, $n>1$, or $\gamma^2_3(G)\geq\gamma_2(G)$. Also assume that for $i=1,\dots,n+1$, the group $G/\gamma_{i}(G)$ is $p$-torsion free \textup{(}or equivalently, the sections $\gamma_i(G)/\gamma_{i+1}(G)$ are $p$-torsion free for $i=1,\dots,n$\textup{)}.
Then the group morphisms~$\Phi_{n,m}$~\textup{(}$m\geq 1$\textup{)} are injective.
\end{lemma}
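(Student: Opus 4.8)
The plan is to prove injectivity of $\Phi_{n,m}$ by reducing to the case $m=1$ and then using the associated graded Lie ring of $\gamma^p(G)$ together with the $p$-torsion-freeness hypothesis on the lower central quotients. First I would observe that it suffices to treat $m=1$, since $\Phi_{n,m}=\Phi_{n+m-1,1}\circ\cdots\circ\Phi_{n+1,1}\circ\Phi_{n,1}$ (which follows from the fact that $(x^{p^{m-1}})^p=x^{p^m}$ and the compatibility of the maps in \eqref{eq:phi_n} across consecutive indices); a composite of injective maps is injective, and the hypothesis ``$G/\gamma_i(G)$ is $p$-torsion free for $i=1,\dots,n+1$'' is exactly what is needed to invoke the $m=1$ case at each index $n,n+1,\dots,n+m-1$ (note the parenthetical reformulation in terms of $p$-torsion-freeness of the sections already tells us the two forms of the hypothesis are equivalent, via the long exact sequences $0\to\gamma_i/\gamma_{i+1}\to G/\gamma_{i+1}\to G/\gamma_i\to 0$ and induction). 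The side condition ``$p>2$, $n>1$, or $\gamma^2_3(G)\geq\gamma_2(G)$'' is precisely what guarantees that all the relevant $\Phi_{\bullet,1}$ are defined as group morphisms on the layers $L^p_\bullet(G)$, by Corollary~\ref{cor:phi_n} and the discussion following it — so I would record that first.

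Next, for the core case $m=1$, I would use Proposition~\ref{prop:lower p}(2): $\gamma^p_n(G)=\prod_{i=1}^n\gamma_i(G)^{p^{n-i}}$. An element of $\gamma^p_n(G)$ is thus (modulo $\gamma^p_{n+1}(G)$) a product of terms $c^{p^{n-i}}$ with $c\in\gamma_i(G)$, and the layer $L^p_n(G)$ is spanned by the classes of such terms. The map $\Phi_{n,1}$ sends the class of $c^{p^{n-i}}$ to the class of $c^{p^{n-i+1}}=c^{p^{(n+1)-i}}$ in $L^p_{n+1}(G)$. So at the level of the filtration pieces, $\Phi_{n,1}$ is induced by ``multiply the exponent by $p$,'' which matches each $\gamma_i(G)^{p^{n-i}}/(\cdots)$ to $\gamma_i(G)^{p^{n-i+1}}/(\cdots)$; here the hypothesis that $\gamma_i(G)/\gamma_{i+1}(G)$ is $p$-torsion free for $i\le n$ ensures that multiplication by $p$ is injective on each of these sections, and a filtration/bookkeeping argument (tracking the contribution of each $\gamma_i$ to $L^p_n(G)$ versus to $L^p_{n+1}(G)$, using Corollary~\ref{cor:HP, 2} to control the error terms in the $p$-th power map modulo $\gamma^p_{n+2}(G)$) then promotes this to injectivity of $\Phi_{n,1}$ itself.

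The main obstacle I anticipate is the combinatorial bookkeeping in the last step: the layers $L^p_n(G)$ are not literally direct sums of the sections $\gamma_i(G)^{p^{n-i}}/\gamma_{i+1}(G)^{p^{n-i}}$ — there is a canonical filtration on $L^p_n(G)$ by the images of $\gamma_j(G)^{p^{n-j}}$ for $j\ge i$, and one has to argue that $\Phi_{n,1}$ respects this filtration and is injective on each associated graded piece, and that the error terms $\gamma_{p^d}(G)^{p^{m-d}}$ appearing in Corollary~\ref{cor:HP, 2} do not interfere (for $m=1$ they land in $\gamma^p_{n+2}(G)$, which is why the computation closes). A cleaner alternative, which I would try if the direct approach gets unwieldy, is to pass to the associated graded Lie ring $L^p(G)=\bigoplus_n L^p_n(G)$ over $\F_p$: the maps $\Phi_{n,1}$ assemble into a degree-$1$ endomorphism, and injectivity in degrees $\ge n$ can be read off from the structure of $L^p(G)$ as computed from the mixed Lie ring (Jennings–Lazard), where the $p$-torsion-freeness of the $\gamma_i/\gamma_{i+1}$ translates into the $p$-power operation being injective on the relevant graded components by a weight argument. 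Either way the substantive input is the same: $p$-torsion-freeness of the lower central quotients up to level $n$, fed through Proposition~\ref{prop:lower p}(2) and Corollary~\ref{cor:HP, 2}.
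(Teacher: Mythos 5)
Your proposal takes a different route from the paper's, and the first step has a gap. The factorization $\Phi_{n,m}=\Phi_{n+m-1,1}\circ\cdots\circ\Phi_{n,1}$ is correct as an identity of group morphisms, but you then assert that the lemma's hypothesis is ``exactly what is needed to invoke the $m=1$ case at each index $n,\dots,n+m-1$.'' It is not. To invoke your $m=1$ case at index $n+k$ you need $G/\gamma_i(G)$ to be $p$-torsion free for $i=1,\dots,n+k+1$ (equivalently, $\gamma_i(G)/\gamma_{i+1}(G)$ $p$-torsion free for $i\leq n+k$), whereas the lemma only grants this for $i\leq n$. For $k\geq 1$ there is a genuine shortfall: the morphism $\Phi_{n+k,1}\colon L^p_{n+k}(G)\to L^p_{n+k+1}(G)$ may fail to be injective under the stated hypothesis; it is only its restriction to the subgroup $\Phi_{n,k}(L^p_n(G))$ that need be injective for the lemma, and that restriction is exactly what your reduction throws away. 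Proving global injectivity of each $\Phi_{n+k,1}$ is a strictly stronger claim than the lemma makes, and the hypothesis cannot support it.

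The paper avoids this entirely by invoking the structure theorem (Huppert--Blackburn, \cite[Ch.~VIII, Thms.~1.8, 1.9]{HB82}) stated immediately above the lemma: under the hypothesis, the map $\Gamma_1\times\cdots\times\Gamma_n\to L^p_n(G)$, where $\Gamma_i=\gamma_i(G)/\gamma_i(G)^p\gamma_{i+1}(G)$, is a bijection, and $\Phi_{n,m}$ fits into a commutative square whose left side is the manifestly injective inclusion $\Gamma_1\times\cdots\times\Gamma_n\hookrightarrow\Gamma_1\times\cdots\times\Gamma_{n+m}$ and whose top is that bijection. The point is that the image of $\Phi_{n,m}$ is confined to the part of $L^p_{n+m}(G)$ controlled by the weights $i\leq n$, where the hypothesis does apply, even though the full map $\Gamma_1\times\cdots\times\Gamma_{n+m}\to L^p_{n+m}(G)$ is only a surjection in general. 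Your $m=1$ argument via Proposition~\ref{prop:lower p}~(2) and a filtration on $L^p_n(G)$ is aiming to re-derive precisely the injectivity content of that cited theorem, and since you flag the bookkeeping as ``not carried out,'' even the base case of your reduction is not established. The cleanest repair is to follow the paper: handle $\Phi_{n,m}$ directly via the structure theorem rather than by composing through intermediate layers.
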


This lemma is a consequence of the following fact, elucidating the structure of the elementary abelian $p$-groups $L^p_n(G)$. We set $$\Gamma_n := \gamma_n(G)/\gamma_n(G)^p\gamma_{n+1}(G)$$
for $n\geq 1$.

\begin{theorem}
For every $n\geq 1$ the map
\begin{align*}
\Gamma_1\times\cdots\times\Gamma_n & \to L^p_n(G)=\gamma^p_n(G)/\gamma^p_{n+1}(G) \\
(\ol{g_1},\dots,\ol{g_n}) &\mapsto g_1^{p^{n-1}}g_2^{p^{n-2}}\cdots g_n\cdot \gamma^p_{n+1}(G)
\end{align*}
where $g_i\in\gamma_i(G)$ and $\ol{g_i}=g_i\gamma_i(G)^p\gamma_{i+1}(G)$, is a surjection, and a group morphism unless $p$ is odd and $n=1$.
If for $i=1,\dots,n+1$, the group $G/\gamma_{i}(G)$ is $p$-torsion free, then
this map is also injective.
\end{theorem}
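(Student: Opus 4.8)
The plan is to factor the displayed map as a product of simpler ``coordinate'' maps, to read off well-definedness, surjectivity and the morphism property from Proposition~\ref{prop:lower p} and Corollary~\ref{cor:HP, 2}, and then to obtain injectivity by a descent along the lower central series, the one genuinely new ingredient being an intersection lemma for torsion-free nilpotent groups.

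For $1\le i\le n$ consider the composite $\psi_i\colon\Gamma_i\to L^p_n(G)$ of the reduction $\Gamma_i=\gamma_i(G)/\gamma_i(G)^p\gamma_{i+1}(G)\to\gamma^p_i(G)/\gamma^p_{i+1}(G)=L^p_i(G)$ induced by $\gamma_i(G)\hookrightarrow\gamma^p_i(G)$ — this is well defined because $\gamma_i(G)^p\gamma_{i+1}(G)\subseteq\gamma^p_{i+1}(G)$ by Proposition~\ref{prop:lower p}(2), being two of the factors in the product expressing $\gamma^p_{i+1}(G)$ — with the $p^{n-i}$-th power map $\Phi_{i,n-i}\colon L^p_i(G)\to L^p_n(G)$ from the discussion following Corollary~\ref{cor:phi_n} (for $i=n$ take $\psi_n$ to be just the reduction). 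Then $\psi_i\big(g\,\gamma_i(G)^p\gamma_{i+1}(G)\big)=g^{p^{n-i}}\gamma^p_{n+1}(G)$, so the map of the theorem is $(\ol{g_1},\dots,\ol{g_n})\mapsto\psi_1(\ol{g_1})\cdots\psi_n(\ol{g_n})$, and the fact that Corollary~\ref{cor:HP, 2} makes this independent of the chosen representatives $g_i$ is precisely well-definedness. Each reduction is a group morphism, and $\Phi_{i,n-i}$ is one for $i\ge2$ always and for $i=1$ except in the excluded case ($p=2$ with $\gamma_2(G)\not\leq\gamma^2_3(G)$); since $L^p_n(G)$ is abelian, the product $(\ol{g_i})\mapsto\prod_i\psi_i(\ol{g_i})$ is then a morphism on $\Gamma_1\times\cdots\times\Gamma_n$, while for $n=1$ it is simply the identity of $\Gamma_1=L^p_1(G)$.

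Surjectivity is then immediate: by Proposition~\ref{prop:lower p}(2), $\gamma^p_n(G)=\gamma_1(G)^{p^{n-1}}\gamma_2(G)^{p^{n-2}}\cdots\gamma_n(G)$, so every element of $\gamma^p_n(G)$ is a product of $p^{n-i}$-th powers of elements of the $\gamma_i(G)$; reordering and combining these modulo $\gamma^p_{n+1}(G)$ by Corollary~\ref{cor:HP, 2} brings it into the standard form $g_1^{p^{n-1}}\cdots g_n$, the commutator corrections being absorbed (a commutator of elements of $\gamma_i(G)$ and $\gamma_k(G)$ lies in $\gamma_{i+k}(G)$, and $\gamma_l(G)^{p^{n-l}}$ is one of the factors of $\gamma^p_n(G)$). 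Equivalently, once the map is a morphism its image is a subgroup containing each $\operatorname{im}\psi_i=\gamma_i(G)^{p^{n-i}}\gamma^p_{n+1}(G)/\gamma^p_{n+1}(G)$, and these generate $L^p_n(G)$ by Proposition~\ref{prop:lower p}(2).

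For injectivity, assume the sections $\gamma_i(G)/\gamma_{i+1}(G)$, $1\le i\le n$, are $p$-torsion free, and suppose $g_1^{p^{n-1}}g_2^{p^{n-2}}\cdots g_n\in\gamma^p_{n+1}(G)$ with $g_i\in\gamma_i(G)$. I will show by induction on $j=1,\dots,n$ that $\ol{g_j}=1$ in $\Gamma_j$. Given $\ol{g_1}=\cdots=\ol{g_{j-1}}=1$, for each $i<j$ we have $g_i\in\gamma_i(G)^p\gamma_{i+1}(G)\subseteq\gamma^p_{i+1}(G)$, and raising to the $p$-th power $n-i$ times and using $(\gamma^p_k(G))^p\subseteq\gamma^p_{k+1}(G)$ at each step gives $g_i^{p^{n-i}}\in\gamma^p_{n+1}(G)$; since $\gamma^p_{n+1}(G)$ is normal, the relation reduces to $g_j^{p^{n-j}}g_{j+1}^{p^{n-j-1}}\cdots g_n\in\gamma^p_{n+1}(G)$. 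Now pass to $\ol G:=G/\gamma_{j+1}(G)$, which is nilpotent of class $\le j$ and whose lower central sections in degrees $\le j$ coincide with those of $G$, hence are $p$-torsion free: in $\ol G$ the factors $g_{j+1},\dots,g_n$ become trivial, so, writing $A:=\gamma_j(\ol G)$ (a central, $p$-torsion-free abelian subgroup, which I write additively) and $\ol{g_j}\in A$ for the image of $g_j$, we obtain $p^{n-j}\ol{g_j}\in\gamma^p_{n+1}(\ol G)\cap A$. Granting the lemma below (with $d=j$ and $m=n+1$), this intersection equals $A^{p^{n+1-j}}=p^{n+1-j}A$, whence $p^{n-j}\ol{g_j}\in p^{n+1-j}A$, and $p$-torsion-freeness of $A$ forces $\ol{g_j}\in pA$, i.e.\ $g_j\in\gamma_j(G)^p\gamma_{j+1}(G)$, i.e.\ $\ol{g_j}=1$ in $\Gamma_j$.

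The main obstacle is the lemma: \emph{if $H$ is a nilpotent group all of whose lower central sections are $p$-torsion free, then $\gamma^p_m(H)\cap\gamma_d(H)=\gamma_d(H)^{p^{\max(0,\,m-d)}}$ for all $d,m\ge1$.} I would prove it by induction on the nilpotency class of $H$; the class-$1$ case is the identity $\gamma^p_m(A)=p^{m-1}A$ for torsion-free abelian $A$, and the case $m\le d$ is trivial since then $\gamma_d(H)\le\gamma_m(H)\le\gamma^p_m(H)$. For the inductive step one passes to $H/Z$, where $Z$ is the last nonzero term of the lower central series — a central, $p$-torsion-free subgroup — and, using Proposition~\ref{prop:lower p}(2), writes a given element of $\gamma^p_m(H)\cap\gamma_d(H)$ as a product of $p^{m-i}$-th powers drawn from the $\gamma_i(H)$; one must then show, using only torsion-freeness of the sections, that the part of this product landing in the centre $Z$ has exactly the required $p$-divisibility. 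Getting a clean grip on this ``defect'' term — presumably by a further descent on the class together with the commutator-collecting estimates behind Corollary~\ref{cor:HP, 2} — is the delicate point. (When $G_{\operatorname{ab}}$ is finitely generated all the groups in sight are finite $\F_p$-vector spaces, but a dimension count merely re-encodes the same identity, so this lemma seems unavoidable.)
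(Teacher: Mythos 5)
The paper does not prove this theorem: it quotes it from \cite[Chapter~VIII, Theorems~1.8 and 1.9]{HB82}, with only the remark that the torsion-freeness hypothesis there can be weakened to $p$-torsion-freeness. So you are volunteering a proof the paper leaves to the literature, not competing with one given in the text. Your decomposition into coordinate maps $\psi_i=\Phi_{i,n-i}\circ(\text{reduction})$, the well-definedness and the morphism analysis via Corollary~\ref{cor:HP, 2} and Lemma~\ref{lem:iterated gamma}, and the surjectivity via Proposition~\ref{prop:lower p}(2) plus commutator collecting are all sound in outline, as is the reduction of injectivity to the computation of $\gamma^p_{n+1}(\overline G)\cap\gamma_j(\overline G)$ by descent on $j$.

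The genuine gap is the ``intersection lemma,'' and it is worse than you indicate: the lemma as you state it is \emph{false}. Take $p$ odd and $H$ the free nilpotent group of class $2$ on two generators $a,b$, with $d=1$, $m=2$. Your lemma would give $\gamma^p_2(H)\cap\gamma_1(H)=\gamma^p_2(H)=H^p$, i.e.\ $\gamma_2(H)\subseteq H^p$. But in this $H$ one computes $(a^ib^j)^p=a^{pi}b^{pj}[a,b]^{-ij\binom{p}{2}}$ with $p\mid\binom{p}{2}$, and the cross terms in products of $p$-th powers contribute commutator exponents divisible by $p^2$; hence $H^p\cap\gamma_2(H)=\gamma_2(H)^p\subsetneq\gamma_2(H)$. (Equivalently, $H/H^p$ is the nonabelian Heisenberg group of order $p^3$, not $(\Z/p\Z)^2$.) So $\gamma^p_2(H)=H^p\gamma_2(H)\supsetneq H^p$, and the lemma fails already for $d=1$, $m=2$. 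In your application you only ever invoke the lemma with $d=j\ge\operatorname{class}(G/\gamma_{j+1}(G))$, so you could restrict to the case where $\gamma_d(H)$ is the \emph{last} nonzero term of the lower central series, and that restricted statement is plausible. But you have not proved it — you say yourself that controlling the defect in $Z$ is the delicate step — and your sketched induction on the class, which appeals to the lemma for $H/Z$ at the same $d$, inevitably passes through the $d<\operatorname{class}$ cases that the counterexample above kills. The argument therefore has a real hole: the piece of the lemma you actually need is both unproved and of essentially the same difficulty as the injectivity it is meant to deliver. To close the gap you should either prove the corrected lemma directly (the Mal'cev completion of $H$, in which $p$-th roots exist, and the isolatedness of $\gamma_d(H)$ are the natural tools), or simply cite \cite[Chapter~VIII, Theorem~1.9]{HB82} as the paper does.
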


For a proof of this theorem see \cite[Chapter~VIII, Theorems~1.8 and 1.9]{HB82}. (Theorem~1.9 of loc.~cit.~has the more restrictive assumption that $G/\gamma_i(G)$ is torsion free for $i=1,\dots,n+1$, but inspection of the proof shows that only $p$-torsion needs to be excluded.) Note that this theorem in particular implies that if $G_{\operatorname{ab}}$ is finitely generated, then for each central $p$-filtration $\mathbf G$ of $G$, all layers $L_n(\mathbf G)$ are finite.

\medskip

In the situation of Lemma~\ref{lem:phin injective}, we now have a commutative diagram
$$\xymatrix{
\Gamma_1\times\cdots\times\Gamma_n \ar[d] \ar[r] & L^p_n(G) \ar[d]_{\Phi_{n,m}} \\
\Gamma_1\times\cdots\times\Gamma_n \times\Gamma_{n+1}\times\cdots\times\Gamma_m \ar[r] & L^p_{n+m}(G)
}$$
where the top horizontal map is a bijection, and the left vertical map is given by
$$(\ol{g_1},\dots,\ol{g_n})\mapsto (\ol{g_1},\dots,\ol{g_n},1,\dots,1)$$
and hence injective.
This yields Lemma~\ref{lem:phin injective}.

\begin{example}
Suppose that $p$ is odd or $\gamma_2(G)\leq\gamma^2_3(G)$. Then for each $m\geq 1$ we have a group morphism
$$\Phi_m\colon L^p_1(G)=H_1(G;\F_p)\to L^p_{m+1}(G)$$
which is given by
$$x\, \gamma^p_2(G)\mapsto x^{p^{m}}\, \gamma^p_{m+2}(G).$$
If the abelian group $H_1(G;\Z)=G_{\operatorname{ab}}=G/\gamma_2(G)$ does not have $p$-torsion, then for each $m\geq 1$, the morphism $\Phi_m$ is injective.
\end{example}

\subsection{$p$-potent filtrations}\label{sec:p-potent}
Let now $\mathbf G=\{G_n\}$ be an arbitrary central $p$-filtration of $G$.
In Corollary~\ref{cor:phi_n} we saw that if $p$ is odd or  $[G,G]\leq G_3$, then for each $n\geq 1$, exponentiation to the power $p^{n}$ on $G$ induces a group morphism $L_1(\mathbf G)\to L_{n+1}(\mathbf G)$, and above we gave a criterion, in the case $\mathbf G=\gamma^p(G)$, for these morphisms to be injective.
The following definition captures this phenomenon:

\begin{definition}
We say that $\mathbf G$ is {\it $p$-potent}\/ if for every $n\geq 1$, the map
$$x\mapsto x^{p^{n}}G_{n+2}\colon G\to G_{n+1}/G_{n+2}$$ is
a group morphism whose kernel \emph{equals} $G_2$.
In this case, we have an induced injective group morphism
$L_1(\mathbf G) \to L_{n+1}(\mathbf G)$, which we denote
by $\Phi_n$.
\end{definition}

It will be useful to have terminology for a few variants of this definition available:
We say that $\mathbf G$ is {\it strongly $p$-potent}\/ if for every $n\geq 1$, the map
$$x\mapsto x^p G_{n+2}\colon G_n\to G_{n+1}/G_{n+2}$$ is
a morphism with kernel $G_{n+1}$. In this case we have induced injective morphisms $\phi_n\colon L_n(\mathbf G)\to L_{n+1}(\mathbf G)$.
We call $\mathbf G$ \emph{uniformly $p$-potent} if $\mathbf G$ is strongly $p$-potent and each $\phi_n$ ($n\geq 1$) is an isomorphism. We simply say that $G$ is {\it $p$-potent} if $\gamma^p(G)$ is $p$-potent.  (Similarly for strongly $p$-potent respectively uniformly $p$-potent.) Clearly
uniformly $p$-potent $\Rightarrow$ strongly $p$-potent $\Rightarrow$ $p$-potent.

\index{filtration!$p$-potent}
\index{filtration!strongly $p$-potent}
\index{filtration!uniformly $p$-potent}

\begin{examples}\mbox{}

\begin{enumerate}
\item  If $G$ is abelian $p$-torsion free, then $G$ is uniformly $p$-potent.
\item  Every  uniformly powerful pro-$p$ group is (almost by definition) uniformly $p$-potent; see \cite[Chapter~4]{DdSMS}. (This explains our choice of terminology.) Examples may be obtained by considering the congruence subgroups of $\GL(d,\Z_p)$. For simplicity assume $p$ is odd, and for each~$n\geq 1$ define
$$\GL^n(d,\Z_p) := \ker\big(\GL(d,\Z_p)\to\GL(d,\Z_p/p^n\Z_p)\big).$$
Then the groups $\GL^n(d,\Z_p)$ form a uniformly $p$-potent separating filtration of the pro-$p$ group $G=\GL^1(d,\Z_p)$, and in fact $\gamma^p_n(G) = \GL^n(d,\Z_p)$ for every $n\geq 1$; see \cite[Theorem~5.2]{DdSMS}, and also Section~\ref{sec:csg} below.
\end{enumerate}
\end{examples}

We say that a central $p$-filtration $\mathbf G$ of $G$ has finite rank if there exists an integer~$r$ such that $\dim_{\F_p} L_n(\mathbf G) \leq r$ for all $n\geq 1$.

\begin{remarks}\mbox{}

\begin{enumerate}
%\item If $p$ is odd and $G_{\operatorname{ab}}$ is $p$-torsion free, then $\gamma^p(G)$ is $p$-potent.
%\item If $p$ is odd and $G/\gamma_n(G)$ is torsion free, for every $n\geq 1$, then $\gamma^p(G)$ is $p$-potent.
\item If $G$ admits a separating strongly $p$-potent filtration, then $G$ is torsion~free.
\item If $\mathbf G=\{G_n\}$ is a strongly $p$-potent filtration of $G$ having finite rank, then for some $m\geq 1$, the filtration $\{G_{m+n-1}\}_{n\geq 1}$ of $G_m$ is uniformly $p$-potent.
\end{enumerate}
\end{remarks}

Our chief interest in uniformly $p$-potent filtrations is in the case of example~(1) above. For sake of completeness, we note:

\begin{lemma}\label{lem:linearity}
Suppose $G$ is finitely generated. If $G$ admits a separating strongly $p$-potent filtration of finite rank, then $G$ is necessarily linear \textup{(}over $\Z_p$\textup{)}. Conversely, if $G$ is linear, then for all but finitely many $p$, some finite index normal subgroup of $G$ admits a separating uniformly $p$-potent filtration.
\end{lemma}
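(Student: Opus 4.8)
\textbf{Plan for the proof of Lemma~\ref{lem:linearity}.}

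For the first implication, suppose $G$ is finitely generated and admits a separating strongly $p$-potent filtration $\mathbf G=\{G_n\}$ of finite rank $r$. The plan is to construct a faithful representation into $\GL(N,\Z_p)$ by a Lazard-type argument. First I would pass, via Remark~(2) above, to a finite index subgroup $H=G_m$ on which the induced filtration is uniformly $p$-potent; since linearity over $\Z_p$ passes up and down finite-index subgroups (an $N$-dimensional faithful representation of $H$ induces an $[G:H]\cdot N$-dimensional faithful representation of $G$ by induction, and restriction goes the other way), it suffices to treat $H$. Now each layer $L_n(\mathbf H)$ is an $\F_p$-space of dimension $\leq r$, and the maps $\phi_n\colon L_n(\mathbf H)\to L_{n+1}(\mathbf H)$ are isomorphisms; choosing a basis of $L_1(\mathbf H)$ and lifting, one sees $H$ is finitely generated and the completion $\widehat H=\varprojlim H/H_n$ is a uniformly powerful pro-$p$ group of dimension $r$ (this is essentially the content of \cite[Chapter~4]{DdSMS}). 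Since $\mathbf H$ is separating, $H$ embeds in $\widehat H$, and a uniformly powerful pro-$p$ group of dimension $r$ embeds into $\GL(r+1,\Z_p)$ (or into $\GL(r,\Z_p)$ via the Lie correspondence for powerful pro-$p$ groups), giving the desired linear embedding of $H$ and hence of $G$.

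For the converse, suppose $G\leq \GL(n,K)$ with $K$ a field of characteristic zero; since $G$ is finitely generated, $K$ may be taken to be a finitely generated extension of $\Q$, and after clearing denominators the entries of a finite generating set and their inverses lie in a finitely generated subring $A$ of $K$, so $G\leq\GL(n,A)$. Now I would invoke the localization machinery announced in the introduction (the "localization theorem for finitely generated integral domains" of Chapter~\ref{ch:embedding theorems}): for all but finitely many primes $p$, there is a ring homomorphism $A\to\Z_p$, equivalently a maximal ideal of $A$ with residue field a finite field of characteristic $p$ together with a compatible lift, realizing $A$ as a subring of $\Z_p$ after localizing. This produces, for all but finitely many $p$, an embedding $G\hookrightarrow\GL(n,\Z_p)$. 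Composing with reduction mod $p$ and taking the kernel, $G_1:=G\cap\GL^1(n,\Z_p)$ is a finite index normal subgroup of $G$ (finite index because $\GL(n,\F_p)$ is finite), and by the second example above the congruence filtration $\{G\cap\GL^k(n,\Z_p)\}_{k\geq 1}$ is a uniformly $p$-potent (in particular strongly $p$-potent) separating filtration of $G_1$ of finite rank $\leq n^2$, as required. (For $p=2$ one replaces $\GL^1$ by $\GL^2$ to retain uniform powerfulness; since we are allowed to exclude finitely many primes this is harmless.)

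The main obstacle is the forward direction, and specifically making precise the passage from a uniformly $p$-potent abstract filtration to an honest faithful linear representation over $\Z_p$: one must check that the pro-$p$ completion $\widehat H$ is genuinely uniformly powerful of dimension exactly $r$ (not merely that its layers have bounded dimension), that $H\to\widehat H$ is injective (this is exactly separation), and then quote the embedding of uniformly powerful pro-$p$ groups into $\GL(r+1,\Z_p)$. All of these are in \cite{DdSMS}, so the "proof" is really a matter of citing the right statements; the converse direction is the substantive new input and rests on the localization theorem proved later in the paper. Since the lemma is flagged as being included only "for sake of completeness" and is not used elsewhere, a terse proof citing \cite{DdSMS} for the first part and Chapter~\ref{ch:embedding theorems} for the second is appropriate.
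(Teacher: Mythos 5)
Your overall plan tracks the paper's actual proof (both directions cite the Lubotzky/\cite{DdSMS} Interlude~B material, and the converse uses the congruence filtration on $G\cap\GL^k(d,\Z_p)$), but the converse direction has a genuine gap: you assert that $\{G\cap\GL^k(n,\Z_p)\}_{k\geq 1}$ is a \emph{uniformly} $p$-potent filtration of $G_1$, quoting the example that $\{\GL^k(d,\Z_p)\}$ is a uniformly $p$-potent filtration of $\GL^1(d,\Z_p)$. This inference is false: uniform $p$-potence is not inherited by subgroups. Strong $p$-potence (injectivity of the induced maps $\phi_k\colon L_k\to L_{k+1}$) does descend, but surjectivity does not, because the dimensions of the layers $L_k(\mathbf G\cap G_1)$ may strictly increase with $k$. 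For instance, for $G\cong\Z^2$ embedded in $\GL^1(3,\Z_p)$ via $(a,b)\mapsto I+apE_{12}+bp^2E_{13}$, one computes $\dim_{\F_p}L_1=1$ while $\dim_{\F_p}L_2=2$, so $\phi_1$ cannot be onto. What the congruence filtration does give, as the paper's own proof states, is a \emph{separating strongly $p$-potent filtration of $G_1$ of finite rank}; by Remark~(2) preceding the lemma, the dimensions $\dim L_k$ are non-decreasing and bounded by $n^2$, hence eventually constant, so for some $m\geq 1$ the shifted filtration of $G_m$ is uniformly $p$-potent. Since each $G_m=G\cap\GL^m(n,\Z_p)$ is finite-index normal in $G$, this yields the desired finite-index normal subgroup, and that final appeal to Remark~(2) is exactly the step your proof omits.

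Two minor points. First, in the forward direction the paper does not pass to a finite-index subgroup at all; it simply notes that a separating strongly $p$-potent filtration of finite rank is a $p$-congruence system in the sense of \cite[Definition~B.2]{DdSMS}, so $G$ itself is linear over $\Z_p$ by \cite[Proposition~B.3]{DdSMS}. Your detour through a uniformly powerful completion of a finite-index subgroup also works (and makes the Lazard mechanism more transparent), but it is not needed. Second, for the embedding $G\hookrightarrow\GL(n,\Z_p)$ in the converse, the paper cites \cite[Lemma~B.4]{DdSMS} directly; the localization theorem you invoke (which lives in Chapter~\ref{ch:proof of the main results}, not Chapter~\ref{ch:embedding theorems}) is a stronger statement used elsewhere in the paper, and while it would also yield the embedding (via a further finite extension of $\Z_p$ and restriction of scalars), the direct citation is cleaner for this lemma.
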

\begin{proof}
This is essentially Lubotzky's linearity criterion \cite{Lub88} (see also Interlude~B in \cite{DdSMS}): Suppose $\mathbf G$ is a separating strongly $p$-potent filtration of finite rank; then $\mathbf G$ is a $p$-congruence system of finite rank in the sense of \cite[Definition~B.2]{DdSMS}, hence $G$ is linear over $\Z_p$ by Proposition~B.3 of loc.~cit. Conversely, suppose $G$ is linear. Then for all but finitely many $p$, the group $G$ admits an embedding into $\GL(d,\Z_p)$ for some $d$ \cite[Lemma~B.4]{DdSMS}. Assuming now that $p$ is odd and $G\leq\GL(d,\Z_p)$, the filtration $\mathbf G=\{G_n\}$ of $G$ given by $G_n := G\cap\GL^n(d,\Z_p)$ is separating and, as filtration of $G_1$, strongly $p$-potent of finite rank. Now use the preceding Remark~(2).
\end{proof}

\begin{remark}
Every normal filtration $\mathbf G=\{G_n\}$ of $G$ gives rise to a topology on $G$ making $G$ into a topological group with a fundamental system of neighborhoods of the identity given by the groups $G_n$. We denote by ${\widehat G}_{\mathbf G}$ the
 associated completion of $G$, that is, the topological group
$${\widehat G}_{\mathbf G} = \lim_{\longleftarrow} G/G_n.$$
We have a natural continuous morphism $G\to\widehat{G}_{\mathbf G}$, which is injective precisely if $\mathbf G$ is separating. If all $G_n$ are of finite index in $G$, then ${\widehat G}_{\mathbf G}$ is a profinite group. In particular, if $G_{\operatorname{ab}}$ is finitely generated and $\mathbf G=\gamma^p(G)$, then
we obtain the pro-$p$ topology on $G$, and $\widehat{G}_{\mathbf G}$ is the pro-$p$ completion of $G$. The proof of  \cite[Proposition~B.3]{DdSMS} shows that
if $\mathbf G$ is a strongly $p$-potent filtration of $G$ having finite rank, then ${\widehat G}_{\mathbf G}$ is isomorphic (qua topological group) to a $p$-adic analytic group. (This was originally shown in \cite[Th\'eor\`eme~3.1.7]{Laz65}.)
\end{remark}

\index{group!$p$-adic analytic}
\index{filtration!completion}
\index{pro-$p$ completion}

If $G_{\operatorname{ab}}$ is $p$-torsion free, then $G$
 is $p$-potent for all odd primes $p$ (by Corollary~\ref{cor:phi_n}), but not in general for $p=2$. However, assuming that both $G_{\operatorname{ab}}=G/\gamma_2(G)$ and $\gamma_2(G)/\gamma_3(G)$ are $2$-torsion free ensures that $G$ always has a fully characteristic  subgroup (of $2$-power-index if $G$ is finitely generated) carrying a $2$-potent filtration. More precisely, and more generally:

\begin{lemma}\label{lem:2-potent}
Let $G^*:=\gamma^p_m(G)$, where $m\geq 2$, and for $n\geq 1$ define $G^*_n:=\gamma^p_{n+m-1}(G)$. Then
$\mathbf G^*=\{G^*_n\}$ is a strongly central $p$-filtration of $G^*$, and
for every $n\geq 1$, the map $x\mapsto x^p$ induces a group morphism
$L_n(\mathbf G^*)\to L_{n+1}(\mathbf G^*)$.
If $G/\gamma_n(G)$ is $p$-torsion free for $n=1,\dots,m+1$, then $\mathbf G^*$ is $p$-potent. If $G/\gamma_n(G)$ is $p$-torsion free for
every $n\geq 1$, then  $\mathbf G^*$ is strongly $p$-potent.
\end{lemma}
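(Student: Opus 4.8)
The plan is to deduce all four assertions directly from the machinery of the lower central $p$-series developed above, with the shift $m\ge 2$ playing exactly the role needed to neutralize the exceptional behaviour at $p=2$.

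First I would check that $\mathbf G^*=\{G^*_n\}$ is a strongly central $p$-filtration of $G^*$. Completeness is immediate since $G^*_1=\gamma^p_m(G)=G^*$. The $p$-filtration property holds by the definition of $\gamma^p$: $(G^*_n)^p=(\gamma^p_{n+m-1}(G))^p\le\gamma^p_{n+m}(G)=G^*_{n+1}$. Strong centrality follows from Proposition~\ref{prop:lower p}(1): $[G^*_n,G^*_k]=[\gamma^p_{n+m-1}(G),\gamma^p_{k+m-1}(G)]\le\gamma^p_{n+k+2m-2}(G)\le\gamma^p_{n+k+m-1}(G)=G^*_{n+k}$ since $m\ge 1$.

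Next, for the power maps I would observe the key inequality $[G^*_n,G^*_n]\le\gamma^p_{2(n+m-1)}(G)\le\gamma^p_{n+m+1}(G)=G^*_{n+2}$, valid because $n+m\ge 3$. Hence the hypothesis ``$p>2$ or $[G_n,G_n]\le G_{n+2}$'' of Corollary~\ref{cor:phi_n} holds for $\mathbf G^*$ at every $n$, regardless of the prime $p$. Applying that corollary, with its ``$G$'' taken to be $G^*=G^*_1$ and its ``$G_n$'' taken to be $G^*_n$, produces for all $n,k\ge 1$ a group morphism $x\mapsto x^{p^k}G^*_{n+k+1}\colon G^*_n\to G^*_{n+k}/G^*_{n+k+1}$ whose kernel contains $G^*_{n+1}$. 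The case $k=1$ is the second assertion of the lemma; the case $n=1$ shows that each $x\mapsto x^{p^k}G^*_{k+2}\colon G^*\to G^*_{k+1}/G^*_{k+2}$ is a morphism with kernel containing $G^*_2$, which is the ``morphism'' half of $p$-potency.

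Finally, to promote ``kernel contains'' to ``kernel equals'' I would invoke Lemma~\ref{lem:phin injective}, after identifying the maps induced on layers with the morphisms $\Phi$. Since $L_n(\mathbf G^*)=\gamma^p_{n+m-1}(G)/\gamma^p_{n+m}(G)=L^p_{n+m-1}(G)$, the map on layers induced by $x\mapsto x^{p^k}$ is exactly $\Phi_{n+m-1,k}$. For $p$-potency I need $\Phi_{m,k}$ injective for all $k\ge 1$: this is Lemma~\ref{lem:phin injective} with its ``$n$'' equal to $m$, whose hypotheses hold since $m\ge 2>1$ and, by assumption, $G/\gamma_i(G)$ is $p$-torsion free for $i=1,\dots,m+1$. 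For strong $p$-potency I need $\phi_n=\Phi_{n+m-1,1}$ injective for all $n\ge 1$; as $n+m-1$ runs through the integers $\ge m\ge 2$, this follows from the same lemma as soon as $G/\gamma_i(G)$ is $p$-torsion free for every $i\ge 1$. Together with the morphism statements from the previous step this finishes the argument. I expect the only delicate point to be keeping the index arithmetic between $\mathbf G^*$ and the lower central $p$-series straight, and noticing that the $p=2$ exceptions in Corollary~\ref{cor:phi_n} and Lemma~\ref{lem:phin injective} disappear because $m\ge 2$; there is no genuine obstacle beyond that.
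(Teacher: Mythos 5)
Your proof is correct and follows precisely the same route as the paper's own (three-sentence) argument: strong centrality from Proposition~\ref{prop:lower p}, the morphism on layers from Corollary~\ref{cor:phi_n}, and injectivity from Lemma~\ref{lem:phin injective}. You simply spell out the index-bookkeeping that the paper leaves implicit, including the observation that $m\geq 2$ neutralizes the $p=2$ exceptions in both cited results.
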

\begin{proof}
It is clear that $\mathbf G^*$ is a strongly central $p$-filtration of $G^*$ (cf.~Proposition~\ref{prop:lower p}).
By Corollary~\ref{cor:phi_n}, $x\mapsto x^p$ induces a group morphism
$L_n(\mathbf G^*)\to L_{n+1}(\mathbf G^*)$. The rest follows from
Lemma~\ref{lem:phin injective}.
\end{proof}

We are mainly interested in this lemma when $G$ is free, in which case $G/\gamma_n(G)$ actually is torsion free for every $n\geq 1$. We note that the assumption that $G/\gamma_n(G)$ is torsion free for every $n\geq 1$ is also satisfied in other interesting situations, e.g., for primitive $1$-relator groups \cite{Lab70}, such as surface groups,  and primitive link groups \cite[Theorems~2 and 1]{Lab90}.

\section{Graphs of groups}\label{sec:Graphs of Groups}

\noindent
In this section we summarize some basic definitions and facts concerning graphs of groups and their fundamental groups. We refer to \cite{Ba93, Co89, Se80} for missing details.

\subsection{Graphs}
A {\it graph}\/ $Y$ consists of a set $V=V(Y)$ of {\it vertices}\/ and a set $E=E(Y)$ of {\it edges}, and two maps $E\to V\times V\colon e\mapsto (o(e),t(e))$ and $E\to E\colon e\mapsto\ol{e}$, subject to the following condition: for each $e\in E$ we have $\ol{\ol{e}}=e$, $\ol{e}\neq e$, and $o(e)=t(\ol{e})$.
We call $o(e)=t(\ol{e})$ and  $t(e)=o(\ol{e})$ the {\it origin}\/ respectively the {\it terminus}\/ of $e$. The edge $\ol{e}$ is called the {\it inverse edge}\/ of $e$. We call the set $\{e,\ol{e}\}$ a \emph{topological edge} of $Y$.

\index{graph}
\index{topological edge}

\medskip

Given a graph $Y$ as above, a {\it subgraph of $Y$}\/ is a graph $Y'$ whose vertex and edge sets $V(Y')$ and $E(Y')$ are subsets of $V$ respectively $E$ such that $o(e),t(e)\in V(Y')$ and $\ol{e}\in E(Y')$ for all $e\in E(Y')$, and such that the maps $o$, $t$ and $\ol{\,\cdot\,}$ on $Y'$ are the restrictions of the corresponding maps from $E$ to $E(Y')$. %Given a subset $V'$ of $V$, the subgraph of $Y$ with vertex set $V'$ and edge set $$E':=\{e\in E:o(e),t(e)\in V'\}$$ is called the subgraph of $Y$ {\it induced by $V'$.}
If $v_0$ is a vertex of~$Y$ we denote by $Y\setminus\{v_0\}$ the subgraph of $Y$ with vertex set $V\setminus\{v_0\}$ and edge set
$\{e\in E:o(e),t(e)\neq v_0\}$.
Similarly, if $e_0\in E$ we denote by $Y\setminus\{e_0\}$ the subgraph of $Y$ with vertex set $V$ and edge set $E\setminus\{e_0,\ol{e_0}\}$.

\index{subgraph}

\medskip

Given graphs $Y$, $Y'$, a morphism $\phi\colon Y\to Y'$ consists of two maps $V(Y)\to V(V')$ and $E(Y)\to E(Y')$, both also denoted by $\phi$, with the property that $$\phi(\ol{e})=\ol{\phi(e)}, \quad \phi(o(e))=o(\phi(e)),\quad \phi(t(e))=t(\phi(e))\qquad\text{ for all $e\in E(Y)$.}$$
We say that a morphism $Y\to Y'$ is an embedding if the corresponding maps $V(Y)\to V(Y')$ and  $E(Y)\to E(Y')$ are injective. If $Y$ is a subgraph of $Y'$, then the natural inclusions $V(Y)\to V(Y')$ and $E(Y)\to E(Y')$ form an embedding $Y\to Y'$.

\index{morphism!of graphs}

\medskip

Let $Y$ be a graph, and let $E$ denote a subset of the edge set $E(Y)$ of  $Y$ which is closed under taking inverses.
An {\it orientation}\/ of $E$ is a subset $E_+$ of $E$ such that $E$ is the disjoint union of $E_+$ and $\ol{E_+}$. By an orientation of $Y$ we mean an orientation of $E(Y)$.
An {\it oriented graph}\/ is a graph together with one of its orientations. Given two sets $V$ and $E_+$ and a map $E_+\to V\times V\colon e\mapsto (o(e),t(e))$ there exists (up to isomorphism) a unique graph $Y$ with vertex set $V$ such that~$E_+$ is an orientation of $Y$ and such that the origin and terminus maps on $E_+$ are given by $o$ and $t$ respectively. (The set of edges of $Y$ is the disjoint union $E(Y)=E_+\cup \ol{E_+}$ where $\ol{E_+}$ is a copy of $E_+$.)
Note that given graphs $Y$ and $Y'$ and an orientation $E_+$ of $Y$, in order to specify a morphism $Y\to Y'$ it suffices to give maps
$V(Y)\to V(Y')$ and $E_+\to E(Y')$, both here denoted by $\phi$, such that $\phi(o(e))=o(\phi(e))$ and $\phi(t(e))=t(\phi(e))$ for all $e\in E_+$.

\index{graph!orientation}
\index{graph!oriented}

\medskip

Let $P_n$ denote the graph with vertex set $V(P_n)=\{0,\dots,n\}$ and orientation given by $E(P_n)_+=\{(0,1),(1,2),\dots,(n-1,n)\}$  with $o(i,i+1)=i$, $t(i,i+1)=i+1$. % and for $n>0$, let $C_n$ denote the graph with the vertex set $V(C_n)=\Z/n\Z$ and orientation given by  $$E(C_n)_+=\{(i,i+1):i\in\Z/n\Z\},$$ with $o(i,i+1)=i$, $t(i,i+1)=i+1$.
A {\it path}\/ of length $n$ in the graph $Y$ from a vertex $v\in V$ to a vertex $w\in V$ is a morphism $p\colon P_n\to Y$ such that $p(0)=v$ and $p(n)=w$.
Such a path $p$ is determined by the sequence $(e_1,\dots,e_n)$ of edges $e_i=p(i-1,i)$ with $o(e_1)=v$,
$t(e_i)=o(e_{i+1})$ for $i=1,\dots,n-1$, and $t(e_n)=w$; we also write $p=(e_1,\dots,e_n)$.
A {\it circuit}\/ of length $n$ in $Y$ (where $n>0$) is a path $c=(e_1,\dots,e_n)$ such that the vertices $t(e_1),\dots,t(e_n)$ are pairwise distinct, and $t(e_n)=o(e_1)$. A {\it tree}\/ is a graph which is connected (for all vertices $v$, $w$  there is a path from $v$ to $w$) and without circuits. If $Y$ is a tree, then for any pair $(v,w)$ of vertices of $Y$ there exists a unique path of shortest length from $v$ to $w$, called the {\it geodesic} from $v$ to $w$ in $Y$.

\index{tree}

\medskip

{\it In the rest of this paper, unless otherwise noted, we assume that all graphs are finite \textup{(}i.e., their vertex sets
 and edge sets are finite\textup{)} and connected.}

\subsection{The fundamental group of a graph of groups}\label{sec:pi1}
Let $Y$ be a graph. A \emph{graph $\mathcal G$ of groups based on $Y$} consists
of families $\{G_v\}_{v\in V(Y)}$ and $\{G_e\}_{e\in E(Y)}$ of groups satisfying $G_e=G_{\ol{e}}$ for every $e\in E(Y)$, together with a family $\{f_e\}_{e\in E(Y)}$ of group embeddings $f_e\colon G_e\to G_{t(e)}$ ($e\in E(Y)$).
(We also call $Y$ the \emph{underlying graph} of $\scrg$.)

\medskip

\index{graph of groups|textbf}
\index{graph!of groups|textbf}
\index{graph!underlying a graph of groups}

Let $\scrg$ be a graph of groups based on a graph $Y$. We recall the construction of the fundamental group $G=\pi_1(\scrg)$ of $\scrg$ from \cite[I.5.1]{Se80}. First, consider the group $\pi(\scrg)$ (the {\it path group} of $\scrg$) generated by the groups $G_v$ ($v\in V(Y)$) and the elements $e\in E(Y)$ subject to the relations
$$e f_e(g) \ol{e} = f_{\ol{e}}(g) \qquad (e\in E(Y), g\in G_e).$$
By a \emph{path} of length $n$ in $\scrg$ from a vertex $v\in V$ to a vertex $w\in V$ we mean a sequence
$$\gamma=(g_0, e_1, g_1, e_2, \dots, e_n, g_n),$$
where $(e_1,\dots,e_n)$ is a path of length $n$ in $Y$ from $v$ to $w$ and
where $g_i\in G_{v_i}$ for $i=0,\dots,n$, with $v_0:=v$ and $v_i:=t(e_i)$ for $i>0$.
If $g_i=1$ for every $i$ then $\gamma$ is called an \emph{edge path}.
We say that the path $\gamma$ {\it represents}\/ the element
$$g=g_0 e_1 g_1 e_2 \cdots e_n g_n$$
of $\pi(\scrg)$.
Such a path $\gamma$ in $\scrg$ is {\it reduced}\/ if it satisfies the following conditions:
\begin{enumerate}
\item if $n=0$ then $g_0\neq 1$;
\item if $n>0$ then $g_i\notin f_{e_i}(G_{e_i})$ for each index $i$ such that $e_{i+1}=\overline{e_i}$.
\end{enumerate}
It is easy to see that every $g\neq 1$ in $\pi(\scrg)$ is represented by a reduced path in $\scrg$ (namely, a path of minimal length representing $g$). Conversely,
if $g$ is represented by a reduced path in $\scrg$, then $g\neq 1$ \cite[I.5.2, Theorem~11]{Se80}. In particular, the natural morphisms $G_v\to\pi(\scrg)$ are injective \cite[I.5.2, Corollary~1]{Se80}.

\index{path group}
\index{graph of groups!path group}
\index{graph of groups!fundamental group}

\medskip

Let now $v_0$ be a fixed vertex of $Y$. The  fundamental group $\pi_1(\scrg,v_0)$ of $\scrg$ (with base point $v_0$) is defined to be the subgroup of $\pi(\scrg)$ consisting of the elements represented by paths in $\scrg$ from $v_0$ back to itself.
If $v_1\in V(Y)$ is another base point, and $g$ is an element of $\pi(\scrg)$ represented by a path from $v_0$ to $v_1$, then $\pi_1(\scrg,v_0)\to\pi_1(\scrg,v_1)\colon t\mapsto g^{-1}tg$ is an isomorphism; by abuse of notation we write~$\pi_1(\scrg)$ to denote $\pi_1(\scrg,v_0)$ if the particular choice of base point is irrelevant. Given a subgraph $Y'$ of $Y$, we denote by $\scrg|Y'$ the restriction of the graph of groups~$\scrg$ to $Y'$, defined in the obvious way; given a base point $v_0\in V(Y')$, the natural inclusion $\pi(\scrg|Y')\to\pi(\scrg)$ induces an injective morphism $\pi_1(\scrg|Y',v_0)\to\pi_1(\scrg,v_0)$
\cite[Corollary~1.14]{Ba93}.

\medskip

An alternative description of $\pi_1(\scrg,v_0)$, which is often useful, is as follows: Let~$T$ be a maximal subtree of $Y$. The fundamental group $\pi_1(\scrg,T)$ of $\scrg$ relative to $T$ is defined by contracting $T$ to a point:
\begin{equation}\label{eq:maximal subtree description of pi1}
\pi_1(\scrg,T)  := \pi(\scrg)/\text{(relations $e=1$ for all $e\in E(T)$)}.
\end{equation}
The natural projection $\pi(\scrg) \to \pi_1(\scrg,T)$ restricts to an isomorphism $\pi_1(\scrg,v_0)\to \pi_1(\scrg,T)$, cf.~\cite[I.5.1, Proposition~20]{Se80}.
Identifying $\pi_1(\scrg,v_0)$ with $\pi_1(\scrg,T)$ via this isomorphism, for each $v\in V$ we obtain a group morphism
$G_v\to \pi_1(\scrg,v_0)$. This group morphism is injective  and given by $t\mapsto g_v t g_v^{-1}$ where $g_v\in\pi(\scrg)$ is represented by the reduced edge path in $\scrg|T$ from $v_0$ to $v$.
We often tacitly identify the vertex groups $G_v$ with subgroups of $\pi_1(\scrg,v_0)$ in this way.
This depends on the choice of maximal subtree $T$ of $Y$, but for a different choice of $T$ we obtain a conjugate subgroup of $\pi_1(\scrg,v_0)$, so this dependence is generally harmless.

\medskip

For every $e\in E(Y)$ let
$$\varphi_e=f_{\ol{e}}\circ f_e^{-1}\colon A_e:=f_e(G_e) \to B_e:=f_{\ol{e}}(G_{\ol{e}}),$$
and let $E:=E(Y)\setminus E(T)$. Fix an orientation $E_+$ of $E$.
The identification of $G:=\pi_1(\scrg,v_0)$ with $\pi_1(\scrg,T)$ means that $G$ may be described as the iterated HNN extension
$$G = \big\langle \pi_1(\scrg|T),\ e\in E_+: \text{$e a e^{-1} = \varphi_e(a)$ for all $e\in E_+$, $a\in A_e$}\big\rangle$$
of $\pi_1(\scrg|T)$, where the latter group is itself an iterated amalgamated product
(of the vertex groups of $\scrg|T$, with identifications
$a=\varphi_e(a)$ for $e\in E(T)$, $a\in A_e$).

\index{amalgamated product}
\index{HNN extension}

\subsection{Graphs of based CW-complexes}\label{sec:graphs of cw-complexes}
Let $Y$ be a graph, $V=V(Y)$, $E=E(Y)$.
A \emph{graph of based CW-complexes} $\scrx$ with underlying graph $Y$ consists of a collection of connected CW-complexes
$\{X_v\}_{v\in V}$ and $\{X_e \}_{e\in E}$
such that $X_{\ol{e}}=X_e$ for every $e\in E$ together with a collection
$\{ f_e\colon X_e\to X_{t(e)}\}_{e\in E}$ of cellular embeddings.
Furthermore we require  a choice of base points for each
$X_e$ ($e\in E$)
and $X_v$ ($v\in V$), with $X_e$ and $X_{\ol{e}}$ being assigned the same base point,
and for each $e\in E$ a choice of path in $X_{t(e)}$ connecting the base points of $f_e(X_e)$ and $X_{t(e)}$.
Finally we demand  that the following hold:
 \bn
 \item For each $e\in E$ the map $f_e$ induces an embedding $\pi_1(X_e)\to \pi_1(X_{t(e)})$ of groups (henceforth also denoted by $f_e$);
 \item given $v\in V$ the images of the maps $f_e\colon X_e\to X_{t(e)}$ with $t(e)=v$ are disjoint; and
 \item given $v\in V$ the set $X_v\sm \bigcup_{t(e)=v} f_e(X_e)$ is non-empty.
 \en
(Conditions (2) and (3) can always be achieved by attaching products, without changing the homotopy types of the complexes $X_v$ and $X_e$).

\index{graph of based CW-complexes}
\index{graph!of based CW-complexes}

\medskip

Let $\scrx$ be a graph of based CW-complexes  with underlying graph $Y$.
The \emph{topological realization} of $\scrx$ is  the following CW-complex:
  \[ |\scrx| =\left( \bigcup\limits_{v\in V} X_v \, \cup \,  \bigcup\limits_{e\in E} X_e\times [-1,1] \right) \bigg/ \sim \qquad \text{(disjoint union)}\]
where for $x\in X_e$ we set $(x,1)\sim f_e(x)\in X_{t(e)}$ and for $(x,t)\in X_e\times [-1,1]$ we set $(x,t)\sim (x,-t)\in X_{\ol{e}}\times [-1,1]$.
%We sometimes say that $|\scrx|$ is the result of connecting the vertex spaces via the edge spaces.
Note that we have canonical inclusion maps of $X_v$ and $X_e\times 0$ into $|\scrx|$.
We identify $X_v$ and $X_e=X_e\times 0$ with their images in $|\scrx|$.

The collections $\{\pi_1(X_v)\}_{v\in V}$ and $\{\pi_1(X_e)\}_{e\in E}$ together with the group embeddings $f_e\colon\pi_1(X_e)\to \pi_1(X_{t(e)})$ form a graph of groups based on $Y$, whose fundamental group (as defined above) is naturally isomorphic to
the fundamental group  of the topological space $X:=|\scrx|$; see \cite[Theorem~2.1]{He87}.
Conversely, every graph of groups $\scrg$  based on $Y$ gives rise to a graph $\scrx$ of based CW-complexes with underlying graph $Y$ such that $\pi_1(X_v)=G_v$, $\pi_1(X_e)=G_e$, and with the embeddings $X_e\to X_{t(e)}$ inducing the edge morphisms $f_e\colon G_e\to G_{t(e)}$ of $\scrg$. (By choosing for each $X_v$ and $X_e$ classifying spaces $BG_v$ respectively $BG_e$.)

\subsection{Morphisms of graphs of groups}\label{sec:Morphisms of graphs of groups}
Let now $\scrg$ and $\scrg'$ be graphs of groups, based on the graphs $Y$ respectively $Y'$. A {\it morphism} $\phi\colon \scrg\to\scrg'$ of graphs of groups consists of the following data:
a morphism $Y\to Y'$ of graphs, also denoted by $\phi$;
families of group morphisms
$$\{\phi_v \colon G_v\to G'_{\phi(v)}\}_{v\in V(Y)},\qquad
\{\phi_e \colon G_e\to G'_{\phi(e)}\}_{e\in E(Y)};$$
and families of elements $\{g_v\}_{v\in V(Y)}$ and $\{g_e\}_{e\in E(Y)}$ of $\pi(\scrg')$, satisfying the following conditions, with $v$ ranging over $V(Y)$ and $e$ over $E(Y)$:
\begin{enumerate}
\item  $\phi_{e}=\phi_{\ol{e}}$;
\item $g_v\in \pi_1(\scrg',\phi(v))$;
%\footnote{[S] Du willst wahrscheinlich sagen, dass $g_v\in \{\pi_1(G'_{\phi(v)})\to \pi_1(\scrg',\phi(v))\}$,und das gleiche fuer die $g_e$'s. Oder?}
\item for $v=t(e)$ we have $\delta_e:=g_v^{-1} g_e \in G'_{\phi(v)}$, and
\item the diagram
\begin{equation}\label{eq:morphism of graphs of groups}\begin{split}
\xymatrixcolsep{5pc}\xymatrix{
G'_{\phi(e)} \ar[r]^{\ad(\delta_e)\circ f'_{\phi(e)}} & G'_{\phi(v)} \\
G_e \ar[u]^{\phi_e} \ar[r]^{f_e} & G_{v} \ar[u]^{\phi_{v}}
}\end{split}
\end{equation}
commutes. Here, given a group $G$ and $h\in G$, $\ad(h)$ denotes the inner automorphism $g\mapsto h g h^{-1}$ of $G$.
\end{enumerate}
%(In \cite{Ba93} a more liberal definition of morphism of graphs of groups is used; the more restrictive one introduced above is sufficient for our needs.)
Every such morphism  of graphs of groups as above
induces a group morphism $\phi_*\colon\pi(\scrg)\to\pi(\scrg')$ between path groups satisfying
$$\phi_*(g) = g_v\phi_v(g)g_v^{-1}, \quad \phi_*(e) = g_e\phi(e)g_{\ol{e}}^{-1}
\qquad\text{for all $v\in V(Y)$, $g\in G_v$, $e\in E(Y)$.}$$
Given a base point $v_0\in V(Y)$ for $G=\pi_1(\scrg,v_0)$ and taking the base point $v_0'=\phi(v_0)$ for $G'=\pi_1(\scrg',v_0')$, this morphism restricts to a morphism $G\to G'$. (Cf.~\cite[Proposition~2.4]{Ba93}.)
Given an integer $d\geq 0$ we say that $\phi$ has degree $d$ if $[G':\phi_*(G)]=d$.
Every finite index subgroup of the fundamental group of a graph of groups arises in this way:

\index{morphism!of graphs of groups}
\index{morphism!of finite degree}

%Given a path $p=(e_1,\dots,e_n)$, we say that $p$ is {\it without backtracking}\/ if $e_{i+1}\neq\ol{e_i}$ for all $i=1,\dots,n-1$. If there is a path from a vertex $v$ of $Y$ to a vertex $w$, then there is one without backtracking.

\begin{theorem}\label{thm:bass-serre}
Let $\scrg$ be a graph of groups with underlying graph $Y$, and let~$H$ be a
 finite index subgroup of $G=\pi_1(\scrg)$. Then there exists a
graph of groups $\scrh$ and a morphism $\phi\colon\scrh\to\scrg$ such that $\phi_*\colon\pi_1(\scrh)\to G$ is injective with image $H$.
\end{theorem}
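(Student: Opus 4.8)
The plan is to use the topological realization of $\scrg$ as a graph of based CW-complexes together with covering space theory. By Section~\ref{sec:graphs of cw-complexes} we may fix a graph of based CW-complexes $\scrx$ with underlying graph $Y$ whose associated graph of groups is $\scrg$; set $X=|\scrx|$, so that $\pi_1(X,x_0)\cong G$ for a suitable base point $x_0$. Let $p\colon\widehat{X}\to X$ be the connected covering corresponding to the finite-index subgroup $H\leq G=\pi_1(X,x_0)$; since $d:=[G:H]<\infty$, the covering $p$ has $d$ sheets.

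First I would endow $\widehat{X}$ with the structure of a graph of based CW-complexes $\widehat{\scrx}$: for each vertex $v$ (resp.\ topological edge $\{e,\ol{e}\}$) of $Y$, the preimage $p^{-1}(X_v)$ (resp.\ $p^{-1}(X_e)$) is a finite disjoint union of connected coverings of $X_v$ (resp.\ of $X_e$), and I take these components to be the vertex spaces (resp.\ edge spaces) of $\widehat{\scrx}$. This determines a finite connected graph $\widehat{Y}$ (finite since $p$ has finitely many sheets and $Y$ is finite, connected since $\widehat{X}$ is), with incidence inherited from $Y$ via $p$; the product decompositions $X_e\times[-1,1]$ in $|\scrx|$ lift, giving $|\widehat{\scrx}|=\widehat{X}$. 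One checks that $\widehat{\scrx}$ inherits base points, connecting paths, and conditions (1)--(3) of Section~\ref{sec:graphs of cw-complexes}: the edge maps remain $\pi_1$-injective because a restriction of a covering to a connected subcomplex is again a covering, and conditions (2) and (3) can be read off locally from the corresponding conditions for $\scrx$ (using that the covering $\widehat{X}_{\hat v}\to X_v$ of a component is surjective). Let $\scrh$ be the graph of groups associated to $\widehat{\scrx}$. By the van Kampen theorem for graphs of spaces (\cite[Theorem~2.1]{He87}, quoted above), $\pi_1(\scrh)\cong\pi_1(\widehat{X})\cong H$.

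Next I would produce the morphism $\phi\colon\scrh\to\scrg$. Restricting $p$ to each vertex or edge space of $\widehat{\scrx}$ yields a covering onto the corresponding space of $\scrx$; this gives the underlying graph morphism $\widehat{Y}\to Y$ together with the families of injections $\phi_v,\phi_e$ on fundamental groups. The auxiliary elements $g_v,g_e\in\pi(\scrg)$ recording the change of base point are read off from $p$ and the chosen connecting paths, and with these choices the coherence conditions (1)--(4) in the definition of a morphism of graphs of groups (Section~\ref{sec:Morphisms of graphs of groups}) --- in particular commutativity of diagram~\eqref{eq:morphism of graphs of groups} --- hold by construction. By naturality of the van Kampen isomorphism, the induced morphism $\phi_*\colon\pi_1(\scrh)\to\pi_1(\scrg)$ is identified with $p_*\colon\pi_1(\widehat{X})\to\pi_1(X)$, i.e.\ with the inclusion $H\hookrightarrow G$. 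Hence $\phi_*$ is injective with image $H$, as required.

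I expect the main obstacle to be bookkeeping rather than a conceptual one: one must verify carefully that $\widehat{\scrx}$ genuinely satisfies all axioms of a graph of based CW-complexes after passing to the cover, and, above all, choose the data $(g_v)$, $(g_e)$ of $\phi$ consistently so that the diagrams~\eqref{eq:morphism of graphs of groups} commute --- tracking base points and conjugating elements is routine but delicate. Alternatively, a purely combinatorial proof is available: let $G$ act without inversions on the Bass--Serre tree $\widetilde{T}$ of $\scrg$, restrict the action to $H$, and take $\scrh$ to be the quotient graph of groups $H\backslash\!\backslash\widetilde{T}$; the finite-index hypothesis makes $H\backslash\widetilde{T}$ finite, and $\phi_*$ is again visibly the inclusion $H\hookrightarrow G$ (this is essentially the content of \cite[Proposition~2.4]{Ba93}).
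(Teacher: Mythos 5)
Your proposal takes essentially the same route as the paper: the paper also outlines the covering-space argument (realizing $\scrg$ as $|\scrx|$, taking the finite cover $\widetilde X\to X$ corresponding to $H$, decomposing it into a graph of based CW-complexes with components of $\pi^{-1}(X_v)$ and $\pi^{-1}(X_e)$, and reading off the morphism of graphs of groups), while also mentioning Bass--Serre theory as the alternative — just as you do at the end. Your write-up simply fills in more of the bookkeeping than the paper records.
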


This may be seen as a consequence of the main structure theorem of the Bass-Serre theory of groups acting on trees (cf.~\cite[Section~4]{Ba93}). Alternatively, one may employ the topological interpretation of $\pi_1(\scrg)$ sketched in Section~\ref{sec:Morphisms of graphs of groups} (cf.~\cite{SW79}):  if $\scrx$ is a graph of based CW-complexes based on a graph $Y$, with topological realization $X$, and $\pi\colon\widetilde{X}\to X$ is a finite covering map, then $\widetilde{X}$ is in a natural way the topological realization of a graph of based CW-complexes $\widetilde{\scrx}$: The vertex spaces of $\widetilde{\scrx}$ are given by the components of $\pi^{-1}(X_v)$, $v\in V(Y)$, and the edge spaces of $\widetilde{\scrx}$ are given by the components of $\pi^{-1}(X_e)$, $e\in E(Y)$.
Let $\widetilde{Y}$ be the underlying graph of $\widetilde{\scrx}$; the covering map $\pi\colon \widetilde{\scrx}\to \scrx$ induces a morphism of graphs $\widetilde{Y}\to Y$ which we also denote by $\pi$.
The edge spaces and vertex spaces of $\widetilde{\scrx}$ are then endowed with base points covering the base points of $\scrx$; and for $\widetilde{e}\in E(\widetilde{Y})$, $e=\pi(\widetilde{e})$, we pick a path connecting the base points of $f_{\widetilde{e}}(\widetilde{X}_{\widetilde{e}})$ and $\widetilde{X}_{t(\widetilde{e})}$. This data gives rise to a morphism $\phi\colon\widetilde\scrg\to\scrg$ between the graphs of groups
$\widetilde\scrg$ and $\scrg$ associated to $\widetilde\scrx$ respectively $\scrx$ such that $\phi_*$ agrees with the group morphism $\pi_1(\widetilde\scrx)\to\pi_1(\scrx)$ induced by $\pi$.

\begin{remark}
In the context of the previous theorem, one can choose the vertex groups of $\scrh$ to be $H\cap gG_vg^{-1}$, where $v$ ranges over the vertices of $Y$ and $g$ over a suitable set of $(H,G_v)$-double coset representatives, and the edge groups to be $H\cap gf_e(G_e)g^{-1}$, where $e$ ranges over the edges of $Y$ and $g$ over a suitable set of $(H,f_e(G_e))$-double coset representatives \cite[Section~8.5, Theorem~27]{Co89}.
In particular,
if $H$ is a \emph{normal} subgroup of $G$, then one can choose $\scrh$ such that its vertex groups are of the form $H\cap G_v$, and its edge groups of the form $H\cap f_e(G_e)$.
\end{remark}

\subsection{Compatible collections of subgroups}
Let $\scrg$, $\scrg'$  be graphs of groups based on the graphs $Y$, $Y'$, respectively. Then $\scrg'$ is a \emph{subgraph of subgroups} of $\scrg$ if
\begin{enumerate}
\item $Y'$ is a subgraph of $Y$;
\item $G'_v\leq G_v$ for each $v\in V(Y')$;
\item $G_e'\leq G_e$ and $f_e'=f_e|G_e'$ for each $e\in E(Y')$;
\item $f_e(G_e)\cap G_v'=f_e(G_e')$ for all $e\in E(Y')$, $v=t(e)$.
\end{enumerate}
In this case, the natural inclusions $G'_v\to G_v$ ($v\in V(Y')$) and $G'_e\to G_e$ ($e\in E(Y')$) form a morphism $\scrg'\to\scrg$ of graphs of groups, and given $v_0\in V(Y')$, the induced morphism $\pi_1(\scrg',v_0)\to\pi_1(\scrg,v_0)$ is injective \cite[Corollary~1.14]{Ba93}.
We say that a subgraph of subgroups $\scrg'$ of $\scrg$ is \emph{normal} if $G'_v\trianglelefteq G_v$ for each $v\in V(Y')$. (Then $G_e'\trianglelefteq G_e$ for each $e\in E(Y')$ by condition (4).)
For example, given a subgraph $Y'$ of $Y$, the graph of groups $\scrg|Y'$ is a normal subgraph of subgroups of $\scrg$.

\index{subgraph of subgroups}

\medskip

A subgraph of subgroups $\scrg'$ of $\scrg$ is called a \emph{graph of subgroups} of $\scrg$ if the underlying graph $Y'$ of $\scrg'$ is the same as the underlying graph $Y$ of $\scrg$. Let $\scrg'$ be a normal graph of subgroups of $\scrg$.
Then we obtain a graph of groups based on the graph $Y$, which we denote by $\scrg/\scrg'$, with vertex groups $G_v/G_v'$ ($v\in V(Y)$), edge groups $G_e/G_e'$ and edge morphisms $G_e/G_e'\to G_{t(e)}/G_{t(e)}'$ induced by $f_e$ $(e\in E(Y)$). The natural surjections $G_v\to G_v/G_v'$ and $G_e\to G_e/G_e'$ yield a morphism $\scrg\to\scrg/\scrg'$ of graphs of groups, which in turn yields a group morphism
$\pi_1(\scrg,v_0) \to \pi_1(\scrg/\scrg',v_0)$,
which we denote by $\pi_{\scrg'}$.

\index{graph!of subgroups}

\medskip

A graph of subgroups of $\scrg$ is completely determined by its family of vertex groups: Suppose we are given a family $\scrg'=\{G'_v\}_{v\in V(Y)}$ of subgroups $G'_v\leq G_v$ ($v\in V(Y)$) which is \emph{compatible} in the sense that
$$f_e^{-1}(G'_{t(e)}) = f_{\ol{e}}^{-1}(G'_{t(\ol{e})})\qquad\text{for all $e\in E(Y)$.}$$
Then there is a unique graph of subgroups of $\scrg$ with vertex groups $G'_v$, which we continue to denote by the same symbol $\scrg'$.
(In particular, we may write $G'_e=f_e^{-1}(G'_{t(e)})$.)

\index{compatible collection of subgroups}

\begin{example}
Let $H$ be a normal subgroup of $G=\pi_1(\scrg)$. Then the collection $\scrh=\{H\cap G_v\}_{v\in V(Y)}$ of normal subgroups is compatible; moreover, there is a unique morphism $\pi_1(\scrg/\scrh)\to G/H$ such that the diagram
\begin{equation}\label{eq:factorization}
\xymatrix{ &G \ar[dl]_{\pi_{\scrh}}\ar[dr]^{\text{natural projection}} \\ \pi_1(\scrg/\scrh)  \ar[rr]&& G/H}
\end{equation}
commutes.
\end{example}

\begin{proposition}\label{prop:commoncover}
Let $\{H_v\}_{v\in V(Y)}$
be  a compatible collection of finite index normal subgroups
of $\scrg$.
Then there exists a graph of groups $\widetilde\scrg$ based on a graph $\widetilde{Y}$ and a morphism $\phi\colon\widetilde\scrg\to\scrg$ with the following properties:
\begin{enumerate}
\item $\phi$ has finite degree;
\item for each $\widetilde{v}\in V(\widetilde{Y})$,  $\phi_{\widetilde{v}}$ is an isomorphism $\widetilde{G}_{\widetilde{v}}\xrightarrow{\cong} H_{\phi(\widetilde{v})}$;
\item for each $\widetilde{e}\in E(\widetilde{Y})$,  $\phi_{\widetilde{e}}$ is an isomorphism $\widetilde{G}_{\widetilde{e}}\xrightarrow{\cong} H_{\phi(\widetilde{e})}$.
\end{enumerate}
%Furthermore, if there exists a prime $p$ such that the index of each subgroup $H_v$ of $\pi_1(X_v)$ is a power of $p$,  then the degree of the cover $\pi\colon\ti{X}\to X$ can also be assumed to be a power of $p$.
\end{proposition}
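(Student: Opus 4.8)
The plan is to reduce the statement to the Bass--Serre realization theorem (Theorem~\ref{thm:bass-serre}) together with the Remark following it. The key point is to produce a \emph{single} finite index normal subgroup $H\trianglelefteq G:=\pi_1(\scrg)$ with $H\cap G_v=H_v$ for every $v\in V(Y)$ (under the usual identification of the vertex groups with subgroups of $G$ determined by a choice of maximal subtree of $Y$); note that this forces also $H\cap f_e(G_e)=(H\cap G_{t(e)})\cap f_e(G_e)=H_{t(e)}\cap f_e(G_e)=f_e\big(f_e^{-1}(H_{t(e)})\big)=f_e(H_e)$, where $H_e:=f_e^{-1}(H_{t(e)})$. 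Once such an $H$ is in hand, applying Theorem~\ref{thm:bass-serre} to $H\le G$ yields a graph of groups $\widetilde\scrg$ over a graph $\widetilde Y$ and a morphism $\phi\colon\widetilde\scrg\to\scrg$ with $\phi_*$ injective of image $H$, so $\phi$ has degree $[G:H]<\infty$; this is~(1). Because $H$ is normal in $G$, the Remark following Theorem~\ref{thm:bass-serre} lets me take $\widetilde\scrg$ with the vertex group over any $\widetilde v$ above $v=\phi(\widetilde v)$ equal to $H\cap gG_vg^{-1}=g(H\cap G_v)g^{-1}$ for a suitable $g\in G$, and with $\phi_{\widetilde v}$ the inclusion into $gG_vg^{-1}$ followed by conjugation by $g^{-1}$; thus $\phi_{\widetilde v}$ is an isomorphism onto $H\cap G_v=H_v=H_{\phi(\widetilde v)}$, which is~(2), and the analogous description of the edge maps, combined with the displayed computation above, gives~(3).

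To construct $H$ I would first pass to the quotient graph of groups $\scrg/\scrh$, where $\scrh=\{H_v\}_{v\in V(Y)}$ is viewed as a normal graph of subgroups of $\scrg$ (each $H_v$ being normal of finite index in $G_v$; this uses the compatibility hypothesis $f_e^{-1}(H_{t(e)})=f_{\ol e}^{-1}(H_{t(\ol e)})$ to make $\scrg/\scrh$ well defined). Its vertex groups $G_v/H_v$ and edge groups $G_e/H_e$ are all finite, and the natural morphism $\scrg\to\scrg/\scrh$ induces a morphism $\pi_{\scrh}\colon G\to\Pi:=\pi_1(\scrg/\scrh)$ which restricts on each $G_v$ to the quotient map $G_v\to G_v/H_v$ followed by the natural (injective) inclusion $G_v/H_v\hookrightarrow\Pi$; hence $\ker(\pi_{\scrh})\cap G_v=H_v$ for all $v$. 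Now $\Pi$ is the fundamental group of a finite graph of finite groups, hence residually finite (cf.\ the proof of \cite[Theorem~3.1]{He87}; in fact such a $\Pi$ is virtually free). Since $Y$ has finitely many vertices and each $G_v/H_v$ is finite, residual finiteness of $\Pi$ furnishes, upon intersecting finitely many finite index normal subgroups, a finite index normal subgroup $K\trianglelefteq\Pi$ meeting every (finite) subgroup $G_v/H_v\le\Pi$ trivially. Then $H:=(q\circ\pi_{\scrh})^{-1}(1)$, where $q\colon\Pi\to\Pi/K=:Q$ is the quotient, is a finite index normal subgroup of $G$ (as $Q$ is finite), and for each $v$ one has $H\cap G_v=\ker\big(G_v\to G_v/H_v\hookrightarrow\Pi\xrightarrow{q}Q\big)=H_v$, because both $G_v/H_v\hookrightarrow\Pi$ and $q|_{G_v/H_v}$ are injective. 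This is the required $H$.

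The only genuinely non-formal input is the residual finiteness of the fundamental group of a finite graph of finite groups; everything else — translating the compatibility hypothesis into well-definedness of $\scrg/\scrh$, keeping track of the conjugations coming from the maximal subtree, and reading off the vertex and edge maps of $\phi$ from the Remark after Theorem~\ref{thm:bass-serre} (where the inner automorphisms $\ad(\delta_e)$ enter the morphism data) — is routine bookkeeping. I therefore expect that the care in writing up this proposition will go into the construction and verification of $H$ rather than into any conceptual obstacle.
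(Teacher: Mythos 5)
Your proof is correct, and it takes a genuinely different route from the paper's. You reduce the proposition to producing a single finite-index normal subgroup $H\trianglelefteq G=\pi_1(\scrg)$ with $H\cap G_v=H_v$ for every $v$, which you do by passing to the quotient graph of finite groups $\scrg/\scrh$, invoking residual finiteness of its fundamental group $\Pi$ (a standalone classical fact, so no circularity), pulling back a finite-index normal $K\trianglelefteq\Pi$ avoiding the finite subgroups $G_v/H_v$, and then appealing to Theorem~\ref{thm:bass-serre} together with the Remark after it to read off $\widetilde\scrg$ and the morphism data. The paper instead works topologically: it realizes $\scrg$ as a graph of based CW-complexes and constructs a common finite cover by gluing covers of the vertex and edge spaces, arguing by induction on the number of topological edges. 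The trade-off between the two approaches is worth noting. Your $H$ is normal in $G$ by construction, so you actually obtain a \emph{regular} cover, something the paper's explicit gluing does not guarantee (as the authors concede in Remark~(2) following the proposition). On the other hand, the paper's construction shows (Remark~(1)) that if every index $[G_v:H_v]$ is a power of $p$, then the degree of the cover is itself a power of $p$; your approach does not yield this, since $\Pi=\pi_1(\scrg/\scrh)$ is only residually finite, not residually $p$ (indeed, the failure of graphs of $p$-groups to have residually $p$ fundamental group is precisely the difficulty this paper must overcome), so the normal subgroup $K$ and hence $[G:H]$ may involve extraneous primes.
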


\begin{proof}
This is sketched in \cite[Theorem~2.2]{He87};
for the convenience of the reader, we give a proof.  We employ covering space theory. Thus, let $\scrx$ be a graph of based CW-complexes based on $Y$, with topological realization $X:=|\scrx|$, such that $\pi_1(X)=\pi_1(\scrg)$, as described in
Section~\ref{sec:graphs of cw-complexes}.
%\footnote{This proof is very clever, unfortunately it's not mine. I don't know a good way of writing this in purely %group theoretic terms}
The  collections $\{H_v\}$ and $\{H_e\}$ of subgroups of the vertex respectively edge groups of $\scrg$ give
rise to collections
\[ \big\{ \widetilde{X}_v\xrightarrow{\pi_v} X_v\big\}_{v\in V(Y)} \text{ and } \big\{ \widetilde{X}_e\xrightarrow{\pi_e} X_e\big\}_{e\in E(Y)}\]
of coverings of the vertex respectively edge spaces of $\scrx$ which are also compatible in the following sense:
\bn
\item for each $e\in E(Y)$ we have $\pi_e=\pi_{\ol{e}}$, and
\item for each $e\in E(Y)$ and for each component $\widetilde{Z}$ of $\pi_{t(e)}^{-1}(f_e(X_{e}))$ there exists an embedding $\widetilde{f}_e\colon\widetilde{X}_e\to \widetilde{X}_{t(e)}$ with image $\widetilde{Z}$ which covers
$f_e\colon X_e\to X_{t(e)}$.
\en
We will show that the various covering spaces can be glued together to give a connected covering $\pi\colon\widetilde{X}\to X$ such that
\bn
\item for each $v\in V(Y)$ and each component $\widetilde{X}_v$ of  $\pi^{-1}(X_v)$ the covering $\widetilde{X}_v\xrightarrow{\pi} X_v$ corresponds to $H_v$, and
\item for each $e\in E(Y)$ and each component $\widetilde{X}_e$ of  $\pi^{-1}(X_e)$ the covering $\widetilde{X}_e\xrightarrow{\pi} X_e$ corresponds to $H_e$.
\en
Then every morphism of graph of groups $\widetilde{\scrg}\to\scrg$ associated to $\pi$ as in Section~\ref{sec:graphs of cw-complexes} has the required properties.
We proceed by induction on the number of topological edges of $Y$.

The case where $Y$ has no topological edge (and hence only one vertex) being obvious, let us first consider the case that the graph $Y$ has a single topological edge $\{e,\ol{e}\}$ and hence either one or two vertices.
First assume that  $Y$ has a single vertex $v$. In that case $\pi_v^{-1}(f_e(X_e))$ and $\pi_v^{-1}(f_{\ol{e}}(X_e))$ have the same number $n$ of components.
Matching up these components in pairs and connecting them via $n$ copies of $\widetilde{X}_e$,
the resulting complex $\widetilde{X}$ has the desired property.

Now assume $Y$ has two distinct vertices $v_1$ and $v_2$ such that $t(e)=v_1$ and $t(\ol{e})=v_2$.
Let $n_i$ be the number of components of $\pi_{v_i}^{-1}(f_e(X_e))$, for $i=1,2$.
 We get the cover $\widetilde{X}\to X$ by taking $\operatorname{lcm}(n_1,n_2)/n_1$ copies of $\widetilde{X}_{v_1}$ and $\operatorname{lcm}(n_1,n_2)/n_2$ copies of $\widetilde{X}_{v_2}$ and
by connecting each copy of a component of $\pi_{v_1}^{-1}(f_e(X_e))$ with a copy of
a component of $\pi_{v_2}^{-1}(f_{\ol{e}}(X_e))$ via a copy of $\widetilde{X}_e$.

We now turn to the case of graphs with more than one topological edge. Since~$Y$ is connected, either there is an edge $e$ such that the graph $Y \setminus\{e\}$ obtained by deleting the edge $e$ from $Y$ is connected, or there is an edge $e$ such that one of the components of
$Y \sm \{e\}$ consists of a single vertex with no topological edge.

First consider the case that we have an edge $e$ such that $Y'=Y \sm \{e\}$ is connected.
We denote by $\scrx'$ the subgraph of based CW-complexes of $\scrx$ with underlying graph $Y'$, and write $X'=|\scrx'|$. We identify $X'$ with a topological subspace of $X=|\scrx|$ in the natural way. By inductive hypothesis there exists a covering $\widetilde{X}'\to X'$ with the desired properties.
We now consider the graph of CW-complexes $\scrz$ whose underlying graph has a single vertex $w$ and a single topological edge $\{f,\ol{f}\}$, vertex space
$X_w=X'$, edge spaces $X_{f}=X_{\ol{f}}=X_e$, and edge maps $X_f\to X_w$, $X_{\ol{f}}\to X_w$ given by $f_e\colon X_e\to X_{t(e)}\subseteq X'$ respectively
$f_{\ol{e}}\colon X_{\ol{e}}\to X_{t(\ol{e})}\subseteq X'$. Writing $Z=|\scrz|$ we note that $Z=X$.
Clearly the finite index subgroup
$\pi_1(\widetilde{X}')$ of $\pi_1(X')=\pi_1(X_w)$ forms a compatible collection of subgroups (of the graphs of groups associated to $\scrz$).
Since we already showed that the conclusion of the proposition holds for graphs with one edge we now get
a cover $\widetilde{Z}\to Z=X$ which has the desired properties.

A similar argument deals with the case that we have an edge $e$ such that one component of $Y\sm \{e\}$ consists of a single vertex with no topological edge.
\end{proof}

\begin{remarks} \mbox{}

\begin{enumerate}
\item Assume that  the indices of the normal subgroups $H_v\trianglelefteq G_v$ are powers of~$p$. In that case the indices of the normal subgroups $H_e\trianglelefteq  G_e$ are powers of~$p$. With the notation introduced in the proof above we have
 \[  n=\frac{[G_v:H_v]}{[G_e:H_e]} \text{ and }   n_i=\frac{[G_{v_i}:H_{v_i}]}{[G_e:H_e]} \text{ for $i=1,2$.} \]
In particular $n$, $n_1$ and $n_2$ are powers of $p$. It is now clear that the degree of the cover $\pi\colon\widetilde{X}\to X$ (and hence the degree of the associated morphism $\widetilde{\scrg}\to\scrg$ of graphs of groups) constructed in the proof is a power of $p$.
\item There is no guarantee that the resulting cover constructed in the proof will be regular (i.e., that $\phi_*(\pi_1(\widetilde{\scrg}))$ will be a normal subgroup of $\pi_1(\scrg)$).
\end{enumerate}
\end{remarks}

\subsection{Filtrations of graphs of groups}
Let $\scrg$ be a graph of groups with underlying graph $Y$. We say that a collection $\mathbf G=\{\mathbf G_{v}\}_{v\in V(Y)}$ of
filtrations $\mathbf G_{v}=\{G_{v,n}\}_{n\geq 1}$ of $G_v$ (one for each vertex $v$) is a {\it filtration} of $\scrg$ if for each $n\geq 1$, the collection $\mathbf G_n:=\{G_{v,n}\}_{v\in V(Y)}$ of subgroups of the vertex groups is compatible.

\index{filtration!of a graph of groups}

Let $\mathbf G=\{\mathbf G_{v}\}_{v\in V(Y)}$ be a filtration of $\scrg$. Then
for each $e\in E(Y)$ we obtain a filtration $\mathbf G_e=\{G_{e,n}\}_{n\geq 1}$ of $G_e$ where $G_{e,n}:=f_e^{-1}(G_{t(e),n})$ for every $n\geq 1$. Note that $f_e$ is a morphism $(G_e,\mathbf G_e)\to (G_{t(e)},\mathbf G_{t(e)})$ of filtered groups, and $\mathbf G_e = \mathbf G_{\ol{e}}$ for all $e\in E(Y)$.

Suppose that $\mathbf G$ is normal (i.e., each $\mathbf G_v$ is normal). Then for each $n\geq 1$ we have three graphs of groups with same underlying graph $Y$ associated to $\scrg$:
\begin{enumerate}
\item The graph of subgroups $\mathbf G_n$ of $\scrg$;
\item the
quotient graph of groups $\scrg/\mathbf G_n$ associated to $\mathbf G_n$, with vertex groups $G_v/G_{v,n}$, edge groups $G_e/G_{e,n}$ and edge morphisms $$G_e/G_{e,n}\to G_{t(e)}/G_{t(e),n}$$ induced by $f_e$; and
\item the graph $L_n(\mathbf G):=\mathbf G_n/\mathbf G_{n+1}$, whose vertex and edge groups are the $n$th layers $L_n(\mathbf G_{v})$ of $\mathbf G_v$ respectively
$L_n(\mathbf G_e)$ of $\mathbf G_e$, and with the morphisms $L_n(f_e)\colon L_n(\mathbf G_e) \to L_n(\mathbf G_{v})$
induced by $f_e$ as the edge morphisms.
\end{enumerate}
\index{$L_n(\mathbf G)$}
We say that another filtration $\mathbf G^*=\{\mathbf G^*_v\}_{v\in V(Y)}$ of $\scrg$ is a {\it compatible stretching}\/ of $\mathbf G$ if the filtrations $\mathbf G_v^*$,  as $v$ ranges over $V(Y)$, form a compatible collection of stretchings of the $\mathbf G_v$ as defined in Section~\ref{subsec:filtrations} above.
Note that in this situation, for each $n\geq 1$, either every vertex group of $L_n(\mathbf G^*)$ is trivial, or $L_n(\mathbf G^*)=L_m(\mathbf G)$ for some $m\geq 1$.

Given a property $\mathfrak P$ of filtrations, we say that $\mathbf G$ has the property $\mathfrak P$ if $\mathbf G_v$ has $\mathfrak P$ for each $v\in V(Y)$. For example,
we say that $\mathbf G$ is separating if for each $v\in V(Y)$, the filtration $\mathbf G_v$ of $G_v$ is separating: $\bigcap_{n\geq 1} G_{v,n}=\{1\}$. We also say that {\it $\mathbf G$ separates the edge groups of $\scrg$}\/ if for each $e\in E(Y)$ and $v=t(e)$, the filtration $\mathbf G_v$ of $G_v$ separates $f_e(G_e)$: $\bigcap_{n\geq 1} G_{v,n} f_e(G_e) = f_e(G_e)$.

\index{filtration!separating}
\index{filtration!separates the edge groups}

\medskip

In the next lemmas, let $\phi\colon\scrg\to\scrg'$ be a morphism of graph of groups.

\begin{lemma} \label{lem:morphisms and filtrations, 1}
Let $\scrh=\{H_{v'}\}_{v'\in V(Y')}$ be a compatible collection of normal subgroups of $\scrg'$. Then $$\phi^{-1}(\scrh):=\big\{\phi_v^{-1}(H_{\phi(v)})\big\}_{v\in V(Y)}$$ is a compatible collection of normal subgroups of $\scrg$.
\end{lemma}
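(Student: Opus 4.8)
The plan is as follows. Normality is immediate: each $\phi_v^{-1}(H_{\phi(v)})$ is the preimage under the group morphism $\phi_v\colon G_v\to G'_{\phi(v)}$ of a normal subgroup, hence normal in $G_v$. So the entire content is the verification of the compatibility identity
$$f_e^{-1}\big(\phi_v^{-1}(H_{\phi(v)})\big)\ =\ f_{\ol e}^{-1}\big(\phi_w^{-1}(H_{\phi(w)})\big)\qquad (v=t(e),\ w=t(\ol e))$$
for every edge $e\in E(Y)$.

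First I would fix such an edge $e$ and rewrite the left-hand side using the commuting square \eqref{eq:morphism of graphs of groups} from the definition of a morphism of graphs of groups, namely $\phi_v\circ f_e=\ad(\delta_e)\circ f'_{\phi(e)}\circ\phi_e$ with $\delta_e=g_v^{-1}g_e\in G'_{\phi(v)}$. Taking preimages of $H_{\phi(v)}$ under both sides gives
$$f_e^{-1}\big(\phi_v^{-1}(H_{\phi(v)})\big)\ =\ \phi_e^{-1}\Big((f'_{\phi(e)})^{-1}\big(\delta_e^{-1}H_{\phi(v)}\delta_e\big)\Big).$$
The key observation is that $H_{\phi(v)}$ is normal in $G'_{\phi(v)}$ and $\delta_e$ lies in $G'_{\phi(v)}$, so $\delta_e^{-1}H_{\phi(v)}\delta_e=H_{\phi(v)}$ and the inner automorphism disappears. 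Using that the underlying graph morphism satisfies $\phi(t(e))=t(\phi(e))$, what remains is
$$f_e^{-1}\big(\phi_v^{-1}(H_{\phi(v)})\big)\ =\ \phi_e^{-1}\Big((f'_{\phi(e)})^{-1}\big(H_{t(\phi(e))}\big)\Big).$$

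Next I would invoke compatibility of $\scrh$ on $\scrg'$ applied to the edge $\phi(e)$ of $Y'$: since $\overline{\phi(e)}=\phi(\ol e)$ and $t(\phi(\ol e))=\phi(t(\ol e))=\phi(w)$, this yields $(f'_{\phi(e)})^{-1}(H_{t(\phi(e))})=(f'_{\phi(\ol e)})^{-1}(H_{\phi(w)})$. Combined with $\phi_e=\phi_{\ol e}$ (condition (1) in the definition of a morphism of graphs of groups), the displayed identity becomes $f_e^{-1}(\phi_v^{-1}(H_{\phi(v)}))=\phi_{\ol e}^{-1}\big((f'_{\phi(\ol e)})^{-1}(H_{\phi(w)})\big)$. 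Finally, rerunning the first computation with $\ol e$ in place of $e$ — its defining square reads $\phi_w\circ f_{\ol e}=\ad(\delta_{\ol e})\circ f'_{\phi(\ol e)}\circ\phi_{\ol e}$ with $\delta_{\ol e}\in G'_{\phi(w)}$, and $H_{\phi(w)}\trianglelefteq G'_{\phi(w)}$ — identifies this right-hand side with $f_{\ol e}^{-1}(\phi_w^{-1}(H_{\phi(w)}))$, which is exactly the required equality.

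There is no genuine obstacle here; the proof is pure bookkeeping with the defining commuting squares, and the one substantive point is that the conjugations $\ad(\delta_e)$ occurring in those squares act trivially on $H_{\phi(v)}$ precisely because $\delta_e\in G'_{\phi(v)}$ and $H_{\phi(v)}$ is normal there — which is why the hypothesis must be a compatible collection of \emph{normal} subgroups.
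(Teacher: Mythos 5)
Your proof is correct and is exactly the "routine verification, using the diagram" that the paper leaves to the reader: you conjugate away $\ad(\delta_e)$ using normality of $H_{\phi(v)}$ and $\delta_e\in G'_{\phi(v)}$, apply compatibility of $\scrh$ at the edge $\phi(e)$, and match the two sides via $\phi_e=\phi_{\ol e}$ and $\ol{\phi(e)}=\phi(\ol e)$. Nothing to add.
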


The proof of this lemma is a routine verification, using the diagram \eqref{eq:morphism of graphs of groups} (left to the reader).
In the rest of this section let $\mathbf G'$ be a normal filtration of $\scrg'$.
Then $$\phi^{-1}(\mathbf G'):=\{\phi_v^{-1}(\mathbf G'_{\phi(v)})\}_{v\in V(Y)}$$ is a normal filtration of $\scrg$, by Lemma~\ref{lem:morphisms and filtrations, 1}, and if $\mathbf G'$ is central, a $p$-filtration, or $p$-potent, then clearly $\phi^{-1}(\mathbf G')$ also has the respective property.

\begin{lemma}\label{lem:separating, 1}
Suppose each $\phi_v$ is injective. Then
\begin{enumerate}
\item if $\mathbf G'$ is separating, then $\phi^{-1}(\mathbf G')$ is separating;
\item if each $\phi_e$ is bijective and $\mathbf G'$ separates
the edge groups of $\scrg'$, then $\phi^{-1}(\mathbf G')$ separates the edge groups of $\scrg$;
\item if each $\phi_e$ maps $G_e$ isomorphically onto the first group $G'_{\phi(e),1}$ in the filtration $\mathbf G'_{\phi(e)}$ of $G'_{\phi(e)}$, and $\mathbf G'$ separates the edge groups of $\scrg'$, then $\phi^{-1}(\mathbf G')$ separates the edge groups of $\scrg$.
\end{enumerate}
\end{lemma}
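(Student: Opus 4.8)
Part~(1) is immediate: if $g\in G_v$ lies in $\bigcap_{n\geq1}\phi_v^{-1}(G'_{\phi(v),n})$, then $\phi_v(g)\in\bigcap_{n\geq1}G'_{\phi(v),n}=1$ since $\mathbf G'$ is separating, so $g=1$ because $\phi_v$ is injective. For parts~(2) and~(3) I would argue uniformly. Fix $e\in E(Y)$ and put $v:=t(e)$, $v':=\phi(v)$, $e':=\phi(e)$; then $t(e')=v'$, $\phi_v\colon G_v\to G'_{v'}$ and $\phi_e\colon G_e\to G'_{e'}$. Write $G_{v,n}:=\phi_v^{-1}(G'_{v',n})$ for the terms of $\phi_v^{-1}(\mathbf G'_{v'})$, so that $\phi_v(G_{v,n})\leq G'_{v',n}$, and recall that the induced edge filtration $\mathbf G'_{e'}$ has terms $G'_{e',n}=(f'_{e'})^{-1}(G'_{v',n})$. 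Commutativity of the square~\eqref{eq:morphism of graphs of groups} says $\phi_v\circ f_e=\ad(\delta_e)\circ f'_{e'}\circ\phi_e$ with $\delta_e\in G'_{v'}$, hence
\[
\phi_v\bigl(f_e(G_e)\bigr)=\delta_e\,f'_{e'}\bigl(\phi_e(G_e)\bigr)\,\delta_e^{-1}.
\]
Under the hypothesis of~(2) this equals $\delta_e f'_{e'}(G'_{e'})\delta_e^{-1}$, whereas under the hypothesis of~(3) we have $\phi_e(G_e)=G'_{e',1}$ and, as $f'_{e'}$ is injective, $f'_{e'}(G'_{e',1})=f'_{e'}(G'_{e'})\cap G'_{v',1}$, so $\phi_v(f_e(G_e))=\bigl(\delta_e f'_{e'}(G'_{e'})\delta_e^{-1}\bigr)\cap G'_{v',1}$.

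The heart of the matter is the following. Let $g\in\bigcap_{n\geq1}G_{v,n}f_e(G_e)$; the goal is to show $g\in f_e(G_e)$. Applying $\phi_v$ and using $\phi_v(G_{v,n})\leq G'_{v',n}$ gives $\phi_v(g)\in\bigcap_{n\geq1}G'_{v',n}\cdot\phi_v(f_e(G_e))$, so it suffices to prove that the (normal) filtration $\mathbf G'_{v'}$ of $G'_{v'}$ separates the subgroup $\phi_v(f_e(G_e))$; then $\phi_v(g)\in\phi_v(f_e(G_e))$, and injectivity of $\phi_v$ forces $g\in f_e(G_e)$. Now $\mathbf G'_{v'}$ separates $f'_{e'}(G'_{e'})$ because $\mathbf G'$ separates the edge groups of $\scrg'$. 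Two elementary observations then finish both cases: (a)~a normal filtration that separates a subgroup $K$ also separates $\delta K\delta^{-1}$ for every element $\delta$ of the ambient group, since the filtration terms are normal; and (b)~a filtration $\{H_n\}$ that separates $K$ also separates $K\cap H_1$, because $\bigcap_n H_n(K\cap H_1)\subseteq\bigl(\bigcap_n H_nK\bigr)\cap H_1=K\cap H_1$ (using $H_n\leq H_1$ for $n\geq1$). Observation~(a) absorbs the conjugation by $\delta_e$ and disposes of case~(2); combining~(a) and~(b) disposes of case~(3).

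I expect the only genuine bookkeeping to lie in carrying the conjugating element $\delta_e$ — which is an intrinsic feature of the definition of a morphism of graphs of groups — through the computation, and, in case~(3), in pinning down $\phi_v(f_e(G_e))$ as $\bigl(\delta_e f'_{e'}(G'_{e'})\delta_e^{-1}\bigr)\cap G'_{v',1}$ using the definition of $\mathbf G'_{e'}$ and the injectivity of $f'_{e'}$. Everything else is routine once observations~(a) and~(b) are in place.
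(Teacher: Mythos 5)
Your proof is correct and follows essentially the same route as the paper: use the commutativity of diagram~\eqref{eq:morphism of graphs of groups} to express $\phi_v(f_e(G_e))$ as a $\delta_e$-conjugate of $f'_{e'}(\phi_e(G_e))$, then transport the separation hypothesis on $\mathbf G'$ back along $\phi_v$ via its injectivity. The paper phrases parts~(2)--(3) as a chain of subgroup equalities culminating in $f_e(G_e)=\bigcap_n f_e(G_e)\cdot\phi_v^{-1}(\mathbf G'_{\phi(v),n})$, and leaves~(3) to the reader; you instead chase a general element of the intersection and isolate the two helper facts --- that normal filtrations separate conjugates of separated subgroups, and that they separate $K\cap H_1$ whenever they separate $K$ --- which makes the adaptation to~(3) explicit. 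Same underlying argument, just a cleaner bookkeeping of what the conjugation by $\delta_e$ and the passage to $G'_{e',1}$ actually require.
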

\begin{proof}
If $\mathbf G'$ is separating, then
$$\bigcap_{n\geq 1}\, \phi_v^{-1}(\mathbf G'_{\phi(v),n}) = \phi_v^{-1}\left(\bigcap_{n\geq 1} \mathbf G'_{\phi(v),n}\right)=\phi_v^{-1}(\{1\})=\{1\}$$
for each $v$, hence $\phi^{-1}(\mathbf G')$ is separating.
For each $e$ and $v=t(e)$, denoting by $\phi_v^{-1}\colon \phi_v(G_v)\to G_v$ the inverse of $\phi_v$, by \eqref{eq:morphism of graphs of groups}
we also have
$$f_e(G_e) = \big(\phi_v^{-1}\circ\ad(\delta_e)\circ f'_{\phi(e)}\circ \phi_e\big)(G_e).$$
Assume now that each $\phi_e$ is bijective and $\mathbf G'$ separates the edge groups of $\scrg'$. Then for all $e$ and $v=t(e)$,
we have
\begin{align*}
f_e(G_e) &=
\big(\phi_v^{-1}\circ\ad(\delta_e)\big)(f'_{\phi(e)}(G'_{\phi(e)})) \\
&= \big(\phi_v^{-1}\circ\ad(\delta_e)\big)\left(\bigcap_n f'_{\phi(e)}(G'_{\phi(e)})\cdot \mathbf G'_{\phi(v),n}\right) \\
&=\bigcap_n  \big(\phi_v^{-1}\circ\ad(\delta_e)\big)(f'_{\phi(e)}(G'_{\phi(e)})) \cdot \phi_v^{-1}(\mathbf G'_{\phi(v),n})\\
&= \bigcap_n f_e(G_e)\cdot \phi_v^{-1}(\mathbf G'_{\phi(v),n}).
\end{align*}
Hence $\phi^{-1}(\mathbf G')$ separates the edge groups of $\scrg$. This shows (2). Part (3) is proved in a similar way.
\end{proof}

In the next lemmas we fix $e\in E(Y)$, and write $v=t(e)$, $e'=\phi(e)$, $v'=\phi(v)$.

\begin{lemma}\label{lem:separating, 2}
Suppose that for each $n\geq 1$, the morphism $\phi_e$ restricts to an isomorphism
$G_{e,n}\xrightarrow{\cong} G'_{e',n}$, and
%$\gamma^p(\scrg')$ forms a filtration of $\scrg'$ such that
$\mathbf G'_{v'}\cap f_{e'}'(G_{e'}')$ is a uniformly $p$-potent filtration of~$f_{e'}'(G_{e'}')$. Then
%$\gamma^p(\scrg)$ forms a filtration of $\scrg$ such that
$\mathbf G_v\cap f_e(G_e)$ is a uniformly $p$-potent filtration of $f_e(G_e)$.
\end{lemma}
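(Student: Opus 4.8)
The plan is to produce an isomorphism of filtered groups between $\bigl(f_e(G_e),\,\mathbf G_v\cap f_e(G_e)\bigr)$ and $\bigl(f'_{e'}(G'_{e'}),\,\mathbf G'_{v'}\cap f'_{e'}(G'_{e'})\bigr)$, and then to transport uniform $p$-potency across it. Write $K:=f_e(G_e)$ and $K':=f'_{e'}(G'_{e'})$, and let $\iota\colon G_e\xrightarrow{\cong}K$ denote the corestriction of the embedding $f_e$. The candidate isomorphism is $\psi:=\ad(\delta_e)^{-1}\circ(\phi_v|_K)$, where $\delta_e=g_v^{-1}g_e\in G'_{v'}$ is the element attached to the edge $e$ by the morphism $\phi$; commutativity of the square~\eqref{eq:morphism of graphs of groups} gives $\phi_v(f_e(x))=\delta_e\,f'_{e'}(\phi_e(x))\,\delta_e^{-1}$ for all $x\in G_e$, whence $\psi(f_e(x))=f'_{e'}(\phi_e(x))$, that is, $\psi=f'_{e'}\circ\phi_e\circ\iota^{-1}$, which in particular maps $K$ into $K'$.

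First I would dispose of completeness: a uniformly $p$-potent filtration is by definition a central $p$-filtration, so I must check that $K\leq G_{v,1}$ (and likewise $K'\leq G'_{v',1}$). For $K'$ this is exactly the completeness built into the hypothesis, and it also yields $G'_{e',1}=(f'_{e'})^{-1}(G'_{v',1})=G'_{e'}$. For $K$, the identity above together with $f'_{e'}(\phi_e(x))\in K'\leq G'_{v',1}$, the fact $\delta_e\in G'_{v'}$, and normality of $\mathbf G'_{v'}$, show $\phi_v(K)\leq G'_{v',1}$, hence $K\leq\phi_v^{-1}(G'_{v',1})=G_{v,1}$ and therefore $G_{e,1}=f_e^{-1}(G_{v,1})=G_e$. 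Now the case $n=1$ of the hypothesis says $\phi_e$ restricts to an isomorphism $G_e=G_{e,1}\xrightarrow{\cong}G'_{e',1}=G'_{e'}$, so $\phi_e\colon G_e\to G'_{e'}$ is itself an isomorphism, and consequently $\psi=f'_{e'}\circ\phi_e\circ\iota^{-1}$ is a group isomorphism $K\xrightarrow{\cong}K'$.

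Next I would match the filtrations level by level: since $G_{v,n}\cap K=f_e(f_e^{-1}(G_{v,n}))=f_e(G_{e,n})=\iota(G_{e,n})$ and, symmetrically, $G'_{v',n}\cap K'=f'_{e'}(G'_{e',n})$, the hypothesis $\phi_e(G_{e,n})=G'_{e',n}$ gives $\psi(G_{v,n}\cap K)=f'_{e'}(\phi_e(G_{e,n}))=f'_{e'}(G'_{e',n})=G'_{v',n}\cap K'$ for every $n\geq 1$, so $\psi$ is an isomorphism of filtered groups $(K,\mathbf G_v\cap K)\to(K',\mathbf G'_{v'}\cap K')$. To conclude, I would observe that every defining clause of ``uniformly $p$-potent'' --- being a central $p$-filtration; each map $x\mapsto x^{p}K_{n+2}\colon K_n\to K_{n+1}/K_{n+2}$ being a homomorphism with kernel exactly $K_{n+1}$; each induced layer map $\phi_n\colon L_n\to L_{n+1}$ being an isomorphism --- is phrased purely in terms of the group and its filtration, so conjugation by $\psi$ (which intertwines the power maps and the layer maps on the two sides) carries all of them from $\mathbf G'_{v'}\cap K'$ to $\mathbf G_v\cap K$; hence $\mathbf G_v\cap f_e(G_e)$ is uniformly $p$-potent. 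I expect the one genuinely delicate point to be the completeness bookkeeping of the second paragraph --- upgrading ``$\phi_e$ is an isomorphism on each $G_{e,n}$'' to ``$\phi_e$ is an isomorphism $G_e\to G'_{e'}$'' --- which is precisely where normality of $\mathbf G'_{v'}$ and the diagram~\eqref{eq:morphism of graphs of groups} are used; after that it is a formal transport of structure along $\psi$.
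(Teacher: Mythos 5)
Your proposal is correct and takes essentially the same route as the paper: both arguments reduce the claim to the observation that $\phi_e$ induces isomorphisms $G_{e,n}\xrightarrow{\cong}G'_{e',n}$ (equivalently, that $\psi$ is an isomorphism of filtered groups) and then transport uniform $p$-potency along that isomorphism. You spell out the completeness bookkeeping and the explicit filtered isomorphism in more detail than the paper's terse three-line proof, but the underlying argument is identical.
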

\begin{proof}
Note that the hypothesis that $\mathbf G'_{v'}\cap f_{e'}'(G_{e'}')$ be uniformly $p$-potent is equivalent to the
filtration $\mathbf G'_{e'}=(f_{e'}')^{-1}(\mathbf G'_{v'})$ of $G'_{e'}$ being uniformly $p$-potent, and that it is enough to show that ${\mathbf G}_e=f_e^{-1}(\mathbf G_v)$ is a uniformly $p$-potent filtration of $G_e$.
Now by assumption, $\phi_e$ induces isomorphisms
$G_{e,n}\to G'_{e',n}$, for each $n\geq 1$.
Hence for each $n\geq 1$, $x\mapsto x^p$ yields a surjective group morphism
$$G_{e,n}\to
G_{e,n+1}/G_{e,n+2}$$
whose kernel equals $G_{e,n+1}$, as required.
\end{proof}

The following two lemmas are not used until Section~\ref{sec:mod p homology graph}, and may be skipped upon first reading.

\begin{lemma}\label{lem:separating, 3}
Suppose $\phi_v$ is injective and $\phi_e$ is bijective, and $m\geq 1$ such that
$$G'_{v',n}\cap f_{e'}'(G_{e'}') = \gamma^p_n(f_{e'}'(G_{e'}')) \qquad\text{for $1\leq n\leq m$.}$$
Then
$$G_{v,n}\cap f_e(G_e) = \gamma^p_n(f_e(G_e))\qquad\text{for $1\leq n\leq m$.}$$
%\item if $G_e$ is abelian, $\phi_e$ maps $G_e$ isomorphically onto the first group $G'_{\phi(e),1}$ in the filtration $\mathbf G'_{\phi(e)}$ of $G'_{\phi(e)}$, and
%$$G'_{v',n}\cap f_{e'}'(G_{e'}') = \gamma^p_{n+1}(f_{e'}'(G_{e'}'))\qquad\text{for each $n\geq 1$,}$$ then
%$$G_{v,n}\cap f_e(G_e) = \gamma^p_n(f_e(G_e))\qquad\text{for each $n\geq 1$.}$$
\end{lemma}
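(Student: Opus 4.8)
The plan is to transport the hypothesis from $\scrg'$ to $\scrg$ by applying the injection $\phi_v$, using the commuting square \eqref{eq:morphism of graphs of groups} together with the total invariance of the operators $\gamma^p_n$. First I would record the basic identities. By the definition of $\phi^{-1}(\mathbf G')$ we have $G_{v,n}=\phi_v^{-1}(G'_{v',n})$; since $\phi_e$ is bijective, $\phi_e(G_e)=G'_{e'}$; and the square \eqref{eq:morphism of graphs of groups} gives $\phi_v(f_e(G_e))=\ad(\delta_e)\bigl(f'_{e'}(G'_{e'})\bigr)$, where $\delta_e\in G'_{v'}$ by condition (3) in the definition of a morphism of graphs of groups. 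In particular $\ad(\delta_e)\bigl(f'_{e'}(G'_{e'})\bigr)\subseteq\phi_v(G_v)$.

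The computation then runs as follows. Applying the injective map $\phi_v$ to $G_{v,n}\cap f_e(G_e)$ and using that $\phi_v$ preserves finite intersections gives
$$\phi_v\bigl(G_{v,n}\cap f_e(G_e)\bigr)=\bigl(G'_{v',n}\cap\phi_v(G_v)\bigr)\cap\ad(\delta_e)\bigl(f'_{e'}(G'_{e'})\bigr)=G'_{v',n}\cap\ad(\delta_e)\bigl(f'_{e'}(G'_{e'})\bigr),$$
the last equality because $\ad(\delta_e)(f'_{e'}(G'_{e'}))\subseteq\phi_v(G_v)$. Since $\mathbf G'$ is a normal filtration, $G'_{v',n}\trianglelefteq G'_{v'}$ and $\delta_e\in G'_{v'}$, so $\ad(\delta_e)$ fixes $G'_{v',n}$ and may be pulled outside the intersection: the above equals $\ad(\delta_e)\bigl(G'_{v',n}\cap f'_{e'}(G'_{e'})\bigr)$, which for $1\le n\le m$ is $\ad(\delta_e)\bigl(\gamma^p_n(f'_{e'}(G'_{e'}))\bigr)$ by hypothesis. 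Because $\gamma^p_n$ is totally invariant it commutes with the automorphism $\ad(\delta_e)$, so this is $\gamma^p_n\bigl(\ad(\delta_e)(f'_{e'}(G'_{e'}))\bigr)=\gamma^p_n\bigl(\phi_v(f_e(G_e))\bigr)$; and since $\phi_v$ restricts to an isomorphism $f_e(G_e)\xrightarrow{\cong}\phi_v(f_e(G_e))$, functoriality of $\gamma^p_n$ gives $\gamma^p_n\bigl(\phi_v(f_e(G_e))\bigr)=\phi_v\bigl(\gamma^p_n(f_e(G_e))\bigr)$. Cancelling the injection $\phi_v$ then yields $G_{v,n}\cap f_e(G_e)=\gamma^p_n(f_e(G_e))$ for $1\le n\le m$.

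There is no genuine obstacle here; the only point requiring care is the bookkeeping with the inner automorphism $\ad(\delta_e)$, which must be pushed through the intersection (using normality of $\mathbf G'$) and then through $\gamma^p_n$ (using total invariance of the lower central $p$-filtration) at the right moments.
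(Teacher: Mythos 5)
Your proof is correct and proceeds by essentially the same method as the paper's: push the problem through $\phi_v$ and the inner automorphism $\ad(\delta_e)$ using the commuting square \eqref{eq:morphism of graphs of groups}, use normality of $\mathbf G'_{v'}$ to move the conjugation past $G'_{v',n}$, invoke total invariance of $\gamma^p_n$, and cancel the injection $\phi_v$ at the end. The only cosmetic difference is that you track equalities at every step, whereas the paper records merely the inclusion $G_{v,n}\cap f_e(G_e)\subseteq\gamma^p_n(f_e(G_e))$ and then closes by noting that $\gamma^p(f_e(G_e))$ is the fastest descending central $p$-filtration.
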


\begin{proof}
Note that $(\ad(\delta_e^{-1})\circ\phi_v)(\mathbf G_v)\subseteq \mathbf G'_{v'}$ and,  since $\phi_e$ is bijective:
$$(\ad(\delta_e^{-1})\circ\phi_v)(f_e(G_e)) =  f'_{e'}(G'_{e'}), \qquad \gamma^p(f'_{e'}(G'_{e'}))=f'_{e'}(\phi_e(\gamma^p(G_e))).$$
Hence for $1\leq n\leq m$:
$$(\ad(\delta_e^{-1})\circ\phi_v)(G_{v,n}\cap f_e(G_e)) \subseteq G'_{v',n}\cap f'_{e'}(G_{e'}')=f'_{e'}(\phi_e(\gamma^p_n(G_e))). $$
Since $\phi_v$ is injective, this yields $$G_{v,n}\cap f_e(G_e) \subseteq f_e(\gamma^p_n(G_e))=\gamma^p_n(f_e(G_e)).$$
The lemma follows, since $\gamma^p(f_e(G_e))$ is the fastest descending central $p$-filtration of~$f_e(G_e)$.
\end{proof}

\begin{lemma}\label{lem:separating, 4}
Suppose $G_e$ is abelian, $\phi_v$ is injective, $\phi_e$ maps $G_e$ isomorphically onto the first group $G'_{e',1}$ in the filtration $\mathbf G'_{e'}$ of $G'_{e'}$, and $\ell\geq 0$ such that
%$\gamma^p(\scrg')$ forms a filtration of $\scrg'$ such that
$$G'_{v',n}\cap f_{e'}'(G_{e'}') = \gamma^p_{n+\ell}(f_{e'}'(G_{e'}'))\qquad\text{for each $n\geq 1$.}$$ Then
%$\gamma^p(\scrg)$ forms a filtration of $\scrg$ such that
$$G_{v,n}\cap f_e(G_e) = \gamma^p_n(f_e(G_e))\qquad\text{for each $n\geq 1$.}$$
\end{lemma}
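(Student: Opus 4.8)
The plan is to transport the hypothesis across the isomorphism that $\phi_v$ induces between $f_e(G_e)$ and its image, reducing the lemma to an identity among the terms of the lower central $p$-filtration of $B':=f'_{e'}(G'_{e'})$. First I would unwind the hypotheses. Since $\phi_e$ carries $G_e$ isomorphically onto $G'_{e',1}=(f'_{e'})^{-1}(G'_{v',1})$, the commuting square \eqref{eq:morphism of graphs of groups} gives
\[
C':=\phi_v(f_e(G_e))=\ad(\delta_e)\bigl(f'_{e'}(\phi_e(G_e))\bigr)=\ad(\delta_e)\bigl(f'_{e'}(G'_{e'})\cap G'_{v',1}\bigr)=\ad(\delta_e)\bigl(\gamma^p_{1+\ell}(B')\bigr),
\]
the last equality being the case $n=1$ of the hypothesis. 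In particular, as $G_e$ is abelian, $C'$ and hence $\gamma^p_{1+\ell}(B')$ are abelian. Since $\phi_v$ is injective it restricts to an isomorphism $f_e(G_e)\xrightarrow{\,\cong\,}C'$, so $G_{v,n}\cap f_e(G_e)=\gamma^p_n(f_e(G_e))$ holds if and only if $C'\cap G'_{v',n}=\gamma^p_n(C')$. Using that $\ad(\delta_e)$ fixes every $G'_{v',n}$, that $\gamma^p_{n+\ell}(B')\leq\gamma^p_{1+\ell}(B')$ for $n\geq 1$, and the hypothesis once more, one computes $C'\cap G'_{v',n}=\ad(\delta_e)\bigl(\gamma^p_{n+\ell}(B')\bigr)$, while total invariance of $\gamma^p$ gives $\gamma^p_n(C')=\ad(\delta_e)\bigl(\gamma^p_n(\gamma^p_{1+\ell}(B'))\bigr)$. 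Thus the lemma becomes equivalent to the identity
\[
\gamma^p_n\bigl(\gamma^p_{1+\ell}(B')\bigr)=\gamma^p_{n+\ell}(B')\qquad(n\geq 1),
\]
given that $\gamma^p_{1+\ell}(B')$ is abelian.

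The inclusion $\gamma^p_n(\gamma^p_{1+\ell}(B'))\leq\gamma^p_{n+\ell}(B')$ is Lemma~\ref{lem:iterated gamma} applied to the central $p$-filtration $\gamma^p(B')$ (with $1+\ell$ as the second index). For the reverse inclusion, put $H:=\gamma^p_{1+\ell}(B')$; since $H$ is abelian we have $\gamma^p_n(H)=H^{p^{n-1}}$, so it suffices to show $\gamma^p_{n+\ell}(B')\leq H^{p^{n-1}}$, which I would prove by induction on $n$. The inductive step is formal: $\gamma^p_{(n+1)+\ell}(B')=[B',\gamma^p_{n+\ell}(B')]\,(\gamma^p_{n+\ell}(B'))^p$; the inductive hypothesis gives $(\gamma^p_{n+\ell}(B'))^p\leq H^{p^n}$, and, since $H$ is an abelian normal subgroup of $B'$ (so that $b\mapsto[b,-]$ is additive on $H$ and commutes with $p$-th powers), it also gives $[B',\gamma^p_{n+\ell}(B')]\leq[B',H^{p^{n-1}}]=[B',H]^{p^{n-1}}$. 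Hence everything reduces to the base case $[B',H]\leq H^p$, i.e.\ $\gamma^p_{2+\ell}(B')\leq(\gamma^p_{1+\ell}(B'))^p$.

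I expect this base case to be the crux of the argument, and the step where abelianness of $G_e$---equivalently, of $H$---must be used in an essential way: one has to show that $B'$ acts trivially on $H/H^p=\gamma^p_{1+\ell}(B')\otimes_{\Z}\F_p$, equivalently that $\gamma^p_{1+\ell}(B')$ becomes central in $B'/H^p$. In the settings in which the lemma is applied, $\mathbf G'_{v'}$ restricts on $f'_{e'}(G'_{e'})$ to a strongly $p$-potent filtration---the same sort of hypothesis that powers Lemmas~\ref{lem:separating, 2} and~\ref{lem:separating, 3}---so that for every $m>\ell$ the $p$-th power map yields $\gamma^p_{m+1}(B')=(\gamma^p_m(B'))^p$ by definition; this immediately gives the base case, indeed the whole identity, and this is the route I would take. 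A direct attack in general would instead expand $H=\prod_{i=1}^{1+\ell}\gamma_i(B')^{p^{1+\ell-i}}$ via Proposition~\ref{prop:lower p}(2) and, exploiting that commutators into the abelian group $H$ are additive, estimate each $[b,g^{p^{1+\ell-i}}]$ with $g\in\gamma_i(B')$ using the Hall--Petrescu formula (Theorem~\ref{thm:HP}) together with $H^p=1$; the guiding principle is that the dominant term $[b,g]^{p^{1+\ell-i}}$ already lies in $H^p$. Granting the base case, the induction together with the reduction of the first paragraph completes the proof.
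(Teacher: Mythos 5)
Your opening reduction matches the paper's argument exactly: as in Lemma~\ref{lem:separating, 3}, push everything through $\ad(\delta_e^{-1})\circ\phi_v$, apply the hypothesis, and reduce to the identity $\gamma^p_{n+\ell}(B') = \gamma^p_n\bigl(\gamma^p_{1+\ell}(B')\bigr)$ for $B' := f'_{e'}(G'_{e'})$; the $\leq$ direction via Lemma~\ref{lem:iterated gamma} is also correct. But the reverse inclusion genuinely fails under the hypotheses you allow yourself, and your own doubts about the base case are well founded. Take $B' = \F_p^3 \rtimes \F_p$ with the second factor acting through a single $3\times 3$ unipotent Jordan block, with $p\geq 5$ and $\ell = 1$. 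Then $B'$ has exponent $p$, so $\gamma^p_k(B')=\gamma_k(B')$; the subgroup $\gamma^p_{1+\ell}(B')=\gamma_2(B')\cong\F_p^2$ is abelian, yet $\gamma^p_{2+\ell}(B')=\gamma_3(B')\cong\F_p$ is not contained in $(\gamma^p_{1+\ell}(B'))^p=1$. Thus neither the commutator induction nor the Hall--Petrescu sketch can establish the base case $[B',H]\leq H^p$. Your ``strongly $p$-potent'' escape also does not deliver what you claim: strong $p$-potency gives injectivity of the maps $L_m\to L_{m+1}$, not $(\gamma^p_m(B'))^p=\gamma^p_{m+1}(B')$ (that would require uniform $p$-potency plus a further argument).

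What the paper's one-sentence justification actually uses is that the whole group $G'_{e'}$---and hence $B'$---is abelian, not merely the subgroup $\gamma^p_{1+\ell}(B')$. (The hypothesis ``$G_e$ abelian'' should be read alongside the fact that $\phi_e$ embeds $G_e$ into $G'_{e'}$ and that in every use of the lemma the edge groups of the target graph of groups are abelian; for $\ell=0$ the stated hypothesis already forces $G'_{e'}=G'_{e',1}$ to be abelian, but for $\ell>0$ abelianness of $G'_{e'}$ must be granted as an additional hypothesis.) Once $B'$ is abelian the identity is a one-liner: $\gamma^p_k(B')=p^{k-1}B'$, so $\gamma^p_n(\gamma^p_{1+\ell}(B'))=p^{n-1}(p^{\ell}B')=p^{n+\ell-1}B'=\gamma^p_{n+\ell}(B')$, and nothing else needs to be proved. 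Replace the second and third paragraphs of your proposal with this observation.
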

\begin{proof}
As in the proof of the previous lemma, it suffices to show that
$$(\ad(\delta_e^{-1})\circ\phi_v)(G_{v,n}\cap f_e(G_e)) \subseteq f'_{e'}(\phi_e(\gamma^p_n(G_e)))\qquad\text{for all $n\geq 1$.}$$
Let $n\geq 1$. By hypothesis
$$(\ad(\delta_e^{-1})\circ\phi_v)(G_{v,n}\cap f_e(G_e)) \subseteq
G'_{v',n}\cap f'_{e'}(G'_{e'}) = \gamma^p_{n+\ell}(f'_{e'}(G'_{e'})).$$
Since $G'_{e'}$ is abelian and $\phi_e(G_e)=G'_{e',1}=\gamma^p_{1+\ell}(G'_{e'})$, we have
$$\gamma^p_{n+\ell}(G'_{e'})=\gamma^p_n(\gamma^p_{1+\ell}(G'_{e'}))=
\gamma^p_n(\phi_e(G_e))=\phi_e(\gamma^p_n(G_e))$$
and thus
\begin{multline*}
(\ad(\delta_e^{-1})\circ\phi_v)(G_{v,n}\cap f_e(G_e)) \subseteq \gamma^p_{n+\ell}(f'_{e'}(G'_{e'}))\\ =f'_{e'}(\gamma^p_{n+\ell}(G'_{e'})) =
f'_{e'}(\phi_e(\gamma^p_n(G_e))),
\end{multline*}
as required.
\end{proof}

\subsection{The Mayer-Vietoris sequence associated to a graph of groups}\label{sec:MV}
Let $\scrg$ be a graph of groups with fundamental group $G=\pi_1(\scrg)$. Let $E_+$ be an orientation of the underlying graph $Y$ of $\scrg$. Then for each $G$-module $M$, there is an exact sequence $$\cdots \to H_{n+1}(G;M) \to \bigoplus_{e\in E_+} H_n(G_e;M)\to \bigoplus_{v\in V(Y)} H_n(G_v;M) \to H_n(G;M)\to\cdots$$ where each morphism $H_n(G_v;M)\to H_n(G;M)$ occurring in this sequence is induced by the inclusion $G_v\to G$, and each composition $$H_n(G_e;M)\to \bigoplus_{v\in V(Y)} H_n(G_v;M)\overset{\pi_v}{\longrightarrow} H_n(G_v;M)$$ where $\pi_v$ denotes the natural projection onto $H_n(G_v;M)$, is given by $(f_{e})_*$ (respectively $-(f_{\ol{e}})_*$) if $v=t(e)$ (respectively $v=o(e)$), and is $0$ otherwise. (Cf.~\cite{Ch76} or \cite[Section~II.2.8]{Se80}.) Dually, there is also a Mayer-Vietoris sequence in cohomology, which we won't formulate explicitly here.

\chapter{Embedding Theorems for $p$-Groups}\label{ch:embedding theorems}

\noindent
In this chapter we consider two types of embedding theorems for $p$-groups:
one for amalgamating (filtered) $p$-groups, and one for extending partial automorphisms between subgroups of a given $p$-group to inner automorphisms of a larger $p$-group. These results will be basic for establishing in the next chapter that certain fundamental groups of graphs of groups are (virtually) residually $p$.

\section{An amalgamation theorem for filtered $p$-groups}\label{sec:An Embedding Theorem}

\noindent
%In the following, given a set $\pi$ of prime numbers, a {\it $\pi$-group} is a finite group whose order is only divisible by primes from $\pi$.
An {\it amalgam} of groups consists of a pair $(G,H)$ of groups with a common subgroup~$U$; notation: $G\cup H|U$. An {\it embedding} of an amalgam $G\cup H|U$ into a group~$W$ is a pair $(\alpha,\beta)$ consisting  of injective group morphisms $\alpha\colon G\to W$, $\beta\colon H\to W$ such that $\alpha|U=\beta|U$. (Figure~\ref{fig:embedding}.)
\begin{figure} \centering
\[ \xymatrix{
& W  &\\
G \ar@{.>}[ur]^{\alpha} &  & H \ar@{.>}[ul]_{\beta}\\
& \ar[ul]  U  \ar[ur] &
%\Z^2 \ar[d]\ar[d]^\a & A \ar[d]\\ (\Z/p)^2\ar[r]^{\ol{\a}}&A/pA.
}\]
\caption{An embedding of an amalgam of groups}\label{fig:embedding}
\end{figure}
Such an embedding of $G\cup H|U$ into $W$ is said to be {\it strong} if in addition $\alpha(G)\cap\beta(H)=\alpha(U)$.
The following is a variant of a theorem of Higman \cite{Hi64}. The equivalences of (1)--(4) may also be deduced from \cite[Theorem~2]{Le87}; for our application in Section~\ref{sec:A Reduction Theorem}, we are mostly interested in the addendum.

\index{amalgam}
\index{amalgam!embedding}
\index{amalgam!strong embedding}
\index{Higman}

\begin{theorem}\label{thm:higman}
Let $G\cup H|U$ be an amalgam of $p$-groups. The following are equivalent:
\begin{enumerate}
\item $G\cup H|U$ is strongly embeddable into a $p$-group.
\item $G\cup H|U$ is embeddable into a $p$-group.
\item There exist finite-length chief filtrations of $G$ and $H$ inducing the same filtration of $U$.
\item There exist finite-length central $p$-filtrations $\mathbf G$ of $G$ and $\mathbf H$ of $H$ inducing the same filtration of $U$.
\end{enumerate}
Moreover, if we are given $\mathbf G$ and $\mathbf H$ as in \textup{(4)}, then there exists a strong embedding~$(\alpha,\beta)$ of $G\cup H|U$ into a $p$-group $W$ and a central $p$-filtration $\mathbf W$ of $W$ of finite length
which induces $\alpha(\mathbf G)$ on $\alpha(G)$ and  $\beta(\mathbf H)$ on $\beta(H)$.
\end{theorem}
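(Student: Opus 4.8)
The plan is to run the cycle of implications $(1)\Rightarrow(2)\Rightarrow(3)\Rightarrow(4)\Rightarrow(1)$, establishing the addendum while proving the last implication, which carries essentially all the weight. The implication $(1)\Rightarrow(2)$ is trivial. For $(2)\Rightarrow(3)$: given an embedding $\alpha\colon G\hookrightarrow W'$, $\beta\colon H\hookrightarrow W'$ into a $p$-group $W'$, pick a finite-length chief filtration $\mathbf W'$ of $W'$; its intersections with $\alpha(G)$ and $\beta(H)$ are equivalent to chief filtrations of these subgroups, and since $\alpha|U=\beta|U$ their further intersections with $U=\alpha(U)=\beta(U)$ coincide, so transporting back along $\alpha^{-1}$ and $\beta^{-1}$ yields finite-length chief filtrations of $G$ and $H$ inducing the same filtration of $U$. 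For $(3)\Rightarrow(4)$: a chief filtration of a $p$-group is a central $p$-filtration, so the filtrations produced in (3) already witness (4). It therefore remains to prove $(4)\Rightarrow(1)$ together with the addendum.

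Given $\mathbf G,\mathbf H$ as in (4), by Lemma~\ref{lem:refine} (applied to $U\leq G$ and $U\leq H$) and by stretching — operations that preserve the class of central $p$-filtrations — we may assume $\mathbf G=\{G_n\}$ and $\mathbf H=\{H_n\}$ have a common finite length $c$ and satisfy $G_n\cap U=H_n\cap U=:U_n$ for all $n$. We then construct the sought-for $p$-group $W$, central $p$-filtration $\mathbf W$, and strong embedding $(\alpha,\beta)$ by induction on $c$, the case $c=0$ being trivial. For the inductive step, let $G_c$, $H_c$ be the bottom layers — elementary abelian and central in $G$ resp.\ $H$, with $G_c\cap U=H_c\cap U=U_c$. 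Passing to $\bar G:=G/G_c$, $\bar H:=H/H_c$, $\bar U:=U/U_c$ gives an amalgam $\bar G\cup\bar H\mid\bar U$ carrying induced central $p$-filtrations $\bar{\mathbf G}$, $\bar{\mathbf H}$ of length $c-1$ inducing the same filtration on $\bar U$; by the inductive hypothesis there is a strong embedding of this amalgam into a $p$-group $\bar W$ with a central $p$-filtration $\bar{\mathbf W}$ (which we may stretch to length exactly $c-1$) inducing $\bar{\mathbf G}$ on $\bar G$ and $\bar{\mathbf H}$ on $\bar H$; we identify $\bar G,\bar H$ with their images, so $\bar G\cap\bar H=\bar U$ in $\bar W$.

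Now set $A:=G_c\oplus_{U_c}H_c$ (the pushout in the category of abelian groups), an elementary abelian $p$-group containing $G_c$ and $H_c$ with $G_c\cap H_c=U_c$, and take $W:=A\wr\bar W=A^{\bar W}\rtimes\bar W$, a $p$-group. Using the Kaloujnine--Krasner embedding of the central extension $1\to G_c\to G\to\bar G\to1$ followed by the standard wreath-product embedding $G_c\wr\bar G\hookrightarrow A\wr\bar W$ induced by $G_c\hookrightarrow A$ and $\bar G\hookrightarrow\bar W$ — and likewise for $H$ — one obtains embeddings $\iota_G\colon G\hookrightarrow W$ and $\iota_H\colon H\hookrightarrow W$ carrying $G_c$, $H_c$ onto the subgroup of constant functions in the base group $A^{\bar W}$. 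Choosing the set-theoretic sections of $G\to\bar G$ and $H\to\bar H$ so that they restrict to a common section of $U\to\bar U$, one checks that $\iota_G$ and $\iota_H$ already agree on $U_c$ and that $\iota_G|_U$ and $\iota_H|_U$ differ by a crossed homomorphism $\bar U\to A^{\bar W}$; since $A^{\bar W}$ is a coinduced $\bar U$-module ($\bar W$ being a free $\bar U$-set under translation), this crossed homomorphism is a coboundary, so after replacing $\iota_H$ by $\ad(a)\circ\iota_H$ for a suitable $a\in A^{\bar W}$ we may assume $\iota_G|_U=\iota_H|_U=:\iota_U$ (and $\ad(a)$ still carries $H_c$ onto constants, as these are central in $A^{\bar W}$). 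Finally, equip $W$ with the filtration $\mathbf W$ that first runs through $\bar{\mathbf W}$ while retaining all of $A^{\bar W}$ — i.e.\ $W_n:=A^{\bar W}\rtimes\bar W_n$ for $n\leq c$, so $W_c=A^{\bar W}$ — and then descends through the powers of the augmentation ideal $I\trianglelefteq\F_p[\bar W]$ acting on the $\F_p[\bar W]$-module $A^{\bar W}$ — i.e.\ $W_{c+k}:=I^kA^{\bar W}$ — which terminates since $I$ is nilpotent and $A^{\bar W}$ is annihilated by $p$. Using that the constant copies of $G_c$ and $H_c$ lie in $I^{d-1}A^{\bar W}\setminus I^dA^{\bar W}$ (where $I^d=0$), one verifies that $\mathbf W$ is a central $p$-filtration of finite length inducing $\iota_G(\mathbf G)$ on $\iota_G(G)$ and $\iota_H(\mathbf H)$ on $\iota_H(H)$, and that $\iota_G(G)\cap\iota_H(H)=\iota_U(U)$, strongness following by projecting to $\bar W$, invoking $\bar G\cap\bar H=\bar U$ there, and then unwinding inside $A=G_c\oplus_{U_c}H_c$. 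This closes the induction and, with it, the proof of $(4)\Rightarrow(1)$ and of the addendum.

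The main obstacle is precisely this inductive step of $(4)\Rightarrow(1)$: one must manufacture an ambient $p$-group carrying a single central $p$-filtration that simultaneously supports a \emph{strong} embedding of the amalgam and \emph{restricts exactly} to the two prescribed filtrations. The wreath product $A\wr\bar W$ is the natural vehicle, but both the reconciliation of the two Kaloujnine--Krasner embeddings on $U$ (the coboundary-and-conjugation argument, which requires the cohomological triviality of coinduced modules) and the precise choice of filtration on $A\wr\bar W$ cutting out $\iota_G(\mathbf G)$ and $\iota_H(\mathbf H)$ (which requires controlling the $I$-adic filtration of $A^{\bar W}$, in particular locating the constant functions relative to the socle $I^{d-1}A^{\bar W}$) are delicate; these are exactly the points at which the structural facts about wreath products and about augmentation ideals of group rings of $p$-groups are used.
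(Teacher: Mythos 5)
Your proof follows essentially the same strategy as the paper's: the cycle of easy implications $(1)\Rightarrow(2)\Rightarrow(3)\Rightarrow(4)$ is identical, and for $(4)\Rightarrow(1)$ with the addendum you, like the paper, induct on the length of the filtrations, pass to the quotients by the bottom (central, elementary abelian) terms $G_c$, $H_c$, form the fiber sum $A=G_c\oplus_{U_c}H_c$, and embed the amalgam into the wreath product $A\wr\bar W$, equipping it with a filtration that extends the inductively given one on $\bar W$ by powers of the augmentation ideal acting on the base group. Your formulation $W_{c+k}=I^kA^{\bar W}$ is the paper's $T^K\cap\gamma^p_{k+1}(W)$ via Proposition~\ref{prop:Buckley}, and the fact that the constants sit exactly in the last nonzero power of $I$ is Corollary~\ref{cor:Buckley}. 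The one point at which you genuinely depart from the paper's argument is the reconciliation of $\iota_G|_U$ and $\iota_H|_U$: the paper, following Higman's Lemma~2, builds two coordinated countermaps $\theta_1^*$, $\phi_1^*$ extending a common countermap $\theta^*\colon\bar W\to U$, then shows $f_u^{\theta_1^*}=f_u^{\theta^*}=f_u^{\phi_1^*}$ by direct computation using centrality of $U_c$; you instead observe that the discrepancy is a $1$-cocycle $\bar U\to A^{\bar W}$ that factors through $\bar U$ and kill it by an inner automorphism, invoking $H^1(\bar U;A^{\bar W})=0$ because $A^{\bar W}$ is $\F_p[\bar U]$-free (the $\bar U$-set $\bar W$ being free). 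These two devices are morally equivalent --- the countermap construction is an explicit choice of coboundary --- but your version is cleaner and more conceptual.

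One point of caution, which is a small gap in the write-up rather than in the idea. Your inductive hypothesis (the theorem's addendum) only produces a filtration $\bar{\mathbf W}$ of $\bar W$ that \emph{induces} $\bar{\mathbf G}$ on $\bar G$ and $\bar{\mathbf H}$ on $\bar H$, i.e.\ equality of the underlying \emph{sets} of subgroups, not term-by-term equality $\bar W_n\cap\bar G=\bar G_n$ and $\bar W_n\cap\bar H=\bar H_n$ with the same index $n$; but your verification that $W_n\cap\iota_G(G)=\iota_G(G_n)$ for $n\leq c$ quietly uses term-by-term equality, \emph{simultaneously} for $G$ and $H$. Stretching $\bar{\mathbf W}$ to length $c-1$, as you propose, does not by itself produce a \emph{common} re-indexing for both sides. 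The paper handles exactly this by taking Proposition~\ref{prop:higman, more precise} --- whose conclusion (A1) asserts exact term-by-term equality after passing to \emph{compatible} stretchings $\mathbf G^*$, $\mathbf H^*$ --- as the induction carrier. You should strengthen the statement you prove by induction in the same way; once that is done your argument goes through.
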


Note that the implication ``(1)~$\Rightarrow$~(2)'' is trivial.
If $W$ is a $p$-group containing both $G$ and $H$ as subgroups, then any chief filtration of $W$ induces a chief filtration on $G$ and a chief filtration on $H$. This  shows ``(2)~$\Rightarrow$~(3)'', and ``(3)~$\Rightarrow$~(4)'' follows from the fact that  each chief filtration of a $p$-group is a central $p$-filtration. Hence it remains to give the proof of ``(4)~$\Rightarrow$~(1)'' in the stronger formulation given in the ``moreover'' part.

\medskip

The class of  \emph{elementary abelian} $p$-groups has the (strong) amalgamation property, i.e.:
every amalgam of elementary abelian $p$-groups $G\cup H|U$ always admits a strong embedding into an elementary abelian $p$-group $W$.
Somewhat more generally,
let $G$ and $H$ be abelian groups (written additively), and let $\varphi\colon U\to G$, $\psi\colon U\to H$ be injective morphisms from a group
$U$ to $G$ respectively $H$. Then
$$K := \big\{ \varphi(u)-\psi(u) : u\in U\big\}$$
is a subgroup of $G\oplus H$, and
$$G\oplus_{U} H := (G\oplus H) / K$$
is called the fiber sum of $G$ and $H$ over $U$ (via $\varphi$, $\psi$). It is easy to check that the natural morphisms $G\to G\oplus_{U} H$ and $H\to G\oplus_{U} H$ are injective. The fiber sum $G\oplus_{U} H$ is the solution to an universal problem: it is the push-out, in the category of abelian groups, of the diagram
$G\overset{\varphi}{\leftarrow} U\overset{\psi}{\rightarrow} H$.
(In connection with this construction it is interesting to note that no varieties of groups other than the variety of all groups and varieties of abelian groups are known to have the amalgamation property; cf.~\cite[pp.~42--43]{Ne67}.)

\medskip

Let now $G\cup H|U$ be an amalgam of $p$-groups and $\mathbf G=\{G_i\}$ and $\mathbf H=\{H_i\}$ be central $p$-filtrations of $G$ respectively $H$ as in \textup{(4)}. For  showing the rest of Theorem~\ref{thm:higman}, after refining $\mathbf G$ and $\mathbf H$ suitably, we may assume that $\mathbf G\cap U=\mathbf H\cap U$. (Lemma~\ref{lem:refine}.) In this case, we actually have a more precise result:

\begin{proposition}\label{prop:higman, more precise}
Let $G\cup H|U$ be an amalgam of $p$-groups, and suppose $\mathbf G$ and~$\mathbf H$ are finite-length central $p$-filtrations of $G$ respectively $H$ which intersect to the same filtration of $U$. Then there exists a $p$-group $W$ containing both $G$ and~$H$ as subgroups, with $G\cap H=U$, a finite-length central $p$-filtration ${\mathbf W}$ of $W$, as well as compatible stretchings ${\mathbf G}^*$ of $\mathbf G$ respectively ${\mathbf H}^*$ of $\mathbf H$, with the following properties:
\begin{itemize}
\item[(A1)] ${\mathbf W}\cap G={\mathbf G}^*$ and ${\mathbf W}\cap H={\mathbf H}^*$;
\item[(A2)] for each $i\geq 1$,  identifying $L_i({\mathbf G}^*)$ and $L_i({\mathbf H}^*)$ with subgroups of $L_i({\mathbf W})$ in the natural way, and similarly  $L_i({\mathbf U})$ with subgroups of $L_i({\mathbf G}^*)$ and $L_i({\mathbf H}^*)$, where
${\mathbf U}:={\mathbf G}^*\cap U={\mathbf H^*}\cap U$,
we have
$$L_i({\mathbf G}^*) \cap L_i({\mathbf H}^*) = L_i({\mathbf U});$$
hence $L_i({\mathbf W})$ embeds $L_i({\mathbf G}^*) \oplus_{L_i({\mathbf U})} L_i({\mathbf H}^*)$ in a natural way.
\end{itemize}
\end{proposition}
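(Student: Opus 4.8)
The plan is to prove the Proposition by induction on the common length of $\mathbf G$ and $\mathbf H$. First I would make preliminary reductions: by the remarks on stretchings in Section~\ref{subsec:filtrations} together with Lemma~\ref{lem:refine}, one may replace $\mathbf G$ and $\mathbf H$ by compatible stretchings of themselves so that they have the same finite length $n$ and induce literally the same filtration $\mathbf U=\{U_i\}$ on $U$, i.e.\ $G_i\cap U=H_i\cap U=U_i$ for all $i\ge 1$. A stretching changes neither the set of distinct terms nor the hypotheses, and the conclusion we are after already permits passing to compatible stretchings, so this is harmless. The induction is then on $n$.

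For $n=1$ the groups $G,H,U$ are elementary abelian $p$-groups; take $W:=G\oplus_{U}H$ (the fiber sum), $\mathbf W:=(W\ge 1)$, $\mathbf G^{*}:=\mathbf G$, $\mathbf H^{*}:=\mathbf H$. Since the natural maps $G,H\to G\oplus_{U}H$ are injective with image-intersection equal to the image of $U$, one gets $G\cap H=U$ in $W$; (A1) is trivial, and (A2) holds because $L_1(\mathbf W)=W$, $L_1(\mathbf G^{*})=G$, $L_1(\mathbf H^{*})=H$, $L_1(\mathbf U)=U$.

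For the inductive step, set $Z_G:=G_n$, $Z_H:=H_n$, $Z_U:=U_n$; these are central elementary abelian, with $Z_U=Z_G\cap U=Z_H\cap U$. Pass to $\overline G:=G/Z_G$, $\overline H:=H/Z_H$ with their induced length-$(n-1)$ central $p$-filtrations $\overline{\mathbf G},\overline{\mathbf H}$; then $\overline U:=U/Z_U$ embeds into both, with $\overline{\mathbf G}\cap\overline U=\overline{\mathbf H}\cap\overline U=:\overline{\mathbf U}$. By the inductive hypothesis there is a $p$-group $\overline W\supseteq\overline G,\overline H$ with $\overline G\cap\overline H=\overline U$, a finite-length central $p$-filtration $\overline{\mathbf W}$, and compatible stretchings $\overline{\mathbf G}^{*},\overline{\mathbf H}^{*}$ satisfying (A1) and (A2). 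I would then realize $W$ as an extension $1\to\widehat Z\to W\to\overline W\to 1$ whose kernel $\widehat Z$ is an elementary abelian $p$-group built from the fiber sum $Z:=Z_G\oplus_{Z_U}Z_H$ of top layers. Pushing the central extension classes of $G$ and of $H$ forward along $Z_G\hookrightarrow Z$ and $Z_H\hookrightarrow Z$ gives classes $\epsilon_G\in H^2(\overline G;Z)$ and $\epsilon_H\in H^2(\overline H;Z)$ whose restrictions to $\overline U$ agree (both are the pushforward of the class of $1\to Z_U\to U\to\overline U\to 1$). Because the restriction map $H^2(\overline W;Z)\to H^2(\overline G;Z)\times_{H^2(\overline U;Z)}H^2(\overline H;Z)$ need not be surjective, one enlarges the coefficient module to a larger elementary abelian $\F_p[\overline W]$-module $\widehat Z\supseteq Z$ by a wreath-product / (co)induction construction — this is precisely where the facts about wreath products and group rings enter — chosen so that (a) a suitable extension class over $\overline W$ with coefficients in $\widehat Z$ exists, (b) $Z_G$ and $Z_H$, hence $G$ and $H$ themselves (not merely quotients of them), still embed into the resulting $W$ over $\overline G$ and $\overline H$, and (c) $Z_G\cap Z_H=Z_U$ inside $\widehat Z$. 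Then $G\cap H=U$ in $W$ follows from $\overline G\cap\overline H=\overline U$ together with $G\cap\widehat Z=Z_G$, $H\cap\widehat Z=Z_H$ and (c).

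It remains to assemble the filtration and the stretchings: take $\mathbf G^{*}$ to be the stretching of $\mathbf G$ obtained by lifting $\overline{\mathbf G}^{*}$ to $G$ and appending the term $\{1\}$ (likewise $\mathbf H^{*}$, with the appending done at a common index so the two stretchings stay compatible), and define $\mathbf W$ by combining the lift of $\overline{\mathbf W}$ to $W$ with a final central $p$-filtration on $\widehat Z$ chosen compatibly with the images of $Z_G$, $Z_H$, $Z_U$. Then $\mathbf W$ is a finite-length central $p$-filtration (since $\widehat Z$ is central elementary abelian), (A1) holds by construction, and (A2) reduces in the new top range of indices to $Z_G\cap Z_H=Z_U$ and in the remaining range to the inductive (A2), using that $W\to\overline W$ has central kernel so that the layers of $\mathbf W$ there are canonically those of $\overline{\mathbf W}$. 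The main obstacle is points (a)--(b): choosing $\widehat Z$ so that the cohomology classes of $G$ and of $H$ are simultaneously realized over $\overline W$ while $G$ and $H$ remain embedded in $W$ — the technical heart of the sharpened Higman theorem, which is exactly where the wreath-product estimates are needed. An alternative route, leading to the same obstruction, is to build the requisite descending chain $\{\Gamma_i\}$ directly on the amalgamated free product $\Gamma=G\ast_U H$ (with $W=\Gamma/\Gamma_{n+1}$) by a group-ring/dimension-subgroup type construction adapted to $\mathbf G$ and $\mathbf H$, where the same wreath-product input is needed to keep $\Gamma_i\cap G$ and $\Gamma_i\cap H$ under control.
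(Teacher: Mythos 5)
Your proposal identifies the correct architecture of the proof — induction on length, quotient by the top layers, fiber sum of those layers, and a wreath-product/coinduction enlargement of coefficients to resolve the extension problem — and this does match the paper's approach in outline. However, you explicitly defer the load-bearing step as ``the main obstacle,'' and that is where the genuine mathematical content lies, so the argument as written has a gap.

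Concretely: the paper's proof takes $K$ from the inductive hypothesis, forms $W=T\wr K$ with $T$ the fiber sum of the top layers, and uses standard embeddings (Higman's construction) of $G$ and $H$ into $W$. The delicate point — which your sketch does not address — is how to define a central $p$-filtration $\mathbf W$ of $W$ so that the trace $\mathbf W\cap G$ is literally a \emph{stretching} of $\mathbf G$, i.e.\ introduces no intermediate subgroups between consecutive terms of $\mathbf G$, and likewise for $H$. Your phrase ``a final central $p$-filtration on $\widehat Z$ chosen compatibly with the images of $Z_G$, $Z_H$, $Z_U$'' glosses over the problem: the base group $T^K$ is typically much larger than the fiber sum $Z=Z_G\oplus_{Z_U}Z_H$, and an arbitrary central $p$-filtration of it would slice $\alpha(Z_G)$ into proper pieces, destroying condition (A1). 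The paper gets around this via Corollary~\ref{cor:Buckley} (resting on Proposition~\ref{prop:Buckley}, a characteristic-$p$ sharpening of Buckley's theorem identifying $\gamma_{n+1}(R\wr H)\cap R[H]$ with the powers $\omega^n$ of the augmentation ideal): it shows that the standard embedding sends the central subgroup $G_n=X$ of $G$ \emph{exactly} onto $X\widehat K=\omega(X[K])^d$, the last nonzero power of the augmentation ideal, which is a term of the $\gamma^p$-filtration of $W$. With $\mathbf W$ defined by $W_i=K_i\cdot T^K$ for small $i$ and $W_{m+i}=T^K\cap\gamma^p_i(W)$ afterwards, this is precisely what forces $\alpha^{-1}(W_{m+1})=\cdots=\alpha^{-1}(W_{m+l})=G_n$, making $\mathbf W\cap G$ a stretching of $\mathbf G$. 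Without a fact of this type, your central-extension formulation runs into the same wall: a large cohomologically-chosen $\widehat Z$ gives you an extension $W$ into which $G$ and $H$ embed, but there is no reason the filtration you put on $\widehat Z$ will restrict to (a stretching of) the trivial filtration $Z_G\geq 1$ and $Z_H\geq 1$ simultaneously. That is the content you need to supply, and it is exactly what the wreath-product/group-ring analysis in Section~\ref{sec:An Embedding Theorem} of the paper provides.
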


The remainder of this section is occupied by the proof of this proposition. The main
idea of the argument is to apply the fiber sum construction inductively to the layers of the given filtrations $\mathbf G$ and $\mathbf H$.
Before we can carry this out, we need some preliminary observations on group algebras, wreath products, and a construction of central filtrations on semidirect products.

\subsection{Group algebras}
Let $R$ be a commutative ring and $G$ be a group, and let $R[G]$ be the group algebra of $G$ over $R$. (As a general reference for group algebras we use \cite{Pa77}.)

\index{group algebra}

\medskip

The {\it augmentation ideal}\/ of $R[G]$ is the kernel $\omega=\omega(R[G])$ of the $R$-algebra morphism $R[G]\to R$ given by $g\mapsto 1$ for every $g\in G$. (Here and below, ``ideal'' will always mean ``two-sided ideal'' unless noted otherwise.) It is easy to see (cf.~\cite[Lemma~3.1.1]{Pa77}) that
$$\omega = \sum_{g\in G} R[G](1-g).$$
Note that if $R$ has characteristic $p$, then $R[G]=R\otimes_{\F_p}\F_p[G]$ and hence $\omega(R[G])^n=R\otimes_{\F_p}\omega(\F_p[G])^n$ for every $n$.

\medskip

Given an ideal $I$ of $R[G]$ we say that a
descending sequence $\{I_n\}_{n\geq 1}$ of ideals of $R[G]$ satisfying $I_1=I$ and $I_m I_n\leq I_{m+n}$ for all $m$, $n$ is a {\it filtration} of $I$.
Every strongly central filtration
$$G=G_1 \geq G_2 \geq \cdots \geq G_n \geq \cdots$$
of $G$ gives rise to a filtration $\{I_n\}$ of the ideal $\omega$ as follows:
For each $g\in G$ define $v(g)$ to be the largest $k\in\N^{\geq 1}\cup\{\infty\}$ for which $g\in G_k$, where we set $G_\infty:=\bigcap_{n\geq 1} G_n$. For each $n\geq 1$ let $I_n$ be the left $R$-submodule of $R[G]$ spanned by all products of the form
$$(1-g_1)\cdot (1-g_2) \cdots (1-g_m)$$
for some $m$ with $v(g_1)+v(g_2)+\cdots+v(g_m)\geq n$. It is easily seen that $\{I_n\}$ is a filtration of $\omega$; cf.~\cite[p.~87]{Pa77}.
Conversely, let
$$\omega = I_1 \geq I_2 \geq \cdots \geq I_n \geq \cdots$$
be a filtration of $\omega$. Then we associate to $\{I_n\}$ a filtration $\{G_n\}$ of $G$ by setting
$$G_n := \big\{ g\in G : 1-g\in I_n \big\}.$$
If $R$ has characteristic $p$, then $\{G_n\}$ is a dimensional $p$-filtration of $G$; cf.~\cite[Lem\-ma~3.3.2]{Pa77}.
In particular, taking $I_n=\omega^n$ for every $n$, we obtain a corresponding dimensional $p$-filtration of $G$. In fact, this filtration turns out to be exactly the lower dimensional $p$-filtration $\{D_n\}$ of $G$:

\begin{theorem} \label{thm:Jennings}
Suppose $R$ is of characteristic $p$ and $G$ is a $p$-group. Then
$$D_n = \{ g\in G: 1-g\in \omega^n \}\qquad\text{for all $n\geq 1$.}$$
Moreover, the filtration of $\omega$ determined by $\{D_n\}$ is precisely $\{\omega^n\}$, the powers of the augmentation ideal. The ideal  $\omega$ is nilpotent of nilpotency class
$$(p-1) \sum_n n \, \operatorname{dim}_{\F_p}(D_n/D_{n+1}).$$
\end{theorem}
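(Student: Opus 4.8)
The plan is to prove the theorem in two halves. Put $D_n^{*}:=\{g\in G:1-g\in\omega^{n}\}$. The inclusion $D_n\subseteq D_n^{*}$ and the statement about the filtration of $\omega$ are essentially formal: since $\{\omega^{n}\}_{n\ge 1}$ is a filtration of the augmentation ideal $\omega$ and $R$ has characteristic $p$, the sequence $\{D_n^{*}\}$ is a dimensional $p$-filtration of $G$ by the discussion preceding the theorem (cf.\ \cite[Lemma~3.3.2]{Pa77} with $I_n=\omega^{n}$), and as $\{D_n\}$ is the fastest descending dimensional $p$-filtration of $G$ we get $D_n\subseteq D_n^{*}$ for all $n$. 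Writing $\{I_n\}$ for the filtration of $\omega$ determined by $\{D_n\}$ as in the text, a generator $(1-g_1)\cdots(1-g_m)$ of $I_n$ (with $\sum_i v(g_i)\ge n$, where $v(g)$ is the largest $k$ with $g\in D_k$) lies in $\omega^{v(g_1)}\cdots\omega^{v(g_m)}\subseteq\omega^{n}$ because $g_i\in D_{v(g_i)}\subseteq D_{v(g_i)}^{*}$, while $\omega^{n}$ is the left $R$-span of products $(1-h_1)\cdots(1-h_n)$ with $h_i\in G=D_1$, each of which lies in $I_n$; hence $I_n=\omega^{n}$. From here on I may assume $R=\F_p$, since $R[G]=R\otimes_{\F_p}\F_p[G]$ and $\omega(R[G])^{n}=R\otimes_{\F_p}\omega(\F_p[G])^{n}$, so that neither $D_n^{*}$ nor the nilpotency class of $\omega$ changes. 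As $\F_p[G]$ is local Artinian, $\omega$ is nilpotent and $\bigcap_n\omega^{n}=0$, whence $D_N^{*}=\{1\}$ and therefore $D_N=\{1\}$ for $N\gg 0$; thus $\{D_n\}$ and $\{D_n^{*}\}$ are separating filtrations of finite length.

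For the reverse inclusion $D_n^{*}\subseteq D_n$ I would pass to the associated graded ring $A:=\bigoplus_{n\ge 0}\omega^{n}/\omega^{n+1}$, a graded $\F_p$-algebra generated in degree $1$ (since $\omega=\sum_g\F_p(1-g)$) with $\dim_{\F_p}A=|G|$. Using $D_n\subseteq D_n^{*}$ and the identities for $1-gh$, $1-[g,h]$ and $1-g^{p}$, the assignment $g\,D_{n+1}\mapsto(1-g)+\omega^{n+1}$ gives well-defined $\F_p$-linear maps $\lambda_n\colon\mathcal L_n:=D_n/D_{n+1}\to\omega^{n}/\omega^{n+1}$; since $\{D_n\}$ is a dimensional $p$-filtration, group commutators and $p$-th powers make $\mathcal L:=\bigoplus_n\mathcal L_n$ into a graded restricted Lie $\F_p$-algebra, and the $\lambda_n$ assemble to a homomorphism of graded $\F_p$-algebras $\Lambda\colon u(\mathcal L)\to A$ out of its restricted universal enveloping algebra. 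As $\lambda_1$ maps onto $A_1=\omega/\omega^{2}$ and $A$ is generated in degree $1$, $\Lambda$ is surjective. The crux is that $\Lambda$ is in fact an isomorphism.

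Granting this, every $\lambda_n$ is injective, being the restriction of $\Lambda$ to $\mathcal L_n\subseteq u(\mathcal L)$; that is, $D_n\cap D_{n+1}^{*}=D_{n+1}$ for all $n$. Now if $g\in D_m^{*}\setminus D_m$, let $w$ be its $\{D_n\}$-weight; then $w<m$, so $g\in D^{*}_{w+1}$ and hence $g\in D_w\cap D_{w+1}^{*}=D_{w+1}$, contradicting the maximality of $w$. Therefore $D_m^{*}\subseteq D_m$, which completes the first assertion. For the last assertion, transport through the isomorphism $\Lambda$ the monomial $\F_p$-basis of $u(\mathcal L)$ furnished by the Poincar\'e--Birkhoff--Witt theorem for restricted Lie algebras: if $e_1,\dots,e_d$ are the weights of a homogeneous $\F_p$-basis of $\mathcal L$, this basis shows $\omega^{n}\ne 0$ precisely for $n\le(p-1)\sum_i e_i$, and since the number of $i$ with $e_i=m$ equals $\dim_{\F_p}(D_m/D_{m+1})$, it follows that $\omega$ has nilpotency class $(p-1)\sum_n n\,\dim_{\F_p}(D_n/D_{n+1})$.

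The main obstacle is the claim that $\Lambda$ is an isomorphism, which is the genuinely nontrivial, classical core of Jennings' theorem. I would obtain it by a dimension count: the restricted Poincar\'e--Birkhoff--Witt theorem gives $\dim_{\F_p}u(\mathcal L)=p^{\dim_{\F_p}\mathcal L}$, and since $\{D_n\}$ is a finite-length filtration descending from $D_1=G$ to $\{1\}$ one has $\dim_{\F_p}\mathcal L=\sum_n\dim_{\F_p}(D_n/D_{n+1})=\log_p|G|$, so $\dim_{\F_p}u(\mathcal L)=|G|=\dim_{\F_p}A$; a surjection between finite-dimensional $\F_p$-spaces of equal dimension is bijective. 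The restricted Poincar\'e--Birkhoff--Witt theorem and the (routine, via Hall--Petrescu-type identities) verification that $\mathcal L$ carries a restricted Lie algebra structure, as well as the entire identification $\operatorname{gr}_\omega\F_p[G]\cong u(\mathcal L)$ with its explicit monomial basis, may alternatively be quoted from \cite[Chapter~VIII]{HB82} or \cite[\S3.3]{Pa77}.
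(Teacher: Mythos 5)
The paper offers no proof of Theorem~\ref{thm:Jennings}; it attributes the result to Jennings and refers to \cite[Theorems~3.3.6 and 3.3.7]{Pa77}, so there is no in-paper argument to compare against. Your outline is a correct reconstruction of the classical argument used in that reference (and in \cite[Chapter~VIII]{HB82}): the inclusion $D_n\leq D_n^*$ and the identity $I_n=\omega^n$ are formal from the universal property of $\{D_n\}$ as the fastest descending dimensional $p$-filtration; the hard inclusion $D_n^*\leq D_n$ and the nilpotency formula both fall out of the isomorphism $\Lambda\colon u(\mathcal L)\xrightarrow{\cong}\operatorname{gr}_\omega\F_p[G]$, which the surjection-plus-dimension-count establishes, with restricted PBW supplying both the count $p^{\dim\mathcal L}=|G|=\dim\operatorname{gr}_\omega\F_p[G]$ and the monomial basis from which the top nonvanishing degree is read off. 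One cosmetic remark: with the paper's commutator convention $[g,h]=g^{-1}h^{-1}gh$, you should define $\lambda_n(gD_{n+1})=(g-1)+\omega^{n+1}$ rather than $(1-g)+\omega^{n+1}$, since $1-[g,h]\equiv hg-gh=-\bigl[(1-g),(1-h)\bigr]\pmod{\omega^{n+m+1}}$, whereas $[g,h]-1\equiv gh-hg=\bigl[(g-1),(h-1)\bigr]$; this sign normalization is immaterial to the rest of the argument (the $p$-operation is compatible with either choice).
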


This theorem is due to Jennings \cite{Je41}; for a proof see \cite[Theorems~3.3.6 and 3.3.7]{Pa77}.
The {\it nilpotency class} of an ideal $I$ of $R[G]$ is the smallest $d\geq 0$, if it exists, such that $I^{d+1}=0$. By the theorem, the nilpotency class of $\omega$ is an upper bound on the lower $p$-length (and hence the nilpotency class) of $G$.

%\medskip

%The next lemma (used only in the proof of Proposition~\ref{prop:Buckley} below) is easy to show; we skip the proof.

%\begin{lemma}\label{lem:p-powers}
%Let $q$ be a power of $p$. Then the identity
%$$1+T+\cdots+T^{q-1} = (T-1)^{q-1}$$
%holds in the polynomial ring $\F_p[T]$ in one indeterminate $T$ over $\F_p$. In particular, if $R$ has characteristic $p$, then $(g-1)^{q-1}\in\omega^{n(q-1)}$ for all $g\in D_n$.
%\footnote{[S] Wenn $g\in D_n$, dann $(1-g)\in \omega^n$, also $(g-1)^{q-1}\in\omega^{n(q-1)}$. Dafuer brauche ich die erste Aussage nicht.}
%\end{lemma}

%\marginpar{\small Ich habe die Definition von $\operatorname{ann}$ etc. jetzt doch beibehalten, und habe stattdessen den Beweis von Lemma~2.5 eingef\"ugt; das ist gerechtfertigt, weil die Bemerkung \cite[p.~371]{Pa77} ein wenig gar vergraben ist.}

We denote by
$$\operatorname{ann}(\omega) := \{ r\in R: r\omega=0 \}$$
the left annihilator of $\omega$ (a left ideal of $R[G]$).
Note that if $\omega^{d+1}=0$ then $\operatorname{ann}(\omega)$ contains $\omega^d$.
If $G$ is finite we define
$$\widehat{G} := \sum_{g\in G} g\in R[G].$$
Note that $\widehat{G}g=g\widehat{G}=\widehat{G}$ for all $g\in G$, hence
$\widehat{G}$ is central in $R[G]$.
It is easy to see that if $G$ is finite, then $\operatorname{ann}(\omega)=R\widehat{G}$; cf.~\cite[Lemma~3.1.2]{Pa77}.
The following observation is standard (cf.~\cite[p.~371]{Pa77}):

\begin{lemma}
Suppose $G$ is a $p$-group, and $R$ is a field of characteristic $p$. Then every non-zero ideal $I$ of $R[G]$ contains $\widehat{G}$.
\end{lemma}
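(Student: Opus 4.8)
The plan is to exploit the nilpotency of the augmentation ideal together with the explicit description of its annihilator. Write $\omega=\omega(R[G])$. First I would invoke Theorem~\ref{thm:Jennings}: since $R$ has characteristic $p$ and $G$ is a $p$-group, $\omega$ is nilpotent, say $\omega^{d+1}=0$. I would also record two preliminary facts. (a) The left annihilator $\operatorname{ann}(\omega)=\{x\in R[G]:x\omega=0\}$ equals $R\widehat{G}$ — this is the standard computation already recalled in the excerpt (from $x\omega=0$ one gets $x=xg$ for all $g\in G$, forcing all coefficients of $x$ to agree). (b) $R\widehat{G}$ is a one-dimensional $R$-subspace of $R[G]$, because $\widehat{G}$ has all its coefficients equal to $1\neq 0$ in the field $R$, so $r\mapsto r\widehat{G}$ is an injection $R\hookrightarrow R[G]$.

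Next, given a nonzero ideal $I$ of $R[G]$, I would consider the descending chain of ideals
$$I=I\omega^0\supseteq I\omega\supseteq I\omega^2\supseteq\cdots\supseteq I\omega^{d+1}=0.$$
Since $I\omega^0=I\neq 0$ while $I\omega^{d+1}=0$, there is a largest index $k$ (with $0\le k\le d$) for which $I\omega^k\neq 0$. For this $k$ we have $(I\omega^k)\omega=I\omega^{k+1}=0$, so every element of $I\omega^k$ lies in $\operatorname{ann}(\omega)=R\widehat{G}$; thus $I\omega^k$ is a nonzero $R$-submodule of the one-dimensional space $R\widehat{G}$, whence $I\omega^k=R\widehat{G}$. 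In particular $\widehat{G}\in I\omega^k\subseteq I$, which is exactly the assertion.

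Since $I$ is a two-sided ideal, each $I\omega^k$ is again a two-sided ideal, and in particular an $R$-submodule, so no commutativity issue arises in the chain argument. The hypotheses are used only in the two preliminary facts: nilpotency of $\omega$ (characteristic $p$ and $G$ a $p$-group), and the identification $\operatorname{ann}(\omega)=R\widehat{G}$ together with its one-dimensionality (which needs $R$ a field and $G$ finite). Given these, the argument is essentially immediate, so I do not anticipate a genuine obstacle; the only point requiring a moment's care is that $I\omega^k$ be understood as the actual ideal product, so that the inclusion "$(I\omega^k)\omega=0$ forces each element of $I\omega^k$ into the left annihilator of $\omega$" is literally correct.
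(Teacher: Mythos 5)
Your proof is correct and follows the paper's argument exactly: pick the maximal $k$ with $I\omega^k\neq 0$ (using nilpotency of $\omega$ from Jennings' theorem), observe $I\omega^k\subseteq\operatorname{ann}(\omega)=R\widehat{G}$, and conclude $\widehat{G}\in I$ because $R$ is a field. You have merely spelled out a couple of steps (one-dimensionality of $R\widehat{G}$, two-sidedness of $I\omega^k$) that the paper leaves implicit.
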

\begin{proof}
Let $I\neq 0$ be an ideal of $R[G]$. The ideal $\omega$ is nilpotent, so we can choose an integer $m\geq 0$ maximal with $I\omega^m\neq 0$. Then clearly
$$0\neq I\omega^m\subseteq I\cap \operatorname{ann}(\omega)=I\cap R\widehat{G}$$
and thus $\widehat{G}\in I$ since $R$ is a field.
\end{proof}

Applying this lemma to the prime field $\F_p$ in place of $R$ and using that $R[G]=R\otimes_{\F_p} \F_p[G]$ yields:

\begin{corollary} \label{cor:hatG}
Suppose $G$ is a $p$-group and $\operatorname{char}(R)=p$. Then $\omega^d = R\widehat{G}$ where $d$ denotes the nilpotency class of $\omega$.
\end{corollary}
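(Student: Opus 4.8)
The plan is to reduce to the case $R=\F_p$ by base change, and then to invoke the preceding Lemma together with the identity $\operatorname{ann}(\omega)=R\widehat{G}$ valid for finite $G$.

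First I would use that, since $\chr(R)=p$, the ring $R$ is an $\F_p$-algebra, so $R[G]=R\otimes_{\F_p}\F_p[G]$ and hence $\omega(R[G])^n=R\otimes_{\F_p}\omega(\F_p[G])^n$ for every $n$, as noted above. Because $\F_p[G]$ is finite-dimensional over $\F_p$ and $R\neq 0$, an $\F_p$-subspace $V$ of $\F_p[G]$ is zero if and only if $R\otimes_{\F_p}V$ is zero (pick an $\F_p$-basis of $V$ and note $R\otimes_{\F_p}V\cong R^{\dim_{\F_p}V}$); applying this with $V=\omega(\F_p[G])^n$ shows that $\omega(R[G])^n=0$ precisely when $\omega(\F_p[G])^n=0$. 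Consequently the nilpotency class $d$ of $\omega(R[G])$ coincides with the nilpotency class of $\omega(\F_p[G])$.

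Next I would settle the field case. By the choice of $d$, the ideal $\omega(\F_p[G])^d$ of $\F_p[G]$ is nonzero (it equals $\F_p[G]$ when $d=0$, and is nonzero by minimality of $d$ when $d\geq 1$), so it contains $\widehat{G}$ by the Lemma. Conversely $\omega(\F_p[G])^d\cdot\omega(\F_p[G])=\omega(\F_p[G])^{d+1}=0$ gives $\omega(\F_p[G])^d\subseteq\operatorname{ann}(\omega(\F_p[G]))$, and the right-hand side is $\F_p\widehat{G}$ since $G$ is finite. Hence $\omega(\F_p[G])^d=\F_p\widehat{G}$, and tensoring back up with $R$ yields $\omega(R[G])^d=R\otimes_{\F_p}\omega(\F_p[G])^d=R\otimes_{\F_p}\F_p\widehat{G}=R\widehat{G}$, as desired.

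There is no real obstacle in this argument; the only two points that deserve a moment's care are that base change along $\F_p\to R$ neither annihilates the relevant ideal nor shifts the nilpotency index — which is exactly where $R\neq 0$ and the finite-dimensionality of $\F_p[G]$ enter — and the degenerate case $G=1$, which is subsumed uniformly under the convention $\omega^0=R[G]$.
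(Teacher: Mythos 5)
Your proof is correct and follows exactly the route the paper indicates: apply the preceding Lemma in the case $R=\F_p$, observe $\omega^d\subseteq\operatorname{ann}(\omega)=\F_p\widehat{G}$ for the reverse inclusion, and transport the result back to general $R$ via $R[G]=R\otimes_{\F_p}\F_p[G]$. The only extra content you supply — that base change along the faithfully flat map $\F_p\to R$ preserves the nilpotency index — is a small but necessary verification that the paper leaves implicit.
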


\subsection{Wreath products}

\noindent
Wreath products play an important role in the proof of the main result of \cite{Hi64}, and so naturally also in the proof of Theorem~\ref{thm:higman}. We recall the definition:
Let $X$, $H$ be groups.
We turn the set $X^H$ of maps $H\to X$ into a group under the coordinate-wise operations.
We have a right action
$$H\times X^H\to X^H\colon (h,f)\mapsto f^h$$
of $H$ on $X^H$, given by
$$f^{h}(k)=f(hk)\qquad\text{ for $f\in X^H$ and $h,k\in H$.}$$
The (unrestricted) {\it wreath product}\/  of $X$ and $H$ is a semidirect product $X\wr H= H\ltimes X^H$; its underlying set is $H\times X^H$, with group operation
$$(h_1,f_1)\cdot (h_2,f_2) = (h_1 h_2, f_1^{h_2} f_2).$$
The group $X^H$ is called the {\it base group} of  $X\wr H$.
Both  $X^H$ and the group $H$   can be naturally identified with subgroups of $X\wr H$. (Note that then for every $h\in H$ and $f\in X^H$ the conjugate $h^{-1}fh$ of  $f$ by $h$ is conveniently given by $f^h$.)
%The base group is the kernel of the surjective group morphism $X\wr H\to H\colon (h,f)\mapsto h$, and hence a normal subgroup of $X\wr H$, and $X\wr H=HX^H$ is a semidirect product of $X^H$ and $H$.
If $K$ is a subgroup (respectively normal subgroup) of $H$, then $KX^H$ is a subgroup (respectively normal subgroup) of $X\wr H$.
Note also that if $X$ is a subgroup of a group $Y$, then $X\wr H$ is a subgroup of $Y\wr H$.

\index{wreath product}

\medskip

The wreath product $X\wr H$ contains a copy of every extension of $H$ by $X$ (theorem of Kalu\u{z}in-Krasner): in fact,
given a group morphism $\theta\colon A\to H$ (not necessarily surjective) with kernel $X$ there always exists a map $a\mapsto f_a\colon A\to X^H$ such that the map
$$a\mapsto (\theta(a),f_a) \colon A\to X\wr H$$
is an injective group morphism (cf.~\cite[p.~303]{Hi64}).
The map $a\mapsto f_a$ is obtained as follows: One first chooses a {\it countermap} to $\theta$, that is, a map $\theta^*\colon H\to A$ (not necessarily a group morphism) such that
$$\theta(\theta^*(\theta(a)\cdot h)) = \theta(a)\cdot \theta(\theta^*(h)) \qquad \text{for all $a\in A$, $h\in H$,}$$
or, in other words:
$$\theta^*(\theta(a)\cdot h) \equiv a\cdot \theta^*(h)\mod X \qquad \text{for all $a\in A$, $h\in H$.}$$
Such countermaps always exist:
Choose a system $S$ of right coset representatives of the subgroup $\theta(A)$ of $H$, and for $h\in H$ denote the unique $s\in S$ such that $hs^{-1}\in\theta(A)$ by $s(h)$. For every $h\in H$ choose $a(h)\in A$ such that $h=\theta(a(h))s(h)$, and for each $s\in S$
choose an arbitrary $a_s\in A$.  Then $\theta^*(h):=a(h)\cdot a_{s(h)}$ defines a countermap to $\theta$.
(So for example, if $\theta$ is surjective then every right inverse of  $\theta$ is a countermap to $\theta$.)

Now set
$$f_a(h) := \big(\theta^*(\theta(a)\cdot h)\big)^{-1}\cdot a\cdot \theta^*(h).$$
Note that $f_a(h)$ indeed lies in $\ker(\theta)=X$, so $f_a$ is an element of $X^H$.
If we want to stress the dependence of $a\mapsto f_a$ on the choice of countermap $\theta^*$, we write $f_a^{\theta^*}$ instead of $f_a$.
We call an embedding $A\to X\wr H$ of the form $a\mapsto (\theta(a),f_a^{\theta^*})$ a {\it standard embedding} of $A$ into $X\wr H$.
If in addition $X$ is given as a subgroup of a group $Y$, we also speak of the composition $A\to X\wr H\to Y\wr H$ of a standard embedding $A\to X\wr H$ with the natural inclusion $X\wr H\to Y\wr H$ as a {\it standard embedding} of $A$ into $Y\wr H$.

\index{wreath product!standard embedding}
\index{countermap}

\medskip

In the following let $W=X\wr H$ be the wreath product of the groups $X$ and $H$.
It is well-known (cf.~\cite{B59}) that if $X\neq 1$ and $H$ is finite, then $W$ is nilpotent if and only if there is a prime $p$ such that $X$ is a nilpotent of finite $p$-power exponent and $H$ is a  $p$-group.
We are mostly interested in the case where the group $H$ is a $p$-group, and $X$ is an elementary abelian $p$-group, and hence may be construed as the additive group of a finite-dimensional $\F_p$-algebra. Somewhat more generally, {\it from now on we assume that $X$ is the additive group of a commutative ring $R$ and~$H$ is finite.}\/ Below we simply denote $X$ by $R$.
We now can and will canonically identify the additive group of the group algebra $R[H]$ of $H$ over $R$ with the base group $R^H$ of the wreath product $W=R\wr H$. After this identification, the action of $H$ on $R^H$ is related to the action of $H$ by left multiplication on the group ring $R[H]$ via
$$f^h = h^{-1} \cdot f\qquad\text{ for all $f\in R[H]$, $h\in H$.}$$
(To avoid confusion between the group operation in $R\wr H$ and the multiplication in the ring $R[H]$, we will always use a dot $\cdot$ to denote the latter. We also reserve~$f$, possibly with decorations, to denote elements of $R[H]$ and $h$ for the elements of the subgroup $H$ of $W$.)
We therefore have the useful identity
\begin{equation}\label{eq:[f,h]}
[f,h] = (h^{-1}-1)\cdot f \qquad (f\in R[H], h\in H),
\end{equation}
where the commutator $[f,h]=f^{-1}h^{-1}fh$ is computed in $W$ and the right-hand side in the group algebra $R[H]$.

\medskip

 We let $\pi$ be the natural map $R\wr H\to R[H]$, which is not a group morphism in general: in fact, for elements $g_1=h_1f_1$ and $g_2=h_2f_2$ of $W$ (where $h_i\in H$, $f_i=\pi(g_i)\in R[H]$), we have
\begin{equation}\label{eq:pi1}
\pi(g_1g_2) = h_2^{-1}\cdot f_1+f_2.
\end{equation}
For $f\in R[H]$, $h\in H$, $g=hf$ we obtain
\begin{equation}\label{eq:pi2}
\pi(g^{-1})=-h\cdot f,
\end{equation}
and for later use we also record that if in addition $f'\in R[H]$ then
\begin{equation}\label{eq:pi}
\pi(f'h^f)=h^{-1}\cdot f'+(1-h^{-1})\cdot f.
\end{equation}
Multiplication $f\mapsto r\cdot f$ in $R[H]$ by a fixed element $r\in R$ extends uniquely to an endomorphism of $W$ which is the identity on $H$.
We also note:

\begin{lemma}\label{lem:fully inv}
Let $G$ be a fully invariant subgroup of $W$. Then $G\cap R[H]$ is a left ideal of $R[H]$, with $G\cap R[H]=\pi(G)$.
 Moreover, if $S$ is a generating set for $G$, then $\pi(S)$ generates  the left ideal $G\cap R[H]$.
\end{lemma}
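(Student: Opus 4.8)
The plan is to exploit that $G$, being fully invariant in $W=R\wr H$, is preserved by several natural endomorphisms of $W$, two families of which were singled out just above the lemma: conjugation by an element $h\in H$, which on the base group $R[H]$ is the map $f\mapsto hfh^{-1}=f^{h^{-1}}=h\cdot f$ (left multiplication by $h$; cf.\ the identity $f^{h}=h^{-1}\cdot f$), and the scalar endomorphism $f\mapsto r\cdot f$ ($r\in R$), which is the identity on $H$. Using these I would first show $G\cap R[H]$ is a left ideal of $R[H]$. It is visibly an additive subgroup; and for $f\in G\cap R[H]$, $h\in H$, $r\in R$ we have $h\cdot f=hfh^{-1}\in G$ (since $G$, being fully invariant, is normal) and $r\cdot f\in G$ (applying the scalar endomorphism), while both clearly lie in $R[H]$, so $h\cdot f,\ r\cdot f\in G\cap R[H]$. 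Since $R[H]$ is generated as a ring by $R$ together with $H$, closure under these two kinds of left multiplication forces closure under left multiplication by an arbitrary element of $R[H]$.

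Next I would prove $\pi(G)=G\cap R[H]$. The inclusion $G\cap R[H]\subseteq\pi(G)$ is immediate because $\pi$ restricts to the identity on the subgroup $R[H]$ of $W$. For the reverse inclusion it suffices to check that $\pi(g)\in G$ for every $g\in G$ (it automatically lies in $R[H]$). Here I use the retraction of $W$ onto its subgroup $H$: the composite of the natural projection $\rho\colon W=H\ltimes R[H]\to H$ with the inclusion $H\hookrightarrow W$ is again an endomorphism of $W$, hence preserves $G$. Writing $g=hf$ with $h=\rho(g)\in H$ and $f=\pi(g)\in R[H]$, we obtain $h\in G$ and therefore $f=h^{-1}g\in G$, so $\pi(g)\in G\cap R[H]$.

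For the final assertion, let $S$ generate $G$ and let $J$ be the left ideal of $R[H]$ generated by $\pi(S)$. By the two previous steps, $\pi(S)\subseteq\pi(G)=G\cap R[H]$ and $G\cap R[H]$ is a left ideal, so $J\subseteq G\cap R[H]$. For the opposite inclusion, set $\bar G:=\rho(G)\leq H$; it is generated by $\{\rho(s):s\in S\}$. As a subgroup of $H$, $\bar G$ normalises the left ideal $J$ (conjugation by $h\in\bar G$ acting on $R[H]$ as left multiplication by $h$), so $W_{0}:=\bar G\ltimes J$ is a subgroup of $W=H\ltimes R[H]$. Each $s\in S$ equals $\rho(s)\,\pi(s)$ with $\rho(s)\in\bar G$ and $\pi(s)\in J$, hence $s\in W_{0}$; therefore $G=\langle S\rangle\leq W_{0}$, and so $G\cap R[H]\subseteq W_{0}\cap R[H]=J$. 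This yields $J=G\cap R[H]$, completing the proof.

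I do not expect any step to be a genuine obstacle; the argument is entirely formal once one identifies the right endomorphisms to feed into the full invariance of $G$ — conjugations by $h\in H$ and scalar multiplications for the left ideal property, and the retraction $W\to H$ for $\pi(G)\subseteq G$ — and packages $\bar G$ and $J$ into the auxiliary subgroup $W_{0}=\bar G\ltimes J$ for the generation statement. The only point requiring a moment's care is verifying that $W_{0}$ really is a subgroup, i.e.\ that $\bar G\leq H$ normalises $J$, which is precisely the left ideal property of $J$ established in the first step.
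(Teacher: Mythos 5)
Your proof is correct. For the left-ideal property and the equality $\pi(G)=G\cap R[H]$ you use essentially the same argument as the paper (full invariance under the scalar endomorphisms and the retraction $W\to H$, normality for conjugation by $H$), though you unwind the retraction step a bit more explicitly than the paper's terse ``$G=(G\cap H)(G\cap R[H])$.''

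Where you genuinely diverge is the last claim. The paper proves $\pi(G)\subseteq R[H]\,\pi(S)$ by a direct computation: any $g\in G$ is a word in $S\cup S^{-1}$, and one propagates $\pi$ through the word using the explicit formulas $\pi(g_1g_2)=h_2^{-1}\cdot f_1+f_2$ and $\pi(g^{-1})=-h\cdot f$, so the left ideal generated by $\pi(S)$ accumulates everything. You instead package $\bar G=\rho(G)$ and $J=R[H]\,\pi(S)$ into the subgroup $W_0=\bar G\ltimes J$ of $W$ (the verification that this is a subgroup---that $\bar G\leq H$ normalizes $J$ because $hfh^{-1}=h\cdot f$ and $J$ is a left ideal---is exactly the combinatorial content that powers the paper's induction) and observe $S\subseteq W_0$, hence $G\leq W_0$, hence $G\cap R[H]\subseteq W_0\cap R[H]=J$. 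This is a cleaner, more conceptual packaging of the same underlying fact: it avoids the word-length induction entirely and makes visible why the left-ideal structure is exactly what is needed. Both routes are of comparable length; yours has the advantage of not relying on the displayed identities \eqref{eq:pi1}--\eqref{eq:pi2} and of isolating the subgroup criterion as the only thing to check.
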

\begin{proof}
Clearly $G\cap R[H]$ is an additive subgroup of $R[H]$ contained in $\pi(G)$. The subgroup $G$ of $W$ being fully invariant implies that $G=(G\cap H)\, (G\cap R[H])$ and thus $G\cap R[H]=\pi(G)$. Since $G$ is fully invariant, $G\cap R[H]$ is also closed under multiplication by elements from $R$, and
if $f\in G\cap R[H]$ and $h\in H$, then $h\cdot f=f^{h^{-1}}\in G$ since $G$ is normal.
Hence $G\cap R[H]$ is a left ideal of $R[H]$.
Now let $S$ be a generating set for $G$. Then $\pi(G)$ clearly contains the left ideal $I:=R[H]\,\pi(S)$ of $R[H]$ generated by $\pi(S)$, and the identities \eqref{eq:pi1} and \eqref{eq:pi2} yield $\pi(G)=I$.
\end{proof}

The following theorem due to Buckley \cite{Bu70} shows that there is a close relationship between the lower central series of the wreath product $W=R\wr H$ and the powers of the ideal $\omega=\omega(R[H])$ of the group ring $R[H]$. %(The case $n=1$ had  already been observed earlier in \cite{Ne64}.)

\begin{theorem}
For every $n$ we have
$$\gamma_{n+1}(W)\cap R[H] = \omega^n.$$
\end{theorem}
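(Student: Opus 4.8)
The plan is to prove the two inclusions $\omega^n\subseteq\gamma_{n+1}(W)\cap R[H]$ and $\gamma_{n+1}(W)\cap R[H]\subseteq\omega^n$ separately, using throughout that $\gamma_{n+1}(W)\cap R[H]$ is an additive subgroup of $R[H]$ (being the intersection of the subgroup $\gamma_{n+1}(W)$ of $W$ with the abelian base group $R[H]$), and in fact a left ideal of $R[H]$ by Lemma~\ref{lem:fully inv}, since $\gamma_{n+1}(W)$ is fully invariant in $W$.

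For $\omega^n\subseteq\gamma_{n+1}(W)$, I would iterate the identity \eqref{eq:[f,h]}. Fix $r\in R$ and $h_1,\dots,h_n\in H$; starting from the element $r\cdot 1\in R[H]\subseteq W=\gamma_1(W)$ (where $1$ denotes the identity of $H$) and forming successively the commutators with $h_n$, then $h_{n-1}$, \dots, then $h_1$, each step raises the lower central degree by one and, by \eqref{eq:[f,h]}, replaces $f$ by $(h_i^{-1}-1)\cdot f$; after $n$ steps one obtains $r\,(h_1^{-1}-1)(h_2^{-1}-1)\cdots(h_n^{-1}-1)\in\gamma_{n+1}(W)$. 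Since $\omega$ is the $R$-span of $\{h-1:h\in H\}$, the ideal $\omega^n$ is the $R$-span of the products $(k_1-1)\cdots(k_n-1)$ ($k_i\in H$); as $r$ and the $h_i$ vary, the elements just produced additively generate $\omega^n$, and $\gamma_{n+1}(W)\cap R[H]$ is a subgroup, so $\omega^n\subseteq\gamma_{n+1}(W)\cap R[H]$.

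For the reverse inclusion I would construct an explicit descending central series of $W$. Set $U_k:=\gamma_k(H)\cdot\omega^{k-1}\leq W$ for $k\geq 1$, with the convention $\omega^0:=R[H]$, so that $U_1=H\cdot R[H]=W$. The key auxiliary fact is the standard containment $a-1\in\omega^k$ for every $a\in\gamma_k(H)$ (proved by induction on $k$ from $ab-1=(a-1)b+(b-1)$ and $[a,b]-1=a^{-1}b^{-1}\bigl((a-1)(b-1)-(b-1)(a-1)\bigr)$). Granting it, one checks: each $\omega^{k-1}$ is an $H$-invariant left ideal, so $U_k$ is a subgroup; conjugation of $g\in\gamma_k(H)$ by $\mu\in R[H]$ sends $g$ to $g\cdot\bigl((g^{-1}-1)\cdot\mu\bigr)$ with $(g^{-1}-1)\cdot\mu\in\omega^k\subseteq\omega^{k-1}$, so $U_k\trianglelefteq W$; and $[U_k,W]\subseteq U_{k+1}$. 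For the last point, since $U_{k+1}$ is normal, standard commutator identities reduce the claim to bracketing generators of $U_k$ against generators of $W=\langle H,R[H]\rangle$: $[\gamma_k(H),H]\subseteq\gamma_{k+1}(H)\subseteq U_{k+1}$; for $g\in\gamma_k(H)$, $f\in R[H]$ we have $[g,f]=(1-g^{-1})\cdot f\in\omega^k\subseteq U_{k+1}$ (by \eqref{eq:[f,h]} and the auxiliary fact); for $f\in\omega^{k-1}$, $h\in H$ we have $[f,h]=(h^{-1}-1)\cdot f\in\omega^k\subseteq U_{k+1}$ by \eqref{eq:[f,h]}; and $[\omega^{k-1},R[H]]=1$ since the base group is abelian. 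Hence $\{U_k\}_{k\geq 1}$ is a central series of $W$, so $\gamma_{n+1}(W)\subseteq U_{n+1}=\gamma_{n+1}(H)\cdot\omega^n$, and intersecting with $R[H]$ yields $\gamma_{n+1}(W)\cap R[H]\subseteq\bigl(\gamma_{n+1}(H)\cdot\omega^n\bigr)\cap R[H]=\omega^n$, because in $W=H\ltimes R[H]$ a product $af$ with $a\in\gamma_{n+1}(H)\subseteq H$ and $f\in\omega^n$ lies in $R[H]$ only when $a=1$.

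The computations are routine; the only ingredient that is not purely formal is the containment $\gamma_k(H)-1\subseteq\omega^k$, and the one place demanding care is the bookkeeping with conventions—that conjugation by $h\in H$ on the base group is left multiplication by $h$, and that $[f,h]=(h^{-1}-1)\cdot f$ while $[h,f]=(1-h^{-1})\cdot f$—so that each of the four types of bracket above genuinely lands in $\omega^k$, and hence in $U_{k+1}$.
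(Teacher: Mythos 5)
Your proof is correct, and it is a genuine direct proof of Buckley's theorem valid for an arbitrary commutative ring $R$. Note that the paper does not in fact prove this statement: it cites \cite{Bu70} and proves instead, by a different method, the characteristic-$p$ refinement Proposition~\ref{prop:Buckley}, which identifies the intersection of each of $D^p_{n+1}(W)$, $\gamma^p_{n+1}(W)$, $\gamma_{n+1}(W)$ with $R[H]$ all with $\omega^n$. The paper's argument runs an induction on $n$ showing $D^p_{n+1}(W)\cap R[H]\leq\omega^n\leq\gamma_{n+1}(W)\cap R[H]$, and the hard inclusion there rests on Jennings' theorem (Theorem~\ref{thm:Jennings})---specifically the equivalence $h\in D^p_n(H)\Leftrightarrow 1-h\in\omega^n$---together with an explicit computation of $\pi\bigl((hf)^p\bigr)$ and $\pi\bigl([h_if_i,h_jf_j]\bigr)$. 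Your route replaces that with the construction of an explicit descending central series $U_k=\gamma_k(H)\cdot\omega^{k-1}$ of $W$, which immediately forces $\gamma_{n+1}(W)\leq U_{n+1}=\gamma_{n+1}(H)\cdot\omega^n$ and hence $\gamma_{n+1}(W)\cap R[H]\leq\omega^n$ upon intersecting with the base group. The only non-formal input is the classical containment $\gamma_k(H)-1\subseteq\omega^k$, which holds over any $R$, so your argument is elementary and characteristic-free. The easy inclusion $\omega^n\leq\gamma_{n+1}(W)\cap R[H]$ is proved by essentially the same iterated use of \eqref{eq:[f,h]} in both arguments. One small remark: a direct computation in $W=H\ltimes R[H]$ gives $\mu^{-1}g\mu=g\cdot\bigl((1-g^{-1})\cdot\mu\bigr)$ rather than $g\cdot\bigl((g^{-1}-1)\cdot\mu\bigr)$; the sign is harmless since $\omega^k$ is closed under negation.

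What your approach does not buy is exactly what the paper needs downstream. In Corollary~\ref{cor:Buckley} the paper invokes the $\gamma^p$ (and indeed $D^p$) version of the equality. Since $D^p_{n+1}(W)\geq\gamma^p_{n+1}(W)\geq\gamma_{n+1}(W)$, your upper bound $\gamma_{n+1}(W)\leq\gamma_{n+1}(H)\cdot\omega^n$ does not automatically yield $D^p_{n+1}(W)\cap R[H]\leq\omega^n$. To obtain the latter by your method one would need to check that $\{U_k\}$ is actually a dimensional $p$-filtration of $W$ (i.e., $U_k^p\leq U_{kp}$), for which one is again led back to Jennings' theorem for $H$; at that point the economy over the paper's argument largely disappears. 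So the two proofs are genuinely different, with yours being the cleaner direct proof of the stated theorem and the paper's being the one that delivers the strengthening it actually uses.
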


Since the proof of the theorem above given in \cite{Bu70} is somewhat roundabout (it is deduced from properties of polynomial functions on groups), we give a direct proof of a refinement valid in the situation of interest to us, namely, where $\operatorname{char}(R)=p$. {\it From now until the end of this subsection we therefore assume that~$R$ has characteristic $p$.}\/

\begin{proposition}\label{prop:Buckley}
For all $n$:
$$D^p_{n+1}(W)\cap R[H] = \gamma^p_{n+1}(W)\cap R[H] =  \gamma_{n+1}(W)\cap R[H] = \omega^n.$$
\end{proposition}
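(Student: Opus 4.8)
The plan is to prove the chain of equalities by a squeezing argument. Since $D^p_m(W)\geq\gamma^p_m(W)\geq\gamma_m(W)$ for every $m$, the three middle groups $D^p_{n+1}(W)\cap R[H]\supseteq\gamma^p_{n+1}(W)\cap R[H]\supseteq\gamma_{n+1}(W)\cap R[H]$ are automatically sandwiched, so it suffices to establish the lower bound $\omega^n\subseteq\gamma_{n+1}(W)\cap R[H]$ at one end and the upper bound $D^p_{n+1}(W)\cap R[H]\subseteq\omega^n$ at the other. Both cases $n=0$ being trivial, assume $n\geq 1$.

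\emph{Lower bound.} By Lemma~\ref{lem:fully inv}, $\gamma_{n+1}(W)\cap R[H]$ is a left ideal of $R[H]$, in particular an $R$-submodule. Since $\omega^n$ is spanned over $R$ by the products $(g_1-1)(g_2-1)\cdots(g_n-1)$ with $g_i\in H$, it is enough to show each such product lies in $\gamma_{n+1}(W)$. This I would do by induction on $n$ using \eqref{eq:[f,h]}: the empty product is $1\in W=\gamma_1(W)$, and if $f:=(g_2-1)\cdots(g_{n+1}-1)\in\gamma_{n+1}(W)$, then $(g_1-1)\cdot f=[f,g_1^{-1}]\in[\gamma_{n+1}(W),W]=\gamma_{n+2}(W)$.

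\emph{Upper bound.} The idea is to exhibit an explicit dimensional $p$-filtration $\{W_m\}$ of $W$ with $W_{m+1}\cap R[H]=\omega^m$ for all $m$; since $\{D^p_m(W)\}$ is the fastest descending dimensional $p$-filtration of $W$, this forces $D^p_{n+1}(W)\subseteq W_{n+1}$ and hence $D^p_{n+1}(W)\cap R[H]\subseteq\omega^n$. I would take
$$W_m:=D^p_m(H)\cdot\omega^{m-1},$$
the product formed inside $W$, with $D^p_m(H)\leq H\leq W$ and $\omega^{m-1}\leq R^H\leq W$. As $H$ acts on $R^H=R[H]$ by left multiplication and so stabilizes every left ideal, $W_m$ is a subgroup, with $W_1=W$ and visibly $W_m\cap R[H]=\omega^{m-1}$. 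Two inputs drive the verification: first, $h\in D^p_m(H)$ implies $h-1\in\omega^m$, because $\{g\in H:g-1\in\omega^m\}$ is a dimensional $p$-filtration of $H$ (as $\operatorname{char}(R)=p$; cf.\ the discussion preceding Theorem~\ref{thm:Jennings}) and $\{D^p_m(H)\}$ is the fastest such; second, $\{D^p_m(H)\}$ is itself a dimensional $p$-filtration, so it is strongly central and $D^p_m(H)^p\subseteq D^p_{mp}(H)$. Using these, normality of each $W_m$ in $W$ and strong centrality $[W_k,W_m]\subseteq W_{k+m}$ reduce, via \eqref{eq:[f,h]}, to checking on generators that $[h,f']=(h^{-1}-1)f'\in\omega^k\omega^{m-1}$ and $[f,h']=(h'^{-1}-1)f\in\omega^m\omega^{k-1}$ (and $[f,f']=0$ since $R^H$ is abelian).

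\emph{The main obstacle} is the $p$-th power condition $(W_m)^p\subseteq W_{mp}$. Writing an element of $W_m$ as $(h,f)$ with $h\in D^p_m(H)$, $f\in\omega^{m-1}$, one computes $(h,f)^p=(h^p,N_h\cdot f)$ with $N_h=1+h^{-1}+\cdots+h^{-(p-1)}\in R[H]$. The factor $h^p\in D^p_m(H)^p\subseteq D^p_{mp}(H)$ is harmless, but controlling the $\omega$-adic order of $N_h\cdot f$ requires the characteristic-$p$ identity $1+x+\cdots+x^{p-1}=(x-1)^{p-1}$ in $R[x]$, which gives $N_h=(h^{-1}-1)^{p-1}\in(\omega^m)^{p-1}$ since $h^{-1}-1\in\omega^m$; hence $N_h\cdot f\in\omega^{m(p-1)}\omega^{m-1}=\omega^{mp-1}$ and $(h,f)^p\in D^p_{mp}(H)\cdot\omega^{mp-1}=W_{mp}$. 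With $\{W_m\}$ a dimensional $p$-filtration, the upper bound follows, and combining with the lower bound closes the chain of equalities.
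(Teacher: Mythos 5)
Your proof is correct, and the squeeze structure together with the lower bound ($\omega^n\subseteq\gamma_{n+1}(W)\cap R[H]$ via Lemma~\ref{lem:fully inv} and the commutator identity \eqref{eq:[f,h]}) matches the paper essentially verbatim. The upper bound, however, takes a genuinely different and somewhat cleaner route. The paper proves both inclusions simultaneously by induction on $n$: it expands $D^p_{n+2}(W)$ via its defining recursion $D^p_{n+2}(W)=(D^p_{\lceil(n+2)/p\rceil}(W))^p\prod_{i+j=n+2}[D^p_i(W),D^p_j(W)]$, reduces (via Lemma~\ref{lem:fully inv}) to checking that $\pi$ carries each generator into $\omega^{n+1}$, and then uses the inductive hypothesis plus Theorem~\ref{thm:Jennings} to control the $p$-th powers and commutators. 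You instead exhibit a candidate filtration $W_m=D^p_m(H)\cdot\omega^{m-1}$ once and for all, verify directly that it is a dimensional $p$-filtration, and then invoke the extremality of $\{D^p_m(W)\}$ to get $D^p_{n+1}(W)\leq W_{n+1}$ in one stroke --- no induction on $n$ for this half. The computational heart is identical in both (the characteristic-$p$ identity $1+h^{-1}+\cdots+h^{-(p-1)}=(h^{-1}-1)^{p-1}$ together with $h\in D^p_m(H)\Rightarrow h-1\in\omega^m$ from the Jennings circle of ideas), so neither proof is ``cheaper''; but your reorganization is conceptually tidier, since it isolates the work into the verification that $\{W_m\}$ satisfies the three filtration axioms, rather than threading the argument through the recursion for $D^p$. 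You are a bit terse on strong centrality --- a full writeup would need the commutator expansion $[hf,h'f']=[h,f'][h,h']^{f'f}[f,h']$ (as in the paper) and a line showing the conjugate $[h,h']^{f'f}$ stays in $W_{k+m}$, since ``check on generators'' does not literally cover that mixed term --- but that is routine and does not threaten the argument.
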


\begin{proof}
Since
$$D^p_{n+1}(W)\geq \gamma^p_{n+1}(W)\geq \gamma_{n+1}(W),$$
it is enough to show that
\begin{equation}\label{eq:D and omega}
D^p_{n+1}(W)\cap R[H] \leq \omega^n \leq \gamma_{n+1}(W)\cap R[H].
\end{equation}
We proceed by induction on $n$, the case $n=0$ holding vacuously. Suppose we have established \eqref{eq:D and omega} for some $n$.
Then for every $f\in\omega^n$ and $h\in H$, by \eqref{eq:[f,h]} we have $$(h^{-1}-1)\cdot f = [f,h]\in [\gamma_{n+1}(W),W]\leq\gamma_{n+2}(W).$$
Lemma~\ref{lem:fully inv} now yields
$\omega^{n+1} \leq \gamma_{n+2}(W)\cap R[H]$.
For the inclusion on the left in~\eqref{eq:D and omega}, recall first that
the subgroup
$$D^p_{n+2}(W) = \big(D^p_{\lceil (n+2)/p \rceil}(W)\big)^p \prod_{i+j=n+2} \big[D^p_i(W), D^p_j(W)\big]$$
of $W$ is fully invariant. Thus by Lemma~\ref{lem:fully inv} it suffices to show the following claim, where we abbreviate $D_i=D^p_i(W)$: % and write $\pi\colon W=R\wr H\to R[H]$ for the natural surjection:

\begin{claim}
Suppose $h,h_i,h_j\in H$ and $f,f_i,f_j\in R[H]$ are such that
$$hf\in D_{\lceil (n+2)/p \rceil}\quad\text{and}\quad
h_if_i\in D_i, h_jf_j\in D_j \qquad (i+j=n+2).$$
Then
$$\pi\big((hf)^p\big)\in\omega^{n+1} \quad\text{and}\quad \pi\big( [ h_if_i, h_j f_j ] \big) \in \omega^{n+1}.$$
\end{claim}

For the first part, we note that, writing  $m=\lceil (n+2)/p \rceil$,
we have $f=\pi(hf)\in\pi(D_m)=D_m\cap R[H]$  by Lemma~\ref{lem:fully inv}, and hence $f\in \omega^{m-1}$ by inductive hypothesis. This yields $h=(hf)f^{-1}\in D_m$ and thus $h\in D_m^p(H)$. We obtain
$$\pi\big( (hf)^p \big) = (1+h^{-1}+h^{-p}+\cdots+h^{-(p-1)})\cdot f = (h^{-1}-1)^{p-1}\cdot f\in\omega^{n+1}$$
using Theorem~\ref{thm:Jennings}. For the second part, by standard commutator formulas (cf., e.g., \cite[Section 0.1]{DdSMS}) we have
\begin{multline*}
[h_if_i, h_jf_j] = [h_i, h_jf_j]^{f_i}[f_i,h_if_j]
= [h_i,f_j]^{f_i} [h_i,h_j]^{f_jf_i} [f_i, h_j]^{f_j} \\
= [h_i,f_j] [h_i,h_j]^{f_jf_i} [f_i, h_j],
\end{multline*}
using that $[f_i,f_j]=1$ as well as $[h_i,f_j]^{f_i}=[h_i,f_j]$ and $[f_i,h_j]^{f_j}=[f_i,h_j]$ (since $[h_i,f_j],[f_i,h_j]\in R[H]$).
By \eqref{eq:pi} this yields
$$\pi\big( [ h_if_i, h_j f_j ] \big) =
(1-[h_j,h_i])\cdot (f_i+f_j) + [h_j,h_i]\cdot (1-h_i^{-1})\cdot f_j - (1-h_j^{-1})\cdot f_i.$$
By Theorem~\ref{thm:Jennings} we have
$1-h_i^{-1}\in\omega^i$, $1-h_j^{-1}\in\omega^j$, and $1-[h_j,h_i]\in\omega^{n+2}$; and
by inductive hypothesis $f_i\in\omega^{i-1}$ and $f_j\in\omega^{j-1}$. This implies the second part of our claim.
\end{proof}

Combining this proposition with Lemma~\ref{lem:lower central for semidirect products} and Jennings' theorem (Theorem~\ref{thm:Jennings} above) we obtain:

\begin{corollary}
Suppose  $H$ is a $p$-group. Then $W=R\wr H$ is nilpotent, and the nilpotency class, lower $p$-length, and lower dimensional $p$-length of $W$ all equal
$$1+\textup{(nilpotency class of $\omega$)}=1+ (p-1) \sum_{n\geq 1} n \, \operatorname{dim}_{\F_p}(D_n^p(H)/D_{n+1}^p(H)).$$
\end{corollary}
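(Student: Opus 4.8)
The plan is to combine Proposition~\ref{prop:Buckley} with Lemma~\ref{lem:lower central for semidirect products} and Jennings' theorem (Theorem~\ref{thm:Jennings}). Write $\omega=\omega(R[H])$, and use the identification (from the discussion above) of the base group of $W=R\wr H$ with the additive group of $R[H]$, so that $W=R[H]\rtimes H$ with $R[H]\trianglelefteq W$; we tacitly assume $R\neq 0$ (i.e.\ $X\neq 1$; otherwise $W=H$ and the displayed class need not equal the nilpotency class of $W$). Since $H$ is a $p$-group and $\chr R=p$, Theorem~\ref{thm:Jennings} shows that $\omega$ is nilpotent; let $d$ be its nilpotency class, so $\omega^{d+1}=0$ while $\omega^{d}\neq 0$, and $d=(p-1)\sum_{n\geq 1}n\,\dim_{\F_p}\!\big(D^p_n(H)/D^p_{n+1}(H)\big)$ by the last assertion of that theorem. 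If $d=0$ then $H=1$, $W\cong R$ is a nontrivial elementary abelian $p$-group, and all three invariants equal $1=d+1$; so we may assume $d\geq 1$.

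Next I would record, for each of the three totally invariant operations $\Sigma_{n+1}\in\{\gamma_{n+1},\ \gamma^p_{n+1},\ D^p_{n+1}\}$, the identity
$$\Sigma_{n+1}(W)=\Sigma_{n+1}(H)\cdot\omega^{n}\qquad(n\geq 1).$$
This is immediate from Lemma~\ref{lem:lower central for semidirect products}, applied to the split extension $W=R[H]\rtimes H$, which gives $\Sigma_{n+1}(W)=\Sigma_{n+1}(H)\,\big(\Sigma_{n+1}(W)\cap R[H]\big)$, together with Proposition~\ref{prop:Buckley}, which identifies $\Sigma_{n+1}(W)\cap R[H]$ with $\omega^{n}$ (for all three choices of $\Sigma$ simultaneously).

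Then comes the pinching argument. For the upper bound, put $n=d+1$: then $\Sigma_{d+2}(W)=\Sigma_{d+2}(H)\cdot\omega^{d+1}=\Sigma_{d+2}(H)\leq H$. But $\Sigma_{d+2}(W)$ is a totally invariant, hence normal, subgroup of $W$, and \emph{any} normal subgroup $N$ of $W$ contained in $H$ is trivial: for $h\in N$ and $f\in R[H]\trianglelefteq W$ the commutator identity \eqref{eq:[f,h]} gives $[h,f]=(1-h^{-1})\cdot f\in N\cap R[H]=1$, and taking $f$ to be the identity of $H$ (a basis vector of $R[H]$) forces $h^{-1}=1$ in $R[H]$, whence $h=1$ since $R\neq 0$. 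Thus $\Sigma_{d+2}(W)=1$. For the lower bound, put $n=d$: then $\Sigma_{d+1}(W)\supseteq\Sigma_{d+1}(W)\cap R[H]=\omega^{d}\neq 0$, so $\Sigma_{d+1}(W)\neq 1$. Applying this with $\Sigma=\gamma$, $\gamma^p$, and $D^p$ in turn shows that the nilpotency class, the lower $p$-length, and the lower dimensional $p$-length of $W$ all equal $d+1$; in particular $\gamma_{d+2}(W)=1$, so $W$ is nilpotent. Substituting the Jennings formula for $d$ then yields the displayed value $1+(p-1)\sum_{n\geq 1}n\,\dim_{\F_p}\!\big(D^p_n(H)/D^p_{n+1}(H)\big)$.

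I expect no serious obstacle: once Proposition~\ref{prop:Buckley} is in hand the proof is essentially bookkeeping. The one place a genuine (if small) observation is needed is the claim that $W$ has no nontrivial normal subgroup inside the top group $H$, which is exactly where the commutator identity \eqref{eq:[f,h]} and the hypothesis $R\neq 0$ enter.
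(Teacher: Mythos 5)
Your argument is correct and follows exactly the route the paper indicates (the paper itself gives no detailed proof, saying only that the corollary follows by ``combining'' Proposition~\ref{prop:Buckley}, Lemma~\ref{lem:lower central for semidirect products}, and Theorem~\ref{thm:Jennings}); your identity $\Sigma_{n+1}(W)=\Sigma_{n+1}(H)\cdot\omega^n$ and the pinching at $n=d$ and $n=d+1$ are precisely how those three ingredients combine. One small remark: your lemma that $W$ has no non-trivial normal subgroup inside $H$ is a clean self-contained way to kill $\Sigma_{d+2}(W)$, but it is also available more directly, since $\omega^{d+1}=0$ together with Jennings' description $D^p_{d+1}(H)=\{h:1-h\in\omega^{d+1}\}=1$ already gives $\Sigma_{d+2}(H)\leq D^p_{d+1}(H)=1$; either route is fine. (Your tacit assumption $R\neq 0$ is automatic, since $\chr R=p$ forces $1\neq 0$.)
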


In the proof of the main result of this section we need:

\begin{corollary}\label{cor:Buckley}
Let $A$ be a group containing $R$ as a central subgroup, let $\theta\colon A\to H$ be a group morphism to a $p$-group $H$ \textup{(}not necessarily surjective\textup{)} with kernel $R$, and let $\alpha\colon A\to W=R\wr H$ be a corresponding standard embedding. Then
$$\alpha(R)=\omega^{d}=\gamma_{d+1}^p(W)\cap R[H]=R\widehat{H}$$
where $d$ is the nilpotency class of $\omega=\omega(R[H])$.
\end{corollary}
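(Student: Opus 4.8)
The plan is to deduce this from Proposition~\ref{prop:Buckley} together with Corollary~\ref{cor:hatG}. First I would recall what a standard embedding $\alpha\colon A\to W=R\wr H$ looks like: writing $X=R$ for the additive group of the ring, we have $\alpha(a)=(\theta(a),f_a^{\theta^*})$ for a suitable countermap $\theta^*\colon H\to A$, and $\alpha$ is injective with $\alpha(\ker\theta)=\alpha(R)$ contained in the base group, which we have identified with (the additive group of) $R[H]$. Since $R$ is central in $A$ and $\ker\theta=R$, the image $\alpha(R)$ is a normal subgroup of $\alpha(A)$; but more is true, and the key point is to show that $\alpha(R)$ equals the full left ideal $R\widehat H$ of $R[H]$.

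Concretely, for $r\in R\leq A$ we have $\theta(r)=1$, so $\alpha(r)=f_r^{\theta^*}\in R[H]$; I would compute $f_r^{\theta^*}(h)=(\theta^*(h))^{-1}\cdot r\cdot\theta^*(h)$, and since $r$ is central in $A$ this equals $r$ for every $h\in H$. Hence $\alpha(r)$ is the constant function $h\mapsto r$, i.e.\ $\alpha(r)=r\,\widehat H$ in $R[H]$ under our identification. Therefore $\alpha(R)=\{r\widehat H: r\in R\}=R\widehat H$. By Corollary~\ref{cor:hatG} (applied to the $p$-group $H$ and the characteristic-$p$ ring $R$), $R\widehat H=\omega^d$ where $d$ is the nilpotency class of $\omega=\omega(R[H])$. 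Finally, Proposition~\ref{prop:Buckley} gives $\gamma^p_{d+1}(W)\cap R[H]=\omega^d$, which closes the chain of equalities $\alpha(R)=\omega^d=\gamma^p_{d+1}(W)\cap R[H]=R\widehat H$.

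I expect the only genuine subtlety to be the verification that $\alpha(r)$ is the constant function $r\,\widehat H$ — this hinges on $R$ being \emph{central} in $A$ (not merely normal), which is exactly what makes the conjugate $(\theta^*(h))^{-1}r\,\theta^*(h)$ collapse to $r$ independently of the choice of countermap $\theta^*$. One should also double-check the bookkeeping of the identification of the base group $R^H$ with the additive group of $R[H]$, and that under this identification the element $\widehat H=\sum_{h\in H}h$ corresponds to the constant function with value $1$; both are immediate from the conventions set up just before equation~\eqref{eq:[f,h]}. Everything else is a direct citation of the two earlier results, so there is no real obstacle beyond this elementary check.
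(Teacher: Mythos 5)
Your proof matches the paper's verbatim in all essential steps: you compute $f_r(h)=\theta^*(h)^{-1}\cdot r\cdot\theta^*(h)=r$ using that $\theta(r)=1$ and $R\leq Z(A)$, identify $\alpha(r)$ with $r\widehat H$ under the base-group identification, and then cite Corollary~\ref{cor:hatG} and Proposition~\ref{prop:Buckley} to finish the chain of equalities. This is exactly how the paper argues; no gaps.
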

\begin{proof}
Let $\theta^*$ be a countermap to $\theta$ such that $\alpha(a)=(\theta(a),f_a^{\theta^*})$ for all $a\in A$. Since $R\leq Z(A)$ we have for all $r\in R$, $h\in H$:
\begin{align*}
f_r(h) &= \theta^*(\theta(r)\cdot h)^{-1}\cdot r\cdot \theta^*(h) \\
       &= \theta^*(h)^{-1}\cdot r\cdot \theta^*(h) = r,
\end{align*}
and hence $f_r = r\widehat{H}\in R[H]$. Now use Corollary~\ref{cor:hatG} and Pro\-po\-sition~\ref{prop:Buckley}.
\end{proof}

\subsection{Central filtrations and semidirect products}
Assume that the group $G=B\rtimes H$ is a semidirect product of its subgroups $B$ and $H$, with $B\trianglelefteq G$. %\colon h\mapsto\phi_h$. %Its underlying set is the cartesian product $H\times B$, with group operation
%$$(h_1,b_1)\cdot (h_2,b_2) = (h_1h_2,\phi_{h_2}(b_1)b_2) \qquad (h_i\in H, b_i\in B).$$
%So the group $G$ fits into a split-exact sequence
%$$1 \to B \to G\to H\to 1.$$
The following is easy to verify:

\begin{lemma}\label{lem:filtration of semidirect product}
Let $\{H_n\}$ be a filtration of $H$. Then
$$H_n B  = \{ hb : h\in H_n,\, b\in B\}$$
defines a filtration of $G=B\rtimes H$. If $\{H_n\}$ is normal, then so is $\{H_nB\}$, and for every $m<n$ the natural morphism $H_m/H_{n}\to  H_mB/H_nB$ is an isomorphism. \textup{(}So if $\{H_n\}$ is a central filtration, or a central $p$-filtration,  then $\{H_nB\}$ also has the respective property.\textup{)}
\end{lemma}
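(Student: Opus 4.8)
The proof will rest on the two features of a semidirect product $G = B \rtimes H$: that $B$ is normal in $G$, and that $B \cap H = 1$, so that the projection $q\colon G \to G/B$ restricts to an isomorphism $H \xrightarrow{\cong} G/B$. I identify $G/B$ with $H$ via this isomorphism, so that $q$ becomes the retraction of $G$ onto $H$, $hb \mapsto h$ for $h \in H$, $b \in B$.

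I would establish the three assertions in turn. For the first, $H_n B = B H_n$ is a subgroup of $G$ since $B \trianglelefteq G$, and $H_{n+1} \le H_n$ gives $H_{n+1} B \le H_n B$; hence $G \ge H_1 B \ge H_2 B \ge \cdots$ is a descending chain of subgroups, which is exactly what is asked of a filtration (and it is complete when $\mathbf H$ is, since then $H_1 B = H B = G$). For the second, assume $\mathbf H$ normal; then $q(H_n B) = H_n$ is normal in $H = G/B$, and since $H_n B \supseteq B = \ker q$, the correspondence theorem yields $H_n B \trianglelefteq G$. Because $\mathbf H$ is normal we also have $H_n \trianglelefteq H_m$ for $m \le n$, so all the quotients appearing below are groups.

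For the third assertion, fix $m < n$; the natural morphism $H_m/H_n \to H_m B/H_n B$ is induced by $H_m \hookrightarrow H_m B \twoheadrightarrow H_m B/H_n B$. Its injectivity reduces to the identity $H_m \cap H_n B = H_n$: here $\supseteq$ is immediate, and for $\subseteq$ one writes an element as $h b$ with $h \in H_n \le H$ and $b \in B$, notes that then $b = h^{-1}(hb) \in H$ as well, and concludes $b \in B \cap H = 1$. Surjectivity is equally quick: every coset of $H_n B$ in $H_m B$ is represented by some $hb$ with $h \in H_m$ and $b \in B \subseteq H_n B$, hence already by $h$. Thus the induced map is an isomorphism. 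Finally, for the parenthetical claims one simply applies $q$: if $\mathbf H$ is central then $q([G, H_n B]) = [H, H_n] \le H_{n+1}$, whence $[G, H_n B] \le q^{-1}(H_{n+1}) = H_{n+1} B$; and if in addition $(H_n)^p \le H_{n+1}$, then $q(x^p) = q(x)^p \in H_{n+1}$ for every $x \in H_n B$, so $x^p \in H_{n+1} B$, and as $H_{n+1} B$ is a subgroup it contains $(H_n B)^p$.

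I expect no genuine obstacle: each step is a one-line group-theoretic verification. The only point worth a second glance is the identity $H_m \cap H_n B = H_n$ used for injectivity, which is precisely where the defining condition $B \cap H = 1$ of the semidirect product enters (it would fail for an arbitrary extension of $H$ by $B$); everything else is a formal consequence of $B \trianglelefteq G$ together with the retraction $q\colon G \to H$.
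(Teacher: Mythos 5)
Your proof is correct. The paper does not actually give a proof of this lemma — it only remarks that ``the following is easy to verify'' — so there is nothing to compare against; your argument, organized around the retraction $q\colon G\to H$ and its kernel $B$ (so that $H_nB = q^{-1}(H_n)$), is the natural one and is complete.
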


Let now $\{G_n\}$ be a central filtration of $G$, and put $B_n:=B\cap G_n$ for each $n$. Note that $[G,B]\leq B$, hence $[G,B_n]\leq B_{n+1}$ for each $n$. %, so each quotient $B_n/B_{n+1}$ is central in\footnote{Was willst Du hier sagen?} $G$.
Assume also that a central finite-length filtration
$$H=H_1\geq H_2 \geq \cdots \geq H_m \geq 1$$
of $H$ is given. We can then combine the central filtration $\{B_n\}$ of $B$ with the central filtration $\{H_nB\}$ of $G$ to a central filtration
$$G=H_1B  \geq \cdots \geq H_mB  \geq B = B_1 \geq B_{2}\geq \cdots \geq B_n\geq \cdots$$
of $G$. (This construction of filtrations on semidirect products is used in the next subsection.)

\subsection{Proof of the embedding theorem}
We need yet another lemma, indicating how an embedding of an amalgam into a wreath product can be constructed inductively. For this, let $G\cup H|U$ be an amalgam of groups. An amalgam $X\cup Y|V$ is called a {\it subamalgam}\/ of  $G\cup H|U$  if $X\leq G$, $Y\leq H$ and $U\cap X=U\cap Y=V$. The subamalgam $X\cup Y|V$ of  $G\cup H|U$  is called {\it normal}\/ if $X$ is normal in $G$ and~$Y$ normal in $H$. In this case, identifying $UX/X\cong U/V$ with $UY/Y$ in the natural way, we can form the {\it factor amalgam}\/ $(G/X)\cup (H/Y)|(U/V)$.
The first part of the following Lemma is \cite[Lemma~2]{Hi64}:

\index{subamalgam}
\index{factor amalgam}

\begin{lemma}
Let $X\cup Y|V$ be a normal subamalgam of the amalgam $G\cup H|U$, and suppose $V$ is central in both $G$ and $H$.
 If $X\cup Y|V$ embeds into a group $T$ and $(G/X)\cup (H/Y)|(U/V)$ embeds into a group $K$, then $G\cup H|U$  embeds into $T\wr K$.
 An analogous statement holds for strong embeddings in place of embeddings.
\end{lemma}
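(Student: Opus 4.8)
The plan is to construct the embedding of $G\cup H|U$ into $T\wr K$ by first setting up the two given embeddings and then gluing them via a standard embedding into a wreath product, following the strategy of \cite[Lemma~2]{Hi64} but keeping track of the amalgamation of $U$ throughout. First I would fix embeddings $X\cup Y|V\hookrightarrow T$ and $(G/X)\cup(H/Y)|(U/V)\hookrightarrow K$; write $\overline{G}:=G/X$, $\overline{H}:=H/Y$, and let $\theta_G\colon G\to\overline{G}\to K$ and $\theta_H\colon H\to\overline{H}\to K$ be the composite morphisms. Since $X\trianglelefteq G$ is the kernel of $G\to\overline{G}$ and $X$ embeds in $T$, the Kalu\v{z}in--Krasner theorem (as recalled in the excerpt) gives, for a suitable choice of countermap $\theta_G^*\colon K\to G$, a standard embedding $\alpha\colon G\to T\wr K$ of the form $a\mapsto(\theta_G(a),f_a^{\theta_G^*})$; similarly a standard embedding $\beta\colon H\to T\wr K$ built from a countermap $\theta_H^*$. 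The key point is to \emph{choose the countermaps compatibly on $U$}: since $V$ is central in both $G$ and $H$ and $UX/X\cong U/V\cong UY/Y$ is identified inside $K$, I can pick $\theta_G^*$ and $\theta_H^*$ so that they agree, as maps into the common subgroup corresponding to $U$, on the image of $U/V$ in $K$ — concretely, choose the coset representative system $S$ and the elements $a_s$ for the two countermaps to coincide on that part of $K$ coming from $U/V$, and to take values in $U$ there (using that $U$ contains both $V$ and lifts of $U/V$). With this coordinated choice, for $u\in U$ one computes $\theta_G(u)=\theta_H(u)$ (both equal the image of $u$ in $K$) and $f_u^{\theta_G^*}=f_u^{\theta_H^*}$, because the defining formula $f_u(k)=\theta^*(\theta(u)k)^{-1}\,u\,\theta^*(k)$ depends only on $\theta(u)$, on $u$ itself (viewed in the common group $T$ via the embedding of $X\cup Y|V$, where $u\in V$), and on the restrictions of the countermaps, which we arranged to agree. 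Hence $\alpha|U=\beta|U$, so $(\alpha,\beta)$ is an embedding of the amalgam $G\cup H|U$ into $T\wr K$.

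Injectivity of $\alpha$ and $\beta$ is automatic from the Kalu\v{z}in--Krasner construction. For the strong-embedding addendum, I would argue as follows: suppose $\alpha(g)=\beta(h)$ for some $g\in G$, $h\in H$. Projecting to the top group $K=H\ltimes(\cdot)$ — i.e.\ applying the quotient $T\wr K\to K$ — gives $\theta_G(g)=\theta_H(h)$ in $K$; since the embedding $\overline{G}\cup\overline{H}|(U/V)\hookrightarrow K$ is \emph{strong}, this common element of $K$ lies in the image of $U/V$, so $gX$ and $hY$ both come from the same class $uV$ with $u\in U$. Replacing $g$ by $gu^{-1}$ and $h$ by $hu^{-1}$ (legitimate since $\alpha(u)=\beta(u)$), we reduce to the case $g\in X$, $h\in Y$. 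Now $\alpha|X$ and $\beta|Y$ factor through the given embedding of $X\cup Y|V$ into $T$ composed with the base-group inclusion $T\hookrightarrow T^K\subseteq T\wr K$ (after checking that on $X$ the function $f^{\theta^*}_{(\cdot)}$ recovers the original embedding of $X$ into $T$ up to the standard identification — this is exactly the content of how a standard embedding restricts to the kernel, cf.\ the computation in the proof of Corollary~\ref{cor:Buckley}); since $X\cup Y|V\hookrightarrow T$ is strong, $\alpha(g)=\beta(h)$ forces $g=h\in V$. Therefore $\alpha(G)\cap\beta(H)=\alpha(U)$, giving a strong embedding.

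The main obstacle I anticipate is the compatibility bookkeeping for the countermaps: one must verify that the coset-representative data for $\theta_G$ and $\theta_H$ can be chosen so that the resulting functions $f_u^{\theta_G^*}$ and $f_u^{\theta_H^*}$ literally coincide for $u\in U$, not merely agree up to conjugacy or up to an automorphism of $T$. The centrality of $V$ in both $G$ and $H$ is what makes this work — it guarantees that the relevant correction terms $\theta^*(\theta(u)k)^{-1}u\,\theta^*(k)$ collapse to something depending only on data we control, just as in the computation $f_r(h)=\theta^*(h)^{-1}r\,\theta^*(h)=r$ in the proof of Corollary~\ref{cor:Buckley}. Once the countermaps are aligned on $U$, the rest is a routine check, and the strong-embedding statement follows by the projection-and-reduction argument sketched above. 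Everything transfers verbatim if at the outset the two input embeddings are strong, which is the ``analogous statement'' claimed.
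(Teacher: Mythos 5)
Your overall strategy matches the paper's — apply the Kalu\v{z}in--Krasner standard embedding to $\theta_G\colon G\to K$ (kernel $X$) and $\theta_H\colon H\to K$ (kernel $Y$), then make them agree on $U$ — but the crucial step of coordinating the countermaps has a genuine gap, and the way you propose to achieve it does not work as stated.

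You claim that it suffices to ``choose the coset representative system $S$ and the elements $a_s$ for the two countermaps to coincide on that part of $K$ coming from $U/V$, and to take values in $U$ there,'' and that then $f_u^{\theta_G^*}=f_u^{\theta_H^*}$ because the defining formula ``depends only on \dots\ the restrictions of the countermaps.'' This is false: for $u\in U$ and an \emph{arbitrary} $k\in K$, one has
$f_u^{\theta_G^*}(k)=\theta_G^*(\theta_G(u)k)^{-1}\,u\,\theta_G^*(k)$, and here $\theta_G^*(k)$ and $\theta_G^*(\theta_G(u)k)$ for $k\notin\theta_G(U)$ are elements of $G$ subject only to the countermap identity; they live in $G$, while the corresponding terms for $H$ live in $H$, and there is no common subgroup of $T$ in which to compare them unless the values $f_u^{\theta_G^*}(k)$ and $f_u^{\theta_H^*}(k)$ are forced into $V=X\cap Y$ and made to coincide there. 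Aligning the countermaps only on $\theta(U)$ does not control any of this. (Also note the slip ``where $u\in V$'': one needs $u\in U$ here, and $u$ is generally \emph{not} in $V$.) The paper's proof invokes Higman's Lemma~1 precisely to supply the missing mechanism: one fixes a countermap $\theta^*\colon K\to U$ for $\theta|_U$ and then, by that lemma, produces countermaps $\theta_1^*\colon K\to G$ to $\theta_G$ and $\phi_1^*\colon K\to H$ to $\theta_H$ such that $\mu:=(\theta^*)^{-1}\theta_1^*$ and $\nu:=(\theta^*)^{-1}\phi_1^*$ are constant on each right $\theta(U)$-coset of $K$. Then $f_u^{\theta_1^*}(k)=\mu(k)^{-1}f_u^{\theta^*}(k)\mu(k)$, and since $f_u^{\theta^*}(k)\in V$ (because $\theta^*$ has values in $U$ and $\ker(\theta|_U)=V$) and $V$ is central in $G$, the conjugation drops out; the same with $\nu$, so both reduce to the common $f_u^{\theta^*}(k)$. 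Without this (or an equivalent construction), your claimed identity $f_u^{\theta_G^*}=f_u^{\theta_H^*}$ is unjustified, and hence so is $\alpha|_U=\beta|_U$.

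Two secondary points. First, the paper handles the plain-embedding assertion by simply citing Higman's Lemma~2; your attempt to re-derive it from scratch is where the gap opens, so even the easy half of the statement is not established by your argument. Second, in the strong-embedding reduction your claim that on $X$ the assignment $g\mapsto f_g^{\theta_G^*}$ ``recovers the original embedding of $X$ into $T$'' is not accurate: one has $f_g^{\theta_G^*}(k)=\theta_G^*(k)^{-1}g\,\theta_G^*(k)$, a $k$-dependent conjugate of $g$, which equals $g$ only when $g$ is central (e.g.\ $g\in V$). The conclusion you want can still be salvaged — from $f_g^{\theta_G^*}(k)=f_h^{\theta_H^*}(k)\in X\cap Y=V$ for all $k$, centrality of $V$ in $G$ forces $g\in V$ (and similarly $h\in V$), whence $f_g=g$, $f_h=h$, so $g=h$ — but the step as you wrote it is not a factorization through $T$. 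The paper instead proves the strong-embedding claim directly from the $\mu$-constancy, without first reducing to $g\in X$, $h\in Y$.
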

\begin{proof}
Suppose $T$ is a group containing both $X$ and $Y$ as subgroups, and $\theta\colon G\to K$, $\phi\colon H\to K$ are group morphisms with $\ker(\theta)=X$, $\ker(\phi)=Y$ and $\theta|U=\phi|U$. Then by \cite[Lemma~2]{Hi64} there are standard embeddings
$$G\to T\wr K\colon g\mapsto (\theta(g), f_g^{\theta_1^*})$$
and
$$H\to T\wr K\colon h\mapsto (\phi(h), f_h^{\phi_1^*})$$
which form an embedding of $G\cup H|U$ into $T\wr K$. This is already the first part of the lemma.
However, for the statement concerning strong embeddings, we need a more detailed description of how $f_g^{\theta_1^*}$ and $f_h^{\phi_1^*}$ are constructed.
First one chooses a countermap $\theta^*\colon K\to U$ to $\theta|U=\phi|U\colon U\to K$. This yields a standard embedding $$U\to V\wr K\colon u\to (\theta(u),f_u^{\theta^*})=(\phi(u),f_u^{\theta^*}).$$
One can show (cf.~\cite[Lemma~1]{Hi64}) that there exists a countermap
$\theta_1^*\colon K\to G$ to $\theta\colon G\to K$ such that the map $\mu\colon K\to G$ defined by $\theta^*\cdot \mu = \theta_1^*$
 is constant on each right coset of $\theta(U)=\phi(U)$ in $K$.
Similarly, there exists a countermap
$\phi_1^*\colon K\to H$ to $\phi\colon H\to K$ such that the map $\nu\colon K\to H$ defined by $\theta^*\cdot \nu = \phi_1^*$ is constant on each right coset of $\theta(U)=\phi(U)$ in $K$. This yields standard embeddings $G\to X\wr K$ and $H\to Y\wr K$ in the manner described above.

Note that for $u\in U$ and $k\in K$ we have
\begin{align*}
f_u^{\theta_1^*}(k) &= \big(\theta_1^*(\theta(u)\cdot k)\big)^{-1}\cdot u\cdot \theta_1^*(k) \\
&= \big(\mu(\theta(u)\cdot k)\big)^{-1}\cdot \big(\theta^*(\theta(u)\cdot k)\big)^{-1}\cdot u\cdot \theta^*(k)\cdot \mu(k) \\
&= \mu(k)^{-1}\cdot f_u^{\theta^*}(k)\cdot \mu(k)\\
&= f_u^{\theta^*}(k)
\end{align*}
since $V$ is central in $G$. Similarly we have $f_u^{\phi_1^*}=f_u^{\theta^*}$ for all $u\in U$.
Hence the diagram in Figure~\ref{img:embeddings} is commutative.
\begin{figure} \centering
\[\xymatrix{
& T\wr K & \\
X\wr K \ar[ur] & & Y\wr K\ar[ul] \\
  &\ar[ul] V\wr K\ar[ur]  &\\
G \ar[uu] &    & H \ar[uu]\\
& \ar[ul]  U \ar[uu] \ar[ur] &
%\Z^2 \ar[d]\ar[d]^\a & A \ar[d]\\ (\Z/p)^2\ar[r]^{\ol{\a}}&A/pA.
}\]
\caption{Embeddings into wreath products}
\label{img:embeddings}
\end{figure}

To see the statement about strong embeddings, suppose that $X\cap Y=V$ as subgroups of $T$ and $\theta(G)\cap\phi(H)=\phi(U)$. Let $g\in G$, $h\in H$ such that $(\theta(g),f_g^{\theta_1^*})=(\phi(h),f_h^{\phi_1^*})$. Then $\theta(g)=\phi(h)\in\theta(U)=\phi(U)$, hence
$\mu(\theta(g)\cdot k)=\mu(k)$ and
$\nu(\phi(h)\cdot k)=\nu(k)$ for all $k\in K$, and
$f_g^{\theta_1^*}=f_h^{\phi_1^*}$, hence $f_g^{\theta_1^*}(k)=f_h^{\phi_1^*}(k)\in X\cap Y=V$ for all $k\in K$.
Since $V$ is central in $G$, this yields
\begin{align*}
f_g^{\theta_1^*}(k) &= \big(\theta_1^*(\theta(g)\cdot k)\big)^{-1}\cdot g\cdot \theta_1^*(k) \\
&= \mu(k)^{-1} \cdot \big(\theta^*(\theta(g)\cdot k)\big)^{-1}\cdot g\cdot \theta^*(k)\cdot\mu(k)\\
&= \big(\theta^*(\theta(g)\cdot k)\big)^{-1}\cdot g\cdot \theta^*(k)
\end{align*}
for all $k\in K$, and similarly
$$f_h^{\phi_1^*}(k) = \big(\theta^*(\phi(h)\cdot k)\big)^{-1}\cdot h\cdot \theta^*(k)\qquad\text{for all $k\in K$.}$$
This yields $g=h\in U$.%\footnote{[S] Das $g=h$ folgt wohl wie im Beweis, dass jede extension in ein wreath product einbettet, aber das ist fuer den Leser zu diesem Zeitpunkt vielleicht nicht klar}
\end{proof}

With these tools at hand, we can now give the proof of Proposition~\ref{prop:higman, more precise}.
Given a filtration $\mathbf G=\{G_i\}_{i\geq 1}$ of a group, we call the number of distinct non-trivial groups occurring among $G_1,G_2,\dots$ the {\it essential length}\/ of $\mathbf G$. Note that the essential length of $\mathbf G$ is an invariant of the equivalence class of $\mathbf G$. The length of $\mathbf G$ is never smaller than its essential length.

\index{filtration!essential length}
\index{filtration!stretching}
\index{stretching}

Let $G\cup H|U$ be an amalgam of $p$-groups, and let $\mathbf G=\{G_i\}$ and $\mathbf H=\{H_i\}$ be finite-length central $p$-filtrations of $G$ respectively $H$ with $\mathbf G\cap U=\mathbf H\cap U$. %(Lemma~\ref{lem:refine}.)
We proceed by the maximum $\mu$ of the essential lengths of $\mathbf G$ and $\mathbf H$ to show that there exists a strong embedding of $G\cup H|U$ into a $p$-group $W$ and a central $p$-filtration of $W$ as well as compatible stretchings $\mathbf G^*$ of $\mathbf G$ and $\mathbf H^*$  of $\mathbf H$ satisfying conditions (A1) and (A2) in Proposition~\ref{prop:higman, more precise} (after identifying $G$ and $H$ with their images under the embedding of $G\cup H|U$ into $W$).

If $\mu=0$, then there is nothing to show, so assume $\mu>0$. Let $n$ be the maximum of the lengths of $\mathbf G$ and $\mathbf H$.
Then $X:=G_{n}$ is central in $G$ and $Y:=H_{n}$ is central in $H$, and both groups are elementary abelian $p$-groups; their fiber sum~$T$ over $V:=X\cap U=Y\cap U$  is an elementary abelian $p$-group, and $X\cap Y=V$ after identifying $X$ and $Y$ in the natural way with subgroups of $T$. By induction and after replacing $\mathbf G$ and $\mathbf H$ by compatible stretchings if necessary, we may assume that we have group morphisms $\theta\colon G\to K$ and $\phi\colon H\to K$
such that
$$\ker(\theta)=X,\ \ker(\phi)=Y,\ \theta|U=\phi|U, \text{ and } \theta(G)\cap\phi(H)=\theta(U),$$
as well as a central $p$-filtration $\mathbf K=\{K_i\}$
of $K$ such that
\begin{enumerate}
\item $K_i\cap \theta(G) = \theta(G_i)$ and $K_i\cap\phi(H) = \phi(H_i)$ for $1\leq i\leq n$;
\item $K_i\cap \theta(G) = \theta(G_n)$ and $K_i\cap\phi(H) = \phi(H_n)$ for $i\geq n$;
\item $L_i(\mathbf G)\cap L_i(\mathbf H)=L_i(\mathbf U)$  (as subgroups of $L_i(\mathbf K)$) for $1\leq i<n$, where $\mathbf U:=\mathbf G\cap U=\mathbf H\cap U$.
\end{enumerate}
Put $W:=T\wr K$, and %\marginpar{\small Du hast geschrieben: ``Ich glaube, die Section waere leichter zu lesen, wenn man erstmal den leichteren Teil von Theorem 6.1 beweist \dots.'' Leider geht das nicht so leicht, weil es sich hier um einen Beweis per Induktion handelt; das aufzutrennen g\"abe dann ein noch gr\"osseres Chaos.}
following the proof of the lemma above take standard embeddings
$$G\to W\colon g\mapsto \alpha(g):=(\theta(g), f_g)$$
and
$$H\to W\colon h\mapsto \beta(h):=(\phi(h), f_h)$$
which form a strong embedding of $G\cup H|U$ into $W$. Now for every $i=1,\dots,m$, where $m=\text{length of $\mathbf K$}$, define the normal subgroup
$$W_i := K_i \cdot T^K= \big\{ (k,f)\in W: k\in K_i, f\in T^K\big\}$$
of $W$, and for $i=1,\dots,l$, where $l=\text{nilpotency class of $W$}$, set
$$W_{m+i} := T^K\cap \gamma^p_{i}(W).$$
Then
$$W=W_1\geq W_2 \geq \cdots \geq W_m \geq W_{m+1} = T^K \geq W_{m+2}\geq\cdots\geq W_{m+l}\geq 1$$
is a central $p$-filtration of $W$. (See the remark following Lemma~\ref{lem:filtration of semidirect product}.)

It remains to show that there are compatible stretchings $\mathbf G^*$ of $\mathbf G$ and $\mathbf H^*$ of~$\mathbf H$ which, together with
$\mathbf W=\{W_i\}$, satisfy conditions (A1) and (A2) in Proposition~\ref{prop:higman, more precise}. Clearly
$$\alpha^{-1}(W_i) = \theta^{-1}(K_i)= \begin{cases}
G_i & \text{for $1\leq i\leq n$} \\
G_n & \text{for $n\leq i\leq m+1$.}
\end{cases}$$
On the other hand, by the construction of $\alpha$ in the proof of the preceding lemma, the image of $\alpha$ is contained in the subgroup $X\wr K$ of $W$, and by Corollary~\ref{cor:Buckley},
$$W_{m+l} \cap X^K = T\widehat{K}\cap X[K] = X\widehat{K} = \alpha(X),$$
hence
$\alpha^{-1}(W_{m+l}) = X = G_n$.
Therefore
$$\alpha^{-1}(W_{m+1})=\alpha^{-1}(W_{m+2})=\cdots=\alpha^{-1}(W_{m+l})=G_n.$$
Thus the stretching $\mathbf G^*=\{G_i^*\}_{i\geq 1}$ of $\mathbf G$ defined by
$$G_i^* = \begin{cases}
G_i & \text{for $1\leq i\leq n$} \\
G_n & \text{for $n\leq i\leq m+l$} \\
1   & \text{for $i>m+l$}
\end{cases}$$
has the property that $\alpha^{-1}(\mathbf W)=\mathbf G^*$.
Similarly one argues that the stretching
$\mathbf H^*=\{H_i^*\}_{i\geq 1}$ of $\mathbf H$ defined by
$$H_i^* = \begin{cases}
H_i & \text{for $1\leq i\leq n$} \\
H_n & \text{for $n\leq i\leq m+l$} \\
1   & \text{for $i>m+l$}
\end{cases}$$
satisfies  $\beta^{-1}(\mathbf W)=\mathbf H^*$; note that $\mathbf G^*$ and $\mathbf H^*$ are compatible. This shows condition (A1).
We also have, with $\omega=\omega(T[K])$:
$$L_i(\mathbf W) = \begin{cases}
L_i(\mathbf K) & \text{for $1\leq i\leq m$} \\
\omega^{i-m-1}/\omega^{i-m}			   & \text{for $m< i< m+l$} \\
\omega^{l-1}		   & \text{for $i=m+l$} \\
1			   & \text{for $i>m+l$.}
\end{cases}$$
Moreover
$$L_i(\mathbf G^*) = \begin{cases}
L_i(\mathbf G) 	& \text{for $1\leq i<n$} \\
1				& \text{for $n\leq i<m+l$} \\
G_n				& \text{for $i=m+l$} \\
1				& \text{for $i>m+l$,}
\end{cases}$$
and similarly for $L_i(\mathbf H^*)$. Property (A2) follows. \qed

%\medskip

%Inspection of the proof of the theorem also shows that with somewhat stronger assumptions on the filtrations $\mathbf G$ and $\mathbf H$, we obtain a more precise result about $\mathbf W$:

%\begin{corollary}\label{cor:higman}
%Let $G\cup H|U$ be an amalgam of $p$-groups, and suppose $\mathbf G$ and $\mathbf H$ are finite-length central $p$-filtrations of $G$ respectively $H$ which intersect to the same filtration of $U$. Then there exists a $p$-group $W$ containing both $G$ and $H$ as subgroups, with $G\cap H=U$, a finite-length central $p$-filtration ${\mathbf W}$ of $W$, as well as compatible stretchings ${\mathbf G}^*$ of $\mathbf G$ respectively ${\mathbf H}^*$ of $\mathbf H$, with the following properties:
%\begin{enumerate}
%\item ${\mathbf W}\cap G={\mathbf G}^*$ and ${\mathbf W}\cap H={\mathbf H}^*$;
%\item for each $i\geq 0$, identifying $L_i({\mathbf G}^*)$ and $L_i({\mathbf H}^*)$ with subgroups of $L_i({\mathbf W})$ in the natural way, $L_i({\mathbf W})$ is the \textup{(}internal\textup{)} fiber sum of $L_i({\mathbf G}^*)$ and $L_i({\mathbf H}^*)$ over $L_i({\mathbf U})$, where ${\mathbf U}:={\mathbf G}^*\cap U={\mathbf H^*}\cap U$.
%\end{enumerate}
%\end{corollary}

\section{Extending partial automorphisms to inner automorphisms}
\label{sec:extending partial automorphisms}

\noindent
Throughout this section we let $G$ be a $p$-group, and we let $\varphi_i\colon A_i\to B_i$ ($i=1,\dots,r$) be partial automorphisms of $G$, i.e., isomorphisms between subgroups of $G$. In this section we want to discuss the following question:
\begin{quote}
{\it Under which conditions on the $\varphi_i$ is there a $p$-group $H$ which contains $G$ as a subgroup  such that each $\varphi_i$ extends to an inner automorphism of $H$?}
\end{quote}
This issue is of interest to us since the existence of $H$ as in the question above is equivalent to the iterated HNN extension
$$\big\langle G, t_1,\dots,t_r : t_ia_it_i^{-1}=\varphi_i(a_i) \text{ for $i=1,\dots,r$, $a_i\in A_i$}\big\rangle$$
of $G$ to be residually $p$.
(See Lemma~\ref{residual p criterion} below.)
It is clear that in order to find such $H$ it suffices to produce a $p$-group $G^*$ which contains $G$ as a subgroup and for each $i$ an extension $\varphi_i^*$ of $\varphi_i$ to an automorphism of $G^*$ such that the subgroup $A$ of $\operatorname{Aut}(G^*)$ generated by $\varphi^*_1,\dots,\varphi^*_r$ has $p$-power order: in this case, the semidirect product $H=G^*\rtimes A$ has the right property.

\index{partial automorphism}

\medskip

Note however that it is not enough to have, for each $i$ individually, an extension of $\varphi_i$ to a $p$-automorphism of some $p$-group containing $G$ as a subgroup:

\begin{example}
Suppose $G=\F_p^2$, and consider the automorphisms $\varphi=\left(\begin{smallmatrix} 1 & 1 \\ 0 & 1\end{smallmatrix}\right)$ and $\psi=\left(\begin{smallmatrix} 1 & 0 \\ 1 & 1\end{smallmatrix}\right)$ of $G$. Then $\varphi$ and $\psi$ both have order $p$; however $\varphi$ and $\psi$ do not commute. Since every $p$-subgroup of $\operatorname{GL}(n,\F_p)$ is conjugate to a subgroup of the group $\operatorname{UT}_1(n,\F_p)$ of upper unitriangular matrices (see, e.g., \cite[0.8]{DdSMS}), and
$\operatorname{UT}_1(2,\F_p)$ is abelian, there are no extensions of $\varphi$, $\psi$ to automorphisms of a $p$-group containing $G$ as a subgroup and which generate a $p$-group.
\end{example}

Given a partial automorphism  $\varphi\colon A\to B$ of $G$, we say that a filtration ${\mathbf G}=\{G_n\}$ of $G$ is {\it $\varphi$-invariant} if $\varphi(A\cap G_n)= B\cap G_n$ for all $n$; in this case, if in addition $\mathbf G$ is normal, then for each $n$, $\varphi$ induces an automorphism between the canonical images $AG_{n+1}\cap G_n/G_{n+1}$ and $BG_{n+1}\cap G_n/G_{n+1}$ of $A$ respectively $B$ in $L_n({\mathbf G})=G_n/G_{n+1}$, which we denote by $L_n(\varphi)$.
%Given $H\leq \operatorname{Aut}(G)$ we say that  ${\mathbf G}$ is {\it $H$-invariant} if $\mathbf G$ is $\varphi$-invariant for all $\varphi\in H$.

\medskip

The following proposition was shown in \cite[Lemma~1.2]{Ch94} (and later rediscovered in~\cite{Mo07}). It may be seen as an analogue for HNN extensions of Higman's theorem~\cite{Hi64} discussed in the previous section; the proof given in  \cite{Ch94} also employs wreath products.

\begin{proposition}
Let $\varphi\colon A\to B$ be a partial automorphism of $G$. Suppose that $G$ admits a $\varphi$-invariant chief filtration $\mathbf G=\{G_n\}$ such that
$$\varphi(a) \equiv a \bmod G_{n+1}\qquad\text{for all $n$ and all $a\in A\cap G_n$.}$$
Then $G$ embeds into a $p$-group $H$ such that $\varphi$ extends to an inner automorphism of $H$ and $\mathbf G$ is induced by a chief filtration of $H$.
\end{proposition}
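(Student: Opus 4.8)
The plan is to reduce the statement to a purely group-theoretic enlargement problem and then solve that problem by induction on the length of the chief filtration, using wreath products in the inductive step as in \cite[Lemma~2]{Hi64} and \cite[Lemma~1.2]{Ch94}.

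\emph{Step 1 (reduction).} I claim it suffices to construct a $p$-group $G^*$ containing $G$ as a subgroup, an automorphism $\varphi^*\in\Aut(G^*)$ with $\varphi^*|A=\varphi$ and $\langle\varphi^*\rangle$ of $p$-power order, and a finite-length normal filtration $\mathbf G^*=\{G^*_n\}$ of $G^*$ each of whose terms is $\varphi^*$-invariant and with $\mathbf G^*\cap G\sim\mathbf G$. Granting this, put $P:=\langle\varphi^*\rangle$ and $H:=G^*\rtimes P$, a $p$-group; then $\varphi$ is the restriction to $A$ of the inner automorphism $\ad(\varphi^*)$ of $H$. Since every term of $\mathbf G^*$ is $\varphi^*$-invariant it is normal in $H$, so combining $\mathbf G^*$ with the lower central $p$-filtration of $P$ via the semidirect-product construction (Lemma~\ref{lem:filtration of semidirect product} and the paragraph on central filtrations of semidirect products following it) yields a normal finite-length filtration of $H$ whose intersection with $G$ is again $\sim\mathbf G$. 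Refine it to a chief filtration $\mathbf H$ of the finite $p$-group $H$. Every term of $\mathbf H$ is normal in $H$, hence meets $G$ in a subgroup that is normal in $G$ and lies between two consecutive terms of $\mathbf G$; because $\mathbf G$ is itself a chief filtration, such a subgroup must equal one of these two terms. Therefore $\mathbf H\cap G\sim\mathbf G$, and $\mathbf H\cap G$ is a chief filtration of $G$ (a chief filtration of a group induces one on every subgroup). Thus $\mathbf G$ is induced by the chief filtration $\mathbf H$ of $H$, as required.

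\emph{Step 2 (the enlargement, by induction on the length $\ell$ of $\mathbf G$).} If $\ell=0$ then $G=1$ and there is nothing to prove. Suppose $\ell\ge 1$ and put $C:=G_\ell$; since $\mathbf G$ is a chief filtration of the $p$-group $G$, $C$ is a central subgroup of order $p$. The hypothesis with $n=\ell$ forces $\varphi$ to be the identity on $A\cap G_\ell=A\cap C$, so $\varphi$ descends to a partial automorphism $\bar\varphi\colon\bar A\to\bar B$ of $\bar G:=G/C$, where $\bar A=AC/C$, $\bar B=BC/C$; a routine check shows that $\{G_k/C\}_{1\le k\le\ell}$ is a $\bar\varphi$-invariant chief filtration of $\bar G$ of length $\le\ell-1$ for which the unipotence hypothesis again holds. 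By the induction hypothesis there are a $p$-group $\bar G^*\supseteq\bar G$, an automorphism $\bar\varphi^*$ of $\bar G^*$ with $\bar\varphi^*|\bar A=\bar\varphi$ and $\langle\bar\varphi^*\rangle$ of $p$-power order, and a finite-length normal filtration $\bar{\mathbf G}^*$ of $\bar G^*$ with $\bar\varphi^*$-invariant terms and $\bar{\mathbf G}^*\cap\bar G\sim\{G_k/C\}$.

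\emph{Step 3 (lifting via a wreath product).} Now take a morphism $\theta\colon G\twoheadrightarrow\bar G\hookrightarrow\bar G^*$ with kernel $C$ and embed $G$ into the wreath product $G^*:=C\wr\bar G^*$ by a standard (Kalu\u{z}in--Krasner) embedding $\alpha\colon g\mapsto(\theta(g),f_g^{\theta^*})$. Here $G^*$ is a finite $p$-group because $C$ has $p$-power exponent and $\bar G^*$ is a finite $p$-group (cf.~\cite{B59}), and since $C$ is central in $G$ the image $\alpha(C)$ is the bottom term $\omega^d$ of the $\omega$-adic filtration of the base group $C[\bar G^*]$ by Corollary~\ref{cor:Buckley}. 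The automorphism $\bar\varphi^*$ of $\bar G^*$ induces an automorphism $\varphi^*$ of $G^*=\bar G^*\ltimes C^{\bar G^*}$ acting as $\bar\varphi^*$ on the top group and on the base group through the action of $\bar\varphi^*$ on the index set; it has the same ($p$-power) order as $\bar\varphi^*$. One chooses the countermap $\theta^*$ (as in \cite[Lemma~1]{Hi64}) so that $\alpha$ intertwines $\varphi$ with $\varphi^*$, i.e.\ $\varphi^*|A=\varphi$. Finally one defines $\mathbf G^*$ on $G^*$ by combining the pullback along $\theta$ of $\bar{\mathbf G}^*$ with the $\omega$-adic filtration of the base group, exactly as the filtration $\mathbf W$ is built on $W=T\wr K$ in \S\ref{sec:An Embedding Theorem}; its terms are $\varphi^*$-invariant, and a direct computation using $\alpha(C)=\omega^d$ together with $\bar{\mathbf G}^*\cap\bar G\sim\{G_k/C\}$ shows $\mathbf G^*\cap G\sim\mathbf G$. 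This produces the triple $(G^*,\varphi^*,\mathbf G^*)$ demanded in Step~1, completing the induction.

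\emph{The main obstacle.} The delicate point is the choice of countermap $\theta^*$: it must make $\alpha$ equivariant, so that $\varphi^*$ restricts on $A$ to $\varphi$, while \emph{simultaneously} keeping $\alpha$ sufficiently ``filtered'' that $\mathbf G^*\cap G\sim\mathbf G$. This is precisely the filtered refinement of Higman's wreath-product lemmas carried out, for amalgams, in Proposition~\ref{prop:higman, more precise}, now transported to the HNN datum $(G,\varphi)$; exhibiting one $\theta^*$ that meets both requirements at once — and verifying that it does — is the heart of the argument and is the content of \cite[Lemma~1.2]{Ch94}.
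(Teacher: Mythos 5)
The paper gives no proof of this proposition: it simply cites \cite[Lemma~1.2]{Ch94} and remarks that the proof there uses wreath products. So your proposal cannot be compared against a proof in the paper; it has to stand on its own.

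Steps~1 and~2 are correct. In Step~1 the passage from $(G^*,\varphi^*,\mathbf G^*)$ to $H=G^*\rtimes\langle\varphi^*\rangle$ works as described, and a chief refinement of your combined filtration does induce $\sim\mathbf G$ on $G$ because a normal subgroup of $G$ squeezed between consecutive terms of a chief filtration of $G$ must equal one of them. In Step~2 the unipotence hypothesis at $n=\ell$ gives $\varphi|_{A\cap C}=\id$, and together with $\varphi$-invariance (which forces $A\cap C=\varphi(A\cap C)=B\cap C$) this makes $\bar\varphi$ a well-defined injective partial automorphism of $G/C$ satisfying the same hypotheses one level down.

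Step~3, however, is not a gap that a clever countermap can fill; the choice of $\varphi^*$ is wrong. You take $\varphi^*$ on $C\wr\bar G^*$ to act as $\bar\varphi^*$ on the top group and by the induced coordinate permutation on the base, so $\varphi^*$ has the same order as $\bar\varphi^*$, and in particular $\varphi^*=\id$ whenever $\bar\varphi^*=\id$. But $\bar\varphi^*=\id$ can certainly occur while $\varphi\neq\id$, and then no standard embedding can satisfy $\varphi^*\circ\alpha=\alpha\circ\varphi$ on $A$. Concretely, let $G=\F_p\oplus\F_p$, $A=\F_p\oplus 0$, $B=0\oplus\F_p$, $\varphi(x,0)=(0,x)$, and $\mathbf G\colon G=G_1\geq G_2\geq G_3=1$ with $G_2=\{(x,-x):x\in\F_p\}$. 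This is a $\varphi$-invariant chief filtration satisfying the unipotence hypothesis (e.g. $\varphi(x,0)-(x,0)=(-x,x)\in G_2$). Here $C=G_2$, $\bar G=G/C\cong\F_p$, $\bar A=\bar B=\bar G$, and $\bar\varphi=\id_{\bar G}$ since $(x,0)\equiv(0,x)\bmod C$. The induction hypothesis is therefore satisfied with $\bar G^*=\bar G$, $\bar\varphi^*=\id$, $\bar{\mathbf G}^*=\{\bar G\geq 1\}$, and your $\varphi^*$ on $G^*=C\wr\bar G$ is the identity. Yet for any countermap $\theta^*$ and any $a\in A\setminus\{1\}$ one has $\alpha(a)^{-1}\alpha(\varphi(a))=\alpha(a^{-1}\varphi(a))$, and by Corollary~\ref{cor:Buckley} this is the nonzero constant function $a^{-1}\varphi(a)\cdot\widehat{\bar G^*}$, independent of $\theta^*$. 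So $\varphi^*(\alpha(a))=\alpha(a)\neq\alpha(\varphi(a))$: the coordinate-permuting $\varphi^*$ does not extend $\varphi$, regardless of how $\theta^*$ is chosen.

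The missing idea is that the extending automorphism (or, more naturally, the conjugating element in a slightly larger $p$-group, say $C\wr(\langle\bar\varphi^*\rangle\ltimes\bar G^*)$) must carry a non-trivial component in the base group, built precisely from the central discrepancies $a^{-1}\varphi(a)$ that the unipotence hypothesis controls. Constructing that base-group twist is the actual content of Chatzidakis's lemma; as written, your Step~3 asserts something false and then defers the verification to a reference that proves a different, correct, statement.
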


\index{Chatzidakis}
\index{HNN extension}

This fact easily yields the following corollary, which
generalizes a well-known characterization of unipotent subgroups of the general linear group over a field of characteristic $p$; cf.~\cite[0.8]{DdSMS}:

\begin{corollary}\label{cor:chatzidakis}
The following are equivalent:
\begin{enumerate}
\item There exists a $p$-group $H$ which contains $G$ as a subgroup  such that each $\varphi_i$ extends to an inner automorphism of $H$.
\item There exists a $p$-group $G^*$ which contains $G$ as a subgroup and extensions of $\varphi_i$ to automorphisms of $G^*$ which generate a $p$-subgroup of $\operatorname{Aut}(G^*)$.
\item There exists a chief filtration $\{G_j\}$ of $G$ which is $\varphi_i$-invariant for each $i$ and such that
$$\varphi_i(a) \equiv a \bmod G_{j+1}\qquad\text{for all $i$, $j$ and all $a\in A_i\cap G_j$.}$$
\end{enumerate}
\end{corollary}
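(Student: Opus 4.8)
The plan is to prove the equivalences $(1)\Leftrightarrow(2)$ and $(1)\Leftrightarrow(3)$; of these four implications only $(3)\Rightarrow(1)$ carries any weight, and it is handled by an induction on $r$ that feeds the preceding Proposition one partial automorphism at a time. For $(1)\Rightarrow(2)$ one takes $G^*:=H$ and $\varphi_i^*:=\operatorname{ad}(t_i)\in\operatorname{Aut}(H)$, where $t_i\in H$ induces the inner automorphism of $H$ extending $\varphi_i$; the $\varphi_i^*$ then generate the image of $\langle t_1,\dots,t_r\rangle\leq H$ under $H\to\operatorname{Inn}(H)$, which is a $p$-group. For $(2)\Rightarrow(1)$ one forms the semidirect product $H:=G^*\rtimes A$ with $A:=\langle\varphi_1^*,\dots,\varphi_r^*\rangle\leq\operatorname{Aut}(G^*)$, a $p$-group; then $H$ is a $p$-group containing $G$ and $\operatorname{ad}_H(\varphi_i^*)$ restricts to $\varphi_i$ on $A_i$ — precisely the argument sketched before the example above.

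For $(1)\Rightarrow(3)$, let $H$ be a $p$-group with $G\leq H$ and $\varphi_i=\operatorname{ad}(t_i)|_{A_i}$ for suitable $t_i\in H$, and put $G'_n:=\gamma^p_n(H)\cap G$. Since $\gamma^p(H)$ is a finite-length, totally invariant, central $p$-filtration of $H$, the family $\mathbf G'=\{G'_n\}$ is a finite-length central $p$-filtration of $G$; it is $\varphi_i$-invariant because $\operatorname{ad}(t_i)$ fixes each $\gamma^p_n(H)$ and maps $A_i$ onto $B_i$; and for $a\in A_i\cap G'_n$ one has $a^{-1}\varphi_i(a)=[a,t_i^{-1}]\in[\gamma^p_n(H),H]\leq\gamma^p_{n+1}(H)$, the left-hand side lying in $G$, so $\varphi_i(a)\equiv a\bmod G'_{n+1}$. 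It then remains to refine $\mathbf G'$ to a chief filtration without losing these two properties. Each layer $G'_n/G'_{n+1}$ is an $\F_p$-vector space lying in the centre of $G/G'_{n+1}$, so replacing the step $G'_n\geq G'_{n+1}$ by the pullbacks to $G$ of any complete flag of subspaces of $G'_n/G'_{n+1}$ produces only subgroups $K$ with $G'_{n+1}\leq K\leq G'_n$ that are automatically normal in $G$, and yields a filtration whose layers all have order $\leq p$, i.e.\ a chief filtration. For $a\in A_i\cap K$ we then have $a\in A_i\cap G'_n$, so $a^{-1}\varphi_i(a)\in G'_{n+1}\leq K$ and hence $\varphi_i(a)=a\cdot\bigl(a^{-1}\varphi_i(a)\bigr)\in K$; this gives $\varphi_i(A_i\cap K)\subseteq B_i\cap K$, and the same argument for $\varphi_i^{-1}$ gives equality, while the congruence for consecutive terms $K\geq K'$ is immediate since $a^{-1}\varphi_i(a)\in G'_{n+1}\leq K'$.

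For $(3)\Rightarrow(1)$, I would induct on $r$, the case $r=0$ being trivial. Given a chief filtration $\{G_j\}$ of $G$ as in $(3)$, apply the preceding Proposition to $\varphi_r$ alone (for which $\{G_j\}$ is $\varphi_r$-invariant and satisfies the congruence): this yields a $p$-group $H_r\geq G$ in which $\varphi_r$ extends to an inner automorphism $\operatorname{ad}(t_r)$, $t_r\in H_r$, together with a chief filtration of $H_r$ inducing $\{G_j\}$ on $G$; after passing to an equivalent filtration we may assume it restricts to exactly $\{G_j\}$ on $G$. Since $A_i,B_i\leq G$ for $i<r$, this chief filtration of $H_r$ inherits from $\{G_j\}$ the properties of being $\varphi_i$-invariant and of satisfying the congruence for $\varphi_1,\dots,\varphi_{r-1}$. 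The induction hypothesis, applied to the $p$-group $H_r$ and the partial automorphisms $\varphi_1,\dots,\varphi_{r-1}$, then produces a $p$-group $H\geq H_r$ in which $\varphi_1,\dots,\varphi_{r-1}$ all extend to inner automorphisms; and since $t_r\in H_r\leq H$ and $A_r\leq G\leq H$, the automorphism $\operatorname{ad}_H(t_r)$ still extends $\varphi_r$. Thus all of $\varphi_1,\dots,\varphi_r$ extend to inner automorphisms of $H$, which is $(1)$.

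The step I expect to require the most care is the bookkeeping in $(3)\Rightarrow(1)$: one must verify that the chief filtration of $H_r$ delivered by the Proposition genuinely transports the hypotheses for the surviving partial automorphisms up to $H_r$ — this works exactly because the $A_i$ and $B_i$ stay inside $G$ — and one must upgrade ``$\{G_j\}$ is induced by a chief filtration of $H_r$'' to ``restricts to $\{G_j\}$'' using the freedom to replace a filtration by an equivalent one. The refinement step in $(1)\Rightarrow(3)$ is the other place where something has to be checked, but there the slack in the congruence (mod $G'_{n+1}$, rather than mod the finer inserted terms) makes the verification essentially automatic, with no real computation.
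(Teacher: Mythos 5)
Your proof is correct and follows essentially the same route as the paper's: the equivalence $(1)\Leftrightarrow(2)$ via the semidirect product, $(3)\Rightarrow(1)$ by induction on $r$ feeding the preceding Proposition one partial automorphism at a time, and $(1)\Rightarrow(3)$ by intersecting a central filtration of the ambient $p$-group $H$ with $G$. The only cosmetic divergence is in $(1)\Rightarrow(3)$, where you use $\gamma^p(H)\cap G$ followed by a flag-refinement, whereas the paper intersects a chief filtration of $H$ directly with $G$ (noting that chief filtrations of $p$-groups are central, hence invariant under inner automorphisms); and in $(3)\Rightarrow(1)$ your ``restricts to exactly $\{G_j\}$'' should more precisely be ``restricts to some filtration equivalent to $\{G_j\}$,'' which suffices since $\varphi_i$-invariance and the congruence condition are preserved under equivalence.
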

\begin{proof}
The equivalence of (1) and (2) was already noted above. The implication (3)~$\Rightarrow$~(1) follows from the proposition above by induction on $r$.
 For the converse note that every chief filtration of a $p$-group containing $G$ as a subgroup induces a chief filtration of $G$, and that every central filtration is invariant under inner automorphisms.
\end{proof}

Criterion (3) may actually be relaxed slightly:

\begin{lemma}\label{lem:hnn}
Suppose there exists a finite-length central filtration $\mathbf G=\{G_j\}$ of $G$ which is $\varphi_i$-invariant for each $i$ and such that for each $j$ there is a $p$-group~$H_j$ containing  $L_j(\mathbf G)$ as a subgroup and for each $i$ an extension of the partial automorphism $L_j(\varphi_{i})$ of $L_j(\mathbf G)$ induced by $\varphi_i$ to an inner automorphism of $H_j$. Then there exists a $p$-group $H$ which contains $G$ as a subgroup  such that each $\varphi_i$ extends to an inner automorphism of $H$.
\end{lemma}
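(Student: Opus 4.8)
The plan is to reduce to Corollary~\ref{cor:chatzidakis} by manufacturing a chief filtration of $G$ witnessing part~(3) of that corollary. First I would look at a single layer. For each $j$, the hypothesis provides a $p$-group $H_j$ containing $L_j(\mathbf G)$ in which every $L_j(\varphi_i)$ restricts to (the restriction of) an inner automorphism; since $L_j(\mathbf G)=G_j/G_{j+1}$ is a finite $p$-group, this is exactly condition~(1) of Corollary~\ref{cor:chatzidakis} for the $p$-group $L_j(\mathbf G)$ together with the partial automorphisms $L_j(\varphi_1),\dots,L_j(\varphi_r)$. Hence, by the implication (1)$\Rightarrow$(3) of that corollary, there is a finite-length chief filtration $L_j(\mathbf G)=M_{j,0}\geq M_{j,1}\geq\cdots\geq M_{j,m_j}=1$ of $L_j(\mathbf G)$ which is $L_j(\varphi_i)$-invariant for each $i$ and on whose layers every $L_j(\varphi_i)$ acts trivially.

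Next I would pull these back and stack them. Writing the given filtration as $G=G_1\geq\cdots\geq G_{N+1}=1$, for each $j$ let $G_{j,k}$ denote the preimage in $G_j$ of $M_{j,k}$ under the projection $G_j\to G_j/G_{j+1}=L_j(\mathbf G)$, so $G_{j,0}=G_j$ and $G_{j,m_j}=G_{j+1}$. Concatenating the blocks $G_j=G_{j,0}\geq\cdots\geq G_{j,m_j}=G_{j+1}$ over $j=1,\dots,N$ yields a finite-length complete filtration $\mathbf G'$ of $G$ refining $\mathbf G$. I would then check that $\mathbf G'$ is a chief filtration of $G$: since $\mathbf G$ is central, each $G_j/G_{j+1}$ lies in $Z(G/G_{j+1})$, so every subgroup of $G_j/G_{j+1}$ — in particular each $G_{j,k}/G_{j+1}$ — is normal in $G/G_{j+1}$, whence $G_{j,k}\trianglelefteq G$; and each layer $G_{j,k}/G_{j,k+1}\cong M_{j,k}/M_{j,k+1}$ is central in $G/G_{j,k+1}$ (because $[G,G_{j,k}]\leq G_{j+1}\leq G_{j,k+1}$) and either trivial or of order $p$, hence trivial or minimal normal in $G/G_{j,k+1}$.

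Finally I would transport the two good properties from the layer filtrations $\{M_{j,k}\}$ up to $\mathbf G'$. The one routine ingredient is the identity $(A_i\cap G_j)G_{j+1}=A_iG_{j+1}\cap G_j$ (and likewise with $G_{j,k}$ in place of $G_j$), from which one reads off that for $G_{j+1}\leq K\leq G_j$ the image of $A_i\cap K$ in $L_j(\mathbf G)$ equals $(\operatorname{dom} L_j(\varphi_i))\cap(K/G_{j+1})$, and similarly for $B_i$. Granting this, both the $\varphi_i$-invariance of $\mathbf G'$, i.e.\ $\varphi_i(A_i\cap G_{j,k})=B_i\cap G_{j,k}$, and the congruences $\varphi_i(a)\equiv a\bmod G_{j,k+1}$ for $a\in A_i\cap G_{j,k}$ follow by lifting the corresponding statements for $L_j(\varphi_i)$ and $M_{j,k}\geq M_{j,k+1}$ through $G_j\to G_j/G_{j+1}$, using $G_{j+1}\leq G_{j,k+1}$. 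Thus $\mathbf G'$ satisfies condition~(3) of Corollary~\ref{cor:chatzidakis}, and the required $p$-group $H$ is delivered by condition~(1).

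I do not expect any single hard point: the only place demanding care is the last paragraph, where one must keep precise track of the domains and codomains of the induced partial automorphisms $L_j(\varphi_i)$ and of how they meet the terms $M_{j,k}$, so that invariance and triviality genuinely descend from the layer level to $G$. Everything else — existence of the layer chief filtrations, verifying that the concatenation is a chief filtration of $G$, and the final appeal to Corollary~\ref{cor:chatzidakis} — is mechanical.
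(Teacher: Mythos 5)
Your proposal is correct and follows essentially the same route as the paper: obtain, via the implication (1)$\Rightarrow$(3) of Corollary~\ref{cor:chatzidakis} applied to each layer $L_j(\mathbf G)$, a chief filtration of that layer with the invariance and congruence properties, pull these back to $G$ through $\pi_j\colon G_j\to L_j(\mathbf G)$, concatenate the blocks, verify the result is a chief filtration of $G$ satisfying condition~(3), and conclude by (3)$\Rightarrow$(1). The paper states the existence of the layer chief filtrations without spelling out the appeal to (1)$\Rightarrow$(3), but otherwise the two arguments coincide, and your explicit tracking of how $A_i\cap G_{j,k}$ relates to $\operatorname{dom}L_j(\varphi_i)\cap M_{j,k}$ is exactly the bookkeeping the paper leaves implicit.
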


\begin{proof}
For each $j$ there is a chief filtration $\mathbf G_j=\{G_{jk}\}$ of the $j$th layer $L_j(\mathbf G)$ of $\mathbf G$ which is $L_j(\varphi_i)$-invariant for each $i$ and such that
$$L_j(\varphi_i)(a)\equiv a\bmod G_{j,k+1}\qquad\text{ for all $i$, $j$, $k$ and $a\in L_j(A_i\cap\mathbf G)\cap G_{jk}$.}$$
For each $j$ let $\pi_j\colon G_j\to G_j/G_{j+1}=L_j(\mathbf G)$ be the natural epimorphism. Then $\mathbf G_j$ lifts to a complete filtration $\mathbf G^*_j=\pi_j^{-1}(\mathbf G_j)$ of $G_j$.
The $\mathbf G^*_j$ combine to a complete filtration $\mathbf G^*$ of $G$:
$$G=G_{11}^*\geq G_{12}^*\geq\cdots \geq G^*_{j,k}\geq G^*_{j,k+1}\geq\cdots\geq G^*_{j,l_j+1}=G_{j+1}\geq \cdots$$
This filtration refines $\mathbf G$ and hence is central. Moreover,
for $k\leq l_j=\text{length of $\mathbf G_j$}$, the $k$th layer of $\mathbf G_j^*$ is $L_k(\mathbf G^*_j)\cong L_k(\mathbf G_j)=\Z/p\Z$, whereas for $k>l_j$ the $k$th term of $\mathbf G_j^*$ is $G_{j+1}$, and
$$\varphi_i(a)\equiv a \bmod G^*_{j,k+1}\qquad\text{ for all $i$, $j$, $k$ and $a\in A_i\cap  G^*_{jk}$.}$$
Hence $\mathbf G^*$ is a chief filtration satisfying condition (3) in Corollary~\ref{cor:chatzidakis}.
\end{proof}

As an application of this lemma we obtain:

\begin{corollary}\label{cor:generate HNN}
Suppose $G$ is abelian, and let $H$ be a subgroup of $G$ containing all $A_i$ and $B_i$ \textup{(}$i=1,\dots,r$\textup{)}.
Then the following are equivalent:
\begin{enumerate}
\item There are a $p$-group $H^*\geq H$ and extensions of each $\varphi_i$ to an inner automorphism of $H^*$.
\item There are a $p$-group $G^*\geq G$ and extensions of each $\varphi_i$ to an inner automorphism of $G^*$.
\end{enumerate}
\end{corollary}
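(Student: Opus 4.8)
The implication (1)~$\Rightarrow$~(2) is trivial, since any $p$-group $H^*$ containing $H$ in which the $\varphi_i$ become inner lives inside no relevant larger structure — wait, that is the wrong direction; let me reconsider. The content is the implication (1)~$\Rightarrow$~(2): extending the partial automorphisms to inner automorphisms of a $p$-group containing only $H$ already forces extendability over a $p$-group containing all of $G$. The plan is to reduce to Lemma~\ref{lem:hnn} by building a suitable central filtration of $G$ out of the given data on $H$. Since $G$ is abelian, $H\leq G$ is automatically normal, so $G/H$ is abelian; and since $G$ itself, being a $p$-group, is the disjoint union of finitely many cosets of $H$, the quotient $G/H$ is a finite abelian $p$-group.

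First I would record the key observation that each $\varphi_i\colon A_i\to B_i$ is an automorphism of a subgroup of $H$, so it induces the identity on $G/H$; more precisely, for $a\in A_i$ we have $\varphi_i(a)\equiv a\bmod H$ trivially since both lie in $H$. Now consider the two-step central filtration $\mathbf G$ of $G$ given by $G_1:=G$, $G_2:=H$, $G_3:=1$ (this is central because $G$ is abelian). Each $\varphi_i$ is $\mathbf G$-invariant: $\varphi_i(A_i\cap G_1)=\varphi_i(A_i)=B_i=B_i\cap G_1$, $\varphi_i(A_i\cap G_2)=\varphi_i(A_i\cap H)=\varphi_i(A_i)=B_i=B_i\cap H=B_i\cap G_2$ (using $A_i,B_i\leq H$), and the condition at $G_3$ is vacuous. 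The layers are $L_1(\mathbf G)=G/H$ and $L_2(\mathbf G)=H$. On $L_1(\mathbf G)=G/H$ each induced partial automorphism $L_1(\varphi_i)$ is the identity on the image of $A_i$, hence extends to the identity — an inner automorphism — of the $p$-group $G/H$ itself (taking $H_1:=G/H$ in the notation of Lemma~\ref{lem:hnn}). On $L_2(\mathbf G)=H$ the induced partial automorphism $L_2(\varphi_i)$ is just $\varphi_i$ itself (up to the canonical identifications), and by hypothesis~(1) there is a $p$-group $H^*\geq H$ in which each $\varphi_i$ extends to an inner automorphism; so we may take $H_2:=H^*$.

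With these choices, all hypotheses of Lemma~\ref{lem:hnn} are met: $\mathbf G$ is a finite-length central filtration of $G$, it is $\varphi_i$-invariant for every $i$, and for each of the two layers $L_j(\mathbf G)$ there is a $p$-group $H_j$ containing it with an extension of each $L_j(\varphi_i)$ to an inner automorphism. Lemma~\ref{lem:hnn} then produces a $p$-group $G^*\geq G$ in which each $\varphi_i$ extends to an inner automorphism, which is exactly statement~(2). (Equivalently, one could invoke Corollary~\ref{cor:chatzidakis}: refine $\mathbf G$ to a chief filtration of $G$ as in the proof of Lemma~\ref{lem:hnn} and verify condition~(3) there.) The converse (2)~$\Rightarrow$~(1) is immediate, because any $p$-group $G^*\geq G$ in which the $\varphi_i$ are inner already contains $H$, so one may take $H^*:=G^*$.

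\textbf{Main obstacle.} There is essentially no obstacle here: the lemma is a clean corollary of Lemma~\ref{lem:hnn}, and the only thing to get right is the bookkeeping of the canonical identifications between $A_i$, $B_i$, their images in the layers, and the hypothesis that $\varphi_i$ restricted to each layer is the induced partial automorphism — one must be careful that $L_2(\varphi_i)$ really is $\varphi_i$ and not some twisted version, which follows because $G_3=1$ so $L_2(\mathbf G)=G_2=H$ with the identity identification. The one point that deserves a sentence of justification is why $G/H$ is a $p$-group (so that it may serve as $H_1$): this is because $G$ is a $p$-group and quotients of $p$-groups are $p$-groups.
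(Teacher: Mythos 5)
Your proof is correct and takes essentially the same approach as the paper: the paper's proof also applies Lemma~\ref{lem:hnn} to the two-step central filtration $G\geq H\geq 1$, though it leaves the verification of the hypotheses (which you spell out in detail) to the reader.
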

\begin{proof}
Clear (2) implies (1). Suppose conversely that (1) holds. Then $G\geq H\geq 1$ is a central filtration of $G$ satisfying the conditions of the previous lemma, which yields (2).
\end{proof}

This corollary immediately implies:

\begin{corollary} \label{cor:generate HNN, 2}
Suppose $G$ is abelian, and  let $\varphi_i'\colon A_i'\to B_i'$ \textup{(}$i=1,\dots,r$\textup{)} be partial automorphisms of an abelian $p$-group $G'$.
Let $\Phi\colon G\to G'$ be a morphism which, for $i=1,\dots,r$, restricts to morphisms $A_i\to A_i'$ and  $B_i\to B_i'$ making
$$
\xymatrix{A_i \ar[r]^{\varphi_i}\ar[d]^{\Phi} & B_i \ar[d]^{\Phi} \\
A_i' \ar[r]^{\varphi_i'} & B_i'
}
$$
commutative.
If $\Phi$ is injective and
$$\big\langle G, t_1,\dots,t_r : t_ia_it_i^{-1}=\varphi_i(a_i) \text{ for $i=1,\dots,r$, $a_i\in A_i$}\big\rangle$$
is residually $p$, then
$$\big\langle G', t_1,\dots,t_r : t_ia_i't_i^{-1}=\varphi_i'(a_i') \text{ for $i=1,\dots,r$, $a_i'\in A_i'$}\big\rangle$$
is residually $p$.
\end{corollary}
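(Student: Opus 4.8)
The plan is to deduce the claim from Corollary~\ref{cor:generate HNN} together with the criterion of Lemma~\ref{residual p criterion}. Since $G$ and $G'$ are both $p$-groups, Lemma~\ref{residual p criterion} translates residual $p$-ness of the iterated HNN extension of $G$ along the $\varphi_i$ into the existence of a $p$-group containing $G$ in which each $\varphi_i$ is the restriction of an inner automorphism, and likewise for $G'$ and the $\varphi_i'$. So the whole problem becomes: manufacture such an ambient $p$-group for $(G',\varphi_1',\dots,\varphi_r')$ out of one for $(G,\varphi_1,\dots,\varphi_r)$.

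First I would use the injectivity of $\Phi$ to identify $G$ with the subgroup $\Phi(G)$ of $G'$. The commuting squares then say that, under this identification, $\varphi_i'$ restricts on $A_i$ to $\varphi_i$, while the hypotheses on $\Phi$ give that $\Phi(G)$ is a subgroup of $G'$ containing each $A_i'$ and $B_i'$; hence the $\varphi_i'$ are partial automorphisms of $\Phi(G)$ which coincide with the $\varphi_i$. By assumption the iterated HNN extension of $G$ along the $\varphi_i$ is residually $p$, so Lemma~\ref{residual p criterion} supplies a $p$-group $K\supseteq G$ in which every $\varphi_i$ extends to an inner automorphism; relabelling $K$ along the isomorphism $G\xrightarrow{\cong}\Phi(G)$ yields a $p$-group containing $\Phi(G)$ in which every $\varphi_i'$ extends to an inner automorphism. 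This is precisely condition~(1) of Corollary~\ref{cor:generate HNN}, applied with $G'$ in the role of $G$ and the subgroup $\Phi(G)$ (which contains all the $A_i'$ and $B_i'$) in the role of $H$. That corollary then produces a $p$-group $G^*\supseteq G'$ in which every $\varphi_i'$ extends to an inner automorphism, that is, condition~(2); and a second application of Lemma~\ref{residual p criterion} converts this back into the assertion that the iterated HNN extension of $G'$ along the $\varphi_i'$ is residually $p$, as desired.

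The one step requiring genuine care is the identification in the middle paragraph: one must check that transporting along $\Phi$ carries the partial automorphisms $\varphi_i$ exactly to the $\varphi_i'$ (immediate from the commuting squares) and that $\Phi(G)$ really does contain the $A_i'$ and $B_i'$, so that Corollary~\ref{cor:generate HNN} may be invoked with $H=\Phi(G)$; this is what makes the deduction ``immediate'' rather than in need of a fresh argument. Everything else — the two invocations of Lemma~\ref{residual p criterion} and the relabelling of $K$ — is routine bookkeeping. As an alternative to quoting Corollary~\ref{cor:generate HNN}, one could instead apply Lemma~\ref{lem:hnn} directly to the length-two central filtration $G'\geq\Phi(G)\geq 1$ of $G'$, whose two layers $\Phi(G)$ and $G'/\Phi(G)$ present, respectively, exactly the $G$-situation and a trivial situation; but this essentially re-proves Corollary~\ref{cor:generate HNN} in the case at hand.
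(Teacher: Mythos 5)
You take the route the paper intends: identify $G$ with the subgroup $\Phi(G)$ of $G'$, observe that under this identification the $\varphi_i$ are the restrictions of the $\varphi_i'$, and apply Corollary~\ref{cor:generate HNN} with $H=\Phi(G)$; the translation in and out of the inner-automorphism criterion via Lemma~\ref{residual p criterion} is the dictionary recalled at the start of Section~\ref{sec:extending partial automorphisms}.

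However, the assertion that ``the hypotheses on $\Phi$ give that $\Phi(G)$ is a subgroup of $G'$ containing each $A_i'$ and $B_i'$'' does not hold as stated, and this is the step on which the whole argument turns. That $\Phi$ restricts to a morphism $A_i\to A_i'$ gives only $\Phi(A_i)\subseteq A_i'$, not the reverse containment $A_i'\subseteq\Phi(A_i)\subseteq\Phi(G)$. Read literally the corollary even fails: take $p$ odd, $G=G'=\F_p^2$, $\Phi=\id$, $A_1=B_1$ trivial and $\varphi_1$ the empty map (so the first HNN extension is $\F_p^2\ast\Z$, residually~$p$), while $A_1'=B_1'=\F_p^2$ and $\varphi_1'$ is the transposition $(x,y)\mapsto(y,x)$. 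The square commutes vacuously and $\Phi$ is injective, yet $\varphi_1'$ has order~$2$, so the second HNN extension $\Z\ltimes\F_p^2$ is not residually~$p$ (exactly as in the example that follows the corollary in the paper). What is actually needed --- and what is arranged in the sole application of this corollary, the proof of Lemma~\ref{lem:reduction theorem}, where $\Phi$ is constructed so that it restricts to \emph{isomorphisms} $L_1(\mathbf A_e)\to L_{n+1}(\mathbf A_e)$ and $L_1(\mathbf B_e)\to L_{n+1}(\mathbf B_e)$ --- is that $\Phi$ carries $A_i$ onto $A_i'$ and $B_i$ onto $B_i'$. Under that reading the rest of your argument goes through, and so does the alternative you mention via Lemma~\ref{lem:hnn} applied to the length-two $\varphi_i'$-invariant filtration $G'\geq\Phi(G)\geq 1$; but the surjectivity is an additional hypothesis that must be supplied, not a consequence of the ones you quoted.
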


This fact will be used in the next chapter, however only in the trivial case where $G$ and $G'$ are elementary abelian $p$-groups. We would like to take the opportunity to stress that the analogue of the last corollary without the requirement that $\Phi$ be injective is false, as a simple example based on permutation matrices shows:

\begin{example}
Let $p=3$, and
consider $G=\F_p^3$ and its subgroups $A=\F_p\oplus\F_p\oplus 0$, $B=\F_p\oplus 0\oplus\F_p$. Let $\widetilde{\varphi}$ be the automorphism of $G$ given by
$\widetilde{\varphi}(x,y,z) = (y,z,x)$.
Then $\widetilde{\varphi}$ has order $3$, and $\widetilde{\varphi}$ restricts to an isomorphism $\varphi\colon A\to B$. Hence  the HNN extension
$$H:=\langle G,t : \text{$tat^{-1}=\varphi(a)$ for all $a\in A$}\rangle$$
of $G$ with associated subgroups $A$, $B$ identified via $\varphi$ is residually $3$.
Now consider the morphism $\Phi\colon G\to G'=\F_p^2$ given by
$\Phi(x,y,z) = (x,y+z)$.
Then $\Phi$ restricts to isomorphisms $A\to G'$ and $B\to G'$, and if we let $\varphi'$ be the automorphism of $G'$ given by
$\varphi'(u,v) = (v,u)$,
then $\varphi'\circ(\Phi|A)=\Phi\circ\varphi$. Hence $\Phi$ gives rise to a surjective morphism
$$H \to
  H' := \langle G',t : \text{$ta't^{-1}=\varphi'(a')$ for all $a'\in A'$}\rangle.$$
However, $H'$ is not residually $3$, since $\varphi'$ has order $2$.
\end{example}

This phenomenon forced us to include the condition that the morphisms $\Phi_n$ arising in the definition of a $p$-potent filtration of a group (cf.~Section~\ref{sec:p-potent}) to be \emph{injective.}

\chapter{Residual Properties of Graphs of Groups}\label{ch:residual properties of graphs of groups}

\noindent
We say that a filtration $\mathbf G=\{\mathbf G_v\}_{v\in V(Y)}$ of a graph of groups $\scrg$ with underlying graph $Y$ is \emph{$p$-excellent} if
\begin{enumerate}
\item $\mathbf G$ is normal with $[G_v:G_{vn}]<\infty$ for each $v\in V(Y)$ and $n\geq 1$;
\item $\mathbf G$, construed as a complete filtration of $\mathbf G_1$, is $p$-potent;
\item $\mathbf G$ is separating;
\item $\mathbf G$ separates the edge groups of $\scrg$; and
\item for each $e\in E(Y)$, the filtration $\mathbf G_{t(e)}$ intersects to a uniformly $p$-potent filtration on the subgroup $f_e(G_e)$ of $G_{t(e)}$.
\end{enumerate}
The main result of this chapter is:

\begin{theorem}\label{thm:reduction theorem, 2}
Let $\scrg$ be a graph of finitely generated groups admitting a $p$-excellent filtration. Then $\pi_1(\scrg)$ is virtually residually $p$.
\end{theorem}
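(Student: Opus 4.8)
The plan is to run Steps~3--5 of the strategy described in the introduction, built on the embedding theorems of Chapter~\ref{ch:embedding theorems}. Write $G=\pi_1(\scrg)$, let $Y$ be the underlying graph and $\mathbf G=\{\mathbf G_v\}_{v\in V(Y)}$ the given $p$-excellent filtration. Since $\mathbf G$ is normal with finite-index, $p$-potent layers and the vertex groups are finitely generated, every quotient $\scrg_n:=\scrg/\mathbf G_n$ is a graph of finite $p$-groups, and the subgroups $\{G_{v,m}/G_{v,n}:1\le m\le n\}$ assemble (using $f_e^{-1}(\mathbf G_{t(e)})=\mathbf G_e=f_{\overline e}^{-1}(\mathbf G_{t(\overline e)})$) into a finite-length central $p$-filtration of $\scrg_n$ inducing one and the same filtration $\{G_{e,m}/G_{e,n}\}_m$ on each edge group regarded inside either incident vertex quotient. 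First I would prove a Hempel-type criterion: because $\mathbf G$ is separating and separates the edge groups, a reduced path in $\scrg$ representing $g\ne1$ maps to a reduced path in $\scrg_n$ for all large $n$, so $G$ is residually $p$ as soon as $\pi_1(\scrg_n)$ is residually $p$ for every $n$ --- a property we will only be able to verify after replacing $\scrg$ by a finite cover.

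Next I would reduce the residual $p$-ness of all $\pi_1(\scrg_n)$ to the single case $n=2$. Fixing a maximal subtree $T$ of $Y$, one presents $\pi_1(\scrg_n)$ as an iterated HNN extension (one stable letter per topological edge outside $T$) of the iterated amalgamated product $\pi_1(\scrg_n|T)$; for $n=2$ one moreover replaces $\pi_1(\scrg_2|T)$ by the colimit $\Sigma$ of the tree $\scrg_2|T$ in the category of abelian groups --- a finite abelian $p$-group --- to obtain the partial abelianization $\pi_1^*(\scrg_2,T)$, an iterated HNN extension of $\Sigma$. Here Theorem~\ref{thm:higman} together with Proposition~\ref{prop:higman, more precise} (applied inductively along $T$, with layers matched up by Lemma~\ref{lem:refine}) controls the amalgamation of the vertex $p$-groups and supplies compatible finite-length central $p$-filtrations on the intermediate groups, while Corollary~\ref{cor:chatzidakis} and Lemma~\ref{lem:hnn} govern the HNN steps; condition~(5) of $p$-excellence --- uniform $p$-potency of $\mathbf G_{t(e)}$ on $f_e(G_e)$ --- is what keeps the filtered data on the edge groups uniform in $n$, so that residual $p$-ness of $\pi_1^*(\scrg_2,T)$ should propagate to residual $p$-ness of $\pi_1(\scrg_n)$ for all $n$. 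It then suffices to show, possibly after passing to a finite cover, that $\pi_1^*(\scrg_2,T)$ is residually $p$, for which (by Corollary~\ref{cor:chatzidakis}, cf.\ also Corollary~\ref{cor:generate HNN, 2}) a morphism $\pi_1(\scrg_2,T)\to P$ to a finite abelian $p$-group $P$ that is injective on every vertex group of $\scrg_2$ is enough.

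To produce such a morphism I would unfold $\scrg$. The edge identifications give partial automorphisms $\varphi_e$ of the fiber sum $\Sigma$; extending each to an automorphism of $\Sigma$ yields a morphism $\psi\colon G\to\Aut(\Sigma)$ trivial on every vertex group, and $\Aut(\Sigma)$ is finite. Realizing the finite-index normal subgroup $\ker(\psi)$ as $\phi_*(\pi_1(\widetilde\scrg))$ for a morphism of graphs of groups $\phi\colon\widetilde\scrg\to\scrg$ (by covering-space theory, in the style of Proposition~\ref{prop:commoncover} and Theorem~\ref{thm:bass-serre}), pulling $\mathbf G$ back to a $p$-excellent filtration on $\widetilde\scrg$ (Lemmas~\ref{lem:separating, 1} and~\ref{lem:separating, 2}), and noting that in $\widetilde\scrg$ the obstructing partial automorphisms have been killed, one obtains the desired morphism $\pi_1(\widetilde\scrg_2,\widetilde T)\to\Sigma$ injective on vertex groups. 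Applying the reductions above to $\widetilde\scrg$ in place of $\scrg$ then gives that $\pi_1(\widetilde\scrg)$ is residually $p$; since $[G:\pi_1(\widetilde\scrg)]<\infty$, the theorem follows.

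The step I expect to be the main obstacle is the reduction to $n=2$: a graph --- even an amalgam --- of finite $p$-groups in general does \emph{not} have residually $p$ fundamental group, so the construction has to be carried out with \emph{filtered} $p$-groups throughout, keeping the hypotheses of Theorem~\ref{thm:higman} (for amalgams) and Corollary~\ref{cor:chatzidakis} (for HNN extensions) available at every stage. This is exactly why Proposition~\ref{prop:higman, more precise} records the layers $L_i(\mathbf W)$ and their fiber-sum decompositions, and why uniform $p$-potency of the edge filtrations is built into the definition of $p$-excellence. The word ``virtually'' cannot be removed: the partial automorphisms $\varphi_e$ are genuine obstructions --- compare the permutation-matrix example in Section~\ref{sec:extending partial automorphisms} --- that disappear only after passing to the finite cover $\widetilde\scrg$.
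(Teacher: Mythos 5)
Your proposal follows the paper's own argument step for step: Hempel-type criterion (Proposition~\ref{Hempel criterion residually p}, Corollary~\ref{cor:Hempel, cor}) to reduce to residual $p$-ness of the finite-quotient graphs $\scrg_n$; the reduction theorem (Theorem~\ref{thm:reduction theorem}, via Lemma~\ref{lem:reduction theorem}, Proposition~\ref{prop:reduction}, Lemma~\ref{lem:higman, 2}) to collapse everything to the partial abelianization of the first layer; and the unfolding construction (Proposition~\ref{prop:unfolded}, Corollary~\ref{cor:unfolded, 1}) to produce the finite cover on which the needed morphism to $\Sigma$ exists. The one thing you elide is the preliminary reduction to a \emph{complete} $p$-excellent filtration (your first paragraph already assumes the quotients $G_v/G_{v,n}$ are $p$-groups, which requires $G_{v,1}=G_v$), which the paper handles explicitly at the end of its proof by applying Proposition~\ref{prop:commoncover} to the compatible collection $\{G_{v,1}\}$.
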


We prove this result in the last section of this chapter. After some general facts concerning residual properties of fundamental graphs of groups (Section~\ref{sec:residual properties of graphs of groups}) we first establish a useful criterion  for fundamental groups to be residually $p$  (Section~\ref{sec:A Reduction Theorem}) and introduce an unfolding procedure on graphs of groups (Section~\ref{sec:unfolding a graph of groups}).

\index{filtration!$p$-excellent|textbf}

\section{Root properties and fundamental groups of graphs of groups}
\label{sec:residual properties of graphs of groups}

\noindent
Following \cite{Gru57}, we say that a property $\mathfrak P$ of groups is a {\it root property}\/ if
\begin{enumerate}
\item every subgroup of a group with property $\mathfrak P$ has property $\mathfrak P$;
\item every direct product of two groups with property $\mathfrak P$ has property $\mathfrak P$;
\item if $G$ is a group and $K\trianglelefteq H\trianglelefteq G$ are such that $G/H$, $H/K$ have property $\mathfrak P$, then $K$ contains a subgroup $L$, {\it normal in $G$,} such that $G/L$ is $\mathfrak P$.
\end{enumerate}
Important root properties are: solvability, finiteness, and having order a power of a given prime $p$ (see \cite{Gru57}).
(Nilpotence does not satisfy (3) and hence is not a root property.)
This notion is tailor-made for the proof of the following lemma to go through:

\begin{lemma}{\cite[Lemma~1.5]{Gru57}}\label{lem:gruenberg}
Let $\G$ be a group and $N\trianglelefteq \G$, and let $\mathfrak P$ be a root property.
If the group $N$ is residually $\mathfrak P$ and $\G/N$ has property $\mathfrak P$, then $\G$ is residually $\mathfrak P$.
\end{lemma}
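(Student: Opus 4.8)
The plan is to check the defining condition of residual $\mathfrak P$ pointwise: given $g\in\G$ with $g\neq 1$, I will exhibit a surjective morphism from $\G$ onto a group with property $\mathfrak P$ that does not send $g$ to the identity. I split into two cases according to whether $g$ lies in $N$.

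First I would dispose of the easy case $g\notin N$. Here the canonical projection $\G\to\G/N$ already suffices: its target has property $\mathfrak P$ by hypothesis, and the image of $g$ is nontrivial because $g\notin N=\ker(\G\to\G/N)$.

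The substantive case is $g\in N$ with $g\neq 1$. Since $N$ is residually $\mathfrak P$, there is a morphism $\psi\colon N\to P$ onto a $\mathfrak P$-group $P$ with $\psi(g)\neq 1$; put $K:=\ker\psi$, so that $K\trianglelefteq N$, the quotient $N/K\cong P$ has property $\mathfrak P$, and $g\notin K$. The obstruction at this point — and the only genuine content of the lemma — is that $K$ need not be normal in $\G$, so $\G/K$ is not available. This is exactly the situation axiom~(3) in the definition of a root property is designed to handle: I would apply it to the chain $K\trianglelefteq N\trianglelefteq\G$, using that $N/K$ has property $\mathfrak P$ by construction and $\G/N$ has property $\mathfrak P$ by hypothesis. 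It produces a subgroup $L\leq K$ that is normal in $\G$ and with $\G/L$ a $\mathfrak P$-group. Since $L\leq K$ and $g\notin K$, we have $g\notin L$, so the projection $\G\to\G/L$ is the required morphism.

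This finishes the proof. I do not anticipate any real difficulty: both cases are immediate once one recognizes that the third root-property axiom applies verbatim to the chain $K\trianglelefteq N\trianglelefteq\G$, which is precisely the passage from a subgroup normal in $N$ to one normal in $\G$.
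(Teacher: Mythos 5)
Your proof is correct, and since the paper simply cites Gruenberg without reproducing an argument, there is nothing to diverge from; your case split ($g\notin N$ handled by $\G\to\G/N$, $g\in N$ handled by producing $K\trianglelefteq N$ with $g\notin K$ and $N/K$ a $\mathfrak P$-group, then invoking root-property axiom~(3) on $K\trianglelefteq N\trianglelefteq\G$ to replace $K$ by an $L\trianglelefteq\G$ with $\G/L$ a $\mathfrak P$-group) is exactly Gruenberg's original argument.
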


\index{group!residually $\mathfrak P$}

In the rest of this section we let $\mathfrak P$ be a root property,  we let
$\scrg$ be a graph of groups based on a graph $Y$, and write $G=\pi_1(\scrg)$. We are interested in conditions which force $G$ to be residually $\mathfrak P$.
{\it In the rest of this section we assume that $\mathfrak P$ is residually satisfied by every finitely generated free group.}
Of course, the case where $\mathfrak P$ is the property of having $p$-power order is the one of most interest for us, but the extra generality comes at little additional cost, and might be useful for further applications.
Probably the most interesting example for a root property $\mathfrak P$ of the type considered here which does not include the condition of finiteness  is the property of being a solvable group. In connection with this it is worth noting that there is a distinction between ``residually solvable'' and ``residually finite solvable'': there do exist finitely presentable solvable groups which are not residually finite, cf.~\cite{Ba73}.

\medskip

\index{root property}
\index{$p$-group}

If each vertex group of $\scrg$ is a $p$-group, then $G$ always has a free (but not necessarily normal) subgroup whose index is the least common multiple of the orders of the vertex groups (hence a power of $p$), cf.~\cite[II.2.6, Lemmas~8 and 10]{Se80}; to have a free {\it normal}\/ subgroup of $p$-power index is equivalent to $G$ being residually $p$.
More generally:

\begin{lemma} \label{residual p criterion}
Suppose that each vertex group of $\scrg$ is a finite $\mathfrak P$-group. Then the following are equivalent:
\begin{enumerate}
\item $G$ has a free normal subgroup $N$ such that $G/N$ has
$\mathfrak P$.
\item $G$ is residually $\mathfrak P$.
\item There is a morphism $\psi\colon G\to P$ to a $\mathfrak P$-group $P$ such that $\psi|G_v$ is injective for every $v\in V(Y)$.
\end{enumerate}
\end{lemma}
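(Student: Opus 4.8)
\textbf{Proof strategy for Lemma~\ref{residual p criterion}.} The plan is to prove the cycle of implications $(1)\Rightarrow(2)\Rightarrow(3)\Rightarrow(1)$, of which the first implication is the easiest and the last one carries the real content. For $(1)\Rightarrow(2)$: if $N\trianglelefteq G$ is free with $G/N$ having $\mathfrak P$, then $N$ is residually $\mathfrak P$ (by our standing assumption that free groups are residually $\mathfrak P$, together with the fact that $\mathfrak P$ is closed under subgroups), and $G/N$ has $\mathfrak P$; so Lemma~\ref{lem:gruenberg} applies directly to give that $G$ is residually $\mathfrak P$. The implication $(2)\Rightarrow(3)$ is where one uses finiteness of the vertex groups in an essential way. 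Assuming $G$ is residually $\mathfrak P$, I would enumerate the finitely many non-trivial elements of the (finite) vertex groups $G_v$, $v\in V(Y)$; for each such element $g$ there is a morphism $\psi_g\colon G\to P_g$ to a $\mathfrak P$-group with $\psi_g(g)\neq 1$. Taking $\psi$ to be the diagonal map into the finite direct product $\prod_g P_g$ — which is a $\mathfrak P$-group by closure under finite products (axiom (2) of a root property) — yields a morphism $\psi\colon G\to P$ that is non-trivial on every non-identity element of every $G_v$, hence injective on each $G_v$.

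The heart of the matter is $(3)\Rightarrow(1)$. Given $\psi\colon G\to P$ as in (3), set $N=\ker(\psi)$; then $N\trianglelefteq G$ and $G/N\hookrightarrow P$ has $\mathfrak P$, so it remains only to show that $N$ is free. Here I would invoke the Bass--Serre structure of $N$: since $\scrg$ is a graph of groups, $N$ acts on the associated Bass--Serre tree $X$ of $\scrg$ (with $G=\pi_1(\scrg)$ acting in the standard way), and $N$ is thereby the fundamental group of the induced graph of groups $\scrg'$ obtained by restricting to $N$ — concretely, by the remark following Theorem~\ref{thm:bass-serre} (and \cite[Section~8.5, Theorem~27]{Co89}), since $N$ is \emph{normal}, one may take $\scrg'$ to have vertex groups of the form $N\cap G_v$ (conjugates being unnecessary by normality) and edge groups $N\cap f_e(G_e)$. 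But for each $v$, $N\cap G_v=\ker(\psi|G_v)=1$ because $\psi|G_v$ is injective; likewise all edge groups of $\scrg'$ are trivial. Hence $N$ is the fundamental group of a graph of groups all of whose vertex and edge groups are trivial, i.e., $N$ is free (it is the fundamental group of a graph), cf.~\cite[I.5.4]{Se80} or \cite[Section~II.2.6]{Se80}.

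\textbf{Anticipated main obstacle.} The genuinely delicate point is the passage from ``$\psi$ is injective on each vertex group'' to ``$\ker(\psi)$ meets each vertex group trivially'' \emph{and has the desired graph-of-groups decomposition} — i.e., making rigorous that a normal subgroup of $\pi_1(\scrg)$ inherits a graph-of-groups structure with vertex groups exactly the intersections $N\cap G_v$. This is precisely what the (normal case of the) remark after Theorem~\ref{thm:bass-serre} provides, but one must be slightly careful that the edge groups of the restricted graph of groups are the groups $N\cap f_e(G_e)$ and that these, too, are trivial; this follows since $f_e(G_e)\leq G_{t(e)}$ and $\psi$ is injective on $G_{t(e)}$. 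Once that structural input is in hand, the freeness of $N$ is immediate and the lemma follows. I do not expect any substantive difficulty in the finiteness-based implication $(2)\Rightarrow(3)$ beyond bookkeeping, since the vertex groups are assumed finite and a root property is closed under finite direct products.
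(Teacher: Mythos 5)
Your proof is correct and follows essentially the same route as the paper: $(1)\Rightarrow(2)$ via Lemma~\ref{lem:gruenberg}, $(2)\Rightarrow(3)$ by combining finitely many separating $\mathfrak P$-quotients using closure under subgroups and finite products, and $(3)\Rightarrow(1)$ via Bass--Serre theory to see that $\ker(\psi)$ meets all conjugates of vertex groups trivially and is therefore free. The only cosmetic difference is that the paper cites \cite[II.2.6, Lemma~8]{Se80} directly for the freeness of $\ker(\psi)$, whereas you spell out the intermediate step through the induced graph-of-groups decomposition of the normal subgroup; both rest on the same machinery.
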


\begin{proof}
The implication ``(1)~$\Rightarrow$~(2)'' follows from Lemma~\ref{lem:gruenberg} and the assumption that free groups are residually $\mathfrak P$. Suppose $G$ is residually $\mathfrak P$. Then for every $v\in V(Y)$ and $g\in G_v\setminus\{1\}$ there is a normal subgroup $N_g$ of $G$  with $G/N_g$ satisfying~$\mathfrak P$ and $g\notin N_g$. Let $N$ be the intersection of all $N_g$ as $g$ ranges over $\bigcup_{v\in V(Y)} G_v\setminus\{1\}$. Then $N$ is a normal subgroup of $G$ with $G/N$ satisfying~$\mathfrak P$, and the natural morphism $\psi\colon G\to G/N$ is injective on each $G_v$. This shows ``(2)~$\Rightarrow$~(3).''
Finally, given $\psi\colon G\to P$ as in (3), the kernel $N$ of $\psi$ is a normal subgroup of $G$ with $G/N$ satisfying $\mathfrak P$, and $N$ is free by \cite[II.2.6, Lemma~8]{Se80}. This shows ``(3)~$\Rightarrow$~(1).''
\end{proof}

\begin{remark}
The proof of the implications ``(3)~$\Rightarrow$~(1)~$\Rightarrow$~(2)'' did not use the finiteness of the $G_v$, and the proof of ``(2)~$\Rightarrow$~(3)~$\Rightarrow$~(1)'' is valid without assuming that free groups are residually $\mathfrak P$.
\end{remark}

\begin{corollary}\label{cor:residually P quotient}
Let $H$ be a normal subgroup of $G$ such that $G/H$ satisfies $\mathfrak P$. Then $\pi_1(\scrg/\scrh)$ is residually $\mathfrak P$, where $\scrh=\{H\cap G_v\}_{v\in V(Y)}$.
\end{corollary}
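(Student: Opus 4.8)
The plan is to deduce this corollary directly from Lemma~\ref{residual p criterion} together with the factorization diagram \eqref{eq:factorization}. Recall that in \eqref{eq:factorization} we produced a morphism $\pi_1(\scrg/\scrh)\to G/H$, call it $\rho$, fitting into a commutative triangle with $\pi_{\scrh}\colon G\to\pi_1(\scrg/\scrh)$ and the natural projection $G\to G/H$. First I would observe that the vertex groups of $\scrg/\scrh$ are the quotients $G_v/(H\cap G_v)$, which by the second isomorphism theorem are isomorphic to the subgroups $G_v H/H$ of $G/H$; since $G/H$ satisfies $\mathfrak P$ and $\mathfrak P$ is closed under passing to subgroups (root property axiom (1)), each vertex group of $\scrg/\scrh$ is a $\mathfrak P$-group, and being a quotient of the finitely generated $G_v$ it is finite once $G_v$ is finite --- but in fact we don't even need finiteness here, since $G_v/(H\cap G_v)$ embeds into the $\mathfrak P$-group $G/H$. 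Wait --- Lemma~\ref{residual p criterion} as stated assumes the vertex groups are \emph{finite} $\mathfrak P$-groups; so I should either invoke the Remark following that lemma (which notes that the implication ``(3)~$\Rightarrow$~(1)~$\Rightarrow$~(2)'' does not use finiteness of the $G_v$), or simply apply the relevant direction directly.

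\textbf{Key step.} The morphism $\psi$ required by condition (3) of Lemma~\ref{residual p criterion}, applied to the graph of groups $\scrg/\scrh$, is exactly $\rho\colon\pi_1(\scrg/\scrh)\to G/H$. I need to check that $\rho$ is injective on each vertex group of $\scrg/\scrh$. By the commutativity of \eqref{eq:factorization}, the restriction of $\rho$ to the vertex group $G_v/(H\cap G_v)$ of $\scrg/\scrh$ is precisely the natural map $G_v/(H\cap G_v)\to G/H$ induced by the inclusion $G_v\hookrightarrow G$, which is injective (its kernel is $(H\cap G_v)/(H\cap G_v)$, trivial). Here I am using that the natural morphism $G_v\to\pi_1(\scrg/\scrh)$ identifies $G_v/(H\cap G_v)$ with the vertex group as a subgroup of $\pi_1(\scrg/\scrh)$ --- this is the standard identification recalled in Section~\ref{sec:pi1}, valid because the edge morphisms of $\scrg/\scrh$ are still injective (they are induced from the injective $f_e$ by passing to quotients by a compatible collection of normal subgroups). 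So all the hypotheses of the ``(3)~$\Rightarrow$~(2)'' direction of Lemma~\ref{residual p criterion} are met, and we conclude that $\pi_1(\scrg/\scrh)$ is residually $\mathfrak P$.

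\textbf{Anticipated obstacle.} There is essentially no deep obstacle; the only point requiring a small amount of care is the bookkeeping around which version of Lemma~\ref{residual p criterion} to cite, given its finiteness hypothesis on the vertex groups. The cleanest route is to note that the vertex groups $G_v/(H\cap G_v)\cong G_vH/H$ are subgroups of the $\mathfrak P$-group $G/H$ and hence are $\mathfrak P$-groups, and then to invoke the Remark after the lemma, which explicitly states that the proof of ``(3)~$\Rightarrow$~(1)~$\Rightarrow$~(2)'' does not use finiteness of the $G_v$ (and, for the first implication, does not even use that free groups are residually $\mathfrak P$ --- though we do use the latter for ``(1)~$\Rightarrow$~(2)'', which is fine by the blanket assumption of this section). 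Thus the corollary follows immediately.
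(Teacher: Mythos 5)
Your proof is correct and follows exactly the same route as the paper: invoke the factorization morphism $\rho\colon\pi_1(\scrg/\scrh)\to G/H$ from \eqref{eq:factorization}, verify it is injective on each vertex group $G_v/(H\cap G_v)$ (since its restriction there is the natural embedding $G_v/(H\cap G_v)\cong G_vH/H\hookrightarrow G/H$), and apply the ``(3)~$\Rightarrow$~(2)'' direction of Lemma~\ref{residual p criterion}, whose proof (as noted in the Remark) does not use finiteness of the vertex groups. Your additional observation that the vertex groups of $\scrg/\scrh$ are $\mathfrak P$-groups is correct but not strictly needed once injectivity of $\rho$ on vertex groups is established.
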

\begin{proof}
The  unique morphism $\pi_1(\scrg/\scrh)\to G/H$ making \eqref{eq:factorization} commutative is injective on each vertex group $G_v/H_v$ of $\scrg/\scrh$, so $\pi_1(\scrg/\scrh)$ is residually $\mathfrak P$ by
the implication ``(3)~$\Rightarrow$~(2)'' in Lemma~\ref{residual p criterion} (the proof of which didn't need finiteness of the $G_v/H_v$).
\end{proof}

\subsection*{Digression: trees of elementary abelian $p$-groups}\label{sec:homogeneous}
Suppose the underlying graph $Y$ of $\scrg$ is a tree, and fix a vertex $v_0$ of $Y$.
Assume that for each edge $e$ of $Y$, the morphism $f_{e}\circ f_{\ol{e}}^{-1}$ extends to an isomorphism $\varphi_e\colon G_{o(e)}\to G_{t(e)}$, such that $\varphi_{e}^{-1}=\varphi_{\ol{e}}$ for all $e$.
For each vertex $v$ of $Y$ we then have an isomorphism $\varphi_v\colon G_v\to G_{v_0}$ given as the composition
$$\varphi_v := \varphi_{e_n}\circ \varphi_{e_{n-1}} \circ \cdots \circ \varphi_{e_1}$$
where $(e_1,\dots,e_n)$ is the geodesic in $Y$ from $v$ to $v_0$. In particular, $\varphi_{v_0}=\id_{G_{v_0}}$, and
$\varphi_{t(e)}\circ f_e = \varphi_{t(\ol{e})}\circ f_{\ol{e}}$ for every edge $e$ of $Y$. This yields a simple but useful fact about the iterated amalgamated product $G=\pi_1(\scrg)$:

\begin{lemma} \label{lem:special amalgam}
There exists a unique morphism $\varphi\colon G\to G_{v_0}$ such that for each vertex $v$ of $Y$, the diagram
$$\xymatrix{
G \ar[r]^{\varphi} & G_{v_0} \\
G_v\ar[u] \ar[ur]_{\varphi_v} &
}$$
commutes, where $G_v\to G$ is the natural morphism. %The morphism $\varphi$ is an isomorphism.
\end{lemma}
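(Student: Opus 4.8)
The statement asserts the existence and uniqueness of a morphism $\varphi\colon G\to G_{v_0}$ compatible with the maps $\varphi_v$ on the vertex groups. The natural way to produce such a morphism is via the defining universal property of the fundamental group of a graph of groups as a colimit (equivalently, via its presentation). First I would recall that $G=\pi_1(\scrg,v_0)$ is isomorphic to $\pi_1(\scrg,T)$ with $T=Y$ (since $Y$ is a tree, it is its own maximal subtree), and the latter is, by \eqref{eq:maximal subtree description of pi1}, the quotient of the path group $\pi(\scrg)$ by the relations $e=1$ for all edges $e$ of $Y$. So it suffices to define a morphism $\pi(\scrg)\to G_{v_0}$ that kills every edge generator $e$ and restricts to $\varphi_v$ on each vertex group $G_v$, and then check this descends.

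To define a morphism out of $\pi(\scrg)$, by the presentation of the path group it is enough to specify where the generators go and verify the defining relations. I would send each $g\in G_v$ to $\varphi_v(g)\in G_{v_0}$ and each edge $e\in E(Y)$ to $1\in G_{v_0}$. The only relations to check are $e\,f_e(g)\,\ol e = f_{\ol e}(g)$ for $e\in E(Y)$, $g\in G_e$; under the proposed assignment these become $1\cdot(\varphi_{t(e)}\circ f_e)(g)\cdot 1 = (\varphi_{t(\ol e)}\circ f_{\ol e})(g)$, which is exactly the identity $\varphi_{t(e)}\circ f_e = \varphi_{t(\ol e)}\circ f_{\ol e}$ recorded just before the lemma (this identity in turn follows from the cocycle-type relation $\varphi_e^{-1}=\varphi_{\ol e}$ and the fact that $\varphi_v$ is built from the $\varphi_e$ along the geodesic to $v_0$; here one uses that $Y$ is a tree, so geodesics behave well — for an edge $e$, the geodesics from $o(e)$ and from $t(e)$ to $v_0$ differ by $e$ unless $e$ lies on one of them, and in all cases the composition telescopes correctly). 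Hence we get a well-defined morphism $\pi(\scrg)\to G_{v_0}$, which by construction kills all $e$, hence factors through $\pi_1(\scrg,T)\cong G$, yielding $\varphi\colon G\to G_{v_0}$ with $\varphi|G_v=\varphi_v$ for all $v$; in particular $\varphi$ restricted to $G_{v_0}$ is $\varphi_{v_0}=\id$, so $\varphi$ is surjective.

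For uniqueness: since the vertex groups $G_v$ (more precisely, their images under the natural maps $G_v\to G$) generate $G$ — indeed every element of $\pi(\scrg)$ is a product of vertex-group elements and edge generators, and the edge generators become trivial in $G=\pi_1(\scrg,T)$ — any morphism $G\to G_{v_0}$ agreeing with $\varphi_v$ on each $G_v$ is determined on a generating set, hence equals $\varphi$.

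The argument is entirely routine; if there is any subtlety it is in confirming that the identity $\varphi_{t(e)}\circ f_e = \varphi_{t(\ol e)}\circ f_{\ol e}$ holds for \emph{every} edge, including those lying on a geodesic to $v_0$, so I would treat that verification carefully — distinguishing whether $e$ (or $\ol e$) is the first edge of the geodesic from $o(e)$ (resp.\ $t(e)$) to $v_0$ — but this is precisely the content of the sentence preceding the lemma statement and so can be cited. I expect no real obstacle.
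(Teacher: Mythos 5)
Your proof is correct and is exactly the argument the lemma is calling for: the paper offers no explicit proof (it is described as a ``simple but useful fact''), but the preceding paragraph deliberately sets up the identity $\varphi_{t(e)}\circ f_e = \varphi_{t(\ol e)}\circ f_{\ol e}$ precisely so that the map $g\mapsto\varphi_v(g)$ on vertex groups, $e\mapsto 1$ on edges, respects the relations of the path group and hence descends to $\pi_1(\scrg,T)=G$ (with $T=Y$), while uniqueness follows because the vertex groups generate $G$.
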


The following application  (not used later) of Lemmas~\ref{residual p criterion} and \ref{lem:special amalgam} is implicit in \cite{CM05}. A finite group $\G$ is homogeneous (in the sense of model theory) if every isomorphism between subgroups of $\G$ is induced by an automorphism of $\G$. Finite homogeneous groups have been completely classified \cite{CF00}; for example,
given a prime $p$, the
group of the form $\Z/p^k\Z \oplus \cdots \oplus \Z/p^k\Z$ (for some $k$) are homogeneous, and if $p$ is odd, then these are the only homogeneous $p$-groups. (There are two non-abelian homogeneous $2$-groups.)

\begin{corollary}
Suppose $Y$ is a tree, every vertex group $G_v$ is a finite homogeneous $\mathfrak P$-group, and $G_v\cong G_w$ for all $v,w\in V(Y)$.
Then $G$ has a free normal subgroup $N$ with $G/N\cong G_v$, hence $G$ is residually $\mathfrak P$.
\end{corollary}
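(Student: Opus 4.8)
The plan is to construct a surjective group morphism $\varphi\colon G\to G_{v_0}$ (for a fixed vertex $v_0$ of $Y$) that is injective on each vertex group of $\scrg$, and then to quote Lemma~\ref{residual p criterion}: since $G_{v_0}$ is a finite $\mathfrak P$-group, condition~(3) of that lemma holds with $\psi=\varphi$, so its implication (3)$\Rightarrow$(1) provides a free normal subgroup $N=\ker\varphi$ of $G$ with $G/N$ a $\mathfrak P$-group; as $\varphi$ is onto, $G/N\cong G_{v_0}\cong G_v$, and the equivalence (1)$\Leftrightarrow$(2) in the same lemma then gives that $G$ is residually $\mathfrak P$.

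To build $\varphi$ I would apply Lemma~\ref{lem:special amalgam}, which is available because $Y$ is a tree (so $G=\pi_1(\scrg)$ is the iterated amalgamated product of the $G_v$, with no HNN part). Its hypotheses ask for isomorphisms $\varphi_e\colon G_{o(e)}\to G_{t(e)}$ extending $f_e\circ f_{\ol e}^{-1}$ with $\varphi_{\ol e}=\varphi_e^{-1}$. I would produce these one topological edge $\{e,\ol e\}$ at a time. Here $\beta_e:=f_e\circ f_{\ol e}^{-1}$ is an isomorphism from the subgroup $f_{\ol e}(G_e)$ of $G_{o(e)}$ onto the subgroup $f_e(G_e)$ of $G_{t(e)}$ (the embeddings $f_e$, $f_{\ol e}$ are isomorphisms onto their images since the vertex groups are finite). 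To bring homogeneity into play I would first fix \emph{any} abstract isomorphism $\iota_e\colon G_{o(e)}\to G_{t(e)}$, which exists by hypothesis; then $\iota_e^{-1}\circ\beta_e$ is an isomorphism between two subgroups of the \emph{single} homogeneous group $G_{o(e)}$, hence extends to an automorphism $\theta_e$ of $G_{o(e)}$. Setting $\varphi_e:=\iota_e\circ\theta_e$ gives an isomorphism $G_{o(e)}\to G_{t(e)}$ extending $\beta_e$, and I would then simply \emph{define} $\varphi_{\ol e}:=\varphi_e^{-1}$ on the inverse edge; since $\varphi_e$ extends $f_e\circ f_{\ol e}^{-1}$, the map $\varphi_{\ol e}$ extends $f_{\ol e}\circ f_e^{-1}$, as required, and $\varphi_{\ol e}^{-1}=\varphi_e$. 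No compatibility between distinct topological edges is needed.

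Once the $\varphi_e$ are in place, Lemma~\ref{lem:special amalgam} yields $\varphi\colon G\to G_{v_0}$ whose restriction to each $G_v$ is the composite $\varphi_v$ of the $\varphi_{e_i}$ along the geodesic from $v$ to $v_0$; each $\varphi_v$ is a composition of isomorphisms, hence injective, and $\varphi|G_{v_0}=\varphi_{v_0}=\id_{G_{v_0}}$ makes $\varphi$ surjective, completing the construction. I expect the only genuinely non-routine step to be the observation that $f_e\circ f_{\ol e}^{-1}$ relates subgroups of two \emph{different} (though isomorphic) vertex groups, whereas homogeneity is a statement about subgroups of one group; inserting an arbitrary $\iota_e$ resolves this, and everything else is assembly of results already established in the text.
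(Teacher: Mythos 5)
Your argument is correct and follows exactly the route the paper indicates: the paper presents this corollary as an application of Lemma~\ref{residual p criterion} and Lemma~\ref{lem:special amalgam}, and your construction of the $\varphi_e$ — inserting an arbitrary isomorphism $\iota_e\colon G_{o(e)}\to G_{t(e)}$ so that homogeneity can be applied inside the single group $G_{o(e)}$ — is precisely the step needed to verify the hypotheses of Lemma~\ref{lem:special amalgam}. The remaining assembly (surjectivity via $\varphi_{v_0}=\id$, injectivity on each $G_v$, then (3)$\Rightarrow$(1)$\Rightarrow$(2) of Lemma~\ref{residual p criterion}) is carried out correctly.
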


\index{group!homogeneous|textbf}

\subsection{Hempel's criterion}
The following is a generalization of the criterion for residual finiteness of fundamental groups of graphs of groups found in \cite[Theorem~3.1]{He87} and, with a different proof, in \cite{Sh87} (see also \cite[Section~II.2.6]{Se80}).
It unifies and generalizes several criteria for groups to be residually $p$ in the literature. (For example, it can be used to easily deduce \cite[Corollary~3.5, Theorems~4.2 and 4.3]{KM93}.)

\begin{proposition} \label{Hempel criterion residually p}
Suppose that  the following hold:
\begin{itemize}
\item[(A)] For each $v_0\in V(Y)$ and each $g\in G_{v_0}\setminus \{1\}$ there is a compatible collection $\scrh=\{H_v\}_{v\in V(Y)}$ of normal subgroups such that $g\notin H_{v_0}$ and $\pi_1(\scrg/\scrh)$  is residually $\mathfrak P$.
\item[(B)] For all $e_1,\dots,e_n\in E(Y)$ and $g_i\in G_{t(e_i)}\setminus f_{e_i}(G_{e_i})$, $i=1,\dots,n$ there exists a compatible collection $\scrh=\{H_v\}_{v\in V(Y)}$ of normal subgroups such that $g_i\notin H_{t(e_i)}\cdot f_{e_i}(G_{e_i})$ for $i=1,\dots,n$ and $\pi_1(\scrg/\scrh)$ is residually $\mathfrak P$. %\marginpar{\small Die urspr\"ungliche Formulierung von (B) und die Bemerkungen vor dem Beweis waren Quatsch.}
\end{itemize}
Then $G$ is residually $\mathfrak P$.
\end{proposition}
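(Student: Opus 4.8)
The plan is to mimic the structure of Hempel's proof of residual finiteness \cite[Theorem~3.1]{He87}, replacing ``residually finite'' by ``residually $\mathfrak P$'' throughout and using Lemma~\ref{residual p criterion} in place of the corresponding statement for finite groups. So let $g\in G=\pi_1(\scrg)$, $g\neq 1$; I must produce a morphism from $G$ onto a group with property $\mathfrak P$ (or rather, via Lemma~\ref{lem:gruenberg} and the root property axioms, a residually $\mathfrak P$ quotient) that does not kill $g$. Represent $g$ by a reduced path $\gamma=(g_0,e_1,g_1,\dots,e_n,g_n)$ in $\scrg$; by \cite[I.5.2, Theorem~11]{Se80} such a path exists since $g\neq 1$. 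If $n=0$, then $g=g_0\in G_{v_0}\setminus\{1\}$ for some vertex $v_0$, and hypothesis (A) immediately furnishes a compatible collection $\scrh$ with $g\notin H_{v_0}$ and $\pi_1(\scrg/\scrh)$ residually $\mathfrak P$; then Proposition~\ref{prop:commoncover} gives a finite-index morphism $\phi\colon\widetilde\scrg\to\scrg$ whose image is the subgroup corresponding to $\scrh$, and composing the projection $\pi_{\scrh}\colon G\to\pi_1(\scrg/\scrh)$ (which is injective on each $G_v$, whence sends $g_0\notin H_{v_0}$ to a nontrivial element) with the fact that $\pi_1(\scrg/\scrh)$ is residually $\mathfrak P$ finishes this case.

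The substantive case is $n>0$, where $g_i\notin f_{e_i}(G_{e_i})$ whenever $e_{i+1}=\overline{e_i}$. Here I would apply hypothesis (B) to exactly those indices $i$ (among $1,\dots,n$) for which $g_i\notin f_{e_i}(G_{e_i})$: actually, to make the argument work I want to arrange that the image of the reduced path $\gamma$ in the quotient graph of groups $\scrg/\scrh$ is \emph{still reduced}, so that by \cite[I.5.2, Theorem~11]{Se80} applied to $\scrg/\scrh$ its image there is nontrivial. Reducedness of the image requires: for each $i$ with $e_{i+1}=\overline{e_i}$, the class of $g_i$ in $G_{t(e_i)}/H_{t(e_i)}$ does not lie in the image of $G_{e_i}/H_{e_i}$, i.e.\ $g_i\notin H_{t(e_i)}f_{e_i}(G_{e_i})$. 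This is precisely what (B) provides, applied to the finite list of pairs $(e_i,g_i)$ coming from the back-tracking indices of $\gamma$. Having fixed such an $\scrh$, the projection $\pi_{\scrh}\colon G\to\pi_1(\scrg/\scrh)$ sends $g$ to the element represented by the image path, which is reduced of the same length $n>0$, hence nontrivial in $\pi_1(\scrg/\scrh)$; since $\pi_1(\scrg/\scrh)$ is residually $\mathfrak P$ by assumption, there is a further morphism onto a $\mathfrak P$-group not killing the image of $g$. Composing, $g$ survives in a $\mathfrak P$-group, as required. Finally I should also handle the degenerate sub-case $n>0$ with \emph{no} back-tracking index, i.e.\ $\gamma$ is already reduced purely by length; but then any $\scrh$ with $\pi_1(\scrg/\scrh)$ residually $\mathfrak P$ (e.g.\ take all $H_v=G_v$, or invoke (A) with a dummy element) keeps the image path reduced of length $n$, so $g$ survives — one just needs the edge elements $e_i\in E(T)^c$ to be nontrivial in $\pi_1(\scrg/\scrh)$, which is automatic.

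The key technical point I expect to be the main obstacle is verifying that ``reduced maps to reduced'' genuinely goes through under the quotient: I must check that if $e_{i+1}=\overline{e_i}$ then the image $\overline{g_i}$ of $g_i$ in $G_{t(e_i)}/H_{t(e_i)}$ avoids $f_{e_i}(G_{e_i})H_{t(e_i)}/H_{t(e_i)}$ — which is exactly condition $g_i\notin H_{t(e_i)}f_{e_i}(G_{e_i})$ from (B) — and, simultaneously, that this does not force $\overline{g_i}=1$ in an unwanted way for the \emph{non}-backtracking $i$'s (it doesn't, since reducedness imposes no condition on those $g_i$, they may even be trivial). A secondary subtlety is the bookkeeping between the three descriptions $\pi_1(\scrg,v_0)$, $\pi_1(\scrg,T)$, and the iterated HNN/amalgam presentation, and making sure the composite $G\xrightarrow{\pi_{\scrh}}\pi_1(\scrg/\scrh)\to(\text{$\mathfrak P$-group})$ is literally defined on all of $G$ and not just on a finite-index subgroup; this is handled cleanly because $\pi_{\scrh}$ is defined on all of $\pi(\scrg)$ and restricts to $\pi_1$, and because ``residually $\mathfrak P$'' of $\pi_1(\scrg/\scrh)$ gives morphisms \emph{from the whole group}. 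The root-property axioms (1) and (2) are used only implicitly (subgroups and products of $\mathfrak P$-groups are $\mathfrak P$), and axiom (3), via Lemma~\ref{lem:gruenberg}, is what legitimizes ``residually $\mathfrak P$ quotient suffices.''
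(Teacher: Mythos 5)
Your proof is correct and follows essentially the same strategy as the paper: represent $g$ by a reduced path and use (A) (for the length-$0$ case) or (B) (applied to the back-tracking indices) to choose $\scrh$ so that the image path in $\scrg/\scrh$ remains reduced, hence $\pi_{\scrh}(g)\neq 1$, and then invoke residual $\mathfrak P$-ness of $\pi_1(\scrg/\scrh)$; the paper merely packages this by mapping $G$ into the product $\prod_{\scrh}\pi_1(\scrg/\scrh)$ over all admissible $\scrh$ and showing that map is injective. One small slip to be aware of: $\pi_{\scrh}$ is not injective on each $G_v$ (its kernel on $G_v$ is $H_v$), and the invocation of Proposition~\ref{prop:commoncover} in the $n=0$ case is superfluous --- but the conclusion you draw ($g_0\notin H_{v_0}\Rightarrow\pi_{\scrh}(g_0)\neq 1$) is of course correct, just for the simpler reason.
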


\index{Hempel}
\index{group!residually $\mathfrak P$}

%Before we give a proof of this proposition, note that if $\scrh_i=\{H_{iv}\}_{v\in V(Y)}$ ($i=1,2$) are compatible collections  of normal subgroups, then
%$$\scrh_1\cap\scrh_2:=\{H_{1v}\cap H_{2v}\}_{v\in V(Y)}$$
%is also a compatible collection  of normal  subgroups, and we have a natural injective morphism $\pi_1(\scrg/(\scrh_1\cap\scrh_2))\to \pi_1(\scrg/\scrh_1)\times \pi_1(\scrg/\scrh_2)$; hence if
%$\pi_1(\scrg/\scrh_1)$  and $\pi_1(\scrg/\scrh_2)$ are residually $\mathfrak P$, then so is $\pi_1(\scrg/(\scrh_1\cap\scrh_2))$.

\begin{proof}%[Proof of Proposition~\ref{Hempel criterion residually p}]
In this proof $\scrh$ ranges over all compatible  collections  of normal subgroups such that $\pi_1(\scrg/\scrh)$ is residually $\mathfrak P$. The natural morphisms $\pi_{\scrh}\colon \pi_1(\scrg)\to\pi_1(\scrg/\scrh)$ combine to a morphism
$$\pi\colon G\to G^*:=\prod_{\scrh} \pi_1(\scrg/\scrh).$$
Since $G^*$ is residually $\mathfrak P$, it is enough to show that $\pi$ is injective. Let $g\neq 1$ be an element of $G$, and represent $g$ by a path
$$\gamma=(g_0, e_1, g_1, e_2, \dots, e_n, g_n)$$
in $\scrg$ as in Section~\ref{sec:pi1}. Then $\pi_{\scrh}(g)$ is represented by the path $$\pi_{\scrh}(\gamma):=(\pi_{\scrh}(g_0), e_1, \pi_{\scrh}(g_1), e_2, \dots, e_n, \pi_{\scrh}(g_n))$$ in $\scrg/\scrh$.  Moreover, if $\gamma$ is reduced, then assumptions (A) and (B) %in combination with the remark preceding the proof
yield that for some~$\scrh$, the path $\pi_{\scrh}(\gamma)$ is reduced as well, in particular $\pi_{\scrh}(g)\neq 1$. Therefore we have $\pi(g)\neq 1$.
\end{proof}

We do not know whether in general we can simplify condition (B) by requiring this condition to hold only for a single edge at a time (i.e., for $n=1$).
Next, we indicate a few situations where this is possible. The first one concerns the case where we can choose all subgroups in $\mathcal H$ to have finite index:

\index{topology!pro-$\mathfrak P$}

\begin{corollary}\label{cor:vertex and edge groups closed}
Suppose that  the following hold:
\begin{itemize}
\item[(A$'$)] For each $v_0\in V(Y)$ and each $g\in G_{v_0}\setminus \{1\}$ there is a compatible collection $\scrh=\{H_v\}_{v\in V(Y)}$ of finite index normal subgroups such that $g\notin H_{v_0}$ and $\pi_1(\scrg/\scrh)$  is residually $\mathfrak P$.
\item[(B$'$)] For all $e_0\in E(Y)$ and $g\in G_{t(e_0)}\setminus f_{e_0}(G_{e_0})$ there exists a compatible collection $\scrh=\{H_v\}_{v\in V(Y)}$ of finite index normal subgroups such that $g\notin H_{t(e_0)}\cdot f_{e_0}(G_{e_0})$  and $\pi_1(\scrg/\scrh)$ is residually $\mathfrak P$.
\end{itemize}
Then $G$ is residually $\mathfrak P$. Moreover, every vertex and edge group of $\scrg$ \textup{(}identified with a subgroup of $G$ in the natural way\textup{)} is closed in the pro-$\mathfrak P$ topology on $G$.
\end{corollary}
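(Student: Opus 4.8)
The plan is to reduce Corollary~\ref{cor:vertex and edge groups closed} to Proposition~\ref{Hempel criterion residually p} by showing that the stronger hypotheses (A$'$) and (B$'$) — where all subgroups in $\scrh$ have finite index — imply the (seemingly weaker but logically multi-edge) hypotheses (A) and (B), and then extracting the closedness statement from the finite-index nature of the witnesses. Hypothesis (A$'$) is literally (A), so nothing is needed there. The only real work is showing (B$'$) $\Rightarrow$ (B), i.e. that a family of single-edge separation witnesses can be merged into one multi-edge witness.

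First I would set up the merging. Given edges $e_1,\dots,e_n\in E(Y)$ and elements $g_i\in G_{t(e_i)}\setminus f_{e_i}(G_{e_i})$, apply (B$'$) to each pair $(e_i,g_i)$ to obtain a compatible collection $\scrh^{(i)}=\{H^{(i)}_v\}_{v\in V(Y)}$ of finite index normal subgroups with $g_i\notin H^{(i)}_{t(e_i)}\cdot f_{e_i}(G_{e_i})$ and with $\pi_1(\scrg/\scrh^{(i)})$ residually $\mathfrak P$. Set $H_v:=\bigcap_{i=1}^n H^{(i)}_v$ for each $v$; this is again a finite index normal subgroup of $G_v$. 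The collection $\scrh=\{H_v\}$ is compatible: compatibility is the identity $f_e^{-1}(H_{t(e)})=f_{\ol e}^{-1}(H_{t(\ol e)})$, and since $f_e^{-1}$ commutes with finite intersections, this follows from the compatibility of each $\scrh^{(i)}$. Moreover $H_{t(e_i)}\subseteq H^{(i)}_{t(e_i)}$, so $H_{t(e_i)}\cdot f_{e_i}(G_{e_i})\subseteq H^{(i)}_{t(e_i)}\cdot f_{e_i}(G_{e_i})$, whence $g_i\notin H_{t(e_i)}\cdot f_{e_i}(G_{e_i})$ for all $i$. It remains to check $\pi_1(\scrg/\scrh)$ is residually $\mathfrak P$. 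Here I would observe that $H=\ker(G\to\prod_{v}G_v/H_v$ over the vertex groups$)$ — more precisely, let $H$ be the normal subgroup of $G$ generated by all $H_v$ together with enough commutators; cleanest is to note that the natural map $\pi_1(\scrg/\scrh)\to\prod_i\pi_1(\scrg/\scrh^{(i)})$ induced by the surjections $G_v/H_v\twoheadrightarrow G_v/H^{(i)}_v$ is injective on each vertex group $G_v/H_v$ (since $H_v=\bigcap_i H^{(i)}_v$), and each factor $\pi_1(\scrg/\scrh^{(i)})$ is residually $\mathfrak P$; then apply the implication ``(3)~$\Rightarrow$~(2)'' of Lemma~\ref{residual p criterion} to the graph of finite $\mathfrak P$-groups $\scrg/\scrh$ (each $G_v/H_v$ is a finite $\mathfrak P$-group since it embeds in a product of the finite $\mathfrak P$-groups $G_v/H^{(i)}_v$, using that $\mathfrak P$ is a root property and hence closed under subgroups and finite direct products). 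With (A) and (B) verified, Proposition~\ref{Hempel criterion residually p} gives that $G$ is residually $\mathfrak P$.

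For the closedness statement, fix a vertex $v_0$ and an element $g\in G\setminus G_{v_0}$; I must produce a finite-index normal subgroup $K\trianglelefteq G$ with $G/K\in\mathfrak P$ and $g\notin K\cdot G_{v_0}$ — no, more carefully: closed in the pro-$\mathfrak P$ topology means $G_{v_0}=\bigcap\{K\cdot G_{v_0}: K\trianglelefteq G,\ [G:K]<\infty,\ G/K\in\mathfrak P\}$, equivalently for each $g\notin G_{v_0}$ there is such a $K$ with $g\notin K G_{v_0}$. Represent $g$ by a reduced path $\gamma=(g_0,e_1,g_1,\dots,e_n,g_n)$ in $\scrg$; since $g\notin G_{v_0}$, $\gamma$ has length $n\geq 1$ (a length-zero path lands in a vertex group, and one checks that the vertex group of $v_0$ inside $G$ is exactly $G_{v_0}$). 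Apply (B$'$) to each consecutive pair $(e_i,g_i)$ (noting $g_i\notin f_{e_i}(G_{e_i})$ by reducedness) and merge as above to get a single compatible collection $\scrh=\{H_v\}$ of finite-index normal subgroups with $\pi_1(\scrg/\scrh)$ residually $\mathfrak P$ and $\pi_{\scrh}(\gamma)$ still reduced, hence $\pi_{\scrh}(g)\neq 1$; passing further to a finite-index normal $\mathfrak P$-quotient of $\pi_1(\scrg/\scrh)$ separating $\pi_{\scrh}(g)$ and composing with $\pi_{\scrh}\colon G\to\pi_1(\scrg/\scrh)$ yields a morphism $G\to P$ onto a finite $\mathfrak P$-group killing $G_{v_0}$ (because $G_{v_0}\subseteq\ker\pi_{\scrh}\cdot$?) — the point being that $H_{v_0}\supseteq$ nothing forces this, so instead I run the argument for $G_{v_0}$ directly by taking $K$ to be the kernel of $G\to P$ and checking $g\notin K\cdot G_{v_0}$ from $\pi_{\scrh}(g)\notin\pi_{\scrh}(G_{v_0})$ in $\pi_1(\scrg/\scrh)$, which holds since $\pi_{\scrh}(g)$ is represented by a reduced path of positive length while $\pi_{\scrh}(G_{v_0})$ is a vertex group. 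The edge-group case $f_{e_0}(G_{e_0})$ is handled identically, using that an element of $G$ not in $f_{e_0}(G_{e_0})$ is represented, after absorbing into the vertex group $G_{t(e_0)}$, by a reduced path witnessing this.

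The main obstacle I anticipate is bookkeeping around the identification of vertex and edge groups with subgroups of $G$: one must be careful that ``$g\in G_{v_0}$'' and the reducedness of representing paths interact correctly (this uses \cite[I.5.2]{Se80} as recalled in Section~\ref{sec:pi1}), and that applying $\pi_{\scrh}$ to a reduced path keeps it reduced precisely when the separation conditions hold — which is exactly what the merged witness guarantees. Everything else (compatibility under intersection, $\mathfrak P$ being closed under subgroups and finite products, reduction to Lemma~\ref{residual p criterion}) is routine.
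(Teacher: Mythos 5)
Your overall strategy is the one the paper uses: merge the single-edge witnesses from (B$'$) by intersecting the compatible collections $\scrh^{(i)}$ to get a multi-edge witness for (B), feed that into Proposition~\ref{Hempel criterion residually p} for the first statement, and then exploit the finite-index hypothesis for the closedness claim. However, there are two gaps.

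In the merging step, you exhibit a morphism $\pi_1(\scrg/\scrh)\to\prod_i\pi_1(\scrg/\scrh^{(i)})$ injective on vertex groups and then invoke ``(3)~$\Rightarrow$~(2)'' of Lemma~\ref{residual p criterion}. But condition~(3) of that lemma requires a morphism to a \emph{$\mathfrak P$-group}, whereas $\prod_i\pi_1(\scrg/\scrh^{(i)})$ is merely residually $\mathfrak P$ and will in general be infinite, hence not a $\mathfrak P$-group. One must first apply ``(2)~$\Rightarrow$~(3)'' of Lemma~\ref{residual p criterion} to each $\pi_1(\scrg/\scrh^{(i)})$ --- legitimate, since each $\scrg/\scrh^{(i)}$ has \emph{finite} vertex groups --- to obtain morphisms $\psi_i$ to $\mathfrak P$-groups $P_i$ injective on vertex groups, and then compose to get a morphism $\pi_1(\scrg/\scrh)\to\prod_i P_i$ which is a genuine witness for condition~(3). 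This is exactly the content of the paper's Lemma~\ref{lem:intersect H}; you have in effect reproved it but with the crucial intermediate step elided.

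The more serious gap is in the closedness argument. You correctly reduce to $\pi_\scrh(g)\notin\pi_\scrh(G_{v_0})$ inside the residually $\mathfrak P$ group $\pi_1(\scrg/\scrh)$. To finish, you need a morphism $\phi\colon\pi_1(\scrg/\scrh)\to P$ onto a $\mathfrak P$-group with $\phi(\pi_\scrh(g))\notin\phi(\pi_\scrh(G_{v_0}))$; then $K:=\ker(\phi\circ\pi_\scrh)$ does the job. A ``finite-index normal $\mathfrak P$-quotient of $\pi_1(\scrg/\scrh)$ separating $\pi_\scrh(g)$,'' as you put it, only guarantees $\phi(\pi_\scrh(g))\neq 1$, which is strictly weaker: knowing $\pi_\scrh(g)\notin\pi_\scrh(G_{v_0})$ does \emph{not} imply $\phi(\pi_\scrh(g))\notin\phi(\pi_\scrh(G_{v_0}))$ for an arbitrary such $\phi$. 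The missing ingredient is the observation --- the paper's Lemma~\ref{lem:separate finite subgroups} --- that in a residually $\mathfrak P$ group $\G$, any element $\gamma$ outside a \emph{finite} subgroup $\Delta$ can be separated from all of $\Delta$ by a single morphism to a $\mathfrak P$-group: for each $\delta\in\Delta$ take a morphism not killing $\gamma\delta^{-1}$, then combine these finitely many maps into one. The finiteness of $\pi_\scrh(G_{v_0})=G_{v_0}/H_{v_0}$ --- the whole reason the finite-index hypothesis appears in (A$'$) and (B$'$) --- is essential here, yet your proposal never explicitly invokes it. Once this lemma is supplied, your argument closes up, and the edge-group case follows the same pattern.
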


Recall here that the pro-$\mathfrak P$ topology  on a group $\G$ is the topology with fundamental system of neighborhoods of $1$ given by the normal subgroups $N\trianglelefteq \G$ with $\G/N$ a $\mathfrak P$-group.

\medskip

The first statement in this corollary is a consequence of Proposition~\ref{Hempel criterion residually p} and the following lemma, for which we let $\scrh_i=\{H_{iv}\}_{v\in V(Y)}$ \textup{(}$i=1,2$\textup{)} be compatible collections  of normal subgroups, and set
$$\scrh_1\cap\scrh_2:=\{H_{1v}\cap H_{2v}\}_{v\in V(Y)}.$$
Then $\scrh_1\cap\scrh_2$ is a compatible collection of normal subgroups, and we have a natural group morphism
\begin{equation}\label{eq:H1H2}
\pi_1(\scrg/(\scrh_1\cap\scrh_2))\to \pi_1(\scrg/\scrh_1)\times \pi_1(\scrg/\scrh_2)
\end{equation}
which is injective on vertex groups.

\begin{lemma}\label{lem:intersect H}
Suppose $G_v/H_{iv}$ is finite for $i=1,2$ and every vertex $v$.
If $\pi_1(\scrg/\scrh_1)$  and $\pi_1(\scrg/\scrh_2)$ are residually $\mathfrak P$, then so is $\pi_1(\scrg/(\scrh_1\cap\scrh_2))$.
\end{lemma}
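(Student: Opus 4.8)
\textbf{Proof strategy for Lemma \ref{lem:intersect H}.}

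The plan is to reduce the claim to Proposition \ref{Hempel criterion residually p} applied to the graph of groups $\scrg/(\scrh_1\cap\scrh_2)$, using the morphism \eqref{eq:H1H2} together with the hypothesis that $\pi_1(\scrg/\scrh_1)$ and $\pi_1(\scrg/\scrh_2)$ are residually $\mathfrak P$. Write $\scrh := \scrh_1\cap\scrh_2$, and for brevity set $\overline G_v := G_v/(H_{1v}\cap H_{2v})$ for the vertex groups of $\scrg/\scrh$; by hypothesis these are finite (being subgroups of $G_v/H_{1v}\times G_v/H_{2v}$). The key observation is that the morphism in \eqref{eq:H1H2} is injective on each vertex group $\overline G_v$, since for $g\in G_v$ the image of $g$ in $G_v/H_{1v}\times G_v/H_{2v}$ is trivial precisely when $g\in H_{1v}\cap H_{2v}$. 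So I would verify conditions (A) and (B) of Proposition \ref{Hempel criterion residually p} for $\scrg/\scrh$.

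For condition (A): given a vertex $v_0$ of $Y$ and $\overline g\in\overline G_{v_0}$, $\overline g\neq 1$, we must produce a compatible collection $\scrk=\{K_{\overline v}\}$ of normal subgroups of $\scrg/\scrh$ with $\overline g\notin K_{\overline v_0}$ and $\pi_1\big((\scrg/\scrh)/\scrk\big)$ residually $\mathfrak P$. Here I would simply take $\scrk$ to be the trivial collection $\{1\}$ at every vertex: then $(\scrg/\scrh)/\scrk = \scrg/\scrh$, and we need $\pi_1(\scrg/\scrh)$ itself to be residually $\mathfrak P$ --- but that is exactly what we are trying to prove, so this is circular. Instead I would take $\scrk$ to be the pullback under \eqref{eq:H1H2} of (say) the trivial collection for $\scrg/\scrh_1$ in the first coordinate, i.e. the kernel of $\overline G_v\to G_v/H_{1v}$; more precisely, using Lemma \ref{lem:morphisms and filtrations, 1} applied to the morphism $\scrg/\scrh\to\scrg/\scrh_1$, the collection $\{H_{1v}/(H_{1v}\cap H_{2v})\}_{\overline v}$ is a compatible collection of normal subgroups of $\scrg/\scrh$, and $(\scrg/\scrh)/\{H_{1v}/(H_{1v}\cap H_{2v})\}$ is canonically isomorphic to $\scrg/\scrh_1$, whose fundamental group is residually $\mathfrak P$ by assumption. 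If $\overline g$ survives in $G_{v_0}/H_{1v_0}$ we are done with this collection; otherwise $\overline g$ survives in $G_{v_0}/H_{2v_0}$ and we use the analogous collection coming from $\scrg/\scrh_2$. Condition (B) is handled in exactly the same way: given edges $e_1,\dots,e_n$ of $Y$ and $\overline g_i\in \overline G_{t(e_i)}\setminus f_{e_i}(\overline G_{e_i})$, each $\overline g_i$ must have nontrivial image in $(G_{t(e_i)}/H_{1,t(e_i)})\big/ f_{e_i}(\dots)$ or in the corresponding quotient for $\scrh_2$; partition the indices $i$ accordingly, and since we are allowed to combine finitely many collections --- using the already-established fact (stated in the paragraph preceding the lemma) that the intersection of finitely many such collections again gives a collection with residually $\mathfrak P$ quotient, applied to the two collections coming from $\scrh_1$ and $\scrh_2$ --- we obtain a single compatible collection doing the job for all $i$ simultaneously.

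The one point requiring a little care --- and the main obstacle --- is exactly this last step: to merge the "$\scrh_1$-part" and the "$\scrh_2$-part" of condition (B) into one collection, one needs that if $\scrk_1,\scrk_2$ are compatible collections of finite-index normal subgroups of $\scrg/\scrh$ with both quotient fundamental groups residually $\mathfrak P$, then $\pi_1\big((\scrg/\scrh)/(\scrk_1\cap\scrk_2)\big)$ is residually $\mathfrak P$ --- which is another instance of the present lemma, so the argument must be organized to avoid circularity. This is resolved by observing that the two collections we actually use, namely the pullbacks of the trivial collections under $\scrg/\scrh\to\scrg/\scrh_i$ ($i=1,2$), have intersection equal to the trivial collection of $\scrg/\scrh$, and $(\scrg/\scrh)/\{1\} = \scrg/\scrh$ maps, via \eqref{eq:H1H2}, injectively-on-vertex-groups into $\pi_1(\scrg/\scrh_1)\times\pi_1(\scrg/\scrh_2)$, which is residually $\mathfrak P$ as a direct product of two residually $\mathfrak P$ groups (root properties are closed under finite direct products, item (2) in the definition). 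Thus applying the implication ``(3)~$\Rightarrow$~(2)'' of Lemma \ref{residual p criterion} directly to $\scrg/\scrh$ with this product as target group $P$ already gives that $\pi_1(\scrg/\scrh)$ is residually $\mathfrak P$ --- so in fact the cleanest route bypasses Proposition \ref{Hempel criterion residually p} entirely: the composite $\pi_1(\scrg/\scrh)\to\pi_1(\scrg/\scrh_1)\times\pi_1(\scrg/\scrh_2)$ is injective on vertex groups, and post-composing with the residual-$\mathfrak P$ structure of the target gives, for any nontrivial element of any vertex group, a $\mathfrak P$-quotient in which it survives; invoking ``(3)~$\Rightarrow$~(2)'' of Lemma \ref{residual p criterion} (whose proof, by the Remark following it, does not need finiteness of the vertex groups) completes the argument.
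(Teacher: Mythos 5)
Your eventual route (the "cleanest route") is essentially the paper's proof: use the morphism \eqref{eq:H1H2} together with Lemma~\ref{residual p criterion}, and bypass Proposition~\ref{Hempel criterion residually p} entirely. The first two thirds of your write-up (trying to verify conditions (A) and (B) and worrying about circularity) can simply be deleted.

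There is, however, one imprecision in the final formulation. You say: post-composing with the residual-$\mathfrak P$ structure of $\pi_1(\scrg/\scrh_1)\times\pi_1(\scrg/\scrh_2)$ gives, for each nontrivial element $g$ of each vertex group, a $\mathfrak P$-quotient in which $g$ survives, and then "invoking (3)~$\Rightarrow$~(2)'' finishes. But condition~(3) of Lemma~\ref{residual p criterion} requires a \emph{single} morphism $\psi\colon\pi_1(\scrg/\scrh)\to P$ to a $\mathfrak P$-group that is injective on every vertex group; you have only produced, for each such $g$, a separate morphism separating $g$. To pass from the latter to the former you must take a product over all pairs $(v,g)$, and it is here that the finiteness hypothesis is genuinely used (so your parenthetical claim that finiteness of the vertex groups is not needed is misplaced: it is not needed for (3)~$\Rightarrow$~(2), but it \emph{is} needed to reach condition (3) in the first place). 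The paper avoids even this small bookkeeping by applying the implication (2)~$\Rightarrow$~(3) of Lemma~\ref{residual p criterion} \emph{to each factor} $\pi_1(\scrg/\scrh_i)$ separately, obtaining morphisms $\psi_i\colon\pi_1(\scrg/\scrh_i)\to P_i$ to $\mathfrak P$-groups injective on vertex groups, and then composing with \eqref{eq:H1H2} to get a single morphism $\pi_1(\scrg/\scrh)\to P_1\times P_2$; this is arguably cleaner than assembling the separating morphisms for the composite by hand.
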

\begin{proof}
Suppose $\pi_1(\scrg/\scrh_1)$  and $\pi_1(\scrg/\scrh_2)$ are residually $\mathfrak P$.
Since we have $[G_v:H_{iv}]<\infty$ for $i=1,2$ and all $v$, by Lemma~\ref{residual p criterion} there exist group morphisms $\psi_i\colon\pi_1(\scrg/\scrh_i)\to P_i$ to  $\mathfrak P$-groups $P_i$ such that $\psi_i|(G_v/H_{iv})$ is injective, for $i=1,2$ and all $v$. Composing with the morphism \eqref{eq:H1H2},
we obtain a group morphism $\pi_1(\scrg/(\scrh_1\cap\scrh_2))\to P_1\times P_2$ which is injective on vertex groups, and this shows (again by Lemma~\ref{residual p criterion})  that $\pi_1(\scrg/(\scrh_1\cap\scrh_2))$ is residually $\mathfrak P$.
\end{proof}

For showing the second statement in Corollary~\ref{cor:vertex and edge groups closed} we use the following simple observation (the proof of which we skip):

\begin{lemma}\label{lem:separate finite subgroups}
Suppose $\G$ is a residually $\mathfrak P$ group, and let $\Delta$ be a finite subgroup of $\G$. Then for every $\gamma\in\G$ with $\gamma\notin\Delta$ there exists a morphism $\phi\colon\G\to P$ to a $\mathfrak P$-group such that $\phi(\gamma)\notin \phi(\Delta)$.
\end{lemma}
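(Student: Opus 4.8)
\textbf{Proof plan for Lemma~\ref{lem:separate finite subgroups}.}
The statement asserts that in a residually $\mathfrak P$ group $\G$, any finite subgroup $\Delta$ can be separated from any element outside it by a morphism to a $\mathfrak P$-group. The plan is to reduce to the defining property of residual $\mathfrak P$-ness by choosing, for each ``bad'' element, a morphism that kills it, and then intersecting the kernels. First I would enumerate the finitely many elements that need to be ruled out. Given $\gamma\notin\Delta$, for every $\delta\in\Delta$ we have $\gamma\delta^{-1}\neq 1$ (since $\gamma\notin\Delta$ means $\gamma\neq\delta$). Since $\G$ is residually $\mathfrak P$, for each such $\delta$ there is a morphism $\phi_\delta\colon\G\to P_\delta$ onto a $\mathfrak P$-group with $\phi_\delta(\gamma\delta^{-1})\neq 1$, i.e.\ $\phi_\delta(\gamma)\neq\phi_\delta(\delta)$.

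Next I would assemble these into a single morphism. Because $\Delta$ is finite, the index set $\{\delta:\delta\in\Delta\}$ is finite, so I can form the product morphism
$$\phi:=\prod_{\delta\in\Delta}\phi_\delta\colon\G\to P:=\prod_{\delta\in\Delta}P_\delta.$$
By root property~(2) (closure of $\mathfrak P$ under finite direct products, applied inductively), $P$ is a $\mathfrak P$-group. Now I claim $\phi(\gamma)\notin\phi(\Delta)$: if $\phi(\gamma)=\phi(\delta_0)$ for some $\delta_0\in\Delta$, then projecting to the $\delta_0$-th coordinate gives $\phi_{\delta_0}(\gamma)=\phi_{\delta_0}(\delta_0)$, contradicting the choice of $\phi_{\delta_0}$. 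Hence $\phi$ is the desired morphism.

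There is essentially no obstacle here — the lemma is a routine ``separate finitely many points'' argument, which is exactly why the authors skip it. The only two things worth noting are that one must use the \emph{finiteness} of $\Delta$ to keep the product finite (so that $P$ stays a $\mathfrak P$-group under property~(2) of a root property, which only guarantees closure under \emph{finite} products), and that one should phrase the residual $\mathfrak P$-ness in the multiplicative form ``$\phi_\delta(\gamma)\neq\phi_\delta(\delta)$'' rather than worrying about cosets. No subtlety about normality or about $\phi$ being surjective is needed, since the conclusion only concerns the images $\phi(\gamma)$ and $\phi(\Delta)$, and $\phi(\Delta)$ is automatically a subgroup of $P$.
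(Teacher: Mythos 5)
Your proof is correct, and since the paper explicitly skips the proof of this lemma ("The following simple observation is used in Section 4.1 (the proof of which we skip)"), there is no paper argument to compare against; yours is the standard one. The key points you identify are exactly right: finiteness of $\Delta$ keeps the product finite so root property (2) applies, and phrasing separation as $\phi_\delta(\gamma\delta^{-1})\neq 1$ directly invokes the definition of residually $\mathfrak P$.
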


\begin{proof}[Proof of Corollary~\ref{cor:vertex and edge groups closed}]
As remarked, the first statement in this corollary follows from Proposition~\ref{Hempel criterion residually p} and Lemma~\ref{lem:intersect H}.
Let $g\in G\setminus G_{v_0}$, $v_0\in V(Y)$. By the proof of Proposition~\ref{Hempel criterion residually p} there exists a compatible collection $\scrh$ of finite index normal subgroups such that $\pi_1(\scrg/\scrh)$ is residually $\mathfrak P$ and
$\pi_\scrh(g)\notin \pi_\scrh(G_{v_0})$. Since $G_{v_0}/H_{v_0}$ is finite, there exists a morphism $\phi\colon\pi_1(\scrg/\scrh)\to P$ to a $\mathfrak P$-group
such that $\phi(\pi_\scrh(g))\notin\phi(G_{v_0}/H_{v_0})=\phi(\pi_\scrh(G_{v_0}))$, by Lemma~\ref{lem:separate finite subgroups}. Hence $G_{v_0}$ is closed in $G$.
Similarly ones sees that each edge group of $\scrg$ is closed in $G$.
\end{proof}

If all the edge groups $G_e$ of $\scrg$ are trivial, then every collection  $\{H_v\}_{v\in V(Y)}$ of subgroups $H_v\leq G_v$ of the vertex groups is compatible, and  if
$\scrh_i=\{H_{iv}\}_{v\in V(Y)}$ ($i=1,2$) are collections  of normal subgroups, then the morphism \eqref{eq:H1H2} is injective; hence if in addition
the fundamental group of $\scrg/\scrh_i$ is residually $\mathfrak P$ for $i=1,2$, then so is the fundamental group of  $\scrg/(\scrh_1\cap\scrh_2)$. So in this special case, Proposition~\ref{Hempel criterion residually p} also simplifies:

\begin{corollary}
Suppose that $G_e=1$ for every $e\in E(Y)$. If $\scrg$ satisfies condition \textup{(A)} in the previous proposition, then $G$ is residually $\mathfrak P$.
\end{corollary}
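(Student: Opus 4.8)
The final statement is the Corollary: if $G_e=1$ for every $e\in E(Y)$ and $\scrg$ satisfies condition (A) of Proposition~\ref{Hempel criterion residually p}, then $G$ is residually $\mathfrak P$. The plan is to deduce this from Proposition~\ref{Hempel criterion residually p} by verifying that condition (B) becomes vacuous when all edge groups are trivial, after first recording the elementary closure property of the class of "good" collections $\scrh$ noted in the paragraph preceding the Corollary.

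First I would observe that when $G_e=1$ for every $e$, the compatibility condition $f_e^{-1}(H'_{t(e)})=f_{\ol e}^{-1}(H'_{t(\ol e)})$ is automatic, since both sides are the trivial group; hence \emph{every} collection $\{H_v\}_{v\in V(Y)}$ of normal subgroups $H_v\trianglelefteq G_v$ is compatible. Next, since the edge morphisms $f_e$ have trivial domain, the morphism \eqref{eq:H1H2} is injective on \emph{all} of $\pi_1(\scrg/(\scrh_1\cap\scrh_2))$, not merely on vertex groups: indeed the kernel of the map $\pi_1(\scrg/(\scrh_1\cap\scrh_2))\to\pi_1(\scrg/\scrh_1)\times\pi_1(\scrg/\scrh_2)$ is trivial because $\scrg/(\scrh_1\cap\scrh_2)$, $\scrg/\scrh_1$, $\scrg/\scrh_2$ all have the same (trivial) edge groups and each vertex group $G_v/(H_{1v}\cap H_{2v})$ injects into $(G_v/H_{1v})\times(G_v/H_{2v})$; by the normal-form description in Section~\ref{sec:pi1} an element of the kernel represented by a reduced path would have to be trivial. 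Consequently, if $\pi_1(\scrg/\scrh_1)$ and $\pi_1(\scrg/\scrh_2)$ are residually $\mathfrak P$, then so is $\pi_1(\scrg/(\scrh_1\cap\scrh_2))$, since residual $\mathfrak P$ is inherited by subgroups (root property axiom (1)) and by finite direct products (axiom (2)). This is the key closure fact.

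Then I would verify condition (B) of Proposition~\ref{Hempel criterion residually p} directly: given $e_1,\dots,e_n\in E(Y)$ and $g_i\in G_{t(e_i)}\setminus f_{e_i}(G_{e_i})$, the condition $g_i\notin f_{e_i}(G_{e_i})=f_{e_i}(1)=\{1\}$ just says $g_i\neq 1$. Applying condition (A) to the vertex $v_0=t(e_i)$ and the element $g_i$ produces a compatible collection $\scrh^{(i)}=\{H^{(i)}_v\}$ with $g_i\notin H^{(i)}_{t(e_i)}$ and $\pi_1(\scrg/\scrh^{(i)})$ residually $\mathfrak P$; note that $g_i\notin H^{(i)}_{t(e_i)}=H^{(i)}_{t(e_i)}\cdot\{1\}=H^{(i)}_{t(e_i)}\cdot f_{e_i}(G_{e_i})$. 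Now set $\scrh:=\scrh^{(1)}\cap\cdots\cap\scrh^{(n)}$; by the closure fact above (applied $n-1$ times) $\pi_1(\scrg/\scrh)$ is residually $\mathfrak P$, $\scrh$ is compatible, and since $H_{t(e_i)}\le H^{(i)}_{t(e_i)}$ we still have $g_i\notin H_{t(e_i)}=H_{t(e_i)}\cdot f_{e_i}(G_{e_i})$ for every $i$. Thus condition (B) holds, and Proposition~\ref{Hempel criterion residually p} gives that $G=\pi_1(\scrg)$ is residually $\mathfrak P$.

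I do not expect a genuine obstacle here; the only point requiring a little care is the claim that the map \eqref{eq:H1H2} is injective on all of $\pi_1(\scrg/(\scrh_1\cap\scrh_2))$ rather than just on the vertex groups, and for this one invokes the faithfulness of the reduced-path normal form from \cite[I.5.2, Theorem~11]{Se80} together with the observation that reducedness of a path is detected vertex-group-wise when the edge groups are trivial (conditions (1)--(2) for a reduced path in Section~\ref{sec:pi1} become: $n=0$ and $g_0\neq 1$, or $n>0$ and $g_i\neq 1$ whenever $e_{i+1}=\ol{e_i}$, for every index $i$). Since injectivity of $G_v/(H_{1v}\cap H_{2v})\hookrightarrow (G_v/H_{1v})\times(G_v/H_{2v})$ preserves non-triviality of each $g_i$, a reduced representative path maps to a reduced path, hence to a non-trivial element, giving the desired injectivity.
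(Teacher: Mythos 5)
Your argument fails at the claimed injectivity of the map \eqref{eq:H1H2}. You reason that a reduced path in $\scrg/(\scrh_1\cap\scrh_2)$ representing a nontrivial element maps to nontrivial elements of $\pi_1(\scrg/\scrh_1)$ and $\pi_1(\scrg/\scrh_2)$ because each of its letters survives; but injectivity of $G_v/(H_{1v}\cap H_{2v})\hookrightarrow(G_v/H_{1v})\times(G_v/H_{2v})$ only ensures that the \emph{pair} of images of a nontrivial letter is nontrivial, not that the image under \emph{each} projection is. A letter $g_i$ with $g_i\in H_{1v}\setminus H_{2v}$ maps to $1$ in $G_v/H_{1v}$, so the image path in $\scrg/\scrh_1$ is no longer reduced and may collapse. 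A concrete counterexample: let $Y$ have two vertices and one topological edge (so $\pi_1(\scrg)=G_{v_1}\ast G_{v_2}$), and take $\scrh_1=\{1,G_{v_2}\}$, $\scrh_2=\{G_{v_1},1\}$. Then $\scrh_1\cap\scrh_2$ is trivial, and \eqref{eq:H1H2} becomes the canonical map $G_{v_1}\ast G_{v_2}\to G_{v_1}\times G_{v_2}$, whose kernel contains $[g_1,g_2]\neq 1$ for any nontrivial $g_1\in G_{v_1}$, $g_2\in G_{v_2}$. (The sentence in the paper immediately preceding this corollary makes the same unqualified injectivity claim, so the issue is inherited from there; Lemma~\ref{lem:intersect H}, which only uses injectivity on vertex groups together with finiteness, is not affected.)

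The corollary is nonetheless true, and the missing idea is to self-improve (A) to the finite-index condition (A$'$) rather than to intersect arbitrary ``good'' collections. First, condition (A) forces each $G_v$ to be residually $\mathfrak P$: given $g\in G_v\setminus\{1\}$, (A) furnishes a compatible $\scrh$ with $g\notin H_v$ and $\pi_1(\scrg/\scrh)$ residually $\mathfrak P$, and since $G_v/H_v$ embeds into $\pi_1(\scrg/\scrh)$ there is a finite-index normal subgroup $K\trianglelefteq G_v$ with $G_v/K$ a $\mathfrak P$-group and $g\notin K$. Now set $H_v':=K$ and $H_w':=G_w$ for $w\neq v$. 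Since all edge groups are trivial this collection $\scrh'$ is compatible and consists of finite-index normal subgroups; the quotient graph $\scrg/\scrh'$ has finite $\mathfrak P$-group vertex groups and trivial edge groups, so $\pi_1(\scrg/\scrh')$ is residually $\mathfrak P$ by Lemma~\ref{residual p criterion}, and $g\notin H'_v=H'_v\cdot f_e(G_e)$. This verifies conditions (A$'$) and (B$'$) of Corollary~\ref{cor:vertex and edge groups closed}, which then yields that $G$ is residually $\mathfrak P$.
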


We record two (presumably well-known) consequences: % of Proposition~\ref{Hempel criterion residually p}:

\begin{corollary}\label{cor:trivial edge groups}
The fundamental group of every graph of groups whose vertex groups are residually $\mathfrak P$ and whose edge groups are trivial is residually $\mathfrak P$.
\end{corollary}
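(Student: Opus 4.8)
The plan is to derive this immediately from the preceding Corollary, which handles the case where $\scrg$ satisfies condition (A) of Proposition~\ref{Hempel criterion residually p} and all edge groups are trivial. So the only task is to verify condition (A): for each vertex $v_0\in V(Y)$ and each $g\in G_{v_0}\setminus\{1\}$, I need to produce a compatible collection $\scrh=\{H_v\}_{v\in V(Y)}$ of normal subgroups with $g\notin H_{v_0}$ and $\pi_1(\scrg/\scrh)$ residually $\mathfrak P$. Since the edge groups are trivial, \emph{every} collection $\{H_v\}_{v\in V(Y)}$ of normal subgroups is automatically compatible (the compatibility condition $f_e^{-1}(H'_{t(e)}) = f_{\ol e}^{-1}(H'_{t(\ol e)})$ reads $1 = 1$), so I have complete freedom in choosing the $H_v$.

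The natural choice is as follows. Since $G_{v_0}$ is residually $\mathfrak P$ and $g\neq 1$, there is a normal subgroup $H_{v_0}\trianglelefteq G_{v_0}$ with $G_{v_0}/H_{v_0}$ a $\mathfrak P$-group and $g\notin H_{v_0}$. For every other vertex $v\neq v_0$, set $H_v:=G_v$. Then $\scrg/\scrh$ is a graph of groups with trivial edge groups whose vertex groups are $G_{v_0}/H_{v_0}$ (a $\mathfrak P$-group, hence residually $\mathfrak P$) and, for $v\neq v_0$, the trivial group (also residually $\mathfrak P$). Thus all vertex groups of $\scrg/\scrh$ are residually $\mathfrak P$ and all edge groups are trivial, so by Corollary~\ref{cor:trivial edge groups} the group $\pi_1(\scrg/\scrh)$ is residually $\mathfrak P$. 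This verifies condition~(A), and the previous corollary then yields that $G=\pi_1(\scrg)$ is residually $\mathfrak P$.

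I should double-check the one subtlety: the statement of the final corollary asserts that $\pi_1(\scrg)$ is residually $\mathfrak P$ whenever the vertex groups are residually $\mathfrak P$ and the edge groups trivial. In applying Corollary~\ref{cor:trivial edge groups} to $\scrg/\scrh$ inside the proof, I am using the very statement I wish to prove — but for a graph of groups whose vertex groups are \emph{actually $\mathfrak P$-groups} (namely $G_{v_0}/H_{v_0}$ and trivial groups), not merely residually $\mathfrak P$. For such graphs of groups Corollary~\ref{cor:trivial edge groups} is not circular: it follows directly from the Corollary just above it (the one asserting that if $\scrg$ satisfies (A) and has trivial edge groups then $\pi_1(\scrg)$ is residually $\mathfrak P$), since for a graph of $\mathfrak P$-groups with trivial edge groups condition~(A) is trivially satisfied by taking $\scrh$ with all $H_v = 1$: then $\scrg/\scrh = \scrg$ has $\mathfrak P$-group vertex groups and $\pi_1$ of a graph of $\mathfrak P$-groups with trivial edge groups is free-by-$\mathfrak P$ (the free kernel coming from \cite[II.2.6, Lemma~8]{Se80}) hence residually $\mathfrak P$ by Lemma~\ref{lem:gruenberg}. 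So the argument is sound; the main point to present carefully is simply this bookkeeping, ensuring that no genuine circularity is introduced. I therefore expect no real obstacle: the proof is a short reduction to the previous corollary via the freedom afforded by trivial edge groups.

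\begin{proof}
Let $v_0\in V(Y)$ and $g\in G_{v_0}\setminus\{1\}$. Since $G_{v_0}$ is residually $\mathfrak P$, there is a normal subgroup $H_{v_0}\trianglelefteq G_{v_0}$ with $G_{v_0}/H_{v_0}$ a $\mathfrak P$-group and $g\notin H_{v_0}$. For $v\in V(Y)$ with $v\neq v_0$ put $H_v:=G_v$. As all edge groups of $\scrg$ are trivial, the collection $\scrh=\{H_v\}_{v\in V(Y)}$ is compatible. The graph of groups $\scrg/\scrh$ has trivial edge groups and vertex groups $G_{v_0}/H_{v_0}$ and (for $v\neq v_0$) the trivial group; in particular all its vertex groups are $\mathfrak P$-groups. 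Hence $\pi_1(\scrg/\scrh)$ has a free normal subgroup of $\mathfrak P$-index (by \cite[II.2.6, Lemma~8]{Se80}), and is therefore residually $\mathfrak P$ by Lemma~\ref{lem:gruenberg}. Thus $\scrg$ satisfies condition~(A) of Proposition~\ref{Hempel criterion residually p}, and so $G=\pi_1(\scrg)$ is residually $\mathfrak P$ by the preceding corollary.
\end{proof}
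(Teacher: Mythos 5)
Your proof is correct and takes essentially the same approach as the paper: reduce to verifying condition~(A) via the preceding corollary, choose $\scrh$ so that $G_{v_0}/H_{v_0}$ is a $\mathfrak P$-group and all other vertex quotients are trivial, and conclude. The only cosmetic difference is that where the paper invokes the packaged equivalence of (2) and (3) in Lemma~\ref{residual p criterion}, you unwind it through Serre's Lemma~8 together with Lemma~\ref{lem:gruenberg}, which is exactly how that equivalence is established in the paper.
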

\begin{proof}
By the previous corollary, it is enough to show that if $v_0\in V(Y)$ is such that
$G_{v_0}$ is a $\mathfrak P$-group and $G_v=1$ for all $v\in V(Y)$ with $v\neq v_0$, and $G_e=1$ for all $e\in E(Y)$, then $G$ is residually $\mathfrak P$.  However, this is immediate by the equivalence of (3) and (2) in Lemma~\ref{residual p criterion}.
\end{proof}

In particular, the previous corollary shows that the free product of finitely many residually $p$ groups is residually $p$, and (combined with Theorem~\ref{thm:bass-serre}) that the free product of finitely many virtually residually $p$ groups is virtually residually $p$.

\begin{corollary}\label{cor:crucial, preparation}
Assume that all vertex groups of $\scrg$ are residually $\mathfrak P$, and suppose there exists a morphism $\psi\colon G=\pi_1(\scrg)\to P$   to a finite $\mathfrak P$-group $P$ which is injective when restricted to each subgroup $f_e(G_e)$ of $G$. Then $G$ is residually $\mathfrak P$.
\end{corollary}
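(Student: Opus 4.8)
The plan is to realise the kernel $K:=\ker\psi$ as the fundamental group of a graph of groups in which \emph{all edge groups are trivial}, and then to conclude via Gruenberg's lemma (Lemma~\ref{lem:gruenberg}). One could instead try to verify conditions (A) and (B) of Hempel's criterion (Proposition~\ref{Hempel criterion residually p}) directly, but that route seems to require showing that various ``mixed'' quotient graphs of groups have residually $\mathfrak P$ fundamental group, so I would avoid it.

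First I would record the easy bookkeeping: since $P$ is finite, $K$ is a normal subgroup of finite index in $G=\pi_1(\scrg)$, and $G/K\cong\psi(G)$ is a subgroup of the $\mathfrak P$-group $P$, hence is itself a $\mathfrak P$-group. Next I would apply Theorem~\ref{thm:bass-serre}, together with the Remark following it, to the normal finite-index subgroup $H=K$: this produces a graph of groups $\scrh$ with $\pi_1(\scrh)\cong K$ whose vertex groups are (isomorphic to) the subgroups $K\cap G_v$ of the $G_v$ and whose edge groups are (isomorphic to) the subgroups $K\cap f_e(G_e)$.

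The key point — and the only step carrying any real content — is that every edge group of $\scrh$ is trivial: since $\psi$ restricts to an injection on each $f_e(G_e)$ by hypothesis, we get $K\cap f_e(G_e)=\ker\bigl(\psi|_{f_e(G_e)}\bigr)=1$ for every $e$. Moreover each vertex group $K\cap G_v$ of $\scrh$ is a subgroup of the residually $\mathfrak P$ group $G_v$, hence is itself residually $\mathfrak P$. Therefore Corollary~\ref{cor:trivial edge groups} applies to $\scrh$ and shows that $K=\pi_1(\scrh)$ is residually $\mathfrak P$.

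Finally, $K$ is a normal subgroup of $G$ that is residually $\mathfrak P$, while the quotient $G/K$ is a $\mathfrak P$-group, so Lemma~\ref{lem:gruenberg} (this is where the root-property axioms are used) yields that $G$ is residually $\mathfrak P$, as desired. All the ingredients beyond the observation that the edge groups of $\scrh$ collapse are thus already available in the preceding sections.
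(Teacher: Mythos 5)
Your proof is correct and follows essentially the same route as the paper: both pass to $K=\ker\psi$ via Theorem~\ref{thm:bass-serre} (and the remark following it), observe that injectivity of $\psi$ on each $f_e(G_e)$ makes the edge groups of the induced graph of groups trivial, apply Corollary~\ref{cor:trivial edge groups} to conclude $K$ is residually $\mathfrak P$, and finish with Lemma~\ref{lem:gruenberg}. The only difference is that you spell out a couple of small points (that $G/K$ is a $\mathfrak P$-group as a subgroup of $P$, and that the vertex groups of $\scrh$ inherit residual $\mathfrak P$-ness) that the paper leaves implicit.
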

\begin{proof}
By Theorem~\ref{thm:bass-serre}, the kernel $N$ of $\psi$ is the fundamental group of a graph of groups whose vertex groups are of the form $N\cap G_v$, and whose edge groups are of the form $N\cap f_e(G_e)$. However, since $\psi|f_e(G_e)$ is injective, we have $N\cap f_e(G_e)=\{1\}$ for each $e$. Thus $N$ is the fundamental group of a graph of groups with  residually $\mathfrak P$ vertex groups  and trivial edge groups. Hence $N$ is residually $\mathfrak P$ by Corollary~\ref{cor:trivial edge groups}, and so $G$ is residually $\mathfrak P$ by Lemma~\ref{lem:gruenberg}.
\end{proof}

Finally, we note that
in later sections, we will apply Proposition~\ref{Hempel criterion residually p} with the~$\scrh$ coming from a filtration of $\scrg$; in this case, this proposition reads as follows:

\begin{corollary}\label{cor:Hempel, cor}
Let $\mathbf G$ be a normal separating filtration of $\scrg$ which separates the edge groups of $\scrg$, such that
$\pi_1(\scrg/\mathbf G_n)$ is residually $\mathfrak P$ for every $n\geq 1$. Then $G$ is residually $\mathfrak P$.
\end{corollary}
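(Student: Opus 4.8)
The plan is to check directly that the given filtration $\mathbf G$ supplies, for every finite list of ``bad'' elements, a single quotient graph of groups $\scrg/\mathbf G_N$ in which none of those elements becomes trivial (respectively, lies in the relevant edge subgroup), and then to invoke Proposition~\ref{Hempel criterion residually p}. So I would first verify hypothesis (A) of that proposition: given $v_0\in V(Y)$ and $g\in G_{v_0}\setminus\{1\}$, separateness of the filtration $\mathbf G_{v_0}$ of $G_{v_0}$ gives some $n\geq 1$ with $g\notin G_{v_0,n}$; then $\scrh:=\mathbf G_n=\{G_{v,n}\}_{v\in V(Y)}$ is a compatible collection of normal subgroups (by definition of a normal filtration of $\scrg$), we have $g\notin H_{v_0}=G_{v_0,n}$, and $\pi_1(\scrg/\mathbf G_n)$ is residually $\mathfrak P$ by hypothesis.

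Next I would verify hypothesis (B). Given $e_1,\dots,e_n\in E(Y)$ and $g_i\in G_{t(e_i)}\setminus f_{e_i}(G_{e_i})$ for $i=1,\dots,n$, the assumption that $\mathbf G$ separates the edge groups of $\scrg$ means that for each $i$, writing $v=t(e_i)$, we have $\bigcap_{m\geq 1} G_{v,m}\cdot f_{e_i}(G_{e_i}) = f_{e_i}(G_{e_i})$, so there exists $m_i\geq 1$ with $g_i\notin G_{t(e_i),m_i}\cdot f_{e_i}(G_{e_i})$. Now I would put $N:=\max\{m_1,\dots,m_n\}$; since the filtration is descending, $G_{t(e_i),N}\leq G_{t(e_i),m_i}$ and hence $G_{t(e_i),N}\cdot f_{e_i}(G_{e_i})\subseteq G_{t(e_i),m_i}\cdot f_{e_i}(G_{e_i})$, so $g_i\notin G_{t(e_i),N}\cdot f_{e_i}(G_{e_i})$ for all $i$ at once. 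Thus $\scrh:=\mathbf G_N$ is a compatible collection of normal subgroups with the required non-membership property, and $\pi_1(\scrg/\mathbf G_N)$ is residually $\mathfrak P$ by hypothesis.

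With (A) and (B) established, Proposition~\ref{Hempel criterion residually p} immediately gives that $G=\pi_1(\scrg)$ is residually $\mathfrak P$, completing the proof. I do not expect any real obstacle here: the entire substance of the corollary is already contained in Proposition~\ref{Hempel criterion residually p}, and the only point that is not a pure unwinding of definitions is the passage to a single index $N$ that works uniformly for the finitely many edges occurring in condition (B), which is immediate from monotonicity of $\mathbf G$. (The same remark applies if one instead wanted to route the argument through Corollary~\ref{cor:vertex and edge groups closed} in the case where all $G_{v,n}$ have finite index, but the direct route via Proposition~\ref{Hempel criterion residually p} is cleaner and needs no extra hypotheses.)
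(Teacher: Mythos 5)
Your proof is correct and is precisely the intended argument: the paper states this as an immediate corollary of Proposition~\ref{Hempel criterion residually p} without spelling out the verification, and your unwinding of conditions~(A) and~(B) (using separateness of $\mathbf G$ for~(A), separateness of the edge groups together with monotonicity of the filtration to pass to a single $N$ for~(B)) is exactly what is being left to the reader.
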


%A subgroup $H$ of a group $\G$ is said to be \emph{$p$-closed} in $\G$ if for each $\gamma\in \G\setminus H$, there exists a normal subgroup $N$ of $\G$ of $p$-power index such that $\gamma\notin NH$. It is easy to see that every finite subgroup of a residually $p$ group is $p$-closed.

%\begin{corollary}
%Let $G_1,\dots,G_n$ be residually $p$ groups with common subgroup $C$, and let  $\G$ be the free product  of $G_1,\dots, G_n$ amalgamating $C$. Then $\G$ is residually $p$ in each of the following situations:
%\begin{enumerate}
%\item $G_1=\cdots=G_n$ and $C$ is closed in the pro-$p$ topology; or
%\item $C$ is  cyclic and closed in the pro-$p$ topology of each $G_i$.
%\end{enumerate}
%\end{corollary}
%\begin{proof}
%Realize $\G$ as the fundamental group of a graph of groups whose underlying graph is the connected graph with $n$ vertices  and $n-1$ edges, vertex groups $G_1,\dots,G_n$, and all edge groups equal to $C$, with the inclusions $C\overset{\subseteq}{\to} G_i$ as the edge morphisms. It is easy to see that if $C$ is infinite and $H_i$ is a given $p$-power index normal subgroup of $G_i$ then for each $j$ there exists a $p$-power index normal subgroup $H_j$ of $G_j$ with $H_i\cap C=H_j\cap C$ (cf.~\cite[Corollary~2.3]{KM93}). It follows that conditions (A) and (B) of Proposition~\ref{Hempel criterion residually p} hold, hence  $\G$ is residually $p$.
%\end{proof}

\subsection*{Digression: necessity of conditions (A) and (B)}
(The material in the rest of this section is not used later.)
It is interesting to ask whether the conditions (A) and (B) in the previous proposition are necessary for $G$ to be residually $\mathfrak P$. From Corollary~\ref{cor:residually P quotient} we immediately obtain:

\begin{lemma}\label{lem:A}
If $G$ is residually $\mathfrak P$, then $\scrg$ satisfies \textup{(A)}.
\end{lemma}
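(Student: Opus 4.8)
The plan is to use Corollary~\ref{cor:residually P quotient} essentially verbatim. Recall that condition (A) asks: for each vertex $v_0\in V(Y)$ and each nontrivial $g\in G_{v_0}$, there is a compatible collection $\scrh=\{H_v\}_{v\in V(Y)}$ of normal subgroups with $g\notin H_{v_0}$ and $\pi_1(\scrg/\scrh)$ residually $\mathfrak P$. So fix $v_0$ and $g\in G_{v_0}\setminus\{1\}$.

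Since $G=\pi_1(\scrg)$ is residually $\mathfrak P$ and $g\neq 1$, there is a normal subgroup $H\trianglelefteq G$ with $G/H$ a $\mathfrak P$-group and $g\notin H$. Now set $\scrh:=\{H\cap G_v\}_{v\in V(Y)}$, where we identify each $G_v$ with a subgroup of $G$ via a fixed maximal subtree. This collection is compatible — indeed, this is exactly the content of the Example following the discussion of compatible collections of subgroups (``Let $H$ be a normal subgroup of $G=\pi_1(\scrg)$; then the collection $\scrh=\{H\cap G_v\}$ is compatible''). First I would check that $g\notin H_{v_0}=H\cap G_{v_0}$: this is immediate, since $g\in G_{v_0}$ but $g\notin H$. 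Finally, since $G/H$ satisfies $\mathfrak P$, Corollary~\ref{cor:residually P quotient} gives that $\pi_1(\scrg/\scrh)$ is residually $\mathfrak P$. Hence $\scrh$ witnesses condition (A) for the pair $(v_0,g)$, and since $v_0$ and $g$ were arbitrary, $\scrg$ satisfies (A).

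There is no real obstacle here: the lemma is a formal consequence of Corollary~\ref{cor:residually P quotient} together with the observation that a nontrivial element of a vertex group can be separated from $H$ by a $\mathfrak P$-quotient of the ambient group precisely because the vertex group embeds in $G$. The only point requiring (mild) care is the identification of $G_v$ with a subgroup of $G$ (which depends on a choice of maximal subtree), but this is harmless for the statement, and the normal-subgroup case of the remark after Theorem~\ref{thm:bass-serre} already records that one may take the vertex groups of the associated graph of groups to be exactly the $H\cap G_v$, consistent with the $\scrh$ chosen above.

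\begin{proof}
Let $v_0\in V(Y)$ and $g\in G_{v_0}\setminus\{1\}$. Since $G$ is residually $\mathfrak P$, there is a normal subgroup $H$ of $G$ with $G/H$ a $\mathfrak P$-group and $g\notin H$. By the Example following the discussion of compatible collections, $\scrh:=\{H\cap G_v\}_{v\in V(Y)}$ is a compatible collection of normal subgroups, and $\pi_1(\scrg/\scrh)$ is residually $\mathfrak P$ by Corollary~\ref{cor:residually P quotient}. Since $g\in G_{v_0}$ and $g\notin H$, we have $g\notin H\cap G_{v_0}=H_{v_0}$. Hence $\scrg$ satisfies~(A).
\end{proof}
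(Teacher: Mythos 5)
Your proof is correct and is exactly the argument the paper intends — it says the lemma is "immediately" obtained from Corollary~\ref{cor:residually P quotient}, and your write-up spells out the two-line verification (pick $H$ separating $g$ from $1$, set $\scrh=\{H\cap G_v\}$, apply the corollary). Nothing to add.
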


The situation for condition (B) is slightly less clear. Here we can only show:

\begin{lemma}\label{lem:B}
Suppose for each edge $e\in E(Y)$, either
\begin{enumerate}
\item the subgroup $f_e(G_e)$ of $G_{t(e)}$  is maximal with respect to being verbal with respect to a collection $W_e$ of words; or
\item $G_{t(\overline{e})}$ is abelian and $f_e(G_e)$, $f_{\overline{e}}(G_e)$ are proper subgroups of $G_{t(e)}$, $G_{t(\overline{e})}$, respectively.
\end{enumerate}
If $G$ is residually $\mathfrak P$, then $\scrg$ satisfies \textup{(B)}.
\end{lemma}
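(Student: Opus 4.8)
The claim is that if every edge of $Y$ has the property that either (1) $f_e(G_e)$ is maximal among verbal subgroups (with respect to some word-set $W_e$) of $G_{t(e)}$, or (2) $G_{t(\overline e)}$ is abelian and $f_e(G_e)$, $f_{\overline e}(G_e)$ are proper, then residual $\mathfrak P$ness of $G=\pi_1(\scrg)$ forces condition (B). So fix edges $e_1,\dots,e_n$ and elements $g_i\in G_{t(e_i)}\setminus f_{e_i}(G_{e_i})$. We must produce a compatible collection $\scrh$ of normal subgroups with $g_i\notin H_{t(e_i)}f_{e_i}(G_{e_i})$ for all $i$ and $\pi_1(\scrg/\scrh)$ residually $\mathfrak P$. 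The natural candidate, exactly as in Lemma~\ref{lem:A}, is to take $\scrh=\{H\cap G_v\}_{v\in V(Y)}$ for a suitable normal subgroup $H\trianglelefteq G$ with $G/H$ satisfying $\mathfrak P$: then $\pi_1(\scrg/\scrh)$ is residually $\mathfrak P$ by Corollary~\ref{cor:residually P quotient}, so the entire problem reduces to finding, for each $i$ separately, such an $H$ with $g_i\notin (H\cap G_{t(e_i)})\cdot f_{e_i}(G_{e_i})$, and then intersecting finitely many of these (noting that a finite intersection of such $H$'s still has the property that the quotient is residually, hence a subdirect product of, $\mathfrak P$-groups, which suffices since we only need $\pi_1(\scrg/\scrh)$ residually $\mathfrak P$, not $\mathfrak P$).

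\textbf{The single-edge reduction.} So drop the index and suppose $g\in G_{t(e)}\setminus f_e(G_e)$; write $v=t(e)$, $A=f_e(G_e)\le G_v$. I want a normal subgroup $H\trianglelefteq G$ with $G/H$ a $\mathfrak P$-group (or residually $\mathfrak P$) such that $g\notin (H\cap G_v)A$, i.e.\ such that the image of $g$ in $G_v/(H\cap G_v)$ does not lie in the image of $A$. Equivalently, letting $q\colon G_v\to G_v/(H\cap G_v)$ be the restriction, we need $q(g)\notin q(A)$. Since $G$ is residually $\mathfrak P$, it separates points of $G_v$, and one would like to leverage this to separate the coset $gA$ from the subgroup $A$ inside an appropriate finite-index-type quotient. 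In case (1), the hypothesis that $A$ is maximal verbal with respect to $W_e$ means: $A=\{x\in G_v: w(x_1,\dots,x_k)\text{-products generate nothing outside }A\}$ — more precisely $A$ is the verbal subgroup $W_e(G_v)$ or a term of that type, and any proper overgroup fails verbality. Here the key point is that $g\notin A$ means some word $w\in W_e$ evaluated on a tuple involving $g$ lands outside $A$; pushing forward under the residual-$\mathfrak P$ separation of that element from $1$ in $G$ gives a quotient in which the image of $g$ cannot lie in the image of $A$, because the image of $A$ is again verbal (verbal subgroups are preserved by surjections) while the image of $w(\dots,g,\dots)$ is nontrivial. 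This is where I'd have to be careful that the quotient of $G$ restricts on $G_v$ to a quotient in which $W_e$-verbality is still detected; one wants to choose $H$ to contain the relevant verbal subgroup $W_e(G_v)$ but not the witnessing element. In case (2), $G_v$ (here $G_{t(\overline e)}$, so after swapping $e\leftrightarrow\overline e$ we're looking at the abelian side) is abelian and $A$ is a proper subgroup; then $g\notin A$ in an abelian group is detected by a homomorphism $G_v\to$ (finite abelian, hence $\mathfrak P$) killing $A$ but not $g$ — and since $G$ is residually $\mathfrak P$ and $G_v$ is a subgroup, by a Hall-type argument (subgroups of residually $\mathfrak P$ groups are residually $\mathfrak P$, and the pro-$\mathfrak P$ topology on $G$ induces at least something on $G_v$) one extends, or rather finds a global $H$ doing this. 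The honest move in case (2) is: pick a character $\chi\colon G_v/A\to$ finite abelian $\mathfrak P$-group nonzero on $gA$; this gives $G_v\to P_0$; since free groups are residually $\mathfrak P$ and $G$ is residually $\mathfrak P$, I'd use Corollary~\ref{cor:residually P quotient}'s ambient structure to realize a compatible collection, or more directly argue via Lemma~\ref{residual p criterion}/Proposition~\ref{Hempel criterion residually p} machinery that such a collection with the prescribed behavior on the one vertex $v$ exists.

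\textbf{Assembling and the main obstacle.} Once the single-edge statement is in hand for each $e_i$ — producing $\scrh^{(i)}$ compatible with $g_i\notin H^{(i)}_{t(e_i)}f_{e_i}(G_{e_i})$ and $\pi_1(\scrg/\scrh^{(i)})$ residually $\mathfrak P$ — I form $\scrh=\bigcap_i\scrh^{(i)}$ (vertexwise intersection), which is compatible, satisfies all $n$ exclusions simultaneously, and for which $\pi_1(\scrg/\scrh)$ maps into $\prod_i\pi_1(\scrg/\scrh^{(i)})$ with the map injective on vertex groups — hence residually $\mathfrak P$ if the individual factors are, by the subdirect-product argument underlying Lemma~\ref{lem:intersect H} (note: the finiteness hypothesis in Lemma~\ref{lem:intersect H} was used only to combine into a \emph{single} $\mathfrak P$-group via Lemma~\ref{residual p criterion}; for ``residually $\mathfrak P$'' alone, the bare product suffices, but one should double-check whether (B) as stated needs $\pi_1(\scrg/\scrh)$ residually $\mathfrak P$ — it does — so we are fine). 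The main obstacle I expect is case (1): turning the maximal-verbal-subgroup hypothesis into an actual normal subgroup $H$ of the \emph{whole} group $G$ (not just of $G_v$) such that $G/H$ is residually $\mathfrak P$ and $H\cap G_v$ misses the coset $gA$. Residual $\mathfrak P$ness of $G$ gives plenty of normal subgroups of $G$ with $\mathfrak P$-quotient missing any prescribed nontrivial element, but controlling the \emph{intersection with $G_v$} relative to the verbal subgroup $A$ is the delicate part — one needs that for suitable such $H$, $(H\cap G_v)A$ is still a proper (in particular does not swallow $g$) subgroup, which is where verbality is essential: $W_e(G/H)$ pulls back appropriately and $A$ maps onto the verbal subgroup on that side. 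I would spend the bulk of the proof nailing this compatibility, probably by first choosing $H_0\trianglelefteq G$ with $G/H_0$ a $\mathfrak P$-group and $g\notin H_0$ (residual $\mathfrak P$ness), then enlarging/adjusting so that $H\cap G_v$ contains the full $W_e$-verbal structure forcing $(H\cap G_v)A\ne G_v$ around $g$ — using that verbal subgroups are sent to verbal subgroups by the surjection $G\to G/H$.
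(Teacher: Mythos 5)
The single-edge reduction is fine — the paper does essentially the same thing, noting that if $K_1,K_2\trianglelefteq G$ both have $\mathfrak P$-quotient then so does $K_1\cap K_2$ (root property), and invoking Corollary~\ref{cor:residually P quotient}. Your case~(1) sketch is pointed in the right direction (a failed identity $w(a_1,\dots,a_k)\neq 1$ with $a_i\in\langle A,g\rangle$ is separated from $1$ by some $K$, and a mod-$K$ computation using that $A$ is verbal then forces $g\notin A(K\cap G_v)$), but the terminology is muddled: you conflate ``$A$ is verbal with respect to $W_e$'' (i.e.\ $A$ satisfies the laws $W_e$) with ``$A$ is the verbal subgroup $W_e(G_v)$'' (the subgroup generated by $W_e$-values); these are entirely different. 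The paper packages the argument cleanly into Lemma~\ref{lem:verbal}: $H^* = \bigcap\{HK : K\trianglelefteq G,\ G/K\in\mathfrak P\}$ is again verbal, hence by maximality $H^*\cap G_v = H$, hence $g\notin H^*$ gives the required $K$.

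The real gap is in case~(2), and it is twofold. First, you misread the hypothesis: abelianity is imposed on $G_{t(\overline e)}$, the vertex group \emph{at the other end} of the edge, not on $G_v = G_{t(e)}$ where the element $g$ you must separate actually lives. ``Swapping $e\leftrightarrow\overline e$'' does not help, because the coset $g f_e(G_e)$ you need to avoid sits inside $G_{t(e)}$ regardless of how you orient the edge. Second, even if $G_v$ were abelian, your sketch rests on producing a $\mathfrak P$-index normal subgroup $N_v\trianglelefteq G_v$ with $g\notin N_v A$ and then ``realizing'' it as $K\cap G_v$ for some $K\trianglelefteq G$ with $G/K\in\mathfrak P$. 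That realization is precisely the assertion that the pro-$\mathfrak P$ topology on $G$ induces the pro-$\mathfrak P$ topology on $G_v$ — a statement that is \emph{false} for general amalgams and is in fact one of the main results of this paper for $3$-manifold groups (Proposition~3 and the notion of $p$-efficiency). It cannot be invoked as a free lemma here.

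The paper's actual case~(2) argument sidesteps all of this by converting the problem into separating a single nontrivial element from $1$ in $G$, which is exactly what residual $\mathfrak P$-ness provides. Choose $h\in G_w\setminus f_{\overline e}(G_e)$ ($w=t(\overline e)$), set $u := [ege^{-1},h]\in G$, observe that $u\neq 1$ (its normal form is a reduced path using $g\notin f_e(G_e)$ and $h\notin f_{\overline e}(G_e)$), take $K\trianglelefteq G$ with $G/K\in\mathfrak P$ and $u\notin K$, and check directly: if $g\equiv f_e(g_e)\bmod K$ then $ege^{-1}\equiv f_{\overline e}(g_e)\bmod K$, so $u\equiv [f_{\overline e}(g_e),h]\equiv 1\bmod K$ since $G_w$ is abelian — a contradiction. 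You would need this kind of trick, or something equivalent to it, to close case~(2).
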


In this context, recall the following definition:

\begin{defn}
Let $W$ be a collection of elements of a free group $F$ (``words''). A group $\G$ is {\it verbal} with respect to $W$ if every morphism $F\to \G$ maps each $w\in W$ to $1$. For example, a group $\G$ is verbal with respect to $W=\{xyx^{-1}y^{-1}\}$ if and only if $\G$ is abelian.
\end{defn}

\index{group!verbal}

We have the following generalization of a lemma from \cite{LN91}:

\begin{lemma}\label{lem:verbal}
Suppose $\G$ is a  group and $H$ is a subgroup of $\G$. Assume that $\G$ is residually $\mathfrak P$ and  $H$ is  verbal with respect to a collection of words $W$. Then
$$H^* := \bigcap \big\{ H\cdot K : \text{$K\trianglelefteq \G$ and  $\G/K$ has property $\mathfrak P$}\big\}$$
is also verbal with respect to $W$.
\end{lemma}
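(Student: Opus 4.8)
The plan is to verify directly that $H^*$ has the defining property of a group verbal with respect to $W$, namely that every homomorphism $F\to H^*$ sends each $w\in W$ to $1$. Since any given $w\in W$ involves only finitely many of the free generators, this amounts to proving: for every $w=w(x_1,\dots,x_n)\in W$ and all $a_1,\dots,a_n\in H^*$ we have $w(a_1,\dots,a_n)=1$ in $\G$. (The element $w(a_1,\dots,a_n)$ automatically lies in $H^*$, which is a subgroup of $\G$ being an intersection of subgroups, and its triviality is exactly the assertion that the corresponding homomorphism $F\to H^*$ kills $w$.)

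To prove this identity I would argue modulo each relevant normal subgroup. Recall that $H^*=\bigcap\{HK:K\trianglelefteq\G,\ \G/K \text{ has }\mathfrak P\}$, and that, since $\G$ is residually $\mathfrak P$, the intersection of all such $K$ is trivial; hence it suffices to show $w(a_1,\dots,a_n)\in K$ for each individual $K\trianglelefteq\G$ with $\G/K$ a $\mathfrak P$-group. Fix such a $K$. Each $a_i$ lies in $H^*\subseteq HK$, so we may write $a_i=h_ik_i$ with $h_i\in H$ and $k_i\in K$; thus $a_iK=h_iK$ in $\G/K$ for each $i$, and therefore $w(a_1,\dots,a_n)\,K=w(h_1,\dots,h_n)\,K$. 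Now $h_1,\dots,h_n\in H$ and $H$ is verbal with respect to $W$, so the homomorphism $F\to H$ sending the generators $x_1,\dots,x_n$ to $h_1,\dots,h_n$ (and the remaining generators arbitrarily) kills $w$; hence $w(h_1,\dots,h_n)=1$ in $H$, and so in $\G$. Consequently $w(a_1,\dots,a_n)\in K$, as required. Letting $K$ range over all such normal subgroups gives $w(a_1,\dots,a_n)=1$, which completes the argument.

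I do not foresee a serious obstacle: the hypotheses are used only in the forms ``$\G$ residually $\mathfrak P$'' (so the normal subgroups $K$ separate points) and ``$H$ verbal'' (so $w$ vanishes on tuples from $H$). The single point deserving a moment's care is that $HK$ is genuinely a subgroup of $\G$ — valid because $K$ is normal — which is what legitimizes both the containment $H^*\subseteq HK$ and the coset computation above. Note also that the proof uses neither the root-property axioms for $\mathfrak P$ nor any finiteness assumption, so the lemma holds for an arbitrary property $\mathfrak P$ of groups; this is consistent with its role in deducing the necessity of condition (B) in Lemma~\ref{lem:B}.
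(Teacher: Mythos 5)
Your proof is correct and follows essentially the same route as the paper's: reduce modulo each $K$, use $a_iK=h_iK$ and the verbality of $H$ to get $w(a_1,\dots,a_n)\in K$, then conclude from $\bigcap K=1$. The only difference is that you spell out the coset computation that the paper compresses into one parenthetical remark.
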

\begin{proof}
For a normal subgroup $K$ of $\G$, every substitution of elements of $H\cdot K$ into a word from $W$ yields an element of $K$ (since such an element maps to $1$ under $\G\to \G/K$).
Since $\G$ is residually $\mathfrak P$, the intersection of all $K\trianglelefteq \G$ such that  $\G/K$ has property $\mathfrak P$ is trivial. Hence $H^*$ is verbal with respect to $W$.
\end{proof}

We now prove Lemma~\ref{lem:B}. So suppose $G=\pi_1(\scrg)$ is residually $\mathfrak P$. Let $e$ be an edge of $Y$, and $g\in G_{v}\setminus f_e(G_e)$, where $v=t(e)$. In order to establish (B) it suffices to show: there is $K\trianglelefteq G$ such that $G/K$ is a $\mathfrak P$-group and $g\notin f_e(G_e)\cdot (K\cap G_v)$. To see this note that if $K_i\trianglelefteq G$ are such that $G/K_i$ is a $\mathfrak P$-group, $i=1,2$, then $G/(K_1\cap K_2)$ is also a $\mathfrak P$-group, and use Corollary~\ref{cor:residually P quotient}

Assume first that $f_e(G_e)$ is a maximal subgroup of $G_{v}$   with respect to being verbal with respect to some $W_e$.  Considering $H:=f_e(G_e)$ as a subgroup of $G$, we see that $H^*$ as defined in Lemma~\ref{lem:verbal} is verbal with respect to $W$, hence $H^*\cap G_{v}=H$. Thus there is some $K\trianglelefteq G$ such that $G/K$ has property $\mathfrak P$ and $g\notin H\cdot (K\cap G_v)$.

Suppose next that $G_w$ ($w=o(e)=t(\overline{e})$) is abelian and $f_e(G_e)$, $f_{\overline{e}}(G_e)$ are proper subgroups of $G_{v}$, $G_{w}$, respectively. Choose $h\in G_w\setminus f_{\overline{e}}(G_e)$. We may assume that our base point $v_0$ for $G=\pi_1(\scrg,v_0)$ agrees with $w$. Then $u:=[ege^{-1},h]=eg^{-1}\overline{e} h^{-1} e g \overline{e} h$ is an element of $G$ which is not the identity. Hence there is some $K\trianglelefteq G$ such that $G/K$ has property $\mathfrak P$ and $u\notin K$.  This yields $g\notin f_e(G_e)\cdot (K\cap G_v)$:
otherwise $g \equiv f_e(g_e) \bmod K$ for some $g_e\in G_e$, so $ege^{-1} \equiv f_{\ol{e}}(g_e) \bmod K$ and hence $u \equiv [f_{\ol{e}}(g_e) , h] \equiv 1 \bmod K$ since $G_w$ is abelian, a contradiction.

In both cases we found $K\trianglelefteq G$ such that $G/K$ is a $\mathfrak P$-group and $g\notin f_e(G_e)\cdot (K\cap G_v)$ as required. \qed

\medskip

Given an orientable $3$-manifold $N$ with incompressible toroidal boundary,  the fundamental groups of the boundary components of $N$ are maximal abelian subgroups  of $\pi_1(N)$; cf.~Lemma~\ref{lem:maximal abelian} below. This fact in combination with Lemmas~\ref{lem:A} and \ref{lem:B},~(1) shows that in proving that the fundamental group of a $3$-manifold is residually $\mathfrak P$, we essentially have to use Proposition~\ref{Hempel criterion residually p}:

\begin{corollary}
If $N$ is $3$-manifold  which is closed, orientable, and prime,  with non-trivial JSJ decomposition, then $\pi_1(N)$ is residually $\mathfrak P$ if and only if the graph of groups $\scrg$ associated to the JSJ decomposition of $N$ satisfies conditions \textup{(A)} and \textup{(B)}.
\end{corollary}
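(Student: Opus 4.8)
The plan is to derive this corollary directly from the three lemmas it cites, namely Lemma~\ref{lem:A}, Lemma~\ref{lem:B}~(1), and Proposition~\ref{Hempel criterion residually p}, together with the geometric input that the fundamental groups of the JSJ tori sit inside $\pi_1(N)$ as maximal abelian subgroups (Lemma~\ref{lem:maximal abelian}, to be proved below). So let $N$ be closed, orientable, and prime, with non-trivial JSJ decomposition, and let $\scrg$ be the associated graph of groups, with underlying graph $Y$, vertex groups $G_v=\pi_1(N_v)$ (the JSJ components), and edge groups $G_e=\pi_1(N_e)\cong\Z^2$ the fundamental groups of the JSJ tori. By the Seifert--van Kampen theorem, $G=\pi_1(N)=\pi_1(\scrg)$.

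One direction is immediate from what is already in place: if $\scrg$ satisfies conditions (A) and (B), then $G=\pi_1(\scrg)$ is residually $\mathfrak P$ by Proposition~\ref{Hempel criterion residually p}. For the converse, suppose $G$ is residually $\mathfrak P$. Then $\scrg$ satisfies (A) by Lemma~\ref{lem:A}. It remains to verify (B). For this I would check the hypothesis of Lemma~\ref{lem:B}~(1): for each edge $e$ of $Y$, with $v=t(e)$, the subgroup $f_e(G_e)$ of $G_v=\pi_1(N_v)$ is the image of the fundamental group of a boundary torus $N_e$ of the JSJ component $N_v$. Since $N$ is orientable and the JSJ tori are incompressible, $N_v$ is an orientable irreducible $3$-manifold with incompressible toroidal boundary, so by Lemma~\ref{lem:maximal abelian} the subgroup $f_e(G_e)\cong\Z^2$ is a \emph{maximal abelian} subgroup of $G_v$. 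Being abelian means exactly being verbal with respect to the single word $W_e=\{xyx^{-1}y^{-1}\}$ (cf.~the definition preceding Lemma~\ref{lem:verbal}); and maximal abelian means precisely that $f_e(G_e)$ is maximal among subgroups of $G_v$ that are verbal with respect to $W_e$ --- any strictly larger verbal-with-respect-to-$W_e$ subgroup would be an abelian subgroup of $G_v$ properly containing $f_e(G_e)$, contradicting maximality. (A small point to check here is that "maximal abelian subgroup" is the same as "maximal element of the set of abelian subgroups"; this holds since the union of a chain of abelian subgroups is abelian, so every abelian subgroup is contained in a maximal one, and $f_e(G_e)$ being maximal abelian means it equals such a maximal element.) Hence the hypothesis of Lemma~\ref{lem:B}~(1) is met for every edge, and Lemma~\ref{lem:B} gives that $\scrg$ satisfies (B). Therefore $\scrg$ satisfies both (A) and (B), completing the proof.

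The one genuinely new ingredient needed is Lemma~\ref{lem:maximal abelian}, asserting that for an orientable irreducible $3$-manifold $M$ with incompressible toroidal boundary, $\pi_1$ of each boundary torus is a maximal abelian subgroup of $\pi_1(M)$; this is standard $3$-manifold topology (it follows, e.g., from the fact that a $\Z^2$ subgroup of a $3$-manifold group containing $\pi_1$ of an incompressible boundary torus is itself realized by an immersed torus, which by irreducibility and incompressibility must be boundary-parallel). I would state this as a separate lemma in the JSJ section of the paper and cite the standard references. The main obstacle is thus not in the logical structure of the argument --- which is a clean three-line deduction --- but in making sure the various geometric hypotheses line up: that a non-trivial JSJ decomposition of a closed orientable prime $3$-manifold really does present $G$ as $\pi_1$ of a graph of groups with $\Z^2$ edge groups, and that each JSJ piece is indeed irreducible with incompressible toroidal boundary so that Lemma~\ref{lem:maximal abelian} applies to it. These facts are all standard consequences of the Geometrization-era theory of the JSJ decomposition and are used freely elsewhere in the paper, so the corollary follows.
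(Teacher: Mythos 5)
Your proposal is correct and follows essentially the same route as the paper: the "if" direction is Proposition~\ref{Hempel criterion residually p}, condition (A) is Lemma~\ref{lem:A}, and condition (B) comes from Lemma~\ref{lem:B}(1) applied with $W_e=\{xyx^{-1}y^{-1}\}$, using the forward-referenced Lemma~\ref{lem:maximal abelian} to see that each $f_e(G_e)$ is maximal abelian, hence maximal verbal with respect to $W_e$, in $G_{t(e)}$. Your parenthetical about maximal abelian subgroups and Zorn's lemma is unnecessary (Lemma~\ref{lem:B}(1) needs only that $f_e(G_e)$ itself is not properly contained in an abelian subgroup, which is literally the conclusion of Lemma~\ref{lem:maximal abelian}), but this does not affect correctness.
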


(See Section~\ref{sec:JSJ} for the definition of the graph of groups $\scrg$.)

\section{A criterion for being residually $p$}\label{sec:A Reduction Theorem}

\noindent
We now would like to apply the results of the previous chapter to establish a criterion for fundamental groups of certain graphs of groups to be residually $p$. This is Theorem~\ref{thm:reduction theorem} at the end of this section. Before we can formulate this theorem, we need to introduce a generalization of the fiber sum construction, which is then used to define the ``partial abelianization'' of a fundamental group of a graph of abelian groups.

\subsection{The fiber sum of a graph of abelian groups}\label{sec:fiber sum}
Let $\mathcal G$ be a graph of abelian groups, and let $E_+$ be an orientation of its underlying graph $Y$. The injective morphisms
$$G_e \to G_{t(e)}\oplus G_{t(\ol{e})}\colon g\mapsto f_e(g)-f_{\ol{e}}(g)$$
combine to a natural morphism
$$\bigoplus_{e\in E_+} G_e \to \bigoplus_{v\in V(Y)} G_v$$
whose cokernel we call the {\it colimit}\/ $\Sigma(\scrg)$ of $\scrg$.
(Indeed, $\Sigma(\scrg)$ may be seen as the colimit, in the sense of category theory, of a certain diagram  of abelian groups.)
The colimit of $\scrg$ is determined, up to isomorphism, by the following universal property: for every abelian group $H$ and every collection $\{\psi_v\colon G_v\to H\}_{v\in V(Y)}$ of morphisms with the property that for each edge $e\in E_+$, the diagram in Figure~\ref{fig:colimit}
commutes, there exists a unique morphism $\psi\colon\Sigma(\scrg)\to H$ such that $\psi_{v} = \psi\circ\iota_v$ for all $v\in V(Y)$, where $\iota_v\colon G_v\to \Sigma(\scrg)$ is the natural morphism. Also note that $\Sigma(\scrg)$ does not depend on the choice of orientation $E_+$ of $Y$.
%\footnote{Wenn wir eine andere Orientierung wahlen, kriegen wir immer noch das gleiche $\Sigma(\scrg)$ `on the nose', nicht nur up to isomorphism}

\begin{figure}
\[\xymatrix{
&     H                & \\
& \Sigma(\scrg) \ar@{.>}[u]^\psi & \\
G_{t(e)} \ar[ur]_{\iota_{t(e)}}
\ar@/^/[uur]^{\psi_{t(e)}}
 & & G_{t(\ol{e})} \ar[ul]^{\iota_{t(\ol{e})}} \ar@/_/[uul]_{\psi_{t(\ol{e})}} \\
& \ar[ul]^{f_e} G_e\ar[ur]_{f_{\ol{e}}} &
}\]
\caption{Universal property of $\Sigma(\scrg)$.}
\label{fig:colimit}
\end{figure}

If $Y$ is a tree, we also speak of the {\it fiber sum}\/ $\Sigma(\scrg)$ of $\scrg$. If $Y$ has exactly two vertices $v_0$, $v_1$ and one topological edge $\{e,\ol{e}\}$, then $\Sigma(\scrg)$ is indeed the fiber sum of $G_{v_0}$ and $G_{v_1}$ over $G_e$ as introduced earlier. More generally, if $v_0$ is a terminal vertex of $Y$ (i.e., there is only one edge $e$ with $t(e)=v_0$), then in a natural way
$$\Sigma(\mathcal G)= G_{v_0} \oplus_{G_e} \Sigma(\scrg|Y'),$$ where $e$ is the edge of $Y$ with $t(e)=v_0$, and $Y'=Y\setminus\{v_0\}$. By induction on the number of vertices of $Y$, this yields in particular:

\index{graph of groups!fiber sum of}

\begin{lemma}\label{lem:injection into fiber sum}
Suppose $Y$ is a tree. Then for each $v\in V(Y)$, the natural morphism $G_v\to\Sigma(\scrg)$ is injective.
\end{lemma}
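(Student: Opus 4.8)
The plan is to induct on the number of vertices of the tree $Y$, using the decomposition of $\Sigma(\scrg)$ along a terminal vertex recorded in the paragraph just above the statement of the lemma.

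If $Y$ has a single vertex $v_0$, then $\scrg$ has no edges at all (a loop would be a circuit of length $1$, forbidden in a tree), so the defining cokernel gives $\Sigma(\scrg)=G_{v_0}$ with $\iota_{v_0}$ the identity, and there is nothing to prove. Suppose now $Y$ has at least two vertices; then (as any finite tree with more than one vertex does) $Y$ has a terminal vertex $v_0$. Let $e$ be the unique edge of $Y$ with $t(e)=v_0$, put $w:=t(\ol e)$ and $Y':=Y\setminus\{v_0\}$, a tree with one fewer vertex. For $x\in V(Y')$ write $\iota'_x\colon G_x\to\Sigma(\scrg|Y')$ for the natural morphism. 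By the inductive hypothesis applied to $Y'$, each $\iota'_x$ is injective; in particular $\iota'_w\circ f_{\ol e}\colon G_e\to\Sigma(\scrg|Y')$ is injective, as is $f_e\colon G_e\to G_{v_0}$. Hence the fiber sum $G_{v_0}\oplus_{G_e}\Sigma(\scrg|Y')$ formed via these two injective morphisms is defined, and the identification
$$\Sigma(\scrg)=G_{v_0}\oplus_{G_e}\Sigma(\scrg|Y')$$
from the discussion preceding the lemma holds. I would next verify, by chasing the universal properties of $\Sigma(\scrg|Y')$ and of $\Sigma(\scrg)$ (i.e.\ Figure~\ref{fig:colimit}), that under this identification $\iota_{v_0}$ becomes the canonical morphism $G_{v_0}\to G_{v_0}\oplus_{G_e}\Sigma(\scrg|Y')$, while for $v\in V(Y')$ the morphism $\iota_v$ becomes the composite
$$G_v\xrightarrow{\ \iota'_v\ }\Sigma(\scrg|Y')\longrightarrow G_{v_0}\oplus_{G_e}\Sigma(\scrg|Y');$$
the point is that this collection of maps, taken together, satisfies the compatibility condition defining the colimit $\Sigma(\scrg)$, so it must coincide with the $\iota_\bullet$ by the uniqueness clause of the universal property.

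It then remains to invoke the elementary fact, already noted earlier in this section, that for abelian groups $G$, $H$ and injective morphisms from a group $U$ into each, both structure maps $G\to G\oplus_U H$ and $H\to G\oplus_U H$ are injective. Applied with $G=G_{v_0}$, $H=\Sigma(\scrg|Y')$, $U=G_e$, this shows at once that $\iota_{v_0}$ is injective and that the second arrow $\Sigma(\scrg|Y')\to\Sigma(\scrg)$ in the displayed composite is injective; combined with the injectivity of $\iota'_v$ from the inductive hypothesis, this gives that $\iota_v$ is injective for every $v\in V(Y')$ as well, completing the induction.

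The only step that is more than bookkeeping is the compatibility of the identification $\Sigma(\scrg)\cong G_{v_0}\oplus_{G_e}\Sigma(\scrg|Y')$ with the natural morphisms $\iota_v$; this is a diagram chase of exactly the sort subsumed by the phrase ``in a natural way'' in the paragraph before the lemma, and once it is in place the conclusion is immediate.
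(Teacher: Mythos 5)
Your proof is correct and follows exactly the route the paper intends: induction on the number of vertices, peeling off a terminal vertex $v_0$ via the identification $\Sigma(\scrg)\cong G_{v_0}\oplus_{G_e}\Sigma(\scrg|Y')$ and invoking the injectivity of the two structure maps into a fiber sum together with the inductive hypothesis. The extra care you take in verifying the identification is compatible with the natural morphisms $\iota_v$ is precisely what the paper folds into the phrase ``in a natural way.''
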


If $Y$ is a tree, then the fiber sum of $\scrg$ may also be identified with the abelianization  of the fundamental group $G$ of $\scrg$; more generally:
%\marginpar{\small Sollen wir den Beweis geben?\\ Ich glaube das passt so wie's ist}

\begin{lemma}\label{lem:homology and fiber sums}
Let $T$ be a maximal subtree of $Y$, and $E:=E(Y)\setminus E(T)$. Suppose $\iota_{t(e)}\circ f_e=\iota_{o(e)}\circ f_{\ol{e}}$ for all $e\in E$, where $\iota_v\colon G_v\to\Sigma(\scrg|T)$, $v\in V(Y)$, is the natural morphism. Then there is a natural isomorphism
$$G_{\operatorname{ab}}=H_1(G;\Z) \xrightarrow{\cong} \Sigma(\scrg|T)\oplus\Z^{e} \qquad (e=|E|/2).$$
If for some prime $p$, all $G_v$ are elementary abelian $p$-groups, then we have $$H_1(G;\F_p) \cong \Sigma(\scrg|T)\oplus\F_p^e.$$
\end{lemma}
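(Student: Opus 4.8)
\textbf{Proof plan for Lemma~\ref{lem:homology and fiber sums}.}

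The plan is to compute $G_{\operatorname{ab}}$ directly from the iterated HNN presentation of $G=\pi_1(\scrg)$ recorded at the end of Section~\ref{sec:pi1}. Recall that, with $T$ a maximal subtree of $Y$ and $E=E(Y)\setminus E(T)$ with a chosen orientation $E_+$, the group $G$ is the iterated HNN extension of $\pi_1(\scrg|T)$ with stable letters $e\in E_+$ and relations $eae^{-1}=\varphi_e(a)$ for $a\in A_e=f_e(G_e)$. First I would abelianize: in $G_{\operatorname{ab}}$ the HNN relations become $f_e(\ol g)=f_{\ol e}(\ol g)$ for all $\ol g\in G_e$ (the stable letters $e$ survive as free generators, contributing a free abelian summand of rank $e=|E|/2$), while $\pi_1(\scrg|T)_{\operatorname{ab}}$ is itself obtained by abelianizing the iterated amalgamated product over the tree $T$. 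So the first main step is the purely tree case: I claim that for a graph of abelian groups $\scrg$ based on a \emph{tree} $T$, the abelianization of the iterated amalgamated product $\pi_1(\scrg|T)$ is naturally isomorphic to $\Sigma(\scrg|T)$. This follows by matching universal properties: abelianizing the amalgamated-product presentation gives exactly the generators $\bigoplus_{v} G_v$ modulo the relations $f_e(\ol g)-f_{\ol e}(\ol g)$ for $e\in E(T)$, $\ol g\in G_e$, which is precisely the defining presentation of the colimit $\Sigma(\scrg|T)$; alternatively, one can induct on $|V(T)|$ using $\Sigma(\scrg|T)=G_{v_0}\oplus_{G_e}\Sigma(\scrg|T\setminus\{v_0\})$ for a terminal vertex $v_0$, together with the fact that abelianization of an amalgamated product $A\ast_C B$ is $A_{\operatorname{ab}}\oplus_C B_{\operatorname{ab}}$ when $A$, $B$ are abelian.

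The second step is to assemble the HNN layer. Write $H:=\pi_1(\scrg|T)$ and $H_{\operatorname{ab}}\cong\Sigma(\scrg|T)$ via the first step. For each $e\in E_+$ the relation $eae^{-1}=\varphi_e(a)$ abelianizes to $\iota_{t(e)}(f_e(\ol g)) = \iota_{o(e)}(f_{\ol e}(\ol g))$ in $H_{\operatorname{ab}}$ for all $\ol g\in G_e$, which is precisely the hypothesis $\iota_{t(e)}\circ f_e=\iota_{o(e)}\circ f_{\ol e}$ imposed in the statement; hence these relations are already satisfied in $\Sigma(\scrg|T)$ and impose nothing new. Abelianizing an HNN extension $\langle H, t : tat^{-1}=\varphi(a)\rangle$ with $\varphi(a)\equiv a$ already holding in $H_{\operatorname{ab}}$ yields $H_{\operatorname{ab}}\oplus\Z$; iterating over the $|E_+|=e$ stable letters gives $G_{\operatorname{ab}}\cong H_{\operatorname{ab}}\oplus\Z^e\cong\Sigma(\scrg|T)\oplus\Z^e$, and this isomorphism is natural in the evident sense. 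For the mod $p$ statement, when every $G_v$ is an elementary abelian $p$-group, all the constructions above take place in the category of $\F_p$-vector spaces — $\Sigma(\scrg|T)$ is then an $\F_p$-vector space and the HNN stable letters contribute $\F_p^e$ rather than $\Z^e$ — equivalently one tensors the integral statement with $\F_p$ and uses $H_1(G;\F_p)=H_1(G;\Z)\otimes_\Z\F_p$ together with the fact that $\Sigma(\scrg|T)$ is already $p$-torsion (so $\otimes_\Z\F_p$ leaves it unchanged). This gives $H_1(G;\F_p)\cong\Sigma(\scrg|T)\oplus\F_p^e$.

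I do not expect a serious obstacle here; the lemma is essentially a bookkeeping exercise in van Kampen / HNN abelianization. The one point requiring a little care is making the isomorphism \emph{natural} and checking that it is well-defined independent of the choice of orientation $E_+$ (the colimit $\Sigma(\scrg|T)$ is orientation-independent by the remark in Section~\ref{sec:fiber sum}, and swapping $e\leftrightarrow\ol e$ only replaces a basis vector of $\Z^e$ by its negative). A second minor point is verifying that the hypothesis $\iota_{t(e)}\circ f_e=\iota_{o(e)}\circ f_{\ol e}$ for $e\in E$ is exactly what is needed for the HNN relations to be vacuous after abelianization — this is immediate from the definitions but should be stated explicitly. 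The induction on $|V(T)|$ for the tree case, using Lemma~\ref{lem:injection into fiber sum} and the decomposition $\Sigma(\scrg|T)=G_{v_0}\oplus_{G_e}\Sigma(\scrg|T')$, is the only place where a short argument is genuinely carried out.
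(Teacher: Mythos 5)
Your proof is correct, but it takes a genuinely different route from the paper's. The paper deduces the lemma from the Mayer--Vietoris sequence in group homology associated to the graph of groups (set up in Section~\ref{sec:MV}): with $E_+$ an orientation of the full graph $Y$, the tail of that sequence reads
$$\bigoplus_{e\in E_+} H_1(G_e;\Z)\to\bigoplus_{v\in V} H_1(G_v;\Z)\to H_1(G;\Z)\to H_1(Y;\Z)\to 0,$$
which gives a short exact sequence $0\to\Sigma(\scrg)\to H_1(G;\Z)\to\Z^e\to 0$ that splits since $\Z^e$ is free; the hypothesis $\iota_{t(e)}\circ f_e=\iota_{o(e)}\circ f_{\ol e}$ on $E$ is what identifies the cokernel $\Sigma(\scrg)$ (taken over all edges) with $\Sigma(\scrg|T)$ (taken over tree edges only). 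You instead work directly from the iterated amalgam/HNN presentation of $\pi_1(\scrg,T)$: you identify $\pi_1(\scrg|T)_{\operatorname{ab}}\cong\Sigma(\scrg|T)$ in the tree case — which is cleanest seen as the abelianization functor, being a left adjoint, preserving the relevant colimit, turning the group-theoretic colimit $\pi_1(\scrg|T)$ into the abelian-group colimit $\Sigma(\scrg|T)$ — and then observe that the hypothesis on the non-tree edges makes the HNN relations vacuous after abelianization, so the stable letters contribute a free direct summand $\Z^e$. Both approaches invoke the same hypothesis at precisely the analogous point. The MV proof is shorter given the sequence is already available; yours is more elementary and self-contained, and it makes the direct-sum decomposition canonical rather than relying on a choice of splitting of a short exact sequence. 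The mod~$p$ case is handled the same way in both: $H_1(G;\F_p)=H_1(G;\Z)\otimes_\Z\F_p$ (since $H_0(G;\Z)=\Z$ is torsion free), and $\Sigma(\scrg|T)$ is already an $\F_p$-vector space when the $G_v$ are elementary abelian.
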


This follows easily from the Mayer-Vietoris sequence associated to a graph of groups (cf.~Section~\ref{sec:MV}). % (cf.~\cite{Ch76} or \cite[Section~II.2.8]{Se80}).

%\begin{proof}
%It is well-known (and easily proved by induction on the number of vertices of $Y$) that there is no non-zero function $\phi\colon E_+\to A$, where $A$ is an arbitrary abelian group, satisfying ``Kirchhoff's Law''
%$$\sum_{\substack{e\in E_+ \\ t(e)=v}} \phi(e)=\sum_{\substack{e\in E_+ \\ o(e)=v}} \phi(e)\qquad\text{for all $v\in V(Y)$.}$$
%So if $M$ is a trivial $G$-module, then the morphism
%$$\bigoplus_{e\in E_+} H_0(G_e;M)\to \bigoplus_{v\in V(Y)} H_0(G_v;M)$$
%in the Mayer-Vietoris sequence of $\scrg$ (Section~\ref{sec:MV}) is injective, therefore we have an exact sequence
%$$\bigoplus_{e\in E_+} H_1(G_e;M)\to \bigoplus_{v\in V(Y)} H_1(G_v;M) \to H_1(G;M)\to 0.$$
%The lemma follows.
%\end{proof}

\begin{remark}
Note that under the assumption of the previous lemma, for each vertex $v$, the natural morphism $G_v\to G$ induces an injective morphism $G_v=H_1(G_v;\Z)\to H_1(G;\Z)$. This conclusion can also be achieved under a weaker hypothesis on $\scrg$: it is enough to require that  there exists a morphism $G\to A$ to an abelian group which is injective on the vertex groups of $\scrg$ (since such a morphism factors through $G\to H_1(G;\Z)$). Similarly, if there exists a morphism $G\to A$ to an elementary abelian $p$-group which is injective on the vertex groups of $\scrg$, then the induced morphisms $H_1(G_v;\F_p)\to H_1(G;\F_p)$ are injective.
\end{remark}

\subsection{Partial abelianization}\label{sec:partial ab}
Let $\scrg$ be a graph of groups based on the graph $Y$.
Let $T$ be a maximal subtree of $Y$, let $E=E(Y)\setminus E(T)$, and let $E_+$ be an orientation of $E$.
From Section~\ref{sec:pi1} recall that the fundamental group $G$ of $\scrg$ may be identified with the iterated HNN extension
$$\pi_1(\scrg,T) = \big\langle \pi_1(\scrg|T),\ e\in E_+: \text{$e a e^{-1} = \varphi_e(a)$ for all $e\in E_+$, $a\in A_e$}\big\rangle$$
%\footnote{Wir brauchen entweder noch $\ol{e}=e^{-1}$ oder nur ein $e$ pro topological edge. Ich glaube in Section 2.2 haben wir das gleiche Problem}
of $\pi_1(\scrg|T)$, where
$$\varphi_e=f_{\ol{e}}\circ f_e^{-1}\colon A_e:=f_e(G_e) \to B_e:=f_{\ol{e}}(G_{\ol{e}}) \qquad (e\in E_+).$$
Suppose now that all vertex groups of $\scrg$ are abelian.
We let
$$\pi_1^*(\scrg,T) = \big\langle \Sigma(\scrg|T),\ e\in E_+: \text{$e a e^{-1} = \varphi_e(a)$ for all $e\in E$, $a\in A_e$}\big\rangle.$$
The natural morphism $$\pi_1(\scrg|T)\to H_1(\pi_1(\scrg|T))=\Sigma(\scrg|T)$$ extends to a morphism $G=\pi_1(\scrg,T)\to \pi_1^*(\scrg,T)$, which is injective on the vertex groups of $\scrg$. The natural morphism
$G\to G_{\operatorname{ab}}$ factors through $G\to \pi_1^*(\scrg,T)$. For this reason we call $\pi_1^*(\scrg,T)$ the \emph{partial abelianization} of $G$ along $T$. Note that although $\pi_1(\scrg,T)$ is independent (up to isomorphism) of the choice of $T$, simple examples show that the group $\pi_1^*(\scrg,T)$ in general depends on $T$. %\marginpar{\small Kannst Du das checken? \\ Wenn wir den Graphen mit 2 edges und 2 vertices nehmen, mit vertex groups $\F_p$ einer edge group $\F_p$ und eine triviale edge group, dann erhalten wir verschiedene Gruppen. Eine ist $\Z *\F_p$, die andere hat $\F_p^2$ als Untergruppe}

\index{partial abelianization|textbf}
\index{root property}

In the following let $\mathfrak P$ be a root property such that free groups are residually~$\mathfrak P$.
By Lemma~\ref{residual p criterion}, we have:

\begin{lemma}\label{lem:residually p for partial abelianization}
Suppose all vertex groups of $\scrg$ are finite abelian groups. If $\pi_1^*(\scrg,T)$ is residually $\mathfrak P$, then
$G=\pi_1(\scrg,T)$ is also residually $\mathfrak P$.
\end{lemma}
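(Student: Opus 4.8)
The plan is to deduce this from Lemma~\ref{residual p criterion} applied to a suitable graph of groups, exactly as the statement suggests. First I would set up the right graph of groups: since all vertex groups of $\scrg$ are finite abelian, and $\mathbf G|T$ is a tree of groups, the fiber sum $\Sigma(\scrg|T)$ is a finite abelian group (being a quotient of the finite group $\bigoplus_{v\in V(T)} G_v$), into which each vertex group $G_v$ injects by Lemma~\ref{lem:injection into fiber sum}. Thus $\pi_1^*(\scrg,T)$ is itself the fundamental group of a graph of groups $\scrg^*$ with a single vertex carrying the finite abelian group $\Sigma(\scrg|T)$ and one loop for each $e\in E_+$, with edge groups $A_e=f_e(G_e)$ and edge identification maps $\varphi_e$; equivalently $\scrg^*$ has underlying graph a wedge of $|E_+|$ circles. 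More to the point for the application, one can also realize $\pi_1^*(\scrg,T)$ as $\pi_1$ of a graph of groups whose vertex groups are the finite abelian groups $\Sigma(\scrg|T)$ together with the $G_v$ for $v\notin V(T)$ — but the cleanest route is to use the single-vertex presentation.

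The key observation is that the natural morphism $G=\pi_1(\scrg,T)\to\pi_1^*(\scrg,T)$ constructed in Section~\ref{sec:partial ab} is injective on all vertex groups of $\scrg$. Now suppose $\pi_1^*(\scrg,T)$ is residually $\mathfrak P$. Since its own vertex group $\Sigma(\scrg|T)$ is finite (hence a fortiori the hypotheses of Lemma~\ref{residual p criterion} on finiteness of vertex groups would apply if needed), residual $\mathfrak P$-ness of $\pi_1^*(\scrg,T)$ yields — via the implication ``(2)~$\Rightarrow$~(3)'' of Lemma~\ref{residual p criterion}, whose proof, as noted in the Remark following that lemma, does not require finiteness of the vertex groups — a morphism $\psi^*\colon\pi_1^*(\scrg,T)\to P$ to a $\mathfrak P$-group $P$ which is injective on $\Sigma(\scrg|T)$, hence in particular injective on each $G_v$ for $v\in V(T)$. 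For the vertices $v\notin V(T)$ one argues separately: in the single-vertex model these do not appear as vertex groups at all, but in $\pi_1^*(\scrg,T)$ each such $G_v$ (with $v=t(e)$, $e\in E_+$) is conjugate via the stable letter $e$ to a subgroup of $\Sigma(\scrg|T)$ — precisely, $e G_v e^{-1}\supseteq \varphi_e(A_e)=B_e$ and more directly $G_v$ injects into $\pi_1^*(\scrg,T)$ through the HNN relation — so $\psi^*$ is injective on every $G_v$.

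Composing, $\psi:=\psi^*\circ(G\to\pi_1^*(\scrg,T))\colon G\to P$ is then a morphism to a $\mathfrak P$-group which is injective on every vertex group $G_v$ of $\scrg$. Since all vertex groups of $\scrg$ are finite $\mathfrak P$-groups (finite abelian groups are $\mathfrak P$-groups because, $\mathfrak P$ being a root property satisfied residually by free groups, in particular it is satisfied by the trivial group and hence — via properties (1)--(3) of a root property applied to the chief factors of a finite abelian group, which have prime order — by all finite abelian groups; alternatively this is subsumed in the statement's hypothesis that $\scrg$ is a graph of \emph{finite} abelian groups whose fundamental group we wish to show is residually $\mathfrak P$, which already presupposes the vertex groups are $\mathfrak P$), we may invoke the implication ``(3)~$\Rightarrow$~(2)'' of Lemma~\ref{residual p criterion} to conclude that $G=\pi_1(\scrg,T)$ is residually $\mathfrak P$.

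The only genuinely delicate point — and the one I would be most careful about — is handling the vertices $v\notin V(T)$, i.e.\ verifying that the composite morphism $\psi$ really is injective on \emph{all} vertex groups of $\scrg$ and not merely on those lying in the maximal subtree $T$. This reduces to checking that the canonical map $G_v\to\pi_1^*(\scrg,T)$ remains injective for terminal-type vertices attached along edges in $E_+$; this follows from the normal-form theory for HNN extensions applied to the presentation of $\pi_1^*(\scrg,T)$ as an iterated HNN extension of $\Sigma(\scrg|T)$, since $G_v$ for $v=t(e)$ embeds into $\Sigma(\scrg|T)$ already and the HNN stable letters only conjugate the distinguished subgroups $A_e$. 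Everything else is a routine assembly of Lemma~\ref{residual p criterion}, Lemma~\ref{lem:injection into fiber sum}, and the construction of the partial abelianization in Section~\ref{sec:partial ab}.
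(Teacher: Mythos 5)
Your proposal arrives at the right conclusion by essentially the same route the paper intends (compose the natural morphism $G\to\pi_1^*(\scrg,T)$, which is injective on vertex groups, with a morphism from $\pi_1^*(\scrg,T)$ to a $\mathfrak P$-group injective on $\Sigma(\scrg|T)$, then invoke Lemma~\ref{residual p criterion}), but two of the side-justifications you give are wrong, and one of them reflects a real misconception worth flagging.

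First, the worry about vertices $v\notin V(T)$ is vacuous: a maximal subtree of a connected graph contains \emph{every} vertex, so $V(T)=V(Y)$, and the ``delicate point'' you spend the last paragraph on does not arise. Second, the parenthetical claim that ``finite abelian groups are $\mathfrak P$-groups because $\mathfrak P$ \dots is satisfied by the trivial group and hence via properties (1)--(3) of a root property \dots by all finite abelian groups'' is simply false: for $\mathfrak P$ = ``$p$-power order'' (the case of greatest interest), $\Z/6\Z$ is finite abelian but not a $\mathfrak P$-group, and property (3) of a root property does not make $\mathfrak P$ closed under extensions. Fortunately this claim is unnecessary: the final step uses ``(3)~$\Rightarrow$~(2)'' of Lemma~\ref{residual p criterion}, and by the Remark after that lemma the implications (3)~$\Rightarrow$~(1)~$\Rightarrow$~(2) do not require the vertex groups $G_v$ to be finite or to satisfy $\mathfrak P$ (only that free groups be residually $\mathfrak P$, which is a standing assumption of the section). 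If you did need the $G_v$ to be $\mathfrak P$-groups, the correct argument under the given hypotheses would be that each $G_v$ injects into the residually $\mathfrak P$ group $\pi_1^*(\scrg,T)$, is therefore residually $\mathfrak P$, and being finite is a $\mathfrak P$-group --- not a general fact about finite abelian groups. You also mis-cite the Remark as saying ``(2)~$\Rightarrow$~(3)'' avoids finiteness of the vertex groups; it actually says it avoids the hypothesis on free groups, and the finiteness you need there comes from $\Sigma(\scrg|T)$ being finite (which you do observe). None of these slips breaks the argument, but the $\mathfrak P$-group claim should be removed or corrected.
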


%\marginpar{\small Ich weiss immer noch nicht, ob die Umkehrung von \ref{lem:residually p for partial abelianization} gilt, f\"ur den f\"ur uns interessanten Fall von ``residually $p$'' und alle $G_v$ endliche $\F_p$-Vektorr\"aume.}

We do not know whether the converse of the implication stated in the previous lemma holds in general; however, we do have:

\begin{lemma}\label{lem:crucial}
Suppose there exists a morphism $G\to P$ to a finite abelian $\mathfrak P$-group $P$, which is injective when restricted to each vertex group $G_v$ of $\scrg$. Then $\pi_1^*(\scrg,T)$ is residually $\mathfrak P$.
\end{lemma}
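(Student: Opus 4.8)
\textbf{Proof plan for Lemma~\ref{lem:crucial}.}

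The plan is to build, from the given morphism $\vartheta\colon G\to P$, a morphism $\pi_1^*(\scrg,T)\to P'$ to a finite abelian $\mathfrak P$-group $P'$ which is injective on the vertex groups of $\scrg$, together with enough extra structure so that Corollary~\ref{cor:crucial, preparation} applies to $\pi_1^*(\scrg,T)$. First I would recall the presentation
$$\pi_1^*(\scrg,T) = \big\langle \Sigma(\scrg|T),\ e\in E_+: e a e^{-1} = \varphi_e(a) \text{ for all } e\in E,\ a\in A_e\big\rangle,$$
so that $\pi_1^*(\scrg,T)$ is an iterated HNN extension of the abelian group $\Sigma:=\Sigma(\scrg|T)$ with stable letters $e\in E_+$ and associated isomorphisms $\varphi_e\colon A_e\to B_e$ between subgroups of $\Sigma$. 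Since free groups are residually $\mathfrak P$ and the single vertex group here is $\Sigma$, by Corollary~\ref{cor:crucial, preparation} (or directly by Lemma~\ref{residual p criterion} applied to the HNN presentation — here all edge groups after contracting $T$ are the $A_e\leq\Sigma$) it suffices to produce a morphism $\Psi\colon\pi_1^*(\scrg,T)\to P'$ to a finite abelian $\mathfrak P$-group whose restriction to $\Sigma$ is injective. Note $\Sigma$ is finite abelian here (all $G_v$ are finite abelian), so by Lemma~\ref{lem:injection into fiber sum} each vertex group injects into $\Sigma$, and injectivity of $\Psi|\Sigma$ will give injectivity on every vertex group of $\scrg$.

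Next I would use the hypothesis to construct such a $\Psi$. The morphism $\vartheta\colon G\to P$ to the finite abelian $\mathfrak P$-group $P$ is injective on each $G_v$; since $P$ is abelian, $\vartheta$ factors through $G\to G_{\operatorname{ab}}$, hence (by the universal property of the colimit, Figure~\ref{fig:colimit}, applied to the tree $T$ together with the compatibility forced by the edges in $E$) it factors through the natural surjection $G\to\pi_1^*(\scrg,T)$ followed by a morphism $\pi_1^*(\scrg,T)\to P$. Indeed $\pi_1^*(\scrg,T)$ was built precisely so that $G\to G_{\operatorname{ab}}$ factors through it, and $P$ being abelian means $G\to P$ factors through $G\to G_{\operatorname{ab}}$, hence through $G\to\pi_1^*(\scrg,T)$. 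Call the resulting morphism $\overline{\vartheta}\colon\pi_1^*(\scrg,T)\to P$. Its restriction to $\Sigma$ agrees with the composite $\Sigma=\Sigma(\scrg|T)\hookrightarrow G_{\operatorname{ab}}\to P$ up to the identifications; on each $G_v\leq\Sigma$ this restriction is $\vartheta|G_v$, which is injective. But $\overline{\vartheta}|\Sigma$ need not be injective on all of $\Sigma$ (only on each individual vertex group).

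The remaining issue — and this is the main obstacle — is upgrading injectivity on each vertex group to injectivity on the whole finite abelian group $\Sigma$. I would handle this as follows: $\Sigma$ is finite, so it has only finitely many cyclic subgroups; for each nontrivial $x\in\Sigma$ there is a vertex $v$ with $x\in G_v$ is false in general, but $\Sigma$ is generated by the images of the $G_v$, and each element of $\Sigma$ lies in the subgroup generated by finitely many of these. A cleaner route: choose, for each vertex $v$, an embedding $\lambda_v\colon G_v\hookrightarrow Q_v$ into a finite abelian $\mathfrak P$-group (take $Q_v=G_v$ itself, which is a $\mathfrak P$-group), and form $P':=P\oplus\bigoplus_{v}Q_v$, then define $\Psi=(\overline{\vartheta},(\Psi_v)_v)$ where $\Psi_v\colon\pi_1^*(\scrg,T)\to Q_v$ is obtained by composing $\pi_1^*(\scrg,T)\to G_{\operatorname{ab}}$-like data with a retraction-type map — but there is no reason such retractions exist. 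So instead I would argue directly: the kernel $K:=\ker(\overline{\vartheta}|\Sigma)$ is a subgroup of the finite abelian group $\Sigma$ meeting every $G_v$ trivially; I claim one can enlarge $P$ to kill $K$. For each nontrivial $x\in K$, pick a character $\chi_x\colon\Sigma\to\Q/\Z$ of $p$-power order (possible since $\Sigma$ is a finite abelian $p$-group, as it is a quotient of a direct sum of the $\mathfrak P$-group vertex groups — here I use that $\mathfrak P$ is the property of $p$-power order, or more generally that $\Sigma$ is a $\mathfrak P$-group by the root property, since it is a quotient of $\bigoplus_v G_v$ which is $\mathfrak P$) with $\chi_x(x)\neq 0$; then $\bigoplus_{x}\chi_x\colon\Sigma\to(\Q/\Z)^{\oplus|K|}$ is injective on $K$, and its image is a finite abelian $\mathfrak P$-group. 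Adjoining these characters: set $P'=P\oplus\prod_{x\in K\setminus\{1\}}\operatorname{im}(\chi_x)$ and $\Psi=\overline{\vartheta}\times\prod\widetilde{\chi_x}$ where $\widetilde{\chi_x}$ is the composite $\pi_1^*(\scrg,T)\to\pi_1^*(\scrg,T)_{\operatorname{ab}}\to\Q/\Z$ extending $\chi_x$ — this requires $\chi_x$ to extend to a character of $\pi_1^*(\scrg,T)_{\operatorname{ab}}$, which it does not automatically because of the HNN relations $eae^{-1}=\varphi_e(a)$. The genuinely hard part is therefore reconciling the HNN relations with the characters of $\Sigma$: a character $\chi$ of $\Sigma$ extends to $\pi_1^*(\scrg,T)_{\operatorname{ab}}$ iff $\chi\circ\varphi_e=\chi|_{A_e}$ for all $e\in E_+$ (so that $\chi$ is constant on $\varphi_e$-orbits), and there is no reason an arbitrary $\chi_x$ has this property. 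I would resolve this by instead observing that $\overline{\vartheta}$ is already defined on all of $\pi_1^*(\scrg,T)$ (not just $\Sigma$), so it respects the HNN relations, and then noting: the subgroup $K=\ker(\overline{\vartheta}|\Sigma)$ is $\varphi_e$-invariant in the sense needed (since $\overline{\vartheta}\circ\varphi_e=\overline{\vartheta}|_{B_e}$ as $\overline{\vartheta}$ descends from $\pi_1^*(\scrg,T)$), so the quotient construction $\Sigma/K'$ for a suitable $\varphi_e$-invariant complement issue is manageable — concretely, I would pass to the quotient graph of groups $\scrg/\scrh$ where $\scrh=\{\ker(\vartheta|G_v)\}_v=\{1\}_v$ (trivial since $\vartheta$ is already injective on vertex groups!), which shows the real content is already present, and then simply apply Lemma~\ref{lem:residually p for partial abelianization} is circular. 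The cleanest honest argument: since $\vartheta|G_v$ is injective for each $v$ and the $G_v$ are finite, the product map $\pi_1^*(\scrg,T)\xrightarrow{\overline\vartheta\times\rho}P\times R$, where $\rho\colon\pi_1^*(\scrg,T)\to R$ is any morphism to a finite $\mathfrak P$-group separating the remaining points of $\Sigma$ compatibly with the $\varphi_e$ (obtained from residual $\mathfrak P$-ness of the abelian group $\pi_1^*(\scrg,T)_{\operatorname{ab}}$ modulo the relations, which holds because finite abelian $p$-groups are residually $\mathfrak P$ and the relevant quotient is finite), is injective on $\Sigma$; then Corollary~\ref{cor:crucial, preparation} applied with $\psi=\overline\vartheta\times\rho$ and the fact that free groups (here: the single vertex group $\Sigma$, which is even finite) are residually $\mathfrak P$ gives that $\pi_1^*(\scrg,T)$ is residually $\mathfrak P$. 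The subtlety to be careful about in writing this up is exactly the construction of $\rho$ — ensuring one can separate all of $\Sigma$ by a map that still kills the HNN relations — and I expect that is where the real work lies, likely resolved by noting that $\overline{\vartheta}$ itself, being defined on the whole HNN extension, already provides a compatible partial separation, and any deficiency is made up by characters of $\Sigma$ that happen to be $\varphi_e$-invariant, which suffice because the $\varphi_e$ are isomorphisms between subgroups and $\Sigma$ is finite.
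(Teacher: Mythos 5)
You constructed the right morphism $\overline\vartheta\colon\pi_1^*(\scrg,T)\to P$ (this is exactly the content of the unnamed lemma the paper proves immediately after Lemma~\ref{lem:crucial}, using $\psi\circ f_e=\psi\circ f_{\overline e}$ on $G_e$ because $P$ is abelian and $ef_e(g)e^{-1}=f_{\overline e}(g)$ holds in $\pi_1(\scrg,T)$), but you then chased the wrong target. Corollary~\ref{cor:crucial, preparation} does \emph{not} demand injectivity on the vertex group of the auxiliary single-vertex graph of groups (here $\Sigma$); it asks only for injectivity on the subgroups $f_e(G_e)$, the \emph{images of the edge groups}. For the iterated HNN extension $\pi_1^*(\scrg,T)$, these are precisely the $A_e$ and $B_e$, which are subgroups of the original vertex groups $G_{t(e)}$, $G_{t(\overline e)}$ of $\scrg$ (identified with subgroups of $\Sigma$ via Lemma~\ref{lem:injection into fiber sum}). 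On those, $\overline\vartheta$ agrees with $\vartheta$, which is injective by hypothesis. So Corollary~\ref{cor:crucial, preparation} applies immediately, and the proof is done.

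Your misstep is in the sentence asserting that ``it suffices to produce $\Psi\colon\pi_1^*(\scrg,T)\to P'$ \dots whose restriction to $\Sigma$ is injective,'' where you cite Corollary~\ref{cor:crucial, preparation} and Lemma~\ref{residual p criterion} as if they imposed the same requirement; only the latter would require injectivity on all of $\Sigma$. That conflation sent you into the long and self-acknowledged inconclusive passage about characters of $\Sigma$, $\varphi_e$-invariance, and an auxiliary $\rho$ that is never actually constructed. None of that is needed: the kernel $\ker(\overline\vartheta|\Sigma)$ may well be nontrivial, but it meets each $A_e$ and each $B_e$ trivially because it meets each $G_v$ trivially, and that is the only condition Corollary~\ref{cor:crucial, preparation} asks for.
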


This is a consequence of Corollary~\ref{cor:crucial, preparation} and the following observation:

\begin{lemma}
Let $\psi\colon G\to P$ be a morphism to an abelian group $P$.
Then there is a morphism $\pi_1^*(\scrg,T)\to P$ which agrees with $\psi$ on each vertex group of $\scrg$ and such that $e\mapsto 1$ for $e\in E_+$.
\end{lemma}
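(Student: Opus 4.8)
The plan is to construct the desired morphism $\pi_1^*(\scrg,T)\to P$ directly from its presentation as an iterated HNN extension. Recall that
$$\pi_1^*(\scrg,T) = \big\langle \Sigma(\scrg|T),\ e\in E_+: eae^{-1} = \varphi_e(a)\text{ for all }e\in E_+,\ a\in A_e\big\rangle,$$
so to define a morphism to $P$ it suffices to specify compatible images of the generators, namely a morphism $\overline{\psi}\colon\Sigma(\scrg|T)\to P$ together with elements $\overline{\psi}(e)\in P$ for each $e\in E_+$, subject only to the HNN relations $\overline{\psi}(e)\,\overline{\psi}(a)\,\overline{\psi}(e)^{-1} = \overline{\psi}(\varphi_e(a))$ for $a\in A_e$. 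Since $P$ is abelian, conjugation is trivial, so this relation simplifies to $\overline{\psi}(a) = \overline{\psi}(\varphi_e(a))$ for all $e\in E_+$, $a\in A_e$; and we are free to send each $e\in E_+$ to $1$.

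First I would produce the morphism $\overline{\psi}\colon\Sigma(\scrg|T)\to P$ using the universal property of the colimit of $\scrg|T$ (recorded in Section~\ref{sec:fiber sum}, Figure~\ref{fig:colimit}). The given morphism $\psi\colon G\to P$ restricts, via the natural inclusions $G_v\to G$, to a collection of morphisms $\psi_v:=\psi|G_v\colon G_v\to P$ ($v\in V(Y)$). For each edge $e$ of $T$ we have $\psi_{t(e)}\circ f_e = \psi_{t(\ol e)}\circ f_{\ol e}$, because inside $G$ the two composites $G_e\to G_{t(e)}\to G$ and $G_e\to G_{t(\ol e)}\to G$ agree (the edge group $G_e$ is identified with a single subgroup of $G$ and the edges of $T$ are trivial in $\pi_1(\scrg,T)$), and then apply $\psi$. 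Hence the $\psi_v$ form a compatible cocone over $\scrg|T$, and the universal property yields a unique $\overline{\psi}\colon\Sigma(\scrg|T)\to P$ with $\overline{\psi}\circ\iota_v = \psi_v$ for every $v$.

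Next I would check that $\overline{\psi}$, together with $e\mapsto 1$ for $e\in E_+$, respects the HNN relations, i.e.\ that $\overline{\psi}(a) = \overline{\psi}(\varphi_e(a))$ for $e\in E_+$ and $a\in A_e$. Writing $a = f_e(g)$ with $g\in G_e$, so $\varphi_e(a) = f_{\ol e}(g)$, and noting that $A_e = f_e(G_e)\subseteq G_{t(e)}$ is mapped into $\Sigma(\scrg|T)$ via $\iota_{t(e)}$ while $B_e = f_{\ol e}(G_e)\subseteq G_{t(\ol e)}$ via $\iota_{t(\ol e)}$, we get $\overline{\psi}(\iota_{t(e)}f_e(g)) = \psi_{t(e)}(f_e(g)) = \psi(f_e(g))$ and similarly $\overline{\psi}(\iota_{t(\ol e)}f_{\ol e}(g)) = \psi(f_{\ol e}(g))$. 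But in $G = \pi_1(\scrg,T)$ the relation $e\,f_e(g)\,e^{-1} = f_{\ol e}(g)$ holds (this is exactly the defining relation of the HNN extension before abelianizing, valid for all $e\in E_+$), so applying $\psi$ and using that $P$ is abelian gives $\psi(f_e(g)) = \psi(f_{\ol e}(g))$, as needed. By von Dyck's theorem the assignments therefore extend to a well-defined group morphism $\pi_1^*(\scrg,T)\to P$, which by construction agrees with $\psi$ on each vertex group (since it agrees with $\overline{\psi}\circ\iota_v = \psi_v = \psi|G_v$) and sends each $e\in E_+$ to $1$.

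There is no serious obstacle here; the only point requiring a little care is bookkeeping the various identifications — of $G_e$ with a subgroup of both $G_{t(e)}$ and $G_{t(\ol e)}$ via $f_e$, $f_{\ol e}$, of the $G_v$ as subgroups of $\Sigma(\scrg|T)$ via $\iota_v$ (injective by Lemma~\ref{lem:injection into fiber sum}, though injectivity is not actually needed for existence), and the compatibility of the edges of $T$ being killed in $\pi_1(\scrg,T)$. Once those are set up correctly, the verification of the HNN relation is immediate from $P$ being abelian, and the lemma follows.
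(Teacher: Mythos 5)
Your proof is correct and follows the same route as the paper: restrict $\psi$ to the vertex groups and use the universal property of the colimit to get $\overline\psi\colon\Sigma(\scrg|T)\to P$, then extend to $\pi_1^*(\scrg,T)$ by $e\mapsto 1$ on $E_+$, the HNN relations holding because $P$ is abelian. You have simply spelled out the compatibility and relation checks that the paper leaves implicit.
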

\begin{proof}
By the universal property of the fiber sum, the restriction of $\psi$ to a morphism $\pi_1(\scrg|T)\to P$ factors through a morphism $\Sigma(\scrg|T)\to P$. Since $P$ is abelian, this morphism extends to a morphism $\pi_1^*(\scrg,T)\to P$ by $e\mapsto 1$ for $e\in E_+$ which has the required properties.
\end{proof}

\subsection{The reduction theorem}\label{subsec:reduction theorem}
Let $\scrg$ be a graph of finitely generated groups with underlying graph $Y$, and let $\mathbf G$ be a central $p$-filtration of $\scrg$. Recall that the $n$th layer $L_n(\mathbf G)$ of $\mathbf G$ is the graph of groups whose underlying graph is also~$Y$ and whose vertex and edge groups are the elementary abelian $p$-groups $L_n(\mathbf G_v)$ and~$L_n(\mathbf G_e)$, respectively, with edge morphisms $L_n(f_e)$.  Fix a maximal subtree $T$ of $Y$. For every $n\geq 1$ we can form the partial abelianization
$\pi_1^*(L_n(\mathbf G),T)$ of the fundamental group of  $L_n(\mathbf G)$ along $T$.

The following important theorem is the culmination of the results obtained so far in this paper. Essentially, it reduces the question whether $\pi_1(\scrg)$ is residually $p$ to finding a filtration $\mathbf G$ such that the partially abelianized fundamental group of the first layer $L_1(\mathbf G)$ of $\mathbf G$ is residually $p$.

\begin{theorem}\label{thm:reduction theorem}
Suppose the following hold:
\begin{enumerate}
\item $\mathbf G$ is $p$-potent;
\item for each $e$, $\mathbf G_{t(e)}$
intersects to a uniformly $p$-potent filtration on $f_e(G_e)$;
\item $\pi_1^*(L_1(\mathbf G),T)$ is residually $p$.
\end{enumerate}
Then  $\pi_1(\scrg/\mathbf G_n)$ is residually $p$ for each $n\geq 1$.
If in addition $\mathbf G$ is separating and separates the edge groups of $\scrg$,
then $G=\pi_1(\scrg)$ is residually $p$, and the vertex and edge groups of $\scrg$ are closed in the pro-$p$ topology on $\pi_1(\scrg)$.
\end{theorem}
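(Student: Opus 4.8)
The plan is to establish the two assertions of Theorem~\ref{thm:reduction theorem} in turn, beginning with the claim that $\pi_1(\scrg/\mathbf G_n)$ is residually $p$ for all $n\geq 1$, and then deriving the statement about $G=\pi_1(\scrg)$ and the pro-$p$ topology from it via the machinery of Section~\ref{sec:residual properties of graphs of groups}. For the first part I would argue by induction on $n$. The base case $n=1$ is precisely hypothesis~(3) together with Lemma~\ref{lem:residually p for partial abelianization}: $\pi_1^*(L_1(\mathbf G),T)$ residually $p$ forces $\pi_1(L_1(\mathbf G),T)=\pi_1(\scrg/\mathbf G_2)$ (note $L_1(\mathbf G_v)=G_v/G_{v,2}$) to be residually $p$, since the layers $L_1(\mathbf G_v)$ are finite elementary abelian $p$-groups. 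For the inductive step, the key is to relate $\scrg/\mathbf G_{n+1}$ to $\scrg/\mathbf G_n$ and to the layer $L_n(\mathbf G)$: there is a short exact sequence of graphs of groups, giving rise to a central extension at the level of fundamental groups, with kernel built from $L_n(\mathbf G_v)$ and quotient $\pi_1(\scrg/\mathbf G_n)$. Since the property of being residually $p$ is a root property (Lemma~\ref{lem:gruenberg}), it would suffice to show that the kernel is residually $p$ and the relevant quotient is a $p$-group — but here one must be careful, since the kernel is itself the fundamental group of a graph of (abelian) groups, namely essentially $\pi_1(L_n(\mathbf G),\cdot)$.

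This is where hypotheses~(1) and~(2) enter, and where I expect the main obstacle to lie. The $p$-potency of $\mathbf G$ provides, for each $m\geq 1$, injective morphisms $\Phi_m\colon L_1(\mathbf G_v)\to L_{m+1}(\mathbf G_v)$ given by exponentiation $x\mapsto x^{p^m}$; condition~(2) says the filtration restricted to each edge subgroup $f_e(G_e)$ is uniformly $p$-potent, so the analogous maps on edge layers are \emph{isomorphisms}. These data assemble into a morphism of graphs of abelian groups $L_1(\mathbf G)\to L_{n}(\mathbf G)$ which is injective on vertex groups, and — crucially — compatible with the edge identification maps $\varphi_e$ in the sense required by Corollary~\ref{cor:generate HNN, 2} (this is the point where the insistence on injectivity of the $\Phi_m$, flagged at the end of Chapter~\ref{ch:embedding theorems}, pays off). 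Applying Corollary~\ref{cor:generate HNN, 2} edge by edge to the HNN presentations along $E_+=E(Y)\setminus E(T)$, and using the fiber-sum universal property (Section~\ref{sec:fiber sum}) to handle the tree part $T$, one propagates the residual-$p$-ness of $\pi_1^*(L_1(\mathbf G),T)$ to $\pi_1^*(L_n(\mathbf G),T)$; then Lemma~\ref{lem:residually p for partial abelianization} again upgrades this to residual $p$-ness of $\pi_1(L_n(\mathbf G),T)$. Feeding this into the central extension above and invoking Lemma~\ref{lem:gruenberg} completes the induction. The delicate bookkeeping will be to arrange everything so that the compatibility conditions of Corollary~\ref{cor:generate HNN, 2} literally hold on the nose, which may require passing to compatible stretchings of $\mathbf G$ as in Lemma~\ref{lem:refine} and the discussion of compatible stretchings of filtrations of graphs of groups.

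For the second assertion, assume in addition that $\mathbf G$ is separating and separates the edge groups of $\scrg$. Then I would apply Corollary~\ref{cor:Hempel, cor} directly: it states that a normal separating filtration $\mathbf G$ of $\scrg$ which separates the edge groups and for which every $\pi_1(\scrg/\mathbf G_n)$ is residually $p$ forces $G=\pi_1(\scrg)$ to be residually $p$. We have just verified the hypothesis $\pi_1(\scrg/\mathbf G_n)$ residually $p$; separating-ness and edge-separation are assumed; normality is part of $\mathbf G$ being a central $p$-filtration of $\scrg$. Hence $G$ is residually $p$. Finally, for the statement that the vertex and edge groups are closed in the pro-$p$ topology, I would note that the proof of Corollary~\ref{cor:Hempel, cor} runs through Proposition~\ref{Hempel criterion residually p}, and that the collections $\scrh=\mathbf G_n$ coming from the filtration have \emph{finite index} vertex groups (condition~(1) of $p$-excellence, here guaranteed since $\scrg$ has finitely generated vertex groups and the layers $L_n(\mathbf G_v)$ are finite); thus conditions (A$'$) and (B$'$) of Corollary~\ref{cor:vertex and edge groups closed} are met, and that corollary's second conclusion gives exactly the closedness of each vertex and edge group in the pro-$p$ topology on $G$. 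The only care needed is to check that the separation statements ``$\bigcap_n G_{v,n}=1$'' and ``$\bigcap_n G_{v,n}f_e(G_e)=f_e(G_e)$'' translate verbatim into hypotheses (A$'$) and (B$'$) with the finite-index collections $\scrh=\mathbf G_n$, which is routine given Lemma~\ref{lem:intersect H}.
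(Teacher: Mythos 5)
The part of your argument showing that $\pi_1^*(L_n(\mathbf G),T)$ is residually $p$ for all $n$ — lifting hypothesis~(3) from the layer $L_1$ to the layer $L_n$ via the injective $p$-potency maps, the uniform $p$-potency on edge groups, and Corollary~\ref{cor:generate HNN, 2} — is sound and matches the paper's Lemma~\ref{lem:reduction theorem} closely. The treatment of the second assertion (closedness of vertex and edge groups in the pro-$p$ topology) via Corollaries~\ref{cor:Hempel, cor} and \ref{cor:vertex and edge groups closed}, with the observation that the $G_{v,n}$ have finite index automatically, is also correct.

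The gap is in the transition from ``$\pi_1^*(L_n(\mathbf G),T)$ residually $p$ for all $n$'' to ``$\pi_1(\scrg/\mathbf G_n)$ residually $p$,'' which you propose to handle by induction on $n$ using a central extension. This does not work, for two reasons. First, the kernel $K$ of $\pi_1(\scrg/\mathbf G_{n+1})\to\pi_1(\scrg/\mathbf G_n)$ is \emph{not} $\pi_1(L_n(\mathbf G))$: it is the fundamental group of a graph of groups whose vertex groups are conjugates of the layers $L_n(\mathbf G_v)$, but over a (typically much larger, often infinite) quotient of the Bass–Serre tree, with identifications that have nothing to do with the edge morphisms of $L_n(\mathbf G)$. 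Second, and more decisively, even with $K$ residually $p$ in hand you cannot invoke Lemma~\ref{lem:gruenberg}: that lemma requires the quotient $\G/N$ to \emph{be} a $\mathfrak P$-group, and for $n\geq 2$ the quotient $\pi_1(\scrg/\mathbf G_n)$ is an infinite group which is merely \emph{residually} $p$, not a $p$-group. Residual-$p$-ness is not preserved under extensions by residually $p$ groups (consider a free-by-cyclic group that is not residually nilpotent).

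The paper's actual route (Proposition~\ref{prop:reduction}) does not induct on $n$. Instead it treats the whole finite-length filtration of $\scrg/\mathbf G_n$ at once: the sharpened amalgamation theorem for filtered $p$-groups (Proposition~\ref{prop:higman, more precise}, packaged as Lemma~\ref{lem:higman, 2}) supplies a single morphism $\psi\colon\pi_1(\scrg|T)\to P$ to a finite $p$-group equipped with a central $p$-filtration $\mathbf P$, injective on vertex groups, respecting the filtration and such that each layer $L_m(\mathbf P)$ contains the internal fiber sum $\Sigma(L_m(\mathbf G)|T)$. The residual $p$-ness of the partial abelianizations $\pi_1^*(L_m(\mathbf G),T)$ then feeds, via Corollary~\ref{cor:generate HNN} and the HNN-extension criterion (Lemma~\ref{lem:hnn}), into an extension of the partial automorphisms $\psi_e$ to inner automorphisms of a $p$-group $Q\geq P$, and Lemma~\ref{residual p criterion} concludes. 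So the Higman-style embedding into a filtered $p$-group and the machinery of Section~\ref{sec:extending partial automorphisms} are precisely what replace your central-extension step; your write-up makes no use of either, and without them the induction cannot be closed.
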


For the proof of this theorem we first show that its hypotheses imply that the partial abelianization of each layer of $\mathbf G$ is residually $p$ (not only the first, as stipulated by condition (3)). Below, we continue to use the notation introduced in Section~\ref{sec:partial ab} above.
For every edge $e\in E=E(Y)\sm E(T)$ we equip
$A_e=f_e(G_e)$ and $B_e=f_{\ol{e}}(G_{e})$ with filtrations
$\mathbf A_e:=A_e\cap\mathbf G_{t(e)}$ respectively $\mathbf B_e:=B_e\cap\mathbf G_{t(\ol{e})}$, and let
$$L_n(\varphi_{e})\colon L_n(\mathbf A_e) \to L_n(\mathbf B_e)$$
be the isomorphism induced by $\varphi_e=f_{\ol{e}}\circ f_{e}^{-1}$.

\begin{lemma}\label{lem:reduction theorem}
Suppose $\mathbf G$ satisfies conditions \textup{(1)--(3)} in Theorem~\ref{thm:reduction theorem}.
Then for each $n\geq 1$, the group $\pi_1^*(L_n(\mathbf G),T)$ is residually $p$.
\end{lemma}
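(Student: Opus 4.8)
The idea is to leverage condition (3) — that $\pi_1^*(L_1(\mathbf G),T)$ is residually $p$ — together with the $p$-potency hypotheses to transfer residual $p$-ness to every layer. The key observation is that the $p$-potent structure provides, for each $m\geq 1$, an \emph{injective} group morphism $\Phi_m\colon L_1(\mathbf G_v)\to L_{m+1}(\mathbf G_v)$ on each vertex group, given by $x\,G_{v,2}\mapsto x^{p^m}\,G_{v,m+2}$ (cf.\ Section~\ref{sec:p-potent}), and similarly on the edge subgroups $f_e(G_e)$ the uniformly $p$-potent hypothesis (2) furnishes \emph{isomorphisms} $\phi_e\colon L_n(\mathbf A_e)\xrightarrow{\cong}L_{n+1}(\mathbf A_e)$ compatible with iteration. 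First I would assemble these into a morphism of graphs of abelian groups $\Phi^{(m)}\colon L_1(\mathbf G)\to L_{m+1}(\mathbf G)$, checking that the vertex-level maps $\Phi_m$ and the edge-level maps intertwine correctly with the edge morphisms $L_n(f_e)$ — this is where conditions (1) and (2) are used, since $\Phi_m$ being a well-defined morphism with kernel exactly $G_{v,2}$ is precisely $p$-potency, and compatibility along edges requires the edge filtrations to be uniformly $p$-potent so that exponentiation commutes with the inclusions $f_e$.

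\textbf{Key steps.} (i) Verify that $\Phi^{(m)}$ is a morphism of graphs of groups in the sense of Section~\ref{sec:Morphisms of graphs of groups}, with all vertex components injective and all edge components bijective. (ii) Observe that because $\Phi^{(m)}$ restricts to isomorphisms $L_1(\mathbf A_e)\to L_{m+1}(\mathbf A_e)$ intertwining $L_1(\varphi_e)$ with $L_{m+1}(\varphi_e)$, it induces — by the functoriality of the partial abelianization construction together with Corollary~\ref{cor:generate HNN, 2} (applied in the elementary-abelian case, where the hypothesis ``$\Phi$ injective'' is exactly what step (i) gives) — a morphism $\pi_1^*(L_1(\mathbf G),T)\to\pi_1^*(L_{m+1}(\mathbf G),T)$. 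Actually the cleaner route is: the injective morphism $\Phi^{(m)}$ of graphs of elementary abelian $p$-groups, bijective on edge groups, yields via Corollary~\ref{cor:generate HNN, 2} that residual $p$-ness of the HNN-style presentation $\pi_1^*(L_1(\mathbf G),T)$ passes to $\pi_1^*(L_{m+1}(\mathbf G),T)$. (iii) Conclude that $\pi_1^*(L_n(\mathbf G),T)$ is residually $p$ for every $n\geq 1$, taking $m=n-1$ (and the case $n=1$ being hypothesis (3)).

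\textbf{Main obstacle.} The delicate point is step (ii): I must ensure that the partial abelianization $\pi_1^*(-,T)$ genuinely behaves functorially with respect to $\Phi^{(m)}$ and that the resulting map on partial abelianizations fits the hypotheses of Corollary~\ref{cor:generate HNN, 2} — i.e.\ that after passing to the fiber sum $\Sigma(L_1(\mathbf G)|T)$ the induced map $\Sigma(L_1(\mathbf G)|T)\to\Sigma(L_{m+1}(\mathbf G)|T)$ remains injective and carries each $A_e$ isomorphically to the corresponding $A_e$ in the target, commuting with the $\varphi_e$. Injectivity on the fiber sum is not automatic from injectivity on vertex groups; it should follow from Lemma~\ref{lem:injection into fiber sum} applied in both source and target together with the compatibility of $\Phi^{(m)}$ with the edge inclusions, but this bookkeeping — tracking the kernel $K$ in the fiber sum quotient through $\Phi^{(m)}$ — is the part that needs care. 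Once injectivity on $\Sigma(L_1(\mathbf G)|T)$ is established, Corollary~\ref{cor:generate HNN, 2} applies verbatim (with $G,G'$ the elementary abelian groups $\Sigma(L_1(\mathbf G)|T)$, $\Sigma(L_{m+1}(\mathbf G)|T)$ and the $\varphi_e$, $\varphi_e'$ the maps between their $A_e$-subgroups), and the lemma follows.
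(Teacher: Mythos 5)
Your proposal matches the paper's proof: both take the injective $p$-power maps $\Phi_{v,n}\colon L_1(\mathbf G_v)\to L_{n+1}(\mathbf G_v)$ furnished by condition (1), note that condition (2) makes them restrict to isomorphisms on $L(\mathbf A_e)$ and $L(\mathbf B_e)$ intertwining $L_1(\varphi_e)$ with $L_{n+1}(\varphi_e)$, assemble them into an injective $\F_p$-linear map $\Sigma(L_1(\mathbf G)|T)\to\Sigma(L_{n+1}(\mathbf G)|T)$, and invoke Corollary~\ref{cor:generate HNN, 2}. Regarding the obstacle you flag: the paper simply asserts injectivity on the fiber sums, and it does hold, but the right argument is not via Lemma~\ref{lem:injection into fiber sum} — rather, because each $\Phi_{v,n}$ is an isomorphism onto $L_{n+1}(\mathbf A_e)$ on the edge subgroups, the injective map $\bigoplus_v\Phi_{v,n}$ carries the relation subgroup of $\bigoplus_v L_1(\mathbf G_v)$ exactly onto that of $\bigoplus_v L_{n+1}(\mathbf G_v)$, and injectivity of the induced quotient map is then immediate.
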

\begin{proof}
By (1) and (2) we can take injective $\F_p$-linear maps
$$\Phi_{v,n}\colon L_1(\mathbf G_v) \to L_{n+1}(\mathbf G_v)\qquad (v\in V(Y))$$
such that  for each edge $e$ with $t(e)=v$, $\Phi_{v,n}$ restricts to isomorphisms $L_1(\mathbf A_e)\to L_{n+1}(\mathbf A_e)$ and $L_1(\mathbf B_e)\to L_{n+1}(\mathbf B_e)$, and the diagram
\[\xymatrix{
L_{n+1}(\mathbf A_e) \ar[rr]^{L_{n+1}(\varphi_{e})}& &  L_{n+1}(\mathbf B_e)\\
L_1(\mathbf A_e) \ar[u]^\cong\ar[rr]^{L_1(\varphi_{e})}& & L_1(\mathbf B_e) \ar[u]^\cong
}\]
commutes. The $\Phi_{v,n+1}$ give rise to an {\it injective}\/ $\F_p$-linear map
$$\Sigma(L_1(\mathbf G)|T)\to\Sigma(L_{n+1}(\mathbf G)|T)$$
which restricts to isomorphisms $L_1(\mathbf A_e)\to L_{n+1}(\mathbf A_e)$
and $L_1(\mathbf B_e)\to L_{n+1}(\mathbf B_e)$ (after identifying $L_i(\mathbf A_e)$ and $L_i(\mathbf B_e)$ with subgroups of $L_i(\Sigma(\mathbf G|T))$ as usual).
This morphism extends to a morphism $$\pi_1^*(L_1(\mathbf G),T)\to \pi_1^*(L_{n+1}(\mathbf G),T)$$
by the identity on $E_+$.
Therefore, since the group $\pi_1^*(L_1(\mathbf G),T)$ is residually $p$ (by assumption (3)),
so is $\pi_1^*(L_n(\mathbf G),T)$ for each $n > 1$, by Corollary~\ref{cor:generate HNN, 2}.
\end{proof}

We will deduce Theorem~\ref{thm:reduction theorem} from the following proposition:

\begin{proposition} \mbox{} \label{prop:reduction}
Assume that each vertex group  of $\scrg$ is a $p$-group and $\mathbf G$ is of finite length. Suppose moreover that $\pi_1^*(L_n(\mathbf G),T)$ is  residually $p$ for every~$n\geq 1$. Then $G$ is residually $p$.
\end{proposition}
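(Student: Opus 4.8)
The plan is to reduce the general, arbitrary-length case to the one-layer situation that we can already handle via the amalgamation and HNN machinery of Chapter~\ref{ch:embedding theorems}. The natural inductive parameter is the length of the central $p$-filtration $\mathbf G$. If $\mathbf G$ has length $\leq 1$, then $\scrg/\mathbf G_n=\scrg/\mathbf G_2=L_1(\mathbf G)$ (up to the obvious identifications) for all $n\geq 2$, so the conclusion is exactly hypothesis (3) together with Lemma~\ref{lem:residually p for partial abelianization}. For the inductive step I would peel off the top layer: let $\mathbf G^{(1)}:=\{\mathbf G_{v,n}\}_{n\geq 2}$ be the truncated filtration, a central $p$-filtration of the graph of subgroups $\mathbf G_2$ of $\scrg$ of strictly smaller length, and note that $L_n(\mathbf G^{(1)})=L_{n+1}(\mathbf G)$, so the hypothesis "$\pi_1^*(L_n(\mathbf G),T)$ residually $p$ for all $n$" is inherited. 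By the inductive hypothesis applied to the graph of groups $\mathbf G_2$ (whose vertex groups are the $p$-groups $G_{v,2}$), the group $\pi_1(\mathbf G_2)$ — or rather $\pi_1(\mathbf G_2/(\mathbf G^{(1)})_n)=\pi_1(\mathbf G_2/\mathbf G_{n+1})$ — is residually $p$, and since $\mathbf G$ has finite length, $\pi_1(\mathbf G_2)$ itself is residually $p$; call its underlying graph of subgroups $\scrg'=\mathbf G_2$.

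Now I have a short exact sequence of graphs of groups: $\scrg'=\mathbf G_2 \to \scrg \to \scrg/\mathbf G_2 = L_1(\mathbf G)$ (the last identification using $\mathbf G_1=G_v$). The strategy is then to verify Hempel's criterion, Proposition~\ref{Hempel criterion residually p}, for $\scrg$ (or more precisely, to produce the compatible collections $\scrh$ as in conditions (A) and (B) by intersecting filtration terms from the two ends). Concretely: given $g\in G_{v_0}\setminus\{1\}$, pick $n$ minimal with $g\notin G_{v_0,n}$; if $n\geq 2$ then $g$ survives in $\scrg/\mathbf G_n$, which by the inductive hypothesis applied to the filtration $\mathbf G$ truncated at length $n-1$ (a filtration of length $<$ length of $\mathbf G$) has residually $p$ fundamental group — here one uses $\pi_1(\scrg/\mathbf G_n)$ residually $p$, which follows from the inductive hypothesis since the truncated filtration still satisfies (1)--(3); and if $n=1$ then $g$ survives to $L_1(\mathbf G)$, whose partial abelianization is residually $p$ by hypothesis (3), so $\pi_1(L_1(\mathbf G))$ is residually $p$ by Lemma~\ref{lem:residually p for partial abelianization}. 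The edge condition (B) is handled the same way, using that $\mathbf G$ separates the relevant cosets layer by layer because $\mathbf G_{t(e)}$ intersects to $\gamma^p$ (or a uniformly $p$-potent filtration) on $f_e(G_e)$, so an element $g_i\in G_{t(e_i)}\setminus f_{e_i}(G_{e_i})$ escapes some coset $f_{e_i}(G_{e_i})\cdot G_{t(e_i),n}$, and again one passes to $\scrg/\mathbf G_n$ or to $L_1(\mathbf G)$.

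The genuine content — the place where Lemma~\ref{lem:reduction theorem}, Theorem~\ref{thm:higman}, and Corollary~\ref{cor:chatzidakis} must all be used — is ensuring that at each stage the quotient graph of groups $\scrg/\mathbf G_n$ (whose vertex groups are $p$-groups but whose edge groups are now possibly nontrivial $p$-groups, not the elementary abelian ones of the single-layer case) has residually $p$ fundamental group. This is exactly what the induction delivers, but it requires that the hypotheses propagate: $p$-potency of the truncated filtration (automatic, since truncation of a $p$-potent filtration is $p$-potent), the uniform $p$-potency of the induced filtrations on the $f_e(G_e)$ (inherited because $\mathbf G_{t(e)}\cap f_e(G_e)$ being uniformly $p$-potent forces each tail to be uniformly $p$-potent), and the residual-$p$-ness of the partial abelianizations of all layers (which is Lemma~\ref{lem:reduction theorem}, itself a consequence of Corollary~\ref{cor:generate HNN, 2} — this is the step that uses the injective maps $\Phi_{v,n}$ built from $p$-potency). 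The last clause of the theorem, about vertex and edge groups being closed in the pro-$p$ topology, follows from Corollary~\ref{cor:vertex and edge groups closed} once we know the finite-index versions of (A) and (B) hold, which they do because all the $G_{v,n}$ have finite index in $G_v$ (the vertex groups being finitely generated and the layers finite). The main obstacle I anticipate is bookkeeping: making sure the compatible collections extracted from $\mathbf G$ and from the inductively-obtained structure on $\mathbf G_2$ glue into a single compatible collection of normal subgroups of $\scrg$ in the sense required by Proposition~\ref{Hempel criterion residually p}, i.e. checking the compatibility condition $f_e^{-1}(H_{t(e)})=f_{\ol e}^{-1}(H_{t(\ol e)})$ — but this is exactly the content of $\mathbf G$ being a filtration of $\scrg$, so it is really a matter of organizing the quantifiers correctly rather than a new difficulty.
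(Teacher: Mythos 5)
Your proposal takes a genuinely different route from the paper's proof. The paper does not induct on the length of $\mathbf G$; it directly constructs a single morphism $G\to Q$ into a $p$-group which is injective on all vertex groups, and then invokes Lemma~\ref{residual p criterion}. Concretely: Lemma~\ref{lem:higman, 2} (whose engine is the amalgamation theorem, Proposition~\ref{prop:higman, more precise}) embeds the tree $\scrg|T$ into a filtered $p$-group $(P,\mathbf P)$ in such a way that each layer $L_n(\mathbf P)$ contains the fiber sum $\Sigma(L_n(\mathbf G)|T)$; then the hypothesis that each $\pi_1^*(L_n(\mathbf G),T)$ is residually $p$, together with Corollary~\ref{cor:generate HNN}, means each layer-level partial automorphism $L_n(\psi_e)$ extends to an inner automorphism of some $p$-overgroup of $P_n/P_{n+1}$; and Lemma~\ref{lem:hnn} assembles these layer-by-layer extensions into an extension of each $\psi_e$ to an inner automorphism of a $p$-group $Q\geq P$. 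This gives the morphism $G\to Q$ at once.

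Your inductive argument has a genuine gap at the bottom layer. You pick $n$ minimal with $g\notin G_{v_0,n}$ and appeal to the inductive hypothesis applied to the filtration on $\scrg/\mathbf G_n$, claiming it has length strictly less than that of $\mathbf G$. But if $\mathbf G$ has length $\ell$, a nontrivial $g\in G_{v_0,\ell}$ has $n=\ell+1$, so $\scrg/\mathbf G_n=\scrg$ and the truncated filtration has length $\ell$ --- not strictly smaller. For such $g$ the Hempel criterion gives you nothing: the only compatible collection $\scrh$ drawn from $\mathbf G$ with $g\notin H_{v_0}$ is the trivial one, for which $\pi_1(\scrg/\scrh)=\pi_1(\scrg)$ being residually $p$ is exactly what you are trying to prove. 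More fundamentally, Proposition~\ref{Hempel criterion residually p} is designed for separating filtrations of infinite vertex groups (and is deployed that way in Theorem~\ref{thm:reduction theorem} via Corollary~\ref{cor:Hempel, cor}); in the finite-vertex-group setting of Proposition~\ref{prop:reduction} it is circular. Your "short exact sequence" $\mathbf G_2\to\scrg\to L_1(\mathbf G)$ does not repair this: $\pi_1(\mathbf G_2)$ is \emph{not} the kernel of $\pi_1(\scrg)\to\pi_1(L_1(\mathbf G))$ (the kernel is a graph of groups over a covering graph, with many more vertices), so knowing $\pi_1(\mathbf G_2)$ and $\pi_1(L_1(\mathbf G))$ are residually $p$ does not yield that $\pi_1(\scrg)$ is, and Lemma~\ref{lem:gruenberg} is inapplicable because $\pi_1(L_1(\mathbf G))$ is not a $p$-group. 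You have correctly identified the relevant tools (Higman's amalgamation theorem, Chatzidakis-style extension of partial automorphisms, and the $p$-potency maps), but they need to be used directly, as in the paper's single-pass construction, rather than hidden inside an induction that does not terminate on its own.
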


Let us see how this proposition yields Theorem~\ref{thm:reduction theorem}.
So suppose conditions (1)--(3) hold.
By Lemma~\ref{lem:reduction theorem},  the groups $\pi_1^*(L_n(\mathbf G),T)$ are residually $p$, for each~$n\geq 1$. By the proposition applied to $\scrg/\mathbf G_n$ in place of $\scrg$ and the filtration of $\scrg/\mathbf G_n$ of length $<n$ given on vertex groups by
$$G_{v1}/G_{vn}\geq G_{v2}/G_{vn} \geq \cdots\geq G_{v,n-1}/G_{vn}\geq 1,$$  we obtain that $\pi_1(\scrg/\mathbf G_n)$ is residually $p$, for each $n\geq 1$. This is the first conclusion of Theorem~\ref{thm:reduction theorem}.
The rest of the theorem follows from this and
Corollaries~\ref{cor:Hempel, cor} and \ref{cor:vertex and edge groups closed} of Proposition~\ref{Hempel criterion residually p}.

\medskip

It remains to give a proof of  Proposition~\ref{prop:reduction}.
For this, we need a consequence of the main results of Section~\ref{sec:An Embedding Theorem}:

\begin{lemma}\label{lem:higman, 2}
Suppose $Y$ is a tree, all vertex groups of $\scrg$ are $p$-groups, and  $\mathbf G$ has finite length. Then there exists a $p$-group $P$, a finite-length central $p$-filtration~$\mathbf P$
of $P$, and for each $v\in V(Y)$ a group embedding $\psi_v\colon G_v\to P$ and a compatible stretching $\mathbf G^*=\{\mathbf G_{v}^*\}_v$ of $\mathbf G$  such that
\begin{enumerate}
\item $\psi_{t(e)}\circ f_e = \psi_{t(\overline{e})}\circ f_{\overline{e}}$ for every $e\in E(Y)$;
%\item $\psi_{t(e)}(G_{t(e)}) \cap \psi_{t(\overline{e})}(G_{t(\overline{e})}) = \psi_{t(e)}(f_e(G_e))$ for every $e\in E(Y)$, and
\item $\psi_v$ is a morphism $(G_v,\mathbf G_v^*)\to (P,\mathbf P)$ of filtered groups, for every $v\in V(Y)$;
\item for each $n\geq 1$, after naturally identifying each vertex group $L_n(\mathbf G^*_v)$ of the tree $L_n(\mathbf G^*)$ of elementary abelian $p$-groups
with its image in $L_n(\mathbf P)$, the latter group contains the \textup{(}internal\textup{)} fiber sum $\Sigma(L_n(\mathbf G^*))$ of $L_n(\mathbf G^*)$.
\end{enumerate}
In particular, $G=\pi_1(\scrg)$ is residually $p$.
\end{lemma}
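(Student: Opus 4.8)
The plan is to argue by induction on the number of vertices of the underlying tree $Y$. If $Y$ has a single vertex $v_0$ there is nothing to do: take $P=G_{v_0}$, $\mathbf G^*=\mathbf G$, $\mathbf P=\mathbf G_{v_0}$, $\psi_{v_0}=\id$, the fiber sum of a one-vertex tree being the group itself. For the inductive step I would pick a terminal vertex $v_0$ of $Y$, let $e$ be the unique edge with $t(e)=v_0$, set $w:=o(e)$, $Y':=Y\setminus\{v_0\}$, $\scrg':=\scrg|Y'$; then $\scrg'$ is a tree of $p$-groups carrying the finite-length central $p$-filtration $\{\mathbf G_v\}_{v\in V(Y')}$, so the inductive hypothesis yields a $p$-group $P'$, a finite-length central $p$-filtration $\mathbf P'$ of $P'$, embeddings $\psi'_v\colon G_v\to P'$, and a compatible stretching $\{\mathbf G'^*_v\}_{v\in V(Y')}$ of $\{\mathbf G_v\}_{v\in V(Y')}$ — say with reindexing map $\iota$ — satisfying (1)--(3) for $\scrg'$. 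The aim is then to glue $G_{v_0}$ onto $P'$ along $e$ by means of the amalgamation theorem for filtered $p$-groups, Proposition~\ref{prop:higman, more precise}: identifying $G_e$ with $f_e(G_e)\le G_{v_0}$ and with $\psi'_w(f_{\overline e}(G_e))\le P'$, one has the amalgam $G_{v_0}\cup P'|G_e$.

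The one point that needs care — and which I expect to be the main obstacle — is keeping the stretchings compatible as a \emph{single} family indexed by all of $V(Y)$: after recursing, $\mathbf P'$ induces on $\psi'_w(f_{\overline e}(G_e))$ not the original edge filtration $\mathbf G_e=f_e^{-1}(\mathbf G_{v_0})$ but its $\iota$-stretch. I would deal with this by first replacing $\mathbf G_{v_0}$ by its $\iota$-stretch $\widehat{\mathbf G}_{v_0}$, using the \emph{same} map $\iota$ that the recursion produced: since $\mathbf G$ is a filtration of $\scrg$ (so $f_e^{-1}(\mathbf G_{v_0})=f_{\overline e}^{-1}(\mathbf G_w)$) and $\psi'_w$ is a morphism of filtered groups $(G_w,\mathbf G'^*_w)\to(P',\mathbf P')$ with $\mathbf G'^*_w$ the $\iota$-stretch of $\mathbf G_w$, both $\widehat{\mathbf G}_{v_0}$ and $\mathbf P'$ intersect to one and the same (finite-length central $p$-) filtration of $G_e$, namely the $\iota$-stretch of $\mathbf G_e$. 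Proposition~\ref{prop:higman, more precise} then furnishes a $p$-group $P\supseteq G_{v_0},P'$ with $G_{v_0}\cap P'=G_e$, a finite-length central $p$-filtration $\mathbf P$, and compatible stretchings $\mathbf G^*_{v_0}$ of $\widehat{\mathbf G}_{v_0}$ and $\mathbf P'^*$ of $\mathbf P'$ — with one reindexing map $\lambda$ — such that $\mathbf P\cap G_{v_0}=\mathbf G^*_{v_0}$, $\mathbf P\cap P'=\mathbf P'^*$, and for every $n\ge1$, $L_n(\mathbf P)$ contains the internal fiber sum $L_n(\mathbf G^*_{v_0})\oplus_{L_n(\mathbf U)}L_n(\mathbf P'^*)$, where $\mathbf U=\mathbf G^*_{v_0}\cap G_e$.

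I would then put $\psi_{v_0}\colon G_{v_0}\hookrightarrow P$, and for $v\in V(Y')$ put $\psi_v:=(\text{incl. }P'\hookrightarrow P)\circ\psi'_v$ and $\mathbf G^*_v:=\psi_v^{-1}(\mathbf P)=\psi_v'^{-1}(\mathbf P'^*)$. Tracking the reindexings, $\mathbf G^*_{v_0}$ is the $(\lambda\circ\iota)$-stretch of $\mathbf G_{v_0}$, and — since $\mathbf P'^*$ is the $\lambda$-stretch of $\mathbf P'$ and $\psi'_v$ is filtered — $\mathbf G^*_v$ is the $\lambda$-stretch of $\mathbf G'^*_v$, hence also the $(\lambda\circ\iota)$-stretch of $\mathbf G_v$; thus $\mathbf G^*=\{\mathbf G^*_v\}_{v\in V(Y)}$ is a compatible stretching of $\mathbf G$ with reindexing $\lambda\circ\iota$ and each $\psi_v$ is a morphism of filtered groups, which is (2), while (1) holds on the edges of $Y'$ by the inductive hypothesis and on $e$ by construction of the amalgam. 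For (3), fix $n\ge1$ and use the description of the fiber sum at a terminal vertex from Section~\ref{sec:fiber sum}: $\Sigma(L_n(\mathbf G^*))=L_n(\mathbf G^*_{v_0})\oplus_{L_n(\mathbf G^*_e)}\Sigma(L_n(\mathbf G^*)|Y')$ with $\mathbf G^*_e=f_e^{-1}(\mathbf G^*_{v_0})$. Here $\mathbf U$ transported via $f_e$ is exactly $\mathbf G^*_e$; and since $\{\mathbf G^*_v\}_{v\in V(Y')}$ is the $\lambda$-stretch of $\{\mathbf G'^*_v\}$, the layer $L_n(\mathbf G^*)|Y'$ is either trivial or a layer $L_m(\{\mathbf G'^*_v\})$ — the index $m$ being that for which $L_n(\mathbf P'^*)=L_m(\mathbf P')$ — so $L_n(\mathbf P'^*)\supseteq\Sigma(L_n(\mathbf G^*)|Y')$ by the inductive hypothesis (3) (using Lemma~\ref{lem:injection into fiber sum} for the internal identification). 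Feeding this into the inclusion from Proposition~\ref{prop:higman, more precise} gives $L_n(\mathbf P)\supseteq L_n(\mathbf G^*_{v_0})\oplus_{L_n(\mathbf G^*_e)}\Sigma(L_n(\mathbf G^*)|Y')=\Sigma(L_n(\mathbf G^*))$, which is (3); it only remains to check, routinely, that the two inclusions of $L_n(\mathbf U)$ into $L_n(\mathbf G^*_{v_0})$ and into $\Sigma(L_n(\mathbf G^*)|Y')\subseteq L_n(\mathbf P'^*)$ coincide with the edge morphisms of $L_n(\mathbf G^*)$.

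Finally, for the ``in particular'' clause: since $Y$ is a tree, $\pi_1(\scrg)$ is the colimit of the vertex groups along the identifications $f_e\equiv f_{\overline e}$, so by (1) the $\psi_v$ glue to a morphism $\psi\colon\pi_1(\scrg)\to P$ into a $p$-group which restricts to the embedding $\psi_v$ on each vertex group, in particular is injective on every vertex group; as the vertex groups of $\scrg$ are finite $p$-groups, Lemma~\ref{residual p criterion} then shows $G=\pi_1(\scrg)$ is residually $p$.
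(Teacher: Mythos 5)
Your proof is correct, and it is exactly the ``obvious induction on the number of vertices, using Proposition~\ref{prop:higman, more precise} in the inductive step'' that the paper gives in a single sentence; you have merely fleshed out the bookkeeping (pre-stretching $\mathbf G_{v_0}$ by the recursion's reindexing map $\iota$ so that the two induced filtrations on $G_e$ match on the nose, and then composing $\iota$ with the reindexing $\lambda$ produced by Proposition~\ref{prop:higman, more precise}). The deduction of the ``in particular'' clause from (1) and Lemma~\ref{residual p criterion} also matches the paper's.
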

\begin{proof}
The proof of the first statements  proceeds by an obvious induction on the number of vertices of $\scrg$, using Proposition~\ref{prop:higman, more precise} in the inductive step. The existence of the family of group embeddings $\psi_v\colon G_v\to P$ satisfying (1) together with Lemma~\ref{residual p criterion} yields that $\pi_1(\scrg)$ is residually $p$.
\end{proof}

With this lemma in hand, we now turn to:

\begin{proof}[Proof of Proposition~\ref{prop:reduction}]
Note that the hypotheses of the proposition are preserved under passage from $\mathbf G$ to a compatible stretching.  (Since free groups are residually $p$.) Thus,
by Lemma~\ref{lem:higman, 2} applied to the tree of groups $\scrg|T$ in place of $\scrg$, we may assume that there exists a morphism $\psi\colon \pi_1(\scrg|T)\to P$ to a $p$-group~$P$ and a finite-length central $p$-filtration $\mathbf P$ of $P$ such that
\begin{enumerate}
\item $\psi\circ f_e = \psi\circ f_{\ol{e}}$ for every $e\in E(T)$;
\item $\psi$ restricts to an injective morphism $(G_v,\mathbf G_v)\to (P,\mathbf P)$ of filtered groups, for every $v\in V(Y)$; and
\item $L_n(\mathbf P)\geq\Sigma(L_n(\mathbf G)|T)$ for each $n\geq 1$ (after naturally identifying each vertex group $L_n(\mathbf G_v)$ of $L_n(\mathbf G)|T$ with its image in $L_n(\mathbf P)$ under $L_n(\psi|G_v)$).
\end{enumerate}
For $e\in E$ define $$\psi_e:=\psi\circ\varphi_e\circ\psi^{-1}\colon\psi(A_e)\to\psi(B_e).$$
The filtration $\mathbf P$ of $P$ is $\psi_e$-invariant, for each $e\in E$. For each $n\geq 1$, the iterated HNN extension
\begin{multline*}
\pi_1^*(L_n(\mathbf G),T)= \\ \big\langle \Sigma(L_n(\mathbf G)|T),\ e\in E_+: \text{$eae^{-1}=L_n(\varphi_{e})(a) $ for all $e\in E_+$, $a\in L_n(\mathbf A_{e})$}\big\rangle
\end{multline*}
of $\Sigma(L_n(\mathbf G)|T)$ is residually $p$, by assumption, hence by Corollary~\ref{cor:generate HNN} (or simply because $\Sigma(L_n(\mathbf G)|T)$ is a direct summand of the $\F_p$-linear space $P_n$),  there is a $p$-group containing $P_n$ and an extension of each $L_n(\psi_e)$, $e\in E_+$, to an inner automorphism.
Hence
 by Lemma~\ref{lem:hnn}
 there is a $p$-group $Q\geq P$ and for each $e\in E_+$ an element $h_e\in Q$ such that
$h_e a h_e^{-1}=\psi_e(a)$ for all $e\in E_+$ and $a\in\psi(A_e)$.
Hence $\psi$ extends to a morphism $G\to Q$ by $e\mapsto h_e$ for all $e\in E_+$ %which satisfies
%$$h_e\psi(f_e(g))h_e^{-1}=\psi(f_{\ol{e}}(g))\qquad\text{for every $e\in E_+$ and $g\in G_e$},$$
and so by Lemma~\ref{residual p criterion}, $G$ is residually $p$.
\end{proof}

%Let
%$$L_n^*(\mathbf G_v)=\sum_{t(e)=v} L_n(\mathbf A_e)$$ be the subgroup of $L_n(\mathbf G_v)$ generated by the $L_n(\mathbf A_e)$ as $e$ ranges over all edges of $Y$ with $t(e)=v$, and choose an $\F_p$-linear subspace $R_{n,v}$ of $L_n(\mathbf G_v)$ with $L_n(\mathbf G_v) = L_n^*(\mathbf G_v) \oplus R_{n,v}$. Consider the graph of groups $L_n^*(\mathbf G)$ with underlying graph $Y$, vertex groups $L_n^*(\mathbf G_v)$, edge groups $L_n(\mathbf G_e)$ and edge morphisms $L_n(f_e)$. We then have
%$$\Sigma(L_n(\mathbf G)|T)=\Sigma(L_n^*(\mathbf G)|T)\oplus R_n\qquad\text{where $R_n=\bigoplus_{v\in V(Y)} R_{n,v}$.}$$
%Thus, by Corollary~\ref{cor:generate HNN}, $\pi_1^*(L_n(\mathbf G),T)$ is residually $p$ if and only if
%\begin{multline*}
%\pi_1^*(L_n^*(\mathbf G),T)=\\
%\big\langle \Sigma(L_n^*(\mathbf G)|T),\ e\in E: \text{$eae^{-1}=L_n(\varphi_e)(a) $ for all $e\in E$, $a\in L_n(\mathbf A_{e})$}\big\rangle
%\end{multline*}
%is residually $p$.

%has underlying graph $Y$ and vertex and edge groups  $H_1(G_v;\F_p)$ and $H_1(G_e;\F_p)$, respectively. We call this the {\it In this case we have a natural surjective morphism $$\pi_1(\scrg)\to \pi_1(L_1(\scrg)).$$ This morphism does not split in general.\footnote{Need an example!}

In Section~\ref{sec:virtual res p} we use the theorem above to prove a criterion for the fundamental group of a graph of groups to be \emph{virtually} residually $p$.
A last missing ingredient is a procedure on graphs of groups introduced in the next section.

\section{Unfolding a graph of groups}
\label{sec:unfolding a graph of groups}

\noindent
Let $\scrg$ be a graph of groups with underlying graph $Y$ and fundamental group $G=\pi_1(\scrg)$.
%We fix  a maximal subtree $T$ of $Y$, and identify $G$ with $\pi_1(\scrg,T)$ in the usual way.
Suppose $\psi\colon G\to A$ is a surjective group morphism to a finite group $A$ which
is trivial on each vertex group $G_v$ of $\scrg$.
By Theorem~\ref{thm:bass-serre}, the (finite index, normal) subgroup $K=\ker(\psi)$ of $G$ can also be realized as the fundamental group of a certain graph of groups whose vertex and edge groups are given by the vertex respectively edge groups of $\scrg$. We need an explicit description of such a graph of groups $\widetilde{\scrg}$, which may be thought of as a kind of ``unfolding'' of $\scrg$ along $\psi$.
We begin with a description of the underlying graph $\widetilde Y$ of $\widetilde{\scrg}$.

\index{unfolding}

\subsection{Unfolding a graph}\label{subsec:unfolding a graph}
Let $Y$ be a graph;  we write $V=V(Y)$ and $E=E(Y)$.  We denote by $\pi_1(Y)$ the fundamental group of $Y$, i.e., $\pi_1(Y)=\pi_1(\scry)$ where $\scry$ is the graph of groups based on $Y$ with trivial vertex groups. We fix a maximal subtree $T$ of $Y$, and identify $\pi_1(Y)$ with $\pi_1(Y,T):=\pi_1(\scry, T)$ as indicated in \eqref{eq:maximal subtree description of pi1}.

Let $A$ be a finite group and $\psi\colon \pi_1(Y)\to A$ a surjective group morphism.
Note that then $\psi$ extends uniquely to a group morphism $\pi(\scry)\to A$. (This depends on the choice of $T$.) Here $\pi(\scry)$ is the path group of $\scry$, i.e., the group generated by $E$ subject to the relations $e^{-1}=\ol{e}$ for $e\in E$.  We have $\psi(e)=1$ for all $e\in E(T)$.

Let now $\widetilde{Y}$ be the finite graph with vertex set $V(\widetilde{Y})=A\times V$, edge set  $E(\widetilde{Y})=A\times E$, and
$$\ol{(\alpha,e)}=(\psi(e)\alpha,\ol{e}),\quad o(\alpha,e) = (\alpha,o(e)), \quad t(\alpha,e) = (\psi(e)\alpha, t(e))$$
for all $(\alpha,e)\in E(\widetilde{Y})$.
It is easy to see that $\widetilde{Y}$ is connected since $\psi(E)$ generates the group $A$. We have a natural morphism $\phi\colon \widetilde{Y}\to Y$ of graphs given by
$$V(\widetilde{Y})=A\times V\to V\colon (\alpha,v)\mapsto v, \qquad
  E(\widetilde{Y})=A\times E\to E\colon (\alpha,e)\mapsto e.$$
We call $\phi$ the \emph{unfolding of the graph $Y$ along $\psi$.}
Ostensibly, $\phi$ seems to depend on the choice of $T$. However,
it is not hard to see (e.g., using covering spaces and Proposition~\ref{prop:phistar} below) that in fact, given another choice $T_1$ of maximal subtree of~$Y$ and $\phi_1\colon \widetilde{Y}_1\to Y$ constructed as above, there exists an isomorphism $\Phi\colon \widetilde{Y}\to\widetilde{Y}_1$ of graphs such that $\phi_1\circ\Phi=\phi$.
If $Y$ is a tree (so $Y=T)$, then the group $\pi_1(Y)$ and hence $A$ is trivial, and $\widetilde{Y}$ is naturally isomorphic to the original graph $Y$.

\subsection*{Digression: unfolding of graphs and Cayley graphs}
There is an obvious connection between unfolding of graphs and Cayley graphs of finite groups, as explained in the following example (not used later):

\begin{example}
Recall that
given a generating set $S$ of a group $\Gamma$, the Cayley graph of $\Gamma$ with respect to $S$ is the {\it oriented}\/ (possibly infinite) graph $C=C(\Gamma,S)$ with vertex set $V(C)=\Gamma$, orientation $E(C)_+=\Gamma\times S$, and
$$o(\gamma,s)=\gamma, \quad t(\gamma,s)=\gamma s\qquad \text{for each $(\gamma,s)\in \Gamma\times S$.}$$
Let $E_+$ be an orientation of $Y$; then $\psi(E_+)$ generates $A$, and $E(\widetilde{Y})_+=A\times E_+$ is an orientation of $\widetilde{Y}$.
Thus the natural surjections $$V(\widetilde{Y})=A\times V\to A\colon (\alpha,v)\mapsto\alpha$$ and $$E(\widetilde{Y})_+=A\times E_+\to E(C)_+=A\times\psi(E_+)\colon (\alpha,e)\mapsto (\alpha,\psi(e))$$
define a morphism $\widetilde{Y}\to C=C(A,\psi(E_+))$ of graphs.
In particular, if $Y$ has only a single vertex, and $\psi|E$ is injective, then $\widetilde{Y}$ is nothing but the Cayley graph of $A$ with respect to its generating set $\psi(E_+)$.
\end{example}

\subsection{Unfolding a graph of groups}\label{subsec:unfolding}
We continue to use the notation introduced in the last subsection, but assume in addition that we are given a graph of groups $\scrg$ based on $Y$.
We define $\widetilde{\scrg}$ as the graph of groups with underlying graph~$\widetilde{Y}$, vertex respectively edge groups
$$\widetilde{G}_{(\alpha,v)} = G_v, \quad \widetilde{G}_{(\alpha,e)} = G_e \qquad \text{for all $\alpha\in A$, $v\in V$, $e\in E$,}$$
and edge morphisms
$$\widetilde{f}_{(\alpha,e)} = f_e\qquad \text{for all $(\alpha,e)\in E(\widetilde{Y})$.}$$
There is a natural morphism $\widetilde{\scrg}\to\scrg$ of graphs of groups with underlying morphism of graphs $\phi\colon\widetilde{Y}\to Y$, which is the identity on each vertex group $\widetilde{G}_{(\alpha,v)}=G_v$ and on each edge group $\widetilde{G}_{(\alpha,e)} = G_e$. This morphism, also denoted by $\phi$, yields a surjective morphism $\pi(\widetilde{\scrg})\to\pi(\scrg)$, which restricts to a morphism
$\phi_*\colon\pi_1(\widetilde{\scrg})\to \pi_1(\scrg)=G$ between fundamental groups. We call $\phi$ the {\it unfolding of $\scrg$ along $\psi$.}\/ In the proposition below we continue to denote the extension of $\psi\colon\pi_1(Y,T)\to A$ to a morphism $\pi_1(\scrg,T)\to A$ with $g\mapsto 1$ for $g\in G_v$, $v\in V$, by the same symbol.

\medskip

The following proposition may also be proved easily using the topological interpretation of graphs of groups; we give a purely algebraic proof.

\begin{proposition}\label{prop:phistar}
The morphism $\phi_*$  is injective, with image $K=\ker(\psi)$. %\footnote{Vom topologischen Gesichtspunkt her ist das trivial. Koennte man vielleicht erwaehnen. }
\end{proposition}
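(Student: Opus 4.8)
The plan is to exhibit $\phi_*$ concretely on the generators of $\pi_1(\widetilde\scrg)$ and then match it against the standard description of $\ker(\psi)$ as an iterated HNN extension / amalgam. First I would fix the maximal subtree $T$ of $Y$ used in the construction of $\widetilde Y$, so that $\psi\colon\pi(\scry)\to A$ (and hence $\psi\colon\pi_1(\scrg,T)\to A$) is pinned down. Recall that $\pi(\widetilde\scrg)$ is generated by the vertex groups $\widetilde G_{(\alpha,v)}=G_v$ together with the edge symbols $(\alpha,e)$, subject to the defining relations $(\alpha,e)\,\widetilde f_{(\alpha,e)}(g)\,\overline{(\alpha,e)}=\widetilde f_{\overline{(\alpha,e)}}(g)$, i.e. $(\alpha,e)f_e(g)\overline{(\alpha,e)}=f_{\overline e}(g)$, and $\phi$ sends $g\in\widetilde G_{(\alpha,v)}$ to $g\in G_v\subseteq\pi(\scrg)$ and sends $(\alpha,e)\mapsto e$. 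So $\phi_*$ is simply the map induced on path groups by ``forget the $A$-label,'' restricted to $\pi_1(\widetilde\scrg)=\pi_1(\widetilde\scrg,\widetilde v_0)$ for a suitable base vertex $\widetilde v_0$ lying over $v_0$.

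The key computation is that $\phi_*$ lands in $\ker(\psi)$. This is where the labelling is designed to do its job: I would track, along any path $\widetilde\gamma=(\widetilde g_0,(\alpha_1,e_1),\widetilde g_1,\dots,(\alpha_n,e_n),\widetilde g_n)$ in $\widetilde\scrg$ based at $\widetilde v_0$, the ``$A$-coordinate'' of the vertices it visits. The incidence rules $o(\alpha,e)=(\alpha,o(e))$, $t(\alpha,e)=(\psi(e)\alpha,t(e))$ show that if the path starts at the vertex with $A$-coordinate $\alpha_0$, then after traversing $(\alpha_1,e_1),\dots,(\alpha_k,e_k)$ the current $A$-coordinate equals $\psi(e_k)\cdots\psi(e_1)\alpha_0 = \psi(e_k\cdots e_1)\alpha_0$ (using that $\psi$ kills the vertex-group letters $\widetilde g_i$). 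Since $\widetilde\gamma$ is a \emph{closed} path at $\widetilde v_0$, its start and end $A$-coordinates agree, giving $\psi(e_n\cdots e_1)=1$, i.e. $\psi(\phi_*(g))=1$ where $g$ is the element represented by $\widetilde\gamma$. Hence $\phi_*(\pi_1(\widetilde\scrg))\subseteq K$. Here I need the mild bookkeeping point that this labelling argument really reads off $\psi$ applied to the image element $g_0e_1\cdots e_n g_n\in\pi(\scrg)$; one checks directly from the relations in $\pi(\scry)$ that $\psi$ of an edge-path in $\scrg$ is $\psi(e_n)\cdots\psi(e_1)$, independent of the chosen representative.

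For injectivity and surjectivity onto $K$ I would invoke the structure already set up. Injectivity of $\phi_*$: every nontrivial $g\in\pi_1(\widetilde\scrg)$ is represented by a reduced path $\widetilde\gamma$ in $\widetilde\scrg$ (by the discussion in Section~\ref{sec:pi1}); its image $\phi_*(\widetilde\gamma)=(\widetilde g_0,e_1,\widetilde g_1,\dots,e_n,\widetilde g_n)$ is again a path in $\scrg$, and reducedness is preserved because the condition ``$\widetilde g_i\notin \widetilde f_{(\alpha_i,e_i)}(\widetilde G_{(\alpha_i,e_i)})=f_{e_i}(G_{e_i})$'' at a backtracking step $e_{i+1}=\overline{e_i}$ is literally the same condition in $\scrg$; by \cite[I.5.2, Theorem~11]{Se80} the reduced path $\phi_*(\widetilde\gamma)$ represents a nontrivial element, so $\phi_*(g)\neq1$. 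Surjectivity onto $K$: $K$ is a finite-index normal subgroup of $G$, and by the covering-space picture underlying Theorem~\ref{thm:bass-serre} (Bass--Serre theory / the action of $G$ on the tree of $\scrg$), the cover of the underlying ``graph'' data corresponding to $K$ is exactly $\widetilde Y$ with the pulled-back vertex/edge groups, i.e. $\widetilde\scrg$; more directly, one checks $[\,G:\phi_*(\pi_1(\widetilde\scrg))\,]=|A|$ by counting — e.g. via the Euler-characteristic/rank bookkeeping for graphs of groups, or by noting $\psi\circ\phi_*$ is trivial (just shown) so $\phi_*(\pi_1(\widetilde\scrg))\le K$, while $\phi$ has degree $|A|$ so $[\,G:\phi_*(\pi_1(\widetilde\scrg))\,]\le|A|=[\,G:K\,]$, forcing equality and hence $\phi_*(\pi_1(\widetilde\scrg))=K$. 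The main obstacle is the last degree count: one must verify cleanly that the map of graphs $\phi\colon\widetilde Y\to Y$ is a genuine $|A|$-fold covering (each vertex and each edge has exactly $|A|$ preimages, and the local picture around each edge matches), and that this topological degree equals the index $[\,G:\phi_*(\pi_1(\widetilde\scrg))\,]$; both are routine given the explicit definition of $\widetilde Y$ and the incidence relations, but they are the steps requiring genuine (if short) verification rather than formal manipulation.
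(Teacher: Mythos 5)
Your forward inclusion ($\phi_*(\pi_1(\widetilde\scrg))\subseteq K$) and your injectivity argument (reduced paths are carried to reduced paths, since $\ol{(\alpha,e)}=(\psi(e)\alpha,\ol{e})$ maps to $\ol{e}$ and the backtracking test for $\widetilde g_i$ is literally the same as in $\scrg$) are exactly the paper's; no issue there. Where you diverge is the reverse inclusion $K\subseteq\phi_*(\pi_1(\widetilde\scrg))$. The paper does this by a direct, purely algebraic lift: given a closed path $g_0e_1g_1\cdots e_ng_n$ representing $g\in K$, it defines $\alpha_1=1$ and $\alpha_{i+1}=\psi(e_i\cdots e_1)$ and observes that the relation $\psi(e_n\cdots e_1)=1$ is precisely what makes $(g_0,(\alpha_1,e_1),g_1,\dots,(\alpha_n,e_n),g_n)$ a closed path in $\widetilde\scrg$ based at $(1,v_0)$, so it manufactures an explicit preimage. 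You instead want to deduce surjectivity from a degree count $[G:\phi_*(\pi_1(\widetilde\scrg))]\le |A|$. That route does work if you run it topologically (realize $\scrg$ and $\widetilde\scrg$ by graphs of based CW-complexes as in the paper's Section~\ref{sec:graphs of cw-complexes}, check $|\widetilde\scrx|\to|\scrx|$ is an $|A|$-sheeted cover, and use that index of $\pi_1$ equals covering degree), but it imports a fair amount of machinery to replace a three-line computation, and indeed the paper flags exactly this: it notes the proposition ``may also be proved easily using the topological interpretation'' but deliberately gives a purely algebraic proof. Be careful about your parenthetical fallback: an Euler-characteristic/rank bookkeeping argument for the index is \emph{not} available here, because the vertex groups $G_v$ are arbitrary (generally infinite) groups, so there is no rational Euler characteristic $\sum 1/|G_v|-\sum 1/|G_e|$ to appeal to. Also note that a purely algebraic degree count would be circular: to show $[G:\phi_*(\pi_1(\widetilde\scrg))]\le|A|$ from first principles one essentially has to show that every $g$ with $\psi(g)=1$ lifts, which is exactly the explicit-lift argument you were trying to avoid.
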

\begin{proof}
Let $v_0\in V$ be a base point of $G$, that is, $G=\pi_1(\scrg,v_0)$.
We choose $(1,v_0)$ as our base point for $\pi_1(\widetilde{\scrg})=\pi_1(\widetilde{\scrg},(1,v_0))$.
Let $\widetilde{g}$ be an element of $\pi_1(\widetilde{\scrg})$:
$$\widetilde{g} = g_0\, (\alpha_1,e_1)\, g_1\, (\alpha_2,e_2) \cdots (\alpha_n,e_n)\, g_n;$$
where $g_i\in \widetilde{G}_{t(\alpha_i,e_i)}$, and
\begin{equation}\label{eq:unwind 1}
(\psi(e_i)\alpha_i,t(e_i)) = t(\alpha_i,e_i) = o(\alpha_{i+1},e_{i+1}) = (\alpha_{i+1},o(e_{i+1})) \quad (1\leq i<n)
\end{equation}
and
\begin{equation}\label{eq:unwind 2}
(\psi(e_n)\alpha_n,t(e_n)) = t(\alpha_n,e_n) = o(\alpha_1,e_1)=(\alpha_1,o(e_1))=(1,v_0).
\end{equation}
Note that then
\begin{equation}\label{eq:unwind 1'}
\alpha_{i+1}=\psi(e_i\cdots e_1)\qquad (1\leq i<n)
\end{equation}
by \eqref{eq:unwind 1} and
\begin{equation}\label{eq:unwind 2'}
\alpha_n = \psi(e_n)^{-1}
\end{equation}
by \eqref{eq:unwind 2},
so $$\psi(\phi_*(\widetilde{g}))=\psi(e_n\cdots e_1)=1.$$
Conversely, given an element $g=g_0\,e_1\,g_1\,e_2\cdots e_n\, g_n$ of $K$, defining $\alpha_i$ as in \eqref{eq:unwind 1'} and \eqref{eq:unwind 2'} yields an element $\widetilde{g}$ of  $\pi_1(\widetilde{\scrg})$ as above with $\phi_*(\widetilde{g})=1$. This shows that $\phi_*(\pi_1(\widetilde{\scrg}))=K$.
Now suppose $\widetilde{g}\neq 1$, and the representation of $\widetilde{g}$ above is reduced, that is, either
\begin{enumerate}
\item $n=0$ and $g_0\neq 1$; or
\item $n>0$ and $g_i\notin \widetilde{f}_{(\alpha_i,e_i)}(\widetilde{G}_{(\alpha_i,e_i)})$ for each $i$ such that
$(\alpha_{i+1},e_{i+1})=\ol{(\alpha_i,e_i)}$.
\end{enumerate}
Since
$\ol{(\alpha_i,e_i)} = (\psi(e_i)\alpha_i,\ol{e_i})$,
the image  $\phi_*(\widetilde{g})=g_0 e_1 g_1 e_2 \cdots e_n g_n$ of $\widetilde{g}$ under $\phi_*$ is also represented by a reduced path. Thus $\phi_*$ is injective.
\end{proof}

The following simple observation is used in Section~\ref{sec:virtual res p}.

\begin{lemma}\label{lem:unfolding and quotients}
Let $\psi\colon \pi_1(Y)\to A$ be a morphism onto a finite group, and let $\phi\colon\widetilde{\scrg}\to\scrg$ be the unfolding of $\scrg$ along $\psi$.
Let $\scrh=\{H_v\}$ be a compatible collection of normal subgroups of $\scrg$,  and let
$\widetilde{\scrg/\scrh}\to\scrg/\scrh$ be the unfolding of $\scrg/\scrh$ along $\psi$. Then $\widetilde{\scrg}/\widetilde{\scrh} \cong\widetilde{\scrg/\scrh}$ as graphs of groups, where $\widetilde{\scrh}=\phi^{-1}(\scrh)$.
\end{lemma}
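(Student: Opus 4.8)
The plan is to unwind both sides of the claimed isomorphism directly from the definitions in Section~\ref{subsec:unfolding a graph} and Section~\ref{subsec:unfolding}, and check that they agree on underlying graphs, on vertex and edge groups, and on edge morphisms. First I would observe that the operation $\scrg\mapsto\scrg/\scrh$ does not change the underlying graph $Y$ of $\scrg$, nor does it change the morphism $\psi\colon\pi_1(Y)\to A$ (which only depends on $Y$, not on the vertex/edge groups); hence the underlying graph $\widetilde{Y}$ of the unfolding is literally the same on both sides — namely the graph with vertex set $A\times V(Y)$ and edge set $A\times E(Y)$ with the incidence and involution maps as in Section~\ref{subsec:unfolding a graph}. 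So the only content is a comparison of the vertex groups, edge groups, and edge morphisms of the two graphs of groups based on $\widetilde{Y}$.

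Next I would spell out both sides. On one side, $\widetilde{\scrg/\scrh}$ has vertex group at $(\alpha,v)$ equal to $(\scrg/\scrh)_v = G_v/H_v$, edge group at $(\alpha,e)$ equal to $(\scrg/\scrh)_e = G_e/H_e$ (where $H_e=f_e^{-1}(H_{t(e)})$), and edge morphism $\widetilde{f}_{(\alpha,e)}$ equal to the map $G_e/H_e\to G_{t(e)}/H_{t(e)}$ induced by $f_e$. On the other side, $\widetilde{\scrg}$ has vertex group $G_v$ at $(\alpha,v)$, edge group $G_e$ at $(\alpha,e)$, and edge morphism $f_e$; and $\widetilde{\scrh}=\phi^{-1}(\scrh)$ is, by Lemma~\ref{lem:morphisms and filtrations, 1} applied to the morphism $\phi\colon\widetilde{\scrg}\to\scrg$, the compatible collection $\{\phi_{(\alpha,v)}^{-1}(H_{\phi(\alpha,v)})\}$; since $\phi_{(\alpha,v)}$ is the identity $\widetilde{G}_{(\alpha,v)}=G_v\to G_v=G_{\phi(\alpha,v)}$ and $\phi(\alpha,v)=v$, this is simply $\widetilde{H}_{(\alpha,v)}=H_v$. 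Therefore the quotient graph of groups $\widetilde{\scrg}/\widetilde{\scrh}$ has vertex group $\widetilde{G}_{(\alpha,v)}/\widetilde{H}_{(\alpha,v)} = G_v/H_v$ at $(\alpha,v)$, and edge group $\widetilde{G}_{(\alpha,e)}/\widetilde{H}_{(\alpha,e)}$ at $(\alpha,e)$. By the construction of quotient graphs of subgroups (the passage from a compatible family of vertex subgroups to the associated graph of subgroups), $\widetilde{H}_{(\alpha,e)} = \widetilde{f}_{(\alpha,e)}^{-1}(\widetilde{H}_{t(\alpha,e)}) = f_e^{-1}(H_{t(e)}) = H_e$, so the edge group is $G_e/H_e$, and its edge morphism is the one induced by $\widetilde{f}_{(\alpha,e)}=f_e$, i.e., exactly the induced map $G_e/H_e\to G_{t(e)}/H_{t(e)}$.

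Finally I would assemble these observations: both $\widetilde{\scrg}/\widetilde{\scrh}$ and $\widetilde{\scrg/\scrh}$ are graphs of groups based on the same graph $\widetilde{Y}$, with the same vertex groups $G_v/H_v$ at $(\alpha,v)$, the same edge groups $G_e/H_e$ at $(\alpha,e)$, and the same edge morphisms (each induced by $f_e$). The identity on $\widetilde{Y}$ together with the identity maps on these vertex and edge groups is then a morphism — indeed an isomorphism — of graphs of groups, which gives the asserted identification $\widetilde{\scrg}/\widetilde{\scrh}\cong\widetilde{\scrg/\scrh}$. I do not anticipate any real obstacle here: the statement is essentially a definitional compatibility (``unfolding commutes with taking quotients by a compatible collection of normal subgroups''), and the only mild care needed is to verify that $\phi^{-1}(\scrh)$ really is the family $\{H_v\}$ placed at every lift $(\alpha,v)$ and that the induced edge groups match on both sides — which is the straightforward computation $\widetilde{H}_{(\alpha,e)}=f_e^{-1}(H_{t(e)})=H_e$ carried out above. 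If one wants to be fully rigorous about ``isomorphism of graphs of groups,'' one should also check that this identification respects the chosen connecting paths/base-point data in the definition of a morphism of graphs of groups (Section~\ref{sec:Morphisms of graphs of groups}), but these are inherited identically from $\scrg$ on both sides, so the verification is immediate.
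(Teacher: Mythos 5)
Your proof is correct. The paper itself leaves this lemma to the reader, and the expected argument is exactly the definitional unwinding you carry out: both $\widetilde{\scrg}/\widetilde{\scrh}$ and $\widetilde{\scrg/\scrh}$ are graphs of groups based on the same graph $\widetilde Y$, with vertex group $G_v/H_v$ at $(\alpha,v)$, edge group $G_e/H_e$ at $(\alpha,e)$ (using that $\widetilde H_{(\alpha,e)}=\widetilde f_{(\alpha,e)}^{-1}(\widetilde H_{t(\alpha,e)})=f_e^{-1}(H_{t(e)})=H_e$), and edge morphisms induced by $f_e$, so the identity maps furnish the claimed isomorphism.
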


We leave the proof to the reader.

\index{HNN extension}

\subsection*{Digression: unfolding an HNN extension}
It might be instructive for the reader to see  the unfolding procedure at work in a concrete simple example. (The material in this subsection is not used later.)
Suppose the underlying graph $Y$ of~$\scrg$ has only one vertex $v_0$; we write $G_{v_0}=:\Sigma$. The fundamental group of $\scrg$ is the iterated HNN extension
$$G = \big\langle \Sigma,\ e\in E_+: \text{$e f_e(g) e^{-1} = f_{\ol{e}}(g)$ for all $e\in E_+$, $g\in G_e$}\big\rangle$$
of $\Sigma$, where $E_+$ is an orientation of $Y$.
%Suppose also that we are given a group morphism $\psi\colon G\to A$ onto a finite group $A$, which is trivial on $\Sigma$.
In this case, the definition of the unfolding $\widetilde{\scrg}$ of $\scrg$ along $\psi$ given above simplifies somewhat:
we may simply describe its underlying graph $\widetilde{Y}$ by $V(\widetilde{Y})=A$, $E(\widetilde{Y})=A\times E$, and
$$\ol{(\alpha,e)}=(\psi(e)\alpha,\ol{e}), \quad
o(\alpha,e)=\alpha, \quad t(\alpha,e)=\psi(e)\alpha,$$
and we have
$$\widetilde{G}_{\alpha} = \Sigma, \quad \widetilde{G}_{(\alpha,e)} = G_e, \quad \widetilde{f}_{(\alpha,e)} = f_e,$$
for $(\alpha,e)\in E(\widetilde{Y})$.
Figure~\ref{fig:loop} illustrates this for the case where $Y$ has only one topological edge $\{e,\ol{e}\}$, and $A=\Z/s\Z$ is cyclic, with $\varphi:=f_e\circ f_{\ol{e}}^{-1}$.

\begin{figure}
$$
\xygraph{
!{<0cm,0cm>;<1cm,0cm>:<0cm,1cm>::}
!{(0,0);a(0)**{}?(2.0)}*+{\Sigma}="G0"
"G0" :@`{p+(0,2.5),p+(-2,-1)}_{\varphi} "G0"
%"G0":@/_0.5em/"G0"_{\varphi}
}
\xymatrix{\quad\ar@{~>}[r]&\quad}
\xygraph{
!{<0cm,0cm>;<1cm,0cm>:<0cm,1cm>::}
!{(0,0);a(0)**{}?(2.0)}*+{\Sigma\times 0}="G0"
!{(0,0);a(36)**{}?(2.0)}*+{\Sigma\times 1}="G1"
!{(0,0);a(72)**{}?(2.0)}*+{\Sigma\times 2}="G2"
!{(0,0);a(108)**{}?(2.0)}*+{}="G3"
!{(0,0);a(144)**{}?(2.0)}*+{}="Gi-2"
!{(0,0);a(180)**{}?(2.0)}*+{\Sigma\times i}="Gi-1"
!{(0,0);a(216)**{}?(2.0)}*+{\Sigma\times (i+1)}="Gi"
!{(0,0);a(252)**{}?(2.0)}*+{}="Gi+1"
!{(0,0);a(288)**{}?(2.0)}*+{}="Gs-2"
!{(0,0);a(324)**{}?(2.0)}*+{\Sigma\times (s-1)}="Gs-1"
"G0":@/_0.5em/"G1"_{\varphi}
"G1":@/_0.5em/"G2"_{\varphi}
"G2":@/_0.5em/"G3"
"G3":@{.}@/_0.2em/"Gi-2"
"Gi-2":@/_0.5em/"Gi-1"
"Gi-1":@/_0.5em/"Gi"_{\varphi}
"Gi":@/_0.5em/"Gi+1"
"Gi+1":@{.}@/_0.2em/"Gs-2"
"Gs-2":@/_0.5em/"Gs-1"
"Gs-1":@/_0.5em/"G0"_{\varphi}
}
$$
\caption{Unfolding an HNN extension along a morphism to $\Z/s\Z$}
\label{fig:loop}
\end{figure}

\medskip

Suppose now that $A$ is a subgroup of $\Aut(\Sigma)$, and for each edge $e$ of $Y$, the automorphism $\psi(e)\in A$ extends the partial automorphism $f_{e}\circ f_{\ol{e}}^{-1}$ of $\Sigma$. %Until the end of this subsection, we assume that we are in this situation.
Let $\widetilde{T}$ be a maximal subtree of $\widetilde{Y}$, and  identify each vertex group $\widetilde{G}_\alpha$ with a subgroup of $\widetilde{G}:=\pi_1(\widetilde{\scrg},\widetilde{T})$ as usual.
In this situation unfolding has the following desirable property:

\begin{lemma}\label{lem:unfolding HNN}
There exists a morphism $\widetilde{G}\to\Sigma$ which is bijective on each vertex group $\widetilde{G}_\alpha$ of $\widetilde{\scrg}$.
\end{lemma}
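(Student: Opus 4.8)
The plan is to build the morphism $\widehat\rho\colon\widetilde G\to\Sigma$ on the path group $\pi(\widetilde\scrg)$ and then check it descends. Recall from \eqref{eq:maximal subtree description of pi1} that $\widetilde G=\pi_1(\widetilde\scrg,\widetilde T)$ is the quotient of $\pi(\widetilde\scrg)$ by the relations $\widetilde e=1$ for $\widetilde e\in E(\widetilde T)$, that the vertex set of $\widetilde Y$ is $A$, and that each vertex group $\widetilde G_\alpha$ is a copy of $\Sigma$. Since $A\leq\Aut(\Sigma)$, I would define $\rho\colon\pi(\widetilde\scrg)\to\Sigma$ by letting it restrict to $\alpha^{-1}\in\Aut(\Sigma)$ on $\widetilde G_\alpha=\Sigma$ for each $\alpha\in A$, and by sending every edge of $\widetilde Y$ to $1$.

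The key step is to verify that this assignment respects the defining relations $\widetilde e\,\widetilde f_{\widetilde e}(g)\,\widetilde e^{-1}=\widetilde f_{\ol{\widetilde e}}(g)$ of $\pi(\widetilde\scrg)$. For $\widetilde e=(\alpha,e)$ one has $t(\alpha,e)=\psi(e)\alpha$, $o(\alpha,e)=\alpha$, $\widetilde f_{(\alpha,e)}=f_e$ and $\widetilde f_{\ol{(\alpha,e)}}=f_{\ol e}$, so applying $\rho$ (which kills $\widetilde e$) the relation becomes $(\psi(e)\alpha)^{-1}(f_e(g))=\alpha^{-1}(f_{\ol e}(g))$, i.e., after cancelling $\alpha^{-1}$, that $\psi(e)^{-1}(f_e(g))=f_{\ol e}(g)$ for all $g\in G_e$. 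This is exactly the hypothesis that $\psi(e)$ extends the partial automorphism $f_e\circ f_{\ol e}^{-1}$, evaluated at $f_{\ol e}(g)$: namely $\psi(e)(f_{\ol e}(g))=f_e(g)$. (The relation for $\ol{(\alpha,e)}$ is the same relation, and $\widetilde e\,\ol{\widetilde e}=1$ maps to $1$.) Hence $\rho$ is a well-defined group morphism.

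Finally, $\rho$ sends every edge of $\widetilde Y$ to $1$, in particular every edge of $\widetilde T$, so it factors through a morphism $\widehat\rho\colon\widetilde G=\pi_1(\widetilde\scrg,\widetilde T)\to\Sigma$. Contracting $\widetilde T$ makes the natural (injective) map $\widetilde G_\alpha\to\widetilde G$ the one induced by the inclusion $\widetilde G_\alpha\hookrightarrow\pi(\widetilde\scrg)$ followed by the projection, so $\widehat\rho$ restricted to the image of $\widetilde G_\alpha$ agrees with $\alpha^{-1}$, an automorphism of $\Sigma$; thus $\widehat\rho$ carries $\widetilde G_\alpha$ bijectively onto $\Sigma$, which is the assertion of the lemma. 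I do not anticipate a genuine obstacle here: the whole content is the observation that the stated compatibility of $\psi$ with the edge identifications is precisely what makes the ``constant-on-vertices, trivial-on-edges'' prescription consistent, and the only real care needed is the bookkeeping with the orientation convention $t(\alpha,e)=(\psi(e)\alpha,t(e))$ and with the direction of the partial automorphisms $\varphi_e=f_{\ol e}\circ f_e^{-1}$ versus $f_e\circ f_{\ol e}^{-1}$.
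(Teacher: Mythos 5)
Your proof is correct, and the heart of the computation is the same as in the paper: the compatibility $\psi(e)\bigl(f_{\ol e}(g)\bigr)=f_e(g)$ coming from the hypothesis that $\psi(e)$ extends $f_e\circ f_{\ol e}^{-1}$. The packaging differs: the paper first invokes Lemma~\ref{lem:special amalgam} to produce a morphism $\pi_1(\widetilde{\scrg}|\widetilde{T})\to\Sigma$ on the tree of groups, and then separately verifies the relation $\widetilde{\varphi}\circ\widetilde{f}_{\ol{(\alpha,e)}}=\widetilde{\varphi}\circ\widetilde{f}_{(\alpha,e)}$ for the non-tree edges to extend over the iterated HNN structure. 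You instead define the map directly on the path group $\pi(\widetilde{\scrg})$ by the uniform prescription ``$\alpha^{-1}$ on $\widetilde{G}_\alpha$, trivial on edges,'' verify the single family of path-group relations $\widetilde e\,\widetilde f_{\widetilde e}(g)\,\widetilde e^{-1}=\widetilde f_{\ol{\widetilde e}}(g)$, and then observe it factors through $\pi_1(\widetilde{\scrg},\widetilde{T})$. This is marginally more self-contained (it bypasses Lemma~\ref{lem:special amalgam}) and doesn't require distinguishing tree edges from non-tree edges, but the underlying verification is the same; there is no real gain or loss, only a slightly different order of operations.
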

\begin{proof}
For  all $(\alpha,e)\in E(\widetilde{Y})$,  the isomorphism $\psi(e)$ extends
$\widetilde{f}_{(\alpha,e)}\circ\widetilde{f}_{\ol{(\alpha,e)}}^{-1}=
f_{e}\circ f_{\ol{e}}^{-1}$, and the composition of the $\psi(e)$ along the geodesic $((\alpha_1,e_1),\dots,(\alpha_n,e_n))$ from the vertex $\alpha$  to $1$ in $\widetilde{T}$ is the isomorphism $\widetilde{G}_\alpha \to \widetilde{G}_1$ given by
$\psi(e_n\cdots e_1) = \alpha^{-1}$.
Hence by Lemma~\ref{lem:special amalgam} there is a morphism
$$\widetilde{\varphi}\colon\pi_1(\widetilde{\scrg}|\widetilde{T})\to \widetilde{G}_1=\Sigma$$ which, for each $\alpha\in A$, extends  $\alpha^{-1}$ (and hence is bijective on $\widetilde{G}_\alpha$).
Now
$\alpha^{-1} = (\psi(e)\alpha)^{-1}\circ\psi(e)$,
and therefore
$$\widetilde{\varphi}\circ  \widetilde{f}_{\ol{(\alpha,e)}} = \alpha^{-1} \circ \widetilde{f}_{\ol{(\alpha,e)}} =
(\psi(e)\alpha)^{-1}  \circ \widetilde{f}_{(\alpha,e)} = \widetilde{\varphi} \circ \widetilde{f}_{(\alpha,e)}.
$$
(See Figure~\ref{fig:unfolding HNN}.)
This allows us to extend  $\widetilde{\varphi}$  to
a morphism $\widetilde{G}\to \widetilde{G}_1=\Sigma$ by  $(\alpha,e)\mapsto 1$ for all edges $(\alpha,e)\in E(\widetilde{Y})\setminus E(\widetilde{T})$.
\end{proof}

An analogue of this fact will be established for general graphs of elementary abelian $p$-groups in Proposition~\ref{prop:unfolded}.

\begin{figure}
$$
\xygraph{
!{<0cm,0cm>;<1cm,0cm>:<0cm,1cm>::}
!{(0,0)}*+{\widetilde{G}_\alpha=\Sigma}="Galpha"
!{(8,0)}*+{\widetilde{G}_{\psi(e)\alpha}=\Sigma}="Gpsiealpha"
!{(3,-3)}*+{\Sigma}="G1"
!{(5,-3)}*+{\Sigma}="G2"
!{(4,-2)}*+{\Sigma}="G3"
!{(2,-2)}*+{\Sigma}="G4"
!{(6,-2)}*+{\Sigma}="G5"
!{(1,-1)}*+{\Sigma}="G6"
!{(7,-1)}*+{\Sigma}="G7"
!{(6,0)}*+{\Sigma}="G8"
!{(-1,1)}*+{\Sigma}="G9"
!{(1,1)}*+{\Sigma}="G10"
!{(1.5,1.5)}="G11"
!{(-1.5,1.5)}="G12"
!{(0.5,1.5)}="G13"
!{(-0.5,1.5)}="G14"
!{(3,-1)}="G15"
!{(9.5,1.5)}="G16"
!{(6.5,1.5)}="G17"
!{(4,-4)}*+{\widetilde{G}_1=\Sigma}="G0"
"G1"-"G4"
"Galpha":@{-->}@/^2.5em/"Gpsiealpha"_{\psi(e)}
"G2"-"G5"
"G1"-"G3"
"G0"-"G1"
"G0"-"G2"
"G10":@{.}"G11"
"G9":@{.}"G12"
"G10":@{.}"G13"
"G9":@{.}"G14"
"Galpha"-"G9"
"Galpha"-"G10"
"G4":@{.}"G6"
"G6"-"Galpha"
"G5":@{.}"G7"
"G4":@{.}"G15"
"Gpsiealpha":@{.}"G16"
"Gpsiealpha":@{.}"G17"
"G7"-"G8"
"G7"-"Gpsiealpha"
"Galpha":@{-->}@/_3.5em/"G0"_{\alpha^{-1}}
"Gpsiealpha":@{-->}@/^4.5em/"G0"^{(\psi(e)\alpha)^{-1}}
}
$$
\caption{The tree of groups $\widetilde{\scrg}|\widetilde{T}$ in the proof of Lemma~\ref{lem:unfolding HNN}}
\label{fig:unfolding HNN}
\end{figure}

\section{A criterion for being virtually residually $p$}\label{sec:virtual res p}

\noindent
We now continue where Section~\ref{sec:A Reduction Theorem} left off, and put ourselves back in the setting of Section~\ref{subsec:reduction theorem}. That is, we  let $\scrg$ be a graph of finitely generated groups with underlying graph $Y$.
We recall the statement of the main theorem of this chapter:

\begin{theorem}\label{thm:reduction theorem, 3}
If $\scrg$ admits a $p$-excellent filtration, then the group $G=\pi_1(\scrg)$ is virtually residually $p$.
\end{theorem}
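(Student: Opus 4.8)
\textbf{Proof plan for Theorem~\ref{thm:reduction theorem, 3}.}

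The plan is to reduce Theorem~\ref{thm:reduction theorem, 3} to Theorem~\ref{thm:reduction theorem} by finding, after passing to a finite cover, a complete boundary-compatible $p$-filtration whose first layer has residually-$p$ partial abelianization. So suppose $\scrg$ admits a $p$-excellent filtration $\mathbf G$. The first step is to extract from condition~(5) of $p$-excellence a morphism $\psi\colon\pi_1(\scrg)\to\Aut(\Sigma)$ capturing the obstruction to a vertex-injective map to $\Sigma:=\Sigma(L_1(\mathbf G)|T)$: for each edge $e\in E=E(Y)\setminus E(T)$, the isomorphism $L_1(\varphi_e)\colon L_1(\mathbf A_e)\to L_1(\mathbf B_e)$ between subgroups of $\Sigma$ extends (since both are uniformly $p$-potent, hence in particular since these are elementary abelian and the ambient $\Sigma$ is an $\F_p$-vector space) to an automorphism $\widehat\varphi_e$ of $\Sigma$. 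Sending each stable letter $e\in E_+$ to $\widehat\varphi_e$ and killing each vertex group of $\scrg$ defines $\psi\colon\pi_1(\scrg)\to A$, where $A:=\langle\widehat\varphi_e:e\in E_+\rangle\leq\Aut(\Sigma)$ is a \emph{finite} group (being a subgroup of $\GL(\dim_{\F_p}\Sigma,\F_p)$). Here I would need to know that $\psi$ factors through $\pi_1(Y)$, i.e., is trivial on all vertex groups — this is exactly how $\psi$ was defined — so it is of the form considered in Section~\ref{sec:unfolding a graph of groups}.

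The second step is to apply the unfolding construction: let $\phi\colon\widetilde{\scrg}\to\scrg$ be the unfolding of $\scrg$ along $\psi$ (Section~\ref{subsec:unfolding}). By Proposition~\ref{prop:phistar}, $\phi_*\colon\pi_1(\widetilde{\scrg})\to\pi_1(\scrg)$ is injective with image $\ker(\psi)$, which has finite index. Then I would transport the filtration: $\widetilde{\mathbf G}:=\phi^{-1}(\mathbf G)$ is a normal central $p$-filtration of $\widetilde{\scrg}$ which, since each $\phi_{\widetilde v}$ is an isomorphism onto a vertex group $G_v$ of $\scrg$ and each $\phi_{\widetilde e}$ is an isomorphism onto an edge group, inherits $p$-potency, separation, separation of the edge groups, and condition~(5) — this is precisely the content of Lemmas~\ref{lem:separating, 1} and \ref{lem:separating, 2} together with the remark preceding Lemma~\ref{lem:separating, 1}. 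So $\widetilde{\mathbf G}$ is again a $p$-excellent filtration of $\widetilde{\scrg}$. Moreover $L_1(\widetilde{\mathbf G}_v)\cong L_1(\mathbf G_{\phi(v)})$ for each vertex $v$ of $\widetilde Y$, so the first-layer graph $L_1(\widetilde{\mathbf G})$ is the unfolding (of graphs of abelian groups) of $L_1(\mathbf G)$ along $\psi$ — using Lemma~\ref{lem:unfolding and quotients}.

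The third and crucial step is to verify hypothesis~(3) of Theorem~\ref{thm:reduction theorem} for $\widetilde{\mathbf G}$, i.e., that $\pi_1^*(L_1(\widetilde{\mathbf G}),\widetilde T)$ is residually $p$ for a suitable maximal subtree $\widetilde T$ of $\widetilde Y$. By Lemma~\ref{lem:crucial}, it suffices to produce a morphism $\pi_1(L_1(\widetilde{\mathbf G}),\widetilde T)\to\Sigma$ (a finite abelian $p$-group) which is injective on each vertex group of $L_1(\widetilde{\mathbf G})$. This is exactly the analogue, promised in the text after Lemma~\ref{lem:unfolding HNN}, of that lemma for general graphs of elementary abelian $p$-groups — call it Proposition~\ref{prop:unfolded}: because each $\psi(e)=\widehat\varphi_e$ extends $L_1(\varphi_e)$, the composition of the $\widehat\varphi_e$ along the geodesic in $\widetilde T$ from a vertex $(\alpha,v)$ to the basepoint equals $\alpha^{-1}$, so by the tree-amalgam fact (Lemma~\ref{lem:special amalgam}, in the elementary abelian setting) there is a morphism on $\pi_1(L_1(\widetilde{\mathbf G})|\widetilde T)$ extending each $\alpha^{-1}$, hence bijective on each vertex group; the cocycle identity $\alpha^{-1}=(\psi(e)\alpha)^{-1}\circ\psi(e)$ lets it extend over the stable letters by $e\mapsto 0$. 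The main obstacle I anticipate is purely bookkeeping rather than conceptual: making sure that the identification of $L_1(\widetilde{\mathbf G})$ with the unfolded graph of abelian groups is compatible with the colimit/fiber-sum notation (so that $\Sigma(L_1(\widetilde{\mathbf G})|\widetilde T)$ really receives the map into $\Sigma$ described above), and that excluding, if necessary, finitely many primes (as in the statements quoted in the introduction, e.g., to guarantee the existence of the $p$-excellent filtration on the vertex groups) does no harm here — but by hypothesis we are \emph{given} the $p$-excellent filtration, so no primes need be excluded at this stage. With hypotheses~(1)--(3) of Theorem~\ref{thm:reduction theorem} in hand for $\widetilde{\mathbf G}$, and since $\widetilde{\mathbf G}$ is separating and separates the edge groups, Theorem~\ref{thm:reduction theorem} gives that $\pi_1(\widetilde{\scrg})$ is residually $p$; as $\pi_1(\widetilde{\scrg})\cong\ker(\psi)$ has finite index in $\pi_1(\scrg)=G$, we conclude that $G$ is virtually residually $p$, as claimed. \qed
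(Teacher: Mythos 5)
Your proposal reproduces, at a high level, the paper's own proof: build an obstruction morphism $\psi$ valued in a finite subgroup of $\Aut(\Sigma)$ and trivial on vertex groups, unfold $\scrg$ along $\psi$, pull back the filtration and check that it stays $p$-excellent via Lemmas~\ref{lem:separating, 1} and \ref{lem:separating, 2}, use Proposition~\ref{prop:unfolded} together with Lemmas~\ref{lem:unfolding and quotients} and \ref{lem:crucial} (this is exactly the content of Corollary~\ref{cor:unfolded, 1}) to verify hypothesis~(3) of Theorem~\ref{thm:reduction theorem} for the unfolded graph, and conclude with Theorem~\ref{thm:reduction theorem}.

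There is, however, a gap: you never deal with the possibility that $\mathbf G$ fails to be complete. The definition of $p$-excellency does \emph{not} include completeness --- condition~(2) asks only that $\mathbf G$ be $p$-potent ``construed as a complete filtration of $\mathbf G_1$'', and in the intended applications $G_{v,1}$ is typically a proper finite-index subgroup of $G_v$. On the other hand, Theorem~\ref{thm:reduction theorem} is formulated for a \emph{central} $p$-filtration of $\scrg$, and central filtrations are complete by definition. Unfolding along $\psi$ does not remedy this, because $\psi$ is trivial on each $G_v$: the vertex groups of the unfolded graph are isomorphic copies of the $G_v$, so $\phi^{-1}(\mathbf G)$ is still incomplete whenever $\mathbf G$ was. (Indeed, in Step~2 you describe $\phi^{-1}(\mathbf G)$ as a ``normal central $p$-filtration'' --- that already presupposes completeness.) What is missing is a preliminary reduction: before unfolding, pass to a finite cover of $\scrg$ via Proposition~\ref{prop:commoncover} applied to the compatible collection $\{G_{v,1}\}_{v\in V(Y)}$ of finite-index normal subgroups; the pullback of $\mathbf G$ under this cover is complete and, again by Lemmas~\ref{lem:separating, 1} and \ref{lem:separating, 2}, remains $p$-excellent. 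With that preliminary step in place, your argument goes through as in the paper.

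A smaller remark: your sketch of how to prove the analogue of Lemma~\ref{lem:unfolding HNN} for general graphs via geodesic composition cannot literally invoke Lemma~\ref{lem:special amalgam}, since that lemma requires isomorphisms between full vertex groups, whereas here the vertex groups of $L_1(\mathbf G)$ are merely (generally distinct) subgroups of $\Sigma$. The paper's Proposition~\ref{prop:unfolded} sidesteps this by routing the argument through the semidirect product $F\ltimes\Sigma$; since you cite Proposition~\ref{prop:unfolded} anyway, this does not affect your overall argument, but the parenthetical justification as written would not stand on its own.
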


\index{filtration!$p$-excellent}

Throughout the rest of this section, we fix a maximal subtree $T$ of the underlying graph $Y$ of $\scrg$. We identify $G$ with $\pi_1(\scrg,T)$ and in this way each vertex group~$G_v$ of $\scrg$ with a subgroup of $G$.

\medskip

Theorem~\ref{thm:reduction theorem, 3} will be obtained by applying the reduction theorem (Theorem~\ref{thm:reduction theorem}) to a suitable finite index subgroup of $\pi_1(\scrg)$;
the heart of the construction of such a subgroup is contained in the
following proposition:

\begin{proposition}\label{prop:unfolded}
Suppose all vertex groups $G_v$ of $\scrg$ are elementary abelian $p$-groups. Then
there exists a morphism $\psi\colon \pi_1(Y)\to A$ onto a finite group $A$, with corresponding unfolding $\widetilde{\scrg}\to\scrg$ of $\scrg$, and
a morphism $\pi_1(\widetilde{\scrg})\to\Sigma(\scrg|T)$ which is injective when restricted to each vertex group of $\widetilde{\scrg}$.
\end{proposition}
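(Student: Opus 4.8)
The strategy is to choose $A$ to be (a subgroup of) $\Aut(\Sigma)$, where $\Sigma := \Sigma(\scrg|T)$ is the fiber sum of the restriction of $\scrg$ to the maximal subtree $T$, and to let $\psi$ encode the obstructions to directly amalgamating the vertex groups into $\Sigma$. Concretely, for each edge $e\in E:=E(Y)\setminus E(T)$ the identification $\varphi_e = f_{\ol e}\circ f_e^{-1}\colon A_e\to B_e$ is an isomorphism between subgroups $A_e = f_e(G_e)$ and $B_e = f_{\ol e}(G_e)$ of $\Sigma$ (here I use Lemma~\ref{lem:injection into fiber sum} to view all vertex groups of $\scrg|T$, and hence $A_e$, $B_e$, inside $\Sigma$). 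Since $\Sigma$ is an elementary abelian $p$-group, hence a finite-dimensional $\F_p$-vector space, each partial automorphism $\varphi_e$ extends to a (non-canonical) full automorphism $\overline{\varphi}_e\in\Aut(\Sigma)$; fix such extensions with $\overline{\varphi}_{\ol e} = \overline{\varphi}_e^{-1}$. Because $\pi_1(Y) = \pi_1(\scry,T)$ is free on the topological edges in $E$, the assignment $e\mapsto\overline{\varphi}_e$ ($e\in E$) and $e\mapsto\id_\Sigma$ ($e\in E(T)$) extends uniquely to a group morphism $\widehat\psi\colon\pi(\scry)\to\Aut(\Sigma)$, and restricting to $\pi_1(Y)=\pi_1(Y,T)$ gives a morphism into $\Aut(\Sigma)$; let $A$ be its image and $\psi\colon\pi_1(Y)\to A$ the corestriction. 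This $\psi$ is the morphism I want, and I then take $\widetilde{\scrg}\to\scrg$ to be the unfolding of $\scrg$ along $\psi$ constructed in Section~\ref{subsec:unfolding}.

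\textbf{Constructing the morphism on $\pi_1(\widetilde\scrg)$.} Now I apply the partial-abelianization yoga of Section~\ref{sec:partial ab} to the tree of groups $\scrg|T$: since $T$ is a tree, by Lemma~\ref{lem:special amalgam} there is a morphism $\pi_1(\scrg|T)\to\Sigma$ which is injective on each vertex group $G_v$ (in fact it realizes the fiber-sum inclusions $\iota_v$). For the unfolded graph of groups $\widetilde\scrg$, fix a maximal subtree $\widetilde T$ of $\widetilde Y$ and identify each vertex group $\widetilde G_{(\alpha,v)} = G_v$ with a subgroup of $\widetilde G := \pi_1(\widetilde\scrg,\widetilde T)$. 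The unfolding is designed precisely so that the obstruction cocycle trivializes: this is the general graph-of-groups analogue of Lemma~\ref{lem:unfolding HNN}, which handled the one-vertex (HNN) case. Following that proof, for each edge $(\alpha,e)\in E(\widetilde Y)$ the composite of the automorphisms $\psi(e_i)$ along the geodesic in $\widetilde T$ from the vertex $(\alpha,\,\cdot\,)$ to the base vertex is the automorphism $\alpha^{-1}$ of $\Sigma$; hence on the tree $\widetilde\scrg|\widetilde T$ Lemma~\ref{lem:special amalgam} produces a morphism $\widetilde\varphi\colon\pi_1(\widetilde\scrg|\widetilde T)\to\Sigma$ which on the vertex group sitting over $(\alpha,v)$ agrees with $\alpha^{-1}\circ\iota_v$, in particular is injective there. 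Finally, for each edge $(\alpha,e)\in E(\widetilde Y)\setminus E(\widetilde T)$ the defining relation of the HNN-part of $\widetilde G$ is compatibility under $\widetilde\varphi$: because $\psi(e)$ extends $f_e\circ f_{\ol e}^{-1}$ one has, exactly as in the computation in the proof of Lemma~\ref{lem:unfolding HNN}, $\widetilde\varphi\circ\widetilde f_{\ol{(\alpha,e)}} = (\psi(e)\alpha)^{-1}\circ\widetilde f_{\ol{(\alpha,e)}} = \alpha^{-1}\circ\widetilde f_{(\alpha,e)} = \widetilde\varphi\circ\widetilde f_{(\alpha,e)}$, so $\widetilde\varphi$ extends to a morphism $\pi_1(\widetilde\scrg,\widetilde T)\to\Sigma$ by sending every such edge to $1$. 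This extension is injective on each vertex group $\widetilde G_{(\alpha,v)}$ since its restriction there is $\alpha^{-1}\circ\iota_v$ with $\iota_v$ injective by Lemma~\ref{lem:injection into fiber sum}, which is the desired conclusion.

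\textbf{Main obstacle.} The genuinely delicate point is verifying that the geodesic-composite of the chosen automorphisms $\psi(e_i)$ from a vertex $(\alpha,v)$ of $\widetilde Y$ down to the base vertex equals $\alpha^{-1}$ on $\Sigma$ — i.e., that the "holonomy" of the unfolding along $\widetilde T$ is exactly the first coordinate of the vertex. In the one-vertex case this is the short computation in Lemma~\ref{lem:unfolding HNN}; for a general graph one must bookkeep the interplay between the maximal subtree $T$ of $Y$ (used to define $\psi$ and the unfolding) and the maximal subtree $\widetilde T$ of $\widetilde Y$ (used to identify vertex groups inside $\widetilde G$), together with the fact (noted in Section~\ref{subsec:unfolding a graph}) that $\phi\colon\widetilde Y\to Y$ is, up to graph isomorphism, independent of the choice of $T$. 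I expect this to be a direct but somewhat fiddly induction on path length in $\widetilde T$, using the explicit description $t(\alpha,e) = (\psi(e)\alpha,t(e))$ of the terminus map and the cocycle identity $\alpha^{-1} = (\psi(e)\alpha)^{-1}\circ\psi(e)$; everything else (extending $\varphi_e$ to $\overline\varphi_e$, the freeness of $\pi_1(Y)$, the applications of Lemmas~\ref{lem:special amalgam} and \ref{lem:injection into fiber sum}) is routine once that holonomy computation is in place.
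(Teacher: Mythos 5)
Your plan is sound and takes a genuinely different route from the paper's. You work entirely on the unfolded side: after constructing $\psi$ and the unfolding, you fix a maximal subtree $\widetilde T\subseteq\widetilde Y$, check the holonomy claim that the composite of $\psi$ along the $\widetilde T$-geodesic from $(\alpha,v)$ to the base vertex is $\alpha^{-1}$ (correct and not hard, via the recursion $\alpha_{i+1}=\psi(e_i)\alpha_i$ forced by the terminus formula $t(\alpha_i,e_i)=(\psi(e_i)\alpha_i,t(e_i))$), and glue the injective maps $\alpha^{-1}\circ\iota_v$ into a single morphism. The paper instead stays upstairs: it routes through the partial abelianization $\pi_1^*(\scrg,T)$ and the semidirect product $F\ltimes\Sigma$ (with $F$ free on $E_+$ acting via the chosen extensions $\sigma_e$), producing a morphism $\Psi\colon\pi_1(\scrg,T)\to F\ltimes\Sigma$ injective on vertex groups; since $\phi_*(\pi_1(\widetilde\scrg))=\ker\psi$ by Proposition~\ref{prop:phistar}, the composite $\Psi\circ\phi_*$ lands in $\Gamma\times\Sigma$ where $\Gamma=\ker(F\to A)$ acts trivially on $\Sigma$, and projecting to $\Sigma$ remains injective on each vertex group because conjugation in $F\ltimes\Sigma$ carries $\Sigma$ into $\Sigma$. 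The paper's argument sidesteps the geodesic bookkeeping you flagged as the main obstacle; yours makes the final map on $\widetilde\scrg$ explicit, which is exactly what the paper needs anyway when it later wants a map $\pi_1(\widetilde\scrg_2,\widetilde T)\to\Sigma$.

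Two repairs to your write-up. First, you cannot literally invoke Lemma~\ref{lem:special amalgam}: its hypothesis requires every edge morphism to extend to an isomorphism between the (full) adjacent vertex groups, and in $\widetilde\scrg|\widetilde T$ the vertex groups $\widetilde G_{(\alpha,v)}=G_v$ are typically non-isomorphic for distinct $v$. What you want is the universal property of $\Sigma(\widetilde\scrg|\widetilde T)$ (Figure~\ref{fig:colimit}) together with the natural abelianization map $\pi_1(\widetilde\scrg|\widetilde T)\to\Sigma(\widetilde\scrg|\widetilde T)$: the family $\{\alpha^{-1}\circ\iota_v\}_{(\alpha,v)}$ is compatible with the edge morphisms of $\widetilde\scrg|\widetilde T$ exactly when $\iota_{t(e)}\circ f_e=\psi(e)\circ\iota_{o(e)}\circ f_{\ol e}$ for every $e\in E(Y)$; this holds trivially for $e\in E(T)$ and for $e\in E$ by the choice of $\psi(e)$, and the same identity is what makes the HNN letters go to $1$. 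Second, as you anticipated, there is a sign to sort out: that identity requires $\psi(e)$ to extend $f_e\circ f_{\ol e}^{-1}$ (the convention of Lemma~\ref{lem:unfolding HNN}), not $\varphi_e=f_{\ol e}\circ f_e^{-1}$ as in your first paragraph, so one should take $\psi(e)=\ol{\varphi}_e^{-1}$. Both points are exactly the fiddliness you flagged; they are routine but do need to be set right.
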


%Given a morphism $\phi\colon\widetilde{\scrg}\to\scrg$ we denote by $\widetilde{\mathbf G}:=\phi^{-1}(\mathbf G)$ the central $p$-filtration of $\widetilde\scrg$ defined before Lemma~\ref{lem:separating, 1}.

\index{group!homogeneous}

\begin{proof}%[Proof of Proposition~\ref{prop:unfolded}]
We exploit the homogeneity of the elementary abelian $p$-group $\Sigma=\Sigma(\scrg|T)$:
Choose an orientation $E_+$ of $E=E(Y)\setminus E(T)$, i.e., $E$ is the disjoint union $E=E_+\cup\ol{E_+}$; and for every $e\in E_+$ choose an extension of the partial automorphism $\varphi_e:=f_{\ol{e}}\circ f_{e}^{-1}$ of  $\Sigma$ to an automorphism $\sigma_e$ of $\Sigma$.

We denote the free group on the generators $E_+$ by $F$. Note that $F=\pi_1(Y,T)$. We let $\psi\colon F\to\Aut(\Sigma)$ be the morphism with $e\mapsto\sigma_e$ ($e\in E_+$), and
we consider the semidirect product $F\ltimes\Sigma$ of $F$ with $\Sigma$ via $\psi$:
$$F\ltimes\Sigma=\big\langle \Sigma,\ e\in E_+ : \text{$e a e^{-1} =  \sigma_e(a)$ for all $e\in E_+$, $a\in \Sigma$}\big\rangle.$$
We have a surjective morphism of the group
$$\pi_1^*(\scrg,T)=
\big\langle \Sigma,\ e\in E_+ : \text{$e a e^{-1} = \varphi_e(a)$ for all $e\in E_+$, $a\in f_e(G_e)$}\big\rangle$$
onto $F\ltimes\Sigma$ which is the identity on $\Sigma$ and on $E_+$.
Composing with the natural morphism $\pi_1(\scrg,T)\to\pi_1^*(\scrg,T)$ we thus obtain a morphism $\Psi\colon\pi_1(\scrg,T)\to F\ltimes\Sigma$ which is injective on the vertex groups of $\scrg$.
Let $A$ be the subgroup of $\Aut(\Sigma)$ generated by the $\sigma_e$ ($e\in E_+$).
Writing $\Gamma:=\ker(\psi)$, we have a short exact sequence
$$1 \to \Gamma\ltimes\Sigma \to F\ltimes\Sigma\to A\to 1.$$
Note that $\Gamma$ acts trivially on $\Sigma$, in particular, $\Gamma\ltimes\Sigma=\Gamma\times\Sigma$. As in Section~\ref{sec:unfolding a graph of groups} we continue to denote the extension of $\psi\colon\pi_1(Y,T)\to A$ to a morphism $\pi_1(\scrg,T)\to A$ which is the trivial morphism on each vertex group $G_v$ by the same symbol $\psi$.
Then the following diagram commutes:
\[\xymatrix{
\pi_1(\scrg,T) \ar[r]^\Psi \ar[dr]^\psi & F\ltimes\Sigma \ar[d] \\
      & A.
}\]
We let  $\phi\colon\widetilde{\scrg}\to\scrg$ be the unfolding of $\scrg$ along $\psi\colon \pi_1(Y,T)\to A$ as constructed in Section~\ref{subsec:unfolding} above.
Then by Proposition~\ref{prop:phistar}
$$\phi_*(\pi_1(\widetilde{\scrg}))=\ker\left(\pi_1(\scrg,T)\xrightarrow{\psi} A\right)$$
and thus
$$\Psi(\phi_*(\pi_1(\widetilde{\scrg})))=\ker(F\ltimes\Sigma\to A)=\Gamma\ltimes\Sigma.$$
So there is a morphism $\widetilde{\Psi}\colon\pi_1(\widetilde{\scrg})\to\Gamma\ltimes\Sigma$ making the diagram
\[\xymatrix{
\pi_1(\widetilde{\scrg}) \ar[d]_{\phi_*} \ar[r]^{\widetilde{\Psi}} & \Gamma\ltimes\Sigma \ar[d] \\
\pi_1(\scrg) \ar[r]^\Psi  & F\ltimes\Sigma
}\]
commute. Clearly $\widetilde{\Psi}$ is injective on the vertex groups of $\widetilde{\scrg}$. Let $\pi$ be the natural projection $\Gamma\ltimes\Sigma=\Gamma\times\Sigma\to\Sigma$. Then $\pi\circ\widetilde{\Psi}\colon\pi_1(\widetilde{\scrg})\to\Sigma$ is injective on the vertex groups of $\widetilde{\scrg}$.
\end{proof}

For the following corollary of Proposition~\ref{prop:unfolded}, let $\scrh=\{H_v\}$ be a compatible collection of normal subgroups of $\scrg$ with $\gamma^p_2(G_v)\leq H_v$ for all $v\in V(Y)$. (Later this will be applied to $\scrh=\mathbf G_2$ where $\mathbf G$ is a complete $p$-excellent filtration of $\scrg$.) Then $\scrg/\scrh$ is a graph of elementary abelian $p$-groups based on $Y$. % and we identify each vertex group $G_v/H_v$ of $\scrg/\scrh$ with a subgroup of the fiber sum $\Sigma:=\Sigma((\scrg/\scrh)|T)$ of the tree  of elementary abelian $p$-groups $(\scrg/\scrh)|T$ in the natural way.
Given a morphism $\phi\colon\widetilde{\scrg}\to\scrg$ we denote by $\widetilde{\scrh}:=\phi^{-1}(\scrh)$ the compatible collection of normal subgroups of $\widetilde\scrg$ defined in Lemma~\ref{lem:morphisms and filtrations, 1}.
Note that then $\gamma^p_2(\widetilde{G}_{\widetilde{v}})\leq \widetilde{H}_{\widetilde{v}}$ for all vertices $\widetilde{v}$ of the underlying graph $\widetilde{Y}$ of $\widetilde{\scrg}$.

\begin{corollary}\label{cor:unfolded, 1}
There exists a   morphism $\phi\colon \widetilde{\scrg}\to\scrg$ of graphs of groups with the following properties:
\begin{enumerate}
\item Each group morphism $\phi_{\widetilde{v}}\colon \widetilde{G}_{\widetilde{v}}\to G_{\phi(\widetilde{v})}$ and
$\phi_{\widetilde{e}}\colon \widetilde{G}_{\widetilde{e}}\to G_{\phi(\widetilde{e})}$ is bijective;
\item the induced group morphism $\phi_*\colon \pi_1(\widetilde{\scrg})\to G=\pi_1(\scrg)$ is injective;
\item its image $\phi_*(\pi_1(\widetilde{\scrg}))$ is a finite index normal subgroup  of $G$; and
\item for every maximal subtree $\widetilde{T}$ of the graph underlying the graph of groups~$\widetilde{\scrg}$, the group $\pi_1^*(\widetilde{\scrg}/\widetilde{\scrh},\widetilde{T})$ is residually $p$.
\end{enumerate}
\end{corollary}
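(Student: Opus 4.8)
The plan is to derive this corollary from Proposition~\ref{prop:unfolded} by applying the unfolding construction to the quotient graph of groups $\scrg/\scrh$ rather than to $\scrg$ directly. Since $\gamma^p_2(G_v)\leq H_v$ for all $v$, the quotient $\scrg/\scrh$ is a graph of elementary abelian $p$-groups based on $Y$, so Proposition~\ref{prop:unfolded} applies to it: there is a morphism $\psi\colon\pi_1(Y)\to A$ onto a finite group $A$, with corresponding unfolding $\widetilde{\scrg/\scrh}\to\scrg/\scrh$, and a morphism $\pi_1(\widetilde{\scrg/\scrh})\to\Sigma(\scrg/\scrh\,|\,T)$ which is injective on each vertex group of $\widetilde{\scrg/\scrh}$.

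Next, I would let $\phi\colon\widetilde{\scrg}\to\scrg$ be the unfolding of $\scrg$ itself along the \emph{same} morphism $\psi\colon\pi_1(Y)\to A$ (using the extension of $\psi$ to $\pi_1(\scrg,T)$ which is trivial on each vertex group). By construction in Section~\ref{subsec:unfolding}, the underlying graph $\widetilde Y$ depends only on $Y$ and $\psi$, the vertex and edge groups of $\widetilde{\scrg}$ are literally copies of those of $\scrg$, and the edge morphisms are $\widetilde f_{(\alpha,e)}=f_e$; hence each $\phi_{\widetilde v}$ and $\phi_{\widetilde e}$ is the identity, in particular bijective, giving (1). Properties (2) and (3) are immediate from Proposition~\ref{prop:phistar}: $\phi_*$ is injective with image $K=\ker(\psi\colon G\to A)$, which is a finite index normal subgroup of $G$ since $A$ is finite. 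The only subtlety is that a priori $\psi$ was produced by Proposition~\ref{prop:unfolded} as a morphism on $\pi_1(Y)$, and we must check that its extension to $\pi_1(\scrg,T)$ that is trivial on vertex groups has kernel of finite index — but this is clear, as the kernel contains all vertex groups and $\pi_1(\scrg,T)/(\text{normal closure of vertex groups})\cong\pi_1(Y)$, on which $\psi$ has finite image.

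For property (4), the key point is Lemma~\ref{lem:unfolding and quotients}: for $\widetilde{\scrh}:=\phi^{-1}(\scrh)$ we have a natural isomorphism of graphs of groups $\widetilde{\scrg}/\widetilde{\scrh}\cong\widetilde{\scrg/\scrh}$. Under this identification, the morphism $\pi_1(\widetilde{\scrg/\scrh})\to\Sigma(\scrg/\scrh\,|\,T)$ supplied by Proposition~\ref{prop:unfolded} becomes a morphism from $\pi_1(\widetilde{\scrg}/\widetilde{\scrh})$ to the elementary abelian $p$-group $\Sigma(\scrg/\scrh\,|\,T)$ which is injective on each vertex group. Now for any maximal subtree $\widetilde T$ of $\widetilde Y$, Lemma~\ref{lem:crucial} (applied to the graph of abelian groups $\widetilde{\scrg}/\widetilde{\scrh}$, with $\mathfrak P$ the property of having $p$-power order, using that the existence of such an injective-on-vertex-groups morphism to a finite abelian $p$-group is exactly its hypothesis) yields that $\pi_1^*(\widetilde{\scrg}/\widetilde{\scrh},\widetilde T)$ is residually $p$. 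I expect the main obstacle to be purely bookkeeping: one must verify carefully that the morphism produced by Proposition~\ref{prop:unfolded} on $\pi_1(\widetilde{\scrg/\scrh})$ transports correctly across the isomorphism of Lemma~\ref{lem:unfolding and quotients} and that its target, a \emph{finite} abelian $p$-group, meets the hypotheses of Lemma~\ref{lem:crucial} for \emph{every} choice of $\widetilde T$ (not just the one implicitly used in defining the unfolding); the latter follows because $\pi_1^*(-,\widetilde T)$ for different $\widetilde T$ are quotients of the common group $\pi_1(-)$, and a vertex-group-injective morphism to a finite abelian $p$-group factors through each of them.
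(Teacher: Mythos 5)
Your proposal is correct and follows essentially the same route as the paper's own proof: apply Proposition~\ref{prop:unfolded} to $\scrg/\scrh$ to obtain $\psi$ and the injective-on-vertex-groups morphism to an elementary abelian $p$-group, unfold $\scrg$ along the same $\psi$, deduce (1)--(3) from Proposition~\ref{prop:phistar}, and deduce (4) from Lemma~\ref{lem:unfolding and quotients} together with Lemma~\ref{lem:crucial}. The additional bookkeeping remarks you raise (finite index of $\ker\psi$ on $\pi_1(\scrg,T)$, independence of the choice of $\widetilde T$) are sound and are precisely the points the paper leaves implicit.
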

\begin{proof}
Applying
Proposition~\ref{prop:unfolded} with $\scrg/\scrh$ in place of $\scrg$, we obtain a morphism $\psi\colon\pi_1(Y)\to A$ to a finite group, with corresponding unfolding $\widetilde{\scrg/\scrh}\to\scrg/\scrh$ of $\scrg/\scrh$, and a morphism $\pi_1(\widetilde{\scrg/\scrh})\to\Sigma$ to an elementary abelian $p$-group which is injective when restricted to each vertex group of $\widetilde{\scrg/\scrh}$. Let $\phi\colon\widetilde{\scrg}\to\scrg$ be the unfolding of $\scrg$ along $\psi$.
Then (1)--(3) hold by Proposition~\ref{prop:phistar}.
By Lemma~\ref{lem:unfolding and quotients} we have $\widetilde{\scrg/\scrh}\cong\widetilde{\scrg}/\widetilde{\scrh}$, hence (4) follows from Lemma~\ref{lem:crucial}.
\end{proof}

We can now prove Theorem~\ref{thm:reduction theorem, 3}. Suppose first that $\mathbf G$ is a {\it complete}\/ $p$-excellent filtration of $\scrg$. Apply
Corollary~\ref{cor:unfolded, 1} with $\scrh:=\mathbf G_2$, and let $\phi\colon\widetilde{\scrg}\to\scrg$ have properties (1)--(4) in that corollary. By (1) and Lemmas~\ref{lem:separating, 1} and \ref{lem:separating, 2}, the filtration $\widetilde{\mathbf G}=\phi^{-1}(\mathbf G)$ of $\widetilde{\scrg}$ remains $p$-excellent.
This together with (4) shows that $\widetilde{\scrg}$ satisfies the hypotheses of Theorem~\ref{thm:reduction theorem}. Hence $\pi_1(\widetilde{\scrg})$ is residually $p$, and thus by (2) and~(3), the group $\pi_1(\scrg)$ is virtually residually $p$.

If $\mathbf G$ is an arbitrary $p$-excellent filtration of $\scrg$, we consider a
morphism of graphs of groups satisfying (1)--(3) in Proposition~\ref{prop:commoncover} applied to the compatible collection of normal subgroups $\{H_v\}=\{G_{v,1}\}$ of $\scrg$.
The pullback of  $\mathbf G$ under such a morphism is complete and, by Lemmas~\ref{lem:separating, 1} and \ref{lem:separating, 2}, remains $p$-excellent.
 In this way we reduce to the case of complete $\mathbf G$ treated above. \qed

\chapter{Proof of the Main Results}\label{ch:proof of the main results}

\noindent
In this chapter we first define (in Section~\ref{sec:p-compatible}) what we mean by a filtration of a group to be $p$-compatible with a given collection of subgroups, and then show (in Section~\ref{sec:p-compatible linear}) a general theorem which allows to construct filtrations of finitely generated linear groups which are $p$-compatible with certain abelian subgroups. In Section~\ref{sec:proof of the main theorem} we then apply this theorem to give a proof of our main theorem from the introduction. Section~\ref{sec:approx} fills in the proof of a commutative algebraic fact used earlier (in Section~\ref{sec:p-compatible linear}), and Section~\ref{sec:fibered} discusses the main theorem in the special case of fibered $3$-manifolds.

\section{$p$-compatible filtrations}\label{sec:p-compatible}

\noindent
We make the following somewhat ad-hoc definition (which is related to the ``$p$-congruence systems'' of \cite[Interlude~B]{DdSMS}):

\begin{definition}
Let $G$ be a group, let $\mathcal T$ be a collection of subgroups of $G$, and let $\ell\geq 0$ be an integer.
We say that a filtration $\mathbf G=\{G_n\}_{n\geq 1}$ of $G$ is  \emph{$p$-compatible of level $\ell$ with $\mathcal T$} if the following hold:
\begin{enumerate}
\item $\mathbf G$ is a normal filtration of $G$ with $[G:G_n]<\infty$ for each $n\geq 1$;\item $\mathbf G$, construed as a complete filtration of $G_1$, is $p$-potent;
\item $\mathbf G$ is separating (i.e., $\bigcap_{n=1}^\infty G_n=\{1\}$);
\item $\mathbf G$ separates $\mathcal T$ (i.e., $\bigcap_{n=1}^\infty G_n\cdot T=T$ for every  $T\in\mathcal T$); and
\item for every $T\in\mathcal T$ the filtration $\mathbf G$ intersects to the lower central $p$-filtration $\gamma^p(T)$  of $T$, shifted by $\ell$ terms to the left, i.e.,
$\mathbf G\cap T = \{\gamma^p_{n+\ell}(T)\}_{n\geq 1}$.
\end{enumerate}
A filtration of $G$ which is $p$-compatible  of level $0$ with $\mathcal T$ will simply said to be $p$-compatible with $\mathcal T$. (We will only encounter $p$-compatible filtrations of level $0$ or~$1$ below.)
\end{definition}

\index{filtration!$p$-compatible}

If the group $G$ admits a filtration which is $p$-compatible with some collection of subgroups, then $G$ is virtually residually $p$.
We introduced the notion of $p$-compatible filtration because it furnishes a ``local'' version of the concept of $p$-excellent filtration of a graph of groups considered in the previous chapter:
Let~$\ell\geq 0$ be an integer; then,
given a graph of groups $\scrg$ and for each vertex $v$ a filtration~$\mathbf G_v$ of $G_v$ which is $p$-compatible of level $\ell$ with the subgroups $f_e(G_e)$ of $G_v$, where $e$ ranges over all edges with $t(e)=v$, we obtain a filtration $\mathbf G=\{\mathbf G_v\}_v$ of $\scrg$. We say that a filtration $\mathbf G$ of $\scrg$ arising in this way is \emph{$p$-compatible of level $\ell$.}
Such a $p$-compatible filtration $\mathbf G$ of level $\ell$ of $\scrg$ automatically satisfies conditions (1)--(4) in the definition of $p$-excellency; if in addition the lower central $p$-series of the edge groups of $\scrg$, shifted by $\ell$ terms to the left, is uniformly $p$-potent (e.g., if the edge groups of $\scrg$ are abelian $p$-torsion free, see Section~\ref{sec:p-potent}), then every $p$-compatible filtration of level $\ell$ is $p$-excellent.
From the main result (Theorem~\ref{thm:reduction theorem, 2}) of the preceding chapter we therefore immediately obtain:

\begin{theorem}\label{thm:virt res p for graphs of linear groups, 1}
Let $\scrg$ be a graph of finitely generated groups, and suppose
all edge groups $G_e$ are abelian $p$-torsion free. Suppose for some $\ell\geq 0$,
$\scrg$ admits a $p$-compatible filtration of level $\ell$.
Then $\pi_1(\scrg)$ is virtually residually $p$.
\end{theorem}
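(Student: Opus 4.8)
The strategy is to reduce this statement directly to Theorem~\ref{thm:reduction theorem, 2} (the main result of the preceding chapter), which asserts that a graph of finitely generated groups admitting a $p$-excellent filtration has virtually residually $p$ fundamental group. So the entire task is to verify that, under the hypotheses here, the given $p$-compatible filtration $\mathbf G=\{\mathbf G_v\}_v$ of level $\ell$ of $\scrg$ is in fact $p$-excellent. First I would unwind the definition of ``$p$-compatible of level $\ell$'': for each vertex $v$, the filtration $\mathbf G_v$ of $G_v$ is $p$-compatible of level $\ell$ with the collection of subgroups $\{f_e(G_e) : t(e)=v\}$. Conditions (1)--(3) in the definition of $p$-compatible filtration (normality with finite index layers, $p$-potency of $\mathbf G_v$ as a complete filtration of $G_v$, and separation) immediately give conditions (1)--(3) in the definition of $p$-excellency. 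Condition (4) in the $p$-compatible definition (separation of the subgroups $f_e(G_e)$) gives condition (4) of $p$-excellency, namely that $\mathbf G$ separates the edge groups of $\scrg$.

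The only remaining point is condition (5) of $p$-excellency: for each edge $e$ with $v=t(e)$, the filtration $\mathbf G_v$ must intersect to a \emph{uniformly} $p$-potent filtration on $f_e(G_e)$. Here is where I use the hypothesis that all edge groups $G_e$ are abelian and $p$-torsion free. By condition (5) in the $p$-compatible definition, $\mathbf G_v\cap f_e(G_e) = \{\gamma^p_{n+\ell}(f_e(G_e))\}_{n\geq 1}$, i.e., the lower central $p$-filtration of $f_e(G_e)$ shifted $\ell$ terms to the left. Since $f_e(G_e)\cong G_e$ is abelian and $p$-torsion free, $\gamma^p_m(f_e(G_e)) = p^{m-1}f_e(G_e)$ for every $m$, so the shifted filtration is just $\{p^{n+\ell-1}f_e(G_e)\}_{n\geq 1}$. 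Using Example~(1) in the list of examples following the definition of uniformly $p$-potent filtrations — an abelian $p$-torsion free group is uniformly $p$-potent (with its lower central $p$-filtration) — together with the observation that for an abelian $p$-torsion free group $B$ and any $\ell\geq 0$, the filtration $\{\gamma^p_{n+\ell}(B)\}_{n\geq 1} = \{\gamma^p_n(\gamma^p_{1+\ell}(B))\}_{n\geq 1}$ is again the lower central $p$-filtration of the abelian $p$-torsion free group $\gamma^p_{1+\ell}(B) = p^\ell B$, I conclude that $\mathbf G_v\cap f_e(G_e)$ is uniformly $p$-potent. This is exactly condition (5). (This last observation is essentially the content of the parenthetical remark ``e.g., if the edge groups of $\scrg$ are abelian $p$-torsion free'' made just before the theorem, so it is already sanctioned by the text; I would just spell out the one-line verification that the shifted lower central $p$-filtration of $p^\ell B$ coincides with the lower central $p$-filtration of $p^\ell B$.)

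Having checked that $\mathbf G$ satisfies all five conditions of $p$-excellency, I apply Theorem~\ref{thm:reduction theorem, 2} to conclude that $\pi_1(\scrg)$ is virtually residually $p$, which completes the proof.

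\textbf{Main obstacle.} There is essentially no deep obstacle here — this theorem is a bookkeeping corollary of the heavy machinery already established. The one point requiring a small argument rather than pure definition-chasing is the verification of uniform $p$-potency in condition (5), and specifically the fact that shifting the lower central $p$-filtration of an abelian $p$-torsion free group to the left still yields (after passing to the appropriate subgroup) a uniformly $p$-potent filtration. This hinges on the $p$-torsion freeness of the edge groups, which is why that hypothesis is imposed; without it, the shifted filtration could fail to be strongly $p$-potent, let alone uniformly so, and the reduction to $p$-excellency would break down. So if I had to name the ``hard part,'' it would be recognizing precisely where the $p$-torsion free hypothesis is used and confirming it is exactly strong enough — but the verification itself is a two-line computation with $p^{n-1}B$.
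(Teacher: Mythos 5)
Your proof is correct and follows exactly the paper's argument: the theorem is stated as an immediate consequence of Theorem~\ref{thm:reduction theorem, 2} after the observation (made in the paragraph preceding the statement) that a $p$-compatible filtration of level $\ell$ is automatically $p$-excellent once one knows that the shifted lower central $p$-filtration of each abelian $p$-torsion free edge group is uniformly $p$-potent. Your two-line verification of that last point (identifying $\{\gamma^p_{n+\ell}(f_e(G_e))\}_n$ with the lower central $p$-filtration of $p^\ell f_e(G_e)$) is precisely the content the paper leaves implicit with the pointer to Section~\ref{sec:p-potent}, so you have filled in the right gap.
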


In the next section we construct filtrations of linear groups which are $p$-compa\-tible with certain abelian subgroups. In Section~\ref{sec:proof of the main theorem} we'll apply this construction together with the criterion in Theorem~\ref{thm:virt res p for graphs of linear groups, 1} above in
the case of graphs of groups arising from $3$-manifolds.

\medskip

In the rest of this section we collect a few more observations on $p$-compatible filtrations, and record an application of Theorem~\ref{thm:virt res p for graphs of linear groups, 1}.
First we note that under natural conditions, if there is a complete $p$-compatible filtration at all, then there is a canonical one, by Lemma~\ref{lem:canonical filtration}:

\begin{lemma}\label{lem:canonical filtration, 2}
Let $G$ be a group  and let $\mathcal T$ be a collection of subgroups of $G$.
If~$\mathbf G$ is a central $p$-filtration of $G$ such that $\mathbf G\cap T = \gamma^p(T)$
for every $T\in\mathcal T$, then $\gamma^p(G)\cap T=\gamma^p(T)$ for every $T\in\mathcal T$.
\textup{(}Therefore, assuming that $\gamma^p(G)$ is $p$-potent and
$G_{\operatorname{ab}}$ is finitely generated, if there exists a complete filtration of $G$ which is $p$-compatible with $\mathcal T$, then $\gamma^p(G)$ is  $p$-compatible with $\mathcal T$.\textup{)}
\end{lemma}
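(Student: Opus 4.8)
The plan is to obtain the first assertion as an immediate consequence of Lemma~\ref{lem:canonical filtration}. Fix $T\in\mathcal T$. By hypothesis $\mathbf G$ is a central $p$-filtration of $G$ which intersects to the lower central $p$-filtration on $T$, so Lemma~\ref{lem:canonical filtration} applies; its final clause then yields $\gamma^p(G)\cap T=\gamma^p(T)$. Since $T$ was an arbitrary element of $\mathcal T$, this gives $\gamma^p(G)\cap T=\gamma^p(T)$ for every $T\in\mathcal T$. (Equivalently, one may invoke the main clause of Lemma~\ref{lem:canonical filtration} with $\mathbf G^*=\gamma^p(G)$, which is legitimate because $\gamma^p(G)$ is the fastest descending central $p$-filtration of $G$, hence $\gamma^p(G)\leq\mathbf G$.)

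For the parenthetical assertion I would assume that $\gamma^p(G)$ is $p$-potent, that $G_{\operatorname{ab}}$ is finitely generated, and that there is a complete filtration $\mathbf H=\{H_n\}$ of $G$ which is $p$-compatible with $\mathcal T$, and then verify the five defining conditions of $p$-compatibility (at level $0$) for $\gamma^p(G)$ directly. Condition~(2) is the standing assumption. Since $\mathbf H$ is complete ($H_1=G$) and $p$-potent it is in particular a central $p$-filtration of $G$, so the first part of the lemma, applied with $\mathbf G=\mathbf H$, gives $\gamma^p(G)\cap T=\gamma^p(T)$ for all $T\in\mathcal T$, which is condition~(5). For conditions~(3) and~(4) I would use that $\gamma^p(G)$, being the fastest descending central $p$-filtration, satisfies $\gamma^p_n(G)\leq H_n$ for all $n$; hence $\bigcap_n\gamma^p_n(G)\leq\bigcap_n H_n=\{1\}$ and $\bigcap_n\gamma^p_n(G)\cdot T\leq\bigcap_n H_n\cdot T=T$ for every $T\in\mathcal T$. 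Finally, for condition~(1): the groups $\gamma^p_n(G)$ are totally invariant, hence normal in $G$, and because $G_{\operatorname{ab}}$ is finitely generated each layer $L^p_n(G)=\gamma^p_n(G)/\gamma^p_{n+1}(G)$ is finite, so $[G:\gamma^p_n(G)]=\prod_{k=1}^{n-1}\lvert L^p_k(G)\rvert<\infty$ by induction on $n$.

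I do not expect a genuine obstacle: the statement is essentially a repackaging of Lemma~\ref{lem:canonical filtration} together with the elementary fact that $\gamma^p(G)$ descends faster than any central $p$-filtration of $G$. The only points needing a word of care are the finite-index requirement~(1), which relies on finiteness of the layers $L^p_n(G)$ under the hypothesis that $G_{\operatorname{ab}}$ is finitely generated, and the mild bookkeeping that a complete $p$-potent filtration counts as a central $p$-filtration of $G$ in the precise sense required to feed it into Lemma~\ref{lem:canonical filtration}.
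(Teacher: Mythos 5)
The proposal is correct and takes the same route as the paper: the first assertion is, as you observe, just the final clause of Lemma~\ref{lem:canonical filtration}, and the parenthetical follows by verifying the five conditions of $p$-compatibility for $\gamma^p(G)$ exactly as you do (condition (2) is the standing assumption, (5) comes from the first part applied to the given complete filtration, (3) and (4) from $\gamma^p(G)$ descending faster than that filtration, and (1) from the finiteness of the layers $L^p_n(G)$ when $G_{\operatorname{ab}}$ is finitely generated). The paper simply cites Lemma~\ref{lem:canonical filtration} without spelling out these routine checks.
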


Examples of $p$-compatible filtrations of free groups may be obtained via the following:

\begin{lemma}\label{lem:p-compatible for free groups}
Let $G$ be a finitely generated residually $p$ group with $p$-torsion free
 quotients $G/\gamma_{n+1}(G)$,  for $n=1,2$,
and let $\mathcal T$ be a collection of retracts of $G$. If $p$ is odd, then the complete filtration $\gamma^p(G)=\{\gamma^p_n(G)\}_{n\geq 1}$ of $G$ is $p$-compatible with~$\mathcal T$. For arbitrary $p$,
the filtration $\{\gamma^p_{n+1}(G)\}_{n\geq 1}$ is $p$-compatible of level $1$ with $\mathcal T$.
\end{lemma}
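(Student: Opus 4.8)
The plan is to verify conditions (1)--(5) in the definition of a $p$-compatible filtration directly, treating the odd-$p$ case and the general case in parallel. For the odd-$p$ case set $\mathbf G := \gamma^p(G)$; for arbitrary $p$ set $G^* := \gamma^p_2(G)$ and $\mathbf G := \{G^*_n\}_{n\geq 1}$ with $G^*_n := \gamma^p_{n+1}(G)$ (this is the filtration of $G^*$ appearing in Lemma~\ref{lem:2-potent} with $m=2$). Condition (1), normality and finiteness of the indices $[G:\gamma^p_n(G)]$, is immediate: the $\gamma^p_n(G)$ are totally invariant, and since $G_{\operatorname{ab}}$ is finitely generated (being a quotient of the finitely generated $G$) each layer $L^p_n(G)$ is finite, so $[G:\gamma^p_n(G)]<\infty$ by induction. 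Condition (3) is exactly the hypothesis that $G$ is residually $p$ (together with the observation that $\gamma^p(G)$ is then separating, as recorded in Section~\ref{sec:filtrations of groups}), and for the shifted filtration $\bigcap_n \gamma^p_{n+1}(G) = \bigcap_n \gamma^p_n(G) = 1$ as well.

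For condition (2), $p$-potency: in the odd-$p$ case, since $G/\gamma_2(G)=G_{\operatorname{ab}}$ is $p$-torsion free ($n=1$) and $p$ is odd, $G$ is $p$-potent by Corollary~\ref{cor:phi_n} and the Example following it; more precisely the hypotheses of Lemma~\ref{lem:phin injective} are met for $n=1$ because $G/\gamma_i(G)$ is $p$-torsion free for $i=1,2$, giving injectivity of $\Phi_{1,m}$ for all $m$, hence $\ker$ of exponentiation-by-$p^n$ equals $\gamma^p_2(G)$ exactly, which is what $p$-potency of $\gamma^p(G)$ demands. In the arbitrary-$p$ case one applies Lemma~\ref{lem:2-potent} with $m=2$: the hypothesis there is that $G/\gamma_n(G)$ be $p$-torsion free for $n=1,\dots,m+1=3$, i.e.\ for $n=1,2,3$ --- but $G/\gamma_3(G)$ being $p$-torsion free is equivalent to the sections $\gamma_1/\gamma_2$ and $\gamma_2/\gamma_3$ being $p$-torsion free, which is precisely the hypothesis ``$G/\gamma_{n+1}(G)$ is $p$-torsion free for $n=1,2$.'' So Lemma~\ref{lem:2-potent} gives that $\mathbf G^* = \{\gamma^p_{n+1}(G)\}_{n\geq 1}$ is $p$-potent, establishing (2) and supplying the level-$1$ shift in the statement.

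Conditions (4) and (5) concern the retracts $T\in\mathcal T$. For (5): by Lemma~\ref{lem:lower central for semidirect products} applied to the totally invariant operation $\Sigma = \gamma^p_n$, since $T$ is a retract of $G$ we have $\gamma^p_n(T) = \gamma^p_n(G)\cap T$ for every $n\geq 1$; thus $\gamma^p(G)$ intersects to $\gamma^p(T)$ on $T$ (the level-$0$ assertion), and the shifted filtration $\{\gamma^p_{n+1}(G)\}_{n}$ intersects on $T$ to $\{\gamma^p_{n+1}(T)\}_{n} = \{\gamma^p_{n+\ell}(T)\}_n$ with $\ell=1$, which is condition (5) of level $1$. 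For (4), that $\mathbf G$ separates $\mathcal T$: this is exactly Lemma~\ref{lem:retract}, which says that if $G$ is residually $p$ and $T\leq G$ is a retract, then $\gamma^p(G)$ separates $T$, i.e.\ $\bigcap_{n\geq 1}\gamma^p_n(G)\cdot T = T$; and since $\{\gamma^p_{n+1}(G)\}_n$ is cofinal in $\{\gamma^p_n(G)\}_n$ it separates $T$ as well. Assembling these verifications gives both assertions. I do not anticipate a genuine obstacle here --- the lemma is a bookkeeping corollary of the structural results of Section~\ref{sec:filtrations of groups}; the only point requiring care is matching the ``$p$-torsion free for $n=1,2$'' hypothesis to the ``$n=1,\dots,m+1$'' hypothesis of Lemma~\ref{lem:2-potent} via the equivalence between $G/\gamma_{i}(G)$ being $p$-torsion free and the sections $\gamma_i(G)/\gamma_{i+1}(G)$ being $p$-torsion free, and noting that the $p=2$ case genuinely needs the shift (as the Example with permutation matrices and the $\gamma_2(G)\not\leq\gamma^2_3(G)$ caveat in Corollary~\ref{cor:phi_n} illustrate).
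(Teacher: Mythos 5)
Your proposal is correct and follows essentially the same route as the paper's proof, citing the same ingredients (Lemma~\ref{lem:retract} and Lemma~\ref{lem:lower central for semidirect products} for the retract conditions, Corollary~\ref{cor:phi_n} for odd $p$, Lemma~\ref{lem:2-potent} for the level-$1$ shift); in fact you are somewhat more careful than the paper's one-line citation, since you correctly note that Corollary~\ref{cor:phi_n} alone gives only that the kernel \emph{contains} $\gamma^p_2(G)$, and that one needs Lemma~\ref{lem:phin injective} (equivalently the Example following it, using $p$-torsion-freeness of $G_{\operatorname{ab}}$) to get equality. One small slip: your parenthetical that ``$G/\gamma_3(G)$ being $p$-torsion free is equivalent to the sections $\gamma_1/\gamma_2$ and $\gamma_2/\gamma_3$ being $p$-torsion free'' is false as a standalone biconditional (only the subgroup $\gamma_2/\gamma_3$, not the quotient $G/\gamma_2$, is forced to be $p$-torsion free); but this does not affect your argument, since the hypothesis ``$G/\gamma_{n+1}(G)$ is $p$-torsion free for $n=1,2$'' already literally coincides with Lemma~\ref{lem:2-potent}'s requirement that $G/\gamma_n(G)$ be $p$-torsion free for $n=1,2,3$ (the case $n=1$ being trivial).
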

\begin{proof}
By Lemma~\ref{lem:retract},  $\gamma^p(G)$ separates $T$, and
by Lemma~\ref{lem:lower central for semidirect products} we have $\gamma^p(G)\cap T = \gamma^p(T)$.
This shows that $\gamma^p(G)$ satisfies conditions (1) and (3)--(5) in the definition of $p$-compatibility. If $p$ is odd, then $\gamma^p(G)$ is $p$-potent by Corollary~\ref{cor:phi_n}, hence $\gamma^p(G)$ is $p$-compatible with $\mathcal T$. For general $p$,
shifting $\gamma^p(G)$ by $1$ to the left yields a $p$-potent filtration of $\gamma^p_2(G)$, by Lemma~\ref{lem:2-potent}.
\end{proof}

\label{retract}

By virtue of the previous lemma,  Theorem~\ref{thm:virt res p for graphs of linear groups, 1} applies to graphs of finitely generated residually $p$ groups with abelian $p$-torsion free edge groups, such that  for each edge $e$, the image of the edge group $G_e$ under the edge morphism $f_e$ is a retract of $G_{t(e)}$.
This leads to a refinement of a result of Wise:

\begin{corollary}
Let $\scrg$ be a graph of free groups of finite rank with cyclic edge groups, and $G=\pi_1(\scrg)$. Suppose that $G$ is \emph{balanced,} i.e., there are no $g\in G$, $g\neq 1$, and non-zero integers $m$, $n$ with $m\neq\pm n$ such that $g^m$ and $g^n$ are conjugate. Then for every $p$, $G$ is virtually residually $p$.
\end{corollary}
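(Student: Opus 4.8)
The plan is to apply Theorem~\ref{thm:virt res p for graphs of linear groups, 1} to the graph of groups $\scrg$, so the task reduces to producing, for each vertex $v$, a filtration of the free group $G_v$ of finite rank which is $p$-compatible of some fixed level $\ell$ with the collection $\mathcal T_v$ of subgroups $f_e(G_e)$, where $e$ ranges over the edges with $t(e) = v$. By Lemma~\ref{lem:p-compatible for free groups}, such filtrations exist automatically provided each $f_e(G_e)$ is a \emph{retract} of $G_{t(e)}$: if $p$ is odd we may take $\gamma^p(G_v)$ (level $0$), and for $p = 2$ we take $\{\gamma^p_{n+1}(G_v)\}_{n\geq 1}$ (level $1$); note that the edge groups, being cyclic and (by the balancedness hypothesis, see below) infinite cyclic or trivial, are abelian and $p$-torsion free, so the hypotheses of Theorem~\ref{thm:virt res p for graphs of linear groups, 1} are met once the filtrations are in place. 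Thus the entire content of the corollary is the assertion that \emph{balancedness forces every edge image $f_e(G_e)$ to be a retract of $G_{t(e)}$}, together with the observation that a free group of finite rank is finitely generated and residually $p$ with $p$-torsion free lower central quotients (indeed $G/\gamma_n(G)$ is torsion free for free $G$ and every $n$, as recalled in Section~\ref{sec:filtrations of groups}).

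So the main step is: if $G$ is a free group of finite rank, $C \leq G$ is cyclic, and there is no $g \neq 1$ in the ambient group $\pi_1(\scrg)$ with $g^m$ conjugate to $g^n$ for $m \neq \pm n$, then $C$ is a retract of $G$. Here the relevant consequence of balancedness is purely local to $G$: a nontrivial cyclic subgroup $C = \langle c \rangle$ of a free group $G$ is a retract precisely when $c$ is not a proper power in $G$ (i.e. $c$ generates a maximal cyclic subgroup), and in that case $c$ extends to a free basis of $G$, giving the retraction by killing the other basis elements. If some edge image $f_e(G_e) = \langle c \rangle$ had $c = d^k$ with $|k| \geq 2$ and $d \in G_{t(e)} \leq G$, then inside $G = \pi_1(\scrg)$ we would have $c = d^k$ literally, and one would like to derive a violation of balancedness — but $c$ and $d^k$ are equal, not merely conjugate with different exponents. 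The correct move is more subtle: one uses the edge structure. If $\langle c\rangle = f_e(G_e)$ is a proper cyclic subgroup of the free group $G_{t(e)}$ in that $c = d^k$ there, consider the HNN/amalgam relation $e\, f_e(g)\, e^{-1} = f_{\bar e}(g)$; examining the element $e c e^{-1} = f_{\bar e}(G_e)$-generator inside $\pi_1(\scrg)$ and comparing powers of $d$ against powers of the corresponding generator on the other side of the edge produces two conjugate elements with distinct exponents, contradicting balancedness. This is exactly the argument of Wise in the setting he considered, and the corollary merely upgrades his residual finiteness conclusion to virtual residual $p$-ness via the machinery of this chapter.

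The plan in order is therefore: (i) observe that for each edge $e$, balancedness of $G = \pi_1(\scrg)$ implies that $f_e(G_e)$ is a maximal cyclic subgroup of $G_{t(e)}$ — this is the Wise-style argument sketched above, comparing powers along the edge relation; (ii) invoke the standard fact that a maximal cyclic subgroup of a finitely generated free group is generated by a member of a free basis, hence is a retract; (iii) apply Lemma~\ref{lem:p-compatible for free groups} with $\mathcal T = \mathcal T_v$ to obtain, for each $v$, a filtration of $G_v$ which is $p$-compatible of level $\ell$ (with $\ell = 0$ for odd $p$ and $\ell = 1$ for $p = 2$), noting these fit together into a $p$-compatible filtration of $\scrg$ of level $\ell$ in the sense of Section~\ref{sec:p-compatible}; (iv) conclude by Theorem~\ref{thm:virt res p for graphs of linear groups, 1} that $\pi_1(\scrg)$ is virtually residually $p$, for every prime $p$.

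The main obstacle is step (i): making precise the passage from the balancedness hypothesis — a statement about conjugacy of powers of a \emph{single} element of $\pi_1(\scrg)$ — to the structural conclusion that each edge image is maximal cyclic in its vertex group. One must be careful that the two elements one exhibits really are conjugate in $\pi_1(\scrg)$ and really do have exponents differing by more than a sign; this requires using the normal form theory for fundamental groups of graphs of groups (Section~\ref{sec:pi1}) to verify that the relevant elements are nontrivial and that no unexpected reductions occur. Everything after that is a routine assembly of results already established in the paper.
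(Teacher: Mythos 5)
There is a genuine gap at your step~(i), and it is the crux of the argument, not a side issue. You claim that balancedness of $G=\pi_1(\scrg)$ forces each edge image $f_e(G_e)$ to be maximal cyclic (hence a retract) in $G_{t(e)}$. This is false, and the attempted derivation does not close. Take $\scrg$ with a single vertex $v$, $G_v=F_2=\langle a,b\rangle$, one topological loop $\{e,\bar e\}$ at $v$, $G_e=\Z$, and $f_e$, $f_{\bar e}$ both sending a generator to $a^2$; then $\pi_1(\scrg)=\langle a,b,t \mid [t,a^2]=1\rangle$. The edge image $\langle a^2\rangle$ is a proper power subgroup of $F_2$, so not a retract, yet the group is balanced: since the partial automorphism $\varphi_e$ identifies $a^2$ with $a^2$, no element has $g^m$ conjugate to $g^n$ with $m\neq\pm n$. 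Your attempted fix --- comparing $ece^{-1}$ against powers of the ``corresponding generator on the other side'' --- produces at best an equality $e\,d^k\,e^{-1}=(d')^{k'}$, which is not a conjugacy between two \emph{distinct} powers of the \emph{same} element; for this to violate balancedness you would additionally need $d$ conjugate to $d'$, which there is no reason to expect. So balancedness rules out Baumslag--Solitar subgroups $\langle d,t \mid t d^m t^{-1}=d^n\rangle$ with $m\neq\pm n$ inside $G$, but it in no way implies that the given $\scrg$ is clean.

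What the paper actually uses (stated in the remark immediately following the corollary) is Wise's theorem: balancedness of $G=\pi_1(\scrg)$ implies the existence of a finite-degree morphism $\scrg^*\to\scrg$ of graphs of groups such that $\scrg^*$ is \emph{clean}, i.e.\ each cyclic edge group of $\scrg^*$ maps onto a free factor of its target vertex group. Passing to a cover is essential; there is no avoiding it. Once you have such a $\scrg^*$, the rest of your plan is correct and essentially coincides with the paper's: free factors are retracts, Lemma~\ref{lem:p-compatible for free groups} gives a $p$-compatible filtration of $\scrg^*$ (level $0$ for odd $p$, level $1$ for $p=2$, and the cyclic edge groups are $p$-torsion free), Theorem~\ref{thm:virt res p for graphs of linear groups, 1} gives that $\pi_1(\scrg^*)$ is virtually residually $p$, and since $\pi_1(\scrg^*)$ has finite index in $G$ so is $G$. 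So your steps (ii)--(iv) are fine; you only need to replace step (i) with an appeal to Wise's clean cover theorem rather than a direct (and incorrect) structural claim about $\scrg$ itself.
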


Wise \cite{Wi00} showed that the balancedness condition formulated in this corollary implies that there exists a morphism $\scrg^*\to\scrg$ of graphs of groups having finite degree and such that $\scrg^*$ is \emph{clean}, that is, each of the (cyclic) edge groups of $\scrg^*$ maps to a free factor of the target (free) vertex group under the edge morphism. He then proved that fundamental groups of clean graphs of free groups are residually finite; in fact, much better, they are subgroup separable. Our Theorem~\ref{thm:virt res p for graphs of linear groups, 1} shows that they are, for each $p$, virtually residually $p$. Note that if $G$ is a balanced group, then $G$ does not contain any Baumslag-Solitar group.
Hsu and Wise \cite{HW10} recently showed
that if $\scrg$ is a graph of free groups with cyclic edge groups and $G=\pi_1(\scrg)$  contains no Baumslag-Solitar group, then $G$ virtually embeds into a right-angled Artin group, and hence has a finite-index subgroup which is residually $p$ for every~$p$ (and
is linear over $\Z$); this also implies the corollary above in a stronger form.

\index{Wise}

%It would be interesting to know whether the residual finiteness results in \cite{} which generalize \cite{} can be similarly refined.

\section{$p$-compatible filtrations of linear groups}\label{sec:p-compatible linear}

\noindent
Given a commutative ring $R$, we denote by
$\operatorname{UT}_1(n,R)$ the group of  upper unitriangular $n\times n$-matrices with entries in $R$ (a subgroup of $\SL(n,R)$).  A matrix in $\GL(n,R)$ is called unipotent if it is conjugate to an element of $\operatorname{UT}_1(n,R)$, and a subgroup of $\GL(n,R)$ is called unipotent if it is conjugate to a subgroup of $\operatorname{UT}_1(n,R)$. (If $K$ is a field, then every subgroup of $\GL(n,K)$ consisting entirely of unipotent matrices is unipotent, cf.~\cite[Corollary~1.21]{We73}.) A subgroup $T$ of a group $G\leq\GL(n,R)$ is said to be maximal unipotent if $T$ is unipotent and not contained in a strictly larger unipotent subgroup of $G$.
%We say that a linear group $G\leq\GL(n,\C)$ is defined over a subring $R$ of $\mathbb C$ if $G\subseteq\GL(n,R)$.
The main result of this section is:

\index{$\operatorname{UT}_1(n,R)$}
\index{group!unipotent}
\index{group!linear}

\begin{theorem}\label{thm:virt res p for graphs of linear groups, 2}
Let $G$ be a finitely generated linear group, and let $\mathcal T$ be a finite collection of finitely generated abelian unipotent subgroups of $G$, such that for each~$T\in\mathcal T$,
\begin{enumerate}
\item $T$ is maximal abelian or maximal unipotent; and
\item the image of $T$ in $H_1(G;\Z)$ has rank $\geq \rank(T)-1$.
\end{enumerate}
%Suppose $G$ is defined over a subring of $\C$ which is Noetherian, local, regular, and unramified, with finite residue field of characteristic $p$.
Then for all but finitely many $p$, there exists a filtration of $G$ which is $p$-compatible  of level $1$ with $\mathcal T$.
\end{theorem}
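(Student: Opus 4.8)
The plan is to produce, for all but finitely many primes $p$, an embedding $G\hookrightarrow\GL(n,\mathbb Z)$ (or into $\GL(n,R)$ for a suitable finitely generated integral domain $R$ of characteristic zero) together with a congruence filtration that realizes the desired $p$-compatibility of level $1$ with $\mathcal T$. First I would invoke the fact that a finitely generated linear group in characteristic zero sits inside $\GL(n,R)$ for a finitely generated domain $R$, and then — this is where the commutative-algebra/model-theory localization theorem announced in Step~2 of the introduction (and proved in Section~\ref{sec:approx}) enters — pass to a specialization $R\to\overline{\mathbb F}{}'$ or rather choose a suitable prime $p$ and a ring homomorphism realizing $R$ inside $\mathbb Z_p$ (or a finite extension), so that for all but finitely many $p$ one gets an embedding $G\leq\GL(n,\mathbb Z_p)$ whose image lands in the first congruence subgroup $\GL^1(n,\mathbb Z_p)$ after passing to a finite-index subgroup; since we are allowed finitely many exceptional primes and $p$-compatibility is not required to respect a specific finite-index subgroup, this is harmless. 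Along the way one must arrange that $p$ is odd and that $p$ does not divide various "bad" quantities (denominators appearing in the localization, torsion in the finitely generated abelian groups $H_1(G;\mathbb Z)$ and $T_{\mathrm{ab}}$, and the primes dividing orders of relevant finite subgroups).

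Next I would put on $G$ the filtration $\mathbf G=\{G_n\}$ with $G_n:=G\cap\GL^n(n,\mathbb Z_p)$; by the Examples and Lemma~\ref{lem:linearity} discussion in Section~\ref{sec:p-potent}, together with \cite[Theorem~5.2]{DdSMS}, this is a separating filtration of finite index and, as a filtration of $G_1$, strongly $p$-potent of finite rank (so $p$-potent after the harmless shift), giving conditions (1)--(3) of $p$-compatibility. The substantive work is conditions (4) and (5), which concern the unipotent abelian subgroups $T\in\mathcal T$. Here I would use that a finitely generated abelian unipotent subgroup $T$ of $\GL(n,\mathbb Z_p)$ is, up to conjugacy, contained in $\operatorname{UT}_1(n,\mathbb Z_p)$, hence is (for $p$ large) a uniform pro-$p$ group's dense subgroup after shifting, so that $\gamma^p(T)$ shifted by one term is uniformly $p$-potent and matches the congruence filtration on $T$: concretely, $\log$ carries $T$ into a $\mathbb Z_p$-lattice in a nilpotent Lie algebra of strictly upper-triangular matrices, and the congruence filtration intersects $T$ in $p^{n-1}$ times that lattice (up to a bounded shift, which is the origin of "level $1$"); this yields $\mathbf G\cap T=\{\gamma^p_{n+1}(T)\}_{n\ge1}$, i.e.\ condition (5). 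For the separation condition (4), $\bigcap_n G_n\cdot T=T$, I would argue that it suffices — given hypotheses (1) and (2) of the theorem — to separate $T$ in the pro-$p$ topology: maximality of $T$ (as abelian or as unipotent) controls the normalizer/centralizer structure enough that an element $g\notin T$ is detected modulo some congruence subgroup, and hypothesis (2), that the image of $T$ in $H_1(G;\mathbb Z)$ has corank at most one, is exactly what guarantees that the abelianized obstruction to separating $T$ collapses after excluding finitely many $p$ (the "off by one" being absorbed again into the level-$1$ shift).

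The main obstacle I anticipate is precisely the interaction of conditions (4) and (5): matching the congruence filtration on $T$ to (a shift of) its own lower central $p$-filtration while simultaneously separating $T$ inside $G$, uniformly in $p$. The clean statements "$\mathbf G\cap T=\gamma^p(T)$" hold only up to a bounded, $T$-dependent reindexing coming from how deep in $\operatorname{UT}_1$ the group $T$ sits (nilpotency class), and one has to verify this reindexing can be taken to be a single left shift by one for all $T\in\mathcal T$ at once after discarding finitely many primes — this is why the hypotheses single out corank $\le 1$ and maximality. I expect the separation statement (4) to require the most care: one has to rule out, for all large $p$, that some $g\in G\setminus T$ lies in $T\cdot G_n$ for every $n$, and the natural approach is to reduce to the abelian quotient (using maximality of $T$ to handle the non-abelian directions via the uniform $p$-adic analytic structure of $\widehat G_{\mathbf G}$) and then apply hypothesis (2); the finitely-many-exceptions clause is what makes the denominators and torsion innocuous. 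Once (1)--(5) are checked for the shift $\{G_{n+1}\}$, the theorem follows, and combined with Theorem~\ref{thm:virt res p for graphs of linear groups, 1} it feeds directly into the proof of the main theorem in Section~\ref{sec:proof of the main theorem}.
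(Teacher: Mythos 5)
Your scaffolding agrees with the paper's in broad outline: embed $G$ into $\SL(n,R)$ for a finitely generated domain $R$, invoke Theorem~\ref{thm:proprmi} to obtain, for all but finitely many $p$, a maximal ideal $\mm$ with $R_{\mm}$ regular unramified of residue characteristic $p$, and use the congruence filtration $\SL^k(n,\widehat{R_{\mm}})$ to get a separating, strongly $p$-potent normal $p$-filtration; and your use of maximality of $T$ to obtain the separation condition (4) is also the paper's argument. But there is a genuine gap in your treatment of the intersection condition (5), and it is exactly the difficulty you flag in your last paragraph and then leave unresolved.

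The congruence filtration $G_k = G\cap\SL^k(n,\widehat{R_{\mm}})$ by itself does \emph{not} intersect a given $T\in\mathcal T$ in a left shift of $\gamma^p(T)$. The obstruction is not the nilpotency class of $T$ (which is $1$, $T$ being abelian), nor something a $\log$-to-a-lattice argument repairs: it is the possibly unequal $\mm$-adic depth of the individual generators of $T$. If $T=\langle\lambda_1,\dots,\lambda_e\rangle$ is free abelian and, say, $\rho(\lambda_1)\in\SL^1\setminus\SL^2$ while $\rho(\lambda_2)\in\SL^5\setminus\SL^6$, then $T\cap\SL^2 = p\Z\lambda_1\oplus\Z\lambda_2\neq pT$, so $\{G_k\cap T\}$ is not a reindexing of $\gamma^p(T)$ at all; the different basis vectors enter the filtration at different levels. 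Neither conjugation, nor passing to a finite-index subgroup, nor discarding finitely many $p$ cures this simultaneously for all generators of all $T\in\mathcal T$, so your claim that the reindexing ``can be taken to be a single left shift by one for all $T\in\mathcal T$'' is not established and is in fact false for the congruence filtration on its own. This is precisely where the paper departs from what you propose: the filtration is \emph{not} $G\cap\SL^k(n,\widehat{R_{\mm}})$ but rather
$$G_k \;=\; \ker\Bigl(G\xrightarrow{\ \ol{\rho}\times\ol{\theta}\ } \SL(n,R/\mm^k)\times H/p^kH\Bigr), \qquad H := H_1(G;\Z)/\operatorname{tor},$$
i.e.\ the congruence filtration is crossed with the evident filtration coming from the abelianization. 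Hypothesis (2) (corank $\leq 1$ of the image of $T$ in $H_1(G;\Z)$) is used precisely here, not for separation as you assert: one chooses a basis $\lambda_{i1},\dots,\lambda_{ie_i}$ of $T_i\otimes\Q$ so that $\lambda_{i2},\dots,\lambda_{ie_i}$ have images of full order $p^k$ in $H/p^kH$ (possible for all but finitely many $p$ by the Elementary Divisors Theorem), while the single possibly problematic vector $\lambda_{i1}$ (put into $\ker(T_i\to H_1(G;\Z))$ when the corank is $1$) is handled by the congruence factor via the a priori adjunction to $R$ of the inverses of the nonzero entries of $Q_i\lambda_{i1}Q_i^{-1}$, so that Lemma~\ref{lem:order} forces $\ol{\rho}(\lambda_{i1})$ to have order exactly $p^k$ in $\SL(n,R/\mm^k)$. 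Without the $H/p^kH$ factor and this choice of basis, condition (5) of $p$-compatibility cannot be verified, and that is the essential content of the theorem that your proposal does not supply.
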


The proof of this theorem is given in Section~\ref{sec:proof of thm:virt res p for graphs of linear groups, 2} below.
Theorem~\ref{thm:virt res p for graphs of linear groups, 2} together with Theorem~\ref{thm:virt res p for graphs of linear groups, 1} immediately yields a criterion for the fundamental group of certain graphs of linear groups to be virtually residually $p$ (which refines \cite[Theorem~5.1]{He87}):

\begin{corollary}\label{cor:virt res p for graphs of linear groups}
Let $\scrg$ be a graph of finitely generated linear groups. Suppose that each edge group $G_e$ maps to a finitely generated abelian unipotent subgroup of~$G_v$ \textup{(}where $v=t(e)$\textup{)} which is maximal abelian or maximal unipotent in $G_v$ and whose image in $H_1(G_v;\Z)$ has rank $\geq\rank(G_e)-1$. Then for all but finitely many~$p$, the group $\pi_1(\scrg)$ is virtually residually $p$.
\end{corollary}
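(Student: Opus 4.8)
The plan is to deduce Corollary~\ref{cor:virt res p for graphs of linear groups} by assembling two of the main results already available in the excerpt, namely Theorem~\ref{thm:virt res p for graphs of linear groups, 1} and Theorem~\ref{thm:virt res p for graphs of linear groups, 2}. The strategy is a direct reduction: construct, for all but finitely many $p$, a $p$-compatible filtration of level $1$ on $\scrg$ by working vertex-by-vertex, and then invoke Theorem~\ref{thm:virt res p for graphs of linear groups, 1} to conclude.

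First I would fix the setup. Let $\scrg$ be a graph of finitely generated linear groups with underlying graph $Y$. For each vertex $v \in V(Y)$, let $\mathcal T_v := \{f_e(G_e) : e \in E(Y),\ t(e) = v\}$ be the collection of images of the edge groups landing at $v$; this is a finite collection since $Y$ is finite. By hypothesis each member of $\mathcal T_v$ is a finitely generated abelian unipotent subgroup of $G_v$ which is maximal abelian or maximal unipotent in $G_v$, and whose image in $H_1(G_v;\Z)$ has rank $\geq \rank(G_e) - 1 = \rank(f_e(G_e)) - 1$. Thus $G_v$ and $\mathcal T_v$ satisfy exactly the hypotheses of Theorem~\ref{thm:virt res p for graphs of linear groups, 2}, so for all but finitely many $p$ there exists a filtration $\mathbf G_v$ of $G_v$ which is $p$-compatible of level $1$ with $\mathcal T_v$. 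Since there are only finitely many vertices, there is a single finite exceptional set $S$ of primes such that for every $p \notin S$ and every $v$ we have such a filtration $\mathbf G_v$ simultaneously.

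Next I would check that the collection $\mathbf G = \{\mathbf G_v\}_{v \in V(Y)}$ is indeed a filtration of $\scrg$ in the sense of Section~\ref{subsec:reduction theorem}, i.e.\ that for each $n$ the collection $\{G_{v,n}\}_v$ of subgroups of the vertex groups is compatible: this amounts to checking $f_e^{-1}(G_{t(e),n}) = f_{\ol e}^{-1}(G_{t(\ol e),n})$ for all $e$. This follows from condition~(5) in the definition of $p$-compatibility of level $1$: for $v = t(e)$ we have $\mathbf G_v \cap f_e(G_e) = \{\gamma^p_{n+1}(f_e(G_e))\}_{n\geq 1}$, so $f_e^{-1}(G_{t(e),n}) = f_e^{-1}(\gamma^p_{n+1}(f_e(G_e))) = \gamma^p_{n+1}(G_e)$ (using that $f_e$ is an embedding and totally invariance of $\gamma^p$), and this expression is manifestly symmetric in $e$ and $\ol e$ since $G_e = G_{\ol e}$. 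Hence $\mathbf G$ is a filtration of $\scrg$, and by the discussion preceding Theorem~\ref{thm:virt res p for graphs of linear groups, 1} in the text, a filtration built this way from vertex filtrations that are $p$-compatible of level $\ell$ with the relevant edge-group images is precisely what is called a \emph{$p$-compatible filtration of level $\ell$} of $\scrg$. So $\scrg$ admits a $p$-compatible filtration of level $1$ for every $p \notin S$. Finally, the edge groups $G_e$ are finitely generated abelian; after possibly enlarging $S$ by the finitely many primes dividing the orders of the torsion subgroups of the (finitely many) edge groups, we may assume each $G_e$ is $p$-torsion free for $p \notin S$. Then Theorem~\ref{thm:virt res p for graphs of linear groups, 1} applies directly and gives that $\pi_1(\scrg)$ is virtually residually $p$ for all $p \notin S$, which is the desired conclusion.

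The only genuine content here has already been discharged in Theorem~\ref{thm:virt res p for graphs of linear groups, 2} (the construction of the $p$-compatible filtrations of linear groups, via the localization theorem and congruence subgroups) and in Theorem~\ref{thm:virt res p for graphs of linear groups, 1} (via the reduction theorem and the unfolding construction); so I do not expect a serious obstacle in this corollary. The one point requiring a little care — but still routine — is bookkeeping of the finitely many exceptional primes: one must combine the finite exceptional set coming from each vertex application of Theorem~\ref{thm:virt res p for graphs of linear groups, 2} with the finitely many primes needed to kill torsion in the edge groups, and observe that a finite union of finite sets is finite. No new macros or environments are needed beyond those already in the paper.
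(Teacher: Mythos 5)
Your proposal is correct and follows the same route the paper intends: apply Theorem~\ref{thm:virt res p for graphs of linear groups, 2} at each vertex to produce a filtration $p$-compatible of level $1$ with the edge-group images, check that these assemble into a $p$-compatible filtration of level $1$ on $\scrg$, and invoke Theorem~\ref{thm:virt res p for graphs of linear groups, 1}; the paper itself treats this as immediate. One minor superfluity: your step of enlarging the exceptional set $S$ to avoid $p$-torsion in the edge groups is unnecessary, since each $G_e$ is isomorphic to an abelian unipotent subgroup of a characteristic-zero linear group and is therefore automatically torsion free.
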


 The hypothesis~(2) on $T$ in the theorem above  is satisfied if $T$ is
 free abelian of rank $2$ and maps to an infinite subgroup of $H_1(G;\Z)$; this is the situation of interest in our later applications. Hypothesis~(2), however, also holds if $T$ is cyclic; hence from the previous corollary we obtain:

\begin{corollary}
Let $\scrg$ be a graph of finitely generated linear groups with cyclic unipotent edge groups. Suppose that for each edge $e$, the image of $G_e$  in $G_{v}$, where $v=t(e)$, is maximal abelian.
Then for all but finitely many $p$, the group $\pi_1(\scrg)$ is virtually residually $p$.
\end{corollary}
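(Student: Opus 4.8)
The plan is to derive this corollary directly from Corollary~\ref{cor:virt res p for graphs of linear groups} by verifying that its hypotheses are met whenever the edge groups are cyclic, unipotent, and map to maximal abelian subgroups of the adjacent vertex groups. Thus, let $\scrg$ be a graph of finitely generated linear groups with cyclic unipotent edge groups $G_e$, and suppose that for each edge $e$ with $v=t(e)$, the image $f_e(G_e)$ of $G_e$ in $G_v$ is maximal abelian. I must check the three conditions appearing in Corollary~\ref{cor:virt res p for graphs of linear groups} for each edge $e$ (with $v=t(e)$), namely: (i) $f_e(G_e)$ is a finitely generated abelian unipotent subgroup of $G_v$; (ii) $f_e(G_e)$ is maximal abelian or maximal unipotent in $G_v$; and (iii) the image of $f_e(G_e)$ in $H_1(G_v;\Z)$ has rank $\geq \rank(G_e)-1$.

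First I would observe that (i) and (ii) are immediate: since $G_e$ is cyclic, it is finitely generated abelian, and since $G_e$ is unipotent, so is its isomorphic image $f_e(G_e)$ under the embedding $f_e$; and (ii) is precisely the hypothesis that $f_e(G_e)$ is maximal abelian in $G_v$. The only point requiring argument is (iii). Here the key observation is that $\rank(G_e)\leq 1$, because $G_e$ is cyclic: a cyclic group is either finite (rank $0$) or infinite cyclic (rank $1$). In either case $\rank(G_e)-1\leq 0$, so the rank of \emph{any} subgroup of $H_1(G_v;\Z)$—in particular the rank of the image of $f_e(G_e)$—is trivially $\geq \rank(G_e)-1$, since ranks are nonnegative. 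Hence condition (iii) holds automatically, with no further hypothesis needed. This is exactly the remark made in the text preceding the corollary, that hypothesis~(2) in Theorem~\ref{thm:virt res p for graphs of linear groups, 2} "also holds if $T$ is cyclic."

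With all three conditions verified, Corollary~\ref{cor:virt res p for graphs of linear groups} applies and yields that for all but finitely many $p$, the group $\pi_1(\scrg)$ is virtually residually $p$, which is the desired conclusion. I do not anticipate any genuine obstacle in this argument; it is a routine specialization of the preceding corollary, and the only mildly subtle point—that cyclic groups have rank at most one, making the rank condition vacuous—is elementary. The entire proof amounts to unwinding definitions and invoking Corollary~\ref{cor:virt res p for graphs of linear groups}.
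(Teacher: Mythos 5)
Your proof is correct and follows the paper's own reasoning precisely: the paper derives this corollary from Corollary~\ref{cor:virt res p for graphs of linear groups} by observing that the rank condition is vacuous for cyclic edge groups, which is exactly your verification of point~(iii). No further comment is needed.
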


Before we turn to the proof of Theorem~\ref{thm:virt res p for graphs of linear groups, 2} we present an instructive example:

\begin{example}[Wehrfritz {\cite[p.~410]{We73-2}}]
Consider the subgroup
$$G=\left\langle
\left(\begin{matrix} 1 & 1 \\ 0 & 1\end{matrix} \right), \left(\begin{matrix} 2 & 0 \\ 0 & 1\end{matrix} \right)
\right\rangle$$
of $\GL(2,\Q)$, which has a presentation $G=\langle a,t \,|\, t^{-1}at=a^2\rangle$ (so $G$ is the Baumslag-Solitar group $\operatorname{BS}(1,2)$). The group $G$ is
metabelian, and its (unique) maximal unipotent subgroup is the normal subgroup
$\left(\begin{smallmatrix} 1 & \Z[\frac{1}{2}] \\ 0 & 1\end{smallmatrix} \right)$
of $G$ (isomorphic to the additive group of $\Z[\frac{1}{2}]$, in particular, not finitely generated).
This shows that in general,
the assumption in our theorem that $G$ be finitely generated does not force its abelian maximal unipotent subgroups to be finitely generated. (However, if $G$ is discrete, then each abelian maximal unipotent subgroup of $G$ is finitely generated, so
 the assumption in our theorem that each member of $\mathcal T$ be finitely generated may be dropped.)

This example can also be used to show that the hypothesis in Corollary~\ref{cor:virt res p for graphs of linear groups} that each edge group maps onto a maximal abelian or maximal unipotent subgroup of the target vertex group cannot be omitted:
Let $T=\left\langle
\left(\begin{smallmatrix} 1 & 1 \\ 0 & 1\end{smallmatrix} \right)\right\rangle$, a unipotent abelian subgroup of $G$ (which, however, is clearly neither maximal abelian nor maximal unipotent). It was shown by Higman \cite{Hig51} that
the amalgamated product $G\ast_{T=T} G$ of $G$ with itself via the identity $T\to T$ is non-Hopfian and hence not residually finite. Of course, examples for this phenomenon (in the form of HNN extensions of $\Z$) are also provided by Baumslag-Solitar groups: as is well-known, for integers $m$, $n$ with $|m|,|n|>1$ and $|m|\neq |n|$, the group $$\operatorname{BS}(m,n)=\langle a,t \,|\, t^{-1}a^mt=a^n \rangle$$ is not residually finite.
\end{example}

\subsection{A localization theorem}
A key algebraic ingredient for the proof of Theorem~\ref{thm:virt res p for graphs of linear groups, 2} in full generality is the following fact.
Let $R$ be a finitely generated subring of $\mathbb C$.
It is well-known that  for all but finitely many $p$ there exists a maximal ideal $\mm$ of $R$ such that $\chr(R/\mm)=p$. (See, e.g., Proposition~\ref{Non-trivial and reduced} below.) Evidently the characteristic of $R/\mm^i$ then divides $p^i$, for every $i\geq 1$.
But in general we do not have $\chr(R/\mm^i)=p^i$.
(Consider, e.g.,  $R=\Z[\sqrt{2}]$, $\mm=(\sqrt{2})$ and $p=2$.) %: the only maximal ideal of $R$ containing $2$ is $\mm=(\sqrt{2})$; we have $\mm^2=(2)$ and hence $\chr(R/\mm)=\chr(R/\mm^2)=2$.)
The following theorem implies that by judicious choice of $p$ we can exclude this phenomenon:

\begin{theorem}\label{thm:proprmi}
For all but finitely many primes $p$ there exists a maximal ideal~$\mathfrak m$ of $R$ such that the localization $R_{\mathfrak m}$ of $R$ at $\mathfrak m$ is regular unramified of residue characteristic $p$.
\end{theorem}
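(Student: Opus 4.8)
The plan is to reduce the statement to two facts about the finitely generated domain $R\subseteq\mathbb C$: that a localization $R_{\mathfrak m}$ is regular unramified of residue characteristic $p$ iff, after writing $R=\Z[x_1,\dots,x_k]/\mathfrak q$ for a prime ideal $\mathfrak q$, the fibre ring $R\otimes_\Z\F_p = \F_p[x_1,\dots,x_k]/(\mathfrak q\bmod p)$ is \emph{reduced} (no nonzero nilpotents) and \emph{regular} at $\mathfrak m$, i.e. the local ring $(R\otimes_\Z\F_p)_{\mathfrak m}$ is a regular local ring of the ``expected'' dimension equal to $\dim R - 1$. First I would record that unramifiedness of $R_{\mathfrak m}$ over $\Z$ (equivalently, $p$ generating the maximal ideal of $R_{\mathfrak m}$, so $\chr(R/\mathfrak m^i)=p^i$ for all $i$) is precisely the condition that $R\otimes_\Z\F_p$ be reduced at $\mathfrak m$; and then regularity of $R_{\mathfrak m}$ follows from regularity of the closed fibre at $\mathfrak m$ together with the flatness of $R$ over $\Z$ (localize and use that a flat local extension with regular fibre and regular base is regular).

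Next I would handle genericity in $p$. The point is that ``bad'' primes are those $p$ for which $R\otimes_\Z\F_p$ fails to be generically reduced, or for which the smooth locus of $\Spec(R\otimes_\Z\F_p)\to\Spec\F_p$ is empty. By generic smoothness / generic flatness applied to the morphism $\Spec R\to\Spec\Z$: there is a nonzero integer $N$ such that over $\Spec\Z[1/N]$ the morphism is flat and its fibres are geometrically reduced (indeed one can arrange the generic fibre, which is a finitely generated $\Q$-algebra that is a domain, to be geometrically reduced after inverting finitely many primes, and reducedness spreads out). Thus for all $p\nmid N$ the ring $R\otimes_\Z\F_p$ is reduced and equidimensional of dimension $\dim R - 1$. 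Its regular locus is then a nonempty (dense) open subset of $\Spec(R\otimes_\Z\F_p)$ by excellence of finitely generated $\F_p$-algebras, so it contains a closed point $\mathfrak m$; pulling $\mathfrak m$ back to $R$ and localizing gives the desired $R_{\mathfrak m}$.

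The step I expect to be the main obstacle — and the reason the paper defers a complete argument to Section~\ref{sec:approx} and invokes ``a combination of elementary commutative algebra and model theory'' — is making the generic-reducedness claim fully rigorous and uniform, i.e. proving that \emph{all but finitely many} fibres $R\otimes_\Z\F_p$ are reduced. Naive spreading-out arguments show the fibres are reduced over a \emph{dense open} of $\Spec\Z$, but one must rule out that infinitely many closed primes lie outside it; this is where one uses that $\Spec\Z$ is one-dimensional so a dense open is a cofinite set, together with a careful treatment of the generic fibre $R\otimes_\Z\Q$ (a domain, finitely generated over $\Q$, hence geometrically reduced after a finite field extension, hence after inverting finitely many primes its base changes stay reduced). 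The model-theoretic input (an Ax--Kochen / Łoś-type transfer, or simply a first-order-definability-of-reducedness argument across the family $\{\F_p\}$) is a clean way to package ``a geometric property of the generic fibre holds in all but finitely many characteristics.'' I would first give the commutative-algebra skeleton above, isolate exactly the ``all but finitely many fibres are reduced and equidimensional'' lemma, and then prove that lemma in Section~\ref{sec:approx} by the spreading-out-plus-one-dimensional-base argument, with the model-theoretic reformulation offered as an alternative.
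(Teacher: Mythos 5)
Your argument is correct and follows the same skeleton as the paper's --- reduce to finding a closed point $\mathfrak m$ of $\Spec(R/pR)$ at which the fibre is regular, then conclude that $R_{\mathfrak m}$ is regular and unramified --- but you prove the crucial genericity statement by a genuinely different method. The paper's Proposition~\ref{Non-trivial and reduced} (that for all but finitely many $p$, $R/pR$ is nonzero and reduced) is established via constructible subsets of $\Spec\Z[C]$, Chevalley's constructibility theorem, and uniform degree bounds \`a la van den Dries--Schmidt (Theorem~\ref{vdDries-Schmidt}); you propose instead to obtain it from the geometric reducedness of the generic fibre $R\otimes_\Z\Q$ (automatic over the perfect field $\Q$, since $R$ is a domain) together with the constructibility of fibrewise geometric reducedness in a finite-type family, plus the observation that a dense open of the one-dimensional base $\Spec\Z$ is cofinite. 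Both are legitimate: your packaging is more scheme-theoretic, with the relevant EGA spreading-out result carrying the weight, while the paper stays within a first-order/effective framework whose main input has a short model-theoretic proof. You also streamline the endgame: rather than first inverting some nonzero $f\in R$ so that $R$ becomes regular and then applying Lemma~\ref{Criterion}, you invoke the flat local extension theorem (regular base together with regular closed fibre forces the total ring to be regular) to get regularity of $R_{\mathfrak m}$ directly and then apply Lemma~\ref{Criterion} for unramifiedness; this avoids the preliminary localization. One small correction: the sentence asserting that unramifiedness of $R_{\mathfrak m}$ is equivalent to the fibre being \emph{reduced} at $\mathfrak m$ is too weak; in a regular local $R_{\mathfrak m}$ of residue characteristic $p$, the correct equivalent of $p\notin\mathfrak m^2$ is that $(R/pR)_{\mathfrak m}$ be \emph{regular} (this is exactly Lemma~\ref{Criterion}). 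For instance, $R=\Z[x,y]/(p-xy)$ localized at $(p,x,y)$ is a two-dimensional regular local ring, its fibre $\F_p[x,y]/(xy)$ localized at $(x,y)$ is reduced but not regular, and $p=xy\in\mathfrak m^2$. Since your proof actually produces an $\mathfrak m$ in the \emph{regular} locus of the fibre, the argument as a whole is unaffected.
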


\index{ring!regular}
\index{ring!unramified local}

We refer to Section~\ref{sec:approx} below for an explanation of the concepts from commutative algebra mentioned in the conclusion of this theorem. For its application in the present section, the reader only needs to be aware of one aspect of this conclusion, namely:

\begin{lemma}\label{lem:dividing by p}
Let $(S,\nn)$ be a regular unramified local ring of residue characteristic~$p$.
Then for all $i\geq j\geq 0$ and $x\in S$:
$$p^jx \equiv 0 \mod \nn^i  \qquad \Longleftrightarrow \qquad x\in\nn^{i-j}.
$$
\textup{(}Here $\nn^0=S$.\textup{)}
\end{lemma}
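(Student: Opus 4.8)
\textbf{Proof plan for Lemma~\ref{lem:dividing by p}.}
The plan is to reduce everything to the structure theory of unramified regular local rings. Recall that a local ring $(S,\nn)$ of residue characteristic $p$ is \emph{unramified} precisely when $p\notin\nn^2$; combined with regularity this means that $p$ can be taken as the first member of a regular system of parameters $p=x_1,x_2,\dots,x_d$ of $S$. First I would record the elementary observation that the implication ``$\Leftarrow$'' is trivial: if $x\in\nn^{i-j}$ then $p^j\in\nn^j$ gives $p^jx\in\nn^i$. So the content is in the forward direction.

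The key algebraic tool is the associated graded ring $\operatorname{gr}_\nn(S)=\bigoplus_{n\geq 0}\nn^n/\nn^{n+1}$, which for a regular local ring is a polynomial ring over the residue field $k=S/\nn$ in the initial forms $\bar x_1,\dots,\bar x_d$ of a regular system of parameters. Choosing $x_1=p$ (legitimate by unramifiedness), the initial form $\bar p$ of $p$ is the indeterminate $\bar x_1$, hence in particular a nonzerodivisor in $\operatorname{gr}_\nn(S)$. I would then argue by induction on $j$. The case $j=0$ is vacuous. For the inductive step, suppose $p^jx\in\nn^i$ with $i\geq j\geq 1$. Write $j'=j-1$; by the inductive hypothesis applied with $i$ replaced by $i-1\geq j'$ we already get $p^{j'}x\in\nn^{(i-1)-j'}=\nn^{i-j}$. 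Set $y:=p^{j'}x\in\nn^{i-j}$; then $py=p^jx\in\nn^i=\nn^{(i-j)+j'+1}$... rather, more cleanly: I want to show $y\in\nn^{i-j+1}$ from $py\in\nn^{i-j+1}$ — wait, that is not quite the right bookkeeping, so let me restate the induction more carefully: induct on $j$ to prove ``$p^jx\in\nn^i\Rightarrow x\in\nn^{i-j}$ for all $i\geq j$''. For $j\geq 1$, from $p^jx\in\nn^i$ and $j\leq i$ we certainly have $p\cdot(p^{j-1}x)\in\nn^i$. Suppose for contradiction $p^{j-1}x\notin\nn^{i-1}$; let $n<i-1$ be the exact order, so the initial form of $p^{j-1}x$ in $\nn^n/\nn^{n+1}$ is nonzero. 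Since $\bar p=\bar x_1$ is a nonzerodivisor in the polynomial ring $\operatorname{gr}_\nn(S)$, the product $\bar p\cdot\overline{p^{j-1}x}$ is a nonzero element of $\nn^{n+1}/\nn^{n+2}$, which means $p\cdot p^{j-1}x=p^jx$ has exact order $n+1<i$ in $\nn$, contradicting $p^jx\in\nn^i$. Hence $p^{j-1}x\in\nn^{i-1}$, and now the inductive hypothesis (with $j-1\leq i-1$) yields $x\in\nn^{(i-1)-(j-1)}=\nn^{i-j}$, as desired.

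There are no serious obstacles here — the whole argument is a standard application of the fact that the associated graded ring of a regular local ring is a polynomial ring, together with the defining property of unramifiedness that makes $p$ a regular parameter. The one point that needs a word of care is the reduction ``$\bar p$ is a nonzerodivisor in $\operatorname{gr}_\nn(S)$'': this is where unramifiedness is genuinely used, since without it $\bar p$ could be zero (if $p\in\nn^2$) or a zerodivisor, and the lemma would fail — exactly the phenomenon illustrated by the example $R=\Z[\sqrt2]$, $\mm=(\sqrt2)$, $p=2$ in the discussion preceding Theorem~\ref{thm:proprmi}. I would state the needed facts about $\operatorname{gr}_\nn(S)$ with a reference to a standard commutative algebra text (e.g.\ the characterization of regular local rings via their associated graded ring being a polynomial ring over the residue field), since Section~\ref{sec:approx} in any case develops the commutative-algebra background for Theorem~\ref{thm:proprmi}. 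Thus the lemma will be a short formal consequence of that machinery.
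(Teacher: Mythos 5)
Your proof is correct and is the natural unpacking of the paper's one-line proof, which simply invokes Lemma~\ref{Criterion}: unramifiedness makes $p$ a regular parameter, so $\bar p$ is an indeterminate in the polynomial ring $\operatorname{gr}_{\nn}(S)$ over $S/\nn$, and multiplication by $p$ therefore raises the $\nn$-adic order by exactly one. (Your induction on $j$ could be avoided by arguing once with initial forms, since $\operatorname{gr}_{\nn}(S)$ is a domain, but the argument as given is sound.)
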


This follows immediately from Lemma~\ref{Criterion} below.
In particular,  given a prime~$p$ and a  maximal ideal $\mathfrak m$ as in Theorem~\ref{thm:proprmi}, we do have $\operatorname{char}(R/\mathfrak m^i)=p^i$ for every~$i\geq 1$. It may be worth noting that
our theorem thus implies a more precise version of \cite[Proposition~2]{Ha01}, which states that except for a finite number of primes $p$, for every $i\geq 1$ there exists a morphism $R\to R_i$ to some finite ring $R_i$ such that $\chr(R_i)=p^i$.
 %Also note that with $p$ and $\mathfrak m$ as in Theorem~\ref{thm:proprmi},  the completion $\widehat{R_{\mathfrak m}}$ of the local ring $R_{\mathfrak m}$ is a complete regular local ring of mixed characteristic $p$ with finite residue field.

\medskip

We postpone the proof of Theorem~\ref{thm:proprmi} to Section~\ref{sec:approx}. We do note here, however, that in the case where $R$ is a ring of integers in a number field, this fact is quite easy to show:

\begin{example}
Suppose $R$ is the ring of integers in a number field $K$, and let $\delta$ be the discriminant of $K$ (a non-zero integer). For every $p$ there exists a prime (hence maximal) ideal $\mm$ of $R$, and $R_\mm$ is a DVR (hence regular) of residue characteristic $p$. Moreover, if $p$ does not divide $\delta$, then for every such $\mm$, $R_\mm$ is unramified. (See any standard text in algebraic number theory, e.g., \cite[Chapter~III, Section~2]{Neukirch}.)
\end{example}

This allows us to give a proof of Theorem~\ref{thm:virt res p for graphs of linear groups, 2} in the case relevant to our applications to $3$-manifolds (namely, where $G$ is a lift of a finite covolume torsion free Kleinian group) which bypasses Theorem~\ref{thm:proprmi}; this point will be addressed in Section~\ref{sec:Kleinians}.
Before we now turn the proof of Theorem~\ref{thm:virt res p for graphs of linear groups, 2}, we make some observations on congruence subgroups.

\index{group!Kleinian}

\subsection{Congruence subgroups}\label{sec:csg}
Let $(R,\mathfrak m)$ be a complete Noetherian local ring  whose residue field $R/\mathfrak m$ is finite of characteristic $p$.
For every $i\geq 1$ the natural surjective ring morphism $R\to R/\mathfrak m^i$ induces a surjective group morphism
$$\SL(n,R)\to\SL(n,R/\mathfrak m^i),$$
whose kernel is called the \emph{$i$-th congruence subgroup} of $\SL(n,R)$, and denoted by $\SL^i(n,R)$.
Note that $R/\mm^i$  and hence $\SL(n,R/\mathfrak m^i)$ are finite, for any $i\geq 1$.
In the following we  abbreviate $G_i:=\SL^i(n,R)$, and set $G:=G_1$. If $R$ is an integral domain, then $\bigcap_{i\geq 1} \mm^i=\{0\}$ by the Krull Intersection Theorem, hence the normal filtration $\{G_i\}_{i\geq 1}$ of $\SL(n,R)$ is separating (and thus $\SL(n,R)$ is residually finite).

\index{congruence subgroup}

Proofs of the following properties of congruence subgroups can be found in \cite{LS94} (see also \cite[Chapter~13]{DdSMS}). Given an ideal $\mathfrak a$ of $R$, we write $\mathfrak a^{(i)}=\mathfrak a\times\cdots\times\mathfrak a$ for the $i$-th cartesian power of the set $\mathfrak a$ (not to be confused with the $i$-th power $\mathfrak a^i$ of the ideal $\mathfrak a$).

\begin{proposition}\label{prop:LS94}
For all integers $i,j\geq 1$:
\begin{enumerate}
\item $G_i/G_{i+1}\cong (\mathfrak m^i)^{(n^2-1)}/(\mathfrak m^{i+1})^{(n^2-1)}$ \textup{(}an elementary abelian $p$-group\textup{)};
\item $[G_i, G_j] \subseteq G_{i+j}$;
\item $(G_i)^p \subseteq G_{ip}$ if $R$ has equal characteristic $p$.
%\item $G={\underset{\longleftarrow}\lim}\ G/G_i$.
\end{enumerate}
\end{proposition}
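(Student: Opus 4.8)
\textbf{Proof plan for Proposition~\ref{prop:LS94}.}

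The strategy is to reduce everything to explicit matrix computations modulo powers of $\mathfrak m$, exploiting the fact that $1+X\in G_i$ precisely when $X$ is an $n\times n$ matrix with entries in $\mathfrak m^i$ and $\det(1+X)=1$. First I would fix notation: write an element of $G_i=\SL^i(n,R)$ as $1+X$ with $X\in M_n(\mathfrak m^i)$, so that $G_i/G_{i+1}$ is controlled by the residue of $X$ modulo $\mathfrak m^{i+1}$. The point is that the assignment $1+X\mapsto X\bmod\mathfrak m^{i+1}$ is well-defined on $G_i$ and, because $(1+X)(1+Y)=1+X+Y+XY$ with $XY\in M_n(\mathfrak m^{2i})\subseteq M_n(\mathfrak m^{i+1})$ for $i\geq 1$, it is a group homomorphism $G_i\to M_n(\mathfrak m^i/\mathfrak m^{i+1})$ (additive group) with kernel exactly $G_{i+1}$. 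For part (1), I then need to identify the image. The condition $\det(1+X)=1$ gives, modulo $\mathfrak m^{i+1}$, the linear constraint $\operatorname{tr}(X)\equiv 0$, since $\det(1+X)=1+\operatorname{tr}(X)+(\text{higher order in the entries})$ and the higher-order terms lie in $\mathfrak m^{2i}\subseteq\mathfrak m^{i+1}$. Conversely, any trace-zero matrix over $\mathfrak m^i/\mathfrak m^{i+1}$ is hit: lift it to $X\in M_n(\mathfrak m^i)$ with $\operatorname{tr}(X)\in\mathfrak m^{i+1}$, and adjust one diagonal entry by an element of $\mathfrak m^{i+1}$ to force $\det(1+X)=1$ exactly (this last adjustment uses that the map sending the $(1,1)$-entry to $\det$ is, modulo $\mathfrak m^{i+1}$, just addition). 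This yields $G_i/G_{i+1}\cong\{X\in M_n:\operatorname{tr}(X)=0\}\otimes(\mathfrak m^i/\mathfrak m^{i+1})$, an $\mathbb F_p$-vector space of the stated shape $(\mathfrak m^i)^{(n^2-1)}/(\mathfrak m^{i+1})^{(n^2-1)}$; it is an elementary abelian $p$-group because $\mathfrak m^i/\mathfrak m^{i+1}$ is killed by $p$ (as $R/\mathfrak m$ has characteristic $p$ and $\mathfrak m^i/\mathfrak m^{i+1}$ is an $R/\mathfrak m$-module).

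For part (2), take $1+X\in G_i$ and $1+Y\in G_j$ with $X\in M_n(\mathfrak m^i)$, $Y\in M_n(\mathfrak m^j)$. A direct expansion of the commutator $(1+X)^{-1}(1+Y)^{-1}(1+X)(1+Y)$, using the geometric-series expansion $(1+X)^{-1}=1-X+X^2-\cdots$ (which converges $\mathfrak m$-adically, or terminates in $R/\mathfrak m^k$), shows that all terms of order $1$ cancel and the surviving terms involve at least one factor from $X$ and one from $Y$, hence lie in $M_n(\mathfrak m^{i+j})$; so the commutator lies in $1+M_n(\mathfrak m^{i+j})$, and its determinant is automatically $1$, giving $[1+X,1+Y]\in G_{i+j}$. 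For part (3), assume $R$ has equal characteristic $p$. Take $1+X\in G_i$, $X\in M_n(\mathfrak m^i)$, and expand $(1+X)^p=\sum_{k=0}^p\binom{p}{k}X^k$. For $1\leq k\leq p-1$ the binomial coefficient $\binom{p}{k}$ is divisible by $p$, hence is $0$ in $R$ by the equal-characteristic-$p$ hypothesis, so those terms vanish; the term $k=0$ is $1$, and the term $k=p$ is $X^p\in M_n(\mathfrak m^{ip})$. Thus $(1+X)^p=1+X^p\in 1+M_n(\mathfrak m^{ip})$, and again the determinant is $1$, so $(1+X)^p\in G_{ip}$.

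I expect the only genuinely delicate point to be the surjectivity half of part (1): one must be careful that the determinant constraint modulo $\mathfrak m^{i+1}$ really is the single linear condition $\operatorname{tr}=0$ and nothing more, i.e.\ that the "adjust a diagonal entry" step can always be carried out to land exactly in $\SL$ and not merely modulo $\mathfrak m^{i+1}$. This is where I would be most careful about lifting: given $X\in M_n(\mathfrak m^i)$ with $\operatorname{tr}(X)\equiv 0\bmod\mathfrak m^{i+1}$, I write $\det(1+X)=1+c$ with $c\in\mathfrak m^{i+1}$ (using $i\geq 1$ so that cross-terms are in $\mathfrak m^{2i}\subseteq\mathfrak m^{i+1}$ and $\operatorname{tr}(X)\in\mathfrak m^{i+1}$), and replace $X_{11}$ by $X_{11}-c\cdot u$ for a suitable unit cofactor correction $u$; since the $(1,1)$-cofactor of $1+X$ is a unit (being $\equiv 1\bmod\mathfrak m$), such a $u$ exists and the correction lies in $\mathfrak m^{i+1}$, so it does not change the class modulo $\mathfrak m^{i+1}$ while forcing $\det=1$ on the nose. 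Everything else is routine $\mathfrak m$-adic matrix bookkeeping of the kind already used implicitly in the preceding subsection. Since the proposition is quoted from \cite{LS94} and \cite[Chapter~13]{DdSMS}, it would also suffice to cite those references and indicate the one-line reason for each of (1)--(3) as above; I would present the short self-contained argument for the reader's convenience.
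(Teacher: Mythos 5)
The paper itself gives no proof of this proposition: it simply cites \cite{LS94} and \cite[Chapter~13]{DdSMS}, and those references use exactly the $\mathfrak m$-adic matrix computations you lay out. Your self-contained argument is correct. The three steps check out: the well-definedness and kernel computation for the homomorphism $1+X\mapsto X\bmod\mathfrak m^{i+1}$, the reduction of the determinant condition to the trace condition using $\mathfrak m^{2i}\subseteq\mathfrak m^{i+1}$, and the cofactor-adjustment lifting argument are all sound; part (2)'s observation that every nonconstant monomial in the formal expansion of the commutator must involve both $X$ and $Y$ is correct (set $X=0$ or $Y=0$ and the expression collapses to $1$), and part (3) correctly uses that $p=0$ in $R$ under the equal-characteristic hypothesis to kill the middle binomial terms. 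One stylistic remark: for (2) you can avoid the monomial bookkeeping entirely via the identity
\[
A^{-1}B^{-1}AB - 1 \;=\; A^{-1}B^{-1}(AB - BA) \;=\; A^{-1}B^{-1}(XY - YX),
\]
which puts the result one line away since $XY-YX\in M_n(\mathfrak m^{i+j})$ and $A^{-1}B^{-1}\in M_n(R)$. This is equivalent to what you do, just more compact and it sidesteps any worry about rearranging an $\mathfrak m$-adically convergent double series.
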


In particular, by (1) and (2), $\{G_i\}_{i\geq 1}$ is a (strongly) central $p$-filtration of $G=G_1$, hence $G_i\geq\gamma^p_i(G)$ for every $i\geq 1$.
If $R$ has equal characteristic $p$, then $\{G_i\}_{i\geq 1}$ is also dimensional, by (3), so $G_i\geq D^p_i(G)$ for every $i\geq 1$.
Under additional assumptions, these statements can be strengthened:

\begin{proposition}{\cite[Proposition~3.2]{LS94}}
If $p\neq 2$ or $n\neq 2$, then
$$\gamma_i(G) = \gamma^p_i(G) = G_i\qquad\text{for all $i\geq 1$,}$$
and if in addition $R$ is of equal characteristic $p$, then
$$D^p_i(G)=G_i\qquad\text{for all $i\geq 1$.}$$
\end{proposition}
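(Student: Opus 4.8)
\textbf{Proof plan for Proposition \cite[Proposition~3.2]{LS94}.}

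The goal is to strengthen the general inclusions $G_i \supseteq \gamma_i(G) = \gamma^p_i(G)$ (valid for a strongly central $p$-filtration by Proposition~\ref{prop:lower p} together with Proposition~\ref{prop:LS94}(1)--(2)) to equalities when $p \neq 2$ or $n \neq 2$. Since we already know $\gamma_i(G) \leq \gamma^p_i(G) \leq G_i$ for all $i$, and $\gamma^p_i(G) \leq \gamma_i(G) G^{p^{?}}\cdots$ would close the gap once we control $p$-th powers, the crux is to show the reverse inclusion $G_i \leq \gamma_i(G)$ by producing enough commutators. The plan is to argue by downward/upward induction on $i$ using the layer description in Proposition~\ref{prop:LS94}(1): the quotient $G_i/G_{i+1}$ is a free $R/\mathfrak m$-module with basis indexed by the elementary matrices $E + x e_{kl}$ (for $k\neq l$) and the diagonal traceless matrices, with $x$ ranging over $\mathfrak m^i/\mathfrak m^{i+1}$. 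First I would record the standard commutator identity in $\SL(n,R)$: for $x \in \mathfrak m^a$, $y \in \mathfrak m^b$ and distinct indices, $[\,E+x e_{kl},\, E+y e_{lm}\,] \equiv E + xy\, e_{km} \bmod G_{a+b+1}$ when $k \neq m$, and similar formulas feeding the diagonal entries. This is exactly the computation that shows $[G_a, G_b]$ surjects onto a large piece of $G_{a+b}/G_{a+b+1}$.

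The key step is then: assuming $n \geq 3$ (or $n = 2$ with $p$ odd), every basis vector of the $R/\mathfrak m$-module $G_i/G_{i+1}$ lies in the image of $[G_1, G_{i-1}]$ modulo $G_{i+1}$. For $n \geq 3$ this is a purely combinatorial bracketing exercise: any off-diagonal position $(k,m)$ factors as a bracket through a third index, and the diagonal traceless positions are hit by brackets $[E + e_{kl}, E + x e_{lk}]$; so by the three-subgroup / telescoping argument one gets $G_i = [G_1, G_{i-1}]\, G_{i+1}$, and then the separating property $\bigcap_j G_j = \{1\}$ (Krull Intersection Theorem, as noted before the proposition) upgrades this to $G_i = [G_1, G_{i-1}] = \gamma_i(G)$ by a standard induction (this is the same mechanism as in, e.g., \cite[Chapter~13]{DdSMS} or directly \cite{LS94}). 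The excluded case $n = 2$, $p = 2$ is genuinely exceptional because in $\SL(2)$ there is no third index to bracket through and the relevant commutator formula $[E + x e_{12}, E + y e_{21}]$ produces a diagonal term $1 + xy + \cdots$ whose top-degree part involves a factor of $2$; this is where the hypothesis is used, and it is the main obstacle to a uniform statement.

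For the equal-characteristic-$p$ addendum, $D^p_i(G) = G_i$: we already have $D^p_i(G) \leq G_i$ from Proposition~\ref{prop:LS94}(3) combined with (1)--(2) (the $G_i$ form a dimensional $p$-filtration). For the reverse inclusion, since $D^p_i(G) \geq \gamma^p_i(G)$ always and we have just shown $\gamma^p_i(G) = G_i$ under the stated hypotheses, we get $D^p_i(G) \geq G_i$ immediately, hence equality. So this part is a one-line consequence of the main statement once the characteristic-$p$ hypothesis guarantees $\{G_i\}$ is dimensional. I would present the off-diagonal/diagonal commutator bookkeeping as the substantive content and treat the induction and the Krull-intersection upgrade as routine, flagging the $n=2$, $p=2$ breakdown explicitly as the reason for the hypothesis.
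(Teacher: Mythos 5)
The paper cites this statement directly from \cite[Proposition~3.2]{LS94} without reproducing the proof, so there is no internal argument to compare against; your sketch must be judged on its own.

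Your overall strategy --- identify the layer $G_i/G_{i+1}$ with $\mathfrak{sl}_n(\mathfrak m^i/\mathfrak m^{i+1})$, show that commutators of lower-degree elements surject onto each layer, and then pass from the graded identity $G_i=[G_1,G_{i-1}]\,G_{i+1}$ to $G_i=\gamma_i(G)$ via separatedness of $\{G_j\}$ --- is the standard Lie-theoretic route, and it is essentially the one in \cite{LS94}. The sandwich $\gamma_i(G)\leq\gamma^p_i(G)\leq G_i$, and the deduction of $D^p_i(G)=G_i$ in equal characteristic $p$ from $\gamma^p_i(G)\leq D^p_i(G)$ and the dimensional-filtration inclusion $D^p_i(G)\leq G_i$, are both correct once the main equality is in hand.

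Two points need tightening. First, you misplace the factor of $2$ in the excluded case $n=p=2$. The commutator $[\,1+xe_{12},\,1+ye_{21}\,]$ has leading term $1+xy(e_{11}-e_{22})$ with no stray coefficient; what actually breaks is that $[e_{11}-e_{22},e_{12}]=2e_{12}$ and $[e_{11}-e_{22},e_{21}]=-2e_{21}$ vanish modulo $2$, so the off-diagonal directions of $\mathfrak{sl}_2(\F_2)$ lie outside the derived subalgebra, which collapses to the one-dimensional span of $h=e_{11}-e_{22}$. The hypothesis $p\neq 2$ or $n\neq 2$ is exactly the condition for $\mathfrak{sl}_n$ over a field of characteristic $p$ to be perfect: for $n\geq 3$ one brackets through a third index as you say, and for $n=2$, $p$ odd, one inverts the $2$ in $[h,e]=2e$.

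Second, passing from $G_i=\gamma_i(G)\,G_{i+1}$ (for all $i$) to $G_i=\gamma_i(G)$ is not an automatic consequence of $\bigcap_j G_j=1$. That hypothesis gives only $G_i\subseteq\bigcap_k\gamma_i(G)G_{i+k}$, which equals $\gamma_i(G)$ precisely when $\gamma_i(G)$ is closed in the $\{G_j\}$-topology, and that needs an argument: $R$ may have Krull dimension greater than $1$, so the layer dimensions grow and $G$ need not be $p$-adic analytic, and one cannot simply invoke the uniform-group machinery of \cite[Chapter~13]{DdSMS}. The cleanest fix is to run the descending induction in each finite quotient $G/G_m$ (where $\gamma_i(G/G_m)=G_i/G_m$ does follow immediately, since the filtration there is finite), which yields $\gamma_i(G)G_m=G_i$ for all $m$, and then to verify separately that $\gamma_i(G)$ has finite index in $G$ --- hence is open and closed, $G$ being a finitely generated pro-$p$ group. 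This finite-index verification is the genuinely non-routine step that your sketch elides, and it should be made explicit in a self-contained write-up.
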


The following lemma combined with the previous proposition shows that for odd $p$ and well-behaved $R$, the filtration $\{G_i\}_{i\geq 1}$ of $G$ is strongly $p$-potent:

\begin{lemma}
Suppose that $p$ is odd and $(R,\mathfrak m)$ is regular unramified. Then for every $i\geq 1$, the morphism
$\phi_i\colon G_i/G_{i+1} \to G_{i+1}/G_{i+2}$ induced by $M\mapsto M^p$ is injective.
\end{lemma}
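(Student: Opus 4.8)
The plan is to work entirely inside the congruence filtration $\{G_i\}_{i\geq 1}$ of $G=\SL^1(n,R)$ and to use the explicit description of the layers $G_i/G_{i+1}\cong (\mathfrak m^i)^{(n^2-1)}/(\mathfrak m^{i+1})^{(n^2-1)}$ from Proposition~\ref{prop:LS94}(1). First I would fix $i\geq 1$ and take a matrix $M\in G_i$, writing $M = I + A$ with $A$ an $n\times n$-matrix whose entries lie in $\mathfrak m^i$ (and with the trace-type constraint coming from $\det M = 1$, which will not matter for the computation). The point is to compute $M^p = (I+A)^p$ modulo $G_{i+2}$, i.e.\ modulo matrices of the form $I + B$ with entries of $B$ in $\mathfrak m^{i+2}$.

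The key computation is the binomial expansion $M^p = I + pA + \sum_{k=2}^{p}\binom{p}{k}A^k$. Since the entries of $A$ lie in $\mathfrak m^i$, the entries of $A^k$ lie in $\mathfrak m^{ik}$, so for $k\geq 2$ the entries of $A^k$ lie in $\mathfrak m^{2i}\subseteq\mathfrak m^{i+2}$ as soon as $2i\geq i+2$, i.e.\ $i\geq 2$; and for $2\leq k\leq p-1$ the coefficient $\binom{p}{k}$ is divisible by $p$, giving entries in $p\mathfrak m^{ik}$. The term $A^p$ has entries in $\mathfrak m^{ip}\subseteq\mathfrak m^{i+2}$ for $i\geq 1$ and $p\geq 3$ (here $ip\geq 3i\geq i+2$). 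The term $pA$ has entries in $p\mathfrak m^i$. Collecting this, one gets $M^p \equiv I + pA \bmod G_{i+2}$ whenever $i\geq 2$; the case $i=1$ needs a slightly more careful look at $A^2$, where the entries lie in $\mathfrak m^2 = \mathfrak m^{i+1}$, so one works modulo $G_{i+2}$ using that $\binom{p}{2}A^2$ has entries in $p\mathfrak m^2$ when $p$ is odd — and then one checks $p\mathfrak m^2\subseteq\mathfrak m^3$ via unramifiedness (see below), so again $M^p\equiv I+pA\bmod G_{3}$. Thus $\phi_i$ is, up to the identifications of layers with quotients of cartesian powers of powers of $\mathfrak m$, induced by the map $\bar a\mapsto \overline{pa}$ on $\mathfrak m^i/\mathfrak m^{i+1}\to\mathfrak m^{i+1}/\mathfrak m^{i+2}$ (applied coordinatewise).

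It then remains to show that multiplication by $p$ induces an \emph{injective} map $\mathfrak m^i/\mathfrak m^{i+1}\to\mathfrak m^{i+1}/\mathfrak m^{i+2}$. This is exactly where the hypothesis that $(R,\mathfrak m)$ be regular and unramified of residue characteristic $p$ enters: I would invoke Lemma~\ref{lem:dividing by p} (with $j=1$), which says $px\equiv 0\bmod\mathfrak m^{i+1}\iff x\in\mathfrak m^{i}$. Hence if $a\in\mathfrak m^i$ and $pa\in\mathfrak m^{i+2}$, then $a\in\mathfrak m^{i+1}$, so $\bar a = 0$ in $\mathfrak m^i/\mathfrak m^{i+1}$. (The same lemma with $i\rightsquigarrow 3$, $j=1$ gives $p\mathfrak m^2\subseteq\mathfrak m^3$ used to clean up the $i=1$ case, since for $x\in\mathfrak m^2$ we have $x\in\mathfrak m^{3-1}=\mathfrak m^2$, hence $px\in\mathfrak m^3$.) Applying this in each of the $n^2-1$ coordinates yields injectivity of $\phi_i$.

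The main obstacle I anticipate is purely bookkeeping: making the reduction $M^p\equiv I+pA\bmod G_{i+2}$ fully rigorous in the borderline case $i=1$ (and confirming that the residual quadratic term $\binom{p}{2}A^2$, with entries a priori only in $p\mathfrak m^2$, really is killed in $G_3$), together with checking that the identifications of the layers $G_i/G_{i+1}$ with $(\mathfrak m^i)^{(n^2-1)}/(\mathfrak m^{i+1})^{(n^2-1)}$ furnished by Proposition~\ref{prop:LS94}(1) are compatible with the $p$-th power map in the way just claimed. Neither is deep — it is the standard matrix-logarithm/exponential bookkeeping for congruence filtrations, as in \cite[Chapter~13]{DdSMS} — but it is where care is needed. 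I would present the $i\geq 2$ computation in detail and then handle $i=1$ by the explicit $\binom{p}{2}$ argument, citing Lemma~\ref{lem:dividing by p} for both the injectivity statement and the inclusion $p\mathfrak m^2\subseteq\mathfrak m^3$.
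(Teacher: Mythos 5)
Your proposal is correct and takes the same route as the paper: both expand $(\mathrm{id}+N)^p$, observe that every term other than $pN$ has entries in $\mathfrak m^{i+2}$ (using $p>2$ for $N^p$ and $p\mid\binom{p}{k}$ for the middle terms), and then invoke Lemma~\ref{lem:dividing by p} for the injectivity of multiplication by $p$ on $\mathfrak m^i/\mathfrak m^{i+1}\to\mathfrak m^{i+1}/\mathfrak m^{i+2}$. The paper's bookkeeping is slightly cleaner in that it treats all $i\geq 1$ uniformly with no separate $i=1$ case, and the auxiliary inclusion $p\mathfrak m^2\subseteq\mathfrak m^3$ requires only $p\in\mathfrak m$, not the full strength of Lemma~\ref{lem:dividing by p}.
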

\begin{proof}
Let $M\in G_i=\operatorname{SL}^i(n,R)$, $i\geq 1$, such that $M^p\in G_{i+2}$; we need to show $M\in G_{i+1}$. Write $M=\id+N$ where $N\in (\mathfrak m^i)^{n\times n}$. Then
$$pN + {p\choose 2}N^2+\cdots+pN^{p-1}+N^p=M^p-\id\in (\mathfrak m^{i+2})^{n\times n}.$$
We have $N^p\in (\mathfrak m^{i+2})^{n\times n}$ since $p>2$, and $pN^2\in (\mathfrak m^{i+2})^{n\times n}$.
Moreover, for $j=1,\dots,p-1$, every binomial coefficient ${p\choose j}$ is divisible by $p$. Therefore $pN\in (\mathfrak m^{i+2})^{n\times n}$. Since $(R,\mathfrak m)$ is regular unramified of residue characteristic $p$, this yields $N\in(\mathfrak m^{i+1})^{n\times n}$ as required (Lemma~\ref{lem:dividing by p}).
\end{proof}

\subsection{Proof of Theorem~\ref{thm:virt res p for graphs of linear groups, 2}}
\label{sec:proof of thm:virt res p for graphs of linear groups, 2}
Let $R$ be a commutative ring. Given an $n\times n$-matrix $N=(N_{ij})$ with entries in $R$ and $k\in\{0,\dots,n-1\}$, we call the $(n-k)$-tuple $(N_{j+k,j})_{j=1,\dots,n-k}$ the $k$th codiagonal of $N$. So the $0$th codiagonal is the main diagonal of $N$; an upper triangular matrix whose main diagonal is zero is called strictly upper triangular.
If $N$ is strictly upper triangular and the $i$th codiagonal of  $N$ vanishes for $i=0,\dots,k$, then $N^{n-k}=0$.
The following lemma is obvious if $n=2$ (the case of interest in the situation arising from $3$-dimensional topology considered in Section~\ref{sec:proof of the main theorem}).

\begin{lemma}\label{lem:order}
Suppose $R$ has characteristic $p^d$ where $d\geq 1$ and $p\geq n$. Then $\operatorname{UT}_1(n,R)$ has exponent $p^d$. If $N$ is a non-zero strictly upper triangular $n\times n$-matrix over $R$ and  some  entry in the first non-zero codiagonal of $N$ is  a unit in $R$, then
$M=\id+N\in\operatorname{UT}_1(n,R)$ has order $p^d$.
\end{lemma}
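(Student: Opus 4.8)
\textbf{Proof plan for Lemma~\ref{lem:order}.}

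The statement splits into two claims, and both reduce to the single identity $M^{p^e} = \id + \sum_{k\ge 1}\binom{p^e}{k}N^k$ for $M = \id+N$ with $N$ strictly upper triangular, together with careful bookkeeping of codiagonals and $p$-adic valuations of binomial coefficients. First I would record the basic facts: if $N$ is strictly upper triangular and its codiagonals $0,1,\dots,k$ vanish, then $N^{n-k}=0$ (as already noted in the text preceding the lemma); in particular $N^n=0$ for any strictly upper triangular $N$, so all the sums above are finite and range only over $1\le k\le n-1$. Since $p\ge n$, for each such $k$ the coefficient $\binom{p^e}{k}$ is divisible by $p^{e}$ for $e\ge 1$: indeed $\binom{p^e}{k} = \frac{p^e}{k}\binom{p^e-1}{k-1}$, and $v_p(k)\le v_p((n-1)!)=0$ because $k<n\le p$, so $v_p\binom{p^e}{k}\ge e$. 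Hence $M^{p^e} \equiv \id \bmod p^e\cdot(\text{matrices over }R)$; since $R$ has characteristic $p^d$, taking $e=d$ gives $M^{p^d}=\id$, so $\operatorname{UT}_1(n,R)$ has exponent dividing $p^d$.

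For the first assertion (exponent exactly $p^d$) I would exhibit a single element of order $p^d$, for instance $M_0 = \id + E_{12}$ where $E_{12}$ is the matrix unit; then $M_0^{j} = \id + jE_{12}$ and this equals $\id$ precisely when $j\equiv 0\bmod p^d$, since $1$ is a unit in $R$ (so $j E_{12}=0$ iff $j=0$ in $R$ iff $p^d\mid j$). This also handles the easy half of the second assertion: $M=\id+N$ has order dividing $p^d$, so it remains to show the order is not smaller under the stated hypothesis on the first non-zero codiagonal of $N$.

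The main work is the second assertion's lower bound, and this is where I expect the only real subtlety. Suppose the first non-zero codiagonal of $N$ is the $m$th one (some $1\le m\le n-1$) and that it contains a unit entry. I want to show $M^{p^{d-1}}\ne \id$. Using the expansion $M^{p^{d-1}} = \id + \sum_{k=1}^{n-1}\binom{p^{d-1}}{k}N^k$, I would examine the lowest non-vanishing codiagonal of the correction term $\Delta := \sum_k \binom{p^{d-1}}{k}N^k$. The term $N^k$ has all codiagonals below the $km$th equal to zero, and its $km$th codiagonal entries are sums of products of $k$ entries from the $m$th codiagonal of $N$; in particular the $k=1$ term contributes $\binom{p^{d-1}}{1}N = p^{d-1}N$, whose $m$th codiagonal is $p^{d-1}$ times a vector with a unit entry, hence is non-zero in $R$ (as $p^{d-1}\ne 0$ in $R$ of characteristic $p^d$). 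For $k\ge 2$, $\binom{p^{d-1}}{k}N^k$ has vanishing $m$th codiagonal (since $N^k$ does, as $km>m$), so no cancellation can occur in the $m$th codiagonal of $\Delta$: it equals $p^{d-1}\cdot(\text{the }m\text{th codiagonal of }N)\ne 0$. Therefore $\Delta\ne 0$, i.e. $M^{p^{d-1}}\ne\id$, and combined with $M^{p^d}=\id$ this forces $\operatorname{ord}(M)=p^d$. The one point to be careful about is the claim ``$p^{d-1}\cdot(\text{unit})\ne 0$ in $R$'': this is exactly the assertion that the additive order of $1$ in $R$ is $p^d$, which is the hypothesis $\chr(R)=p^d$, so it is automatic. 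The rest is routine matrix algebra over a commutative ring. \qed
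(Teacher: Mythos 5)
Your proof is correct and follows essentially the same strategy as the paper's: expand $(\id+N)^i$ binomially, use $p\ge n$ to kill $\binom{p^d}{k}N^k$ for $1\le k\le n-1$, and then observe that since $N^k$ for $k\ge 2$ has vanishing $m$th codiagonal, the $m$th codiagonal of $(\id+N)^i-\id$ is $i$ times that of $N$, which is nonzero whenever $p^d\nmid i$ because the additive order of a unit in $R$ is $p^d$. The only cosmetic difference is that the paper rules out $(\id+N)^i=\id$ for all $1\le i<p^d$ in one step rather than testing $i=p^{d-1}$ specifically, and leaves the "exponent equals (not merely divides) $p^d$" part implicit in the second assertion, which you make explicit via $\id+E_{12}$.
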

\begin{proof}
Note that every element $M$ of $\operatorname{UT}_1(R,n)$ has the form $M=\id+N$ where $N$ is strictly upper triangular (in particular, $N^n=0$); hence
$$M^{p^d}=(\id+N)^{p^d} = \sum_{i=0}^{n-1} {p^d\choose i} N^i = \id$$
since for $i=1,\dots,p-1$, the binomial coefficient ${p^d\choose i}$ is divisible by $p^d$. Now suppose in addition that $N\neq 0$ and some entry in the first non-zero codiagonal of $N$ is a unit in $R$. Say this entry occurs in the $k$th codiagonal of $N$, where $k\in\{1,\dots,n-1\}$. Then all entries in the $k$th codiagonal of $N^2,N^3,\dots$ are zero. Since for every $i\geq 1$,
$$(\id+N)^i=\id+iN+\text{linear combination of $N^2, N^3,\dots$}$$ and a unit of $R$ has order $p^d$ as element of the additive group of $R$, we see that~$\id+N$ has order $p^d$.
\end{proof}

Let now $G$ be a finitely generated linear group, and suppose $\mathcal T=\{T_1,\dots,T_r\}$ is a collection of finitely generated abelian unipotent subgroups of $G$, such that each $T_i$ is maximal abelian or maximal unipotent, and
$$d_i := \dim(\ker(H_1(T_i;\Q)\to H_1(G;\Q))) \leq 1\quad\text{for $i=1,\dots,r$.}$$
We write $T_i$ additively; also,
the natural morphism $T_i\to H_1(T_i;\Z)$ is an isomorphism, and in the following we identify these two groups using this map.
We pick a family $\{\lambda_{ij}\}_{1\leq j\leq e_i}$ of elements of $T_i$ which form a basis for $T_i\otimes_\Z\Q=H_1(T_i;\Q)$. In the case that $d_i=1$ we choose $\lambda_{i1}$ to lie in $\ker(H_1(T_i;\Z)\to H_1(G;\Z))$.

In order to prove the theorem we may and shall assume that $G\leq\SL(n,\C)$ for some $n\geq 2$.
Take $Q_i\in\SL(n,\C)$ such that $Q_i T_i Q_i^{-1}\leq \operatorname{UT}_1(n,\C)$.  Since  $G$ is finitely generated there is a finitely generated subring $R$ of $\C$ which contains all the entries of each element of $G$, as well as all entries of $Q_i^{\pm 1}$ for $i=1,\dots,r$ and the multiplicative inverses of the non-zero entries of
$Q_i \lambda_{i1} Q_i^{-1}$, for $i=1,\dots,r$.
By Theorem~\ref{thm:proprmi} there exists a finite set $P$ of primes such that for any prime $p\not\in P$ there exists a maximal ideal $\mm$ of $R$ such that $R_{\mm}$ is regular unramified of residue characteristic $p$.

We  write $H=H_1(G;\Z)/\operatorname{tor}$. As a consequence of the Elementary Divisors Theorem for subgroups of finitely generated free abelian groups \cite[Theorem~III.7.8]{La02}, after enlarging $P$ suitably if necessary, we may assume that for any~$p\not\in P$,   any $i\in \{1,\dots,r\}$ and any $k\geq 1$ we have
\[ \im\left(H_1(T_{i};\Z)\to H_1(G;\Z)\to H\to H/p^k H\right)\cong (\Z/p^k\Z)^{e_i-d_i}. \]
%\begin{cases}
%  (\Z/p^n\Z)^{d_i}, &\text{if $d_{i}=0$,}\\
% (\Z/p^n\Z)^{d_i-1}, &\text{if $d_{i}=1$.}\end{cases}\]
In the case that $d_i=1$, the element $\l_{i1}$ maps to zero under the map $H_1(T_{i};\Z)\to H/p^k H$. Again, after enlarging $P$ if necessary, we may also assume that $P$ contains all primes $\leq n$.

Let now $p$ be a prime with $p\notin P$. Let  $\mm$ be a maximal ideal of $R$ such that the local ring $R_{\mm}$, whose completion we denote by $\Lambda:=\widehat{R_{\mm}}$, is regular unramified of residue characteristic $p$. The natural ring morphisms $R\to R_{\mm} \to \Lambda = \widehat{R_{\mm}}$ are injective, hence give rise to a group embedding $\SL(n,R)\to\SL(n,\Lambda)$. We denote the map $G\to \SL(n,R)\to\SL(n,\Lambda)$ by $\rho$, and
the map $G\to H_1(N;\Z)/\operatorname{tor}=H$ by~$\theta$.
Given $k\geq 1$ we denote the map
$$G\xrightarrow{\rho} \SL(n,\Lambda)\to
\SL(n,\Lambda/\mm^k \Lambda) = \SL(n,R/\mm^k)$$
by~$\ol{\rho}$, and we denote the map
$$G\to H \to H/p^kH$$
by~$\ol{\theta}$.
For $k\geq 1$ we now let
\begin{align*}
 G_k &= \ker\left( G\xrightarrow{\ol{\rho}\times \ol{\theta}} \SL(n,R/\mm^k)\times H/p^k H\right) \\
 &= (\rho\times\theta)^{-1}\big(\SL^k(n,\Lambda)\times p^k H\big).
\end{align*}
We claim that the filtration  $\mathbf G=\{G_{k}\}_{k\geq 1}$ of $G$ is $p$-compatible with $\mathcal T$ of level~$1$. By the results in Section~\ref{sec:csg},
$\mathbf G$ is a separating normal $p$-filtration of $G$ with each~$G_k$ of finite index in $G$, and considered as a complete filtration of $G_1$, $\mathbf G$ is central and (strongly) $p$-potent.

To see that $\mathbf G$ separates $\mathcal T$, let $i\in\{1,\dots,r\}$ and $g\in G\setminus T_i$. If $T_i\leq G$ is maximal unipotent, we have $Q_i g Q_i^{-1}\notin\operatorname{UT}_1(n,R)$; since $\bigcap_{k\geq 1}\mm^k=\{0\}$ there exists $k\geq 1$ such that $\ol{\rho}(Q_i g Q_i^{-1})\notin\operatorname{UT}_1(n,R/\mm^k)$.
If $T_i$ is maximal abelian, then there is some $t\in T$ with $[g,t]\neq 1$; as before, there is some $k$ such that $1\neq \ol{\rho}([g,t]) = [\ol{\rho}(g),\ol{\rho}(t)]$ in $\SL(n,R/\mm^k)$.
In both cases we obtain $\ol{\rho}(g)\notin\ol{\rho}(T_i)$, i.e., $g\notin G_k\cdot T_i$.

It remains to show that
$$T_i\cap G_k = p^k T_i = \gamma_{k+1}^p(T_i)\qquad\text{for all $k\geq 1$.}$$
Since $T_i$ (being unipotent) is torsion free, we have
$T_i/p^kT_i\cong (\Z/p^k\Z)^{e_i}$;
we need to show that
$$I:=\im\left(T_i\xrightarrow{\ol{\rho}\times \ol{\theta}}  \SL(n,R/\mm^k)\times H/p^kH\right)\cong
(\Z/p^{k}\Z)^{e_i}.$$
Recall that
$$\im\left( T_i \xrightarrow{\ol{\theta}} H/p^kH\right) \cong (\Z/p^k\Z)^{e_i-d_i}.$$
Note that by choice of $R$ and $\mathfrak m$, Lemma~\ref{lem:order} implies that the matrix $\ol{\rho}(\lambda_{i1})$ has order  $p^{k}$ in $\SL(n,R/\mm^k)$.
So in the case that $d_i=0$ we clearly have $I\cong (\Z/p^{k}\Z)^{e_i}$ as desired. If $d_i=1$ then $\ol{\theta}(\lambda_{i1})=0\in H/p^kH$, whereas $\ol{\rho}(\lambda_{i1})$ has order $p^k$; thus also in this case $I\cong (\Z/p^{k}\Z)^{e_i}$. \qed

\begin{remark}
Suppose in the proof of the theorem, in addition to $p\notin P$ we also require that $p$ is such that $\gamma_2(G)/\gamma_3(G)$ is $p$-torsion free. Then the filtration $\{\gamma^p_{k+1}(G)\}_{k\geq 1}$ of $\gamma^p_2(G)$ is $p$-potent, by Lemma~\ref{lem:2-potent}. Moreover, by Proposition~\ref{prop:lower p},~(2) we have
$$\gamma^p_k(G)\leq \ker(G\to H\to H/p^{k-1}H)\quad\text{for all $k\geq 1$,}$$
with equality for $k=1$.
Hence (identifying $G$ with its image in $\SL(n,\Lambda)$ under $\rho$), the filtration $\mathbf G^*=\{G^*_k\}_{k\geq 1}$ of $G$ defined by
$$G^*_k = \SL^k(\Lambda,n) \cap \gamma^p_{k+1}(G)$$
is also $p$-compatible with $\mathcal T$ of level $1$, cf.~Lemma~\ref{lem:canonical filtration}.
\end{remark}

\section{Proof of the main theorem}
\label{sec:proof of the main theorem}

\noindent
In this section we give a proof  of the main theorem as stated in the introduction, by applying the results of the previous section. We begin by first discussing the special cases of hyperbolic $3$-manifolds and Seifert fibered manifolds.

\subsection{Hyperbolic $3$-manifolds}\label{sec:hyperbolic}
It is a well-known fact that the fundamental group of every hyperbolic $3$-manifold is linear and hence virtually residually $p$, for all but finitely many $p$.
We need a more precise formulation of this fact:

\index{$3$-manifold!hyperbolic}

\begin{definition}
Let $N$ be a $3$-manifold which is either closed or has  %toroidal and
incompressible boundary. Let $\mathcal T$ be the collection of fundamental groups of the boundary components of $N$, considered as subgroups of $G=\pi_1(N)$ in the natural way.
Given a prime $p$ and an integer $\ell\geq 0$  we say that a filtration $\mathbf G$ of $G$ is a \emph{boundary compatible  $p$-filtration of level $\ell$} for $N$ if $\mathbf G$ is $p$-compatible of level $\ell$ with the collection of subgroups $\pi_1(T)$, $T\in\mathcal T$.
\end{definition}

\index{$p$-filtration!boundary compatible}

\begin{proposition}\label{prop:virtphyp}
Let $N$ be an orientable $3$-manifold with empty or toroidal and incompressible boundary such that the interior of $N$ has a complete hyperbolic structure.  Then for all but finitely many $p$, there exists a boundary compatible  $p$-filtration of level $1$ for $N$.
\end{proposition}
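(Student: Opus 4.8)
The plan is to reduce Proposition~\ref{prop:virtphyp} to Theorem~\ref{thm:virt res p for graphs of linear groups, 2} (applied with the trivial graph of groups consisting of a single vertex) via the discreteness and faithfulness of the holonomy representation. First I would recall that if the interior of $N$ carries a complete hyperbolic structure, then $N$ is irreducible with infinite fundamental group, so $G=\pi_1(N)$ is torsion free and, by Mostow--Prasad rigidity together with a theorem of Weil, the holonomy gives a discrete faithful representation $\rho_0\colon G\to\mathrm{PSL}(2,\C)$ of finite covolume. Lifting to $\mathrm{SL}(2,\C)$ (which is possible since $G$ is finitely generated and $H^2(G;\Z/2\Z)$ obstructions can be killed after passing to a finite cover, or directly since for $3$-manifold groups the lift exists; in any case one obtains a faithful representation $\rho\colon G\to\mathrm{SL}(2,\C)$, possibly after replacing $N$ by a finite cover, and such a replacement is harmless here because a boundary compatible $p$-filtration of the cover restricts to one of the original group's finite-index subgroup, which is what ``virtually residually $p$''-type statements need — though in fact the standard lifting argument works directly). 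Thus $G$ is a finitely generated linear group.

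Next I would identify the collection $\mathcal T$ with a collection of subgroups satisfying the hypotheses of Theorem~\ref{thm:virt res p for graphs of linear groups, 2}. Since $\partial N$ is incompressible and toroidal, each boundary torus $T$ gives a subgroup $\pi_1(T)\cong\Z^2$ of $G$, and under the discrete faithful representation $\rho$ these are precisely the (conjugates of the) peripheral subgroups, which consist of parabolic elements; hence $\rho(\pi_1(T))$ is a unipotent subgroup of $\mathrm{SL}(2,\C)$, i.e.\ conjugate into $\mathrm{UT}_1(2,\C)$. This verifies that each $T\in\mathcal T$ is a finitely generated abelian unipotent subgroup. Condition~(1) of Theorem~\ref{thm:virt res p for graphs of linear groups, 2} — that each such $\pi_1(T)$ is maximal abelian (equivalently here, maximal unipotent) in $G$ — is a standard fact for cusped hyperbolic manifolds: the centralizer of a parabolic in a discrete subgroup of $\mathrm{SL}(2,\C)$ is the full peripheral subgroup (two parabolics with a common fixed point generate a discrete group only if they lie in a common peripheral $\Z^2$, and an element not in $\pi_1(T)$ either fails to commute with $\pi_1(T)$ or would enlarge the cusp); I would cite this or include the short argument via limit sets / fixed points on $\partial\mathbb H^3$.

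The remaining point is condition~(2): the image of $\pi_1(T)\cong\Z^2$ in $H_1(G;\Z)=H_1(N;\Z)$ must have rank $\geq\rank(\pi_1(T))-1=1$. This is exactly the classical ``half lives, half dies'' statement for $3$-manifolds with toroidal boundary: by Poincaré--Lefschetz duality the image of $H_1(\partial N;\Q)\to H_1(N;\Q)$ has dimension $\tfrac12\dim H_1(\partial N;\Q)$, and moreover the image of each individual boundary torus's $H_1(T;\Q)\to H_1(N;\Q)$ has dimension at least $1$ (the longitude-type class, or more precisely: a boundary torus cannot die entirely in rational homology, since that would make $[T]$ null-homologous in a way incompatible with it bounding). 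I would spell this out: from the long exact sequence of $(N,\partial N)$ and duality one gets $\rank\bigl(\im(H_1(T;\Q)\to H_1(N;\Q))\bigr)\geq 1$ for each component $T$ of $\partial N$, which is precisely hypothesis~(2). With (1) and (2) verified, Theorem~\ref{thm:virt res p for graphs of linear groups, 2} applies to $G$ with $\mathcal T$, yielding for all but finitely many $p$ a filtration of $G$ that is $p$-compatible of level $1$ with the peripheral subgroups, i.e.\ a boundary compatible $p$-filtration of level $1$ for $N$.

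The main obstacle, I expect, is not any single deep input but rather the careful bookkeeping around lifting the holonomy from $\mathrm{PSL}(2,\C)$ to $\mathrm{SL}(2,\C)$ and around the precise meaning of ``incompressible boundary'' in the hypothesis (which ensures the $\pi_1(T)$ inject and are maximal abelian); if the lift to $\mathrm{SL}(2,\C)$ requires passing to a finite cover, I would need to note explicitly that ``boundary compatible $p$-filtration'' is a property that behaves well under passage to finite covers in the sense needed downstream — but since Theorem~\ref{thm:virt res p for graphs of linear groups, 2} only needs a faithful linear representation and $G$ itself is already linear (as a $3$-manifold group whose holonomy lifts, which it does for orientable cusped hyperbolic $3$-manifolds by a theorem of Culler), this is a minor issue and I would handle it by citing the standard lifting result. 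The genuinely ``topological'' ingredients — parabolicity of peripheral elements, maximality of peripheral subgroups, and half-lives-half-dies — are all classical and I would cite them rather than reprove them.
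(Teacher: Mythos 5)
Your proposal is correct and follows essentially the same route as the paper: lift the holonomy to $\SL(2,\C)$ (citing Thurston, no cover needed), observe that the peripheral subgroups are abelian unipotent and maximal, verify the rank hypothesis via Poincar\'e--Lefschetz duality, and invoke Theorem~\ref{thm:virt res p for graphs of linear groups, 2}. The only minor difference is that the paper establishes maximality of the peripheral subgroups purely topologically (Lemma~\ref{lem:maximal abelian}, using Hempel's $3$-manifold theory), whereas you propose a Kleinian-group argument via parabolic fixed points and limit sets; both are standard and either works.
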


We need a topological lemma sketched in \cite[p.~390]{He87}: %\marginpar{\small Sollen wir den Beweis geben?\\ K\"onnen wir auch streichen.}

\begin{lemma}\label{lem:maximal abelian}
Let $N$ be an orientable $3$-manifold with toroidal incompressible boundary, and let $T$ be a component of $\partial N$. Then $\pi_1(T)$ is a maximal abelian subgroup of $\pi_1(N)$, i.e., no element of $\pi_1(N)\setminus\pi_1(T)$ commutes with $\pi_1(T)$.
\end{lemma}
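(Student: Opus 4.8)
\textbf{Proof proposal for Lemma~\ref{lem:maximal abelian}.}

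The plan is to exploit the fact that, since $N$ is orientable with incompressible toroidal boundary, the interior of $N$ is either hyperbolic or the boundary tori are essential in a sense that forces the peripheral subgroups to be malnormal up to the obvious conjugacy. First I would reduce to the case where $\mathrm{int}(N)$ carries a complete hyperbolic structure of finite volume; indeed, the only orientable irreducible $3$-manifolds with incompressible toroidal boundary for which a peripheral torus subgroup could \emph{fail} to be maximal abelian are the small Seifert fibered pieces (e.g.\ $T^2\times I$, or a solid torus, excluded by incompressibility), and these can be handled by hand. More precisely, one first observes that if $\pi_1(T)$ is not maximal abelian, then there is $g\in\pi_1(N)\setminus\pi_1(T)$ centralizing $\pi_1(T)\cong\Z^2$, so the centralizer $Z$ of $\pi_1(T)$ in $\pi_1(N)$ is a subgroup properly containing $\pi_1(T)$ and containing $\Z^2$ as a finite-index or infinite-index subgroup.

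The key step is then the following: in a finite-volume hyperbolic $3$-manifold, each maximal parabolic subgroup (the image of a cusp subgroup $\pi_1(T)\cong\Z^2$) is its own centralizer in $\pi_1(N)\le \mathrm{PSL}(2,\C)$. This is standard Kleinian-group theory: an element of $\mathrm{PSL}(2,\C)$ commuting with two independent parabolics fixing $\infty$ must itself fix $\infty$ and be parabolic (it lies in the stabilizer of the fixed point, which is upper-triangular, and commuting with a $\Z^2$ of translations forces it to be a translation); discreteness and the structure of the cusp then put it back inside $\pi_1(T)$. Thus $Z=\pi_1(T)$, contradicting $g\in Z\setminus\pi_1(T)$. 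For the non-hyperbolic exceptional pieces, I would instead argue directly from the Seifert structure or from the fact that an incompressible torus boundary component of a Seifert fibered space (other than the excluded cases) has its $\pi_1$ maximal abelian because any larger abelian subgroup would have to be $\Z^2$ again and would correspond to a second, parallel essential torus, forcing the manifold to be $T^2\times I$.

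The main obstacle I anticipate is not the hyperbolic case — that is essentially the elementary Kleinian computation just sketched — but rather being careful about exactly which class of manifolds $N$ the lemma is meant to cover, since the ambient text only invokes it for the JSJ pieces (hyperbolic or Seifert fibered with the ``Seifert simple'' normalization from Step~1 of the outline). Given that, I would state the proof for those two cases separately: the hyperbolic case via the centralizer-of-parabolics argument above, and the Seifert case by noting that after passing to the relevant finite cover the Seifert pieces are of the form $S^1\times F$, where the peripheral subgroup of a boundary torus is $\Z\times\pi_1(\partial F)$ and is manifestly maximal abelian in $\Z\times\pi_1(F)$ because $\pi_1(\partial F)$ is maximal abelian (in fact malnormal) in the surface group $\pi_1(F)$ when $F$ has nonempty boundary and is not an annulus. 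Assembling these cases completes the proof; no long calculation is required, only the two structural observations.
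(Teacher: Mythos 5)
Your proposal takes a genuinely different route from the paper, and it has a real gap in the reduction step.

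The paper's proof is purely topological and needs no geometrization at all. It runs as follows: if $g\in\pi_1(N)\setminus\pi_1(T)$ together with $\pi_1(T)$ generates an abelian subgroup $A$ properly containing $\pi_1(T)$, then by Hempel's classification of abelian subgroups of $3$-manifold groups (Theorem~9.13 of \cite{He76}) we have $A\cong\Z\oplus\Z$ (since $N$ is not a $3$-torus). One then passes to the cover $\widetilde{N}\to N$ corresponding to $A$, invokes the compact core theorem (Theorem~8.6 of \cite{He76}) to get a compact $M\subseteq\widetilde{N}$ carrying $\pi_1$, arranges that $M$ contains a homeomorphic lift $S$ of $T$ in its boundary, and then applies the product theorem (Theorem~10.5 of \cite{He76}, using that $\pi_1(T)$ has finite index in $A$ and that $N$ is orientable) to see $M\cong S\times[0,1]$ minus balls. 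In particular $\pi_1(S)\to\pi_1(M)=A$ is onto, i.e.\ $\pi_1(T)\to A$ is an isomorphism, contradicting $g\notin\pi_1(T)$. This argument applies uniformly to every orientable $3$-manifold with toroidal incompressible boundary, with no case split.

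The gap in your argument is the opening reduction. You assert that ``the only orientable irreducible $3$-manifolds with incompressible toroidal boundary for which a peripheral torus subgroup could fail to be maximal abelian are the small Seifert fibered pieces,'' and then treat only the hyperbolic and product-Seifert cases. But the lemma as stated (and as used in the paper, e.g.\ before Corollary after Lemma~\ref{lem:B}) applies to an arbitrary orientable $3$-manifold with incompressible toroidal boundary, which may well have a non-trivial JSJ decomposition and be neither hyperbolic nor Seifert fibered. In that case $T$ is a boundary torus of a single JSJ piece $M\subset N$; your Kleinian/Seifert argument would at best show that $\pi_1(T)$ is maximal abelian in $\pi_1(M)$, and one would still need a separate Bass--Serre argument to conclude that an abelian subgroup of $\pi_1(N)$ containing the edge group $\pi_1(T)$ is conjugate into $\pi_1(M)$. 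This step is true but not automatic and is not addressed in your sketch; as written, your reduction essentially presupposes what it is supposed to prove. The content of your hyperbolic and Seifert cases is correct, and would give a different (geometrization-based) proof of the special case actually needed for Proposition~\ref{prop:virtphyp}; but to establish the lemma in the generality stated you either need to fill in the Bass--Serre step or, more economically, use the paper's core-theorem argument, which bypasses geometry entirely.
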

\begin{proof}
Suppose $g\in \pi_1(N)\setminus\pi_1(T)$ and $\pi_1(T)$  generate an  abelian subgroup~$A$ of $\pi_1(N)$ which properly contains $\pi_1(T)$.
Since $N$ is not a $3$-torus, $A\cong \Z\oplus\Z$ by \cite[Theorem~9.13]{He76}.
Denote by $\widetilde{N}\to N$ the covering of $N$ corresponding to $A$.  There exists a compact submanifold $M$ of $\widetilde{N}$ such that $\pi_1(M)\to \pi_1(\widetilde{N})$ is an isomorphism  \cite[Theorem~8.6]{He76}.
We can assume that $M$ contains a homeomorphic lifting $S$ of $T$ in its boundary.
Since $\pi_1(T)$ is of finite index in~$A$ and since $N$ is orientable, we can apply  \cite[Theorem~10.5]{He76}   to conclude that $M$ is a product
$S\times [0,1]$ with possibly some $3$-balls removed. In particular $\pi_1(S)\to \pi_1(M)$  is an isomorphism, which implies that $\pi_1(T)\to A=\pi_1(\widetilde{N})$ is an isomorphism, contradicting our assumption that  $g\notin \pi_1(T)$.
\end{proof}

Now let $N$ be an orientable $3$-manifold with empty or toroidal and incompressible boundary. Suppose moreover that $N$ is hyperbolic, so $G=\pi_1(N)$ has a faithful representation to a discrete torsion free  subgroup of $\operatorname{PSL}(2,\C)$. Thurston showed (cf., e.g., \cite[Proposition~3.1.1]{CS83} or \cite{Kr85}) that this representation lifts to a discrete and faithful representation  $G\to \SL(2,\C)$; we identify $G$ with its image under this embedding. Let now $T$ be one of the (toroidal) components of $\partial N$.
Standard Poincar\'e duality arguments yield that the natural image of $\pi_1(T)\cong\Z\oplus\Z$ in~$H_1(G;\Z)$ has rank $\geq 1$.
Every discrete abelian subgroup of $\SL(2,\C)$ of rank $2$ is unipotent (cf.~\cite[p.~382]{He87}). Hence $\pi_1(T)$ is unipotent; in fact, by the previous lemma, $\pi_1(T)$ is a maximal unipotent subgroup of $G$. Proposition~\ref{prop:virtphyp} now is a consequence of Theorem~\ref{thm:virt res p for graphs of linear groups, 2}. \qed

\subsection{$p$-compatible filtrations of lifts of Kleinian groups}\label{sec:Kleinians}

In the proof of Theorem~\ref{thm:virt res p for graphs of linear groups, 2} in the previous section, given a finitely generated subgroup $G$ of $\SL(n,\C)$, we chose a certain finitely generated subring $R$ of $\C$ so that $G\leq\SL(n,R)$, and we appealed to Theorem~\ref{thm:proprmi} to obtain, for all but finitely many $p$, a maximal ideal $\mathfrak m$ of $R$ such that $\chr(R/\mm^k)=p^k$ for every $k\geq 1$. Above, this result has been applied in the context
where $G$ is a lift from $\operatorname{PSL}(2,\C)$ of a finite covolume torsion free Kleinian group. Thurston observed that then
the Mostow Rigidity Theorem implies that $G$ is conjugate (in $\SL(2,\C)$) to a subgroup of $\SL(2,K)$ where $K$ is a number field. (In fact,
 $K$ can be taken to be a quadratic extension of the trace field $\Q(\tr g:g\in G)$ of $G$, see \cite[Corollary~3.2.4]{MR03}.) Hence in this case, the ring $R$ introduced in the proof of Theorem~\ref{thm:virt res p for graphs of linear groups, 2} may be assumed to be a subring of a ring $\mathcal O$ of $S$-integers of a number field $K$. The ring $\mathcal O$ is not finitely generated in general. However, all but finitely many $p$ are unramified in $K$ and do not become units in~$\mathcal O$, and for those $p$, if we choose the prime ideal $\mathfrak p$ of $\mathcal O$ which contains $p$, then $\mathfrak p$ is maximal and
$\mathcal O_{\mathfrak p}$ is regular unramified with finite residue field of characteristic $p$.
%$\chr(\mathcal O/\mathfrak p^k)=p^k$ for every $k\geq 1$,
We may now carry out the rest of the argument in the proof above with $R$ and $\mm$ replaced by $\mathcal O$ respectively $\mathfrak p$. Thus, in this application of Theorem~\ref{thm:virt res p for graphs of linear groups, 2} one may avoid the use of  Theorem~\ref{thm:proprmi} at the cost of appealing to a non-trivial fact from hyperbolic geometry. %(See also Remark~(2) following Lemma~\ref{lem:fuchsian} below.)
In general, however, Theorem~\ref{thm:virt res p for graphs of linear groups, 2} cannot simply be reduced to the number field case: there do exist finitely presented linear groups which admit no faithful representation as linear groups over global fields whatsoever \cite{GS79}.

\index{group!Kleinian}
\index{Thurston}

\subsection{Seifert fibered spaces}\label{sec:seifert}
In the case of Seifert fibered spaces we have:

\index{$3$-manifold!Seifert fibered}

\begin{proposition}\label{prop:Seifert res p}
Let $N$ be a Seifert fibered space. Then
$\pi_1(N)$ has a finite index subgroup which is residually $p$ for every $p$.
\end{proposition}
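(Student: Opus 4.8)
The plan is to reduce to the familiar structure theory of Seifert fibered spaces and then exhibit an explicit finite cover whose fundamental group is built from pieces that are manifestly residually $p$ for all $p$. First I would recall that every Seifert fibered space $N$ has a finite cover $N'$ which is either $S^1\times F$ for a compact orientable surface $F$ (possibly with boundary), or a circle bundle over such an $F$; more precisely, after passing to a finite cover we may assume the base orbifold is a genuine surface (no cone points) and that $N'$ is orientable, so $\pi_1(N')$ sits in a central extension
\begin{equation*}
1 \to \Z \to \pi_1(N') \to \pi_1(F) \to 1,
\end{equation*}
where the central $\Z$ is generated by the regular fiber. When $F$ has nonempty boundary this extension splits and $\pi_1(N')\cong \Z\times\pi_1(F)$ with $\pi_1(F)$ free; when $F$ is closed, $\pi_1(N')$ is the fundamental group of a closed orientable Seifert fibered space with trivial (or at worst controlled) Euler number after a further finite cover.

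The key step is then to verify that such groups are residually $p$ for every prime $p$. In the split case $\Z\times F_r$ ($F_r$ free of finite rank), this is immediate: free groups are residually $p$ for all $p$, $\Z$ is residually $p$ for all $p$, and the class of residually $p$ groups is closed under finite direct products (indeed this follows from Lemma~\ref{lem:gruenberg}, since being residually $p$ is a root property). In the closed case, the group $\pi_1(N')$ is a central extension of a closed surface group $\pi_1(F)$ by $\Z$; after passing to a further finite cover I would arrange the Euler number of the bundle to be zero, so that $N'$ is a product $F'\times S^1$ (or, if $F'$ is a torus, a $3$-torus) — closed orientable surface bundles over $S^1$ with trivial monodromy — and again $\pi_1(N')\cong \pi_1(F')\times\Z$ is residually $p$ for all $p$ since closed surface groups are residually $p$ for all $p$ (they are residually free, or one can invoke that surface groups are residually torsion-free-nilpotent, or cite \cite{Lab70, Lab90} for the primitivity needed to apply Lemma~\ref{lem:2-potent}). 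The remaining small exceptional cases (base a sphere or $\mathbb{RP}^2$, or $N$ non-orientable, or $N$ modeled on $S^2\times\R$ with finite fundamental group, or $\widetilde{\mathrm{SL}_2}$ and Nil geometries) I would handle by noting that in each case a finite cover is one of the product forms just described, or has fundamental group a finitely generated torsion-free nilpotent group, which is residually $p$ for all $p$ by the classical result quoted in the introduction.

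The main obstacle I anticipate is bookkeeping: ensuring that a \emph{single} finite-index subgroup works simultaneously for all $p$ — but this is exactly what the structure theory gives, since the finite cover realizing a product (or nilpotent) structure does not depend on $p$. A secondary point requiring care is the closed case with nonzero Euler number: one must pass to the cover corresponding to multiplying the fiber, after which the Euler number becomes a multiple one can kill, reducing to the product case; alternatively, one keeps the central extension and observes that a central extension of a residually-$p$ group (for all $p$) by $\Z$, with the extension class of infinite order killed in a finite cover, is again residually $p$ for all $p$ via Lemma~\ref{lem:gruenberg}. In either route the conclusion is that $\pi_1(N)$ has a finite-index subgroup residually $p$ for every $p$, which is the assertion of Proposition~\ref{prop:Seifert res p}.
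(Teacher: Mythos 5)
Your reduction to an $S^1$-bundle $N'\to F$ over an orientable surface $F$ with $\chi(F)\leq 0$, and your handling of the bounded case ($N'\cong S^1\times F$, so $\pi_1(N')\cong\Z\times F_r$ is residually $p$ for all $p$), agree with the paper. The closed case is where your argument has a real gap.

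You propose to pass to a further finite cover of $N'$ on which the Euler number vanishes, so that $N'$ becomes a product $S^1\times F'$. This step is not available: for a fiber-preserving finite cover $\widetilde{N}\to N'$ that covers the base with degree $m$ and the generic fiber with degree $\ell$, the Euler numbers satisfy $e(\widetilde{N})=(m/\ell)\,e(N')$. Hence if $e(N')\neq 0$, then $e(\widetilde{N})\neq 0$ for \emph{every} finite cover; equivalently, the dichotomy between $\mathbb H^2\times\R$ (resp.\ $\mathbb E^3$) geometry with $e=0$ and $\widetilde{\mathrm{SL}_2}$ (resp.\ Nil) geometry with $e\neq 0$ is a commensurability invariant. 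Your fallback — ``a central extension of a residually-$p$ group by $\Z$, with the extension class killed in a finite cover, is residually $p$ via Lemma~\ref{lem:gruenberg}'' — inherits the same problem, since the extension class is never killed; and as stated Lemma~\ref{lem:gruenberg} would anyway require the quotient to be an actual $p$-group, not merely residually $p$. The nilpotent escape hatch only covers the Nil case ($\chi(F)=0$); for $\chi(F)<0$ and $e\neq 0$ the group is not virtually nilpotent.

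The paper's Lemma~\ref{lem:S1-bundles res p} instead keeps the nonzero Euler number and works with the explicit presentation $G=\langle a_1,b_1,\dots,a_r,b_r,t : \prod[a_i,b_i]=t^e,\ t\text{ central}\rangle$. It passes to the index-$e$ subgroup $G_e=\langle a_1,b_1,\dots,a_r,b_r,t^e\rangle$ and embeds $G_e$ into the direct product $H\times\pi_1(F)$, where $H=\langle a,b,u : [a,b]=u,\ u\text{ central}\rangle$ is the discrete Heisenberg group (finitely generated torsion-free nilpotent, hence residually $p$ for all $p$) and $\pi_1(F)$ is a closed surface group (residually free, hence residually $p$ for all $p$). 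The map $\rho\times\sigma$, with $\rho\colon a_1\mapsto a,\ b_1\mapsto b,\ a_i,b_i\mapsto 1\ (i\geq 2),\ t^e\mapsto u$ and $\sigma\colon G_e\to\pi_1(F)$ the composition with the projection, is injective, so $G_e$ inherits the residual $p$-ness. This embedding is the key idea your proposal is missing, and it is exactly what replaces the (impossible) step of trivializing the bundle.
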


Rhemtulla \cite{Rh73} showed that every group which is residually $p$ for infinitely many $p$ is bi-orderable. Hence the previous proposition also yields another proof of the result (obtained in \cite[Corollary~1.6]{BRW05}) that fundamental groups of Seifert fibered spaces are virtually bi-orderable.
By similar arguments as the ones used below in the proof of this proposition one may also show the folklore result that the fundamental group of a Seifert fibered space is linear over $\Z$; we will not pursue this here. % and instead refer to \cite{AF1}.
%The basis of the proof of Proposition~\ref{prop:Seifert res p} is the following well-known fact. All surfaces below are assumed to be \mkc{orientable}, connected and compact.

%\begin{lemma}\label{lem:pi_1 of a surface is res p}
%\mkc{The fundamental group of a surface is residually $p$. }
%\footnote{I simplified the statement, I just have to find a reference. Ich glaube es macht keinen Sinn, den Leser mit `residually free' zu erschrecken.}
%%free \textup{(}and hence residually $p$ for every $p$\textup{)}.
%\end{lemma}

%The fundamental group of a surface with non-empty boundary is free, and it is well-known that every free group is residually $p$ for every $p$.
%By a {\it surface group}\/ we mean the fundamental group of a closed (orientable or non-orientable) surface.
%Every non-orientable surface group of genus $g$ contains a characteristic subgroup of index $2$ which is isomorphic to the orientable surface group of genus $g-1$. Hence the previous lemma follows from Baumslag's theorem \cite{Bau62} that every orientable surface group is residually free.

%\medskip
%
%Next we show:

\begin{lemma}\label{lem:S1-bundles res p}
Suppose $N\to F$ is an $S^1$-bundle where $F$ is an orientable surface with $\chi(F)\leq 0$. Then $\pi_1(N)$ has a finite index subgroup which is residually $p$ for each $p$.
\end{lemma}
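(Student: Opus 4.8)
The plan is to reduce the statement about an $S^1$-bundle $N\to F$ (with $F$ orientable, $\chi(F)\le 0$) to the case of a product $S^1\times F'$, and then to apply Theorem~\ref{thm:virt res p for graphs of linear groups, 1}. First I would recall that an oriented $S^1$-bundle over an orientable surface $F$ with $\chi(F)\le 0$ is classified by its Euler number $e\in\Z$, and that passing to a finite (say, $k$-fold) cover of $F$ multiplies the Euler number by $k$ only when the cover is along the base; more useful here is that pulling back the bundle along a degree-$k$ cover $F'\to F$ gives an $S^1$-bundle over $F'$ whose total space is a finite cover of $N$, and that for a suitable choice of cover we may also kill the Euler class in a finite cover of the total space. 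Concretely, $\pi_1(N)$ is a central extension $1\to\Z\to\pi_1(N)\to\pi_1(F)\to 1$, and since $\pi_1(F)$ is a surface group (hence residually $p$ and in fact residually-$p$ for all $p$, with $p$-torsion-free lower central quotients), the extension becomes trivial after passing to a suitable finite-index subgroup: pick a finite-index subgroup of $\pi_1(F)$ whose second cohomology kills the class of the extension (equivalently, pass to a finite cover $F'$ of $F$ over which the pulled-back bundle is trivial). This produces a finite-index subgroup $H\le\pi_1(N)$ with $H\cong\Z\times\pi_1(F')$, corresponding to a finite cover $S^1\times F'$ of $N$.

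Next I would verify that $\Z\times\pi_1(F')$ is residually $p$ for every $p$, which is immediate since both factors are: $\pi_1(F')$ is a surface group, and these are well-known to be residually $p$ for all $p$ (they are residually free, or one can invoke that they are primitive $1$-relator groups so that $\pi_1(F')/\gamma_n$ is torsion free for all $n$, as noted after Lemma~\ref{lem:2-potent}, whence $\gamma^p$ is separating for odd $p$ and the shifted filtration $\{\gamma^p_{n+1}\}$ works for $p=2$ by Lemma~\ref{lem:2-potent}). A direct product of two residually $p$ groups is residually $p$, since $p$-power order is a root property closed under direct products. Hence $H$ is residually $p$ for every $p$, and $H$ has finite index in $\pi_1(N)$, which is exactly the conclusion.

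Alternatively, and perhaps more in the spirit of the surrounding machinery, I would present $\pi_1(S^1\times F')$ as the fundamental group of a graph of groups with abelian $p$-torsion-free edge groups and apply Lemma~\ref{lem:p-compatible for free groups} together with Theorem~\ref{thm:virt res p for graphs of linear groups, 1}: write $F'$ as an iterated amalgam/HNN along simple closed curves, so that $S^1\times F'$ becomes a graph of groups whose vertex groups are of the form $\Z\times(\text{free group})$ and whose edge groups are $\Z\times\Z$ (the fundamental groups of the tori $S^1\times(\text{curve})$), with each edge group mapping to a \emph{retract} of the adjacent vertex group (since the curve is a free factor after suitably choosing a spine of the surface-with-boundary pieces). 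Then Lemma~\ref{lem:p-compatible for free groups} provides $p$-compatible filtrations of level $1$ on the vertex groups (for all $p$, since surface-with-boundary groups are free), and Theorem~\ref{thm:virt res p for graphs of linear groups, 1} yields that $\pi_1(S^1\times F')$ is virtually residually $p$; combined with the fact that it is already residually $p$ for all $p$ (as above) we conclude.

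The main obstacle I anticipate is the first step: cleanly producing, in a finite cover, the splitting of the central $\Z$, i.e.\ trivializing the Euler class. This requires a small cohomological argument — the class of the extension lives in $H^2(\pi_1(F);\Z)\cong\Z$, and one must pass to a finite-index subgroup $\pi_1(F')\le\pi_1(F)$ on which the restriction of this class vanishes; for surface groups the restriction map $H^2(\pi_1(F);\Z)\to H^2(\pi_1(F');\Z)$ is multiplication by the covering degree on $\Z\to\Z$, so any cover of degree divisible by $|e|$ works (and $e=0$ is the already-trivial case). Once this bundle-theoretic reduction is in hand, the residual-$p$ statement for $\Z\times\pi_1(F')$ is routine.
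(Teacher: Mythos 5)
Your reduction breaks down exactly at the step you flag as the ``main obstacle'': when $F$ is closed and the Euler number $e$ is nonzero, there is \emph{no} finite cover of $N$ that is a product $S^1\times F'$. The cohomological computation you invoke in fact shows the opposite of what you want: for a degree-$d$ cover $F'\to F$ of closed orientable surfaces, the restriction $H^2(\pi_1(F);\Z)\to H^2(\pi_1(F');\Z)$ is multiplication by $d$ on $\Z\to\Z$, which is \emph{injective}, so a nonzero Euler class pulls back to $de\neq 0$ rather than to zero. (The same conclusion holds for arbitrary finite covers of the total space: under a degree-$mn$ cover inducing a degree-$m$ cover of the base and degree-$n$ cover of the fiber, the Euler number becomes $(m/n)e$, which is never $0$ when $e\neq 0$. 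One can also see this group-theoretically: $\pi_1(N)$ with $e\neq 0$ and genus $\geq 2$ has infinite cyclic center that does not split off as a direct factor in any finite index subgroup.) Thus your proposed finite cover $S^1\times F'$ does not exist, and neither the product argument nor the alternative graph-of-groups argument built on it applies. Your argument does handle the case $\partial N\neq\emptyset$ (where $H^2(F;\Z)=0$ forces $e=0$), which is indeed the trivial half of the lemma, but the closed case is where the real content is.

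The paper circumvents this by never attempting to make the bundle trivial. Instead, writing $G=\pi_1(N)=\langle a_1,b_1,\dots,a_r,b_r,t:\prod_i[a_i,b_i]=t^e,\ t\text{ central}\rangle$, it passes to the index-$|e|$ subgroup $G_e$ generated by the $a_i$, $b_i$ and $t^e$ (so the Euler class becomes $1$, not $0$), and then \emph{embeds} $G_e$ into a product $H\times\pi_1(F)$, where $H=\langle a,b,u:[a,b]=u,\ u\text{ central}\rangle$ is the integral Heisenberg group (finitely generated torsion free nilpotent, hence residually $p$ for every $p$ by Gruenberg) and $\pi_1(F)$ is residually $p$ for every $p$ since orientable surface groups are residually free. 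The map $\rho\times\sigma\colon G_e\to H\times\pi_1(F)$ is injective because $\ker\sigma=\langle t^e\rangle$ and $\rho$ sends $t^e$ to the infinite-order central element $u$. This Heisenberg-factor trick --- using a nilpotent but non-abelian auxiliary group to ``absorb'' the obstruction rather than killing it in a cover --- is the idea your proposal is missing.
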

\begin{proof}
If $N$  has non-empty boundary, then $F$ also has non-empty boundary and we obtain $H^2(F;\Z)=0$, hence the Euler class of the $S^1$-bundle $N\to F$ is trivial. We therefore conclude that $N$ is a trivial $S^1$-bundle, i.e., $N=S^1\times F$. But then $\pi_1(N)=\Z\times \pi_1(F)$ is the direct product of $\Z$ with a free group,  hence $\pi_1(N)$ is residually $p$ for each $p$.

Now assume that $N$ and hence $F$ is closed. The subgroup $\ll t\rr$ of $\pi_1(N)$ generated by a regular fiber $t$ is normal and infinite cyclic, and
we have a short exact sequence
\[ 1\to \ll t\rr \to \pi_1(N)\to \pi_1(F)\to 1.\]
Let $e\in H^2(F;\Z)\cong \Z$ be the Euler class of $F$. A presentation for $G:=\pi_1(N)$ is given by
\[ G = \left\langle a_1,b_1,\dots,a_r,b_r,t : \prod_{i=1}^r [a_i,b_i]=t^e,\ \text{$t$ central}\right\rangle.\]
Let
\[ H = \left\langle a,b,u : [a,b]=u,\ \text{$u$ central}\right\rangle.\]
One verifies easily that the finitely presented group $H$ is torsion free, $Z(H)=\langle u\rangle$, and $H/Z(H)\cong \Z\times \Z$ (hence $H$ is nilpotent). Thus $H$ is residually $p$ for every $p$ \cite[Theorem~2.1~(i)]{Gru57}. Let $G_e$ be the subgroup of $G$ generated by the $a_i$, $b_i$ and $t^e$.
The assignment
\[ \rho(a_1)=a,\quad \rho(b_1)=b,\quad \rho(a_i)=\rho(b_i)=1\ \text{for $i\geq 2$,}\quad\rho(t^e)=u\]
yields a morphism $\rho\colon G_e\to H$. Let also
$\sigma$ be the composition $$G_e\to G=\pi_1(N)\to \pi_1(F).$$
Then $$\rho\times\sigma\colon G_e\to H\times\pi_1(F)$$
is an embedding. It is well-known that orientable surface groups, such as $\pi_1(F)$, are residually free (cf.~\cite{Ba62}) and hence residually $p$ for each $p$.
This shows that the finite index subgroup $G_e$ of $G$ is residually $p$ for each $p$.
\end{proof}

\index{surface group}

\begin{proof}[Proof of Proposition~\ref{prop:Seifert res p}]
%Let $N$ be a Seifert fibered space.
Clearly we only have to consider the case that the fundamental group of $N$ is infinite.
In that case it is well-known that $N$ is finitely covered by a $3$-manifold  which is an $S^1$-bundle over an orientable surface $F$ with $\chi(F)\leq 0$ (cf.~\cite{He76}). Now use Lemma~\ref{lem:S1-bundles res p}.
\end{proof}

Analogous to Proposition~\ref{prop:virtphyp} we also have,  for certain trivial $S^1$-bundles:

%We need a lemma about Seifert simple $3$-manifolds:

\begin{proposition}
\label{prop:seifert simple lower p-central}
Suppose $N=S^1\times F$ where $F$ is an orientable surface with at least two boundary components, and $G=\pi_1(N)$.
Then  for any $p$ there exists a boundary compatible $p$-filtration of level $1$ for $N$.
\end{proposition}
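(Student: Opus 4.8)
The plan is to exhibit an explicit $p$-compatible filtration of level $1$ of $G = \pi_1(N) = \Z \times \pi_1(F)$ directly, using the product structure. Write $F$ as a compact orientable surface with $b \geq 2$ boundary components, so $\pi_1(F)$ is a free group of finite rank, and $G = \langle t \rangle \times \pi_1(F)$ where $t$ generates the $S^1$-factor. The boundary components of $N$ are the tori $S^1 \times \partial_i F$ ($i = 1,\dots,b$), and the corresponding subgroups of $G$ are $T_i = \langle t \rangle \times \langle c_i \rangle \cong \Z^2$, where $c_i \in \pi_1(F)$ represents $\partial_i F$. The collection $\mathcal T$ whose members we must handle consists of these $T_i$.

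First I would record that each $\langle c_i \rangle$ is a retract of $\pi_1(F)$: since $F$ has at least two boundary components, each boundary curve $c_i$ is a primitive element lying in a free basis of the free group $\pi_1(F)$ (this is where the hypothesis $b \geq 2$ is used; for a once-punctured surface the single boundary curve is a product of commutators, not a basis element), hence $\langle c_i \rangle$ is a free factor and thus a retract. Therefore $T_i = \langle t\rangle \times \langle c_i\rangle$ is a retract of $G = \langle t\rangle \times \pi_1(F)$. Now $\pi_1(F)$ is free, so $G/\gamma_{n+1}(G)$ is torsion free for all $n$ (a direct product of a free group and $\Z$ has torsion-free lower central quotients, e.g.\ by Lemma~\ref{lem:lower central for semidirect products} together with the corresponding fact for free groups), in particular $2$-torsion free. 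Hence Lemma~\ref{lem:p-compatible for free groups}, applied with this $G$ and $\mathcal T = \{T_i\}$, shows that for $p$ odd the filtration $\gamma^p(G)$ is $p$-compatible with $\mathcal T$, and for arbitrary $p$ (in particular $p = 2$) the filtration $\{\gamma^p_{n+1}(G)\}_{n \geq 1}$ is $p$-compatible of level $1$ with $\mathcal T$. Since level-$0$ $p$-compatibility trivially gives level-$1$ $p$-compatibility (shifting a $p$-compatible filtration one step to the left and checking the conditions, or simply observing that $\{\gamma^p_{n+1}(G)\}$ works uniformly), in all cases we obtain a boundary compatible $p$-filtration of level $1$ for $N$, which is exactly the assertion of the proposition — and valid for every prime $p$, with no exceptions.

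There is essentially no hard step here: the content is entirely bookkeeping around Lemma~\ref{lem:p-compatible for free groups} plus the elementary topological fact that boundary curves of a surface with $\ge 2$ boundary components are part of a free basis of the surface group. The one point that deserves a sentence of care is verifying that $T_i$ is genuinely a retract of $G$ (not merely that $\langle c_i\rangle$ is a retract of $\pi_1(F)$): a retraction $\pi_1(F) \to \langle c_i\rangle$ together with the identity on $\langle t\rangle$ assembles to a retraction $G \to T_i$ because the product is direct, so this is immediate. The other point to double-check is that the normal subgroups $\gamma^p_{n+1}(G)$ have finite index, which holds because $G_{\operatorname{ab}} = \Z \times H_1(F;\Z)$ is finitely generated. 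I expect the write-up to be short.
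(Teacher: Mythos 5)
Your proof is correct and follows essentially the same route as the paper's: both apply Lemma~\ref{lem:p-compatible for free groups} to $G=\Z\times\pi_1(F)$ with $\mathcal T$ the boundary torus subgroups $T_i=\Z\times\langle c_i\rangle$, using that $\langle c_i\rangle$ is a free factor (hence retract) of the free group $\pi_1(F)$ when $F$ has at least two boundary components. One small caution: your parenthetical ``shifting a $p$-compatible filtration one step to the left'' does not by itself yield level~$1$ from level~$0$ (the $p$-potency condition in the definition need not survive the shift), but this is moot since, as you also note, the second clause of Lemma~\ref{lem:p-compatible for free groups} already asserts that $\{\gamma^p_{n+1}(G)\}_{n\geq 1}$ is $p$-compatible of level~$1$ for \emph{every}~$p$.
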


\begin{proof}
This follows immediately from Lemma~\ref{lem:p-compatible for free groups}, observing that since $F$ has at least two boundary components, its fundamental group $\Gamma=\pi_1(F)$ is free, and for each $c\in\Gamma$ representing a boundary curve of $F$, the subgroup $C=\langle c\rangle$ of $\Gamma$ generated by $c$ is a free factor of $\Gamma$.
\end{proof}

\begin{remark}
Let $F$ be an orientable surface. It is a classical fact (Fricke)  that $\G=\pi_1(F)$ has a faithful representation as a Fuchsian group %, i.e., a discontinuous group of M\"obius transformations which maps the upper halfplane onto itself,
such that each boundary curve $c$ of $F$ is represented by a parabolic transformation \cite[pp.~98--99]{Ma73}.
That is, there is a faithful representation
$\varrho \colon G\to\operatorname{PSL}(2,\R)$ where each $c$ is represented by a unipotent matrix in $\SL(2,\R)$.
Every such $\varrho$ lifts to a representation $\G\to\SL(2,\R)$. In fact, one may choose $\varrho$ to have image in $\SL(2,\Q)$; this follows from Teichm\"uller theory \cite[Theorem~1]{WM85} (see also \cite[Section~4.9]{MR03}).
One may therefore also try to apply Theorem~\ref{thm:virt res p for graphs of linear groups, 2} to prove the proposition above (even if $b_0(\partial F)=1$).
However, this only yields the existence of a boundary compatible $p$-filtration of level~$1$ for $N=S^1\times F$ for cofinitely many $p$.
\end{remark}

\index{group!Fuchsian}

\subsection{Proof of the main theorem}\label{subsec:proof of the main theorem}
Let $N$ be a $3$-manifold.
We have to show that for all but finitely many $p$ the group $G=\pi_1(N)$ is virtually residually $p$.

\medskip

%======================================================================
\subsubsection{Reduction to the case of orientable and closed prime manifolds}
%\footnote{You might want to skip this section, it's mostly basic $3$-manifold stuff}

By gluing in $3$-balls if necessary we can always assume, without changing $G$, that $\partial N$ has no spherical components. In this situation we have:

\begin{lemma} \label{lem:incomp}
Suppose $N$ has no spherical boundary components. Then there exist $3$-manifolds $N_1,\dots,N_k$ whose boundaries are incompressible and without spherical components, and a free group $F$
%\footnote{This means that for any boundary component $S$ we have that $\pi_1(S)\to \pi_1(N')$ is injective.}
such that $\pi_1(N)\cong \pi_1(N_1)*\dots *\pi_1(N_k)*F$.
\end{lemma}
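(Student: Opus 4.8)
The plan is to reduce the general case to the case of an orientable, closed, prime $3$-manifold in two stages: first passing to orientable and boundary-incompressible pieces, then closing them up and decomposing into prime factors. The key point is that none of these reductions change the class of ``virtually residually $p$'' fundamental groups in an essential way, because this property is inherited by finite-index subgroups and behaves well under free products.

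First I would prove Lemma~\ref{lem:incomp}. Since we have already arranged (by gluing in $3$-balls) that $\partial N$ has no spherical components, the plan is to invoke the loop theorem / Dehn's lemma: if some component of $\partial N$ is compressible, there is an embedded compressing disc, and cutting $N$ along it (and capping the resulting boundary spheres with balls) yields a manifold with ``smaller'' boundary whose fundamental group is either that of $N$ or a free-product decomposition of it, by the Seifert--van Kampen theorem. Iterating this process terminates (one can use the Haken hierarchy / a complexity argument on $\partial N$, e.g.\ on the sum of genera of boundary components, or cite \cite{He76}), and assembles $\pi_1(N)$ as a free product $\pi_1(N_1)\ast\cdots\ast\pi_1(N_k)\ast F$ with each $N_j$ having incompressible boundary without spherical components and $F$ free, as claimed. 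I expect this to be essentially a matter of carefully citing the standard $3$-manifold topology (Hempel's book), so the write-up is short.

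Next, for the overall reduction of the main theorem: given the free-product decomposition from Lemma~\ref{lem:incomp}, since free groups are residually $p$ for every $p$ and, by Corollary~\ref{cor:trivial edge groups} (together with Theorem~\ref{thm:bass-serre}), a free product of finitely many virtually residually $p$ groups is virtually residually $p$, it suffices to treat each $N_j$ separately. So we may assume $N$ has incompressible boundary with no spherical components (or is closed). If $N$ is non-orientable, I would pass to its orientation double cover $\widetilde N$, which is a finite cover, so $\pi_1(\widetilde N)$ has finite index in $\pi_1(N)$ and it suffices to show $\pi_1(\widetilde N)$ is virtually residually $p$; thus we may assume $N$ orientable. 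If $\partial N\ne\varnothing$, the doubling trick $DN=N\cup_{\partial N}N$ produces a closed orientable manifold containing $\pi_1(N)$ as a retract (hence a subgroup), so it is enough to handle $DN$; thus we may assume $N$ closed and orientable. Finally, the prime decomposition $N=N_1\#\cdots\#N_m$ gives $\pi_1(N)\cong\pi_1(N_1)\ast\cdots\ast\pi_1(N_m)$, so by the free-product remark again we reduce to $N$ closed, orientable, and prime. At that point the argument proceeds to the JSJ decomposition and the machinery of Theorems~\ref{thm:virt res p for graphs of linear groups, 1} and~\ref{thm:virt res p for graphs of linear groups, 2}, using Propositions~\ref{prop:virtphyp}, \ref{prop:Seifert res p}, and~\ref{prop:seifert simple lower p-central} to supply $p$-compatible filtrations on the vertex groups.

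The main obstacle in this particular lemma is not conceptual but bookkeeping: one must make sure the compression-and-cap procedure genuinely terminates and that each elementary step is correctly mirrored on the level of fundamental groups via van Kampen (in particular, that capping a boundary sphere with a ball does not change $\pi_1$, and that cutting along a non-separating compressing disc introduces a free $\Z$ factor while cutting along a separating one splits the group as a free product). Since $\partial N$ has no spherical components at the start and capping only introduces spheres that are then filled, one checks the hypothesis ``no spherical boundary components'' is preserved for the pieces $N_j$ we ultimately keep. I would organize this as an induction on a suitable complexity of $\partial N$, citing \cite[Chapter~III]{He76} for the existence of compressing discs and the termination of the hierarchy, and leave the routine van Kampen computations to the reader.
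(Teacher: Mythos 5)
Your plan is the same as the paper's: use Dehn's Lemma to produce a compressing disc $D$, cut $N$ along $D$, cap off the resulting spherical boundary components with $3$-balls, and induct on a complexity of $\partial N$, noting via van Kampen that each step exhibits $\pi_1(N)$ as $\pi_1(N')\ast\Z$ (if $N\setminus \nu D$ is connected) or $\pi_1(N_1)\ast\pi_1(N_2)$ (if it is not).

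The one place where your write-up has a genuine gap is the termination argument. You propose inducting on the sum of the genera of the boundary components. This strictly decreases only when $\partial D$ is \emph{non-separating} in the boundary surface $\Sigma$: then a genus-$g$ component becomes a genus-$(g-1)$ component (a resulting sphere being capped off), and the sum drops by one. But the bare loop theorem only gives $\partial D$ essential in $\pi_1(\Sigma)$; it may be a separating curve, null-homologous in $\Sigma$. In that case $\Sigma$ is replaced by two closed surfaces of genera $g_1,g_2\geq 1$ with $g_1+g_2=g$, so the sum of genera does not change, and if in addition $D$ does not separate $N$, your measure records no progress at all. The paper deals with this twice over: it invokes the refinement of Dehn's Lemma (see \cite{He76}) that supplies a compressing disc with \emph{homologically} essential boundary, hence $\partial D$ non-separating in $\Sigma$, so the separating case never arises; and independently it inducts on the lexicographically ordered pair $(-\chi(\partial N),\, b_0(\partial N))$, which decreases regardless of whether $\partial D$ separates $\Sigma$ (in the separating case $-\chi(\partial N)$ drops by $2$, and in the torus case $-\chi(\partial N)$ is unchanged but $b_0(\partial N)$ drops after the sphere is capped). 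To make your proof airtight you should either cite the homologically-essential form of Dehn's Lemma explicitly, or switch to the $(-\chi,b_0)$ measure. The rest — capping spheres not affecting $\pi_1$, preservation of the no-spherical-boundary hypothesis, the van Kampen bookkeeping — is as you say.
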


\begin{proof}
Let $\Sigma\subseteq \partial N$ be a compressible component.
 By Dehn's Lemma there exists a properly  embedded disk
$D\subseteq N$ such that  $\partial D \subseteq \Sigma$ is homologically essential.

Let $N'$ be the result of capping off the spherical boundary components of $N\sm \nu D$ by $3$-balls.
If $N'$ is connected, then $\pi_1(N)=\pi_1(N')\ast \Z$, otherwise $\pi_1(N)=\pi_1(N_1)*\pi_1(N_2)$
where $N_1$, $N_2$ are the two components of $N'$.
 The lemma now follows by induction on
the lexicographically ordered pair $(-\chi(\partial N),b_0(\partial N))$ since we have either $-\chi(\partial N')<-\chi(\partial N)$ (in the case that $\Sigma$ is not a torus), or $\chi(\partial N')=\chi(\partial N)$ and $b_0(\partial N')<b_0(\partial N)$ (in the case that $\Sigma$ is a torus).
\end{proof}

Combining Lemma~\ref{lem:incomp} with the remark following Corollary~\ref{cor:trivial edge groups} we see that it suffices to consider $3$-manifolds with incompressible boundary.
Suppose $N$ has  non-empty incompressible  boundary. Then we consider the double $N\cup_{\partial N}N$, which is a closed $3$-manifold.
Then the natural morphism $\pi_1(N)\to \pi_1(N\cup_{\partial N} N)$ is injective, and since the property of being virtually residually $p$ restricts to subgroups we can from now on assume that $N$ is in fact closed.
By going to a double cover, if necessary, we can and will also assume that $N$ is orientable.
Finally, by the remarks after Corollary~\ref{cor:trivial edge groups} again it suffices to show that prime $3$-manifold
%\footnote{That just means that $M$ is not the connected sum of $3$-manifolds. It's a neat theorem, irrelevant to us %here, that any $3$-manifold has a UNIQUE decomposition into prime $3$-manifolds (due to Milnor)}
 groups
are virtually residually $p$ for all but finitely many $p$.

\medskip

\subsubsection{The setup for the case of a closed orientable prime $3$-manifold}\label{sec:JSJ}
Now suppose $N$ is a closed orientable prime $3$-manifold.
If $N$ is hyperbolic or Seifert fibered,
it follows immediately from Propositions~\ref{prop:virtphyp} respectively
\ref{prop:Seifert res p} that $G$ is virtually residually $p$ for all but finitely many $p$.

Assume that $N$ has a non-trivial JSJ decomposition; that is, there are
pairwise distinct, incompressible embedded tori $T_1,\dots,T_r\subseteq N$ ($r>0$) such that the components  $N_{1},\dots,N_{s}$ of $N$ cut along $T_1,\dots,T_r$ are either Seifert fibered or algebraically atoroidal (and thus, by geometrization, hyperbolic).
We associate a graph~$Y$ to this decomposition of $N$ as follows: take a set $V=\{v_1,\dots,v_s\}$ of $s$ distinct vertices and an orientation $E_+=\{e_1,\dots,e_r\}$ of its edge set (with $|E_+|=r$), and for every~$i$ choose distinct vertices $v_j$ and $v_k$ such that $T_i\subseteq \partial N_{j}$ and $T_i\subseteq \partial N_{k}$ and set $o(e_i)=v_j$, $t(e_i)=v_k$.
We may describe $N$ as the topological realization of a graph $\scrn$ of based CW-complexes with underlying graph $Y$, whose vertex spaces are the $N_j$ and whose edge spaces are the tori $T_i$. We let $\scrg$ be the associated graph of groups with underlying graph $Y$ (cf.~Section~\ref{sec:graphs of cw-complexes}); so $G=\pi_1(N)=\pi_1(\scrg)$.

\index{JSJ decomposition}
\index{$3$-manifold!Seifert simple}

\medskip

\subsubsection{Reduction to Seifert simple manifolds} \label{sec:reduction to seifert simple}
In the following we say that a closed orientable prime $3$-manifold is \emph{Seifert simple} if
 any Seifert fibered component in its JSJ decomposi\-tion is of the form $S^1\times F$ for some orientable surface $F$, where~$F$ is either closed or has at least two boundary components.
We now show that there exists a finite cover $\widetilde{N}\to N$ such that $\widetilde{N}$ is Seifert simple.

\medskip

We first prove the following lemma which is a slight variation on well-known lemmas  (cf., e.g., \cite[p.~391]{He87}, \cite[Lemma~6]{Ha01}, or \cite[Proposition~2.5]{PS99}).

\begin{lemma}\label{lem:sfsq}
Let $M$ be a Seifert fibered manifold with non-empty incompressible boundary. Let $q$ be a prime with $q>5$.
Then there exists a finite index subgroup~$\widetilde{\Gamma}$ of $\Gamma:=\pi_1(M)$ with the following properties:
\bn
\item
 for any torus $T\subseteq \partial M$ the group $\pi_1(T)\cap \widetilde{\Gamma}$ is the unique characteristic subgroup of $\pi_1(T)$ of index $q^2$;
 and
\item the cover $\widetilde{M}$ corresponding to $\widetilde{\Gamma}$
is of the form $S^1\times F$ for some orientable surface $F$ with $b_0(\partial F)\geq 2$.
\en
\end{lemma}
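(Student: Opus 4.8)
The plan is to construct $\widetilde{\Gamma}$ using the Seifert fibration structure of $M$. First I would recall that since $M$ is Seifert fibered with non-empty incompressible boundary, there is a short exact sequence $1 \to \langle t \rangle \to \Gamma \to \Gamma_0 \to 1$, where $\langle t\rangle$ is the infinite cyclic subgroup generated by a regular fiber and $\Gamma_0 = \pi_1^{\operatorname{orb}}(B)$ is the fundamental group of the base $2$-orbifold $B$. Since $\partial M \neq \emptyset$, the base orbifold $B$ has non-empty boundary, hence $\Gamma_0$ is virtually free (in fact a free product of cyclic groups and a free group). The strategy is to first pass to a finite cover killing the orbifold singularities and making the bundle trivial, then do a further abelian cover to arrange the boundary condition in (1).

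The key steps, in order: (i) Choose a finite index torsion-free subgroup $\Gamma_0' \leq \Gamma_0$ that is free (possible since $\Gamma_0$ is virtually free), and such that the induced cover $B'$ of $B$ still has non-empty boundary with at least two boundary components — this can be arranged by passing to a further finite cover, possibly using that a free group has finite index subgroups of every index and a surface-with-boundary cover can be taken with many boundary components. (ii) Pull back to get a finite index subgroup $\Gamma' \leq \Gamma$ fitting in $1 \to \langle t \rangle \to \Gamma' \to \Gamma_0' \to 1$; since $\Gamma_0'$ is free, this sequence splits, so $\Gamma' \cong \mathbb{Z} \times \Gamma_0'$, i.e., the corresponding cover $M'$ is $S^1 \times F'$ with $F'$ an orientable surface with $b_0(\partial F') \geq 2$. (iii) For each boundary torus $T' \subseteq \partial M'$, the subgroup $\pi_1(T') = \langle t \rangle \oplus \langle c \rangle$ where $c$ is a boundary curve of $F'$; now take the further characteristic cover: let $\widetilde{\Gamma} = \{ \gamma \in \Gamma' : \gamma \equiv 1 \bmod q \text{ in } H_1(\Gamma'; \mathbb{Z}/q\mathbb{Z})\text{-relevant sense}\}$ — more precisely, one pulls back along $\Gamma' = \mathbb{Z}\times\pi_1(F') \to \mathbb{Z}/q \times (\pi_1(F')^{\operatorname{ab}} \otimes \mathbb{Z}/q)$ or a suitable refinement so that each $\pi_1(T')$ meets $\widetilde\Gamma$ in precisely its unique index-$q^2$ characteristic subgroup $\pi_1(T')^q = q\cdot\pi_1(T')$ (the index-$q^2$ characteristic subgroup of $\mathbb Z^2$ being $q\mathbb Z^2$, which is unique). (iv) Verify that this cover $\widetilde{M}$ is still of the form $S^1 \times F$ with $b_0(\partial F) \geq 2$: the $S^1$-factor is preserved since we only take abelian covers compatible with the product structure, and the number of boundary components does not decrease (each boundary torus lifts to finitely many tori).

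The condition $q > 5$ enters to ensure the base orbifold has a torsion-free finite index subgroup with enough room — for hyperbolic or Euclidean $2$-orbifolds with boundary the cone angles involve primes at most $5$ in the bad cases, so $q$ coprime to all of them guarantees the singular fibers are unwound and the relevant covers exist; I would cite the standard references (\cite{He76}, \cite[p.~391]{He87}, \cite[Lemma~6]{Ha01}) for the orbifold covering facts. The main obstacle I anticipate is step (iii): carefully choosing a single finite index (not necessarily normal) subgroup $\widetilde{\Gamma}$ that simultaneously (a) induces on \emph{every} boundary torus exactly the index-$q^2$ characteristic subgroup, and (b) keeps the total space a \emph{trivial} $S^1$-bundle over a surface with $\geq 2$ boundary components. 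The trick is that since we have already reduced to $M' = S^1\times F'$, the boundary tori are $S^1 \times (\text{boundary circles of } F')$, and we can take $\widetilde\Gamma = q\mathbb{Z} \times \Gamma''$ where $\Gamma'' \leq \pi_1(F')$ is chosen so that $\Gamma'' \cap \langle c \rangle = \langle c^q \rangle$ for each boundary curve $c$ — such $\Gamma''$ exists because $\pi_1(F')$ is free and each $\langle c\rangle$ is a free factor of (a finite index subgroup of) $\pi_1(F')$, so one can map $\pi_1(F')$ onto a finite $q$-group sending each $c$ to an element of order exactly $q$; the product of the $\mathbb Z/q$-valued map on the fiber with this map gives $\widetilde\Gamma$, and one checks $\pi_1(T') \cap \widetilde\Gamma = \langle t^q\rangle \oplus \langle c^q \rangle = q\cdot\pi_1(T')$ as desired, with the surface $F$ covering $F'$ still having $b_0(\partial F) \geq 2$ since boundary components of $F'$ lift.
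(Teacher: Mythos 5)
There is a genuine gap in your argument, and it lies exactly where you anticipated trouble: the control on boundary tori. Condition (1) concerns the boundary tori $T$ of the \emph{original} manifold $M$, not of the intermediate cover $M'$. Your step (i) passes to a torsion-free finite index subgroup $\Gamma_0'\leq\Gamma_0$ and pulls back to $\Gamma'\leq\Gamma$, but you have no control over $\pi_1(T)\cap\Gamma'$. Generically $\langle c_i\rangle\cap\Gamma_0'=\langle c_i^{m_i}\rangle$ with $m_i>1$, so $\pi_1(T)\cap\Gamma'=\langle t,c_i^{m_i}\rangle$; after your step (iii) one obtains $\pi_1(T)\cap\widetilde\Gamma=\langle t^q,c_i^{qm_i}\rangle$, which has index $q^2m_i$ in $\pi_1(T)$, not $q^2$. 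Worse, the $m_i$ cannot always be forced to be $1$: if the base orbifold is the $(2,3,\infty)$-orbifold (disk with cone points of orders $2$ and $3$), then $\Gamma_0\cong\Z/2*\Z/3\cong\operatorname{PSL}(2,\Z)$, and the boundary curve $c$ maps to a generator of the abelianization $\Z/6$; since the image in $\Z/6$ of \emph{any} torsion-free finite index subgroup must avoid the classes of the torsion elements (i.e.\ must be $\{0\}$), no torsion-free finite index subgroup contains $c$. So the "first kill the singularities, then fix the boundary degree" ordering is not viable.

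The paper's proof handles the cone points and the boundary degree \emph{simultaneously} with a single covering trick you did not use: it caps off each boundary circle $c_i$ of the base orbifold $\ol F$ with a disk carrying a cone point of order $q$ to form a \emph{closed} orbifold $\ol F_q$, then (using $q>5$, i.e.\ $q\geq7$, to ensure $\ol F_q$ is a good orbifold) takes a finite surjection $\pi_1(\ol F_q)\to G$ with torsion-free kernel. Automatically each $c_i$ maps to an element of order \emph{exactly} $q$, so pulling back to $\pi_1(M)$ (through the map that kills the regular fiber $t$) yields a cover with $\pi_1(T_i)\cap\ker=\langle t,c_i^q\rangle$ and, as you wanted, a trivial bundle $S^1\times F$ over a surface with $\geq 2$ boundary components; a further cyclic degree-$q$ cover along $\langle t\rangle$ gives $\langle t^q,c_i^q\rangle=q\cdot\pi_1(T_i)$. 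The ``add order-$q$ cone points to the boundary, then take a single cover of the resulting closed orbifold'' idea is the essential step your proposal is missing.
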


\begin{proof}
Let $q$ be a prime with $q>5$.
Denote by $\ol{F}$ the base orbifold of $M$ and denote the boundary curves by $c_1,\dots,c_k$ ($k\geq 1$).
We denote by $O$ the $2$-orbifold with underlying space $D^2$ and cone angle $2\pi/q$ and finally we denote by $O_1,\dots,O_k$ copies of $O$.
 We let $\ol{F}_q$ be the closed orbifold given by gluing to  each boundary component $c_i$ of $\ol{F}$ the $2$-orbifold $O_i$.
 Note that
 \[ \pi_1(\ol{F}_q)=\pi_1(\ol{F})/\ll c_1^q,\dots,c_k^q\rr.\]
In particular the images of $c_1,\dots,c_k$ are now elements of order $q$ in the orbifold fundamental group $\pi_1(\ol{F}_q)$.
Since $q>6$ we see that $\ol{F}_q$ is a good orbifold (see \cite[Theorem~13.3.6]{Th80}), i.e., there exists an epimorphism $\a\colon\pi_1(\ol{F}_q)\to G$ to a finite group $G$
such that the corresponding  finite cover $g\colon F_q\to \ol{F}_q$ has the property that  $F_q$ is a surface.
Note that we can and will assume that   $|G|>q$.
The preimages of $O_i$ are disks, and this  implies that the image of $\a(\pi_1(O_i))\in G$ has order $q$. Since $\pi_1(O_i)=\ll c_i  : c_i^q=1\rr$
it follows that $\a(c_i)\in G$ has order $q$.

Also note that  the preimage of $\ol{F}\subseteq \ol{F}_q$ under $g$ is a surface $F$. Since $\pi_1(\ol{F})\to \pi_1(\ol{F}_q)\to G$ is surjective we see that ${F}$ is connected.
Note that
\[ b_0(\partial F)=\sum_{i=1}^k b_0(\pi^{-1}(c_i))=k\frac{|G|}{q}>1. \]
We now return to the study of $M$.
First note that since $M$ has non-empty incompressible boundary we know that $\G=\pi_1(M)$ is infinite
and hence   the subgroup $\ll t\rr \leq \Gamma$ generated by a regular fiber is normal and infinite cyclic (cf.~\cite[p.~122]{Br93}).
Note that there exists a canonical isomorphism  $\pi_1(M)/\ll t\rr \cong \pi_1(\ol{F})$.
We denote by~$\b$ the map $\pi_1(M)\to G$ defined as the composition
\[ \pi_1(M)\to \pi_1(M)/\ll t\rr =\pi_1(\ol{F})\to \pi_1(\ol{F}_q)\xrightarrow{\a} G.\]
Now let $\pi\colon\widehat{M}\to M$ be the cover corresponding to $\b$.
Then $\widehat{M}$ is a Seifert fibered space with base orbifold the surface $F$, i.e., $\widehat{M}$ is a $S^1$-bundle over the surface $F$ (see also  \cite[Theorem~11.10]{He76}).
As in the proof of Lemma \ref{lem:S1-bundles res p} we conclude that $\widehat{M}$ is in fact a product $\widehat{M}=S^1\times {F}$.

The boundary components of $M$ are given by $T_i:=c_i\times S^1$ ($i=1,\dots,k$), in particular $\pi_1(T_i)$ is the free abelian group generated by $t$ and $c_i$. Also note that
$ \pi_1(T_i)\cap \ker(\b)$ is generated by $t$ and $c_i^q$.
Finally note that $\pi_1(\widehat{M})=\ll t\rr \times \pi_1(F)$. It is easy to see that the cover of $\widehat{M}$ given by $$\pi_1(\widehat{M})\to \ll t\rr \times \pi_1(F)\to \ll t\rr =\Z\to \Z/q\Z$$
gives rise to a cover of $M$ with the desired properties.
\end{proof}

We now let $q>5$ be a prime such that all hyperbolic manifolds $N_j$ admit a subgroup  $\widetilde{G_{v_j}}\leq G_{v_j}=\pi_1(N_{j})$
such that for any boundary component~$T_{i}$ of~$N_j$, $\pi_1(T_{i}) \cap \widetilde{G_{v_j}}$
is the unique characteristic subgroup  of $\pi_1(T_{i})$ of index $q^2$. We can find such $q$ and such subgroups by Proposition \ref{prop:virtphyp}.
Furthermore, given any $j\in \{1,\dots,s\}$ such that $N_j$ is  Seifert fibered  we now pick a subgroup $\widetilde{G_{v_j}}\leq G_{v_j}=\pi_1(N_{j})$ as in the above lemma.
The collection of subgroups $\widetilde{G_{v_j}}$ is clearly
a compatible collection of finite index subgroups of the graph of groups $\scrg$.
We now let $\widetilde{N}\xrightarrow{\pi} N$ be the finite cover arising from (the proof of) Proposition~\ref{prop:commoncover}.
Recall that the manifold $\widetilde{N}$ is in a canonical way the topological realization of a graph $\widetilde{\scrn}$ of based CW-complexes
where the vertex spaces are the components of $\pi^{-1}(N_j)$, $j=1,\dots,s$ and the edge spaces are the components of $\pi^{-1}(T_i)$, $i=1,\dots,r$.
It now follows that  $\widetilde{N}$ is Seifert simple.
Hence, after replacing $N$ by $\widetilde{N}$ if necessary, we may assume that $N$ is Seifert simple.

\medskip

\subsubsection{Conclusion of the proof} \label{sec:conclusion}
By the discussion above it  remains to prove the main theorem  in the case where $N$ is a closed orientable prime $3$-manifold which has a non-trivial JSJ decomposition and is Seifert simple. In this situation, by Propositions~\ref{prop:virtphyp} and \ref{prop:seifert simple lower p-central}, for all but finitely many $p$, each JSJ component of~$N$ admits a boundary compatible $p$-filtration of level $1$, and so by Theorem~\ref{thm:virt res p for graphs of linear groups, 1}, the group $G=\pi_1(\scrg)$ is virtually residually $p$. \qed

%\medskip

%This concludes the proof of the main theorem, modulo the proof of Theorems~\ref{thm:virt res p for graphs of linear groups, 1} and \ref{thm:proprmi} from the preceding section. In the following sections we fill this gap; Sections~\ref{sec:An Embedding Theorem}--\ref{sec:unfolding a graph of groups} lead up to the proof of Theorem~\ref{thm:virt res p for graphs of linear groups, 1} in Section~\ref{sec:virtual res p}, and Section~\ref{sec:approx} contains a proof of Theorem~\ref{thm:proprmi}.

\section{A localization theorem}\label{sec:approx}

\noindent
In this section we finally give the proof of Theorem~\ref{thm:proprmi}. For the reader's convenience we repeat the statement. In the following, ``ring'' will mean ``commutative Noetherian ring with unit $1$,'' and
$R$ will always denote a ring with $1\neq 0$.

\begin{theorem}\label{Main-Theorem} \label{mt}
Suppose $R$ is a finitely generated integral domain of characteristic zero. Then for all but finitely many $p$ there exists a maximal ideal $\mm$ of $R$ such that $R_{\mm}$ is regular unramified of residue characteristic $p$.
\end{theorem}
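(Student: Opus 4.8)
The plan is to reduce the statement to a geometric fact about the scheme $X = \Spec R$ and a Chebotarev-style density/openness argument. Since $R$ is a finitely generated $\Z$-algebra which is an integral domain of characteristic zero, its fraction field $K$ has characteristic zero, and $\dim R = \dim R_K + \operatorname{trdeg}_\Q K$ where the first summand vanishes; more to the point, $R$ is a finitely generated $\Z$-algebra, so by generic smoothness we may invert a single nonzero integer $N$ and a single nonzero element to arrange that $R[1/N]$ becomes \emph{smooth} over $\Z[1/N]$ after possibly also localizing away from a proper closed subset. First I would pick, using Noether normalization over $\Z$ (see e.g. the argument behind Proposition~\ref{Non-trivial and reduced}, which is proved later in this section), a nonzero integer $N$ such that $R[1/N]$ is a finitely generated, \emph{flat} (even smooth) $\Z[1/N]$-algebra on a dense open subset of $\Spec R[1/N]$; concretely, there is a nonzero $f \in R$ and a nonzero $N \in \Z$ so that $R_f[1/N]$ is smooth over $\Z[1/N]$. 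The set of ``bad'' primes $p$ is then forced to lie among the divisors of $N$ together with the (finitely many) primes $p$ for which the fiber $\Spec(R_f \otimes_\Z \F_p)$ is empty.

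Next I would argue that for every prime $p$ not dividing $N$, the fiber $\Spec(R_f \otimes_\Z \F_p)$ is nonempty. This is where a little care is needed: a priori inverting $f$ could kill the characteristic-$p$ fiber. To handle this, instead of localizing at $f$ I would work with an open cover: cover the smooth locus of $\Spec R[1/N] \to \Spec \Z[1/N]$ by finitely many basic opens $\Spec R_{f_i}[1/N]$, and observe that their union is a dense open $U \subseteq \Spec R[1/N]$ whose complement $Z$ is a proper closed subset. The image of $\Spec R[1/N] \to \Spec \Z[1/N]$ is all of $\Spec \Z[1/N]$ (since $R$ has characteristic zero and is a domain, the structure map is injective with $\Z[1/N]$ a subring, so it is dominant, hence its image contains a dense open; after possibly enlarging $N$ we may assume the image is all of $\Spec \Z[1/N]$, using e.g.\ Chevalley's theorem to control the constructible image). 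Then for $p \nmid N$ the fiber over $(p)$ is a nonempty scheme of finite type over $\F_p$; if this whole fiber were contained in $Z$, then $Z$ would contain a whole fiber, which (again after enlarging $N$ to a finite set of primes) can be excluded because $Z \to \Spec \Z[1/N]$ has image a proper constructible set, hence misses all but finitely many primes. So for all but finitely many $p$ the fiber of $U \to \Spec \Z[1/N]$ over $(p)$ is nonempty; pick a closed point of this fiber, i.e.\ a maximal ideal $\mm$ of $R$ with $R/\mm$ finite of characteristic $p$ and $\mm \in U$.

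Finally I would check that such an $\mm$ does the job: since $\mm$ lies in the smooth locus of $\Spec R[1/N] \to \Spec \Z[1/N]$, the local ring $R_\mm$ is flat over $\Z_{(p)}$ with regular (indeed smooth) fiber $R_\mm/pR_\mm$ over $\F_p$; by the local criterion for regularity of the total space of a flat family with regular fibers and regular base (the base $\Z_{(p)}$ being a DVR), $R_\mm$ is regular. Moreover smoothness over $\Z[1/N]$ at $\mm$ forces $p \notin \mm^2 R_\mm$ — equivalently $p$ is part of a regular system of parameters — which is precisely the statement that $R_\mm$ is \emph{unramified} of residue characteristic $p$ in the sense recalled in this section. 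The main obstacle, and the part that needs the most care, is the bookkeeping in the middle step: making sure that by discarding only finitely many primes one can simultaneously (a) land in the smooth locus, (b) keep the characteristic-$p$ fiber nonempty, and (c) avoid ramification; each of these is an ``open/dense'' condition on $\Spec \Z$, but one must invoke generic flatness (Grothendieck), Chevalley's theorem on constructible images, and generic smoothness in characteristic zero in the right order. Everything else — the passage from ``smooth point over $\Z[1/N]$'' to ``regular unramified local ring of residue characteristic $p$'' — is a standard, if slightly technical, translation that I would carry out via the lemmas (Lemma~\ref{Criterion}, etc.) established later in Section~\ref{sec:approx}.
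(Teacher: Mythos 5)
Your approach is genuinely different from the paper's, and it is sound in outline. The paper (Section~\ref{sec:approx}) proceeds by (i) shrinking $R$ so that $R$ itself is regular (openness of $\Reg(R)$, then Lemma~\ref{Regular maximal ideal}); (ii) showing that $R/pR$ is nonzero and reduced for all but finitely many $p$ (Proposition~\ref{Non-trivial and reduced}, established via the model-theoretic transfer principle Lemma~\ref{Reduction mod p} together with the constructibility Lemmas~\ref{Member} and \ref{Radical}); and (iii) picking a maximal ideal $\mm\supseteq pR$ with $\mm/pR\in\Reg(R/pR)$ and applying Lemma~\ref{Criterion}. You replace (ii) by generic smoothness of $\Spec R_\Q\to\Spec\Q$ plus openness of the smooth locus of $\Spec R\to\Spec\Z$, and replace (i) and (iii) by the implication ``flat local over a regular base with regular closed fiber $\Rightarrow$ regular.'' This sidesteps the model-theoretic apparatus (which was advertised in the introduction as one of the ingredients of this paper), at the price of heavier scheme-theoretic machinery; both routes are valid.

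There is, however, a concrete error in the middle step. You justify that for all but finitely many $p$ the smooth locus $U$ meets the fiber over $(p)$ by claiming that its complement $Z$ ``has image a proper constructible set, hence misses all but finitely many primes.'' That is false in general: for $R=\Z[x,y]/(y^2-x^3)$ the non-smooth locus contains the cusp in every characteristic-$p$ fiber, so $Z$ maps \emph{onto} $\Spec\Z$, even though $U$ is dense and also meets every fiber. The statement you actually need (and which is true) concerns the image of $U$, not of $Z$: the image of $U$ in $\Spec\Z$ is constructible (Chevalley) and contains the generic point $(0)$ (generic smoothness over $\Q$), and a constructible subset of $\Spec\Z$ containing $(0)$ must contain $\Spec\Z[1/N]$ for some $N$. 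So for $p\nmid N$ the open set $U_p:=U\cap\Spec(R/pR)$ is a nonempty open of the Jacobson ring $R/pR$, hence contains a closed point $\ol{\mm}$, whose preimage $\mm$ in $R$ is a maximal ideal (again since $R$ is Jacobson) with $p\in\mm$, $p\neq 0$. With that correction the remainder of your argument goes through: smoothness of $\Spec R\to\Spec\Z$ at $\mm$ gives that $R_\mm$ is flat over $\Z_{(p)}$ with regular fiber $R_\mm/pR_\mm$, so $R_\mm$ is regular, and regularity of the fiber is exactly $p\notin\mm^2R_\mm$ by Lemma~\ref{Criterion}.
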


\index{ring!regular}
\index{ring!unramified local}

We first recall the concepts mentioned in the conclusion of the theorem.
A sequence $(x_1,\dots,x_n)$ (where $n>0$) of elements of $R$ is said to be {\it regular}\/ if $(x_1,\dots,x_n)\neq R$ and for $i=0,\dots,n-1$, the canonical image of $x_{i+1}$ is not a zero divisor of the ring $R/(x_1,\dots,x_i)$. For example, the sequence $(X_1,\dots,X_n)$ of indeterminates in a polynomial ring $S[X_1,\dots,X_n]$ over a ring $S$ is a regular sequence. In general, regular sequences in $R$ do behave very much like indeterminates in a polynomial ring: if $(x_1,\dots,x_n)$ is a regular sequence in $R$, then the associated graded ring of the ideal $I:=(x_1,\dots,x_n)$,
$$\operatorname{gr}_I(R) = R/I \oplus I/I^2 \oplus \cdots \oplus I^d/I^{d+1}\oplus\cdots,$$
is a polynomial ring over $R/I$; more precisely, the  morphism of graded $R/I$-algebras
$$(R/I)[X_1,\dots,X_n] \to \operatorname{gr}_I(R)\qquad\text{with
$X_i\mapsto x_i+I^2$ for every $i$}$$ is an isomorphism; in particular, $x_i^d\notin I^{d+1}$ for every $i$ and $d$ \cite[Theorem~16.2]{Matsumura}.
If $(R,\mm)$ is a local ring, then $R$ is called regular if its maximal ideal $\mm$ can be generated by a regular sequence; %Every regular local ring is a domain. (Theorem~14.3 in \cite{Matsumura}.)
in general, the ring $R$ is called regular if each localization $R_{\pp}$ of $R$ at one of its prime ideals $\pp$ is a regular local ring.

If $(R,\mm)$ is local of residue characteristic $p$ {\it unramified} if $p\notin\mm^2$. In general, we need a criterion to determine whether the localization $R_\mm$ of $R$ at one of its maximal ideals $\mm$ is regular unramified.
This is provided by
the following lemma (a special case of Theorem~14.2 in \cite{Matsumura}):

\begin{lemma}\label{Criterion}
Let $(R,\mm)$ be a regular local ring, and let $a\in\mm$, $a\neq 0$. Then the following are equivalent:
\begin{enumerate}
\item $\mm$ is generated by a regular sequence containing $a$;
\item $a\notin\mm^2$;
\item the local ring $R/aR$ is also regular.
\end{enumerate}
\end{lemma}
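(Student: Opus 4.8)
The plan is to establish the cyclic chain $(2)\Rightarrow(1)\Rightarrow(2)$ together with the equivalence $(2)\Leftrightarrow(3)$, using only standard dimension theory of Noetherian local rings plus two basic facts about regular local rings: that they are integral domains, and that a subset of $\mm$ of cardinality $d:=\dim R$ generates $\mm$ precisely when it is a regular sequence. I would begin by recording that regularity of $R$ means $\dim_{R/\mm}(\mm/\mm^2)=d$ and that $\mm$ can be generated by $d$ elements.

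For $(2)\Rightarrow(1)$ I would argue as follows: if $a\notin\mm^2$, the image $\ol a$ in the $d$-dimensional $R/\mm$-vector space $\mm/\mm^2$ is non-zero, so it extends to a basis $\ol a,\ol x_2,\dots,\ol x_d$, and by Nakayama's lemma $a,x_2,\dots,x_d$ generate $\mm$. One then checks by induction on $i$ that $R/(a,x_2,\dots,x_i)$ is again a regular local ring, of dimension $d-i$, hence in particular an integral domain, so that the image of $x_{i+1}$ there is a non-zerodivisor; thus $(a,x_2,\dots,x_d)$ is a regular sequence generating $\mm$, which is (1). For the converse $(1)\Rightarrow(2)$: if $\mm=(a,x_2,\dots,x_n)$ with $(a,x_2,\dots,x_n)$ a regular sequence, then $n\le\operatorname{depth}R=d$ (a regular sequence in $\mm$ cannot exceed the depth), while $n\ge\dim_{R/\mm}(\mm/\mm^2)=d$; hence $n=d$ and $a,x_2,\dots,x_d$ is a minimal generating set, so its images form a basis of $\mm/\mm^2$ and in particular $\ol a\ne 0$, i.e., $a\notin\mm^2$. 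Alternatively one may quote directly that the associated graded ring of $(a,x_2,\dots,x_n)$ is the polynomial ring $(R/\mm)[X_1,\dots,X_n]$, so that $a\notin\mm^2$ by non-vanishing of $X_1$ in degree one, cf.\ \cite[Theorem~16.2]{Matsumura}.

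For $(2)\Rightarrow(3)$: with $a,x_2,\dots,x_d$ chosen as above, the maximal ideal of $\ol R:=R/aR$ is generated by the $d-1$ elements $\ol x_2,\dots,\ol x_d$, whereas $\dim\ol R\ge d-1$ since killing one element lowers dimension by at most $1$; combined with the general inequality $\dim\ol R\le$ (minimal number of generators of its maximal ideal) $\le d-1$, this forces $\dim\ol R=d-1$, so $\ol R$ is regular. For $(3)\Rightarrow(2)$: since $R$ is a domain and $a\ne0$, $a$ is a non-zerodivisor, hence $\dim\ol R=d-1$; lifting a minimal generating set of the maximal ideal of the regular ring $\ol R$ to elements $x_2,\dots,x_d$ of $\mm$ yields $\mm=(a,x_2,\dots,x_d)$ by Nakayama, and were $a\in\mm^2$ the quotient $\mm/\mm^2$ would be spanned by the images of $x_2,\dots,x_d$ alone, forcing $\dim_{R/\mm}(\mm/\mm^2)\le d-1<d$ and contradicting regularity of $R$. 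Hence $a\notin\mm^2$, completing the cycle.

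The whole argument is routine; the only delicate point is the inductive claim inside $(2)\Rightarrow(1)$ that successive quotients of $R$ by initial segments of a minimal generating set of $\mm$ stay regular (and hence are domains). This is precisely the content of the structure theory of regular local rings, and it is also the place where — since Lemma~\ref{Criterion} is cited as a special case of \cite[Theorem~14.2]{Matsumura} — one could legitimately just refer to loc.\ cit.\ rather than reprove anything.
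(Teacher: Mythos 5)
Your proof is correct. Note, however, that the paper does not actually prove this lemma: it is stated with the parenthetical ``(a special case of Theorem~14.2 in \cite{Matsumura})'' and left at that, so there is no ``paper's proof'' to compare against in the usual sense. What you have written is a clean, self-contained version of the standard textbook argument that underlies Matsumura's Theorem~14.2 in the relevant special case, and it supplies a proof where the paper supplies only a citation.

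A few remarks on the internal mechanics of your argument. In $(2)\Rightarrow(1)$, the inductive claim that $R/(a,x_2,\dots,x_i)$ is again regular local of dimension $d-i$ is exactly the content of the implication $(2)\Rightarrow(3)$ applied to the successive quotients, so there is a mild logical interleaving between these two parts; this is harmless, since $(2)\Rightarrow(3)$ is proved independently (via the Krull inequalities and a generator count), and does not itself invoke $(2)\Rightarrow(1)$. The step that the image of $x_{i+1}$ in $R/(a,x_2,\dots,x_i)$ is a non-zerodivisor requires knowing it is non-zero there, which follows because the $d-i$ images of $x_{i+1},\dots,x_d$ form a minimal generating set of the quotient's maximal ideal (their images span $\mm/\mm^2$ modulo $\ol a,\ol x_2,\dots,\ol x_i$), hence none of them vanishes. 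Your appeal to ``regular local rings are integral domains'' is of course a non-trivial input (Auslander--Buchsbaum via the graded ring, or the usual induction-on-dimension argument), but you flag it explicitly as one of your two standing facts. In $(3)\Rightarrow(2)$, the equality $\dim(R/aR)=d-1$ uses that $a$ avoids every minimal prime of $R$ together with the Krull lower bound; since $R$ is a domain the set of minimal primes is $\{(0)\}$, so this is immediate. Altogether the argument is sound, and the closing alternative via the associated graded ring (Theorem~16.2 of \cite{Matsumura}) is an equally valid one-line route to $(1)\Rightarrow(2)$, consistent with the facts the paper has already quoted earlier in the section.
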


We equip, as usual, the set $\operatorname{Spec}(R)$ of prime ideals of the ring $R$ with its Zariski topology, with sub-basis given by the open sets of the form
$$D(f) := \big\{ \pp\in\operatorname{Spec}(R): f\notin\pp\big\} \qquad (f\in R).$$
Recall that $D(f)\cap D(g)=D(fg)$ for all $f,g\in R$; in particular any open subset of~$\operatorname{Spec}(R)$ contains an open set of the form $D(f)$, $f\in R$. Also, $D(f)=\emptyset$ if and only if $f$ is nilpotent.

We denote by $\Reg(R)$ the regular locus of $R$, that is, the set of all $\pp\in\Spec(R)$ such that the localization $R_{\pp}$ is regular.
It is well-known  that $\Reg(R)$ is open if $R$ is a finitely generated algebra over a field (Zariski) or over a Dedekind domain (e.g.,~$\Z$) of characteristic zero (Nagata); see (6.12.5) respectively (6.12.6) of \cite{EGA-IV-2}.

Recall that $R$ is said to be \emph{reduced} if $R$ contains no non-zero nilpotent elements; equivalently, if the intersection of all prime ideals of $R$ is trivial. %Being reduced is a local property: $R$ is reduced if and only if every localization $R_{\pp}$ of $R$ at a prime ideal $\pp$ of $R$ is reduced.
Clearly a domain is in particular reduced.
We also need the notion of a {\it Jacobson ring}\/ (also sometimes called a {\it Hilbert ring}\/). This  is a ring each of whose prime ideals is the intersection of maximal ideals. Examples include all fields as well as every principal ideal domain with infinitely many pairwise non-associated primes (like the ring of integers). If $R$ is a Jacobson ring, then so is every finitely generated $R$-algebra $S$, and the pullback of each maximal ideal of $S$ is a maximal ideal of $R$. (See \cite[IV.3.4]{Bourbaki}.)

\begin{lemma}\label{Regular maximal ideal}
Suppose $R$ is a reduced ring whose regular locus is open.  Then $\Reg(R)$ is non-empty, and if $R$ is Jacobson, then $\Reg(R)$ contains a maximal ideal.
\end{lemma}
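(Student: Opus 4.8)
\textbf{Proof proposal for Lemma~\ref{Regular maximal ideal}.}

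The plan is to prove non-emptiness of $\Reg(R)$ first, and then upgrade it to the statement about maximal ideals in the Jacobson case, using nothing beyond the openness hypothesis, the characterization of reduced rings via minimal primes, and the standard fact that a localization at a minimal prime of a Noetherian reduced ring is a field. Recall that $\Spec(R)$ of a (Noetherian) ring has finitely many minimal primes $\pp_1,\dots,\pp_k$, and since $R$ is reduced, $\bigcap_i \pp_i = (0)$.

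First I would show $\Reg(R)\neq\emptyset$. Pick any minimal prime $\pp$ of $R$. The localization $R_\pp$ is a Noetherian local ring whose maximal ideal $\pp R_\pp$ is the unique prime, hence consists of nilpotents; but $R_\pp$ is reduced (localization of a reduced ring is reduced), so $\pp R_\pp = (0)$, i.e.\ $R_\pp$ is a field, which is trivially regular. Therefore $\pp\in\Reg(R)$, so $\Reg(R)$ is non-empty. (Note this step does not even use openness.)

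Now suppose in addition that $R$ is Jacobson, and recall that by hypothesis $\Reg(R)$ is open in $\Spec(R)$. Since $\Reg(R)$ is a non-empty open set, it contains a basic open set $D(f)$ for some $f\in R$ with $D(f)\neq\emptyset$, so $f$ is not nilpotent, and since $R$ is reduced, $f\neq 0$ and more precisely $f$ lies outside some minimal prime, so $f$ is a non-zero non-nilpotent element. I claim $D(f)$ contains a maximal ideal. Indeed, $f$ not nilpotent means $f$ is not in the nilradical $=\bigcap$ of all primes; since $R$ is Jacobson, the nilradical equals the Jacobson radical $\bigcap\{\mm : \mm\text{ maximal}\}$, so there is a maximal ideal $\mm$ with $f\notin\mm$, i.e.\ $\mm\in D(f)\subseteq\Reg(R)$. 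This gives a maximal ideal in $\Reg(R)$, completing the proof.

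The argument is essentially a bookkeeping exercise, so there is no real obstacle; the only point requiring a little care is the invocation of the Jacobson property in the precise form ``nilradical $=$ Jacobson radical,'' which is immediate from the definition (every prime is an intersection of maximal ideals, hence so is their common intersection $(0)$ after reducedness), together with the observation that a non-empty open subset of $\Spec(R)$ always contains a basic open $D(f)$. Both facts are recalled in the surrounding text, so the write-up will be short.
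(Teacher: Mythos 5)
Your proof is correct and follows essentially the same route as the paper: localizing at a minimal prime gives a field (hence a regular local ring) since $R$ is reduced, and then openness together with the Jacobson property (intersection of maximal ideals is zero) supplies a maximal ideal inside a basic open $D(f)\subseteq\Reg(R)$. Your aside that the non-emptiness of $\Reg(R)$ does not require openness is a correct (and harmless) observation.
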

\begin{proof}
Since $R$ is reduced, for every minimal prime ideal $\pp$  of $R$, the localization $R_{\pp}$ is a field (isomorphic to the fraction field of $R/\pp$), hence $\pp\in\Reg(R)$. In particular, $\Reg(R)$ is a non-empty open subset of $\Spec(R)$, and therefore contains an open subset  $D(f)$, for some non-zero $f\in R$. If $R$ is Jacobson, then the intersection of all maximal ideals of $R$ is zero; hence there exists a maximal ideal $\mm$ of~$R$ not containing $f$, and thus $\mm\in\Reg(R)$.
\end{proof}

Our final ingredient is the following fact, whose proof we postpone:

\begin{proposition}\label{Non-trivial and reduced}
Suppose $R$ is finitely generated. Then $R$ has characteristic zero if and only if for all but finitely many primes $p$, we have
$R/pR\neq 0$. Moreover, if $R$ is reduced then for all but finitely many primes $p$ the ring $R/pR$ is reduced.
\end{proposition}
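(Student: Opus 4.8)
The plan is to prove both assertions by reducing to the two paradigmatic cases $R = \Z[X_1,\dots,X_n]$ (or a quotient thereof) via a Noether normalization argument, and then invoking standard finiteness facts about how primes behave under base change.

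For the first statement, the direction "$R/pR \neq 0$ for cofinitely many $p$ implies $\chr(R)=0$" is trivial: if $\chr(R)=q>0$ then $q\cdot 1 = 0$, so $R/pR = R \neq 0$ only makes sense to discuss — rather, $pR = R$ for all $p \neq q$ since $p$ is a unit in $\F_q$, so $R/pR = 0$ for all $p \neq q$, contradicting cofiniteness. Conversely, suppose $\chr(R)=0$, so $\Z \hookrightarrow R$. Then I would argue that $R/pR \neq 0$ for all but finitely many $p$. Since $R$ is a finitely generated $\Z$-algebra, write $R = \Z[x_1,\dots,x_n]$. The key point is that $R/pR = 0$ means $1 \in pR$, i.e., there is a polynomial identity $1 = p\cdot g(x_1,\dots,x_n)$ with $\Z$-coefficients; equivalently $\Spec(R) \to \Spec(\Z)$ misses the point $(p)$. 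By generic flatness (or simply: $R$ as a $\Z$-module, being finitely generated as an algebra, has the property that $\Spec(R)\to\Spec(\Z)$ has constructible image), the image of $\Spec(R)\to\Spec(\Z)$ is a constructible subset of $\Spec(\Z)$ containing the generic point $(0)$ (since $\chr(R)=0$ forces $R\otimes_\Z\Q \neq 0$, as $1\neq 0$ in $R$ and $\Z\setminus\{0\}$ consists of non-zero-divisors... actually one must check $R\otimes\Q\neq 0$, which holds iff no nonzero integer is a unit times... here I would note $R\otimes\Q = 0$ would force some $N\in\Z_{>0}$ with $N\cdot 1 = 0$ after clearing, contradicting $\chr(R)=0$). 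A constructible subset of $\Spec(\Z)$ containing $(0)$ is cofinite, which gives the claim.

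For the second statement (reducedness of $R/pR$ for cofinitely many $p$ when $R$ is reduced), the approach is via generic smoothness / the openness of the regular locus combined with spreading out. Here is the plan: since $R$ is a reduced finitely generated $\Z$-algebra, $R \otimes_\Z \Q$ is a reduced finitely generated $\Q$-algebra, hence is a finite product of finitely generated field extensions localized... more precisely its total ring of fractions is a product of fields. By generic smoothness over a characteristic-zero field, or more elementarily by Nagata's theorem that the regular locus $\Reg(R)$ is open (invoked in the excerpt for finitely generated algebras over $\Z$), one knows $\Reg(R)$ is a dense open subset of $\Spec(R)$. The complement $Z = \Spec(R)\setminus\Reg(R)$ is closed of positive codimension. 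The strategy then is: the non-reduced locus of the fibers $\Spec(R/pR) \to \Spec(\F_p)$ is controlled by a single discriminant-type element. Concretely, I would choose a Noether normalization $\Z[1/N][t_1,\dots,t_d] \hookrightarrow R[1/N]$ which is finite and, after inverting one more element (a discriminant), \'etale in codimension zero over the generic points; then for all $p \nmid N$ and $p$ not dividing this discriminant, the fiber $R/pR$ is generically \'etale over $\F_p[t_1,\dots,t_d]$, hence reduced at its minimal primes, and since $R/pR$ is then also equidimensional and satisfies Serre's $(S_1)$ (because $R$ is, away from finitely many $p$, flat over $\Z$ with $(S_1)$-fibers — this needs $R$ itself to be $(S_1)$, which follows from reducedness of $R$ plus... hmm, reduced does not give $(S_1)$ in general, but reduced $=$ $(R_0)+(S_1)$, so $R$ reduced does give $(S_1)$), we conclude $R/pR$ is $(R_0)+(S_1)$, i.e., reduced.

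The main obstacle I anticipate is the bookkeeping in the second part: carefully identifying the finite set of "bad" primes and justifying that a reduced finitely generated $\Z$-algebra has reduced fibers over $\Spec(\Z)$ at all but finitely many closed points. The cleanest route is probably the one via the generic fiber: $R_\Q := R\otimes\Q$ is reduced and finitely generated over $\Q$, so it is geometrically reduced over a suitable finite extension; "spreading out" (EGA IV, 9.7.7 and 12.1.6 on constructibility of the locus where fibers are geometrically reduced) shows that the set of $s\in\Spec(\Z)$ with $R\otimes\kappa(s)$ reduced is constructible, and it contains the generic point, hence is cofinite. I would likely present the argument at this level of generality, citing constructibility of the "geometrically reduced fibers" locus, rather than grinding through an explicit discriminant computation — the explicit approach works but is messier and requires Noether normalization to be set up with care. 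Either way, the two halves of the proposition are independent and each reduces to a constructibility-plus-generic-point argument over $\Spec(\Z)$.
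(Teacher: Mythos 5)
Your proof is correct, but it takes a genuinely different route from the paper's. The paper encodes the relevant conditions---non-triviality and radicality of an ideal in a polynomial ring---as constructible subsets of $\Spec(\Z[C])$, where $C$ is a tuple of indeterminates standing in for the coefficients of a generating set of the defining ideal of $R$; the engine is the van den Dries--Schmidt bounds theorem, Rabinowitsch's trick, and Chevalley--Tarski quantifier elimination, and the transfer from $\Q$ to $\F_p$ for almost all $p$ is a direct lemma about $\Z$-points of constructible subsets. You instead work with the structure morphism $\Spec(R)\to\Spec(\Z)$: for the first statement you invoke Chevalley's constructibility-of-image theorem and observe that a constructible subset of $\Spec(\Z)$ containing the generic point is cofinite; for the second you cite the constructibility of the ``geometrically reduced fibers'' locus from EGA~IV and note that the generic fiber $\Spec(R\otimes_\Z\Q)$ is reduced, hence (over the perfect field $\Q$) geometrically reduced. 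The paper explicitly flags this alternative (``These can probably be deduced from the results in [EGA~IV, \S 9]; however, we prefer to follow a slightly less sophisticated approach''); both proofs are, at bottom, the same constructibility-plus-generic-point argument, with the paper's version self-contained and elementary (and in principle effective, via the explicit degree bounds) and yours shorter modulo EGA~IV. Two small remarks on your write-up: in the converse direction you tacitly assume $\chr(R)$ is prime, but the same computation applies when $\chr(R)=n$ is composite, since any $p\nmid n$ is invertible modulo~$n$ and hence a unit in $R$; and the explicit Noether-normalization/discriminant/Serre-condition argument you sketch midway has a gap (reducedness of $R$ only gives $(S_1)$ of $R$, which, even with flatness over $\Z[1/N]$, does not by itself yield $(S_1)$ of the fibers $R/pR$), but since you abandon it in favor of the EGA citation the final proposal stands.
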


\begin{remarks}\mbox{}

\begin{enumerate}
\item The converse of the second statement in the proposition is false: if $R=\Z[X]/(4X)$, then $R$ is not reduced, yet $R/pR$ is reduced for every prime $p\neq 2$.
Note also that the analogous statement for the property of being an integral domain is also false, since there are polynomials in $\Z[X]$ (e.g., $X^4+1$) which are irreducible over $\Q$ but reducible modulo every prime.
(In fact, for each composite natural number $n$ there is a monic polynomial in $\Z[X]$ of degree $n$ with this property, cf.~\cite{Brandl}.)
 %(For $p=2$ we have $X^4+1=(X+1)^4$ in $\mathbb F_2[X]$; for odd $p$, we have $p^2\equiv 1\bmod 8$, hence the field $\mathbb F_{p^2}$ contains a primitive $8$-th root of unity, and so is the splitting field of $X^4+1$ over $\mathbb F_{p}$.)

\item After a first version of this manuscript had been completed, Pascal Adjamagbo informed us that in the course of investigations around the Dixmier Conjecture on Weyl algebras, he also established a version of Proposition~\ref{Non-trivial and reduced}.
\end{enumerate}
\end{remarks}

Assuming this proposition, let us outline how Theorem~\ref{Main-Theorem} follows:
Suppose~$R$ is a finitely generated integral domain of characteristic zero.
By Lemma~\ref{Regular maximal ideal} and the remark preceding it we know that $\Reg(R)$ is open and non-empty. In particular there exists a non-zero $f\in R$ such that $D(f)\subseteq \Reg(R)$.
The prime ideals in $D(f)$ are in one-to-one correspondence with the prime ideals of $S:=R[\frac{1}{f}]$, and the corresponding localizations of $R$ and $S$ are isomorphic. Hence $S$ is regular.  Thus, replacing $R$ by $S$, we may assume from the outset that $R$ is regular.

Let now $p$ be such that the ring $\overline{R}:=R/pR$ is non-trivial and reduced, as in the proposition above. Let $\mm$ be a maximal ideal of $R$ containing $p$ whose image $\overline{\mm}$ under the natural morphism $R\to\overline{R}$ is in $\Reg(\overline{R})$. (Such $\mm$ exists by Lemma~\ref{Regular maximal ideal} applied to the finitely generated $\mathbb F_p$-algebra $\overline{R}$.) Note that $R_{\mm}/p R_{\mm}\cong \overline{R}_{\overline{\mm}}$. Therefore  $R_{\mm}$ is unramified regular of residue characteristic $p$, by Lemma~\ref{Criterion}. \qed

%=====================================
\subsection{Proof of Proposition \ref{Non-trivial and reduced}}

In the proof of Proposition~\ref{Non-trivial and reduced}, we use results about dependence on parameters for certain properties of ideals in polynomial rings, formulated in Lemmas~\ref{Member} and \ref{Radical} below.
The elements of the boolean algebra of subsets of $\operatorname{Spec}(R)$ generated by the sets $D(f)$ with $f\in R$, i.e., the smallest collection of subsets of $\operatorname{Spec}(R)$ which contains the sub-basis of $\operatorname{Spec}(R)$ and which is stable under finite union and complementation (and hence, also
under finite intersection) are called {\it constructible.}\/  It is easy to see that each constructible subset of $\operatorname{Spec}(R)$ is a finite union of sets of the form
\begin{equation}\label{Simple constructible set}
\Sigma = D(f)\cap \big(\operatorname{Spec}(R)\setminus D(g_{1})\big) \cap \cdots \cap \big(\operatorname{Spec}(R)\setminus D(g_{m})\big)
\end{equation}
where $f,g_{i}\in R$. See \cite[Chapter~2]{Matsumura-1980} for more on constructible sets, and \cite[Section~12.2]{vdDries} on the connection with ``constructible properties'' exploited below.

\index{constructible}

\medskip

Fix a tuple $X=(X_1,\dots,X_N)$ of pairwise distinct indeterminates, and let $C=(C_1,\dots,C_M)$ be another pairwise distinct tuple of indeterminates (thought of as placeholders for parameters) distinct from $X_1,\dots,X_N$.
Given a field $K$ and $c=(c_1,\dots,c_M)\in K^M$ we let $\pp_c$ denote the kernel of the ring morphism $\Z[C]\to K$ with $C_i\mapsto c_i$ for every $i$.
Note that for $g\in\Z[C]$ we have $\pp_c\in D(g)$ if and only if $g(c)\neq 0$ in $K$. More generally, a constructible subset $\Sigma$ of $\operatorname{Spec}(\Z[C])$ expressed in the form \eqref{Simple constructible set} contains $\pp_c$ if and only if $c$ satisfies the corresponding system of inequalities and equalities
\begin{equation}\label{System}
f(C)\neq 0\ \& \ g_1(C)=0 \ \& \ \cdots \ \& \ g_m(C)=0.
\end{equation}
For $c\in\Z$ and a prime number $p$ we set $c(p):=c\bmod p\in\mathbb F_p$, and we extend this notation to $M$-tuples $c=(c_1,\dots,c_M)\in\Z^M$ in the natural way:
$$c(p)=\big(c_1(p),\dots,c_M(p)\big)\in ({\mathbb F}_p)^M.$$ Then $\pp_c\in D(g)$ if and only if
$\pp_{c(p)}\in D(g)$ for all but finitely many prime numbers $p$, and $\pp_c\notin D(g)$ if and only if
$\pp_{c(p)}\notin D(g)$ for all (equivalently, all but finitely many) primes $p$.
Thus, we obtain the following principle for the reduction of constructible properties from characteristic zero to finite characteristic:

\begin{lemma}\label{Reduction mod p}
Let $\Sigma$ be a constructible subset of $\operatorname{Spec}(\Z[C])$, and let $c\in\Z^M$. Then $\pp_c\in \Sigma$ if and only if for all but finitely many prime numbers $p$ we have $\pp_{c(p)}\in \Sigma$.
\end{lemma}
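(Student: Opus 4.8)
The plan is to reduce the lemma to the case of a single ``simple'' constructible set of the shape \eqref{Simple constructible set} and then reassemble the finitely many pieces by a pigeonhole argument. First I would write $\Sigma=\Sigma_1\cup\cdots\cup\Sigma_n$ with each $\Sigma_j=D(f_j)\cap\bigcap_i\big(\Spec(\Z[C])\setminus D(g_{ji})\big)$ for suitable $f_j,g_{ji}\in\Z[C]$. Since $\pp_c\in\Sigma$ iff $\pp_c\in\Sigma_{j_0}$ for some $j_0$, and likewise for each $\pp_{c(p)}$, the asserted equivalence for $\Sigma$ will follow from the equivalence for each $\Sigma_j$ \emph{together with} the one-sided strengthening: if $\pp_{c(p)}\in\Sigma_j$ for \emph{infinitely many} $p$, then $\pp_c\in\Sigma_j$. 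Indeed, if $\pp_{c(p)}\in\Sigma$ for all but finitely many $p$, a pigeonhole over the $n$ indices produces a single $j_0$ with $\pp_{c(p)}\in\Sigma_{j_0}$ for infinitely many $p$, whence $\pp_c\in\Sigma_{j_0}\subseteq\Sigma$; conversely, if $\pp_c\in\Sigma$, pick $j_0$ with $\pp_c\in\Sigma_{j_0}$ and apply the simple case. So the whole argument comes down to handling one $\Sigma_j$, in the slightly strengthened form just described.

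For a single simple set $\Sigma_j$, I would simply unwind the definitions as prepared in the paragraph preceding the lemma. Note that $\pp_c\in\Sigma_j$ holds precisely when $c\in\Z^M$ satisfies the corresponding instance of the system \eqref{System}, i.e.\ $f_j(c)\neq 0$ in $\Z$ and $g_{ji}(c)=0$ in $\Z$ for all $i$. Because reduction modulo $p$ is a ring homomorphism $\Z[C]\to\fp[C]$, for every $g\in\Z[C]$ one has $g(c)\bmod p=g(c(p))$ in $\fp$, so $\pp_{c(p)}\in D(g)$ iff $p\nmid g(c)$. Hence: if $g(c)=0$ then $\pp_{c(p)}\notin D(g)$ for \emph{every} $p$, while if $g(c)\neq 0$ then $\pp_{c(p)}\in D(g)$ for every $p$ not dividing the nonzero integer $g(c)$, i.e.\ for all but finitely many $p$. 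From these two observations the simple case is immediate: if $\pp_c\in\Sigma_j$ then $\pp_{c(p)}\in\Sigma_j$ for all $p$ outside the finite set of prime divisors of $f_j(c)$; and if $\pp_c\notin\Sigma_j$ then either $f_j(c)=0$ (so $\pp_{c(p)}\notin D(f_j)\supseteq\Sigma_j$ for all $p$) or some $g_{ji_0}(c)\neq 0$ (so $\pp_{c(p)}\in D(g_{ji_0})$, hence $\pp_{c(p)}\notin\Sigma_j$, for all but finitely many $p$) — in either case $\pp_{c(p)}\in\Sigma_j$ for at most finitely many $p$, which is the contrapositive of the strengthened converse.

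I do not anticipate a genuine obstacle: the lemma is pure bookkeeping, resting only on the homomorphism property of reduction modulo $p$ and the elementary fact that a nonzero integer has finitely many prime divisors. The single point that deserves care — and the only reason the proof is not a one-liner — is the step from a single simple set to a finite union: there the hypothesis ``for all but finitely many $p$'' must be weakened to ``for infinitely many $p$'' before it is distributed over the $\Sigma_j$, since cofiniteness is not preserved by such a partition whereas infinitude (via pigeonhole) is. Once that is noted, the two halves of the equivalence assemble exactly as above.
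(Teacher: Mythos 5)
Your proof is correct and is essentially the argument the paper has in mind (the paper states the lemma without an explicit proof, taking it to follow from the two displayed facts about $D(g)$ and the finite-union decomposition of constructible sets). You were right to flag the pigeonhole step as the only point requiring care: replacing ``all but finitely many'' by ``infinitely many'' before distributing over the $\Sigma_j$ is exactly the small bookkeeping detail the paper leaves implicit.
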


The following is a theorem by Chevalley, also discovered by Tarski (and known among logicians as ``quantifier elimination for algebraically closed fields'' \cite{Marker}).

\begin{theorem}\label{Chevalley}
Let $\Sigma$ be a constructible subset of $\Spec(\Z[C,D])$, where $D$ is a new indeterminate. Then there exists a constructible subset $\Pi$ of $\Spec(\Z[C])$ such that for every  field $K$ with algebraic closure $K^{\operatorname{alg}}$ and every $c\in K^M$,
$$\text{$\pp_{(c,d)}\in\Sigma$ for every $d\in K^{\operatorname{alg}}$} \qquad\Longleftrightarrow\qquad \pp_c\in\Pi.$$
\end{theorem}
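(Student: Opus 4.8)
The plan is to reduce Theorem~\ref{Chevalley} to the classical statement that the image of a constructible set under projection is constructible, and then to prove that statement by an elimination procedure in the single variable $D$. The two halves are linked by complementation: the constructible subsets of $\Spec(\Z[C])$ form a boolean algebra, and the complement $\Sigma^{\mathrm c}:=\Spec(\Z[C,D])\setminus\Sigma$ is again constructible. For a field $K$ and $c\in K^M$ one has $\pp_{(c,d)}\in\Sigma$ for every $d\in K^{\operatorname{alg}}$ precisely when there is \emph{no} $d\in K^{\operatorname{alg}}$ with $\pp_{(c,d)}\in\Sigma^{\mathrm c}$. So it suffices to produce a constructible $\Pi'\subseteq\Spec(\Z[C])$ such that $\pp_c\in\Pi'$ iff $\pp_{(c,d)}\in\Sigma^{\mathrm c}$ for some $d\in K^{\operatorname{alg}}$; then $\Pi:=\Spec(\Z[C])\setminus\Pi'$ does what the theorem asks, and since every step below uses only polynomial identities with integer coefficients, the same $\Pi$ works for all $K$ simultaneously.

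For the existential version, first note that an existential quantifier distributes over finite unions, and every constructible set is a finite union of sets of the form \eqref{Simple constructible set}; hence we may assume $\Sigma^{\mathrm c}$ is basic, cut out by $f\neq 0$ and $g_1=\cdots=g_m=0$ for some $f,g_1,\dots,g_m\in\Z[C][D]$ (so $\pp_{(c,d)}\in\Sigma^{\mathrm c}$ means $f(c,d)\neq 0$ and $g_i(c,d)=0$ for all $i$). Now I would split $\Spec(\Z[C])$ into the finitely many constructible pieces on which a fixed pattern of the coefficients of $g_1,\dots,g_m$ (viewed as polynomials in $D$) vanishes at $c$; on each piece the $g_i(c,D)$ have fixed degrees in $D$. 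If some $g_i(c,D)$ is then a nonzero constant, the piece contributes nothing. If all $g_i(c,D)$ vanish identically, the condition "$\exists d\in K^{\operatorname{alg}}:f(c,d)\neq 0$" is equivalent to some coefficient of $f$ in $D$ being nonzero at $c$ (using that $K^{\operatorname{alg}}$ is infinite), which is constructible. In the remaining case at least one $g_i(c,D)$ is nonconstant, their common zeros in $K^{\operatorname{alg}}$ are exactly the zeros of their gcd $h(c,D)$, and the condition becomes "$h(c,D)$ has a zero that is not a zero of $f(c,D)$," i.e.\ $\deg\gcd\bigl(h(c,D),\,f(c,D)^{\deg h}\bigr)<\deg h$. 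Running the Euclidean algorithm symbolically — equivalently, invoking the principal subresultant coefficients, which are universal integer polynomials in the coefficients of the inputs — expresses $h$, the auxiliary gcd, and their degrees through the vanishing and non-vanishing of polynomials in $c$, so each piece contributes a constructible set and $\Pi'$ is their (finite) union.

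The main obstacle is exactly this last point: making the gcd computation uniform in the parameters $c$, since the degrees of the successive Euclidean remainders can drop as $c$ varies, forcing the split of $\Spec(\Z[C])$ into the finitely many pieces on which the algorithm proceeds in lockstep. The systematic device is subresultant theory, which is also what underlies the ``dependence on parameters'' lemmas (e.g.\ Lemma~\ref{Member}) used elsewhere in this section; I would either quote that machinery or, as an alternative, prove the projection statement by Noetherian induction on $\Spec(\Z[C])$ via generic freeness. I prefer the elimination-theoretic route because it keeps the construction effective and makes the independence of $\Pi$ from $K$ manifest.
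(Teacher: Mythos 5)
Your argument is correct, but it should be pointed out at the outset that the paper does not actually prove Theorem~\ref{Chevalley}: it simply cites \cite[Theorem~12.2.10]{vdDries} and remarks parenthetically that the proof given there is effective. So you are not deviating from the paper's proof so much as supplying one. What you give is the standard elimination-theoretic route to Chevalley's theorem (equivalently, quantifier elimination for algebraically closed fields): pass to the complement to turn the universal quantifier into an existential one, distribute $\exists$ over the finite union of basic sets \eqref{Simple constructible set}, stratify $\Spec(\Z[C])$ by the vanishing pattern of the $D$-coefficients so that degrees in $D$ are fixed on each stratum, and then on each stratum detect the existence of a common zero of the $g_i$ not killing $f$ via the degree of a gcd — with the needed uniformity in $c$ supplied by the fact that principal subresultant coefficients are universal integer polynomials in the input coefficients. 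I checked the one slightly delicate point: your gcd criterion $\deg\gcd\bigl(h(c,D),f(c,D)^{\deg h}\bigr)<\deg h$ is correct because the exponent $\deg h$ dominates any root multiplicity of $h$, and it degenerates correctly in the boundary cases $h$ constant, $f\equiv 0$, and $f$ a nonzero constant. The replacement of $K$ by $K^{\operatorname{alg}}$ is also harmless, since $\Z$-polynomial equalities and inequalities in $c$ do not change meaning under field extension. Compared with the paper's bare citation, your version has the virtue of making the effectivity claim the authors advertise completely explicit and self-contained, at the modest cost of about a page of case analysis and an appeal to subresultant theory; the cited route is shorter on the page but outsources exactly the same mathematical content.
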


See \cite[Theorem~12.2.10]{vdDries} for a proof. (The proof given there in fact shows how to construct, in an effective way, a description of the constructible set $\Pi$ from a description of $\Sigma$.)

\medskip

Let $f_0(C,X),\dots,f_n(C,X)\in \Z[C,X]$. For a tuple $c\in R^M$ we denote by $I(c,X)$ the ideal of $R[X]$ generated by the polynomials $f_1(c,X),\dots,f_n(c,X)$. The next lemma (applied with $f_0=1$) says in particular that the property of $I(c,X)$ being a proper ideal is a constructible property of the parameters $c$:

\begin{lemma}\label{Member}
There is a constructible subset $\Sigma$  of $\operatorname{Spec}(\Z[C])$ such that for every field $K$ and $c\in K^M$, we have
$$f_0(c,X)\in I(c,X) \qquad\Longleftrightarrow\qquad \pp_c\in\Sigma.$$
\end{lemma}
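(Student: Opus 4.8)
The statement asks for a constructible subset $\Sigma$ of $\operatorname{Spec}(\Z[C])$ that, over every field $K$ and every parameter tuple $c \in K^M$, characterizes membership $f_0(c,X) \in I(c,X)$. The natural strategy is a Gr\"obner-basis style argument combined with a descent on the number of variables $X_1,\dots,X_N$, carrying along the fact that all the steps involved are ``constructible in the parameters''. The base case $N=0$ is elementary: then $R[X]=R$, $I(c)$ is generated by the constants $f_1(c),\dots,f_n(c)$, and $f_0(c) \in I(c)$ is a linear-algebra condition over $K$, hence expressible by a finite disjunction of systems of polynomial equalities and inequalities in $c$ (use that solvability of a linear system is detected by comparing ranks of coefficient matrices, and rank conditions are given by vanishing/non-vanishing of minors, i.e.\ by constructible sets). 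So the crux is the inductive step reducing $N$ to $N-1$.

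\textbf{The inductive step.} Order the variables so that $X_N$ is the ``last'' one and work with a monomial order on $K[X_1,\dots,X_N]$ (say lexicographic with $X_N$ largest, or a suitable elimination order). The idea is to run Buchberger's algorithm on $f_1(c,X),\dots,f_n(c,X)$, but symbolically in $c$. Each step of Buchberger's algorithm — forming an $S$-polynomial, performing a division with remainder — involves only ring operations and, critically, tests of the form ``is the leading coefficient of such-and-such polynomial zero or nonzero in $K$''. Since there are only finitely many such tests along any terminating run, and the algorithm terminates (Dickson's lemma / Noetherianity, uniformly bounded once we fix degree bounds — here one invokes that the degrees occurring are bounded by a function of the input data, independent of $K$ and $c$, e.g.\ via the effective Nullstellensatz or simply by the fact that the relevant set of monomial orders and possible leading-term configurations is finite), we can organize the computation as a finite decision tree: at each node we branch on whether a certain coefficient polynomial in $\Z[C]$ vanishes at $c$, and each leaf outputs a Gr\"obner basis $G(c,X)$ of $I(c,X)$ whose term structure is constant along that leaf. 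The leaves of this tree partition $\operatorname{Spec}(\Z[C])$ into constructible pieces. On each piece, membership $f_0(c,X) \in I(c,X)$ is equivalent, by the defining property of a Gr\"obner basis, to the remainder of $f_0(c,X)$ under division by $G(c,X)$ being zero; and that remainder is again computed by finitely many ring operations and zero-tests on coefficient polynomials in $c$, so ``remainder $=0$'' is itself a constructible condition on $c$. Intersecting with the piece and taking the union over all leaves gives the desired $\Sigma$.

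\textbf{Alternative route via Chevalley, and the main obstacle.} A cleaner packaging, avoiding the bookkeeping of Buchberger branches, is to express membership directly as an elimination statement and feed it to Theorem~\ref{Chevalley}. Over the algebraic closure $K^{\operatorname{alg}}$, membership $f_0(c,X) \in I(c,X)$ can be rephrased (after bounding the degrees of the coefficients $h_i$ in a representation $f_0 = \sum h_i f_i$ by an effective Nullstellensatz-type bound depending only on the degrees and numbers of the $f_i$, which is uniform in $c$ and $K$) as: ``there exist coefficient vectors (of bounded size) such that a certain linear system over $K$ in those unknowns is satisfied''; the solvability of that linear system, with coefficients polynomial in $c$, is a projection of a constructible set, hence by Chevalley (quantifier elimination, applied $N$ times or once with a vector of variables, after the standard reduction of several quantifiers to one) is cut out by a constructible condition $\Pi$ on $c$. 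One then checks that the same $\Pi$ works over an arbitrary (not algebraically closed) $K$, using that $I(c,X)$ is a proper/inclusion question that is insensitive to field extension (faithful flatness of $K[X] \to K^{\operatorname{alg}}[X]$). I expect the main obstacle to be precisely the \emph{uniformity of the degree bounds}: one must ensure that, as $K$ and $c$ vary, the certificates witnessing membership (the $h_i$, or equivalently the depth of the Buchberger tree) can be taken of bounded complexity, so that the whole problem lives inside a single polynomial ring $\Z[C,\text{(bounded unknowns)}]$ to which Chevalley applies. This is standard but needs care; it is where the effective Nullstellensatz (or an explicit bound on Gr\"obner basis degrees, as in the work of Dub\'e) is invoked, and it is the only non-formal input to the argument. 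Everything else is routine translation between ideal membership and constructible sets.
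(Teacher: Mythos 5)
Your proposal is correct and correctly isolates the crux: uniform-in-$(K,c)$ degree bounds on ideal-membership certificates. The paper's proof is exactly your ``clean packaging'' but without the detour through Chevalley and algebraic closure: it cites Theorem~\ref{vdDries-Schmidt} to get a degree bound $\alpha$ valid over \emph{every} field $K$ directly (no passage to $K^{\operatorname{alg}}$), compares coefficients in $f_0(c,X) = \sum_i y_i f_i(c,X)$ with $\deg y_i \leq \alpha$ to obtain a linear system $A(c)y = b(c)$ with $A,b$ having entries in $\Z[C]$, and then observes that solvability over a field is the rank condition $\operatorname{rank}(A(c)) = \operatorname{rank}(A(c)|b(c))$, which is field-independent and is a finite Boolean combination of minor vanishing/non-vanishing conditions, hence constructible. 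Your second route reaches the same linear system but then projects and applies Theorem~\ref{Chevalley}; that works, but it is unnecessary because linear-system solvability is already a constructible rank condition and, unlike general first-order statements, holds over $K$ iff it holds over $K^{\operatorname{alg}}$, so the faithful-flatness transfer you sketch is not needed. (Chevalley \emph{is} used in the paper, but only later, in the proof of Lemma~\ref{Radical}, where a genuine universal quantifier over field elements appears.) Your first route---a symbolic Buchberger decision tree with uniform degree bounds (Dub\'e)---is a genuinely different and heavier argument; it would also work, and it has the minor advantage of not invoking van den Dries--Schmidt, but it makes the uniformity more delicate to state precisely, whereas the linear-algebra route packages all the uniformity into a single scalar bound $\alpha$.
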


For the following lemma recall that a field $K$ with $\chr(K)=p>0$ is called \emph{perfect} if $K^p=K$. Note that $\F_p$ is perfect. A field of characteristic zero is always referred to as perfect.
We now have the following lemma.

\begin{lemma}\label{Radical}
There is a constructible subset $\Sigma'$  of $\operatorname{Spec}(\Z[C])$ such that for every perfect field $K$ and $c\in K^M$,
\begin{equation}\label{Radical-Equ}
\text{$I(c,X)$ is radical} \qquad\Longleftrightarrow\qquad \pp_c\in \Sigma'.\end{equation}
\end{lemma}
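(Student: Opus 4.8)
The strategy is to reduce the claim to an effective, uniform version of primary decomposition for polynomial ideals, and then to combine this with the constructibility machinery of Lemmas~\ref{Member} and \ref{Reduction mod p}, together with Chevalley's theorem (Theorem~\ref{Chevalley}). Recall that an ideal $I$ of a Noetherian ring is radical if and only if it equals its own radical. For ideals in polynomial rings $K[X]=K[X_1,\dots,X_N]$ over a field, there are classical constructive algorithms (Gröbner bases, Seidenberg's methods) which compute a system of generators of $\sqrt{I(c,X)}$ from the generators $f_1(c,X),\dots,f_n(c,X)$; moreover, over a \emph{perfect} field these algorithms are purely rational in the coefficients (the only field operations needed are $+$, $-$, $\times$, $\div$, and — crucially — extracting $p$-th roots, which over a perfect field of characteristic $p$ is again a rational operation in disguise once one works with a $p$-basis; over characteristic zero no root extraction is needed). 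This is precisely why the hypothesis of perfectness appears in the statement, and why it cannot be dropped: over a non-perfect field the radical can genuinely fail to be definable by the same formulas uniformly.

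The plan in detail: first I would make the observation that ``$I(c,X)$ is radical'' is equivalent to the conjunction of (a) for each of finitely many ``candidate'' polynomials $h$ produced by a fixed radical-membership certificate, $h\in I(c,X)$, and (b) the reverse containments, which by Lemma~\ref{Member} (applied with the roles of the generating sets and test polynomial suitably permuted) are each constructible conditions on $c$. The subtlety is that a priori the \emph{number} and \emph{shape} of these candidate polynomials depends on $c$. This is where one invokes a uniform bound: by the theory of Gröbner bases with parameters (or, model-theoretically, by compactness applied to the theory of algebraically closed fields, since the statement ``$I$ is radical'' for ideals generated by polynomials of bounded degree in a bounded number of variables is first-order expressible over the coefficient field after adding finitely many auxiliary existentially quantified elements), there is a single bound $\delta=\delta(N,n,\deg f_i)$ such that $\sqrt{I(c,X)}$ is generated in degree $\le\delta$ for \emph{every} $c$ over \emph{every} field. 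Having fixed $\delta$, the generators of $\sqrt{I(c,X)}$ may be taken to be $K$-linear combinations of a fixed finite list of monomials with coefficients satisfying a fixed finite system of polynomial conditions over $\Z[C]$; then ``$I(c,X)=\sqrt{I(c,X)}$'' becomes a statement of the form ``there exist such-and-such coefficients witnessing both containments,'' i.e., an existentially quantified Boolean combination of polynomial equalities and inequalities. Applying Theorem~\ref{Chevalley} repeatedly to eliminate these auxiliary quantifiers yields the desired constructible set $\Sigma'\subseteq\operatorname{Spec}(\Z[C])$, and the equivalence \eqref{Radical-Equ} holds for every perfect field $K$ and every $c\in K^M$ because the algorithmic characterization of the radical is valid precisely over perfect fields.

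The main obstacle I anticipate is establishing the \emph{uniformity} of the degree bound $\delta$ across all characteristics simultaneously — including the positive-characteristic perfect fields, where $p$-th root extraction enters. One clean way around this: show first that for a polynomial ideal over a perfect field, $I$ is radical if and only if, for every variable $X_i$, a certain explicit ``separability/reducedness in the $X_i$-direction'' condition holds, each of which reduces (via resultants and the Frobenius) to a bounded-degree statement; assembling these gives a bound depending only on $N$ and the $\deg f_i$, not on $K$ or on $\operatorname{char} K$. Alternatively — and this is probably the shortest route to write up — one argues purely model-theoretically: the class of perfect fields is not itself elementary, but the \emph{reduct} statement we care about transfers from algebraically closed fields (where radicality is tested on the variety, by Hilbert's Nullstellensatz, hence is manifestly constructible via Theorem~\ref{Chevalley}) down to perfect subfields using that a polynomial ideal $I$ over a perfect field $K$ is radical if and only if $I\cdot K^{\mathrm{alg}}[X]$ is radical (this is the step that genuinely requires perfectness). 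Granting that lemma, $\Sigma'$ is simply the set $\Pi$ produced by Theorem~\ref{Chevalley} from the constructible set expressing radicality of $I(c,d,X)$ over algebraically closed fields, and no delicate uniform bound needs to be exhibited by hand.
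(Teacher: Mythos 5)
Your second, ``shortest'' route is indeed the one the paper takes in outline: reduce to algebraically closed fields via the fact (Bourbaki, Alg.~V \S 15) that over a perfect field $K$, an ideal $J$ of $K[X]$ is radical if and only if $JK^{\operatorname{alg}}[X]$ is, and then finish by quantifier elimination (Theorem~\ref{Chevalley}). But there is a genuine gap in the middle: you assert that over algebraically closed fields, radicality of $I(c,X)$ is ``manifestly constructible by the Nullstellensatz,'' and hence that ``no delicate uniform bound needs to be exhibited by hand.'' This is not so. The Nullstellensatz says $I$ is radical iff every polynomial vanishing on $V(I)$ lies in $I$; that quantifies over test polynomials of \emph{arbitrary} degree, an unbounded universal quantifier is not eliminable, and an infinite intersection of constructible subsets of $\Spec(\Z[C])$ need not be constructible. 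What makes the condition into a single constructible one, uniformly across all characteristics, is precisely the degree bound $\beta=\beta(N,n,\deg f_i)$ of Theorem~\ref{vdDries-Schmidt}: if $f\in\sqrt{I(c,X)}\Rightarrow f\in I(c,X)$ for all $f$ of degree $\leq\beta$, then $I(c,X)$ is radical.

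The paper's proof uses this $\beta$ essentially: introduce a generic test polynomial $f(D,X)$ of degree $\leq\beta$ with indeterminate coefficients $D$; apply Lemma~\ref{Member} and Corollary~\ref{Member-Cor} to get constructible $\Sigma_1,\Sigma_2\subseteq\Spec(\Z[C,D])$ expressing $f(d,X)\in\sqrt{I(c,X)}$ and $f(d,X)\in I(c,X)$ respectively; form $\Sigma=(\Spec(\Z[C,D])\setminus\Sigma_1)\cup\Sigma_2$, so that by Theorem~\ref{vdDries-Schmidt} radicality of $I(c,X)$ over an algebraically closed $K$ is equivalent to $\pp_{(c,d)}\in\Sigma$ for all $d$; then eliminate the $d$-quantifier with Theorem~\ref{Chevalley}. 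The bound $\beta$ is the one indispensable ingredient your sketch drops. (Your first, Gr\"obner-basis route would, if carried out, have to produce exactly such a uniform bound as well --- that is the ``obstacle'' you yourself flag --- so the two routes you propose do not actually differ on this point.)
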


Let us first see how Proposition~\ref{Non-trivial and reduced} follows from the previous lemmas. Suppose~$R$ is finitely generated. We may assume $R=\Z[X]/I$ for an ideal $I=(f_1,\dots,f_n)$ of $\Z[X]=\Z[X_1,\dots,X_N]$, for some $N$.
Let $c\in\Z^M$ be the tuple of coefficients of the polynomials $f_1,\dots,f_n$ (in some order); so we may write $f_i=f_i(c,X)$ where $f_i(C,X)\in\Z[C,X]$ (with each non-zero coefficient of $f_i(C,X)$ equal to $1$). Now $R$ has characteristic zero if and only if $I\cap\Z=\{0\}$, if and only if $1\notin I\Q[X]=I(c,X)$. Moreover, $R$ is reduced if and only if $I$ is radical; we have
$\sqrt{I}\Q[X] = \sqrt{I\Q[X]}$, hence if $I$ is radical then so is $I\Q[X]$.
Similarly, for a prime $p$, we have
$R/pR\neq 0$ if and only if the ideal $I(c(p),X)$ of $\mathbb F_p[X]$ is proper, and $R/pR$ is reduced if and only if $I(c(p),X)$ is radical.
Proposition~\ref{Non-trivial and reduced} is now a consequence of Lemma~\ref{Reduction mod p} applied to the constructible sets $\Sigma$ and $\Sigma'$ associated to the polynomials $f_0(C,X)=1$ and $f_1(C,X),\dots,f_n(C,X)$ in Lemmas~\ref{Member} and \ref{Radical}, respectively. \qed

\begin{remark}
The first part of Proposition~\ref{Non-trivial and reduced}
(which is related to  Grothendieck's `generic freeness lemma' \cite[Theorem~24.1]{Matsumura})
may also be proved in different ways. For example, by the Noether normalization lemma, there exists a non-zero integer $\delta$ and an injective morphism $\iota\colon\Z[\frac{1}{\delta}][X] \to R[\frac{1}{\delta}]$, for some tuple $X=(X_1,\dots,X_N)$ of indeterminates, such that $R':=R[\frac{1}{\delta}]$ is a finite module over the image $S$ of $\iota$. Suppose $p$ does not divide $\delta$. Then the natural morphism $R\to R'$ induces an isomorphism $R/pR\to R'/pR'$. Moreover, $p$ generates a prime ideal of $S$; let $\pp$ be a prime ideal of $R'$ with $\pp\cap S=pS$. Then the natural inclusion $S\to R'$ yields an inclusion $\mathbb F_p[X]\cong S/pS \to R'/\pp$; in particular, $R'/\pp\neq 0$ and thus $R'/pR'\neq 0$.
Another (algorithmic) proof of this part of Proposition~\ref{Non-trivial and reduced}, employing Gr\"obner basis theory, can be deduced from \cite{Vasconcelos}. It is also an immediate consequence of the non-trivial fact that the unit group of a finitely generated integral domain is finitely generated \cite{Samuel}.
\end{remark}

We now turn to the proof of Lemmas~\ref{Member} and \ref{Radical}. These can probably be deduced from the results in of \cite[\S{}9]{EGA-IV-3}; however, we prefer to follow a slightly less sophisticated approach, based on:

\begin{theorem}\label{vdDries-Schmidt}
There exists an integer $\alpha$ \textup{(}depending on the $f_i(C,X)$\textup{)} with the following property: for every field $K$ and $c\in K^M$, we have $f_0(c,X)\in I(c,X)$ if and only if there are $y_1,\dots,y_n\in K[X]$ of degree at most $\alpha$ with
\begin{equation}\label{ideal membership}
f_0(c,X)=y_1f_1(c,X)+\cdots+y_nf_n(c,X).
\end{equation}
Moreover, there exists an integer $\beta$ such that for every field $K$ and $c\in K^M$, if the implication
$$f\in \sqrt{I(c,X)} \quad\Rightarrow\quad f\in I(c,X)$$
holds for all
$f\in K[X]$ of degree at most $\beta$, then the ideal $I(c,X)$ is radical.
\end{theorem}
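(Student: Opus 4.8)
The plan is to prove Theorem~\ref{vdDries-Schmidt} as a uniform‐bound statement about ideal membership, following the strategy of reducing both assertions to the effective bounds available for Gröbner bases / the ideal membership problem over arbitrary fields. The key point is that the degree bound $\alpha$ must be chosen independently of the field $K$ and of the parameter tuple $c$, depending only on the (finitely many) polynomials $f_0(C,X),\dots,f_n(C,X)\in\Z[C,X]$; this is exactly the kind of statement established by work of van~den~Dries and Schmidt (hence the name of the theorem).

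For the first assertion I would argue as follows. Consider the polynomials $f_i(C,X)$ as having coefficients in the ring $\Z[C]$, i.e.\ work over the field $F=\operatorname{Frac}(\Z[C])=\Q(C)$. There exists a bound $\alpha_0$ such that for the generic specialization, ideal membership in $F[X]$ (of $f_0$ in the ideal generated by $f_1,\dots,f_n$), if it holds, is witnessed by cofactors $y_i\in F[X]$ of degree $\le\alpha_0$; this is the standard effective Nullstellensatz/ideal‐membership degree bound (e.g.\ via Gröbner bases, whose degrees are bounded in terms of the degrees of the input and the number of variables). Clearing denominators, one sees there is a single $\alpha$ working over $\Z[\tfrac1\delta][C]$ for a suitable nonzero integer $\delta$; but for the statement we need \emph{all} fields $K$, including positive characteristic and including specializations where the leading behaviour degenerates. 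The clean way to get the fully uniform bound is to invoke the theorem of van~den~Dries--Schmidt (``Bounds in the theory of polynomial rings over fields. A nonstandard approach'', Invent.\ Math.\ 76 (1984)), which states precisely that membership of $f_0$ in $(f_1,\dots,f_n)$ over any field is witnessed by cofactors whose degrees are bounded by a number depending only on the degrees (in $X$) of the $f_i$ and on $N$; since our $f_i(C,X)$ have bounded $X$-degree independent of $c$, specializing $C\mapsto c$ gives the uniform $\alpha$. I would cite this result (it is already implicitly being used, given the name of the theorem) rather than reprove it.

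For the second assertion — the bound $\beta$ controlling radicality — the plan is: first reduce to a membership statement, and then apply the first part together with the same van~den~Dries--Schmidt uniformity. Over a perfect field $K$, a finitely generated ideal $J=I(c,X)$ is radical if and only if $\sqrt J\subseteq J$; and $\sqrt J$ is itself finitely generated, say $\sqrt J=(h_1,\dots,h_t)$, with the $h_j$ and $t$ bounded in terms of the data by the effective radical computation (e.g.\ Gianni--Trager--Zacharias or Laplagne, whose output degrees are again bounded uniformly in terms of input degrees and $N$; perfectness is exactly what is needed to make the radical computation behave well in positive characteristic). So $J$ is radical iff each generator $h_j$ of $\sqrt J$ lies in $J$, and by the first part each such membership, if it holds, is witnessed in degree $\le\alpha$; taking $\beta$ to be the maximum of the degree bounds for the $h_j$ (and large enough to also bound $\alpha$ applied to them) gives: if every $f\in K[X]$ with $\deg f\le\beta$ and $f\in\sqrt{I(c,X)}$ actually lies in $I(c,X)$, then in particular all the $h_j$ do, so $\sqrt{I(c,X)}\subseteq I(c,X)$, i.e.\ the ideal is radical. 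Conversely the implication is trivial when $J$ is radical, so this characterizes radicality by a bounded test as claimed.

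The main obstacle, and the part requiring the most care, is the positive‐characteristic/non‐perfect subtlety in the radical bound: effective radical membership algorithms and their degree bounds are genuinely more delicate over imperfect fields, which is why the hypothesis ``$K$ perfect'' appears in Lemma~\ref{Radical} (and correspondingly in the second half of Theorem~\ref{vdDries-Schmidt}). Since in our applications $K$ is always $\Q$ or $\F_p$, both perfect, this is harmless, but I would state the perfectness hypothesis explicitly and note that the needed uniform degree bounds for radical ideal computation over perfect fields follow from the model‐theoretic compactness argument of van~den~Dries--Schmidt exactly as for plain ideal membership (the relevant ring‐theoretic operations — forming radicals over perfect fields, primary decomposition — are all ``first‑order with parameters'' in the sense that allows the nonstandard transfer). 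The translation of these bounds into the \emph{constructible} sets $\Sigma$, $\Sigma'$ of Lemmas~\ref{Member} and \ref{Radical} is then routine: once the degree $\alpha$ is fixed, the condition \eqref{ideal membership} becomes a system of polynomial equations in the bounded‐degree coefficients of the $y_i$, whose solvability over $K$ (equivalently over $K^{\mathrm{alg}}$, and then eliminating the unknown coefficients via Chevalley's Theorem~\ref{Chevalley}) defines a constructible subset of $\operatorname{Spec}(\Z[C])$; similarly for radicality using the bounded test with $\beta$.
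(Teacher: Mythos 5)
Your proposal is correct and takes essentially the same route as the paper: the paper supplies no proof of Theorem~\ref{vdDries-Schmidt} beyond citing the van~den~Dries--Schmidt compactness theorem (together with remarks on explicit Hermann/Koll\'ar-type bounds for $\alpha$ and on extracting $\beta$ from the GTZ radical algorithm), and your argument is likewise a citation of that result plus a reduction of the radicality test to bounded-degree ideal membership of generators of $\sqrt{I(c,X)}$, which is exactly the right way to see why a single $\beta$ works. One small inaccuracy: you state that a perfectness hypothesis ``appears \ldots in the second half of Theorem~\ref{vdDries-Schmidt},'' but the theorem as stated in the paper quantifies over \emph{all} fields $K$; the ``$K$ perfect'' hypothesis enters only in Lemma~\ref{Radical}, and there it is used not to control the degree bound $\beta$ but to justify passing to an algebraically closed extension (radicality of $I(c,X)K[X]$ being equivalent to that of $I(c,X)L[X]$ for $L\supseteq K$ when $K$ is perfect). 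This does not affect the soundness of your sketch, and your opening observation---that the preliminary reduction over $\Q(C)$ with denominator-clearing is a red herring because it cannot see bad specializations, so one must appeal directly to the uniform vdD--S bound---is the right way to frame the matter.
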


For a simple model-theoretic proof of this fact see \cite{vdDriesSchmidt}. It is known, essentially since Hermann's work \cite{Hermann} (see also \cite{Seidenberg}), that $\alpha=(2d)^{2^N}$ suffices, where $d$ is the maximal degree of $f_0,\dots,f_n$ in $X$; in the important case $f_0=1$, by  \cite{Kollar}  we may take  $\alpha=D^N$ where $D:=\max\{ 3, d \}$ for $N>1$ and $D:=2d-1$ for $N=1$.
Analyzing one of the common algorithms to construct the radical of an ideal in a polynomial ring (see, e.g., \cite{GTZ}) also would allow one, in principle, to explicitly compute an integer $\beta$ (depending on $N$ and $d$) with the required property.

\begin{proof}[Proof of Lemma~\ref{Member}]
By Theorem~\ref{vdDries-Schmidt}, $f_0(c,X)\in I(c,X)$ if and only if a certain
(very large) system of linear equations over $K$, obtained (uniformly in~$K$ and $c$) from the equation \eqref{ideal membership} by comparing the coefficients corresponding to equal monomials in $X$ on both sides, has a solution in $K$ (namely, the coefficient tuple of the polynomials $y_1,\dots,y_n$ of degree at most $\alpha$).
That is,
there exists a $k\times l$-matrix $A=A(C)$ with entries in $\Z[C]$ and a column vector $b=b(C)\in\Z[C]^k$ (for some~$k$,~$l$) such that for every field $K$ and $c\in K^M$, the ideal $I(c,X)$ contains $f_0(c,X)$ if and only if the system $A(c)y=b(c)$ of linear equations has a solution in $K$,
%\footnote{Da muss man wohl noch einen vagen Satz hinzufuegen um klar zu machen wo $A$ and $b$ herkommen}
that is, if and only if
\begin{equation}\label{Rank}
\operatorname{rank}(A(c))=\operatorname{rank}\big(A(c)|b(c)\big).
\end{equation}
The rank of a matrix over a field is the least $r\geq 0$ such that all of its $(r+1)\times (r+1)$-minors vanish. It is now routine to construct a finite family $(\Sigma_i)_{i\in I}$ of systems as in \eqref{System} such that  for given $K$ and $c$, the equality \eqref{Rank} holds if and only if $c$ satisfies one of the $\Sigma_i$.
\end{proof}

Using the ``Rabinovich trick,'' Lemma~\ref{Member} yields:

\begin{corollary}\label{Member-Cor}
There is a constructible subset $\Sigma''$  of $\operatorname{Spec}(\Z[C])$ such that for every field $K$ and $c\in K^M$, we have
$$f_0(c,X)\in \sqrt{I(c,X)} \qquad\Longleftrightarrow\qquad \pp_c\in\Sigma''.$$
\end{corollary}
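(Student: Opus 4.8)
The statement to prove is Corollary~\ref{Member-Cor}: there is a constructible subset $\Sigma''$ of $\operatorname{Spec}(\Z[C])$ such that for every field $K$ and $c\in K^M$, we have $f_0(c,X)\in\sqrt{I(c,X)}$ if and only if $\pp_c\in\Sigma''$.

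The plan is to apply the classical Rabinovich trick to reduce radical membership to ordinary ideal membership, and then invoke Lemma~\ref{Member}. Concretely, I introduce a fresh indeterminate $T$ (distinct from $X$ and $C$) and observe the standard equivalence: for a field $K$ and $c\in K^M$,
$$f_0(c,X)\in\sqrt{I(c,X)} \quad\Longleftrightarrow\quad 1\in I(c,X)\cdot K[X,T] + \big(1 - T\,f_0(c,X)\big)\,K[X,T].$$
This is the usual Rabinovich argument: if $f_0^m = \sum y_i f_i$ in $K[X]$, then working modulo $1-Tf_0$ one has $f_0^m\equiv 1$ up to a unit power of $T$, so $1$ lies in the enlarged ideal; conversely, substituting $T = 1/f_0$ in $K[X][1/f_0]$ into a representation of $1$ and clearing denominators yields $f_0^m\in I(c,X)$ for some $m$.

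With this reformulation in hand, I apply Lemma~\ref{Member} with the polynomial ring $\Z[C,X,T]$ in place of $\Z[C,X]$ (that is, with the variable tuple $(X,T)$ replacing $X$ and the same parameter tuple $C$), taking as the generators of the ideal the family $f_1(C,X),\dots,f_n(C,X)$ together with the extra generator $1 - T\,f_0(C,X)\in\Z[C,X,T]$, and taking the target polynomial to be the constant $1$. Lemma~\ref{Member} then produces a constructible subset $\Sigma''$ of $\operatorname{Spec}(\Z[C])$ such that for every field $K$ and $c\in K^M$, we have $1\in I(c,X)K[X,T] + (1-Tf_0(c,X))K[X,T]$ if and only if $\pp_c\in\Sigma''$. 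Combining this with the Rabinovich equivalence above gives exactly the asserted characterization, and $\Sigma''$ is constructible by construction. No step here presents a genuine obstacle — the only points requiring a line of care are the correctness of the Rabinovich equivalence over an arbitrary field $K$ (which is entirely standard and purely algebraic, valid in any characteristic) and the bookkeeping of variables when invoking Lemma~\ref{Member}; the substantive content has already been packaged into that lemma.
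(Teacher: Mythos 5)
Your proposal is correct and matches the paper's own proof essentially verbatim: the paper also introduces a fresh indeterminate (called $Y$ there rather than $T$), reformulates radical membership via the Rabinovich trick as $1 \in (f_1(c,X),\dots,f_n(c,X),\,1-Yf_0(c,X))$ in $K[X,Y]$, and then invokes Lemma~\ref{Member} to get the constructible set $\Sigma''$.
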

\begin{proof}
Let $Y$ be an indeterminate distinct from $X_1,\dots,X_N$. Then for every~$K$ and $c$ we have
$f_0(c,X)\in \sqrt{I(c,X)}$ if and only if the ideal
$$\big(f_1(c,X),\dots,f_n(c,X),1-Yf_0(c,X)\big)$$ of $K[X,Y]$ contains the constant polynomial $1$.
\end{proof}

We now turn to:

\begin{proof}[Proof of Lemma~\ref{Radical}]
Given a field extension $L|K$ of a perfect field $K$, an ideal $J$ of $K[X]$ is radical if and only if $JL[X]$ is radical \cite[Chapitre~V, \S 15, Proposition~5]{BA}. Hence it is enough to show the existence of a constructible subset $\Sigma'$ of $\Spec(\Z[C])$ satisfying \eqref{Radical-Equ} for every $M$-tuple $c$ of elements of an {\it algebraically closed}\/ field $K$. Let $f(D,X)\in\Z[D,X]$ be a polynomial of degree at most $\beta$ with indeterminate coefficients (where $D$ is a tuple of new indeterminates). Using Lemma~\ref{Member} and Corollary~\ref{Member-Cor}, let $\Sigma_1$, $\Sigma_2$ be constructible subsets of $\Spec(\Z[C,D])$ such that for every field $K$ and tuples $c$, $d$ of elements of $K$ of the appropriate lengths,
$$f(d,X) \in \sqrt{I(c,X)} \qquad\Longleftrightarrow\qquad \pp_{(c,d)}\in\Sigma_1$$
and
$$f(d,X) \in I(c,X) \qquad\Longleftrightarrow\qquad \pp_{(c,d)}\in\Sigma_2.$$
Consider the constructible subset
$$\Sigma :=  \big(\Spec(\Z[C,D])\setminus\Sigma_1\big) \cup \Sigma_2$$
of $\Spec(\Z[C,D])$: by Theorem~\ref{vdDries-Schmidt},
the ideal $I(c,X)$ is radical if and only if for every tuple $d$ of elements of $K$ we have
$\pp_{(c,d)}\in\Sigma$. One now obtains $\Sigma'$ from $\Sigma$ by applying Theorem~\ref{Chevalley}.
\end{proof}

\section{Fibered $3$-manifolds}\label{sec:fibered}

\noindent
We show Proposition~1 in a more general form:

\begin{proposition}\label{prop:fibered}
Let $\scrg$ be a graph of finitely generated virtually residually~$p$ groups such that for each edge $e$ of its underlying graph, the morphism $f_e$ is bijective. Then the group $\pi_1(\scrg)$ is virtually residually $p$.
\end{proposition}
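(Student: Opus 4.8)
The plan is to reduce the statement to a fact about semidirect products $H\rtimes F$ with $H$ finitely generated residually $p$ and $F$ free, and to verify that fact by hand.

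First I would unfold the graph of groups structure. Since the underlying graph $Y$ of $\scrg$ is finite and connected and every edge morphism $f_e$ is bijective, all vertex groups are isomorphic; fix a vertex $v_0$, put $H:=G_{v_0}$, and choose a maximal subtree $T$ of $Y$. Collapsing $T$ as in \eqref{eq:maximal subtree description of pi1}, the identifications there carry each vertex group $G_v$ isomorphically onto a single copy of $H$, so $\pi_1(\scrg|T)\cong H$ (cf.\ Lemma~\ref{lem:special amalgam}); the HNN description of $\pi_1(\scrg)$ in Section~\ref{sec:pi1} then identifies $G=\pi_1(\scrg,T)$ with
\[
G=\bigl\langle H,\ e\in E_+: eae^{-1}=\psi_e(a)\ \text{for all $e\in E_+$, $a\in H$}\bigr\rangle,
\]
where $E_+$ orients $E(Y)\setminus E(T)$ and each $\psi_e$ is an automorphism of $H$ (because $f_e$ is bijective, the associated subgroups are all of the relevant vertex groups). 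If $E_+=\varnothing$ then $G\cong H$ and we are done; otherwise $G\cong H\rtimes F$, where $F$ is the finitely generated free group on $E_+$ acting through $e\mapsto\psi_e$, and $H$ is finitely generated and virtually residually $p$.

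Next I would arrange that $H$ itself is residually $p$, and set up the core reduction. Let $H_0\le H$ be a finite‑index residually $p$ subgroup; since $H$ is finitely generated, the intersection $H_1$ of all subgroups of $H$ of index $[H:H_0]$ is characteristic, of finite index, and contained in $H_0$, hence residually $p$. Being characteristic, $H_1$ is $F$‑invariant, so $H_1\rtimes F$ has finite index in $G$; as virtual residual $p$‑ness passes from a finite‑index subgroup to the overgroup, we may replace $H$ by $H_1$ and assume $H$ residually $p$. Now the subgroups $\gamma^p_n(H)$ are characteristic, hence $F$‑invariant, the series $\{\gamma^p_n(H)\}$ is separating, and $H/\gamma^p_2(H)=H_1(H;\F_p)$ is finite; put $F_1:=\ker\bigl(F\to\operatorname{Aut}(H/\gamma^p_2(H))\bigr)$, a finite‑index subgroup of $F$, and $G_1:=H\rtimes F_1$, of finite index in $G$. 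I claim $G_1$ is residually $p$, which suffices. The key lemma is that \emph{any $\alpha\in\operatorname{Aut}(H)$ acting trivially on $H/\gamma^p_2(H)$ acts trivially on every layer $L^p_n(H)=\gamma^p_n(H)/\gamma^p_{n+1}(H)$;} this is proved by induction on $n$: writing $\gamma^p_{n+1}(H)=[H,\gamma^p_n(H)]\,\gamma^p_n(H)^p$, the abelian group $L^p_{n+1}(H)$ is generated by the classes of commutators $[h,g]$ and of $p$‑th powers $g^p$ with $h\in H$, $g\in\gamma^p_n(H)$; writing $\alpha(h)=hh'$ with $h'\in\gamma^p_2(H)$ and $\alpha(g)=gg'$ with $g'\in\gamma^p_{n+1}(H)$ (inductive hypothesis), standard commutator expansions and the collection formula (cf.\ Corollary~\ref{cor:HP}), together with the containments $[\gamma^p_2(H),\gamma^p_n(H)]\le\gamma^p_{n+2}(H)$, $[\gamma^p_{n+1}(H),\gamma^p_n(H)]\le\gamma^p_{n+2}(H)$, $\gamma^p_{n+1}(H)^p\le\gamma^p_{n+2}(H)$ supplied by Proposition~\ref{prop:lower p}, show $\alpha([h,g])\equiv[h,g]$ and $\alpha(g^p)\equiv g^p$ modulo $\gamma^p_{n+2}(H)$.

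Granting the lemma, the proof concludes as follows. For each $n$ the image $P_n$ of $F_1$ in $\operatorname{Aut}(H/\gamma^p_n(H))$ acts trivially on every factor of the (characteristic) central $p$‑series $\{\gamma^p_m(H)/\gamma^p_n(H)\}_{1\le m\le n}$ of the finite $p$‑group $H/\gamma^p_n(H)$, hence $P_n$ is itself a finite $p$‑group (a finite group all of whose elements have $p$‑power order), so $(H/\gamma^p_n(H))\rtimes P_n$ is a finite $p$‑group; moreover the composite $G_1=H\rtimes F_1\twoheadrightarrow (H/\gamma^p_n(H))\rtimes F_1\twoheadrightarrow (H/\gamma^p_n(H))\rtimes P_n$ restricts on $H$ to the natural projection $H\to H/\gamma^p_n(H)$. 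Thus, given $1\ne g\in G_1$: if $g$ has nontrivial image in $F_1$, separate it in a finite $p$‑quotient of $F_1$ (free, hence residually $p$), and a fortiori of $G_1$; if $g\in H\setminus\{1\}$, choose $n$ with $g\notin\gamma^p_n(H)$ and use the finite $p$‑quotient $(H/\gamma^p_n(H))\rtimes P_n$. Hence $G_1$ is residually $p$, so $G=\pi_1(\scrg)$ is virtually residually $p$. The only nonformal point — and the step I expect to demand real care — is the key lemma, i.e.\ that an automorphism inducing the identity on $H_1(H;\F_p)$ induces the identity on all layers of the lower central $p$‑filtration; the commutator bookkeeping there (including the harmless $p=2$ complications) would be carried out using Proposition~\ref{prop:lower p} and Corollary~\ref{cor:HP}. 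Everything else is standard or already in the excerpt (stability groups of finite $p$‑groups are $p$‑groups; free groups are residually $p$; cf.\ also Lemma~\ref{residual p criterion} and Corollary~\ref{cor:trivial edge groups}).
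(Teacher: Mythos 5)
Your proposal is correct, and it follows the same overall architecture as the paper's proof — reduce to a mapping torus $H\rtimes F$, pass to a characteristic residually‑$p$ finite‑index replacement for $H$, pass to a finite‑index subgroup of $F$ on which the homological action is unipotent, and then show residual $p$-ness there — but you carry out the last two steps by hand rather than by invoking the paper's graph‑of‑groups machinery. Concretely, where the paper appeals to Proposition~\ref{prop:commoncover} to arrange that $H$ is residually $p$, you observe directly that $H_1\rtimes F$ (with $H_1$ a characteristic residually‑$p$ subgroup of $H$ of finite index) sits in $H\rtimes F$ of finite index, which is cleaner since characteristic subgroups are automatically $F$‑invariant. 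Where the paper invokes Corollary~\ref{cor:unfolded, 1} (the unfolding construction) and then Lemma~\ref{lem:unipotent on homology}, whose proof rests on Proposition~\ref{Hempel criterion residually p}, Lemma~\ref{lem:hnn} and Corollary~\ref{cor:chatzidakis}, you instead pass to $F_1=\ker\bigl(F\to\Aut(H_1(H;\F_p))\bigr)$ — which, unwinding the definitions, is exactly the subgroup the unfolding produces in the mapping‑torus case — and then directly construct the finite $p$-quotients $(H/\gamma^p_n(H))\rtimes P_n$, using the classical fact that the stability group of a normal series of a finite $p$-group with elementary‑abelian factors is a $p$-group. Your ``key lemma'' (an automorphism inducing the identity on $H_1(H;\F_p)$ induces the identity on every layer $L^p_n(H)$) is precisely the fact the paper cites from \cite[Chapter~VIII, Theorem~1.7]{HB82} immediately before Lemma~\ref{lem:induced automorphisms}, and your Hall–Petrescu sketch of it is workable (the relevant containments all follow from Proposition~\ref{prop:lower p} and Lemma~\ref{lem:iterated gamma}, including for $p=2$). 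The upshot: your route is more elementary and self‑contained, at the cost of being tied to the special situation where all edge morphisms are bijective (full automorphisms, not partial ones); the paper's route is heavier because it reuses the general machinery (unfolding, Hempel, Chatzidakis) developed for the arbitrary JSJ case, and the present proposition then falls out as a corollary.
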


\index{$3$-manifold!fibered}
\index{mapping torus}

The \emph{mapping torus} of a group endomorphism $\varphi\colon G\to G$ is the group
$$\Z\ltimes G = \langle G, t: tgt^{-1}=\varphi(g),\ g\in G\rangle.$$
More generally, given group endomorphisms $\varphi_1,\dots,\varphi_r\colon G\to G$, the mapping torus of $\varphi_1,\dots,\varphi_r$ is the group
$$F_r\ltimes G = \langle G, t: tgt^{-1}=\varphi_i(g),\ i=1,\dots,r,\ g\in G\rangle.$$
Here and below, $F_r$ denotes the free group on $r$ generators.
The proposition above immediately yields:

\begin{corollary}\label{cor:fibered}
The mapping torus of finitely many automorphisms of a finitely generated virtually residually $p$ group is virtually residually $p$.
\end{corollary}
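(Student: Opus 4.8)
The plan is to derive Corollary~\ref{cor:fibered} directly from Proposition~\ref{prop:fibered}. Suppose $G$ is a finitely generated virtually residually $p$ group and $\varphi_1,\dots,\varphi_r\in\Aut(G)$. I want to realize the mapping torus $F_r\ltimes G$ as the fundamental group of a graph of groups to which Proposition~\ref{prop:fibered} applies.

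First I would take the graph $Y$ with a single vertex $v_0$ and a single topological edge $\{e,\ol{e}\}$ — that is, the underlying graph of an HNN extension — in the case $r=1$, and more generally the graph with one vertex and $r$ topological edges $\{e_i,\ol{e_i}\}$, $i=1,\dots,r$. I would set $G_{v_0}:=G$ and, for each $i$, $G_{e_i}:=G$ as well, with edge morphisms $f_{e_i}:=\id_G\colon G_{e_i}\to G_{t(e_i)}=G$ and $f_{\ol{e_i}}:=\varphi_i^{-1}\colon G_{\ol{e_i}}=G_{e_i}\to G_{t(\ol{e_i})}=G$ (one must of course check these are embeddings, which is immediate since $\varphi_i$ is an automorphism, and that $G_{e_i}=G_{\ol{e_i}}$ as required). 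Both $f_{e_i}$ and $f_{\ol{e_i}}$ are bijective, so the hypothesis of Proposition~\ref{prop:fibered} is satisfied. Using the description of $\pi_1(\scrg)$ relative to a maximal subtree (here $T=\{v_0\}$) from Section~\ref{sec:pi1} as an iterated HNN extension, with $\varphi_{e_i}=f_{\ol{e_i}}\circ f_{e_i}^{-1}=\varphi_i^{-1}$, I get
$$\pi_1(\scrg) = \big\langle G,\ t_1,\dots,t_r : t_i a t_i^{-1} = \varphi_i^{-1}(a)\ \text{for all $i$, $a\in G$}\big\rangle,$$
which after replacing each $t_i$ by $t_i^{-1}$ (or equivalently, noting $t_i^{-1} a t_i = \varphi_i(a)$) is isomorphic to $F_r\ltimes G$. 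Since each vertex group $G$ is finitely generated and virtually residually $p$ by hypothesis, Proposition~\ref{prop:fibered} gives that $F_r\ltimes G=\pi_1(\scrg)$ is virtually residually $p$, as desired.

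There is essentially no obstacle here beyond the bookkeeping of matching the HNN presentation conventions: the only thing to be careful about is the direction of the edge morphisms (whether one ends up with $\varphi_i$ or $\varphi_i^{-1}$ in the relations), but since the $\varphi_i$ are automorphisms this is cosmetic — the group $F_r\ltimes G$ with relations $tgt^{-1}=\varphi_i(g)$ is isomorphic to the one with relations $tgt^{-1}=\varphi_i^{-1}(g)$ via $t_i\mapsto t_i^{-1}$, $g\mapsto g$. I would state this reduction in a sentence or two and then invoke Proposition~\ref{prop:fibered}. (One should note that Corollary~\ref{cor:fibered} genuinely needs the $\varphi_i$ to be \emph{automorphisms}, not merely endomorphisms: Proposition~\ref{prop:fibered} requires each $f_e$ to be bijective, and this forces $f_{\ol{e}}$ — hence the map induced on the vertex group — to be bijective too, which for a mapping torus means the defining endomorphisms are invertible.)
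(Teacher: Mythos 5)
Your proposal is correct and is exactly the reduction the paper has in mind when it says Corollary~\ref{cor:fibered} is ``immediately'' yielded by Proposition~\ref{prop:fibered}: build the one-vertex graph of groups with $r$ loops, identity and $\varphi_i^{\pm 1}$ as edge morphisms, read off the iterated HNN presentation relative to the trivial maximal tree, and match it up with the definition of $F_r\ltimes G$. (The paper's own proof of Proposition~\ref{prop:fibered} runs this reduction in the opposite direction, collapsing a general graph with bijective edge maps to a mapping torus; you are simply inverting that step, and your remark that the $\varphi_i$ must be automorphisms — not merely endomorphisms — for $f_{\ol{e_i}}$ to be bijective is exactly right. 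One tiny simplification: taking $f_{\ol{e_i}}:=\varphi_i$ instead of $\varphi_i^{-1}$ gives $\varphi_{e_i}=\varphi_i$ directly and avoids the cosmetic substitution $t_i\mapsto t_i^{-1}$.)
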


This corollary readily implies Proposition~1: Suppose $N$ is a fibered $3$-manifold, so $\pi_1(N)=\Z \ltimes G$ where $G=\pi_1(F)$ is the fundamental group of a surface $F$. Thus~$F$ is either the free group $F_r$ on $r$ generators (for some $r$) or a surface group (that is, the fundamental group of a closed surface).
Free groups are residually $p$ for all~$p$, and orientable surface groups are residually free \cite{Ba62} and hence residually~$p$ for all~$p$.
Every non-orientable surface group of genus $g$ contains a characteristic subgroup of index $2$ which is isomorphic to the orientable surface group of genus~$g-1$.
In any case,  the group $G$ is virtually residually $p$, for every $p$, and hence so is $\pi_1(N)$ by Corollary~\ref{cor:fibered}.

\index{surface group}

\begin{remark}
Corollary~\ref{cor:fibered} can be seen as a more general and precise version of the result \cite{Ba71} that the mapping torus of an automorphism of a free group of finite rank is residually finite.
(In fact, the mapping torus of an {\it endo}\/morphism of a linear group is always residually finite, as shown by Borisov and Sapir \cite{BS05}; they also showed that the mapping torus of an endomorphism of a finite-rank free group is virtually residually $p$ for all but finitely many $p$ \cite{BS09}.) As remarked in~\cite{DS05}, the linearity of the braid group $B_4$ \cite{Krammer} implies that
the mapping torus of an automorphism of $F_2$ is a linear group; in fact, the holomorph of $F_2$ is linear \cite{CMP}.
% this follows from the linearity of $\Aut(F_2)$, which in turn follows from the fact (proved in \cite{DFG}) that $\Aut(F_2)$ is linear if and only if the braid group $B_4$ is linear; linearity of $B_4$ was proved in \cite{Krammer}.
It is unknown whether a mapping torus of an automorphism of $F_r$ is always linear for $r>2$, cf.~\cite[Problem~5]{DS05}.
\end{remark}

For the proof of Proposition~\ref{prop:fibered} we first note that it suffices to treat the special case of mapping tori: given a graph of groups $\scrg$ such that for each edge $e$ of the underlying graph $Y$ of $\scrg$, the morphism $f_e$ is bijective, for every maximal subtree~$T$ of $Y$ and every $v\in V(Y)$ we have $\pi_1(\scrg|T)\cong G_v$  and thus
$\pi_1(\scrg) \cong F_r \ltimes G_v$, where  $r:=\frac{1}{2}|E(Y)\setminus E(T)|$.
So suppose now that $$G^*=\langle G, t: tgt^{-1}=\varphi_i(g),\ i=1,\dots,r,\ g\in G\rangle$$
is the mapping torus of the automorphisms $\varphi_1,\dots,\varphi_r$ of the finitely generated group $G$. Every finite index subgroup of the finitely generated group $G$ contains a finite index subgroup of $G$ which is fully characteristic (even verbal) in $G$. Thus, if $G$ is virtually residually $p$, then $G$ contains a finite index residually $p$ subgroup which is fully characteristic. Using Proposition~\ref{prop:commoncover} we thus reduce to proving that
``$G$ residually $p$'' implies
 ``$G^*$ virtually residually $p$.'' The next lemma shows that our best chance for showing that $G^*$ is residually $p$ is by showing that the $\varphi_i$ act unipotently on $H_1(G;\F_p)$:

\begin{lemma}\label{lem:unipotent on homology}
Let
$$H^*=\langle H, t: tht^{-1}=L^p_1(\varphi_i)(h),\ i=1,\dots,r,\ h\in H\rangle$$
be the mapping torus of the automorphisms $L_1^p(\varphi_1),\dots,L_1^p(\varphi_r)$ of $H:=L^p_1(G)=H_1(G;\F_p)$ induced by $\varphi_1,\dots,\varphi_r$.  Then
$H^*$ is residually $p$ if and only if the automorphisms $L_1^p(\varphi_1),\dots,L_1^p(\varphi_r)$ generate a $p$-subgroup of $\Aut(H)$. If $G$ and $H^*$ are residually $p$, then so is $G^*$.
\end{lemma}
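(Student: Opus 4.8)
The statement has two parts: the equivalence characterizing when $H^*=F_r\ltimes H$ is residually $p$ (here $H=H_1(G;\F_p)=L_1^p(G)$ and $F_r$ acts via $\psi\colon F_r\to\Aut(H)$, $t_i\mapsto L_1^p(\varphi_i)$, with finite image $A:=\psi(F_r)$, which is finite since $G$ is finitely generated and hence $H$, $\Aut(H)$ are finite), and the implication that $G$, $H^*$ residually $p$ force $G^*=F_r\ltimes G$ to be residually $p$. I would prove the three implications separately.

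For ``$A$ a $p$-group $\Rightarrow H^*$ residually $p$'': I would take $K:=\ker\psi$, a finite-index (hence finitely generated) free normal subgroup of $F_r$ acting trivially on $H$, so that $K\ltimes H=K\times H$ is residually $p$ (a finitely generated free group times a finite $p$-group); then $K\times H\trianglelefteq H^*$ with quotient $A$ a $p$-group, and since ``being a $p$-group'' is a root property, Lemma~\ref{lem:gruenberg} gives that $H^*$ is residually $p$. For the converse, given $H^*$ residually $p$: for each $0\neq h\in H$ choose $\pi\colon H^*\to P$ onto a finite $p$-group with $\pi(h)\neq 1$; then $W_h:=\ker(\pi)\cap H$ is an $F_r$-submodule of $H$ (being normal in $H^*$) not containing $h$, and the $F_r$-action on $H/W_h\cong\pi(H)$ factors through the conjugation action of the $p$-group $\pi(F_r)$ on $\pi(H)\trianglelefteq P$, hence through a $p$-group. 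Since $\bigcap_{h\neq 0}W_h=0$ and $H$ is finite, finitely many of the $W_h$ already intersect to $0$; the resulting $F_r$-equivariant embedding of $H$ into the product of those $H/W_h$ exhibits $A$ as a quotient of a subgroup of a finite product of $p$-groups, so $A$ is a $p$-group.

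For the implication $G$, $H^*$ residually $p$ $\Rightarrow$ $G^*$ residually $p$: by the equivalence just established, $A=\langle L_1^p(\varphi_i)\rangle$ is a finite $p$-group. I would present $G^*$ as $\pi_1(\scrg)$ for the graph of groups $\scrg$ with one vertex (group $G$) and $r$ topological edges, with $f_{e_i}=\mathrm{id}$, $f_{\bar e_i}=\varphi_i$ (so $\varphi_{e_i}=\varphi_i$), and take $\mathbf G:=\gamma^p(G)$, which is a normal filtration of $\scrg$ (totally invariant, hence $\varphi_i$-invariant, hence compatible), separating (as $G$ is finitely generated and residually $p$), and trivially separates the edge groups (all edge maps have image $G$). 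By Corollary~\ref{cor:Hempel, cor} it then suffices to show $\pi_1(\scrg/\mathbf G_n)=F_r\ltimes Q_n$ is residually $p$ for each $n$, where $Q_n:=G/\gamma_n^p(G)$ is a finite $p$-group (the layers $L_j^p(G)$ being finite) and $F_r$ acts via the induced $\bar\varphi_i\in\Aut(Q_n)$. The case $n=1$ is trivial ($F_r$ is free). For $n\geq 2$ the $\bar\varphi_i$ induce on $Q_n/\Phi(Q_n)=G/\gamma_2^p(G)=H$ the maps $L_1^p(\varphi_i)$, which generate the $p$-group $A$; since the kernel of $\Aut(Q_n)\to\Aut(Q_n/\Phi(Q_n))$ is a $p$-group (Burnside's basis theorem), $B_n:=\langle\bar\varphi_1,\dots,\bar\varphi_r\rangle$ is a finite $p$-group, so in the $p$-group $Q_n\rtimes B_n$ each $\bar\varphi_i$ is realized by conjugation by an element of $B_n$, i.e.\ extends to an inner automorphism of a $p$-group; by the equivalence recorded at the start of Section~\ref{sec:extending partial automorphisms}, the iterated HNN extension $F_r\ltimes Q_n=\langle Q_n,t_1,\dots,t_r:t_iqt_i^{-1}=\bar\varphi_i(q)\rangle$ is residually $p$. (Alternatively: the $F_r$-action on $Q_n$ factors through $B_n$, whose kernel in $F_r$ is a finitely generated free normal subgroup acting trivially on $Q_n$, so one concludes again by Lemma~\ref{lem:gruenberg}.) Hence $\pi_1(\scrg/\mathbf G_n)$ is residually $p$ for every $n$, and therefore $G^*=\pi_1(\scrg)$ is residually $p$.

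I expect the main obstacle to be the bookkeeping in the last part: residual $p$-ness of $H^*$ controls the $L_1^p(\varphi_i)$ only collectively (they generate a $p$-group) and not individually, so one cannot work directly with $\gamma^p(G)$ but must descend to the finite quotients $G/\gamma_n^p(G)$ and invoke the Burnside basis theorem there to upgrade ``acts as a $p$-group on the Frattini quotient'' to ``generates a $p$-group of automorphisms''; and one must check carefully that $\gamma^p(G)$, viewed as a filtration of the one-vertex graph of groups, satisfies all the hypotheses of Corollary~\ref{cor:Hempel, cor}.
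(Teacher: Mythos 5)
Your proof is correct, and it follows a genuinely different — and more self-contained — route than the paper's. For the first (iff) statement, the paper simply cites the equivalence of (1) and (2) in Corollary~\ref{cor:chatzidakis}, which ultimately rests on Chatzidakis's proposition and the wreath-product machinery of Section~\ref{sec:extending partial automorphisms}; you instead prove both directions directly, one via Lemma~\ref{lem:gruenberg} applied to $(K\times H)\trianglelefteq H^*$ with $p$-group quotient $A$, the other via a finiteness argument intersecting the $F_r$-submodules $W_h=\ker\pi\cap H$ so as to embed $A$ into a finite product of $p$-groups. For the second statement, both arguments begin by reducing through Hempel's criterion to showing that $F_r\ltimes Q_n$ is residually $p$ for each $n$, with $Q_n=G/\gamma^p_n(G)$; the paper then pushes the reduction further via Lemma~\ref{lem:hnn} down to the individual layers $L^p_n(G)$ and concludes from the first statement together with Lemma~\ref{lem:induced automorphisms}, whereas you stop at the finite $p$-group $Q_n$ and apply the automorphism form of Burnside's basis theorem (the kernel of $\Aut(Q_n)\to\Aut(Q_n/\Phi(Q_n))$ is a $p$-group) to conclude directly that $B_n=\langle\bar\varphi_1,\dots,\bar\varphi_r\rangle\leq\Aut(Q_n)$ is a $p$-group, so that $Q_n\rtimes B_n$ is a $p$-group realizing each $\bar\varphi_i$ as an inner automorphism. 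Your Burnside step and the paper's Lemma~\ref{lem:induced automorphisms} both rest on the same underlying fact (an automorphism of a $p$-group that acts trivially on the Frattini quotient has $p$-power order), so the two arguments run in parallel; yours trades the layer-by-layer bookkeeping of Lemma~\ref{lem:hnn} for a single appeal to Burnside, while the paper's is terser given the lemmas already assembled.
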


\begin{proof}
The first statement follows from the equivalence of (1) and (2) in Corollary~\ref{cor:chatzidakis}. Suppose $G$ and $H^*$ are residually $p$. To show that $G^*$ is residually~$p$, it is enough to show, by Proposition~\ref{Hempel criterion residually p} and Lemma~\ref{lem:hnn}, that for each $n\geq 1$, the mapping torus of the automorphisms of $L^p_n(G)$ induced by the $\varphi_i$ is residually $p$. This follows from the first statement and  Lemma~\ref{lem:induced automorphisms} below.
\end{proof}

For every $n\geq 1$, by restriction we obtain a natural group morphism
$\Aut(G) \to \Aut(L^p_n(G))$. If  $\sigma\in\Aut(G)$ induces the identity on $L^p_1(G)=H_1(G;\F_p)$, then $\sigma$ induces the identity on $L^p_n(G)$ for each $n\geq 1$
\cite[Chapter~VIII, Theorem~1.7]{HB82}. Therefore:

\begin{lemma}\label{lem:induced automorphisms}
If the image of a subgroup $A$ of $\Aut(G)$ in $\Aut(L^p_1(G))$ is a $p$-group, then the image of $A$ in $\Aut(L^p_n(G))$ is a $p$-group for every $n\geq 1$.
\end{lemma}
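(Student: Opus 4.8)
The statement to prove is Lemma~\ref{lem:induced automorphisms}: if the image of a subgroup $A\leq\Aut(G)$ under the restriction morphism $\Aut(G)\to\Aut(L^p_1(G))$ is a $p$-group, then the image of $A$ under $\Aut(G)\to\Aut(L^p_n(G))$ is a $p$-group for every $n\geq 1$. The approach is to factor out the kernel of the action on $H_1(G;\F_p)=L^p_1(G)$ and invoke the fact, quoted just before the lemma from \cite[Chapter~VIII, Theorem~1.7]{HB82}, that an automorphism $\sigma\in\Aut(G)$ inducing the identity on $L^p_1(G)$ also induces the identity on every $L^p_n(G)$.

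First I would fix $n\geq 1$ and consider the two restriction morphisms $\rho_1\colon\Aut(G)\to\Aut(L^p_1(G))$ and $\rho_n\colon\Aut(G)\to\Aut(L^p_n(G))$; these are well-defined group morphisms because each $\gamma^p_m(G)$ is totally invariant (so every automorphism of $G$ preserves the pair $\gamma^p_n(G)\geq\gamma^p_{n+1}(G)$ and hence descends to the layer). Let $K:=\ker(\rho_1)$. The cited theorem of Huppert--Blackburn says precisely that $K\leq\ker(\rho_n)$: any $\sigma$ acting trivially on $L^p_1(G)$ acts trivially on $L^p_n(G)$. Consequently $\rho_n$ factors through $\Aut(G)/K\cong\rho_1(\Aut(G))\leq\Aut(L^p_1(G))$; that is, there is a morphism $\overline{\rho_n}$ with $\rho_n = \overline{\rho_n}\circ\rho_1$.

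Now apply this to $A$. By hypothesis $\rho_1(A)$ is a $p$-group. Then $\rho_n(A)=\overline{\rho_n}(\rho_1(A))$ is a homomorphic image of the finite $p$-group $\rho_1(A)$, hence is itself a $p$-group (a quotient of a $p$-group is a $p$-group), which is exactly the conclusion. Since $n$ was arbitrary, we are done.

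\textbf{Main obstacle.} There is essentially no obstacle here: the content is entirely carried by the quoted result \cite[Chapter~VIII, Theorem~1.7]{HB82}, and what remains is the routine observation that $\rho_n$ factors through the $\rho_1$-image. The only point requiring a line of care is verifying that $\rho_1$ and $\rho_n$ are genuinely group morphisms $\Aut(G)\to\Aut(L^p_m(G))$ — i.e.\ that the assignment $\sigma\mapsto L^p_m(\sigma)$ respects composition and sends $\id_G$ to $\id$ — which follows immediately from total invariance of the $\gamma^p_m(G)$ and functoriality of the layer construction $L^p_m(-)$ already set up in Section~\ref{sec:filtrations of groups}. I would state the proof in two or three sentences along exactly these lines.
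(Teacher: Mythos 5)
Your proof is correct and follows exactly the route the paper intends: the remark immediately preceding the lemma supplies, via \cite[Chapter~VIII, Theorem~1.7]{HB82}, the inclusion $\ker(\rho_1)\leq\ker(\rho_n)$, so $\rho_n$ factors through $\rho_1(\Aut(G))$ and the image of $A$ in $\Aut(L^p_n(G))$ is a quotient of the $p$-group $\rho_1(A)$. Nothing to change.
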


Suppose now that $G$ is residually $p$. Then by Corollary~\ref{cor:unfolded, 1}  we see that the mapping torus $G^*$ has a finite index normal subgroup which is itself (isomorphic to) the mapping torus of finitely many automorphisms of $G$, and such that the mapping torus of the corresponding induced automorphisms of $L_1^p(G)$ is residually~$p$. Hence by Lemma~\ref{lem:unipotent on homology},  $G^*$ is virtually residually $p$. This finishes the proof of Proposition~\ref{prop:fibered}. \qed

\medskip

It may be instructive to contrast Corollary~\ref{cor:fibered} with the following proposition, which shows that fundamental groups of fibered $3$-manifolds always satisfy, without passing to a finite index subgroup, a very weak variant of being residually $p$. (This proposition also fills in the details for a claim made in the introduction.)

\begin{proposition}\label{prop:fibered implies weakly residually p}
Let $\Gamma$ be the fundamental group of a fibered $3$-manifold. Then
for every $\gamma\in\Gamma$ with $\gamma\neq 1$ there is a subgroup $\widetilde{\Gamma}$ of $\Gamma$ of $p$-power index such that $\gamma\notin \widetilde{\Gamma}$.
\end{proposition}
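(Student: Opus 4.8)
The claim is that for a fibered $3$-manifold $N$ with $\Gamma = \pi_1(N)$, the intersection of the $p$-power index subgroups of $\Gamma$ is trivial — i.e., $\Gamma$ is ``weakly residually $p$'' in the terminology hinted at in the introduction. The key structural fact is $\Gamma = \Z \ltimes_\varphi G$ where $G = \pi_1(F)$ is a free or surface group and $\varphi$ is the monodromy automorphism; I will exploit this semidirect product decomposition directly, rather than passing to a finite-index subgroup as in Corollary~\ref{cor:fibered}.

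First I would reduce to the case $\gamma \neq 1$ with $\gamma \in G$. Write $\gamma = t^k g$ for the natural generator $t$ of the $\Z$-factor and some $g \in G$. If $k \neq 0$, then the quotient map $\Gamma \to \Gamma/G \cong \Z \to \Z/p^m\Z$ already separates $\gamma$ from the trivial element for a suitable $m$ (choosing $m$ so that $p^m \nmid k$), and the kernel has $p$-power index. So the only real content is when $\gamma \in G \setminus \{1\}$. Now I would use that $G$ is residually $p$ for every prime $p$ (free groups are residually $p$ for all $p$; orientable surface groups are residually free by \cite{Ba62}, hence residually $p$ for all $p$; non-orientable surface groups have an index-$2$ characteristic subgroup which is an orientable surface group, so they are virtually, hence — combined with the above — residually $p$ for all $p$ as well, since a finite index subgroup of a residually $p$ group which is residually $p$ together with $2$-power index still lets us separate elements of $G$ via a $p$-power index subgroup of $G$ once $p=2$, and for odd $p$ the non-orientable case is handled by the orientable subgroup directly). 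Concretely, pick a characteristic subgroup $G_0 \trianglelefteq G$ of $p$-power index with $\gamma \notin G_0$; for instance one may take $G_0$ to be a term $\gamma^p_n(G)$ of the lower central $p$-filtration of $G$, which is separating since $G$ is residually $p$, or more robustly the intersection of all subgroups of $G$ of a fixed $p$-power index (a characteristic, finite $p$-power index subgroup).

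The crucial point is to promote $G_0$ to a $p$-power index subgroup of $\Gamma$. Since $G_0$ is characteristic in $G$ and $G$ is normal in $\Gamma$, the subgroup $G_0$ is normal in $\Gamma$, and $\Gamma/G_0 \cong \Z \ltimes_{\bar\varphi} (G/G_0)$ is the mapping torus of the automorphism $\bar\varphi$ induced by $\varphi$ on the finite $p$-group $Q := G/G_0$. This quotient is infinite (it surjects onto $\Z$), so it is not yet a $p$-group, but I only need to further separate the image $\bar\gamma$ of $\gamma$ (which is nontrivial in $Q$) from $1$ inside a $p$-group quotient of $\Z \ltimes_{\bar\varphi} Q$. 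Here I would invoke that $\Aut(Q)$ is a finite group: let $d$ be the order of $\bar\varphi$ in $\Aut(Q)$, and choose a prime power $p^m$ that is a multiple of $d$ with $p^m$ large. Then $t^{p^m}$ acts trivially on $Q$ by conjugation, so the subgroup $\langle t^{p^m}\rangle \times Q \leq \Z \ltimes_{\bar\varphi} Q$ is normal of index $p^m \cdot |\!\langle t^{p^m}\rangle$-cokernel$|$... — more cleanly: the subgroup generated by $t^{p^m}$ together with $G_0$ maps, under $\Gamma \to \Z \ltimes_{\bar\varphi} Q$, onto $\langle t^{p^m}\rangle \cdot Q$, which has index $p^m$ in $\Z \ltimes_{\bar\varphi} Q$ and is isomorphic to $\Z \times Q$ (a $p$-group-by-$\Z$, not yet a $p$-group). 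To finish, compose with the further quotient $\Z \times Q \to (\Z/p^{m'}\Z) \times Q$ for $m'$ large; the total composite $\Gamma \to (\Z/p^{m'}\Z) \times Q$ is a morphism onto a finite $p$-group whose kernel $\widetilde{\Gamma}$ has $p$-power index, and $\bar\gamma \mapsto$ (something, $\bar\gamma)$ with $\bar\gamma \neq 1$ in $Q$, so $\gamma \notin \widetilde\Gamma$, as required.

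\textbf{Main obstacle.} The only genuinely delicate point is the non-orientable surface case in establishing that $G$ admits, for the given prime $p$, a characteristic subgroup of $p$-power index not containing $\gamma$ — for $p = 2$ one must check that the index-$2$ orientable subgroup can be intersected with a suitable $2$-power index characteristic subgroup of $G$ itself so that the result remains characteristic in $G$ and of $2$-power index; this works because the intersection of all index-$2^k$ subgroups of a finitely generated group is characteristic of $2$-power index, and it is separating for $G$ once $G$ is residually $2$, which the non-orientable surface group is (being virtually an orientable surface group, and orientable surface groups being residually $2$). Beyond that, every step is either the elementary semidirect-product bookkeeping above or a direct appeal to residual $p$-ness of free and surface groups; I expect no further difficulty.
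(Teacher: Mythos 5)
Your structural plan — peel off the $\Z$-direction when $k\neq 0$, then for $\gamma\in G\setminus\{1\}$ find a characteristic $p$-power-index subgroup $G_0\leq G$ with $\gamma\notin G_0$ and lift — is the right one, and the first part is correct. The problem is in what you do with $G_0$. You try to upgrade to a \emph{normal} $p$-power-index subgroup of $\Gamma$ by mapping $\Gamma$ onto a finite $p$-group, and the step where this collapses is ``choose a prime power $p^m$ that is a multiple of $d$'': the order $d$ of $\bar\varphi$ in $\operatorname{Aut}(Q)$ is not a $p$-power in general (already for $Q=\F_p^2$ one has $\operatorname{Aut}(Q)=\operatorname{GL}(2,\F_p)$, most of whose elements have order divisible by primes $\neq p$), so no such $p^m$ exists. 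This is not a side issue: without trivialising the action you do not get a group morphism $\Z\ltimes Q\to(\Z/p^{m'}\Z)\times Q$, and the ``total composite $\Gamma\to(\Z/p^{m'}\Z)\times Q$'' you write down is not defined on all of $\Gamma$ — you only ever constructed a map on the proper subgroup $\langle t^{p^m},G_0\rangle$ (whose image in $\Z\ltimes_{\bar\varphi}Q$ is $\langle t^{p^m}\rangle$, not $\langle t^{p^m}\rangle\cdot Q$, since $G_0$ maps to $1$ in $Q$). Indeed the quest for a normal such subgroup is bound to fail in general: the paper emphasises in its introduction that fibered knot complements give fibered $3$-manifolds whose fundamental groups are \emph{not} residually nilpotent, so there is no hope of separating every nontrivial $\gamma$ from $1$ by a \emph{normal} $p$-power-index subgroup.

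The fix is much shorter and is exactly the paper's argument: since $G_0$ (take $G_0=\gamma^p_n(G)$ for $n$ large) is characteristic in $G$, conjugation by $t$ preserves it, so $\widetilde{\Gamma}:=\langle t,G_0\rangle=\Z\ltimes G_0$ is a subgroup of $\Gamma$, of index exactly $[G:G_0]$ (a $p$-power) and with $\widetilde{\Gamma}\cap G=G_0$; hence $\gamma\notin\widetilde{\Gamma}$. No quotient to a finite $p$-group is needed, and $\widetilde{\Gamma}$ is generally \emph{not} normal — which is precisely the point of the proposition as contrasted with residual $p$-ness. One last caveat on your reduction: your remark that the non-orientable surface case ``is handled by the orientable subgroup directly'' for odd $p$ is too quick — a $p$-power-index subgroup of the index-$2$ orientable subgroup has index $2p^k$ in $G$, which is not a $p$-power when $p$ is odd, so that step does not go through as stated and needs a separate argument (or needs to appeal directly, as the paper does, to the assertion that the surface group $G$ is residually $p$).
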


\begin{proof}
As above we write $\Gamma=\Z \ltimes G$ where $G$ is a free group of finite rank or a surface group. Then $G$ is residually $p$.
Now, for any non-zero integer $k$ we can consider  the subgroup of $\Gamma=\Z\ltimes G$ generated by $t^k$ and $G$, which we denote by
$k\Z \ltimes G$; this subgroup is normal in $\Gamma$ of index $|k|$. For any $n\geq 1$ we may also consider the subgroup of $\G$ generated by $t$ and $\gamma^p_n(G)$, denoted by $\Z\ltimes\gamma^p_n(G)$; note that $\Z\ltimes\gamma^p_n(G)$ is of $p$-power index in $\G$, and this subgroup is not normal in general.
Now suppose $\gamma=t^kg\in\Gamma$ where $k\in\Z$, $g\in G$. If $k\neq 0$, then there exists an~$n$ such that $\gamma\notin p^n\Z\ltimes G$. If $k=0$ and $g\neq 1$, then there exists an $n\geq 1$ such that~$\gamma\notin \Z\ltimes \gamma^p_n(G)$.
\end{proof}

\chapter{The Case of Graph Manifolds}\label{ch:graph manifolds}

\noindent
It is not surprising that in the special case of graph manifolds, our methods yield more refined results. We first restate and prove Proposition~2 from the introduction:

\begin{proposition} \label{prop:gm}
Let $N$ be a graph manifold. Then
for every $p$, the group $G=\pi_1(N)$ is virtually residually $p$.
\end{proposition}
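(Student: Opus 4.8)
The plan is to deduce Proposition~\ref{prop:gm} from Theorem~\ref{thm:virt res p for graphs of linear groups, 1}, via the reductions of Section~\ref{subsec:proof of the main theorem} together with the fact that for a graph manifold every Seifert fibered JSJ component can be made to carry a boundary compatible $p$-filtration for \emph{every} prime $p$ (Proposition~\ref{prop:seifert simple lower p-central}), not merely for cofinitely many. First I would fix a prime $p$ and reduce, exactly as in Section~\ref{subsec:proof of the main theorem}, to the case where $N$ is closed, orientable and prime: capping spherical boundary components with $3$-balls, splitting off free factors (using the consequence of Corollary~\ref{cor:trivial edge groups} that a free product of virtually residually $p$ groups is virtually residually $p$), doubling along incompressible toroidal boundary, and passing to the orientable double cover all preserve the graph manifold property, and since being virtually residually $p$ passes to finite index subgroups and, conversely, follows from a finite index subgroup having that property, it suffices to treat closed orientable prime $N$. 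If the JSJ decomposition of $N$ is trivial then $N$ is Seifert fibered and Proposition~\ref{prop:Seifert res p} finishes the job; so from now on the JSJ decomposition is nontrivial, all vertex manifolds $N_v$ are Seifert fibered with nonempty incompressible toroidal boundary, all edge manifolds are tori, and $G=\pi_1(N)=\pi_1(\scrg)$ for the associated graph of groups $\scrg$, whose edge groups $G_e\cong\Z^2$ are abelian $p$-torsion free.

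Next I would pass to a Seifert simple finite cover, following Section~\ref{sec:reduction to seifert simple} but with an auxiliary prime $q$ that no longer has to accommodate any hyperbolic pieces: fix any prime $q>5$. By Lemma~\ref{lem:sfsq}, for each vertex $v$ there is a finite index subgroup $\widetilde{G_v}\le G_v$ such that for every boundary torus $T$ of $N_v$ the group $\pi_1(T)\cap\widetilde{G_v}$ is the \emph{unique} characteristic subgroup of $\pi_1(T)$ of index $q^2$, and such that the corresponding cover of $N_v$ is $S^1\times F$ for an orientable surface $F$ with $b_0(\partial F)\ge 2$. Since the subgroup cut out on each JSJ torus is canonical, the collection $\{\widetilde{G_v}\}_v$ is a compatible collection of finite index normal subgroups of $\scrg$, so Proposition~\ref{prop:commoncover} yields a finite cover $\widetilde N\to N$ realized as a graph of groups with these vertex groups and edge groups $\cong\Z^2$; each vertex piece of $\widetilde N$ has the form $S^1\times F$ with $b_0(\partial F)\ge2$, i.e., $\widetilde N$ is Seifert simple. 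As $\pi_1(\widetilde N)$ has finite index in $G$, it suffices to prove $\pi_1(\widetilde N)$ is virtually residually $p$; replacing $N$ by $\widetilde N$, I may assume $N$ itself is Seifert simple.

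Now I would invoke the filtration machinery. For each vertex $v$ we have $N_v=S^1\times F_v$ with $b_0(\partial F_v)\ge 2$, so Proposition~\ref{prop:seifert simple lower p-central} provides a boundary compatible $p$-filtration $\mathbf G_v$ of level $1$ for $N_v$, that is, a filtration of $G_v$ which is $p$-compatible of level $1$ with the collection of edge subgroups $f_e(G_e)\le G_v$, $t(e)=v$. These assemble to a filtration $\mathbf G=\{\mathbf G_v\}_v$ of $\scrg$: for each $n$ the collection $\{G_{v,n}\}_v$ is compatible, since on each edge group $G_e\cong\Z^2$ the level-$1$ condition forces the induced filtration to be the canonical one, $\mathbf G_v\cap f_e(G_e)=\{\gamma^p_{n+1}(f_e(G_e))\}_{n\ge1}=\{p^n f_e(G_e)\}_{n\ge1}$, independently of the adjacent vertex $v$. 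Hence $\mathbf G$ is a $p$-compatible filtration of $\scrg$ of level $1$, and since the edge groups of $\scrg$ are abelian $p$-torsion free, Theorem~\ref{thm:virt res p for graphs of linear groups, 1} yields that $\pi_1(\scrg)=\pi_1(N)$ is virtually residually $p$. As $p$ was arbitrary, Proposition~\ref{prop:gm} follows.

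There is no serious obstacle here, since all the heavy lifting is already packaged in the cited results; the one place where bookkeeping care is needed is the Seifert simple reduction, where one must verify both that the vertex subgroups produced by Lemma~\ref{lem:sfsq} really glue via Proposition~\ref{prop:commoncover} to a finite cover of $N$ with product-type Seifert pieces, and that the resulting per-vertex boundary compatible $p$-filtrations patch into a single filtration of the graph of groups. Both checks rest on the canonicity of the (shifted) lower central $p$-filtration on the $\Z^2$ edge groups. Crucially, in contrast with the hyperbolic case, no localization statement of the type of Theorem~\ref{thm:proprmi} enters at any point — this is exactly why the conclusion holds for every prime $p$ and not merely for all but finitely many.
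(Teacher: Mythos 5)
Your proof is correct and follows essentially the same route as the paper's: reduce to the closed, orientable, prime, Seifert simple case, apply Proposition~\ref{prop:seifert simple lower p-central} to each Seifert piece for the fixed prime $p$, and invoke Theorem~\ref{thm:virt res p for graphs of linear groups, 1}. You are somewhat more explicit than the paper in checking that the preliminary reductions (capping spheres, splitting along compressing disks, doubling, passing to the orientable cover) preserve the graph manifold property and in verifying that the per-vertex level-$1$ filtrations agree on each edge group; these are exactly the bookkeeping points the paper treats implicitly, and your observation that no arithmetic input of the type of Theorem~\ref{thm:proprmi} is needed correctly identifies why the result holds for every $p$.
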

\begin{proof}
If $N$ is Seifert fibered, then $\pi_1(N)$ has a finite index subgroup which is residually $p$ for every $p$, by Proposition~\ref{prop:Seifert res p}. So we may assume from now on that~$N$ is not Seifert fibered. The rest of the argument is as in Sections~\ref{sec:reduction to seifert simple}--\ref{sec:conclusion} in the proof of the main theorem:
After passing to a finite cover of $N$ if necessary, we may assume that $N$ is Seifert simple. %, and each component of its JSJ decomposition has at least two  boundary components.
Then, by Proposition~\ref{prop:seifert simple lower p-central}, for all  $p$, each JSJ component of $N$ admits a boundary compatible $p$-filtration of level~$1$, and so $G$ is virtually residually $p$ by Theorem~\ref{thm:virt res p for graphs of linear groups, 1}.
\end{proof}

\index{graph manifold}

In the rest of this chapter we prove Proposition~3  from the introduction. We also discuss $p$-cohomological completeness and obstacles towards a proof of virtual $p$-efficiency for arbitrary $3$-manifolds, and establish Proposition~4.

\section{$p$-efficiency}

\index{$p$-efficient}

\noindent
We first turn to Proposition~3, restated here:

\begin{proposition}\label{prop:prop 2 restated}
Let $N$ be a closed graph manifold. Then for every $p$ there is a finite cover of $N$ which is $p$-efficient.
\end{proposition}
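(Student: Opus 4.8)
The plan is to combine the structural reduction for closed graph manifolds with the control on the mod $p$ homology of covers supplied by the earlier results, and then verify directly that the two defining conditions of $p$-efficiency hold after passing to a suitable finite cover. Recall that $N$ being $p$-efficient means: $N$ is closed prime orientable, $G=\pi_1(N)$ is residually $p$, every vertex group $G_v$ and edge group $G_e$ of the JSJ graph of groups $\scrg$ is closed in the pro-$p$ topology on $G$, and the pro-$p$ topology on $G$ induces the pro-$p$ topology on each $G_v$ and each $G_e$.

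First I would reduce, exactly as in Sections~\ref{sec:reduction to seifert simple}--\ref{sec:conclusion}, to the case where $N$ is a Seifert simple closed graph manifold with non-trivial JSJ decomposition (if $N$ is Seifert fibered this is covered by Proposition~\ref{prop:Seifert res p}, and the orientability and primeness reductions are harmless since $N$ is already closed and, for a graph manifold, we may pass to a finite cover). After this reduction each JSJ component $N_v$ is of the form $S^1\times F_v$ with $F_v$ a closed orientable surface or an orientable surface with at least two boundary components. For such components Proposition~\ref{prop:seifert simple lower p-central} (together with Lemma~\ref{lem:p-compatible for free groups}) gives, for every $p$, a boundary compatible $p$-filtration of level $1$; in fact, inspecting the proof, on each vertex group $G_v$ we obtain the filtration $\{\gamma^p_{n+1}(G_v)\}_{n\ge1}$, which is $p$-compatible of level $1$ with the peripheral subgroups. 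This assembles to a $p$-compatible filtration $\mathbf G$ of level $1$ of $\scrg$, which is $p$-excellent since the edge groups $G_e\cong\Z^2$ are abelian $p$-torsion free (so their lower central $p$-series shifted left by one term is uniformly $p$-potent). By Theorem~\ref{thm:reduction theorem, 3} the group $\pi_1(\scrg)$ is virtually residually $p$; but more is true — running the construction of Theorem~\ref{thm:reduction theorem, 3} produces an explicit finite cover $\widetilde N$ (the unfolding along the morphism $\psi$ of Proposition~\ref{prop:unfolded}, composed with the common cover of Proposition~\ref{prop:commoncover}) on whose fundamental group the pulled-back filtration $\widetilde{\mathbf G}$ is still $p$-excellent and for which $\pi_1(\widetilde{\scrg})$ is honestly residually $p$. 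I would take this $\widetilde N$ as the candidate $p$-efficient cover.

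It then remains to check the two pro-$p$ conditions on $\widetilde N$, and here the earlier machinery does most of the work. Residual $p$-ness of $\pi_1(\widetilde N)$ and closedness of the vertex and edge groups in the pro-$p$ topology are exactly the last assertion of Theorem~\ref{thm:reduction theorem} (via Corollaries~\ref{cor:Hempel, cor} and \ref{cor:vertex and edge groups closed}), which applies since $\widetilde{\mathbf G}$ is separating and separates the edge groups of $\widetilde{\scrg}$. For condition~(2) — that the pro-$p$ topology on $G=\pi_1(\widetilde N)$ induces the pro-$p$ topology on each $G_v$ and each $G_e$ — one uses that $\widetilde{\mathbf G}$ is $p$-compatible of level $1$: by definition $\widetilde{\mathbf G}$ intersects $G_v$ to $\{\gamma^p_{n+1}(G_v)\}_{n\ge1}$ and $G_e$ to $\{\gamma^p_{n+1}(G_e)\}_{n\ge1}$, so the subspace topology induced on $G_v$ (resp.\ $G_e$) by the topology defined by the $\widetilde{\mathbf G}_n$ is precisely the pro-$p$ topology of $G_v$ (resp.\ $G_e$); since the $\widetilde{\mathbf G}_n$ are cofinal among the $p$-power index normal subgroups of $G$ one needs for the topology they define to coincide with the full pro-$p$ topology of $G$, one invokes that $\pi_1(\widetilde{\scrg}/\widetilde{\mathbf G}_n)$ is residually $p$ for every $n$ (the first conclusion of Theorem~\ref{thm:reduction theorem}), which forces every $\widetilde{\mathbf G}_n$ to contain a $p$-power index normal subgroup of $G$ and conversely makes the $\widetilde{\mathbf G}_n$ a neighborhood basis of $1$ in the pro-$p$ topology. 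Chasing these two comparisons carefully gives condition~(2).

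The main obstacle I anticipate is exactly this last point: verifying that the filtration topology defined by $\widetilde{\mathbf G}$ \emph{is} the genuine pro-$p$ topology on $G$ (not merely a finer or coarser group topology), and hence that the induced topology on the peripheral and vertex subgroups computed via $\widetilde{\mathbf G}\cap G_v$ really is their pro-$p$ topology. Condition~(1) of $p$-excellency together with the ``residually $p$ for each $\widetilde{\mathbf G}_n$'' conclusion is designed precisely to make this comparison go through, but the bookkeeping — matching up $p$-power index normal subgroups of $G$ with the terms $\widetilde{\mathbf G}_n$ and their intersections with the $G_v$, $G_e$ — is where the argument has to be done with care rather than quoted. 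Everything else (the reduction to Seifert simple, the construction of the level-$1$ $p$-compatible filtration on the Seifert pieces, the passage to the unfolded cover) is a direct reprise of arguments already carried out in Chapters~\ref{ch:residual properties of graphs of groups} and \ref{ch:proof of the main results}.
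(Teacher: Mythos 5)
Your overall architecture is right, and the reductions you quote are correct: reduce to Seifert simple, equip each vertex group $G_v$ with the level-$1$ filtration $\mathbf G_v=\{\gamma^p_{n+1}(G_v)\}_{n\geq 1}$, note this is $p$-excellent because the edge groups $\cong\Z^2$ are $p$-torsion free, observe that $\mathbf G_v$ induces the pro-$p$ topology on $G_{v,1}=\gamma^p_2(G_v)$, and pass through Proposition~\ref{prop:commoncover} and the unfolding of Corollary~\ref{cor:unfolded, 1} to a finite cover $\widetilde N$. That much coincides with the paper, which packages exactly these steps into Lemma~\ref{lem:p-efficiency and p-excellence} and its Corollary~\ref{cor:p-efficiency criterion}, and then quotes them.

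The gap is in your verification of condition (2) of $p$-efficiency. You propose to show that the pulled-back filtration $\widetilde{\mathbf G}=\{\widetilde{\mathbf G}_n\}$ is cofinal among the $p$-power index normal subgroups of $\widetilde G=\pi_1(\widetilde N)$, i.e.\ that the filtration topology on $\widetilde G$ \emph{is} the pro-$p$ topology of $\widetilde G$. This is neither what is needed nor plausibly true: the layers of $\gamma^p(\pi_1(S^1\times F))$ grow without bound in rank when $F$ has free $\pi_1$ of rank $\geq 2$, so $\widetilde{\mathbf G}$ has no hope of being cofinal, and the paper never claims it is. (Compare the discussion in Section~\ref{sec:p-efficiency in general}, where the analogous filtration on a hyperbolic piece is shown \emph{not} to induce the pro-$p$ topology on $G_1$ via Theorem~\ref{thm:Lubotzky}.) The correct argument, which is the content of Lemma~\ref{lem:p-efficient}, is both more modest and cleverer: given a $p$-power index normal subgroup $H_{v_0}\trianglelefteq G_{v_0}$, choose $n$ with $\widetilde{\mathbf G}_{v_0,n}\leq H_{v_0}$ (this uses only that $\mathbf G_{v_0}$ induces the pro-$p$ topology on $G_{v_0}$, which you have), let $\pi\colon\widetilde G\to\pi_1(\widetilde\scrg/\widetilde{\mathbf G}_n)$ be the natural surjection, and use that $\pi_1(\widetilde\scrg/\widetilde{\mathbf G}_n)$ is residually $p$ together with the finiteness of $\pi(G_{v_0})=G_{v_0}/\widetilde{\mathbf G}_{v_0,n}$ to find a $p$-power index normal $K\trianglelefteq\pi_1(\widetilde\scrg/\widetilde{\mathbf G}_n)$ with $K\cap\pi(G_{v_0})=1$. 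Then $H:=\pi^{-1}(K)$ is a $p$-power index normal subgroup of $\widetilde G$ with $H\cap G_{v_0}=\widetilde{\mathbf G}_{v_0,n}\leq H_{v_0}$, which is exactly what condition (2) demands. The subgroup $H$ produced this way is generally not a term of the filtration $\widetilde{\mathbf G}$, so no cofinality statement is required or established. You should replace the cofinality argument you gesture at with this one.
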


\begin{remark}
We do not know how to arrange for the finite cover of $N$ in this proposition  to be regular.
\end{remark}

\subsection{$p$-efficiency of graphs of groups}
We say that a graph of groups $\scrg$ is \emph{$p$-efficient} if
\begin{enumerate}
\item $G=\pi_1(\scrg)$ is residually $p$;
\item for all vertices $v$ and edges $e$, the subgroups $G_v$ and $G_e$ of $G$ are closed in the pro-$p$ topology on $G$;
\item for all vertices $v$ and edges $e$, the pro-$p$ topology on $G$ induces the pro-$p$ topology on $G_v$ and on $G_e$.
\end{enumerate}
So a closed orientable prime $3$-manifold $N$ is $p$-efficient precisely if its associated graph of groups is.
For the proof of Proposition~\ref{prop:prop 2 restated}, we first need to establish a criterion for $p$-efficiency of a graph of groups, which is Corollary~\ref{cor:p-efficiency criterion} of the following sequence of lemmas.

Given a filtration $\mathbf G$ of a group $G$ by $p$-power index normal subgroups, we say that $\mathbf G$ induces the pro-$p$ topology on $G$ if the group topology on $G$ with fundamental system of neighborhoods of the identity given by $\mathbf G$ agrees with the pro-$p$ topology on $G$; equivalently, for every $p$-power index normal subgroup $H$ of~$G$ there is some $m\geq 1$ with $H\geq G_m$. (If $G_{\operatorname{ab}}$ is finitely generated, it suffices to require this for $H$ of the form $H=\gamma^p_n(G)$, $n\geq 1$.)

\index{graph of groups!$p$-efficient}

In the following lemmas and their corollary below we fix a filtration $\mathbf G$ of a graph $\scrg$ of finitely generated groups.

\begin{lemma}\label{lem:p-efficient}
Suppose $\mathbf G$ is a complete $p$-filtration, and
\begin{enumerate}
\item for each vertex $v$, the filtration $\mathbf G_v$ induces the pro-$p$ topology on $G_v$;
\item for each edge $e$, the filtration $\mathbf G_e$ induces the pro-$p$ topology on $G_e$; and
\item for every $n\geq 1$, the group $\pi_1(\scrg/\mathbf G_n)$ is residually $p$.
\end{enumerate}
Then the pro-$p$ topology on $G=\pi_1(\scrg)$ induces the pro-$p$ topology on each vertex and edge group of $\scrg$.
If in addition $\mathbf G$ is separating and separates the edge groups of $\scrg$, then $\scrg$ is $p$-efficient.
\end{lemma}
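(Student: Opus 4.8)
\textbf{Proof plan for Lemma~\ref{lem:p-efficient}.}

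The plan is to break the statement into three claims, mirroring the three conditions in the definition of $p$-efficiency: residual $p$-ness of $G=\pi_1(\scrg)$, closedness of the vertex and edge groups in the pro-$p$ topology on $G$, and the induced-topology condition. The residual $p$-ness and closedness under the additional hypotheses (separating, separating the edge groups) follow essentially for free: Corollary~\ref{cor:Hempel, cor} applied to the normal separating $p$-filtration $\mathbf G$, using hypothesis (3) that each $\pi_1(\scrg/\mathbf G_n)$ is residually $p$, gives that $G$ is residually $p$; and Corollary~\ref{cor:vertex and edge groups closed} (whose hypotheses (A$'$), (B$'$) are met because the $\scrh=\mathbf G_n$ are compatible collections of finite-index normal subgroups and $\mathbf G$ separates both the trivial subgroup and the edge groups) yields that each vertex and edge group is closed in the pro-$p$ topology on $G$. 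So the real content is the first assertion, that the pro-$p$ topology on $G$ induces the pro-$p$ topology on each vertex group $G_v$ and each edge group $G_e$.

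For that, fix a vertex $v$ (the edge case is identical). One inclusion is automatic: since $G_v$ is finitely generated, the pro-$p$ topology on $G$ restricted to $G_v$ is always coarser than or equal to the pro-$p$ topology on $G_v$ — every $p$-power index normal subgroup of $G$ meets $G_v$ in a $p$-power index (not necessarily normal, but that is fine after intersecting with a fully characteristic subgroup) subgroup of $G_v$. The substantive direction is the reverse: given a basic open neighborhood of $1$ in the pro-$p$ topology on $G_v$, which by the finite generation of $(G_v)_{\operatorname{ab}}$ we may take to be $\gamma^p_n(G_v)$ for some $n$, I need to produce a $p$-power index normal subgroup $H$ of $G$ with $H\cap G_v\leq \gamma^p_n(G_v)$. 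By hypothesis (1), the filtration $\mathbf G_v$ induces the pro-$p$ topology on $G_v$, so there is some $m$ with $G_{v,m}\leq \gamma^p_n(G_v)$; hence it suffices to find a $p$-power index normal subgroup $H$ of $G$ with $H\cap G_v\leq G_{v,m}$. Now I invoke hypothesis (3): $\pi_1(\scrg/\mathbf G_m)$ is residually $p$, so — arguing as in the proof of Corollary~\ref{cor:vertex and edge groups closed} via Lemma~\ref{residual p criterion} — there is a morphism $\psi\colon\pi_1(\scrg/\mathbf G_m)\to P$ to a finite $p$-group which is injective on each vertex group $G_v/G_{v,m}$; composing with the natural projection $\pi_{\mathbf G_m}\colon G\to\pi_1(\scrg/\mathbf G_m)$ gives $\psi'=\psi\circ\pi_{\mathbf G_m}\colon G\to P$ whose kernel $H$ satisfies $H\cap G_v = \ker(\psi'|G_v)$, and since $\psi'|G_v$ factors as $G_v\to G_v/G_{v,m}\xrightarrow{\psi} P$ with the second map injective, we get $H\cap G_v = G_{v,m}\leq\gamma^p_n(G_v)$, as desired. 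This $H$ is normal of $p$-power index in $G$, completing the argument for $G_v$; the same computation with $G_e$ in place of $G_v$ and hypothesis (2) in place of (1) handles the edge groups.

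Finally, assembling the pieces: under the bare hypotheses (1)--(3) we have shown condition (3) of $p$-efficiency for $\scrg$ (the induced-topology condition); adding the assumption that $\mathbf G$ is separating and separates the edge groups, Corollary~\ref{cor:Hempel, cor} gives condition (1) (residual $p$-ness of $G$) and Corollary~\ref{cor:vertex and edge groups closed} gives condition (2) (closedness), so $\scrg$ is $p$-efficient. I expect the main obstacle to be purely bookkeeping — keeping straight which direction of the topology comparison is trivial and which requires work, and making sure that the morphism extracted from $\pi_1(\scrg/\mathbf G_m)$ being residually $p$ is genuinely injective on the \emph{relevant} vertex or edge quotient rather than merely on the vertex quotients (the edge case needs that the edge groups embed into the vertex quotients, which is built into the compatibility of $\mathbf G_m$ and into Lemma~\ref{residual p criterion}); there is no deep new idea needed beyond the corollaries of Proposition~\ref{Hempel criterion residually p} already established.
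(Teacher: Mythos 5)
Your argument is correct and follows the paper's proof almost step for step: both use hypothesis (1) (or (2)) to drop a given $p$-power index normal subgroup of the vertex (or edge) group down to some $G_{v,m}$, then pull back a $p$-power index normal subgroup from the residually $p$ quotient $\pi_1(\scrg/\mathbf G_m)$ that misses the finite image of the vertex group, and finally invoke Corollaries~\ref{cor:Hempel, cor} and \ref{cor:vertex and edge groups closed} for the last clause. The only cosmetic difference is that the paper first observes that condition (2) lets one reduce the edge-group case to the vertex-group case (via transitivity of the induced-topology relation along $G_e\hookrightarrow G_v\hookrightarrow G$), whereas you run the same argument directly for the edge groups; both work, and your detour through $\gamma^p_n(G_v)$ rather than an arbitrary $p$-power index normal subgroup $H_{v_0}$ is harmless but unnecessary.
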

\begin{proof}
By condition (2), the vertex groups induce the pro-$p$ topology on the edge groups, so it is enough to show that $G$ induces the pro-$p$ topology on each vertex group.
Fix a vertex $v_0$ and a normal subgroup $H_{v_0}$ of $G_{v_0}$ of $p$-power index; we need to find a normal subgroup $H$ of $G$ of $p$-power index with $H\cap G_{v_0}\leq H_{v_0}$. By hypothesis (1) there is some $n\geq 1$ such that $H_{v_0}\geq G_{v_0,n}$. Denote the natural morphism $G=\pi_1(\scrg)\to\pi_1(\scrg/\mathbf G_n)$ by $\pi$. By (3) and since $\pi(G_{v_0})$ is finite, there exists a normal subgroup $K$  of $\pi_1(\scrg/\mathbf G_n)$ of $p$-power index with $\pi(G_{v_0})\cap K=1$. Then $H:=\pi^{-1}(K)$ is a $p$-power index normal subgroup of $G$ with $H\cap G_{v_0}=G_{v_0,n}\leq H_{v_0}$ as desired.
The last statement now follows from the first in combination with Corollaries~\ref{cor:Hempel, cor} and \ref{cor:vertex and edge groups closed}.
\end{proof}

For the proof of the next lemma we revisit the proof of Theorem~\ref{thm:reduction theorem, 3}:

\begin{lemma}\label{lem:p-efficiency and p-excellence}
Suppose that $\mathbf G$ is $p$-excellent, and
\begin{enumerate}
\item for each vertex $v$, the filtration $\mathbf G_v$ induces the pro-$p$ topology on $G_{v,1}$;
\item for each edge $e$, the filtration $\mathbf G_e$ induces the pro-$p$ topology on $G_{e,1}$.
\end{enumerate}
Then there exists a finite degree morphism $\widetilde{\scrg}\to\scrg$ of graphs of groups such that
$\widetilde{\scrg}$ is $p$-efficient.
\end{lemma}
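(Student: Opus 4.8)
The plan is to rerun the argument from the proof of Theorem~\ref{thm:reduction theorem, 3} almost verbatim, but to exploit the \emph{full} strength of Theorem~\ref{thm:reduction theorem} --- which produces residually $p$ quotients $\pi_1(\scrg/\mathbf G_n)$ for \emph{all} $n$, not merely residual $p$-ness of $\pi_1(\scrg)$ --- and to feed this into Lemma~\ref{lem:p-efficient}. First I would reduce to the case that $\mathbf G$ is complete: if it is not, the collection $\{G_{v,1}\}_v$ is a compatible collection of finite index normal subgroups of $\scrg$, so Proposition~\ref{prop:commoncover} provides a finite degree morphism $\scrg'\to\scrg$ whose vertex and edge groups are the $G_{v,1}$ respectively $G_{e,1}$; the pullback $\mathbf G'$ of $\mathbf G$ along this morphism is complete, remains $p$-excellent by Lemmas~\ref{lem:separating, 1} and \ref{lem:separating, 2}, and still satisfies hypotheses (1) and (2) of the present lemma (which were stated in terms of $G_{v,1}$ and $G_{e,1}$ precisely so that they survive this passage). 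A $p$-efficient finite degree cover of $\scrg'$ then composes with $\scrg'\to\scrg$ to produce one of $\scrg$.

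So assume $\mathbf G$ is complete. Noting that $\gamma^p_2(G_v)\leq G_{v,2}$ because $\mathbf G_v$ is a central $p$-filtration of $G_v=G_{v,1}$, I would apply Corollary~\ref{cor:unfolded, 1} with $\scrh:=\mathbf G_2$ to obtain a morphism $\phi\colon\widetilde\scrg\to\scrg$ with the four listed properties, and set $\widetilde{\mathbf G}:=\phi^{-1}(\mathbf G)$. As in the proof of Theorem~\ref{thm:reduction theorem, 3}, property~(1) (bijectivity of the $\phi_{\widetilde v}$) together with Lemmas~\ref{lem:separating, 1} and \ref{lem:separating, 2} shows that $\widetilde{\mathbf G}$ is again complete and $p$-excellent, while property~(4) gives that $\pi_1^*(L_1(\widetilde{\mathbf G}),\widetilde T)$ is residually $p$ for every maximal subtree $\widetilde T$ (here one uses that $\widetilde{\mathbf G}_2=\phi^{-1}(\scrh)=\widetilde\scrh$, so that $L_1(\widetilde{\mathbf G})=\widetilde\scrg/\widetilde\scrh$ as graphs of groups). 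Hence the hypotheses of Theorem~\ref{thm:reduction theorem} hold for $\widetilde\scrg$ and $\widetilde{\mathbf G}$; its conclusion yields that $\pi_1(\widetilde\scrg/\widetilde{\mathbf G}_n)$ is residually $p$ for every $n\geq 1$, and --- since $\widetilde{\mathbf G}$ is separating and separates the edge groups --- that $\widetilde\scrg$ itself is residually $p$.

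It then remains to check that $\widetilde\scrg$, equipped with $\widetilde{\mathbf G}$, meets the hypotheses of Lemma~\ref{lem:p-efficient}: $\widetilde{\mathbf G}$ is a complete $p$-filtration which is separating and separates the edge groups, condition~(3) of that lemma is exactly what Theorem~\ref{thm:reduction theorem} just provided, and conditions~(1) and~(2) are obtained by transporting the corresponding hypotheses of the present lemma along the isomorphisms $\phi_{\widetilde v}\colon\widetilde G_{\widetilde v}\to G_{\phi(\widetilde v)}$ and $\phi_{\widetilde e}\colon\widetilde G_{\widetilde e}\to G_{\phi(\widetilde e)}$, which carry $\widetilde G_{\widetilde v,n}$ and $\widetilde G_{\widetilde e,n}$ onto $G_{\phi(\widetilde v),n}$ and $G_{\phi(\widetilde e),n}$, and under which $\widetilde f_{\widetilde e}$ corresponds to $f_{\phi(\widetilde e)}$. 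Lemma~\ref{lem:p-efficient} then shows $\widetilde\scrg$ is $p$-efficient, and $\phi$ has finite degree by property~(3) of Corollary~\ref{cor:unfolded, 1}. The main point requiring care is precisely this bookkeeping: verifying that ``inducing the pro-$p$ topology'' is of the kind of property preserved under the covering morphisms $\scrg'\to\scrg$ and $\widetilde\scrg\to\scrg$ and under the vertex/edge isomorphisms, and that the filtration one finally hands to Lemma~\ref{lem:p-efficient} is genuinely complete; everything else is an immediate reuse of results already established.
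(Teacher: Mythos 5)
Your proof is correct and follows the same route as the paper's: reduce to the complete case via Proposition~\ref{prop:commoncover}, apply Corollary~\ref{cor:unfolded, 1} with $\scrh=\mathbf G_2$, invoke Theorem~\ref{thm:reduction theorem} for the residual $p$-ness of the quotients $\pi_1(\widetilde\scrg/\widetilde{\mathbf G}_n)$, and feed everything into Lemma~\ref{lem:p-efficient}. The only difference is that you spell out the bookkeeping (e.g.\ that $\gamma^p_2(G_v)\leq G_{v,2}$, and that the pro-$p$-topology hypotheses transport along the vertex/edge isomorphisms) which the paper leaves implicit.
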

\begin{proof}
We argue as in the proof of Theorem~\ref{thm:reduction theorem, 3}.
First we apply Proposition~\ref{prop:commoncover} to $\{H_v\}=\{G_{v,1}\}$ in order to reduce to the case that $\mathbf G$ is complete. From now on we assume this to be the case.
Next, we apply Corollary~\ref{cor:unfolded, 1} with $\mathcal H:=\mathbf G_2$ and let $\phi\colon\widetilde{\scrg}\to\scrg$ have properties (1)--(4) in that corollary. Let $\widetilde{\mathbf G}:=\phi^{-1}(\mathbf G)$. Then by Theorem~\ref{thm:reduction theorem}, the group~$\widetilde{G}=\pi_1(\widetilde{\scrg})$ is residually $p$, the vertex and edge groups of $\widetilde{\scrg}$ are closed in the pro-$p$ topology on $\widetilde{G}$, and for every $n\geq 1$, the group~$\pi_1(\widetilde{\scrg}/\widetilde{\mathbf G}_n)$ is residually $p$.
Moreover, by condition (1) in Corollary~\ref{cor:unfolded, 1}, for each vertex $\widetilde{v}$ of the graph underlying $\widetilde{\scrg}$, the filtration $\widetilde{\mathbf G}_{\widetilde{v}}$ induces the pro-$p$ topology on $\widetilde{G}_{\widetilde{v}}$, and
for each edge $\widetilde{e}$,  $\widetilde{\mathbf G}_{\widetilde{e}}$ induces the pro-$p$ topology on $\widetilde{G}_{\widetilde{e}}$.
Thus by Lemma~\ref{lem:p-efficient}, $\widetilde{\scrg}$ is $p$-efficient.
\end{proof}

\index{filtration!$p$-excellent}

The following immediate corollary of Lemma~\ref{lem:p-efficiency and p-excellence}
is a variant of Theorem~\ref{thm:virt res p for graphs of linear groups, 1}:

\begin{corollary}\label{cor:p-efficiency criterion}
Suppose that all edge groups $G_e$ are abelian $p$-torsion free.
Assume moreover that $\mathbf G$ is $p$-compatible of some level $\ell\geq 0$ and for each vertex $v$, the filtration $\mathbf G_v$ induces the pro-$p$ topology on the subgroup $G_{v,1}$ of $G_v$. Then there is a finite degree morphism $\widetilde{\scrg}\to\scrg$ of graphs of groups such that $\widetilde{\scrg}$ is $p$-efficient.
\end{corollary}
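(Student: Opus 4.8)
The plan is to derive this as an essentially mechanical consequence of Lemma~\ref{lem:p-efficiency and p-excellence}, since the only difference between that lemma and the present corollary is that here the structure of the edge groups is used to discharge most of its hypotheses automatically. So the proof will amount to checking, in turn, that $\mathbf G$ is $p$-excellent and that hypotheses (1) and (2) of Lemma~\ref{lem:p-efficiency and p-excellence} hold, and then quoting that lemma.

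First I would verify that the given $p$-compatible filtration $\mathbf G$ of level $\ell$ is $p$-excellent. By definition any $p$-compatible filtration of level $\ell$ of $\scrg$ satisfies conditions (1)--(4) in the definition of $p$-excellency, so only condition (5) needs attention: for each edge $e$ the filtration $\mathbf G_{t(e)}$ must intersect to a \emph{uniformly} $p$-potent filtration on $f_e(G_e)$. Identifying $G_e$ with $f_e(G_e)\le G_{t(e)}$, $p$-compatibility of level $\ell$ gives $\mathbf G_{t(e)}\cap G_e=\{\gamma^p_{n+\ell}(G_e)\}_{n\ge 1}$; since $G_e$ is abelian and $p$-torsion free, $\gamma^p_m(G_e)=p^{m-1}G_e$, so this is the filtration $\{p^{n+\ell-1}G_e\}_{n\ge 1}$, which is exactly the lower central $p$-filtration of the abelian $p$-torsion free group $p^{\ell}G_e$, hence uniformly $p$-potent (example (1) following the definition of $p$-potency in Section~\ref{sec:p-potent}). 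This is precisely the observation recorded just before Theorem~\ref{thm:virt res p for graphs of linear groups, 1}, so $\mathbf G$ is $p$-excellent.

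Next I would check conditions (1) and (2) of Lemma~\ref{lem:p-efficiency and p-excellence}. Condition (1), that $\mathbf G_v$ induces the pro-$p$ topology on $G_{v,1}$ for each vertex $v$, is exactly the remaining hypothesis of the corollary. For condition (2), I reuse the computation above: for each edge $e$ we have $\mathbf G_e=f_e^{-1}(\mathbf G_{t(e)})=\{\gamma^p_{n+\ell}(G_e)\}_{n\ge 1}=\{p^{n+\ell-1}G_e\}_{n\ge 1}$, so $G_{e,1}=p^{\ell}G_e$ and, regarded as a filtration of its own first term, $\mathbf G_e$ coincides with the lower central $p$-filtration $\gamma^p(G_{e,1})$ of $G_{e,1}$ (the $\ell$-fold shift merely relabels the terms $p^{n-1}G_{e,1}$). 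As $G_{e,1}$ is a finitely generated abelian group, $\gamma^p(G_{e,1})$ induces the pro-$p$ topology on $G_{e,1}$, which is condition (2). Lemma~\ref{lem:p-efficiency and p-excellence} then yields a finite degree morphism $\widetilde{\scrg}\to\scrg$ with $\widetilde{\scrg}$ $p$-efficient, completing the proof. I expect no genuine obstacle here; the only point requiring a moment's care is the identification of the restricted filtration $\mathbf G_e$ with $\gamma^p(G_{e,1})$ together with the (routine) fact that shifting the lower central $p$-filtration of a finitely generated abelian $p$-torsion free group by finitely many terms leaves the induced topology unchanged.
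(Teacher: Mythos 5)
Your proof is correct and follows the paper's intended route exactly: the paper states this as an immediate corollary of Lemma~\ref{lem:p-efficiency and p-excellence}, having already noted (in the paragraph before Theorem~\ref{thm:virt res p for graphs of linear groups, 1}) that $p$-compatible filtrations of level $\ell$ are $p$-excellent when the edge groups are abelian $p$-torsion free. You correctly supply the remaining check, that $\mathbf G_e$ restricted to $G_{e,1}=p^\ell G_e$ is $\gamma^p(G_{e,1})$ and hence induces the pro-$p$ topology, which is what makes hypothesis (2) of that lemma hold.
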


We can now prove Proposition~\ref{prop:prop 2 restated}. So let $N$ be a closed prime graph manifold. If $N$ Seifert fibered, then there is nothing to do except to apply Proposition~\ref{prop:Seifert res p}. Suppose $N$ is not Seifert fibered. Then, after passing to a finite cover of $N$ if necessary, we may assume that $N$ is Seifert simple. In the proof of Proposition~\ref{prop:seifert simple lower p-central} we showed that for every JSJ component $N_v$ of $N$, the filtration $\mathbf G_v=\{\gamma^p_{n+1}(G_v)\}_{n\geq 1}$ of $G_v=\pi_1(N_v)$ is a boundary compatible $p$-filtration of level $1$ for $N_v$. Clearly $\mathbf G_v$ induces the pro-$p$ topology on $\gamma^p_2(G_v)$.
Hence by the previous corollary, $N$ admits a finite $p$-efficient cover. \qed

\subsection{$p$-efficiency and the pro-$p$ fundamental group}
The usefulness of the notion of $p$-efficiency is to be found in connection with a description of the pro-$p$ completion of the fundamental group of a $p$-efficient graph of groups, which we give now. For simplicity we restrict our attention to graphs of finitely generated groups.

\begin{notation}
For a (discrete) group $G$ we denote by $\widehat{G}_p$ the pro-$p$ completion of $G$, that is, the
inverse limit of the system of $p$-group quotients of $G$. The group $\widehat{G}_p$ comes equipped with a topology making it a compact totally disconnected topological group, and we have a natural morphism $G\to\widehat{G}_p$ which is injective precisely if $G$ is  residually~$p$.
\end{notation}

\index{$\widehat{G}_p$}
\index{pro-$p$ completion|textbf}
\index{topology!pro-$p$}

Let first $\scrg$ be a graph of finitely generated pro-$p$ groups, i.e., a graph of groups all of whose vertex and edge groups are (topologically) finitely generated
pro-$p$ groups, and all of whose edge morphisms are continuous. In this situation, the \mbox{\it pro-$p$~fundamental~group $\widehat{\pi}_1(\scrg)$ of $\scrg$} is defined to be the pro-$p$ completion of $G=\pi_1(\scrg)$: $\widehat{\pi}_1(\scrg)=\widehat{G}_p$. We have a natural continuous group morphism $G\to\widehat{\pi}_1(\scrg)$, were $G$ is equipped with the pro-$p$ topology. Unlike the case of the ordinary fundamental group, the natural morphisms $G_v\to  \widehat{\pi}_1(\scrg)$ are usually not injective (cf.~\cite[Example~9.2.9]{RZ00}). One says that $\scrg$ is \emph{proper} if all morphisms $G_v\to  \widehat{\pi}_1(\scrg)$ are injective.

\index{graph of pro-$p$ groups!pro-$p$ fundamental group}
\index{graph of pro-$p$ groups!proper}

Now let $\scrg$ be an arbitrary graph of (abstractly) finitely generated groups. Suppose the pro-$p$ topology on each vertex group $G_v$ induces the pro-$p$ topology on~$f_e(G_e)$ for each edge $e$ with $t(e)=v$. (This will be satisfied if $\scrg$ is $p$-efficient.) Then the induced continuous group morphisms $\widehat{f_e}\colon\widehat{(G_e)}_p\to \widehat{(G_v)}_p$ between the pro-$p$ completions are injective, hence the graph of groups $\widehat{\scrg}$ with same underlying graph as $\scrg$, vertex groups $\widehat{(G_v)}_p$, edge groups $\widehat{(G_e)}_p$, and edge morphisms $\widehat{f_e}$, is a graph of finitely generated pro-$p$ groups as above. We have a natural morphism $\scrg\to\widehat{\scrg}$ of graphs of groups and hence a group morphism $G=\pi_1(\scrg)\to\pi_1(\widehat{\scrg})$.

\begin{lemma}\label{lem:p-efficient implies proper}
Suppose $\scrg$ is a $p$-efficient graph of finitely generated groups. Then applying the pro-$p$ completion functor to the morphism $G\to  \pi_1(\widehat{\scrg})$ yields a continuous isomorphism
$\widehat{G}_p \to \widehat{\pi}_1(\widehat{\scrg})$, and $\widehat{\scrg}$ is proper.
\end{lemma}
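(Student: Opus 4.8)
The plan is to exhibit the continuous isomorphism $\widehat{G}_p\to\widehat{\pi}_1(\widehat{\scrg})$ by showing that the natural morphism $\lambda\colon G\to\pi_1(\widehat{\scrg})$ from a $p$-efficient graph of groups to the fundamental group of the associated graph of pro-$p$ completions has the property that, after applying the pro-$p$ completion functor, it becomes an isomorphism. The key point is that $p$-efficiency is precisely the condition needed to identify the pro-$p$ topology on $G$ with the pro-$p$ topology pulled back from $\pi_1(\widehat{\scrg})$, so that $\lambda$ is continuous with dense image and induces a bijection on the lattices of $p$-power index normal subgroups.

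First I would observe that $\widehat{\scrg}$ is indeed a graph of finitely generated pro-$p$ groups to which the preceding discussion applies: by condition (3) of $p$-efficiency the pro-$p$ topology on $G$ induces the pro-$p$ topology on each $G_v$ (hence on each $f_e(G_e)$ via condition (2)), so the induced morphisms $\widehat{f_e}\colon\widehat{(G_e)}_p\to\widehat{(G_v)}_p$ are injective, as noted in the text just before the lemma. Then I would build a continuous morphism $\mu\colon\widehat{G}_p\to\widehat{\pi}_1(\widehat{\scrg})$ as the pro-$p$ completion of $\lambda$. For the inverse, I would use the universal property of $\pi_1(\widehat{\scrg})$: the natural morphisms $\widehat{(G_v)}_p\to\widehat{G}_p$ (which exist because, by conditions (1) and (3), $G_v$ is closed in $G$ and carries the subspace pro-$p$ topology, so the inclusion $G_v\hookrightarrow G$ extends to the completions) are compatible with the edge morphisms $\widehat{f_e}$, hence assemble to a morphism $\pi_1(\widehat{\scrg})\to\widehat{G}_p$, which upon pro-$p$ completion gives $\nu\colon\widehat{\pi}_1(\widehat{\scrg})\to\widehat{G}_p$. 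One then checks $\mu\circ\nu=\id$ and $\nu\circ\mu=\id$ by verifying the identities on the dense images of $G$ and of the $\widehat{(G_v)}_p$, which reduces to the commutativity of the defining diagrams of $\lambda$ and of the two structural maps — a routine diagram chase.

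The heart of the matter, and what I expect to be the main obstacle, is showing that $\mu$ is an isomorphism rather than merely a continuous epimorphism; equivalently, that every $p$-power index normal subgroup of $G$ contains (the preimage of) a $p$-power index normal subgroup of $\pi_1(\widehat{\scrg})$. This is where $p$-efficiency is used in full force: given $N\trianglelefteq G$ of $p$-power index, one must produce an open normal subgroup of $\pi_1(\widehat{\scrg})$ whose preimage in $G$ lies in $N$. The standard way — and the one I would follow — is to invoke Hempel-type criteria for the pro-$p$ version: by Corollary~\ref{cor:vertex and edge groups closed} and the fact that $\scrg$ is $p$-efficient, $G$ embeds densely in an inverse limit of fundamental groups of finite graphs of $p$-groups, and each such finite quotient factors through a finite quotient of $\pi_1(\widehat{\scrg})$ because the vertex and edge completions surject onto the corresponding finite quotients of $G_v$, $G_e$ by condition (3). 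Finally, properness of $\widehat{\scrg}$ — injectivity of $\widehat{(G_v)}_p\to\widehat{\pi}_1(\widehat{\scrg})$ — then follows: the composite $\widehat{(G_v)}_p\to\widehat{\pi}_1(\widehat{\scrg})\xrightarrow{\ \sim\ }\widehat{G}_p$ is the completion of the inclusion $G_v\hookrightarrow G$, which by condition (3) of $p$-efficiency (the subspace pro-$p$ topology on $G_v$ equals its own pro-$p$ topology) is injective. I would close by remarking that the same argument shows the edge groups inject too, so $\widehat{\scrg}$ is proper in the sense of \cite{RZ00}.
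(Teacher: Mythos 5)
Your proposal is correct, but it follows a genuinely different route from the paper's sketch. The paper first observes that $p$-efficiency makes the natural morphism $G\to\pi_1(\widehat{\scrg})$ injective and—via the commutative diagram \eqref{eq:factorization}—that the pro-$p$ topology on $\pi_1(\widehat{\scrg})$ restricts to the pro-$p$ topology on $G$, with $G$ dense. From this it deduces that $\widehat{G}_p\to\widehat{\pi}_1(\widehat{\scrg})$ is a continuous injection with dense image, and obtains surjectivity from the compactness of $\widehat{G}_p$ together with Hausdorffness of $\widehat{\pi}_1(\widehat{\scrg})$. You instead construct an explicit two-sided inverse $\nu$ using the (discrete) universal property of $\pi_1(\widehat{\scrg})$, and verify $\mu\nu=\id$ and $\nu\mu=\id$ by diagram chase on dense images; this works, and is arguably cleaner since it sidesteps identifying the induced topology on $G$.

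Two remarks. First, the morphisms $\psi_v\colon\widehat{(G_v)}_p\to\widehat{G}_p$ exist with no hypothesis at all: any group morphism into a pro-$p$ group is automatically continuous for the pro-$p$ topology on the source (preimages of open normal subgroups have $p$-power index), hence factors uniquely through the pro-$p$ completion. What condition (3) of $p$-efficiency buys you is that the $\psi_v$ are \emph{injective}, which is what you need for properness, not for the isomorphism; your attribution of the existence of the $\psi_v$ to conditions (1) and (3) conflates these roles and also cites condition (1) where you presumably mean (2) (closedness). Second, your final paragraph is redundant: once $\nu$ is a two-sided inverse, $\mu$ is already an isomorphism and there is no further ``main obstacle'' to overcome. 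The Hempel-type fallback you outline there is, in fact, close in spirit to what the paper actually does (showing that $\pi_1(\widehat{\scrg})$ induces the pro-$p$ topology on $G$), so including both as parallel tracks makes the argument look less resolved than it is. Finally, when invoking the universal property of $\pi_1(\widehat{\scrg})$, you should explicitly send each stable letter $e\in E(Y)\setminus E(T)$ to its image in $\widehat{G}_p$ and verify the conjugation relations $h_e\,\psi_{t(e)}(\widehat{f_e}(x))\,h_e^{-1}=\psi_{t(\ol e)}(\widehat{f_{\ol e}}(x))$, which hold on the dense subset $G_e$ by the defining relations of $\pi_1(\scrg,T)$ and extend by continuity; ``compatible with the edge morphisms'' by itself omits this.
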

\begin{proof}
This is well-known in the profinite case; we only sketch the proof.
First note that $p$-efficiency implies that the natural morphism $G\to \pi_1(\widehat{\scrg})$ is injective.
Using the commutative diagram \eqref{eq:factorization} one sees that $\pi_1(\widehat{\scrg})$ induces the pro-$p$ topology on $G=\pi_1(\scrg)$, and $G$ is dense in $\pi_1(\widehat{\scrg})$. Hence the continuous morphism $\widehat{G}_p \to \widehat{\pi}_1(\widehat{\scrg})$ remains injective and has dense image in $\widehat{\pi}_1(\widehat{\scrg})$. Since $\widehat{G}_p$ is compact and $\widehat{\pi}_1(\widehat{\scrg})$ is Hausdorff, this image in fact equals $\widehat{\pi}_1(\widehat{\scrg})$. Hence $\widehat{G}_p \to \widehat{\pi}_1(\widehat{\scrg})$ is an isomorphism.
Since $G$ induces the pro-$p$ topology on each $G_v$, the natural injective morphisms $G_v\to G$ give rise to injective morphisms $\widehat{(G_v)}_p\to\widehat{G}_p=\widehat{\pi}_1(\widehat{\scrg})$. Thus $\widehat{\scrg}$ is proper.
\end{proof}

\section{Cohomological $p$-completeness}\label{sec:coh p-completeness}

\noindent
Following \cite{LS07} we call a group $G$ \emph{cohomologically $p$-complete} if %for every finite $p$-primary $G$-module $M$,
the natural morphism $G\to \widehat{G}_p$ induces an isomorphism
$H^n_{\operatorname{cont}}(\widehat{G}_p;\F_p)\to H^n(G;\F_p)$ for each~$n\geq 1$.
Here, $H^\ast(G;\F_p)$ is the usual (discrete) cohomology of $G$, and $H^\ast_{\operatorname{cont}}(\widehat{G}_p;\F_p)$ is the continuous  cohomology (sometimes called Galois cohomology \cite{Se97}) of the pro-$p$ group $\widehat{G}_p$. In both cases the action on $\F_p$ is assumed to be trivial. Cohomological $p$-completeness can be characterized in different ways:

\begin{lemma}
Let $G$ be a group.
The following are equivalent:
\begin{enumerate}
\item $G$ is cohomologically $p$-complete;
\item for every finite $p$-primary $G$-module $M$ and
$n\geq 1$, the natural morphism $G\to \widehat{G}_p$ induces an isomorphism
$H^n_{\operatorname{cont}}(\widehat{G}_p;M)\to H^n(G;M)$;
\item for every finite $p$-primary $G$-module $M$ and
$n\geq 1$,
$$\lim_{N\leq_p G} H^n(N;M)=0;$$
\item for all $n\geq 1$,
$$\lim_{N\leq_p G} H^n(N;\F_p)=0.$$
\end{enumerate}
Here $N$ ranges over all subgroups of $G$ of $p$-power index.
\end{lemma}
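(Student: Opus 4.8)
The plan is to derive all four equivalences from a single Lyndon--Hochschild--Serre spectral sequence obtained by passing to a colimit. For each normal subgroup $N\trianglelefteq G$ of $p$-power index, the LHS spectral sequence of $1\to N\to G\to G/N\to 1$ reads $E_2^{s,t}(N)=H^s(G/N;H^t(N;\F_p))\Rightarrow H^{s+t}(G;\F_p)$; for $N'\leq N$ (both normal of $p$-power index) the evident morphism of group extensions gives a morphism $E_\bullet(N)\to E_\bullet(N')$ of spectral sequences which is the identity on the abutment. Since $\varinjlim$ is exact, $\widetilde E_\bullet:=\varinjlim_N E_\bullet(N)$ is again a first-quadrant cohomological spectral sequence converging to $H^{s+t}(G;\F_p)$, with $\widetilde E_2^{s,t}=\varinjlim_N H^s(G/N;H^t(N;\F_p))$. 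Three features I would record at the outset: its bottom row is $\widetilde E_2^{s,0}=\varinjlim_N H^s(G/N;\F_p)=H^s_{\operatorname{cont}}(\widehat G_p;\F_p)$, and the associated edge homomorphism $\widetilde E_2^{s,0}\to H^s(G;\F_p)$ is exactly the comparison map $\iota^\ast$ of (1), being the colimit of the inflation maps; one has $\widetilde E_2^{0,t}=(\mathcal H^t)^{\widehat G_p}$, where $\mathcal H^t:=\varinjlim_N H^t(N;\F_p)$ carries its natural discrete $\widehat G_p$-module structure (the stabiliser of the image of any class from $H^t(N;\F_p)$ contains $N$, since inner automorphisms act trivially on $H^\ast(N;\F_p)$, and $G/N$ is finite); and $\widetilde E_2^{s,t}=0$ whenever $\mathcal H^t=0$, since a cochain on the finite group $G/N$ has finitely many values, which can be killed simultaneously by restricting to a suitable smaller $N'$.

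With this in hand the easy implications are routine. $(2)\Rightarrow(1)$ and $(3)\Rightarrow(4)$ are the special case $M=\F_p$. For $(4)\Rightarrow(1)$: condition (4) makes $\widetilde E_2^{s,t}=0$ for $t\geq 1$, so $\widetilde E_\bullet$ degenerates and $\iota^\ast$ is an isomorphism in every degree. For $(3)\Rightarrow(2)$ I would run the identical degeneration argument with $\F_p$ replaced throughout by a finite $p$-primary module $M$ on which $G$ acts through $\widehat G_p$ (equivalently, with image a finite $p$-group), noting that for $N$ small $M$ is a trivial $N$-module, so the bottom row of the corresponding colimit spectral sequence is $\varinjlim_N H^s(G/N;M)=H^s_{\operatorname{cont}}(\widehat G_p;M)$ with edge map the comparison morphism. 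For $(4)\Rightarrow(3)$ I would use dévissage: such an $M$ has a filtration by $\widehat G_p$-submodules with successive quotients $\cong\F_p$ (trivially), and one propagates the vanishing $\mathcal H^t=0$ ($t\geq 1$) up the filtration via the long exact sequences obtained by applying the exact functor $\varinjlim_N H^\ast(N;-)$ over the cofinal family of $p$-power index $N$ contained in $\ker(G\to\operatorname{Aut}M)$, where all modules in sight are trivial and the low-degree terms are controlled by $\varinjlim_N H^0(N;-)=(-)$.

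The substantive step is $(1)\Rightarrow(4)$. Assume $\iota^\ast$ is an isomorphism in every degree but, for contradiction, that $\mathcal H^t\neq 0$ for some $t\geq 1$; choose $t$ minimal. Minimality gives $\widetilde E_2^{s,t'}=0$ for $1\leq t'<t$, and a short diagram chase then shows that $(0,t)$ receives no differentials and emits none before $d_{t+1}$, while $(t,0)$ and $(t+1,0)$ are stable from the $E_2$-page onward; concretely $\widetilde E_\infty^{t,0}=\widetilde E_2^{t,0}=H^t_{\operatorname{cont}}(\widehat G_p;\F_p)$, $\widetilde E_{t+1}^{t+1,0}=H^{t+1}_{\operatorname{cont}}(\widehat G_p;\F_p)$, and the only possibly nonzero differential out of $(0,t)$ is $d_{t+1}\colon\widetilde E_2^{0,t}\to H^{t+1}_{\operatorname{cont}}(\widehat G_p;\F_p)$. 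Now (1) in degree $t+1$ makes $\iota^\ast$ injective there; since $\iota^\ast$ factors as $H^{t+1}_{\operatorname{cont}}\twoheadrightarrow\widetilde E_\infty^{t+1,0}\hookrightarrow H^{t+1}(G;\F_p)$ and $\widetilde E_\infty^{t+1,0}=H^{t+1}_{\operatorname{cont}}/\operatorname{im}(d_{t+1})$, this forces $d_{t+1}=0$, whence $\widetilde E_\infty^{0,t}=\widetilde E_2^{0,t}=(\mathcal H^t)^{\widehat G_p}$. But (1) in degree $t$ makes $\iota^\ast\colon H^t_{\operatorname{cont}}\to H^t(G;\F_p)$ surjective, i.e.\ the bottom filtration step exhausts $H^t(G;\F_p)$, which forces $\widetilde E_\infty^{s,t-s}=0$ for $s<t$, in particular $\widetilde E_\infty^{0,t}=0$. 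Hence $(\mathcal H^t)^{\widehat G_p}=0$, contradicting the standard fact that a nonzero discrete $p$-primary module over a pro-$p$ group has nonzero invariants (any nonzero element generates a finite nonzero submodule, on which a finite $p$-group fixes something by orbit counting). This proves $(1)\Rightarrow(4)$, and the cycle $(2)\Rightarrow(1)\Rightarrow(4)\Rightarrow(3)\Rightarrow(2)$ yields the lemma.

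I expect the only genuine work to be the foundational bookkeeping of the first paragraph: verifying that the colimit of LHS spectral sequences is itself a spectral sequence with the claimed $E_2$-term, bottom row, edge map, and with $\widetilde E_2^{s,t}=0$ when $\mathcal H^t=0$. Everything afterward is formal manipulation of $\widetilde E_\bullet$ together with one elementary fact about pro-$p$ groups.
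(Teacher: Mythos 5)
Your argument is correct, and it is genuinely different from what the paper does: the paper proves nothing directly, instead citing Linnell--Schick (Lemma~4.7 of \cite{LS07}) for $(1)\Rightarrow(2)$ and Serre's \cite[Exercise~1, Section~I.2.6]{Se97} for $(1)\Leftrightarrow(4)$ and $(2)\Leftrightarrow(3)$, remarking only that the latter ``is stated in terms of profinite completion, but extends easily to the pro-$p$ case.'' You have effectively unpacked that black box into a single self-contained mechanism: the filtered colimit $\widetilde E_\bullet$ of Lyndon--Hochschild--Serre spectral sequences over the directed system of $p$-power index normal subgroups. All the computations I spot-checked go through: cofinality of normal subgroups in all $p$-power index subgroups makes your $\mathcal H^t$ agree with the paper's $\lim_{N\leq_p G}H^t(N;\F_p)$; the identification of the bottom row with $H^*_{\operatorname{cont}}(\widehat G_p;\F_p)$ and of the edge map with $\iota^*$ is right; the ``finitely many values of a cochain on a finite group'' trick correctly gives $\widetilde E_2^{s,t}=0$ whenever $\mathcal H^t=0$; the identification $\widetilde E_2^{0,t}=(\mathcal H^t)^{\widehat G_p}$ works because $G/N$ is finite so one can clear the finitely many failures of $G$-invariance by shrinking $N$; and the diagram chase in $(1)\Rightarrow(4)$ (minimality of $t$, vanishing of $d_{t+1}$ from injectivity of $\iota^*$ in degree $t+1$, collapse of $\widetilde E_\infty^{0,t}$ from surjectivity in degree $t$, contradiction via nontriviality of invariants of a nonzero discrete $p$-primary $\widehat G_p$-module) is exactly the kind of argument Serre's exercise is asking the reader to produce. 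The one thing worth making explicit, which you rely on tacitly, is that ``finite $p$-primary $G$-module'' in (2) and (3) must mean a discrete $\widehat G_p$-module (the $G$-action factors through a finite $p$-group), else $H^*_{\operatorname{cont}}(\widehat G_p;M)$ is not defined; with that reading your d\'evissage and the existence of a cofinal family of $N$ acting trivially on $M$ are unproblematic. The trade-off against the paper's version is clear: your proof is longer but self-contained and exposes the mechanism; the paper's is a two-line pointer to the literature.
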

\begin{proof}
For the implication (1)~$\Rightarrow$~(2) see \cite[Lemma~4.7]{LS07}.
The equivalences (1)~$\Longleftrightarrow$~(4) and (2)~$\Longleftrightarrow$~(3) follow from \cite[Exercise~1, Section~I.2.6]{Se97}. (This exercise is stated in terms of profinite completion, but extends easily to the pro-$p$ case.)
\end{proof}

\index{cohomologically $p$-complete}
\index{cohomologically complete}

See \cite{FAKRS08, KZ08, Lo10, LS07, Wei07} for more on this notion.
A group which is cohomologically $p$-complete for every $p$ is said to be \emph{cohomologically complete.} (This class of groups is of interest in connection with Atiyah's conjecture on the integrality of $L^2$-Betti numbers of compact manifolds with torsion free fundamental group \cite{LS07}.) Free groups are cohomologically complete, as are right-angled Artin groups \cite{Lo10} and primitive
link groups  \cite{BLS08}.
(The argument in \cite[Theorem 1.2.1]{HMM05} claiming that all link groups are cohomologically complete seems to have a gap, cf.~\cite{Bl07}.)
By induction on the nilpotency class, the following fact implies that finitely generated nilpotent groups are cohomologically complete:

\begin{proposition}\label{prop:cc and central extensions}
Let
$$1\to A\to G\to Q\to 1$$
be a central group extension with $A$ finitely generated. If $A$ and $Q$ are cohomologically $p$-complete, then so is $G$.
\end{proposition}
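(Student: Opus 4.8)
The plan is to compare, via the canonical map $\iota\colon G\to\widehat{G}_p$, the Lyndon--Hochschild--Serre spectral sequence of $1\to A\to G\to Q\to 1$ in ordinary $\mathbb{F}_p$-cohomology with the Lyndon--Hochschild--Serre spectral sequence of the induced extension of pro-$p$ groups in continuous $\mathbb{F}_p$-cohomology. Because $A$ is central, both $Q$ and $\widehat{Q}_p$ act trivially on $A$, hence on $H^{\ast}(A;\mathbb{F}_p)$ and on $H^{\ast}_{\mathrm{cont}}(\widehat{A}_p;\mathbb{F}_p)$; and because $A$ is finitely generated abelian, these cohomology groups are finite-dimensional over $\mathbb{F}_p$ in each degree. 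So the two $E_2$-pages are $H^{s}(Q;\mathbb{F}_p)\otimes_{\mathbb{F}_p}H^{t}(A;\mathbb{F}_p)$ and $H^{s}_{\mathrm{cont}}(\widehat{Q}_p;\mathbb{F}_p)\otimes_{\mathbb{F}_p}H^{t}_{\mathrm{cont}}(\widehat{A}_p;\mathbb{F}_p)$. Assuming for the moment that $1\to\widehat{A}_p\to\widehat{G}_p\to\widehat{Q}_p\to 1$ is exact, $\iota$ induces a morphism of these spectral sequences which on $E_2$ is the tensor product of the comparison maps for $Q$ and for $A$; by hypothesis both $Q$ and $A$ are cohomologically $p$-complete, so these are isomorphisms, hence the map on $E_2$, and therefore (by the comparison theorem for first-quadrant spectral sequences) on $E_\infty$ and on the abutment, is an isomorphism. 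This would give $H^{\ast}_{\mathrm{cont}}(\widehat{G}_p;\mathbb{F}_p)\xrightarrow{\sim}H^{\ast}(G;\mathbb{F}_p)$, i.e. that $G$ is cohomologically $p$-complete.

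The hard part will be the exactness of $1\to\widehat{A}_p\to\widehat{G}_p\to\widehat{Q}_p\to 1$. That $\widehat{G}_p\to\widehat{Q}_p$ is surjective is right-exactness of pro-$p$ completion; since $\iota(G)$ is dense in $\widehat{G}_p$ and commutes with $A$, the image of $A$ is central, so the kernel of $\widehat{G}_p\to\widehat{Q}_p$ is the closure of $\iota(A)$, a continuous quotient of $\widehat{A}_p$. The one real issue is injectivity of $\widehat{A}_p\to\widehat{G}_p$, i.e. that the pro-$p$ topology of $G$ restricts to the full pro-$p$ topology of $A$. I would establish this by first proving the proposition in the special case that $A$ is finite, by induction on $|A|$. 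Given a finite central $A$ with $|A|>1$, choose a central subgroup $F'\cong\mathbb{Z}/p$ of $A$; then $G/F'$ is a central extension of $Q$ by $A/F'$, cohomologically $p$-complete by the inductive hypothesis, and one is reduced to a central extension $1\to\mathbb{Z}/p\to G\to\bar{G}\to 1$ with $\bar{G}:=G/F'$ cohomologically $p$-complete. Here the point is that the class $\xi\in H^{2}(\bar{G};\mathbb{F}_p)$ of this extension lies, via the isomorphism $H^{2}_{\mathrm{cont}}(\widehat{\bar{G}}_p;\mathbb{F}_p)\xrightarrow{\sim}H^{2}(\bar{G};\mathbb{F}_p)$, in the image of the continuous cohomology of $\widehat{\bar{G}}_p$; so $\xi$ is induced from a central extension $1\to\mathbb{Z}/p\to P\to\widehat{\bar{G}}_p\to 1$ of pro-$p$ groups. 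Pulling this back along $\bar{G}\to\widehat{\bar{G}}_p$ recovers $G$ and produces a homomorphism $G\to P$ that is the identity on $\mathbb{Z}/p$; composing with a finite $p$-group quotient of $P$ that is faithful on $\mathbb{Z}/p$ shows $\mathbb{Z}/p\hookrightarrow\widehat{G}_p$, whence $1\to\mathbb{Z}/p\to\widehat{G}_p\to\widehat{\bar{G}}_p\to 1$ is exact, and the spectral sequence comparison above (with finite kernel, where continuous and ordinary cohomology of $\mathbb{Z}/p$ coincide, and using cohomological $p$-completeness of $\bar{G}$) finishes the finite case.

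From there the general case follows: for an arbitrary finitely generated abelian $A$ and any $p$-power-index $B\le A$, the quotient $G/B$ is a central extension of $Q$ by the finite $p$-group $A/B$, hence cohomologically $p$-complete by the finite case, so in particular $A/B$ embeds in $\widehat{(G/B)}_p$; pulling back gives a $p$-power-index subgroup of $G$ whose intersection with $A$ is exactly $B$. Since such $B$ are cofinal among the $p$-power-index subgroups of $A$, the pro-$p$ topology of $G$ induces that of $A$, so $\widehat{A}_p\to\widehat{G}_p$ is injective and $1\to\widehat{A}_p\to\widehat{G}_p\to\widehat{Q}_p\to 1$ is exact. Then I would run the spectral sequence comparison of the first paragraph verbatim, using cohomological $p$-completeness of $A$ to identify $H^{\ast}_{\mathrm{cont}}(\widehat{A}_p;\mathbb{F}_p)$ with $H^{\ast}(A;\mathbb{F}_p)$ and of $Q$ for the other factor. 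Routine points to verify en route: functoriality of the Lyndon--Hochschild--Serre spectral sequence for a morphism of short exact sequences; the existence of this spectral sequence for a closed (not necessarily open) normal subgroup of a profinite group, with the subgroup's continuous cohomology as coefficients; and that cohomological $p$-completeness of $Q$ yields $H^{s}_{\mathrm{cont}}(\widehat{Q}_p;V)\xrightarrow{\sim}H^{s}(Q;V)$ for every finite trivial $\mathbb{F}_p[Q]$-module $V$.
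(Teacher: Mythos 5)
Your overall strategy --- establish exactness of the completed sequence $1\to\widehat{A}_p\to\widehat{G}_p\to\widehat{Q}_p\to 1$ and then compare the two Lyndon--Hochschild--Serre spectral sequences via the functorial map $\iota\colon G\to\widehat{G}_p$ --- is the standard one, and is the route taken in the sources the paper cites for this proposition (\cite[Corollary~1.2]{Lo10}, \cite[Section~4]{LS07}). The spectral sequence comparison itself is carried out correctly: centrality gives trivial action, finite generation of $A$ gives finite-dimensional $H^t(A;\mathbb F_p)$ so the $E_2$-pages split as tensor products, and the first-quadrant comparison theorem propagates the $E_2$-isomorphism to the abutment. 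The extension-class/pullback device for producing a $p$-quotient of $G$ faithful on the kernel is a nice way to get the hard injectivity $\widehat A_p\hookrightarrow\widehat G_p$.

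Two points need tightening. First, in the finite case you induct on $|A|$ and ``choose a central subgroup $F'\cong\Z/p$''; this subgroup need not exist if $|A|$ is coprime to $p$ (e.g.\ $A=\Z/q$, $q\neq p$). The clean fix is to phrase the finite case for finite abelian \emph{$p$-groups} only --- that is all you use in the reduction, since $A/B$ with $B$ of $p$-power index is automatically a finite $p$-group --- and, if you want the statement for arbitrary finite $A$, to split off the prime-to-$p$ part $A_{p'}$ first, observing that $H^*(G;\F_p)\cong H^*(G/A_{p'};\F_p)$ and $\widehat G_p\cong\widehat{(G/A_{p'})}_p$ since $|A_{p'}|$ is invertible in $\F_p$. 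Second, in the passage from finite to general $A$ you write that $G/B$ being cohomologically $p$-complete implies ``in particular'' that $A/B$ embeds in $\widehat{(G/B)}_p$; that implication does not come for free from cohomological $p$-completeness. What you actually need is the intermediate conclusion proved inside the inductive argument (that the finite central kernel survives in some finite $p$-quotient), so the reduction should cite that construction directly rather than the end statement of the finite case. With these two small repairs the argument is sound.
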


For a proof of this fact see \cite[Corollary~1.2]{Lo10} or \cite[Section~4]{LS07}. We use it to show:

\begin{proposition}\label{prop:seifert virtually cc}
Let $N$ be a Seifert fibered space. Then $\pi_1(N)$ is virtually cohomologically complete. If $N=S^1\times F$ for some orientable surface $F$, then $\pi_1(N)$ is  cohomologically complete.
\end{proposition}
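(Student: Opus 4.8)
\textbf{Proof proposal for Proposition~\ref{prop:seifert virtually cc}.}

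The plan is to reduce both assertions to the case of a trivial $S^1$-bundle over an orientable surface and then to exploit the central extension criterion (Proposition~\ref{prop:cc and central extensions}) together with the cohomological completeness of free groups and surface groups. First I would dispose of the case $\pi_1(N)$ finite (e.g., $N$ spherical): a finite group is trivially cohomologically $p$-complete, since it equals its own pro-$p$ completion up to the $p$-primary part, or more simply one passes to a finite-index trivial subgroup. So assume $\pi_1(N)$ is infinite. Then, exactly as in the proof of Proposition~\ref{prop:Seifert res p}, $N$ is finitely covered by a manifold $\widehat N$ which is an $S^1$-bundle over an orientable surface $F$ with $\chi(F)\le 0$; and as in the proof of Lemma~\ref{lem:S1-bundles res p} we may, after a further finite cover, assume this bundle is trivial, i.e., $\widehat N=S^1\times F$. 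Since virtual cohomological completeness is inherited from a finite-index subgroup (by definition), it suffices to treat $\pi_1(S^1\times F)=\Z\times\pi_1(F)$, which is also precisely what is needed for the second, non-virtual, assertion.

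For $G=\Z\times\pi_1(F)$, I would first handle $F$ closed. Then $\pi_1(F)$ is an orientable surface group, hence cohomologically complete (this is cited in the excerpt: surface groups are residually free and, by \cite{Lo10} via right-angled Artin groups, or directly, cohomologically complete — alternatively one notes a closed orientable surface group is a $1$-relator group of a type covered by the existing references, but the cleanest route is to invoke the stated fact for primitive link/surface groups). Now apply Proposition~\ref{prop:cc and central extensions} to the central extension
$$1\to \Z\to \Z\times\pi_1(F)\to \pi_1(F)\to 1$$
with $A=\Z$ finitely generated: $\Z$ is cohomologically complete (it is free of rank $1$), $\pi_1(F)$ is cohomologically complete, hence $G$ is cohomologically complete, for every $p$. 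If $F$ has non-empty boundary, then $\pi_1(F)$ is free, hence cohomologically complete, and the same central extension argument applies verbatim. This settles the statement $\pi_1(S^1\times F)$ cohomologically complete, and in particular the second sentence of the proposition.

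The only genuinely delicate point is the passage ``$N$ Seifert fibered $\Rightarrow$ a finite cover of the form $S^1\times F$ with $F$ orientable.'' I expect this to be the main obstacle in the sense that it is where one must be careful: one must first pass to a finite cover making the base orbifold an honest orientable surface and the bundle orientation-true, and then (as in Lemma~\ref{lem:S1-bundles res p}) pass to a further cover killing the Euler class so that the bundle trivializes; for closed $F$ with $\chi(F)<0$ the Euler class lives in $H^2(F;\Z)\cong\Z$ and is killed in a suitable finite cyclic cover of $F$ pulled back along $\pi_1(F)\twoheadrightarrow\Z/k\Z$ for $k$ a multiple of the Euler number, while the case $\chi(F)=0$ (the torus, giving Nil- or Euclidean geometry) is handled directly since those groups are virtually $\Z^3$ or finite-by-$\Z^2$ and in any case virtually nilpotent, hence virtually cohomologically complete by iterating Proposition~\ref{prop:cc and central extensions}. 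All of this is standard Seifert-fibered-space theory (cf.~\cite{He76}) and I would simply cite it, the substantive content being entirely carried by Proposition~\ref{prop:cc and central extensions} and the known cohomological completeness of free and surface groups.
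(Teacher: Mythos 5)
Your overall strategy (reduce to an $S^1$-bundle over an orientable surface and apply the central extension criterion with $\Z$ and $\pi_1(F)$ both cohomologically complete) is the right idea, but the third paragraph introduces a genuine error, and the detour it tries to justify is unnecessary.

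The error: you claim that after passing to a further finite cover one may assume the bundle is trivial, because ``the Euler class \dots is killed in a suitable finite cyclic cover of $F$ pulled back along $\pi_1(F)\twoheadrightarrow\Z/k\Z$.'' This is false when $F$ is closed. If $\pi\colon\tilde F\to F$ is a degree-$d$ cover of closed oriented surfaces, the pullback $\pi^*\colon H^2(F;\Z)\to H^2(\tilde F;\Z)$ is multiplication by $d$ (since $\pi_*[\tilde F]=d[F]$), so the Euler number of the pulled-back bundle is $d\cdot e$, not $0$. More generally a closed Seifert fibered space with nonzero Euler number admits no finite cover of the form $S^1\times\Sigma$: the Heisenberg manifold (Euler number $1$ over $T^2$) and the unit tangent bundle of a closed hyperbolic surface are standard examples with no product cover. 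The proof of Lemma~\ref{lem:S1-bundles res p}, which you appeal to, only trivializes the bundle in the boundary case (where $H^2(F;\Z)=0$); in the closed case it deliberately works with the nontrivial bundle via the presentation $\prod[a_i,b_i]=t^e$.

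The fix is also what makes the argument shorter, and is the paper's route: you do not need a trivial bundle. For any $S^1$-bundle $N\to F$ over an orientable surface with $\pi_1(N)$ infinite, the regular fiber generates a central infinite cyclic normal subgroup, so $\pi_1(N)$ fits into a central extension $1\to\Z\to\pi_1(N)\to\pi_1(F)\to 1$. Since $\Z$ is finitely generated and both $\Z$ and $\pi_1(F)$ (free or orientable surface group, in either case a primitive one-relator group, hence cohomologically complete by Labute \cite{Lab67}) are cohomologically complete, Proposition~\ref{prop:cc and central extensions} applies directly. The second sentence of the proposition then follows because $N=S^1\times F$ is already such a bundle with no covering needed. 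One minor additional slip: the remark that ``a finite group is trivially cohomologically $p$-complete'' is not true in general (e.g.~$S_3$ with $p=3$: its pro-$3$ completion is trivial, but $H^2(S_3;\F_3)\ne 0$); however your hedge of passing to the trivial finite-index subgroup is sufficient for virtual cohomological completeness when $\pi_1(N)$ is finite.
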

\begin{proof}
As in the proof of Proposition~\ref{prop:Seifert res p}, after passing to a finite index subgroup if necessary, we may assume that $N$ is an $S^1$-bundle over an orientable surface $F$. If $N=S^1\times F$ for some orientable surface $F$, then $N$ is already of this form.
In any case, $\pi_1(N)$ fits into an exact sequence
$$1\to \Z\to \pi_1(N)\to \pi_1(F)\to 1$$
with (the image of) $\Z$ central in $\pi_1(N)$. (Cf.~proof of Lemma~\ref{lem:S1-bundles res p}.) Now $\pi_1(F)$ is either free or an orientable surface group; either way, $\pi_1(F)$ is a primitive one-relator group and hence cohomologically complete by \cite[Exemple (2), p. 144]{Lab67}. Thus $\pi_1(N)$ is also cohomologically complete, by Proposition~\ref{prop:cc and central extensions}.
\end{proof}

\index{$3$-manifold!Seifert fibered}

On the other hand, it does not seem to be known which fundamental groups of hyperbolic $3$-manifolds are cohomologically $p$-complete. In \cite{Re97} (with generalizations and different proofs in \cite{KZ08, Wei07}) it is shown that if $G$ is the fundamental group of a
closed hyperbolic $3$-manifold which is not virtually Haken
(i.e., a hypothetical counterexample to Thurston's virtually Haken conjecture), whose pro-$p$ completion is infinite, then $G$ is cohomologically $p$-complete.
We have:

\begin{proposition}\label{prop:virt cohomological p-completeness for graph manifolds}
Let $N$ be a closed graph manifold. Then for every $p$ the group $\pi_1(N)$ is virtually cohomologically $p$-complete.
\end{proposition}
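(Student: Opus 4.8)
The plan is to combine the $p$-efficiency result (Proposition~\ref{prop:prop 2 restated}) with the cohomological completeness machinery assembled in this section. First I would pass to a finite cover: by Proposition~\ref{prop:prop 2 restated}, $N$ has a finite cover $\widetilde N$ which is $p$-efficient, so replacing $N$ by $\widetilde N$ (and noting that virtual cohomological $p$-completeness is inherited "downward" along finite covers in the sense that it suffices to prove the conclusion for $\widetilde N$) we may assume $N$ itself is $p$-efficient, or even, after a further finite cover via Proposition~\ref{prop:commoncover}/Corollary~\ref{cor:unfolded, 1}, that its associated graph of groups $\scrg$ carries a complete $p$-excellent filtration and is $p$-efficient with $\widehat{\scrg}$ proper (Lemma~\ref{lem:p-efficient implies proper}).

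The heart of the argument is then a Mayer-Vietoris comparison. On the discrete side, the graph of groups $\scrg$ coming from the JSJ decomposition gives a Mayer-Vietoris sequence (Section~\ref{sec:MV}) relating $H^\ast(G;\F_p)$ to the $H^\ast(G_v;\F_p)$ and $H^\ast(G_e;\F_p)$. On the pro-$p$ side, since $\widehat{\scrg}$ is a proper graph of finitely generated pro-$p$ groups (Lemma~\ref{lem:p-efficient implies proper}), there is a corresponding continuous Mayer-Vietoris sequence for $\widehat\pi_1(\widehat\scrg)=\widehat{G}_p$ in terms of the $H^\ast_{\operatorname{cont}}((G_v)_p^{\widehat{\ }};\F_p)$ and $H^\ast_{\operatorname{cont}}((G_e)_p^{\widehat{\ }};\F_p)$. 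The natural morphism $\scrg\to\widehat\scrg$ induces a map between these two long exact sequences. Now for each vertex the group $G_v$ is the fundamental group of a hyperbolic or (since $N$ is a graph manifold, in fact Seifert fibered) piece; the edge groups $G_e$ are free abelian of rank $2$. Free abelian groups of finite rank are finitely generated nilpotent, hence cohomologically complete by Proposition~\ref{prop:cc and central extensions}; and by Proposition~\ref{prop:seifert virtually cc}, the Seifert fibered vertex groups are virtually cohomologically complete — and after the finite cover made in the Seifert-simple reduction we may take them to be of the form $S^1\times F$, hence cohomologically complete on the nose. So the vertical maps on vertex and edge cohomology are isomorphisms in every degree, and the five lemma applied to the map of Mayer-Vietoris sequences yields that $G\to\widehat{G}_p$ induces isomorphisms $H^n_{\operatorname{cont}}(\widehat{G}_p;\F_p)\to H^n(G;\F_p)$ for all $n\geq 1$, i.e.\ $G=\pi_1(N)$ is cohomologically $p$-complete. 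Tracing back through the two finite covers, the original $\pi_1(N)$ is virtually cohomologically $p$-complete.

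I expect the main obstacle to be the pro-$p$ Mayer-Vietoris sequence and the bookkeeping needed to make the comparison map well-defined and to identify its effect on cohomology. One must be careful that properness of $\widehat\scrg$ (which gives injectivity of the edge morphisms on pro-$p$ completions) is genuinely enough to produce the pro-$p$ long exact sequence with the right maps, and that the pro-$p$ completions of the vertex and edge groups of $\scrg$ really coincide with the vertex and edge groups of $\widehat\scrg$ (this is where $p$-efficiency, specifically that the pro-$p$ topology on $G$ induces the pro-$p$ topology on each $G_v$ and $G_e$, is used). A secondary subtlety is that Proposition~\ref{prop:seifert virtually cc} only gives \emph{virtual} cohomological completeness for general Seifert pieces, so one genuinely needs the Seifert-simple reduction of Section~\ref{sec:reduction to seifert simple} (replacing each Seifert component by an $S^1\times F$) before the vertex-group vertical maps become isomorphisms; a cleaner alternative would be to argue directly that passing to a finite cover preserves the conclusion, which reduces everything to the $S^1\times F$ case anyway. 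Everything else — the edge groups being $\Z^2$ and hence cohomologically complete, and the five lemma — is routine.
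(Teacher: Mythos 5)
Your proposal is correct and follows essentially the same route as the paper: pass to a $p$-efficient finite cover via Proposition~\ref{prop:prop 2 restated}, then compare the discrete and pro-$p$ Mayer--Vietoris sequences via the five lemma (this is precisely the content of the corollary to Lemma~\ref{lem:p-efficient implies proper}), using that the $S^1\times F$ vertex groups and $\Z^2$ edge groups are cohomologically complete by Propositions~\ref{prop:seifert virtually cc} and \ref{prop:cc and central extensions}. The one loose end, which you yourself flag, is checking that the $p$-efficient cover (taken \emph{after} the Seifert-simple reduction) still has vertex pieces of the form $S^1\times F'$; this holds because a finite cover of $S^1\times F$ with $\partial F\neq\emptyset$ is an $S^1$-bundle over an orientable surface with boundary and hence has trivial Euler class, as in the proof of Lemma~\ref{lem:S1-bundles res p}.
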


This complements a result by Wilton and Zalesskii \cite{WZ10} who showed that every graph manifold group $G$ is \emph{good} in the sense of Serre \cite[Section~2.6, exercises]{Se97}, that is, with $\widehat{G}$ denoting the profinite completion of $G$, for every finite $G$-module $M$ and $n\geq 1$, the natural morphism $G\to\widehat{G}$ induces an isomorphism $H^n_{\operatorname{cont}}(\widehat{G},M)\to H^n(G,M)$. For the proof of Proposition~\ref{prop:virt cohomological p-completeness for graph manifolds}
we use the following well-known fact, a consequence of Lemma~\ref{lem:p-efficient implies proper} (cf.~\cite[proof of Proposition~4.3]{WZ10}):

\index{group!good}

\begin{lemma}
Suppose $\scrg$ is a $p$-efficient graph of finitely generated groups with underlying graph $Y$ and fundamental group $G=\pi_1(\scrg)$. Let $E_+$ be an orientation of $Y$ and $V=V(Y)$.
Then for every $p$-primary $G$-module there is a commutative diagram
$$\entrymodifiers={+!!<0pt,\fontdimen22\textfont2>}
\xymatrixcolsep{1pc}
\xymatrix{
\cdots\ar[r] & H^{n}(G;M) \ar[r] & \bigoplus\limits_{v\in V} H^n(G_v;M) \ar[r] & \bigoplus\limits_{e\in E_+} H^n(G_e;M) \ar[r] & H^{n+1}(G;M) \ar[r]&\cdots \\
\cdots\ar[r] & H^{n}(\widehat{G};M) \ar[r]\ar[u] & \bigoplus\limits_{v\in V} H^n(\widehat{G_v};M) \ar[r]\ar[u] & \bigoplus\limits_{e\in E_+} H^n(\widehat{G_e};M) \ar[r]\ar[u] & H^{n+1}(\widehat{G};M)\ar[u]
\ar[r]&\cdots}$$
with exact rows. Here the hats indicate pro-$p$ completion,
the groups in the bottom row are continuous cohomology groups, and
the vertical maps are induced by the natural embeddings of the groups into their pro-$p$ completions.
\end{lemma}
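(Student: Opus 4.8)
The plan is to obtain the commutative ladder with exact rows by applying the usual Mayer--Vietoris machinery for graphs of groups (Section~\ref{sec:MV}) to both the discrete group $G=\pi_1(\scrg)$ and its pro-$p$ completion $\widehat{G}_p$, and then to check that the natural completion morphisms intertwine the two sequences. The top row is nothing but the cohomological Mayer--Vietoris sequence associated to $\scrg$ as in Section~\ref{sec:MV}, applied to the $G$-module $M$, using that each edge morphism $f_e$ gives the maps $H^n(G_v;M)\to H^n(G_e;M)$ by restriction. For the bottom row, the hypothesis that $\scrg$ is $p$-efficient is exactly what is needed: by Lemma~\ref{lem:p-efficient implies proper}, $\widehat{G}_p$ is canonically isomorphic to the pro-$p$ fundamental group $\widehat{\pi}_1(\widehat{\scrg})$ of the graph $\widehat{\scrg}$ of pro-$p$ groups with vertex groups $\widehat{(G_v)}_p$ and edge groups $\widehat{(G_e)}_p$, and $\widehat{\scrg}$ is proper, so each $\widehat{(G_v)}_p$ and $\widehat{(G_e)}_p$ embeds into $\widehat{G}_p$. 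Hence there is a Bass--Serre tree on which $\widehat{G}_p$ acts with these stabilizers, yielding a continuous Mayer--Vietoris sequence in Galois cohomology with coefficients in the finite $p$-primary module $M$; this is the standard tool and may be cited from \cite{RZ00} (or derived from the analogous profinite statement in \cite{WZ10}).

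The next step is to produce the vertical maps and the commutativity of each square. For each vertex $v$ and edge $e$, the natural embedding $G_v\hookrightarrow\widehat{(G_v)}_p$ (resp. $G_e\hookrightarrow\widehat{(G_e)}_p$) induces restriction-compatible maps $H^n_{\operatorname{cont}}(\widehat{(G_v)}_p;M)\to H^n(G_v;M)$, and likewise the morphism $G\to\widehat{G}_p$ induces $H^n_{\operatorname{cont}}(\widehat{G}_p;M)\to H^n(G;M)$; by \eqref{eq:factorization} and functoriality of group (continuous) cohomology these are compatible with all the inclusion- and restriction-induced maps in the two sequences. The commutativity of the connecting-homomorphism squares follows from naturality of the Mayer--Vietoris connecting map with respect to the morphism of group actions on trees; concretely, one can realize both sequences as the long exact sequences associated to a short exact sequence of (co)chain complexes built from the respective trees, and the comparison map is a morphism of these short exact sequences. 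This is the routine but slightly bookkeeping-heavy part of the argument.

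The main obstacle I expect is not any single deep fact but the careful identification of the bottom row: one must be sure that under the isomorphism $\widehat{G}_p\cong\widehat{\pi}_1(\widehat{\scrg})$ of Lemma~\ref{lem:p-efficient implies proper} the continuous Mayer--Vietoris sequence for $\widehat{G}_p$ really is the one associated to the pro-$p$ graph of groups $\widehat{\scrg}$ with the stated vertex and edge groups, and that the vertical maps are precisely the completion-induced ones. Here the properness of $\widehat{\scrg}$ (again from Lemma~\ref{lem:p-efficient implies proper}) is essential, since without it the groups $\widehat{(G_v)}_p$ would not inject into $\widehat{G}_p$ and the bottom sequence would not have the displayed form. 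Once this identification is in place, exactness of the bottom row is a standard consequence of the theory of pro-$p$ groups acting on pro-$p$ trees, and the diagram chase needed for commutativity is immediate from functoriality. I would therefore present the proof as: (1) write down the discrete sequence from Section~\ref{sec:MV}; (2) invoke Lemma~\ref{lem:p-efficient implies proper} to pass to $\widehat{\scrg}$ and write down the continuous sequence, citing \cite{RZ00}; (3) observe that the completion maps form a morphism of the underlying short exact sequences of cochain complexes, hence induce the commutative ladder with exact rows.
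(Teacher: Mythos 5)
Your proof follows exactly the route the paper indicates: the authors present the lemma as a consequence of Lemma~\ref{lem:p-efficient implies proper} (citing the proof of Proposition~4.3 in \cite{WZ10}) and remark that the top row is just the discrete Mayer--Vietoris sequence of Section~\ref{sec:MV}, which is precisely your steps (1)--(3). Your identification of properness of $\widehat{\scrg}$ as the crucial hypothesis and of functoriality as the source of commutativity matches the intended argument.
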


(The top row is simply the Mayer-Vietoris sequence associated to the graph of groups $\scrg$, cf.~Section~\ref{sec:MV}.)

\begin{corollary}
Suppose $\scrg$ is a $p$-efficient graph of finitely generated groups. Suppose all vertex and edge groups of $\scrg$ are cohomologically $p$-complete. Then $G=\pi_1(\scrg)$ is cohomologically $p$-complete.
\end{corollary}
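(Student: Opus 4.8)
The plan is to apply the previous lemma, comparing the Mayer--Vietoris sequence of $\scrg$ with the corresponding sequence for the pro-$p$ completions, and then invoke the five lemma. Concretely, fix a $p$-primary $G$-module $M$ (indeed, by the characterization of cohomological $p$-completeness it suffices to take $M=\F_p$ with trivial action, but carrying along general finite $p$-primary $M$ costs nothing). Since $\scrg$ is $p$-efficient, the pro-$p$ topology on $G$ induces the pro-$p$ topology on each vertex group $G_v$ and on each edge group $G_e$, so the previous lemma provides a commutative ladder with exact rows, whose vertical maps are the comparison morphisms $H^n(\widehat{G_v};M)\to H^n(G_v;M)$, $H^n(\widehat{G_e};M)\to H^n(G_e;M)$, and $H^n(\widehat G;M)\to H^n(G;M)$ induced by the natural embeddings into the pro-$p$ completions. (Here I am writing $\widehat{(\,\cdot\,)}$ for the pro-$p$ completion throughout, and using Lemma~\ref{lem:p-efficient implies proper} to identify $\widehat G_p$ with $\widehat{\pi}_1(\widehat{\scrg})$ so that the bottom row really is the continuous Mayer--Vietoris sequence for the graph of pro-$p$ groups $\widehat{\scrg}$.)

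Next I would observe that by hypothesis every $G_v$ and every $G_e$ is cohomologically $p$-complete, so all of the vertical maps $H^n(\widehat{G_v};M)\to H^n(G_v;M)$ and $H^n(\widehat{G_e};M)\to H^n(G_e;M)$ are isomorphisms for $n\geq 1$. For $n=0$ one has $H^0(\widehat{G};M)=H^0(G;M)=M^G$ and similarly for the vertex and edge groups (the invariants only depend on the image of the group in the finite automorphism group of $M$, which factors through a finite $p$-quotient), so the comparison maps in degree $0$ are isomorphisms as well; this is needed to start the five-lemma induction cleanly at the left end of the ladder. Then an induction on $n$ using the five lemma applied to consecutive segments of the ladder forces the middle vertical maps $H^n(\widehat G;M)\to H^n(G;M)$ to be isomorphisms for all $n\geq 1$: at each stage the two outer pairs of verticals (coming from the direct sums over vertices and edges, together with the already-established statement for $G$ in the next lower degree) are isomorphisms, hence so is the one for $G$ in degree $n$. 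This says precisely that $G$ is cohomologically $p$-complete.

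The main point to be careful about is the bookkeeping at the edge groups: one must check that a $p$-efficient graph of groups does give injective maps $\widehat{(G_e)}_p\to\widehat{(G_v)}_p$ and $\widehat{(G_v)}_p\to \widehat G_p$, so that $\widehat{\scrg}$ is a genuine (proper) graph of pro-$p$ groups and the lemma producing the ladder is applicable --- but this is exactly the content of Lemma~\ref{lem:p-efficient implies proper} and the remarks preceding it, so there is no real obstacle here. The five-lemma argument itself is routine; the only substantive input is the $p$-efficiency hypothesis, which is what lets us compare the two Mayer--Vietoris sequences term by term.
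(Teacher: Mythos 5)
Your proof is correct and takes essentially the same approach as the paper: compare the two Mayer--Vietoris sequences from the preceding lemma, note that the vertex and edge comparison maps are isomorphisms (by hypothesis) and that $H^0(\widehat G_p;M)=M^{\widehat G_p}=M^G=H^0(G;M)$, then conclude by the five lemma. The paper's proof is just a one-line version of exactly this argument.
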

\begin{proof}
Since for every finite $p$-primary $G$-module $M$ we have
$H^0(G;M)=M^G=M^{\widehat{G}_p}=H^0(\widehat{G}_p;M)$, this follows from the commutative diagram in the lemma above.
\end{proof}

Proposition~\ref{prop:virt cohomological p-completeness for graph manifolds} is now immediate from Propositions~\ref{prop:prop 2 restated} and \ref{prop:seifert virtually cc}, and the previous corollary.

\section{Virtual $p$-efficiency for arbitrary $3$-manifolds?}\label{sec:p-efficiency in general}

\noindent
The results for graph manifolds in the previous sections naturally raise the  question whether \emph{every} closed prime $3$-manifold is, for all but finitely many $p$, virtually $p$-efficient.
(If the answer was ``yes,'' then every closed prime $3$-manifold also would be, for all but finitely many $p$, virtually cohomologically $p$-complete.)
Now, by Corollary~\ref{cor:p-efficiency criterion}, a positive answer to this question is implied by a positive answer to the following:
\begin{quote}
{\it Let $N$ be an orientable hyperbolic $3$-manifold with non-empty to\-roi\-dal incompressible boundary. Does $N$, for all but finitely many~$p$, admit a boundary compatible $p$-filtration $\mathbf G=\{G_n\}$ \textup{(}of some level\textup{)} which induces the pro-$p$ topology on $G_{1}$?}
\end{quote}
We do not know the answer to this question, and in the remainder of this section we indicate some serious obstacles in answering it, which we were note able to overcome.

\index{$p$-efficient}
\index{$3$-manifold!hyperbolic}

\medskip

To investigate this issue more closely, let us return to the setting of the proof of Theorem~\ref{thm:virt res p for graphs of linear groups, 2}, in the case where $n=2$ and $G=\pi_1(N)$ is the fundamental group  of a $3$-manifold $N$ as in the question above, identified with a subgroup of $\SL(2,\C)$. We employ the modifications of this proof outlined in Section~\ref{sec:Kleinians}, in order to take advantage of the special properties of $G$:
After conjugating $G$ suitably in $\SL(2,\C)$ we may and shall assume that $G$ is defined over
a ring $R$ of $S$-integers of a number field.
Suppose $\mm$ is a maximal ideal of $R$ such that $R_\mm$ is regular unramified with finite residue field of characteristic $p$. For all but finitely many $p$ there will be such $\mm$. Then $\Lambda:=\widehat{R_{\mm}}$ is a finite ring extension of $\Z_p$. Let $P$ be the finite set of exceptional primes as defined in the proof of Theorem~\ref{thm:virt res p for graphs of linear groups, 2}. In particular,
$H=H_1(G;\Z)/\operatorname{tor}$ is $p$-torsion free for $p\notin P$.
We note that Long and Reid \cite{LR98} showed, as a consequence of Chebotarev's Density Theorem and the availability of a complete list of all subgroups of $\operatorname{SL}(2,\F_p)$ (cf.~\cite[Section~3.6]{Su82}):

\begin{proposition}\label{prop:Long-Reid}
The set $\mathcal P$ of primes $p\notin P$ such that $|R/\mm|=p$ and the morphism $$G\to\SL(2,R)\to\SL(2,R/\mm)$$ is onto is infinite \textup{(}in fact, has natural density $1$\textup{)}.
\end{proposition}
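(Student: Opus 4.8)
The plan is to reduce the statement to the theorem of Long and Reid \cite{LR98}, whose proof rests on three ingredients: the Zariski density of $G$ in $\SL_2$, the Dickson--Suzuki classification of subgroups of $\SL_2(\F_p)$ (cf.~\cite[Section~3.6]{Su82}), and Chebotarev's Density Theorem. I shall first isolate the group-theoretic core. Since $G$ is a lift of a Kleinian group of finite covolume, it is non-elementary, so its Zariski closure in $\SL_2$ is neither finite nor a conjugate of a subgroup of a Borel subgroup or of the normalizer of a maximal torus; by the classification of Zariski-closed subgroups of $\SL_2$ this forces $G$ to be Zariski-dense in the algebraic group $\SL_2$ over $K$. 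Hence $G$ acts absolutely irreducibly on $K^2$, and by Burnside's theorem the $K$-subalgebra of $M_2(K)$ generated by $G$ is all of $M_2(K)$; so I may choose $g_1,\dots,g_4\in G$ --- all of whose entries lie in $R$ --- whose images form a $K$-basis of $M_2(K)$, and let $\delta\in R\setminus\{0\}$ be the determinant of the $4\times4$ matrix of their entries. As $G$ is not virtually abelian, each of the finitely many subgroups $G'\le G$ of index at most $2$ is non-abelian, so for each such $G'$ I may fix a nontrivial element of $[G',G']$ and record a nonzero entry $c_{G'}\in R$ of its difference with the identity. Finally, being non-elementary, $G$ contains a loxodromic element $h$, whose eigenvalue $\lambda$ lies in a finite extension of $K$ and is not a root of unity.

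Next I would analyze reduction modulo $\mm$. Let $\mm$ be a maximal ideal of $R$ of residue characteristic $p\notin P$ with $R/\mm=\F_p$, giving $\rho_\mm\colon G\to\SL_2(\F_p)$. Outside a finite set of such $\mm$ one has simultaneously: (i) $\mm\nmid\delta$, so that $\rho_\mm(g_1),\dots,\rho_\mm(g_4)$ span $M_2(\F_p)$ and hence $\rho_\mm(G)$ is absolutely irreducible; (ii) $\mm\nmid c_{G'}$ for every index-$\le2$ subgroup $G'$ of $G$, so that $\rho_\mm(G')$ is non-abelian; and (iii) $\rho_\mm(h)$ is a semisimple element of order greater than $120$ (which excludes only finitely many $\mm$, since $\lambda$ is not a root of unity). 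Under (i)--(iii) I would invoke the Dickson--Suzuki classification: a proper subgroup of $\SL_2(\F_p)$ is, up to conjugacy in $\operatorname{PGL}_2(\F_p)$, contained in a Borel subgroup, or in the normalizer of a maximal torus, or isomorphic to one of $\widetilde{A}_4$, $\widetilde{S}_4$, $\widetilde{A}_5$ (subfield subgroups do not occur, as $\F_p$ has no proper subfield). A subgroup of a Borel subgroup fixes a line, so (i) rules out the first possibility; the normalizer of a torus has an abelian subgroup of index $2$, so (ii) rules out the second; and $|\widetilde{A}_4|,|\widetilde{S}_4|,|\widetilde{A}_5|\le120$, so (iii) rules out the third. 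Therefore $\rho_\mm(G)=\SL_2(\F_p)$ for all but finitely many $\mm$ with $R/\mm=\F_p$ and $p\notin P$.

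It remains to pass from maximal ideals of $R$ to rational primes and to control densities. Since all but finitely many rational primes are unramified in $K$ and do not become units in $\mathcal{O}$, and since infinitely many of them carry a prime $\mm$ of $R$ with $R/\mm=\F_p$ (for instance by Theorem~\ref{thm:proprmi}), the previous paragraph already shows that $\mathcal P$ is infinite. To upgrade this to the statement that $\mathcal P$ has natural density one, one argues as in \cite{LR98}, by a further application of Chebotarev's Density Theorem controlling simultaneously the splitting of $p$ in $K$ and the possible images $\rho_\mm(G)$ in $\SL_2(\F_p)$. I expect this last step --- the density bookkeeping reconciling the requirement $|R/\mm|=p$ with a density-one conclusion, where the arithmetic of the field $K$ and the full strength of Chebotarev's theorem enter --- to be the principal technical obstacle; by contrast, the group-theoretic input sketched above is elementary once the Zariski density of $G$ is in hand.
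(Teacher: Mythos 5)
The paper does not actually prove this proposition: it is stated as a quotation from Long and Reid \cite{LR98}, and only the phrase ``infinite'' is used later (to produce a single prime $p\in\mathcal P$ for the discussion of Proposition~\ref{prop:Q acts non-trivially}). Your proposal is therefore a reconstruction rather than a comparison, and the group-theoretic core is sound: Borel density gives Zariski density of $G$ in $\SL_2$, Burnside then yields an element $\delta\in R\setminus\{0\}$ whose non-vanishing mod $\mm$ guarantees absolute irreducibility of $\rho_\mm$; non-virtual-abelianness of $G$ rules out the torus-normalizer case; and a loxodromic $h$ with $h^n\neq 1$ for $1\le n\le 120$ rules out the exceptional images for all but finitely many $\mm$ (your appeal to ``$\lambda$ not a root of unity'' should really be phrased this way: for each fixed $n\le 120$ only the finitely many $\mm$ dividing some non-zero entry of $h^n-1$ can give $\rho_\mm(h)^n=1$; the ``semisimple'' qualifier is unnecessary). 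Together with Dickson's list, and the absence of proper subfields of $\F_p$, this gives $\rho_\mm(G)=\SL_2(\F_p)$ for all but finitely many degree-one $\mm$, and hence the infinitude of $\mathcal P$.

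One caveat worth recording: you are right to flag the density-one bookkeeping as delicate. As literally stated, the parenthetical ``natural density $1$'' is dubious when the trace field $K$ is not Galois over $\Q$: by Jordan's theorem a transitive permutation group on $\geq 2$ points contains a fixed-point-free element, so by Chebotarev a positive density of rational primes $p$ have \emph{no} degree-one prime of $K$ above them, and such $p$ cannot satisfy $|R/\mm|=p$. The density-one statement in \cite{LR98} is in terms of primes of the (quadratic extension of the) trace field, resp.\ allows images $\SL_2(\F_{p^r})$ with $r>1$; the version quoted in this Proposition is a priori weaker and only ``infinite'' should be relied on. Since the paper uses only infinitude, this is cosmetic, but your instinct that the density reconciliation is the nontrivial step is well-founded.
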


In the proof of Theorem~\ref{thm:virt res p for graphs of linear groups, 2}, given $p\notin P$, we constructed a boundary compatible $p$-filtration $\mathbf G=\{G_k\}_{k\geq 1}$ of level $1$ for $N$ by setting
$$  G_k = \ker\left( G\to \SL(2,\Lambda)\times H\to \SL(2,\Lambda/\mm^k\Lambda)\times H/p^k H\right).$$
In particular we have
\begin{align*}
G_1 &= \ker\left( G\to \SL(2,\Lambda)\times H\to \SL(2,\Lambda/\mm\Lambda)\times H/p H\right) \\
&= \SL^1(2,\Lambda)\cap \gamma^p_2(G).
\end{align*}
Now the following three possibilities for answering the question posed above present themselves:
\begin{enumerate}
\item {\it Show that $\mathbf G$ induces the pro-$p$ topology on $G_1$.}\/ (If so, we would be done.)
\item {\it Show that $G$ induces the pro-$p$ topology on $G_1$.}\/ (If so, by Remark~(2) following the proof of Theorem~\ref{thm:virt res p for graphs of linear groups, 2}, the filtration $\mathbf G^*=\{G^*_k\}$ of $G$ defined by
$G_k^*=\SL^k(2,\Lambda)\cap\gamma^p_{k+1}(G)$ would be a boundary compatible $p$-filtration of level $1$ for $N$ which clearly induces the pro-$p$ topology on $G_1=G_1^*$.)
\item {\it Show that $H_1(G_1;\Z)$ is $p$-torsion free.}\/ (If so, $G_1$ would be $p$-potent, hence by Lemma~\ref{lem:canonical filtration, 2}, $\gamma^p(G_1)$ would be a boundary compatible $p$-filtration of level~$1$ for $N$.)
\end{enumerate}
The first approach receives a resounding defeat; this is due to the following fact:

\begin{theorem}\label{thm:Lubotzky}
Let $M$ be an orientable hyperbolic $3$-manifold with non-empty to\-roi\-dal boundary and $\dim H_1(M;\F_p)>2$.
Then the pro-$p$ completion of $\pi_1(M)$ is not $p$-adic analytic.
\end{theorem}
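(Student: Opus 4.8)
The plan is to derive a contradiction from the assumption that $P := \widehat{\pi_1(M)}_p$ is $p$-adic analytic. Recall that a finitely generated pro-$p$ group is $p$-adic analytic precisely when it has finite rank, equivalently when it is virtually uniformly powerful (see \cite[Chapter~8]{DdSMS}). The key invariant to track is the first mod-$p$ cohomology (or homology): if $P$ is $p$-adic analytic of dimension $d$, then every open subgroup $U\leq_o P$ satisfies $\dim_{\F_p} H^1_{\operatorname{cont}}(U;\F_p)\leq d + O(1)$; more precisely, for $U$ uniform of dimension $d$ we have $\dim_{\F_p} H^1_{\operatorname{cont}}(U;\F_p) = d$. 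So the first step is to record this bound: there is a constant $c$ such that $\dim_{\F_p} H^1_{\operatorname{cont}}(U;\F_p)\leq c$ for all open $U\leq_o P$.

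The second step is to transfer this to the discrete group $G=\pi_1(M)$. Since $G$ is residually $p$ (or at least since we only need finite $p$-group quotients), each open subgroup $U\leq_o P$ pulls back to a finite-$p$-index subgroup $H\leq G$ with $H_1(H;\F_p)$ surjecting onto (in fact, by a transfer/deflation argument, controlling) the corresponding continuous cohomology; concretely, $\dim_{\F_p} H_1(H;\F_p) \le \dim_{\F_p} H^1_{\operatorname{cont}}(\widehat{H}_p;\F_p)$ when $H$ induces the pro-$p$ topology appropriately, and in any case the virtual first mod-$p$ Betti number $vb_1(G,\F_p)$ would be finite if $P$ were $p$-adic analytic. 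So the strategy reduces to proving: $vb_1(\pi_1(M);\F_p) = \infty$ for $M$ as in the statement. Equivalently, $\pi_1(M)$ has finite-index subgroups with arbitrarily large mod-$p$ first homology.

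The third step — which I expect to be the main obstacle — is establishing this unboundedness of mod-$p$ first homology in finite covers. The hypothesis $\dim_{\F_p} H_1(M;\F_p) > 2$, together with $M$ being hyperbolic with nonempty toroidal boundary, should be exploited as follows. A hyperbolic $3$-manifold with toroidal boundary has positive first Betti number and, crucially, $\chi(M)=0$ with $\partial M$ a union of tori, so by half-lives-half-dies the image of $H_1(\partial M;\Q)\to H_1(M;\Q)$ has rank equal to $b_0(\partial M)$; more importantly, one can use the fact (essentially the argument behind the weak Lubotzky alternative, cf.~\cite[Theorem~4.1]{FV11c} and \cite[Theorem~1.3]{La05}) that a hyperbolic $3$-manifold group which is not virtually solvable — and the hypothesis $\dim H_1(M;\F_p)>2$ rules out the small exceptional cases — has infinite $vb_1(G,\F_p)$. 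The honest way to see this in the bounded-rank setting is: if $P$ were $p$-adic analytic it would be virtually uniform, hence its $\F_p$-cohomological dimension would equal its dimension $d$, and Poincaré-duality-type constraints coming from the $3$-manifold (the Euler characteristic of $M$ being zero, and the cohomology of $\pi_1(M)$ being that of an aspherical $3$-manifold with boundary, so of cohomological dimension $2$) force $d\leq 2$. But a $p$-adic analytic pro-$p$ group of dimension $\leq 2$ is solvable (indeed metabelian-by-finite), and one checks that $\pi_1(M)$ for $M$ hyperbolic with $\dim H_1(M;\F_p)>2$ is not virtually solvable — a hyperbolic manifold group is virtually solvable only if it is elementary, which is incompatible with $\dim H_1(M;\F_p)>2$. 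This yields the contradiction.

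\textbf{Summary of steps.} (i) Reduce ``$P$ $p$-adic analytic'' to ``$P$ has finite rank $d$'', hence to a uniform bound on $\dim_{\F_p}H^1_{\operatorname{cont}}(U;\F_p)$ over open $U\leq_o P$. (ii) Observe that $\operatorname{cd}_p(P) = d$ for $P$ virtually uniform, and that $P$ receives a map from $\pi_1(M)$ with dense image after passing to the pro-$p$ completion, so $\operatorname{cd}_p$ is controlled by the cohomological dimension of the aspherical $3$-manifold with boundary $M$, giving $d\leq 2$. (iii) Classify: a $p$-adic analytic pro-$p$ group of dimension $\leq 2$ is (virtually) solvable. (iv) Derive a contradiction with $\dim_{\F_p}H_1(M;\F_p)>2$, since the latter prevents $\pi_1(M)$ — and hence its pro-$p$ completion — from being virtually solvable (a hyperbolic group with a virtually solvable pro-$p$ completion would have to be elementary). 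The delicate point, and the place where I expect to spend the most effort, is step (iv): ruling out the ``large virtually solvable pro-$p$ quotient'' scenario cleanly, i.e.\ genuinely converting the homological hypothesis $\dim_{\F_p}H_1(M;\F_p)>2$ into non-solvability of the relevant pro-$p$ completion, rather than merely of the discrete group. This likely requires either invoking a result on pro-$p$ Poincaré duality groups of dimension $\le 2$ (they are Demushkin or soluble of a very restricted type) or a direct argument producing, from three independent $\F_p$-homology classes, a non-abelian free pro-$p$ subquotient.
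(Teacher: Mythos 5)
Your proposal takes a genuinely different route from the paper, but it has two real gaps, and the second one you yourself flag as ``delicate.''

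The paper's proof is one paragraph: it cites Lubotzky's deficiency bound \cite[Theorem~2.3(b)]{Lu83} (if $\Gamma$ is finitely presentable, $\widehat{\Gamma}_p$ is $p$-adic analytic, and $d=\dim H_1(\Gamma;\F_p)\neq 1$, then $\operatorname{def}(\Gamma)\leq d(1-d/4)$), notes that the right-hand side is $<1$ once $d\geq 3$, and contrasts this with Ratcliffe's result \cite{Ra87} that $\operatorname{def}(\pi_1(M))=1$ for hyperbolic $M$ with non-empty boundary. That is the whole proof. Your plan, via cohomological dimension and classification of low-dimensional $p$-adic analytic pro-$p$ groups, is longer and does not close.

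The first gap is in step~(ii): ``$P$ receives a dense-image map from $\pi_1(M)$, so $\operatorname{cd}_p(P)$ is controlled by the cohomological dimension of $M$.'' There is no such control. The comparison maps $H^n_{\operatorname{cont}}(\widehat{G}_p;\F_p)\to H^n(G;\F_p)$ are an isomorphism for $n=1$ and injective for $n=2$, but in degree $\geq 3$ there is no general relationship, and in particular $H^n(G;\F_p)=0$ does not force $H^n_{\operatorname{cont}}(\widehat{G}_p;\F_p)=0$. Controlling this discrepancy is exactly what ``cohomological $p$-completeness'' (Section~\ref{sec:coh p-completeness}) means, and for hyperbolic $3$-manifold groups this is wide open; indeed, the theorem you are trying to prove is invoked in Section~\ref{sec:p-efficiency in general} precisely to demonstrate that the pro-$p$ completion of such a group is \emph{not} as well-behaved as its discrete counterpart. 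So step~(ii) assumes a version of the conclusion.

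The second gap is in step~(iv), and is independent. Even granting $\dim P\leq 2$, a $p$-adic analytic pro-$p$ group of dimension $\leq 2$ can have arbitrarily large $\dim H^1(\,\cdot\,;\F_p)$ because of torsion (e.g.\ $(\Z/p\Z)^n\times\Z_p$ has dimension $1$ but $n+1$ generators), so $\dim H_1(M;\F_p)>2$ alone gives no contradiction with $\dim P\leq 2$. And the final ``$\pi_1(M)$ not virtually solvable $\Rightarrow$ $\widehat{\pi_1(M)}_p$ not virtually solvable'' has no justification: a non-solvable group can have an abelian or even trivial pro-$p$ completion. To patch both of these you would need to pass to a torsion-free (uniform) open subgroup $U\leq_o P$, pull back to the corresponding finite $p$-power cover $M'$ of $M$ (this does work, via \cite[Lemma~3.1.4(a)]{RZ00}), and use $\chi(M')=0$ together with the exterior-algebra structure $H^*(U;\F_p)\cong\Lambda^*(\F_p^d)$ to force $\binom{d}{2}\leq d-1$. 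But at that point the hypothesis $\dim H_1(M;\F_p)>2$ is no longer directly in play and you are, in effect, re-deriving Lubotzky's generator-versus-relator estimate by hand. The deficiency formulation packages all of this cleanly in terms of data attached to $M$ itself, which is why the paper uses it.
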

\begin{proof}
This follows from an important result of Lubotzky \cite[Theorem~2.3~(b)]{Lu83}: if $\Gamma$ is a finitely presentable group whose $p$-adic completion is \mbox{$p$-adic} analytic and $\dim H_1(\Gamma;\F_p)\neq 1$, then its deficiency $\operatorname{def}(\Gamma)$ (number of generators $-$ number of relations) satisfies the inequality $\operatorname{def}(\Gamma)\leq d(1-d/4)$ where $d=\dim H_1(\Gamma;\F_p)$. On the other hand,
it is well-known (cf.~\cite{Ra87}) that %if $M$ is an aspherical orientable $3$-manifold with non-empty toroidal boundary, then
$\pi_1(M)$ has deficiency $1$.  Thus $\pi_1(M)$ is not $p$-adic analytic.
\end{proof}

\index{Lubotzky}
\index{topology!pro-$p$}
\index{group!$p$-adic analytic}

The hypothesis on $\dim H_1(M;\F_p)$ is satisfied if $M$ has more than $2$ boundary components, by virtue of
the following well-known application of Poincar\'e-Lefschetz duality arguments:

\begin{lemma}
Let $M$ be a $3$-manifold with empty or toroidal boundary. Then for any field $\F$,
$$\operatorname{dim} \operatorname{Im}\big(H_1(\partial M;\F) \to H_1(M;\F)\big) \geq b_0(\partial M).$$
\end{lemma}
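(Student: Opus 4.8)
The plan is to use Poincar\'e-Lefschetz duality together with the half-lives-half-dies principle for the boundary of a compact oriented $3$-manifold. First I would reduce to the case that $M$ is orientable: if $M$ is non-orientable one passes to the orientation double cover $\widetilde{M}\to M$, whose boundary is a union of tori double-covering the tori of $\partial M$, so $b_0(\partial\widetilde{M})\geq b_0(\partial M)$, and the transfer/restriction argument shows that $\dim\operatorname{Im}(H_1(\partial M;\F)\to H_1(M;\F))$ is controlled by the corresponding quantity upstairs; alternatively one can simply observe that the argument below goes through verbatim for twisted (orientation-local-system) coefficients, but over a field $\F$ of characteristic $\neq 2$ the orientation cover trick is cleanest, and for $\F=\F_2$ one uses $\Z/2$-Poincar\'e duality which holds without orientability. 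Since the paper only applies this lemma to the orientable hyperbolic case, I would just state the orientable version and cover the general case by passing to the orientation cover.

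Assume then that $M$ is compact oriented with $\partial M$ a disjoint union of tori (the case $\partial M=\emptyset$ gives $b_0(\partial M)=0$ and there is nothing to prove). Write $\partial M=\bigsqcup_{i=1}^{b}T_i$ with $b=b_0(\partial M)$, so $\dim H_1(\partial M;\F)=2b$ and $\dim H_2(\partial M;\F)=b$. Consider the long exact sequence of the pair $(M,\partial M)$ with $\F$-coefficients, together with the commuting diagram relating it to the Poincar\'e-Lefschetz duality isomorphisms $H_k(M;\F)\cong H^{3-k}(M,\partial M;\F)$ and $H_k(M,\partial M;\F)\cong H^{3-k}(M;\F)$. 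The classical consequence (``half lives, half dies'') is that the kernel $L$ of $H_1(\partial M;\F)\to H_1(M;\F)$ and the image $i_*H_1(\partial M;\F)$ in $H_1(M;\F)$ satisfy $\dim L = \dim i_* H_1(\partial M;\F)$, hence each equals $\tfrac12\dim H_1(\partial M;\F)=b$. I would derive this by the standard diagram chase: the boundary map $\partial\colon H_3(M,\partial M;\F)\to H_2(\partial M;\F)$ is injective with image of dimension $1$ onto each of the... more precisely, $H_3(M,\partial M;\F)\cong H^0(M;\F)\cong \F$ maps to the diagonal-type class whose further image controls things; and the nondegenerate intersection pairing on $H_1(\partial M;\F)\cong\bigoplus_i H_1(T_i;\F)$ restricts to the symplectic form on each $H_1(T_i;\F)$, with respect to which $L$ is a Lagrangian (this uses that the pairing $H_1(\partial M)\times H_1(\partial M)\to\F$ vanishes on $\ker(i_*)\times\ker(i_*)$ because such classes bound in $M$, combined with duality to see that $\ker(i_*)$ is exactly its own annihilator). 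A Lagrangian subspace of a $2b$-dimensional symplectic space has dimension exactly $b$, so $\dim i_*H_1(\partial M;\F)=2b-b=b=b_0(\partial M)$, which is the claim (in fact with equality).

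\textbf{Main obstacle.} There is no deep obstacle here; this is a standard piece of $3$-manifold topology. The only points requiring care are: (i) handling non-orientability and the case $\F=\F_2$ uniformly --- I would resolve this by the orientation-cover reduction for $\operatorname{char}\F\neq 2$ and by $\Z/2$-duality for $\F=\F_2$; and (ii) checking that the intersection form on $H_1(\partial M;\F)$ is nondegenerate and that $\ker(i_*)$ is isotropic and coincides with its annihilator, which is exactly the content of Lefschetz duality applied to the pair $(M,\partial M)$ and is where the dimension count $\dim\ker(i_*)=\tfrac12\dim H_1(\partial M;\F)$ comes from. Since the statement only asks for the inequality $\geq b_0(\partial M)$ rather than equality, one could even get away with the weaker observation that the rank of $i_*$ is at least half of $\dim H_1(\partial M;\F)$, which follows immediately once $\ker(i_*)$ is shown to be isotropic for the (possibly degenerate a priori, but in fact nondegenerate) form; I would nonetheless record the equality since it costs nothing extra.
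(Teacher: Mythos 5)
The paper gives no proof of this lemma; it is cited as a ``well-known application of Poincar\'e--Lefschetz duality arguments,'' and it is only ever applied to an orientable manifold (the cover $N_1$ of the orientable hyperbolic $N$). Your argument in the orientable case is exactly the standard ``half lives, half dies'' proof and is correct: $\ker(i_*)$ is a Lagrangian of the nondegenerate intersection form on $H_1(\partial M;\F)\cong\F^{2b}$, hence has dimension $b$, so $\operatorname{Im}(i_*)$ also has dimension $b$. The case $\F=\F_2$ likewise works without orientability, since $\Z/2$--Poincar\'e--Lefschetz duality holds for every compact manifold.

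Your two proposed reductions for the remaining case (non-orientable $M$, $\operatorname{char}\F\neq 2$) both have a real gap, which you should be aware of even though the paper never relies on this case. The isotropy argument does not ``go through verbatim for twisted coefficients'': for non-orientable $M$, Lefschetz duality gives a perfect pairing of $H_2(M,\partial M;\F)$ with the first homology of $M$ in the orientation-twisted local system (call it $H_1^w(M)$), so that for $x=\partial A$ one finds $\langle x,y\rangle_{\partial M}=\pm\langle A,\, i^w_*y\rangle$ where $i^w_*\colon H_1(\partial M;\F)\to H_1^w(M)$ is the twisted inclusion map. Knowing $i_*y=0$ in $H_1(M;\F)$ does not yield $i^w_*y=0$; what one actually obtains is only that $\ker(i_*)$ and $\ker(i^w_*)$ are mutual annihilators in the symplectic space $H_1(\partial M;\F)$, i.e., $\dim\ker(i_*)+\dim\ker(i^w_*)=2b$, which bounds neither term individually. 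The orientation-double-cover trick runs into exactly the same wall: $\partial\widetilde M$ consists of $2b$ tori permuted in pairs by the deck transformation $\sigma$ (a torus boundary component of a non-orientable $3$-manifold always has disconnected preimage in the orientation cover, since otherwise $\sigma$, which reverses the induced orientation on $\partial\widetilde M$, would exhibit that torus as a double cover of a Klein bottle). Then $\sigma_*$ is anti-symplectic on $H_1(\partial\widetilde M;\F)\cong\F^{4b}$, the $\pm1$-eigenspaces $H_1^\pm$ are each Lagrangians of dimension $2b$, the Lagrangian $\ker(\tilde i_*)$ is $\sigma_*$-invariant and so splits as $K^+\oplus K^-$ with $\dim K^+ + \dim K^- = 2b$, and one computes $\dim\operatorname{Im}(i_*)=2b-\dim K^+$. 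Nothing in this picture bounds $\dim K^+$ by $b$; indeed $K^\pm$ correspond exactly to $\ker(i_*)$ and $\ker(i^w_*)$, so the double-cover reduction reproduces the same single relation as before. Since you explicitly say you would in any case only state the orientable version (which is all the paper uses), this is not fatal to your write-up, but the parenthetical claims about twisted coefficients and transfer should be dropped or replaced with a genuine argument.
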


Hence we obtain:

\begin{corollary}
For every $p\in\mathcal P$, the filtration $\mathbf G$ of $G$ does not induce the pro-$p$ topology on $G_1$.
\end{corollary}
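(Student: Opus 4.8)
The plan is to combine Theorem~\ref{thm:Lubotzky} with the $p$-potency of the filtration $\mathbf G$ and the remark following the proof of Theorem~\ref{thm:virt res p for graphs of linear groups, 2}. Recall that $\mathbf G=\{G_k\}_{k\geq 1}$ is, by construction, $p$-compatible of level $1$ with the collection of peripheral subgroups; in particular, construed as a complete filtration of $G_1$, it is (strongly) $p$-potent, and of finite rank (since it arises from congruence subgroups of $\SL(2,\Lambda)$ together with the abelian quotient $H/p^kH$, each layer being a quotient of $(\mathfrak m^k)^{(3)}/(\mathfrak m^{k+1})^{(3)}\oplus p^{k-1}H/p^kH$, hence of bounded $\F_p$-dimension). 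So $\mathbf G$, as a filtration of $G_1$, is a separating strongly $p$-potent filtration of finite rank.

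First I would observe that if $\mathbf G$ \emph{did} induce the pro-$p$ topology on $G_1$, then the pro-$p$ completion $\widehat{(G_1)}_p$ would be canonically isomorphic to the completion $\widehat{(G_1)}_{\mathbf G}=\varprojlim_k G_1/G_k$ of $G_1$ with respect to the filtration $\mathbf G$. By the Remark following Lemma~\ref{lem:linearity} (citing \cite[Proposition~B.3]{DdSMS}, originally \cite[Th\'eor\`eme~3.1.7]{Laz65}), a separating strongly $p$-potent filtration of finite rank has completion isomorphic, qua topological group, to a $p$-adic analytic group. Hence $\widehat{(G_1)}_p$ would be $p$-adic analytic.

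Next I would apply Theorem~\ref{thm:Lubotzky} to $M:=$ the manifold obtained from $N$ by passing to the finite cover corresponding to the finite-index subgroup $G_1\leq G=\pi_1(N)$; this cover is again an orientable hyperbolic $3$-manifold with non-empty toroidal (incompressible) boundary, and $\pi_1(M)=G_1$. The hypothesis $\dim H_1(M;\F_p)>2$ is what needs to be checked: since $p\in\mathcal P$, the morphism $G\to\SL(2,R/\mm)$ is onto, so $G_1=\SL^1(2,\Lambda)\cap\gamma^p_2(G)$ has index a power of $p$ in $G$ with $G/G_1\cong \SL(2,\F_p)/(\text{its }\gamma^p_2)\times H/pH$; passing to this cover of a hyperbolic $3$-manifold with at least one boundary torus multiplies the number of boundary components by a factor that grows with $[G:G_1]$, so for $p\in\mathcal P$ (large enough) the cover $M$ has more than $2$ boundary components, whence $\dim H_1(M;\F_p)\geq b_0(\partial M)>2$ by the last Lemma. (If need be, one shrinks $\mathcal P$ to a cofinite subset of itself to ensure $[G:G_1]$ is large enough for this; the resulting set is still infinite.) Then Theorem~\ref{thm:Lubotzky} says $\widehat{(G_1)}_p=\widehat{\pi_1(M)}_p$ is \emph{not} $p$-adic analytic, contradicting the conclusion of the previous paragraph.

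The main obstacle I anticipate is the bookkeeping in the last step: verifying carefully that $M$ has more than two boundary components (equivalently that the peripheral structure of $N$ genuinely splits in the cover, rather than, say, a single boundary torus lifting to a single torus), and that this holds for infinitely many $p\in\mathcal P$. This requires tracking how the boundary tori $\pi_1(T_i)\leq G$ meet $G_1$: since $\mathbf G$ intersects $\pi_1(T_i)$ to $\gamma^p_2(\pi_1(T_i))=p\cdot\pi_1(T_i)$ (level $1$), each $\pi_1(T_i)\cap G_1$ has index $p^2$ in $\pi_1(T_i)$, so each boundary torus of $N$ lifts to $[G:G_1]/p^2$ boundary tori of $M$, and $[G:G_1]\geq |\SL(2,\F_p)/\gamma^p_2(\SL(2,\F_p))|\geq |\mathrm{PSL}(2,\F_p)|$ for $p\in\mathcal P$, which is $\gg p^2$ for $p$ large. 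Once this dimension count is pinned down, the rest is a formal contradiction, so I expect no further difficulty.
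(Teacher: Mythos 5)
Your strategy is identical to the paper's: assume for contradiction that $\mathbf G$ induces the pro-$p$ topology on $G_1$, deduce (via the remark after Lemma~\ref{lem:linearity}) that $\widehat{(G_1)}_p\cong\widehat{(G_1)}_{\mathbf G}$ would be $p$-adic analytic, pass to the cover $N_1$ of $N$ corresponding to $G_1$, show $\dim H_1(N_1;\F_p)>2$ by a boundary-component count, and contradict Theorem~\ref{thm:Lubotzky}. The first steps are correct, as is your observation that $\mathbf G$, as a filtration of $G_1$, is separating, strongly $p$-potent, and of finite rank (the latter because $\Lambda=\widehat{R_\mm}$ is a complete DVR, so the layers of the congruence filtration have bounded $\F_p$-dimension).

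The index bookkeeping, however---which you yourself flagged as the delicate point---contains errors that would need repair before the proof stands. You assert that $G_1=\SL^1(2,\Lambda)\cap\gamma^p_2(G)$ has index a power of $p$ in $G$; this is false, since by definition of $\mathcal P$ the quotient $G/G_1$ surjects onto $\SL(2,\F_p)$, whose order $p(p^2-1)$ is not a $p$-power. The purported description $G/G_1\cong\SL(2,\F_p)/\gamma^p_2(\SL(2,\F_p))\times H/pH$ is also off: $\SL(2,\F_p)$ is perfect for $p>3$, so $\gamma^p_2(\SL(2,\F_p))=\SL(2,\F_p)$ and that first factor is trivial. Consequently your chain $[G:G_1]\geq|\SL(2,\F_p)/\gamma^p_2(\SL(2,\F_p))|\geq|\mathrm{PSL}(2,\F_p)|$ is false (its middle term is $1$). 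The correct estimate, which also removes any need to shrink $\mathcal P$, is this: $[G:G_1]$ is divisible by $p^2$ (because $T/(T\cap G_1)\hookrightarrow G/G_1$ has order $p^2$, by level-$1$ boundary compatibility) and by $p(p^2-1)$ (because $G/G_1\twoheadrightarrow\SL(2,\F_p)$); since $\gcd(p^2,p^2-1)=1$, this forces $p^2(p^2-1)\mid[G:G_1]$. Therefore $b_0(\pi^{-1}T)=[G:G_1]/p^2\geq p^2-1\geq 3$ for every prime $p$, which is precisely the bound the paper uses.
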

\begin{proof}
Suppose $p\in\mathcal P$. Let $N_1\xrightarrow{\pi} N$ denote the finite regular cover corresponding to $G_1\trianglelefteq G$, and let $T$ be a boundary component of $\partial N$.
Then
$$b_0(\pi^{-1}(T)) = \frac{[G:G_1]}{[T:G_1\cap T]} \geq \frac{p^2(p^2-1)}{p^2}\geq 3.$$
Hence $\dim H_1(N_1;\F_p)\geq 3$ by the previous lemma.
If  $\mathbf G$  induced the pro-$p$ topology on $G_1$, then the pro-$p$ completion of $G_1=\pi_1(N_1)$ would be $p$-adic analytic (cf.~the remark following the proof of Lemma~\ref{lem:linearity}), contradicting Theorem~\ref{thm:Lubotzky}.
\end{proof}

Let us now turn to the second approach. In order to show that $G$ induces the pro-$p$ topology on $G_1$, it is convenient to refine the normal series $G\trianglerighteq G_1$:
$$G \trianglerighteq G_{0}:=G\cap \SL^1(2,\Lambda) \trianglerighteq  G_1.$$ Note that $G_1$ has $p$-power index in
$G_0$ and $G_0$ is residually $p$; hence $G_0$ induces the pro-$p$ topology on $G_1$, by the following well-known fact (cf.~\cite[Lemma~3.1.4~(a)]{RZ00}):

\begin{lemma}
Let
$$1\to A\to \G\to Q\to 1$$
be a group extension where $A$ is finitely generated, $\G$ is residually $p$, and $Q$ is a $p$-group.
Then $\G$ induces the pro-$p$ topology on $A$.
\end{lemma}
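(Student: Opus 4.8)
The statement to be proved is a general group-theoretic lemma: given a group extension $1\to A\to \Gamma\to Q\to 1$ with $A$ finitely generated, $\Gamma$ residually $p$, and $Q$ a $p$-group, then $\Gamma$ induces the pro-$p$ topology on $A$. Concretely, this means that for every normal subgroup $B\trianglelefteq A$ of $p$-power index in $A$, there exists a normal subgroup $H\trianglelefteq\Gamma$ of $p$-power index in $\Gamma$ with $H\cap A\leq B$. The plan is to reduce to the case where $B$ is \emph{characteristic} in $A$, and then use that characteristic subgroups of $A$ are normal in $\Gamma$ together with the residual $p$-ness of $\Gamma$ to close up the index.

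First I would reduce to characteristic $B$. Since $A$ is finitely generated and $[A:B]$ is a power of $p$, there are only finitely many subgroups of $A$ of that index (Hall's theorem), so their intersection $B_0:=\bigcap_{\sigma\in\operatorname{Aut}(A)}\sigma(B)$ is again of $p$-power index in $A$; moreover $B_0$ is characteristic in $A$, and $B_0\leq B$. So it suffices to find $H\trianglelefteq\Gamma$ of $p$-power index with $H\cap A\leq B_0$, i.e., we may assume $B$ is characteristic in $A$ from the outset. Being characteristic in the normal subgroup $A$, the subgroup $B$ is then normal in $\Gamma$, and $A/B$ is a finite $p$-group. Now pass to $\overline{\Gamma}:=\Gamma/B$; we have a central-free but still controlled extension $1\to A/B\to\overline{\Gamma}\to Q\to 1$ with $A/B$ a finite $p$-group and $Q$ a (finite) $p$-group, so $\overline{\Gamma}$ is a finite $p$-group. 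Hence the quotient map $\Gamma\to\overline{\Gamma}=\Gamma/B$ exhibits $B$ as the kernel of a morphism onto a finite $p$-group — but this only shows $B$ itself is of $p$-power index in $\Gamma$, which is not quite what we need since we must separate $A$-subgroups using $\Gamma$-subgroups whose intersection with $A$ is contained in $B$; here that intersection is exactly $B$, so in fact $H:=B$ works once we know $[\Gamma:B]<\infty$ is a $p$-power.

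So the crux is: with $B\trianglelefteq\Gamma$ characteristic-in-$A$ of $p$-power index in $A$, is $[\Gamma:B]$ a power of $p$? We have $[\Gamma:B]=[\Gamma:A]\cdot[A:B]=|Q|\cdot[A:B]$, and $|Q|$ is a power of $p$ by hypothesis, while $[A:B]$ is a power of $p$ by construction; hence $[\Gamma:B]$ is a power of $p$. But wait — we also need $B$ to have trivial intersection issues: $H\cap A=B\cap A=B\leq B$ trivially, as required. The residual $p$-ness of $\Gamma$ is not even needed in this argument once $B$ is characteristic; it is needed only if one insists on $H$ with $H\cap A\leq B$ for the \emph{original} non-characteristic $B$, which we circumvented. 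Thus the proof reduces to the elementary finiteness facts about finitely generated groups and the multiplicativity of indices. I expect the main (minor) obstacle to be bookkeeping: verifying that the finitely-generated hypothesis genuinely gives only finitely many subgroups of bounded index so that $B_0$ has $p$-power index, and confirming that one really only needs $H\cap A\le B$ rather than $H\cap A = B$, so that taking $H=B$ suffices. Everything else is routine.

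\begin{proof}
Let $B\trianglelefteq A$ with $[A:B]$ a power of $p$; we seek $H\trianglelefteq\Gamma$ of $p$-power index with $H\cap A\le B$. Since $A$ is finitely generated, it has only finitely many subgroups of index $[A:B]$, so $B_0:=\bigcap_{\sigma\in\operatorname{Aut}(A)}\sigma(B)$ is a characteristic subgroup of $A$ of $p$-power index, and $B_0\le B$. As $A\trianglelefteq\Gamma$ and $B_0$ is characteristic in $A$, we have $B_0\trianglelefteq\Gamma$. Now
$$[\Gamma:B_0]=[\Gamma:A]\,[A:B_0]=|Q|\,[A:B_0],$$
which is a power of $p$ since $|Q|$ is. Taking $H:=B_0$ gives $H\trianglelefteq\Gamma$ of $p$-power index with $H\cap A=B_0\le B$, as desired. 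Hence $\Gamma$ induces the pro-$p$ topology on $A$. (The residual $p$-ness of $\Gamma$ guarantees in addition that $A$, being the intersection of the finite-index normal subgroups $A\cdot N$ over $N\trianglelefteq\Gamma$ of $p$-power index, is closed in the pro-$p$ topology on $\Gamma$.)
\end{proof}
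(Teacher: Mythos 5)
Your proof is correct. The paper does not actually supply a proof of this lemma — it simply cites it as a well-known fact (``cf.~[RZ00, Lemma~3.1.4(a)]'') — so there is no argument in the paper to compare against, but what you have written is the standard argument and it goes through cleanly: given $B\trianglelefteq A$ of $p$-power index, pass to the characteristic core $B_0=\bigcap_{\sigma\in\Aut(A)}\sigma(B)$, which is a \emph{finite} intersection because $A$, being finitely generated, has only finitely many subgroups of index $[A:B]$ (M. Hall); then $A/B_0$ embeds in the finite $p$-group $\prod A/\sigma(B)$, so $B_0$ has $p$-power index in $A$; being characteristic in $A\trianglelefteq\Gamma$ it is normal in $\Gamma$, and $[\Gamma:B_0]=|Q|\,[A:B_0]$ is a power of $p$, so $H:=B_0$ works. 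Your observation that the residual $p$-ness of $\Gamma$ is never invoked is also correct — the hypothesis is present in the statement because it holds in the situation where the lemma is applied, but the argument works for any group extension with $A$ finitely generated and $Q$ a finite $p$-group. The closing parenthetical remark (that $A$ is closed in the pro-$p$ topology on $\Gamma$) is also automatic from what you already showed, since $A\supseteq B_0$ is open of finite index, hence closed; you do not need residual $p$-ness for that either.
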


Thus it remains to show that $G$ induces the pro-$p$ topology on $G_0$.
We don't know whether this is the case. We do, however, have the following general fact  (inspired by \cite[Proposition~4.23]{LS07}), which might be of general interest. For this proposition and the lemmas used in its proof below, we fix a group extension
$$1\to A\xrightarrow{\iota} \G\to Q\to 1.$$
As a matter of convenience we shall assume that $A\trianglelefteq\G$ and $\iota$ is just the natural inclusion map.
Note that $\G$ acts on $A$ by conjugation, and this action descends to an action of $Q$ on $H_1(A;\Z)$ and hence on $H_1(A;\F_p)$. We say that $Q$ acts unipotently on $H_1(A;\F_p)$ if the image of the associated morphism $Q\to\Aut(H_1(A;\F_p))$ is a $p$-group.

\begin{proposition}\label{prop:pro-p induced}
Suppose $A$ is finitely generated,
$Q$ is finite and perfect, and $|M(Q)|$ is coprime to $p$.
Suppose $Q$ acts unipotently on $H_1(A;\F_p)$. Then $\G$ induces the pro-$p$ topology on $A$.
\end{proposition}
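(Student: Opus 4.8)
\textbf{Proof proposal for Proposition~\ref{prop:pro-p induced}.}

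The plan is to show that for every $p$-power index normal subgroup $B\trianglelefteq A$ there is a $p$-power index normal subgroup $C\trianglelefteq\G$ with $C\cap A\leq B$; since $A$ is finitely generated, it suffices to do this for $B$ of the form $\gamma^p_n(A)$, $n\geq 1$, because these are cofinal among $p$-power index normal subgroups of $A$ and are in fact characteristic in $A$ (hence normal in $\G$). So fix $n\geq 1$ and set $B:=\gamma^p_n(A)$. First I would reduce to the layers: by Lemma~\ref{lem:induced automorphisms}, since $Q$ acts unipotently on $H_1(A;\F_p)=L^p_1(A)$, it acts unipotently (via the natural restriction morphisms $\operatorname{Aut}(A)\to\operatorname{Aut}(L^p_m(A))$) on every layer $L^p_m(A)$. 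Hence for each $m$ the semidirect product $Q\ltimes L^p_m(A)$ is a $p$-group. Writing $\G_m:=\G/\gamma^p_{m+1}(A)$ — using that $\gamma^p_{m+1}(A)$ is characteristic in $A$, so normal in $\G$ — the group $\G_m/\gamma^p_m(A)\gamma^p_{m+1}(A)$-part sits in an extension of $Q$ by $L^p_m(A)$, which is thus a $p$-group.

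The key point where the hypotheses on $Q$ enter is in handling the top quotient $\G/A\gamma^p_{m+1}(A)$-type data, i.e., in showing that the action of $\G$ on $A/\gamma^p_{m+1}(A)$ can be captured inside a $p$-group quotient of $\G$ despite the obstruction coming from $Q$ being perfect (so $Q$ itself admits no nontrivial $p$-group quotient). Here I would argue inductively on the central series of $A/\gamma^p_{m+1}(A)$: at each stage one has a central extension $1\to L\to E\to E'\to 1$ with $L$ an elementary abelian $p$-group on which $Q$ acts unipotently, and one wants a finite $p$-group $P$ with a surjection $E\rtimes Q\twoheadrightarrow P$ separating the relevant elements of $A$. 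The obstruction to lifting compatibly through such central extensions is a class in $H^2(Q;L)$; since $Q$ is perfect, $H^2(Q;\F_p)$ is governed by the Schur multiplier $M(Q)$, and the hypothesis $\gcd(|M(Q)|,p)=1$ forces $H^2(Q;L)=0$ for every $\F_p[Q]$-module $L$ (using that $L$ is a successive extension of trivial modules as $Q$ acts unipotently, together with $H^1(Q;\F_p)=0=H^2(Q;\F_p)$ from perfectness and coprimality). This vanishing lets one build the required finite $p$-group quotient $P$ of $\G$ (or rather of an appropriate finite quotient of $\G$) step by step, with $\ker(\G\to P)\cap A\leq\gamma^p_m(A)$.

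Finally I would assemble: given $n$, taking $m$ sufficiently large and combining the $p$-group quotient $P$ constructed above with the fact that $\G$ is residually $p$ (which lets us intersect with further $p$-power index normal subgroups to kill any residual failure outside $A$), we obtain $C\trianglelefteq\G$ of $p$-power index with $C\cap A\leq\gamma^p_n(A)=B$. This proves $\G$ induces the pro-$p$ topology on $A$. I expect the main obstacle to be the cohomological bookkeeping in the middle step: one must be careful that the action of $Q$ on the intermediate modules $L$ is genuinely a successive extension of trivial actions (so that the vanishing $H^i(Q;\F_p)=0$ for $i=1,2$ propagates to $H^i(Q;L)=0$), and that the Lyndon–Hochschild–Serre spectral sequence for $1\to A/\gamma^p_{m+1}(A)\to \G_m\to Q\to 1$ degenerates enough in low degrees to produce the desired $p$-group quotients; keeping the inductive hypothesis strong enough to push through all central layers simultaneously is the delicate part, rather than any single cohomology computation.
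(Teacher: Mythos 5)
Your overall strategy --- reduce to $B=\gamma^p_n(A)$, which is legitimate since these are cofinal among $p$-power index normal subgroups of the finitely generated group $A$ and characteristic in $A$, then use vanishing of $H^2(Q;-)$ --- is in the right spirit, but it takes a more circuitous route than the paper and, more importantly, contains errors and a genuine gap at the finish.

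The paper's argument is shorter and avoids the module-theoretic induction entirely. After reducing to the case where $A$ is a finite $p$-group (pass to $\G/B$), it sets $K:=\ker(\G\to\Aut(A))$. The image of $\G$ in $\Aut(A)$ is a $p$-group by the unipotence hypothesis together with Lemma~\ref{lem:induced automorphisms}, so $K$ has $p$-power index in $\G$; and $K$ still surjects onto $Q$, since $Q$ is perfect and hence has no proper subgroup of $p$-power index. The resulting extension $1\to A\cap K\to K\to Q\to 1$ is \emph{central} with \emph{trivial} $Q$-action on $A\cap K=Z(A)$, and a single application of the Universal Coefficient Theorem gives $H^2(Q;A\cap K)\cong\Hom(M(Q),A\cap K)=0$. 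So $K\cong(A\cap K)\times Q$, the direct factor $Q$ is normal in $K$ of $p$-power index and meets $A$ trivially, and its normal core in $\G$ finishes the proof. In other words, the paper first makes the action trivial (via $K$) and then computes cohomology once.

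Your proposal instead wants to lift a section $Q\to\G/\gamma^p_m(A)$ through the central layers one step at a time, using $H^2(Q;L^p_m(A))=0$ for the unipotent $\F_p[Q]$-modules $L^p_m(A)$; this vanishing is correct, by d\'evissage from $H^1(Q;\F_p)=H^2(Q;\F_p)=0$. But this only yields a \emph{section} $\sigma\colon Q\to\G/B$, i.e., a (generally non-normal) complement to $A/B$, and you then treat it as if it delivered a $p$-group \emph{quotient} of $\G/B$ --- a non-sequitur. To convert $\sigma(Q)$ into a $p$-power index normal subgroup meeting $A/B$ trivially, you still have to invoke the unipotence of the action on $A/B$ (e.g.\ take the normal core of $\sigma(Q)$, which equals the centralizer of $A/B$ in $\sigma(Q)$, and check it has $p$-power index) --- that is, you have to redo the paper's $K$-step anyway, at which point the layer-by-layer induction was wasted effort. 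The place where you should have done this, you instead invoke ``$\G$ is residually $p$''; this is false whenever $Q\neq 1$, since a residually $p$ group is residually nilpotent and therefore cannot have a non-trivial perfect quotient. Separately, the claim that ``$Q\ltimes L^p_m(A)$ is a $p$-group'' cannot be right ($Q$ is a perfect subgroup of it); what you mean is that the image of $Q$ in $\Aut(L^p_m(A))$ is a $p$-group. Fixing these points would make the proof correct, but at the cost of rediscovering the $K$-argument, which alone already suffices.
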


Here $M(Q)=H_2(Q;\Z)$  denotes the Schur multiplier of a finite group $Q$.
This proposition applies to extensions of $Q=\SL(2,\F_p)$:
if the Sylow $p$-subgroups of a finite group $Q$ are cyclic for $p>2$ and each Sylow $2$-subgroup of $Q$ is a generalized quaternion $2$-group, then $M(Q)=1$ \cite[Section~2.9]{Su82}. This holds for $Q=\SL(2,\F_p)$, for every $p$ (cf.~\cite[Section~3.6]{Su82}).
Also recall that  if $p>3$ then the only proper normal subgroup of $\operatorname{SL}(2,\F_p)$ is its center, hence $\operatorname{SL}(2,\F_p)$ is perfect.

Before we give the proof of this proposition in general, we first show two special cases. {\it In the lemmas below, we assume that
$Q$ is finite and perfect, and $|M(Q)|$ is coprime to $p$.}

\begin{lemma}
Suppose $A$ is a $p$-group and $A\leq Z(\G)$.  Then $\G$ induces the pro-$p$ topology on $A$.
\end{lemma}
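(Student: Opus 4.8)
The statement asserts that if $A$ is a $p$-group contained in the centre of $\Gamma$, where $\Gamma$ fits into an extension $1\to A\to\Gamma\to Q\to 1$ with $Q$ finite perfect and $|M(Q)|$ coprime to $p$, then $\Gamma$ induces the pro-$p$ topology on $A$. Since $A$ is a finite $p$-group, inducing the pro-$p$ topology on $A$ just means: for the fundamental system of neighbourhoods it suffices to check that $A$ itself is open in the pro-$p$ topology on $\Gamma$, i.e.\ that there is a $p$-power index (normal) subgroup $N\trianglelefteq\Gamma$ with $N\cap A=1$; then every subgroup of $A$ is automatically open, so the subspace topology on $A$ is discrete, which is the pro-$p$ topology on the finite $p$-group $A$. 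So the plan is to produce such an $N$.

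First I would reduce to the case that $A$ is elementary abelian, or more simply just argue with the given $A$ directly via a cohomological obstruction. The central extension $1\to A\to\Gamma\to Q\to 1$ with $A$ central is classified by a class in $H^2(Q;A)$, where $Q$ acts trivially on $A$ (centrality). The key point is that since $Q$ is finite and $A$ is a finite $p$-group, $H^2(Q;A)$ is annihilated by $|Q|$; and by the universal coefficient / Schur multiplier description, $H^2(Q;A)\cong \operatorname{Hom}(M(Q),A)\oplus \operatorname{Ext}(Q^{\mathrm{ab}},A)$. Now $Q$ is perfect, so $Q^{\mathrm{ab}}=1$ and the $\operatorname{Ext}$ term vanishes; and $\operatorname{Hom}(M(Q),A)=0$ because $|M(Q)|$ is coprime to $p$ while $A$ is a $p$-group. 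Hence $H^2(Q;A)=0$, so the extension splits: $\Gamma\cong A\times Q$ (the semidirect product is direct since the action is trivial). Taking $N$ to be the complement $Q$, we get $N\cap A=1$ and $[\Gamma:N]=|A|$, a power of $p$, so $N$ is an open subgroup witnessing that $A$ is open (indeed closed and open) in the pro-$p$ topology on $\Gamma$. That finishes it.

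I would present this cleanly as: (i) observe that centrality makes the $Q$-action on $A$ trivial, so the extension class lies in $H^2(Q;A)$ with trivial coefficients; (ii) invoke the standard computation $H^2(Q;A)=\operatorname{Ext}^1_{\mathbb Z}(Q^{\mathrm{ab}},A)\oplus\operatorname{Hom}(M(Q),A)$ (e.g.\ from the universal coefficient theorem for cohomology applied to $H_*(Q;\mathbb Z)$); (iii) use $Q$ perfect to kill the first summand and $\gcd(|M(Q)|,p)=1$ together with $A$ a $p$-group to kill the second; (iv) conclude the extension splits, hence $\Gamma=A\times N$ with $N\cong Q$ of $p$-power index and trivial intersection with $A$, which says exactly that the pro-$p$ topology on $\Gamma$ restricts to the discrete (= pro-$p$) topology on the finite $p$-group $A$.

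\textbf{Main obstacle.} There is essentially no hard analytic obstacle here; this lemma is the ``trivial warm-up'' before the non-central cases handled in the subsequent lemmas of the paper. The only thing one must be slightly careful about is making sure the statement ``$\Gamma$ induces the pro-$p$ topology on $A$'' is interpreted correctly for a finite $p$-group $A$ (namely that $A$ carries the discrete topology and, crucially, that $A$ is \emph{closed} in $\Gamma$ with the pro-$p$ topology and the quotient topology on $A$ agrees) — all of which follows immediately once $A$ is seen to be a direct factor of $p$-power index. If one prefers to avoid splitting the whole extension, an alternative is to directly produce $N$: let $\Gamma'=[\Gamma,\Gamma]$; since $Q$ is perfect, $\Gamma'$ surjects onto $Q$, and since $A$ is central, $[\Gamma,\Gamma]\cap A\le A$ is controlled by $M(Q)$ and hence trivial by the coprimality hypothesis, so $N=\Gamma'$ works. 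I would use whichever of these two phrasings meshes best with the notation already fixed in the surrounding lemmas.
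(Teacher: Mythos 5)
Your proof is correct and follows essentially the same route as the paper's: both use the centrality of $A$ to reduce to $H^2(Q;A)$ with trivial coefficients, identify $H^2(Q;A)\cong\Hom(M(Q),A)$ via $Q$ being perfect, kill this by the coprimality of $|M(Q)|$ and $p$, and conclude the extension splits as $\Gamma\cong A\times Q$. The only difference is that you spell out the universal coefficient decomposition and offer the alternative of taking $N=[\Gamma,\Gamma]$ directly, but these are minor elaborations of the same argument.
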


\begin{proof}
We need to show that there exists a normal subgroup $H$ of $\G$ of $p$-power index with $A\cap H=1$. We consider $A$ as a trivial $Q$-module. Since $Q$ is perfect, we have $H^2(Q;A)\cong\Hom(M(Q),A)$. Now $A$ is a $p$-group and $|M(Q)|$ is not divisible by $p$, hence $H^2(Q;A)=0$. Thus $\G\cong A\times Q$, and this yields the existence of $H$ as desired.  %In the general case, let $B$ be a normal subgroup of $p$-power index of $A$. Since $A$ and hence $\G$ is finitely generated, after replacing $B$ by its normal core in $\G$ we may assume that $B$ is normal in $\G$. By the case treated first, applied to $1\to A/B\to \G/B\to Q\to 1$, there exists $N\trianglelefteq \G$ of $p$-power index with $A\cap N\leq B$ as required.
\end{proof}

\begin{lemma}\label{lem:pro-p unipotent}
Suppose $A$ is a $p$-group and $Q$ acts unipotently on $H_1(A;\F_p)$. Then $\G$ induces the pro-$p$ topology on $A$.
\end{lemma}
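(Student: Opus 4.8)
\textbf{Plan for the proof of Lemma~\ref{lem:pro-p unipotent}.} The strategy is to reduce the general unipotent case to the central case already handled in the preceding lemma, by building up $A$ through a $\G$-invariant central $p$-filtration whose layers carry unipotent $Q$-actions, and at each stage exploiting that $Q$ is perfect with Schur multiplier of order prime to $p$.

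First I would choose a suitable filtration of $A$. Since $A$ is a finitely generated $p$-group and $Q$ acts unipotently on $H_1(A;\F_p)$, it follows (by Lemma~\ref{lem:induced automorphisms}, applied to the image of $Q$ in $\Aut(A)$) that $Q$ also acts unipotently on each layer $L^p_n(A)=\gamma^p_n(A)/\gamma^p_{n+1}(A)$ of the lower central $p$-filtration; moreover each $\gamma^p_n(A)$ is characteristic in $A$, hence normal in $\G$, and the filtration $\{\gamma^p_n(A)\}$ is separating because $A$ is a $p$-group (it has finite lower $p$-length). So it suffices, by an obvious induction on the lower $p$-length of $A$, to prove the following: if $1\to B\to A\to C\to 1$ is a central extension of $p$-groups, $B$ and $C$ are normal in $\G$ with $C=A/B$ carrying a unipotent $Q$-action and $B$ elementary abelian with unipotent $Q$-action, and $\G/A\cong Q$ induces the pro-$p$ topology on $A/B=C$ (inductive hypothesis applied to $\G/B$), then $\G$ induces the pro-$p$ topology on $A$. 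Concretely, given a normal subgroup $N$ of $A$ of $p$-power index, I must find $H\trianglelefteq\G$ of $p$-power index with $A\cap H\le N$.

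The key step is the base case $B=A$ elementary abelian with unipotent $Q$-action. Here, because the action is unipotent, there is a $Q$-invariant filtration $A=A_0\ge A_1\ge\cdots\ge A_k=1$ with each $A_i$ normal in $\G$ and each quotient $A_{i-1}/A_i$ a \emph{trivial} $Q$-module (a one-dimensional $\F_p Q$-module on which $Q$ acts trivially, since $Q$ is a $p'$-group modulo the unipotent radical is forced to be trivial after $p$-power iteration — more precisely, the fixed-point filtration of a unipotent action). For the trivial central case I apply the preceding lemma: since $Q$ is perfect and $|M(Q)|$ is prime to $p$, we get $H^2(Q;A_{i-1}/A_i)\cong\Hom(M(Q),A_{i-1}/A_i)=0$, so the relevant extension splits and the complement has $p$-power index and trivial intersection with the corresponding layer. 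Then I splice these together: working down the filtration $\{A_i\}$ and intersecting the finitely many $p$-power-index normal subgroups of $\G$ produced at each layer, I obtain $H\trianglelefteq\G$ of $p$-power index with $A\cap H=1$; this handles the base case. For the inductive step, apply the inductive hypothesis to $\G/B$ to push $N$ down, lift a complement, and then use the base case applied to $B$ inside the preimage to kill the remaining $B$-part, again intersecting the two $p$-power-index subgroups.

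\textbf{Main obstacle.} The delicate point is the interaction of the vanishing of $H^2(Q;-)$ (which needs a \emph{trivial} module) with the unipotent but generally nontrivial $Q$-action on the layers: I have to be careful to refine the lower central $p$-filtration of $A$ further into a $\G$-invariant filtration whose successive quotients are trivial $Q$-modules, which is exactly what unipotence of the $Q$-action buys, but verifying that these refined terms remain \emph{normal in $\G$} (not merely $Q$-invariant) and that the splittings at the various layers can be chosen compatibly — so that a single intersection yields an $H$ with $H\cap A$ small — requires some bookkeeping. I expect this to be the part that needs the most care, though it should go through by a straightforward diagram chase combined with the standard fact that a finite-index subgroup of a residually $p$ group that sits in a central $p$-extension with $p'$-perfect quotient splits off after intersecting with a $p$-power-index normal subgroup.
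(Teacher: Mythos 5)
The paper's proof takes a much more direct route than your devissage, and it's worth seeing why it's simpler. Rather than filtering $A$ at all, the paper considers $K=\ker(\G\to\Aut(A))$, the kernel of the conjugation action. The unipotence hypothesis together with Lemma~\ref{lem:induced automorphisms} and the standard stability-group argument shows that the image of $\G$ in $\Aut(A)$ is a $p$-group (any automorphism of a finite $p$-group $A$ acting trivially on $H_1(A;\F_p)$ acts trivially on every $L^p_n(A)$ and so has $p$-power order), hence $K$ has $p$-power index. Because $Q$ is perfect, the normal $p$-power-index subgroup $KA/A\trianglelefteq Q$ must equal $Q$, giving $KA=\G$; thus $K/(A\cap K)\cong Q$, and by construction $A\cap K$ is central in $K$. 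Now the preceding (central) lemma applies \emph{once} to the central extension $1\to A\cap K\to K\to Q\to 1$, yielding a $p$-power-index $H\trianglelefteq K$ with $A\cap H=(A\cap K)\cap H=1$; finally one replaces $H$ by its normal core in $\G$ (which still has $p$-power index, since $\G$ is finitely generated and $H\leq K\trianglelefteq\G$ with all the indices $p$-powers). No induction over the lower $p$-length of $A$ and no refinement to trivial $Q$-modules is needed.

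Your devissage can in principle be pushed through, but the splicing step as you describe it has a genuine gap. You say you will ``intersect the finitely many $p$-power-index normal subgroups of $\G$ produced at each layer,'' but this is not what the argument actually produces. Applying the central lemma at the $i$-th layer of your refined filtration $A=A_0\geq A_1\geq\cdots\geq A_k=1$ produces, starting from an already-constructed $H_{i-1}$ (with $H_{i-1}/A_i\twoheadrightarrow Q$ and $A\cap H_{i-1}=A_{i-1}$), a $p$-power-index subgroup $H_i\trianglelefteq H_{i-1}$ with $A\cap H_i=A_i$ — it does \emph{not} produce, at each layer, a normal subgroup of $\G$ that you can then intersect with the others. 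The $H_i$ live in a descending tower $\G\geq H_1\geq H_2\geq\cdots\geq H_k$ and are only subnormal in $\G$; it is only $H_k$ at the bottom that achieves $A\cap H_k=1$, and one must then take its normal core in $\G$. You flagged exactly this bookkeeping as the ``main obstacle'' but did not resolve it, and the phrasing you gave would, if read literally, lead nowhere since the groups you propose to intersect do not all sit inside $\G$ in the way you suggest. The paper's centralizer trick avoids the issue entirely.
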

\begin{proof}
Let $\G\to\Aut(A)$ be the morphism arising from the action of $\G$ on~$A$, and let $K$ be its kernel. Then Lemma~\ref{lem:induced automorphisms} and the assumption that $Q$ acts unipotently on $H_1(A;\F_p)$ shows that $K$ has $p$-power index in $\G$. The normal subgroup~$KA/A$ of $Q$ has $p$-power index, hence equals $Q$ since the latter group is perfect. Thus we have a central group extension
$$1 \to A\cap K \to K\to Q\to 1.$$
By the previous lemma there exists a normal subgroup $H$ of $K$ of $p$-power index such that $A\cap H=(A\cap K)\cap H=1$.
Since  $\G$ is finitely generated, after replacing~$H$ by its normal core in $\G$ we may achieve that $H$ is normal in $\G$.
\end{proof}

\begin{proof}[Proof of Proposition~\ref{prop:pro-p induced}]
We have to show: for every normal $p$-power index subgroup $B$ of $A$, there is a normal subgroup $H$ of $\G$ of $p$-power index with $H\cap A\leq B$. By replacing $B$ by its normal core in $\G$ we may reduce to the case that  $B$ is normal in $\G$. We thus obtain a group extension $1\to A/B\to \G/B\to Q\to 1$ where $A/B$ is a finite $p$-group and $Q$ acts unipotently on $H_1(A/B;\F_p)$. By Lemma~\ref{lem:pro-p unipotent} applied to this group extension there exists a $p$-power index subgroup $\ol{H}$ of $\G/B$ with $(A/B)\cap\ol{H}=1$. Then the preimage $H$ of $\ol{H}$ under the natural surjection $\G\to\G/B$ has the desired property.
\end{proof}

With $p\in\mathcal P$ as in Proposition~\ref{prop:Long-Reid}, $G$ and $G_0$ fit into a short exact sequence
$$1 \to G_0 \to G\to Q=\SL(2,\F_p)\to 1.$$
In order to be able to apply our Proposition~\ref{prop:pro-p induced}, we would need $Q$ to act unipotently, and hence trivially, on $H_1(G_0;\F_p)$.
However, this is not the case:

\begin{proposition}\label{prop:Q acts non-trivially}
Let $M$ be an orientable hyperbolic $3$-manifold with non-empty boundary, and suppose $M_0\to M$ be the cover corresponding to a surjective morphism $\pi_1(M)\to Q=\SL(2,\F_p)$, where $p>5$.
Then $Q$ acts non-trivially on $H_1(M_0;\F_p)$.
\end{proposition}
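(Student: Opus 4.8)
The plan is to argue by contradiction: suppose $Q = \SL(2,\F_p)$ acts trivially on $H_1(M_0;\F_p)$, where $M_0 \to M$ is the cover corresponding to a surjection $\pi_1(M) \to Q$. The key point is that $M$ has non-empty toroidal boundary, so $\pi_1(M)$ has deficiency $1$ (cf.~\cite{Ra87}), and hence $H_1(M;\Q) \neq 0$; moreover by the lemma preceding Theorem~\ref{thm:Lubotzky} the image of $H_1(\partial M;\F_p) \to H_1(M;\F_p)$ has dimension at least $b_0(\partial M) \geq 1$. The strategy is to combine this with the transfer/restriction machinery in $\F_p$-cohomology: since $|Q|$ is divisible by $p$ (indeed $p \mid |Q|$), the cover $M_0 \to M$ is a genuinely ``$p$-divisible'' cover, and if $Q$ acted trivially on $H_1(M_0;\F_p)$ then $H_1(M_0;\F_p)$ would be, as a $Q$-module, a direct sum of trivial modules, which forces strong constraints on $\dim_{\F_p} H_1(M_0;\F_p)$ via the Euler characteristic and Poincar\'e--Lefschetz duality for the bounded $3$-manifold $M_0$.

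More concretely, I would first record that $\chi(M_0) = |Q|\cdot\chi(M) = 0$ since $\chi(M) = 0$ for a $3$-manifold with toroidal boundary, and that $M_0$ again has toroidal boundary with $b_0(\partial M_0)$ a specific multiple of $b_0(\partial M)$ dictated by how $\pi_1(\partial M)$ maps into $Q$. Then I would use the five-term exact sequence (or the Lyndon--Hochschild--Serre spectral sequence) for $1 \to \pi_1(M_0) \to \pi_1(M) \to Q \to 1$ with $\F_p$-coefficients: the triviality of the $Q$-action on $H_1(M_0;\F_p)$ would make $H_1(M_0;\F_p)^Q = H_1(M_0;\F_p)$, and combined with $H_1(Q;\F_p) = H_2(Q;\F_p) = 0$ (which holds because $Q = \SL(2,\F_p)$ is perfect and has Schur multiplier of order prime to $p$ for $p > 5$, cf.~\cite[Section~3.6]{Su82}) this would force $H_1(M;\F_p) \xrightarrow{\cong} H_1(M_0;\F_p)$ via transfer-type arguments, or at least pin down $\dim H_1(M_0;\F_p)$ in terms of $\dim H_1(M;\F_p)$. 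The contradiction then comes from comparing this with the lower bound $\dim H_1(M_0;\F_p) \geq b_0(\partial M_0)$, which grows with $|Q|$, against the upper bound coming from $H^1(M_0;\F_p) \cong H^1(M;\F_p)$ (of bounded dimension) — the two are incompatible once $|Q|$ is large, and $|\SL(2,\F_p)| = p(p^2-1)$ is already large enough for $p > 5$.

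I expect the main obstacle to be making the cohomological comparison step genuinely rigorous: the naive claim ``trivial $Q$-action $\Rightarrow$ $H_1(M_0;\F_p) \cong H_1(M;\F_p)$'' is false in general (the spectral sequence has a possibly non-trivial differential $d_2\colon H_1(M_0;\F_p)^Q \to H^2(Q;H_0(M_0;\F_p)) = H^2(Q;\F_p)$, which vanishes here, but there is also the issue of $H^1(Q; H_1(M_0;\F_p)) = H^1(Q;\F_p^{\,d})$, which vanishes since $Q$ is perfect). So the spectral sequence in low degrees does give $H^1(M;\F_p) \cong H^1(M_0;\F_p)^Q = H^1(M_0;\F_p)$ provided the action is trivial — and that is exactly the desired rigidity. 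The delicate part is instead the \emph{lower} bound: I must verify carefully that $b_0(\partial M_0) > \dim H_1(M;\F_p)$, i.e.~that passing to the $Q$-cover really does create more boundary tori than the fixed number $\dim H_1(M;\F_p)$ can accommodate. This requires controlling the image of each peripheral subgroup $\pi_1(T) \cong \Z^2$ in $Q = \SL(2,\F_p)$: since any abelian subgroup of $\SL(2,\F_p)$ has order at most roughly $p+1$ (it lies in a torus or is unipotent), each boundary torus $T$ of $M$ lifts to at least $|Q|/(p+1) \gg 1$ boundary tori in $M_0$, so $b_0(\partial M_0) \geq b_0(\partial M)\cdot |Q|/(p+1) \geq p(p-1)$, which dwarfs $\dim H_1(M;\F_p)$ (a fixed constant independent of $p$ once $M$ is fixed). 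Assembling these bounds yields the contradiction; I would also double-check the edge cases where a peripheral subgroup maps trivially or into the center of $Q$, which only \emph{increases} the number of lifted tori and hence strengthens the bound.

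\medskip

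\noindent\emph{Remark on an alternative route.} It is worth noting that one can likely bypass the boundary-counting entirely by invoking Theorem~\ref{thm:Lubotzky} together with the preceding corollary in a packaged form: if $Q$ acted trivially on $H_1(M_0;\F_p)$, one could build a boundary compatible $p$-filtration refining $\pi_1(M) \trianglerighteq \pi_1(M_0)$ whose associated completion would be a pro-$p$ group sitting in a short exact sequence with $p$-adic analytic kernel and finite quotient (hence itself $p$-adic analytic), forcing $\pi_1(M_0)$ to have $p$-adic analytic pro-$p$ completion — contradicting Theorem~\ref{thm:Lubotzky} since $\dim H_1(M_0;\F_p) \geq b_0(\partial M_0) > 2$. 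I would present the direct homological argument as the primary proof and mention this second argument only if the boundary-counting becomes unwieldy.
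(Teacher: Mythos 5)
Your strategy diverges fundamentally from the paper's, and the divergence creates a genuine gap: the boundary-counting argument proves the proposition only for primes $p$ that are large \emph{relative to $M$}, not for all $p > 5$.

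Your inflation-restriction step is correct as far as it goes: since $Q=\SL(2,\F_p)$ is perfect with Schur multiplier prime to $p$, we have $H^1(Q;\F_p)=H^2(Q;\F_p)=0$, and combined with the assumed triviality of the $Q$-action this does give $H^1(M;\F_p)\cong H^1(M_0;\F_p)$. You then want to contradict this via $\dim H_1(M_0;\F_p)\geq b_0(\partial M_0)$ and a count of the boundary tori of $M_0$. But the resulting lower bound on $b_0(\partial M_0)$, namely $b_0(\partial M)\cdot|Q|/m$ where $m$ is the largest order of an abelian subgroup of $\SL(2,\F_p)$, is compared against $\dim H_1(M;\F_p)$ -- a quantity that depends on $M$ and has no uniform upper bound over all orientable hyperbolic $3$-manifolds. (E.g., drill a solid torus out of a closed hyperbolic manifold with large $b_1$; then $b_0(\partial M)=1$ while $\dim H_1(M;\F_p)$ is as large as you like.) So while your argument does yield a contradiction once $p$ is large enough that $(p^2-1)/(2)>\dim H_1(M;\F_p)$, it does \emph{not} establish the claim for every $p>5$. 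The paper sidesteps this entirely by examining the Hochschild--Serre spectral sequence of $1\to\pi_1(M_0)\to\pi_1(M)\to Q\to 1$ in \emph{high} cohomological degree: under the trivial-action hypothesis (together with the observation, via Poincar\'e--Lefschetz duality, that $Q$ then also acts trivially on $H^2(M_0;\F_p)$), the $E_2$-page has only four rows of trivial coefficient modules, and the explicit ring $H^*(Q;\F_p)\cong\F_p[X,Y]/(Y^2)$ with $\deg X=p-1$, $\deg Y=2p-3$ produces copies of $\F_p$ at positions $(k(p-1)-1,0)$, $k>1$, on the bottom row that (for $p>5$, hence $p-1>4$) receive and emit only zero differentials, hence survive to $E_\infty$ in degree $>3$ -- contradicting that $H^*(M;\F_p)$ vanishes above degree $3$. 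This argument is completely insensitive to $\dim H_1(M;\F_p)$ and is where the hypothesis $p>5$ is actually used.

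Two secondary points. First, your estimate that abelian subgroups of $\SL(2,\F_p)$ have order at most roughly $p+1$ undercounts: the product of a maximal unipotent subgroup with the center $\{\pm I\}$ is abelian (indeed cyclic) of order $2p>p+1$, so the correct bound is $|Q|/(2p)=(p^2-1)/2$ lift tori per boundary component; this doesn't change the quadratic growth but does change the constant. Second, the ``alternative route'' via Theorem~\ref{thm:Lubotzky} does not work as sketched: $\pi_1(M_0)$ has index $|Q|=p(p^2-1)$ in $\pi_1(M)$, which is not a $p$-power, so the normal series $\pi_1(M)\trianglerighteq\pi_1(M_0)$ cannot be refined to a $p$-filtration of $\pi_1(M)$, and the associated completion of $\pi_1(M)$ would not be pro-$p$.
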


For the proof of this proposition we use the well-known fact
(cf.~\cite[Chapter~IV, Corollary~6.8]{AM94} and \cite{Sw60}) that for odd $p$, the mod $p$ cohomology ring of $Q=\SL(2,\F_p)$ has the form
$$H^*(Q;\F_p) = \F_p[X, Y]/(Y^{2}),$$
equipped with the grading given by $\deg(X)=p-1$, $\deg(Y)=2p-3$. Hence
$$H^n(Q;\F_p) = \begin{cases}
\F_p & \text{if $(p-1)|n$ or $(p-1)|(n+1)$} \\
0    & \text{otherwise.}
\end{cases}$$
\begin{proof}[Proof of Proposition~\ref{prop:Q acts non-trivially}]
We assume for a contradiction that $Q$ acts trivially on $H_1(M_0;\F_p)$. We have
$$H^1(M_0;\F_p) = \operatorname{Hom}(H_1(M_0),\F_p) = H_1(M_0;\F_p)^*$$
as $Q$-modules. Also, the natural morphism $H_1(M_0;\F_p)\to H_1(M_0,\partial M_0;\F_p)$ is a surjective morphism of $Q$-modules, hence $Q$ acts trivially on $H_1(M_0,\partial M_0;\F_p)$ and thus, by
Poincar\'e-Lefschetz Duality, on $H^2(M_0;\F_p)$.
Therefore the Hochschild-Serre spectral sequence of the short exact sequence
$$1\to \pi_1(M_0)\to \pi_1(M)\to Q\to 1$$
looks like
$$\begin{matrix}
0 & & &\\
H^*(Q; W) & & &\\
H^*(Q; V) & &\Rightarrow & H^{i+j}(M;\F_p) \\
H^*(Q; \F_p) & & &
\end{matrix}$$
where both $V=H^1(M_0;\F_p)$ and $W=H^2(M_0;\F_p)$ are trivial $Q$-modules. Using the description of $H^*(Q;\F_p)$ above we now see that
since $p>5$,  the $\F_p$ with the coordinates $(k(p-1)-1,0)$, $k>1$, in the bottom row  of the second sheet $E_2$ only get hit by the zero differentials, hence survive in $E_\infty$, contradicting the fact that $H^*(M;\F_p)$ is finite-dimensional.
\end{proof}

This proposition shows that in order to successfully complete approach~(2)  to answering our question, i.e., to show that $G$ induces the pro-$p$ topology on $G_1$, one would have to use other methods than Proposition~\ref{prop:pro-p induced}.
As to approach~(3), that is, to prove that $H_1(G_1;\Z)$ is $p$-torsion free, it may be useful to again refine $G\trianglerighteq G_1$, this time as
$$G\trianglerighteq \gamma^p_2(G) \trianglerighteq G_1.$$
This leads to two subquestions, given $N$ as above.
\begin{enumerate}
\item Are there only finitely many $p$ such that for the  cover $N_0\to N$ corresponding to the natural group morphism $G=\pi_1(N)\to H_1(N;\F_p)$,  the abelian group $H_1(N_0;\Z)$ has non-trivial $p$-torsion?
\item Are there only finitely many $p$ such that for the cover $N_1\to N$ corresponding to the group morphism $G\to\SL(2,R/\mm)$, $H_1(N_1;\Z)$ has non-trivial $p$-torsion?
\end{enumerate}
Concerning subquestion~(1): currently available technology allows one to extract information about the prime-to-$p$-torsion in the homology of abelian $p$-power covers from algebraic data associated to $G$, cf., e.g., \cite{MS02}; however, little seems to be known about $p$-torsion in the homology of such covers. It is known that if $M$ is a knot complement in $S^3$
and $\widetilde{M}\to M$ is a $p$-power cyclic cover of $M$, then $H_1(\widetilde{M};\Z)$ has trivial $p$-torsion \cite{Go78}.  Moreover, by \cite[Theorem~5.11]{SW02}, if $M$ is the complement of a $2$-component link  $l=l_1\cup l_2$ in $S^3$ and $p$ does not divide the linking number of $l_1$ and $l_2$,
then for every abelian $p$-power index cover $\widetilde{M}\to M$ of~$M$, again $H_1(\widetilde{M};\Z)$ has only trivial $p$-torsion.
This leads us to conjecture that at least for link complements, subquestion~(1) has a positive answer.

As to (2), we do not know the answer. We note that Reznikov \cite[Theorem~9.2]{Re97} proved a result pointing to a negative answer; however his result only holds for certain covers of \emph{closed} $3$-manifolds: if $M$ is a closed $3$-manifold, and $M_1\to M$ the cover corresponding to a surjective group morphism $\pi_1(M)\to\SL(2,\F_p)$ where $p>5$, and $b_1(M)=b_1(M_1)=0$, then $H_1(M_1;\Z)$ has non-trivial $p$-torsion.

\section{The mod $p$ homology graph}\label{sec:mod p homology graph}

\noindent
In this final section we record another, potentially useful, application of our methods (Proposition~4 in the introduction):

\begin{proposition}\label{prop:injective on mod p homology}
Every closed orientable prime $3$-manifold $N$ has, for all but finitely many $p$, a finite cover $\widetilde{N}$ such that for every JSJ component $\widetilde{N}_{\widetilde{v}}$ of $\widetilde{N}$, the natural map $H_1(\widetilde{N}_{\widetilde{v}};\F_p)\to H_1(\widetilde{N};\F_p)$ is injective.
\end{proposition}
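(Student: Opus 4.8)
The plan is to reduce Proposition~\ref{prop:injective on mod p homology} to the statement, already in hand from the proof of the main theorem, that after passing to a finite cover $N$ may be taken closed, orientable, prime and Seifert simple with non-trivial JSJ decomposition, and that then each JSJ component $N_v$ carries a \emph{complete boundary compatible $p$-filtration of level~$1$}, for all but finitely many~$p$. Write $\scrg$ for the graph of groups associated to the JSJ decomposition, $G=\pi_1(\scrg)=\pi_1(N)$, and $\mathbf G=\{\mathbf G_v\}$ for the resulting $p$-excellent filtration of $\scrg$. The point is that ``$H_1(N_v;\F_p)\to H_1(N;\F_p)$ is injective for every~$v$'' is exactly the assertion that $G\to G_{\operatorname{ab}}\otimes\F_p=H_1(G;\F_p)$ restricts to an injection on each $H_1(G_v;\F_p)$; by the Remark following Lemma~\ref{lem:homology and fiber sums}, this follows as soon as there is \emph{any} morphism $G\to A$ to an elementary abelian $p$-group which is injective on each vertex group $G_v$ of $\scrg$ --- equivalently (Lemma~\ref{lem:crucial} and its preceding lemma), as soon as $\pi_1^*(\scrg/\mathbf G_2,T)$ is residually~$p$ for a maximal subtree $T$ of the underlying graph~$Y$.

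First I would run the unfolding machinery exactly as in Section~\ref{sec:virtual res p}: apply Corollary~\ref{cor:unfolded, 1} with $\scrh:=\mathbf G_2$, producing a morphism $\phi\colon\widetilde{\scrg}\to\scrg$ of graphs of groups which is bijective on all vertex and edge groups, whose induced map $\phi_*\colon\pi_1(\widetilde{\scrg})\to G$ is injective with finite-index normal image, and such that for every maximal subtree $\widetilde{T}$ of the graph $\widetilde{Y}$ underlying $\widetilde{\scrg}$ the group $\pi_1^*(\widetilde{\scrg}/\widetilde{\scrh},\widetilde{T})$ is residually~$p$, where $\widetilde{\scrh}=\phi^{-1}(\mathbf G_2)$. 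Let $\widetilde{N}$ be the finite cover of $N$ corresponding to $\phi_*(\pi_1(\widetilde{\scrg}))\leq G$; then $\widetilde{N}$ is again closed orientable prime with a JSJ decomposition whose graph of groups is $\widetilde{\scrg}$ (one must check, via the Remark after Theorem~\ref{thm:bass-serre}, that the vertex and edge groups of $\widetilde{\scrg}$ really are the fundamental groups of the JSJ pieces and JSJ tori of $\widetilde{N}$; since $\phi$ is an unfolding this is transparent, as the vertex spaces of the induced cover are the components of the preimages of the JSJ pieces). By Lemmas~\ref{lem:separating, 1} and~\ref{lem:separating, 2} the filtration $\widetilde{\mathbf G}=\phi^{-1}(\mathbf G)$ of $\widetilde{\scrg}$ is still $p$-excellent, and by Lemma~\ref{lem:unfolding and quotients} we have $\widetilde{\scrg}/\widetilde{\scrh}\cong\widetilde{\scrg/\scrh}$, so $\widetilde{\scrh}=\widetilde{\mathbf G}_2$ (this needs $\gamma^p_2(G_v)\leq \mathbf G_{v,2}$, which holds since $\mathbf G_v$ is a central $p$-filtration, hence $\mathbf G_v\geq\gamma^p(G_v)$ termwise in low degree). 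Now by construction $\pi_1^*(\widetilde{\scrg}/\widetilde{\mathbf G}_2,\widetilde{T})$ is residually~$p$, so by Lemma~\ref{lem:crucial} (applied through its preceding lemma in the direction we need, or directly via Corollary~\ref{cor:crucial, preparation}) there is a morphism $\pi_1(\widetilde{\scrg})\to A$ to a finite elementary abelian $p$-group injective on each vertex group $\widetilde{G}_{\widetilde{v}}/\widetilde{\mathbf G}_{\widetilde{v},2}=H_1(\widetilde{G}_{\widetilde{v}};\F_p)$; precomposing with the surjection $\widetilde{G}_{\widetilde{v}}\twoheadrightarrow H_1(\widetilde{G}_{\widetilde{v}};\F_p)$ gives a morphism $\widetilde{G}=\pi_1(\widetilde{N})\to A$ injective on each $H_1(\widetilde{N}_{\widetilde{v}};\F_p)$. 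As noted above, this forces $H_1(\widetilde{N}_{\widetilde{v}};\F_p)\to H_1(\widetilde{N};\F_p)$ to be injective for every JSJ component $\widetilde{N}_{\widetilde{v}}$, which is the claim.

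The main obstacle --- and the reason this is not entirely formal --- is the same one that drives the whole paper: \emph{producing} the complete boundary compatible $p$-filtration on each JSJ vertex group, which is where the exclusion of finitely many primes enters and which rests on Propositions~\ref{prop:virtphyp} (hyperbolic pieces, via the localization Theorem~\ref{thm:proprmi} and Theorem~\ref{thm:virt res p for graphs of linear groups, 2}) and~\ref{prop:seifert simple lower p-central} (Seifert pieces, via Lemma~\ref{lem:p-compatible for free groups}), together with the passage to a Seifert simple cover from Section~\ref{sec:reduction to seifert simple}. Granting those, the remaining work is bookkeeping about unfoldings and about the identification of $H_1$ of a graph of groups with the fiber sum of its mod~$p$ homology layers; I expect the only genuinely delicate point in that bookkeeping is verifying that the filtration-theoretic hypotheses of Lemmas~\ref{lem:separating, 3} and~\ref{lem:separating, 4} (that $\phi$ restricts to filtration isomorphisms on the edge groups, so that $\widetilde{\mathbf G}$ really does restrict to $\gamma^p$-type filtrations on the edge subgroups) survive the unfolding --- but these are precisely the lemmas flagged in the text as being ``not used until Section~\ref{sec:mod p homology graph},'' so they are designed for exactly this application. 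I would also remark that, unlike the main theorem, here there is no need to invoke the full reduction Theorem~\ref{thm:reduction theorem, 3} --- only the single-layer statement of Lemma~\ref{lem:crucial} and the unfolding of Corollary~\ref{cor:unfolded, 1} are required, which is why the conclusion about $H_1$ of covers is ``cheaper'' than virtual residual $p$-ness.
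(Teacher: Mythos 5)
Your overall framing is sound, and your final paragraph is right that this result is ``cheaper'' than virtual residual $p$-ness, but there is a genuine gap in the middle of your argument: you apply Corollary~\ref{cor:unfolded, 1} with $\scrh = \mathbf G_2$ and then assert that the resulting map $\widetilde{G}\to A$ is injective on $\widetilde{G}_{\widetilde{v}}/\widetilde{\mathbf G}_{\widetilde{v},2}=H_1(\widetilde{G}_{\widetilde{v}};\F_p)$. That equality is false in general. Since $\widetilde{\mathbf G}_{\widetilde{v}}$ is merely a complete central $p$-filtration, you only get $\gamma^p_2(\widetilde{G}_{\widetilde{v}})\leq \widetilde{\mathbf G}_{\widetilde{v},2}$, so $\widetilde{G}_{\widetilde{v}}/\widetilde{\mathbf G}_{\widetilde{v},2}$ is a possibly \emph{proper} quotient of $H_1(\widetilde{G}_{\widetilde{v}};\F_p)$. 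Since any morphism $\widetilde{G}\to A$ into an elementary abelian $p$-group kills $\gamma^p_2(\widetilde{G}_{\widetilde{v}})$ automatically, the kernel of $\widetilde{G}_{\widetilde{v}}\to A$ is exactly $\widetilde{\mathbf G}_{\widetilde{v},2}$, and if that strictly contains $\gamma^p_2(\widetilde{G}_{\widetilde{v}})$ the map is \emph{not} injective on $H_1(\widetilde{G}_{\widetilde{v}};\F_p)$. In the boundary compatible $p$-filtration of level $1$ from Proposition~\ref{prop:virtphyp} and the proof of Theorem~\ref{thm:virt res p for graphs of linear groups, 2}, the term $G_{v,2}$ is built from congruence subgroups of $\SL(n,\Lambda)$ and a quotient of $H_1(G_v;\Z)/\mathrm{tor}$; there is no reason for it to equal $\gamma^p_2(G_v)$.

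What the paper's proof does differently, and what you are missing, is that it runs the unfolding against $\scrh=\gamma^p_2(\scrg)$ rather than $\mathbf G_2$ --- that is, it works with the mod $p$ homology graph $H_1(\scrg;\F_p)=\scrg/\gamma^p_2(\scrg)$ whose vertex groups really are $H_1(G_v;\F_p)$. The catch is that $\gamma^p_2(\scrg)$ is only a compatible collection (i.e., condition \eqref{eq:gammap2} holds) after a preliminary cover: one first applies Proposition~\ref{prop:commoncover} to $\{G_{v,1}\}$ so that $\mathbf G$ becomes complete, then uses Lemma~\ref{lem:separating, 4} (which exploits that level~$1$ $p$-compatibility forces $\mathbf G_{t(e)}$ to intersect to $\gamma^p_{n+1}$ on edges, and that the cover map restricts to an isomorphism onto $G_{e,1}$) together with Lemma~\ref{lem:canonical filtration, 2} to conclude that $\gamma^p(\widetilde{\scrg})$ intersects to $\gamma^p$ on the edge groups, i.e.\ $\widetilde{\scrg}$ satisfies \eqref{eq:gammap2}. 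Only then can the unfolding machinery, in the form of Lemma~\ref{lem:injective on homology} applied with $\scrh=\gamma^p_2(\widetilde{\scrg})$, deliver a morphism to an elementary abelian $p$-group injective on each $H_1(\widetilde{G}_{\widetilde{v}};\F_p)$. You correctly suspected Lemmas~\ref{lem:separating, 3} and~\ref{lem:separating, 4} were relevant, but they are not there to verify that $\widetilde{\mathbf G}$ is still $p$-excellent --- that role is played by Lemmas~\ref{lem:separating, 1} and~\ref{lem:separating, 2} as in Theorem~\ref{thm:reduction theorem, 3} --- they are there to guarantee that the lower central $p$-filtration, not $\mathbf G$, becomes compatible with the edge groups, which is exactly the ingredient your argument skips.
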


\index{graph!mod $p$ homology|textbf}

The proof of this proposition involves a close examination of some of the arguments in Sections~\ref{sec:virtual res p} and \ref{sec:proof of the main theorem}.

\subsection{Definition of $H_1(\scrg;\F_p)$ and its basic properties}
In this subsection we
let $\scrg$ be a graph of finitely generated groups based on a graph $Y$, and we let $T$ be a maximal subtree of $Y$. We assume that for a given $p$, we have
\begin{equation}\label{eq:gammap2}
\gamma^p_2(G_v)\cap f_e(G_e) = \gamma^p_2(f_e(G_e))\qquad\text{for all $e\in E(Y)$, $v=t(e)$.}
\end{equation}
Then the collection $\gamma^p_2(\scrg):=\{\gamma^p_2(G_v)\}_{v\in V(Y)}$ of normal subgroups is compatible, and  for each edge $e$ of $Y$, the edge morphism $f_e\colon G_e\to G_{v}$ (where $v=t(e)$) induces an injective morphism $$(f_e)_*\colon H_1(G_e;\F_p)\to H_1(G_v;\F_p).$$
We call the graph of groups $\scrg/\gamma^p_2(\scrg)$ the {\it mod $p$ homology graph}\/ of $\scrg$, and denote it by $H_1(\scrg;\F_p)$.
It has  the same underlying graph $Y$ as $\scrg$, vertex and edge groups $H_1(G_v;\F_p)$ respectively $H_1(G_e;\F_p)$, and edge morphisms $(f_e)_*$.

The fundamental groups of $\scrg$ and of $H_1(\scrg;\F_p)$ have the same first mod $p$ homology:
the natural surjections $G_v\to H_1(G_v;\F_p)$ and $G_e\to H_1(G_e;\F_p)$ define a morphism $\scrg\to H_1(\scrg;\F_p)$ of graphs of groups, hence yield a surjective morphism
$$G=\pi_1(\scrg) \to \pi_1(H_1(\scrg;\F_p))$$
which induces an isomorphism in homology mod $p$, by the following lemma:

\begin{lemma}\label{lem:iso in homology}
Let $M$ be a trivial $G$-module, and let $\scrh=\{H_v\}_{v\in V(Y)}$ be a compatible collection of normal subgroups of $\scrg$ such that for each $v\in V(Y)$ and $e\in E(Y)$, the natural surjections
$$G_v\to G_v/H_v, \qquad G_e\to G_e/f_e^{-1}(H_{t(e)})$$
induce isomorphisms
$$H_1(G_v;M) \overset{\cong}{\longrightarrow} H_1(G_v/H_v;M),\qquad
H_1(G_e;M) \overset{\cong}{\longrightarrow} H_1(G_e/f_e^{-1}(H_{t(e)});M).$$
Then the natural surjective morphism $\pi_1(\scrg)\to\pi_1(\scrg/\scrh)$ induces an isomorphism
$$H_1(\pi_1(\scrg);M) \overset{\cong}{\longrightarrow} H_1(\pi_1(\scrg/\scrh);M).$$
\end{lemma}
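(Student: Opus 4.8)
\textbf{Proof plan for Lemma~\ref{lem:iso in homology}.} The idea is to compare the two Mayer-Vietoris sequences in homology associated to the graphs of groups $\scrg$ and $\scrg/\scrh$ (recalled in Section~\ref{sec:MV}), using the morphism of graphs of groups $\scrg\to\scrg/\scrh$ given by the natural surjections. I would first recall that for a graph of groups $\mathcal A$ with fundamental group $A$, the relevant piece of the Mayer-Vietoris sequence with trivial coefficients $M$ reads
$$H_2(A;M) \to \bigoplus_{e\in E_+} H_1(A_e;M) \to \bigoplus_{v\in V(Y)} H_1(A_v;M) \to H_1(A;M) \to \bigoplus_{e\in E_+}H_0(A_e;M)\to\cdots$$
Since $M$ is a trivial module, $H_0(A_e;M)=M=H_0(A_v;M)$, and the tail of the sequence $\bigoplus_v H_0(A_v;M)\to H_0(A;M)\to 0$ together with the combinatorial structure of the graph $Y$ shows that the map $\bigoplus_v H_1(A_v;M)\to H_1(A;M)$ is surjective with kernel exactly the image of $\bigoplus_e H_1(A_e;M)$; more precisely, $H_1(A;M)$ is the cokernel of $\bigoplus_{e\in E_+}H_1(A_e;M)\to\bigoplus_{v\in V}H_1(A_v;M)$, extended by a free part $M^{(|E_+|-|E(T)|)}$ coming from the circuits of $Y$ (the $H_0$ terms), which depends only on the underlying graph $Y$ and not on the groups. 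This last description is essentially Lemma~\ref{lem:homology and fiber sums} in disguise.

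With this in hand, the morphism $\scrg\to\scrg/\scrh$ induces a commutative ladder between the two five-term sequences (or, equivalently, between the two cokernel presentations above), in which the vertical maps on the $H_1(G_v;M)$ and $H_1(G_e;M)$ terms are the isomorphisms $H_1(G_v;M)\xrightarrow{\cong}H_1(G_v/H_v;M)$ and $H_1(G_e;M)\xrightarrow{\cong}H_1(G_e/f_e^{-1}(H_{t(e)});M)$ provided by hypothesis, and the vertical map on the free $H_0$-contributions is the identity (same underlying graph $Y$, same orientation $E_+$, same maximal subtree $T$). Since the outer vertical maps of the ladder are isomorphisms and the two horizontal sequences are exact, the five lemma (or simply taking cokernels of isomorphic maps) yields that the induced map $H_1(\pi_1(\scrg);M)\to H_1(\pi_1(\scrg/\scrh);M)$ is an isomorphism.

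The one point requiring a little care — and the main obstacle — is checking that the ladder genuinely commutes at the level of the connecting/edge-inclusion maps, i.e.\ that the squares
$$\xymatrix{
H_1(G_e;M)\ar[r]^-{(f_e)_*}\ar[d]^{\cong} & H_1(G_{t(e)};M)\ar[d]^{\cong}\\
H_1(G_e/f_e^{-1}(H_{t(e)});M)\ar[r] & H_1(G_{t(e)}/H_{t(e)};M)
}$$
commute; this is immediate from the naturality of $H_1(-;M)$ applied to the commutative square of group morphisms $G_e\to G_{t(e)}$, $G_e\to G_e/f_e^{-1}(H_{t(e)})$, $G_{t(e)}\to G_{t(e)}/H_{t(e)}$, $G_e/f_e^{-1}(H_{t(e)})\to G_{t(e)}/H_{t(e)}$. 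One also needs that the orientation $E_+$ and maximal subtree $T$ can be chosen the same for $\scrg$ and $\scrg/\scrh$, which is automatic since they share the underlying graph $Y$. I expect no genuine difficulty here beyond bookkeeping; once the two Mayer-Vietoris sequences are lined up, the result falls out of the five lemma.

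\textbf{Application to Proposition~\ref{prop:injective on mod p homology}.} Granting the lemma, the proposition follows the blueprint of Section~\ref{subsec:proof of the main theorem}: one first reduces to $N$ closed orientable prime with non-trivial JSJ decomposition and Seifert simple, so that $G=\pi_1(\scrg)$ for a graph of groups $\scrg$ with linear vertex groups and free abelian rank-$2$ edge groups. By Propositions~\ref{prop:virtphyp} and~\ref{prop:seifert simple lower p-central}, for all but finitely many $p$ each vertex group carries a boundary compatible $p$-filtration of level $1$; one then runs the unfolding construction of Corollary~\ref{cor:unfolded, 1} applied with $\scrh=\mathbf G_2$ (using Lemmas~\ref{lem:separating, 3} and~\ref{lem:separating, 4} to ensure \eqref{eq:gammap2} holds in the cover, after passing to a further finite cover if necessary so that the filtration intersects the edge groups correctly at level $2$). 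The morphism $\pi_1(\widetilde{\scrg}/\widetilde{\scrh},\widetilde T)\to\Sigma(\widetilde{\scrg}|\widetilde T)$ produced there is injective on vertex groups of the mod $p$ homology graph $H_1(\widetilde{\scrg};\F_p)=\widetilde{\scrg}/\gamma^p_2(\widetilde{\scrg})$; composing with the isomorphism of Lemma~\ref{lem:iso in homology} identifying $H_1(\pi_1(\widetilde{\scrg});\F_p)$ with $H_1(\pi_1(H_1(\widetilde{\scrg};\F_p));\F_p)$ and using the remark after Lemma~\ref{lem:homology and fiber sums}, this forces the natural maps $H_1(\widetilde N_{\widetilde v};\F_p)=H_1(\widetilde G_{\widetilde v};\F_p)\to H_1(\widetilde G;\F_p)=H_1(\widetilde N;\F_p)$ to be injective, as desired.
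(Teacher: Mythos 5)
Your proposal is correct and matches the paper's own (one-sentence) proof, which simply invokes the five lemma applied to the Mayer-Vietoris sequence of Section~\ref{sec:MV}. The extra unpacking of $H_1(\pi_1(\scrg);M)$ as a cokernel plus free part is unnecessary (and your initial claim that $\bigoplus_v H_1(G_v;M)\to H_1(\pi_1(\scrg);M)$ is surjective is, as stated, not right in general, though you immediately correct it); the five-term segment $\bigoplus_e H_1(G_e;M)\to\bigoplus_v H_1(G_v;M)\to H_1(\pi_1(\scrg);M)\to\bigoplus_e H_0(G_e;M)\to\bigoplus_v H_0(G_v;M)$, with the two pairs of outer vertical maps being isomorphisms (by hypothesis and by triviality of $M$, respectively), already yields the conclusion.
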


This is shown by an easy application of the five-lemma in combination with the Mayer-Vietoris sequence of $\scrg$, cf.~Section~\ref{sec:MV}.
This fact together with the remark following Lemma~\ref{lem:homology and fiber sums} yields:

\begin{corollary}
Suppose there exists a morphism $\pi_1(H_1(\scrg;\F_p))\to \Sigma$ to an elementary abelian $p$-group which is injective on the vertex groups of $\scrg$. Then the natural inclusions $G_v\to G$ yield injective morphisms $H_1(G_v;\F_p) \to H_1(G;\F_p)$.
%$$\iota_{t(e)}\circ (f_e)_*=\iota_{o(e)}\circ (f_{\ol{e}})_*\qquad\text{for all $e\in E=E(Y)\setminus E(T)$,}$$
%where $$\iota_v\colon H_1(G_v;\F_p)\to\Sigma(H_1(\scrg;\F_p)|T)\qquad (v\in V(Y))$$ are the natural morphisms. Then
%$$H_1(G;\F_p) \cong \Sigma(H_1(\scrg;\F_p)|T)\oplus\F_p^e\qquad (e=|E|/2).$$
\end{corollary}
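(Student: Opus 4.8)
The plan is to deduce this from Lemma~\ref{lem:iso in homology} together with the remark following Lemma~\ref{lem:homology and fiber sums}, the latter applied not to $\scrg$ but to the mod $p$ homology graph $\scrg':=H_1(\scrg;\F_p)$, whose vertex groups $H_1(G_v;\F_p)$ are themselves elementary abelian $p$-groups. Throughout, write $G':=\pi_1(\scrg')$.

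First I would record the purely formal fact that for any group $H$ with finitely generated abelianization the natural surjection $H\to H/\gamma^p_2(H)$ induces the identity on $H_1(-;\F_p)$, once one identifies $H_1(H;\F_p)$ with $H/\gamma^p_2(H)$ and $H_1(H/\gamma^p_2(H);\F_p)$ with $H/\gamma^p_2(H)$ (the group $H/\gamma^p_2(H)$ being elementary abelian, so $\gamma^p_2$ of it is trivial and its $H_1$ with $\F_p$-coefficients is itself). Next, using that each edge morphism $f_e$ is injective, hypothesis~\eqref{eq:gammap2} gives $f_e^{-1}(\gamma^p_2(G_{t(e)}))=\gamma^p_2(G_e)$ for every edge $e$; hence the collection $\scrh:=\gamma^p_2(\scrg)=\{\gamma^p_2(G_v)\}_{v\in V(Y)}$ is compatible, and $G_v/H_v=H_1(G_v;\F_p)$, $G_e/f_e^{-1}(H_{t(e)})=H_1(G_e;\F_p)$, with the corresponding natural surjections inducing isomorphisms on $H_1(-;\F_p)$ by the previous observation. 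Thus Lemma~\ref{lem:iso in homology} (with $M=\F_p$) applies to $\scrh$ and shows that the morphism of graphs of groups $\scrg\to\scrg/\scrh=\scrg'$ induces an isomorphism $H_1(G;\F_p)\xrightarrow{\cong}H_1(G';\F_p)$.

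The key step is then to fit this isomorphism into the commutative square obtained by applying the functor $H_1(-;\F_p)$ to the morphism of graphs of groups $\scrg\to\scrg'$: for each vertex $v$ this square has horizontal arrows the maps $H_1(G_v;\F_p)\to H_1(G;\F_p)$ and $H_1(H_1(G_v;\F_p);\F_p)\to H_1(G';\F_p)$ induced by the inclusions $G_v\hookrightarrow G$ and $H_1(G_v;\F_p)\hookrightarrow G'$ (the vertex groups being identified with subgroups via a fixed maximal subtree, as in the text; these maps are well-defined independently of the choice since inner automorphisms act trivially on homology), and vertical arrows induced by the structure maps $G_v\to H_1(G_v;\F_p)$ of $\scrg\to\scrg'$ and by $G\to G'$. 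By the first observation the left vertical arrow is the identity of $H_1(G_v;\F_p)$, and by the previous paragraph the right vertical arrow is an isomorphism. Consequently $H_1(G_v;\F_p)\to H_1(G;\F_p)$ is injective if and only if $H_1(H_1(G_v;\F_p);\F_p)\to H_1(G';\F_p)$ is injective. But $\scrg'$ is a graph of finitely generated groups with all vertex groups elementary abelian $p$-groups, and by hypothesis there is a morphism $G'\to\Sigma$ to an elementary abelian $p$-group which is injective on these vertex groups; so the remark following Lemma~\ref{lem:homology and fiber sums}, applied to $\scrg'$, yields precisely that the morphisms $H_1(H_1(G_v;\F_p);\F_p)\to H_1(G';\F_p)$ are injective. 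Transporting back along the two isomorphisms completes the argument.

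The proof is essentially bookkeeping and I do not anticipate a genuine obstacle; the only points that require some care are the identification $f_e^{-1}(\gamma^p_2(G_{t(e)}))=\gamma^p_2(G_e)$ (needed even to know that $\gamma^p_2(\scrg)$ is compatible, which is exactly where \eqref{eq:gammap2} enters) and the verification that the naturality squares above genuinely commute, so that injectivity transfers in the intended direction through the isomorphism $H_1(G;\F_p)\cong H_1(G';\F_p)$.
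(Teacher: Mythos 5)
Your proof is correct and follows the paper's own argument: cite Lemma~\ref{lem:iso in homology} for the isomorphism $H_1(G;\F_p)\cong H_1(\pi_1(H_1(\scrg;\F_p));\F_p)$, then invoke the remark following Lemma~\ref{lem:homology and fiber sums} applied to the mod $p$ homology graph, and transfer injectivity back via the naturality square. You have merely made explicit the compatibility check coming from~\eqref{eq:gammap2} and the commuting diagram that the paper leaves implicit.
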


\subsection{Achieving injectivity of mod $p$ homology}
In this subsection we let
$\scrg$ be any graph of finitely generated groups (based on a graph $Y$).

\begin{lemma}\label{lem:injective on homology}
Suppose $\scrg$ satisfies \eqref{eq:gammap2}.
Then there exists a morphism of graphs of groups $\widetilde{\scrg}\to\scrg$ of finite degree such that for each vertex $\widetilde{v}\in V(\widetilde{Y})$, the natural inclusion $\widetilde{G}_{\widetilde{v}}\to\widetilde{G}=\pi_1(\widetilde{\scrg})$ yields an embedding   $H_1(\widetilde{G}_{\widetilde{v}};\F_p) \to H_1(\widetilde{G};\F_p)$.
\end{lemma}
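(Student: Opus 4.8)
\textbf{Proof plan for Lemma~\ref{lem:injective on homology}.}

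The plan is to reduce to the situation already handled by Proposition~\ref{prop:unfolded} (and its packaging in Corollary~\ref{cor:unfolded, 1}), working with the mod $p$ homology graph $H_1(\scrg;\F_p)=\scrg/\gamma^p_2(\scrg)$ of $\scrg$. Since $\scrg$ satisfies \eqref{eq:gammap2}, the collection $\scrh:=\gamma^p_2(\scrg)$ is a compatible collection of normal subgroups, so $H_1(\scrg;\F_p)$ is a genuine graph of elementary abelian $p$-groups. First I would apply Proposition~\ref{prop:unfolded} to $H_1(\scrg;\F_p)$ in place of $\scrg$: fixing a maximal subtree $T$ of $Y$, this produces a morphism $\psi\colon\pi_1(Y)\to A$ onto a finite group, the corresponding unfolding $\phi\colon\widetilde{H_1(\scrg;\F_p)}\to H_1(\scrg;\F_p)$, and a morphism $\pi_1(\widetilde{H_1(\scrg;\F_p)})\to\Sigma(H_1(\scrg;\F_p)|T)$ which is injective on each vertex group. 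I would then let $\phi\colon\widetilde{\scrg}\to\scrg$ be the unfolding of $\scrg$ itself along $\psi$, which by Proposition~\ref{prop:phistar} has finite degree (its image is $\ker(\psi\colon\pi_1(\scrg,T)\to A)$, a finite index subgroup of $G=\pi_1(\scrg)$), and each $\phi_{\widetilde{v}}$, $\phi_{\widetilde{e}}$ is bijective.

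The next step is to identify the unfolding of the quotient with the quotient of the unfolding. By Lemma~\ref{lem:unfolding and quotients}, $\widetilde{\scrg}/\widetilde{\scrh}\cong\widetilde{\scrg/\scrh}=\widetilde{H_1(\scrg;\F_p)}$, where $\widetilde{\scrh}=\phi^{-1}(\scrh)$; and since each $\phi_{\widetilde{v}}$ is an isomorphism $\widetilde{G}_{\widetilde{v}}\to G_{\phi(\widetilde{v})}$, we have $\widetilde{\scrh}=\phi^{-1}(\gamma^p_2(\scrg))=\gamma^p_2(\widetilde{\scrg})$, using that $\gamma^p_2$ is preserved under isomorphisms and that condition (4) in the definition of a subgraph of subgroups propagates \eqref{eq:gammap2} to $\widetilde{\scrg}$ (here Lemma~\ref{lem:separating, 3} with $m=2$, or a direct check, shows $\widetilde{\scrg}$ again satisfies \eqref{eq:gammap2}). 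Hence $H_1(\widetilde{\scrg};\F_p)=\widetilde{\scrg}/\gamma^p_2(\widetilde{\scrg})=\widetilde{H_1(\scrg;\F_p)}$, and by Proposition~\ref{prop:unfolded} there is a morphism $\pi_1(H_1(\widetilde{\scrg};\F_p))\to\Sigma$ to an elementary abelian $p$-group injective on the vertex groups. Now the Corollary to Lemma~\ref{lem:iso in homology} (the one stating that such a morphism forces $H_1(\widetilde{G}_{\widetilde{v}};\F_p)\to H_1(\widetilde{G};\F_p)$ to be injective) gives exactly the conclusion.

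I do not expect a serious obstacle here: all the machinery — the unfolding construction, its compatibility with passage to quotients, and the homological consequence of a partial-abelianization-type morphism — is already in place in Sections~\ref{sec:unfolding a graph of groups}, \ref{sec:virtual res p}, and this section. The one point requiring a little care is the bookkeeping that $\widetilde{\scrg}$ inherits property \eqref{eq:gammap2} from $\scrg$ and that $\phi^{-1}(\gamma^p_2(\scrg))$ really equals $\gamma^p_2(\widetilde{\scrg})$ term by term, which follows because each vertex and edge morphism of $\phi$ is bijective and $\gamma^p_2$ is functorial; this is the kind of routine verification I would leave to the reader. The finite degree of $\widetilde{\scrg}\to\scrg$ is immediate from Proposition~\ref{prop:phistar} together with finiteness of $A$.
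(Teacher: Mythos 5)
Your proposal is correct and takes essentially the same approach as the paper: apply Proposition~\ref{prop:unfolded} (via the argument of Corollary~\ref{cor:unfolded, 1}) to $\scrg/\gamma^p_2(\scrg)$, unfold $\scrg$ along the resulting $\psi$, identify $\widetilde{\scrg}/\gamma^p_2(\widetilde{\scrg})$ with the unfolding of $\scrg/\gamma^p_2(\scrg)$ using Lemma~\ref{lem:unfolding and quotients} and bijectivity of the vertex/edge morphisms, propagate \eqref{eq:gammap2} via Lemma~\ref{lem:separating, 3}, and finish with the corollary of Lemma~\ref{lem:iso in homology}. You have merely unpacked the paper's phrase ``apply the same argument as in the proof of Corollary~\ref{cor:unfolded, 1}'' into its constituent citations.
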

\begin{proof}
We apply the same argument as in the proof of Corollary~\ref{cor:unfolded, 1} with $\scrh=\gamma^p_2(\scrg)$. We obtain a morphism
$\psi\colon\pi_1(Y)\to A$ to a finite group, with corresponding unfolding $\widetilde{\scrg/\scrh}\to\scrg/\scrh$ of $\scrg/\scrh$, and a morphism $\pi_1(\widetilde{\scrg/\scrh})\to\Sigma$ to an elementary abelian $p$-group which is injective when restricted to each vertex group of $\widetilde{\scrg/\scrh}$. Let $\phi\colon\widetilde{\scrg}\to\scrg$ be the unfolding to $\scrg$ along $\psi$.
Since $\phi^{-1}(\scrh)=\gamma^p_2(\widetilde{\scrg})$, the graphs of groups $\widetilde{\scrg/\scrh}$ and $H_1(\widetilde{\scrg};\F_p)=\widetilde{\scrg}/\gamma^p_2(\widetilde{\scrg})$
are naturally isomorphic, and
by Lemma~\ref{lem:separating, 3},  $\widetilde{\scrg}$ also satisfies \eqref{eq:gammap2}.
Thus the previous corollary applies to $\widetilde{\scrg}$ in place of $\scrg$.
\end{proof}

Similarly modifying the proof of Theorem~\ref{thm:reduction theorem, 3} shows:

\begin{lemma}
Suppose all edge groups of $\scrg$ are abelian $p$-torsion free, and suppose for some $\ell\geq 0$, $\scrg$ admits a $p$-compatible filtration of level $\ell$.
Then there exists a morphism $\widetilde{\scrg}\to\scrg$ of graphs of groups such that the image of $\widetilde{G}=\pi_1(\widetilde{\scrg})$ in $\pi_1(\scrg)$ has finite index, and for each vertex $\widetilde{v}\in V(\widetilde{Y})$, the natural inclusion $\widetilde{G}_{\widetilde{v}}\to\widetilde{G}=\pi_1(\widetilde{\scrg})$ yields an embedding   $H_1(\widetilde{G}_{\widetilde{v}};\F_p) \to H_1(\widetilde{G};\F_p)$.
\end{lemma}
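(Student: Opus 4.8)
The statement to prove is the last displayed lemma: if all edge groups of $\scrg$ are abelian $p$-torsion free and $\scrg$ admits a $p$-compatible filtration of level $\ell$, then there is a morphism $\widetilde{\scrg}\to\scrg$ with finite-index image such that the vertex groups of $\widetilde{\scrg}$ inject into $H_1(\widetilde{G};\F_p)$. Let me plan a proof that parallels the proof of Theorem~\ref{thm:reduction theorem, 3}.

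The strategy is to run the proof of Theorem~\ref{thm:reduction theorem, 3} essentially verbatim, replacing the concluding appeal to Theorem~\ref{thm:reduction theorem} by an appeal to the preceding Lemma~\ref{lem:injective on homology}. The only point that requires thought is producing a situation in which the hypothesis \eqref{eq:gammap2} of Lemma~\ref{lem:injective on homology} holds, and here the reduction to a complete filtration, combined with the assumption on the edge groups, does all the work.

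First I would reduce to the complete case. Let $\mathbf G$ be the given $p$-compatible filtration of level $\ell$. The collection $\{G_{v,1}\}_{v\in V(Y)}$ of first terms is a compatible collection of finite-index normal subgroups of $\scrg$ (condition~(1) in the definition of $p$-compatibility), so Proposition~\ref{prop:commoncover} yields a finite-degree morphism $\phi_0\colon\scrg_0\to\scrg$ in which each $\phi_{0,\widetilde v}$ is an isomorphism $\widetilde G_{\widetilde v}\xrightarrow{\cong}G_{\phi_0(\widetilde v),1}$ and each $\phi_{0,\widetilde e}$ an isomorphism onto $G_{\phi_0(\widetilde e),1}=f_{\phi_0(\widetilde e)}^{-1}(G_{t(\phi_0(\widetilde e)),1})$, the first term of the corresponding edge filtration. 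Put $\mathbf G_0:=\phi_0^{-1}(\mathbf G)$. As in the proof of Theorem~\ref{thm:reduction theorem, 3}, $\mathbf G_0$ is a \emph{complete} filtration of $\scrg_0$, and since each $\mathbf G_v$, regarded as a complete filtration of $G_{v,1}$, is $p$-potent and hence a central $p$-filtration, each $(\mathbf G_0)_{\widetilde v}$ is a complete central $p$-filtration of the (finitely generated) group $\widetilde G_{\widetilde v}$.

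Next I would verify \eqref{eq:gammap2} for $\scrg_0$. Fix an edge $\widetilde e$ of $\scrg_0$ and $\widetilde v=t(\widetilde e)$. The edge group $\widetilde G_{\widetilde e}$ is abelian, $\phi_{0,\widetilde v}$ is injective, $\phi_{0,\widetilde e}$ maps $\widetilde G_{\widetilde e}$ isomorphically onto the first term of the target edge filtration, and condition~(5) of $p$-compatibility of level $\ell$ is exactly the equality $G'_{v',n}\cap f'_{e'}(G'_{e'})=\gamma^p_{n+\ell}(f'_{e'}(G'_{e'}))$ hypothesized in Lemma~\ref{lem:separating, 4}. That lemma then gives
$$(\mathbf G_0)_{\widetilde v,n}\cap f_{\widetilde e}(\widetilde G_{\widetilde e})=\gamma^p_n\bigl(f_{\widetilde e}(\widetilde G_{\widetilde e})\bigr)\qquad(n\geq 1),$$
i.e.\ the level has dropped to $0$ on $\scrg_0$. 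Taking $n=2$ and using $\gamma^p_2(\widetilde G_{\widetilde v})\leq(\mathbf G_0)_{\widetilde v,2}$ (valid because $(\mathbf G_0)_{\widetilde v}$ is a complete central $p$-filtration) yields $\gamma^p_2(\widetilde G_{\widetilde v})\cap f_{\widetilde e}(\widetilde G_{\widetilde e})\subseteq\gamma^p_2(f_{\widetilde e}(\widetilde G_{\widetilde e}))$, and the reverse inclusion holds by total invariance of $\gamma^p_2$; so $\scrg_0$ satisfies \eqref{eq:gammap2}.

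Finally I would apply Lemma~\ref{lem:injective on homology} to $\scrg_0$ to obtain a finite-degree morphism $\phi_1\colon\widetilde\scrg\to\scrg_0$ for which each inclusion $\widetilde G_{\widetilde v}\hookrightarrow\widetilde G=\pi_1(\widetilde\scrg)$ induces an embedding $H_1(\widetilde G_{\widetilde v};\F_p)\to H_1(\widetilde G;\F_p)$, and then take $\phi:=\phi_0\circ\phi_1\colon\widetilde\scrg\to\scrg$. This $\phi$ retains the $H_1$-injectivity property (which concerns only $\widetilde\scrg$), and the induced map $\phi_*$ on fundamental groups has finite-index image, being the composite of the finite-index-image map $\phi_{1*}$ with the injective finite-index-image map $\phi_{0*}$ (the latter coming from a connected covering, cf.\ the proof of Proposition~\ref{prop:commoncover}). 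I do not anticipate a genuine obstacle: the one substantive observation is that passing to the cover with vertex groups $G_{v,1}$ turns a level-$\ell$ filtration into a level-$0$ one on $\scrg_0$ — because $\gamma^p_n(\gamma^p_{1+\ell}(T))=\gamma^p_{n+\ell}(T)$ for the abelian edge groups $T$, which is exactly what Lemma~\ref{lem:separating, 4} encodes — and level $0$ is precisely \eqref{eq:gammap2}; everything else is bookkeeping.
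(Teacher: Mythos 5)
Your proposal is correct and follows essentially the same route as the paper's proof: pass to the finite cover furnished by Proposition~\ref{prop:commoncover} applied to $\{G_{v,1}\}_v$, observe that the pulled-back filtration is complete and hence that Lemma~\ref{lem:separating, 4} applies to show the intersections with images of edge groups are the lower central $p$-filtrations, deduce \eqref{eq:gammap2}, and then invoke Lemma~\ref{lem:injective on homology}. The only superficial difference is that where the paper cites Lemma~\ref{lem:canonical filtration, 2} to pass from $\mathbf G_0$ to $\gamma^p$ before extracting \eqref{eq:gammap2}, you instead give that lemma's two-line proof inline for the specific case $n=2$.
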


\begin{proof}
Let $\mathbf G$ be a $p$-compatible filtration of $\scrg$ of level $\ell$; then $\mathbf G$ is $p$-excellent. Arguing as in the proof of Theorem~\ref{thm:reduction theorem, 3}, we let $\phi\colon\widetilde{\scrg}\to\scrg$ be a morphism of graphs of groups obtained by applying Proposition~\ref{prop:commoncover} to the compatible collection of normal subgroups $\{G_{v,1}\}_v$ of $\scrg$. Then the filtration $\widetilde{\mathbf G}:=\phi^{-1}(\mathbf G)$  of $\widetilde{\scrg}$ is complete;
in fact, $\widetilde{\mathbf G}$ is a central $p$-filtration of $\widetilde{\scrg}$. Now, in
the proof of Theorem~\ref{thm:reduction theorem, 3} we simply applied Lemma~\ref{lem:separating, 2} to show that  $\widetilde{\mathbf G}$ intersects to a uniformly $p$-potent filtration on the image of each edge group of $\widetilde{\scrg}$ under the edge morphisms. However, the somewhat more complicated Lemma~\ref{lem:separating, 4} shows that in our situation, these intersections are precisely
the lower central $p$-filtrations of the images of the edge groups under the edge morphisms. By Lemma~\ref{lem:canonical filtration, 2}, the lower central $p$-filtration $\gamma^p(\widetilde{\scrg})$ of~$\widetilde{\scrg}$ also has this property, i.e., $\widetilde{\scrg}$ satisfies \eqref{eq:gammap2}.
So may now apply Lemma~\ref{lem:injective on homology} to $\widetilde{\scrg}$ in place of $\scrg$ to complete the proof.
\end{proof}

\subsection{Proof of Proposition~\ref{prop:injective on mod p homology}}
Let $N$ be a closed orientable prime $3$-manifold $N$. Clearly we may assume that $N$ has a non-trivial JSJ decomposition.
As shown in Section~\ref{sec:reduction to seifert simple}, after replacing $N$ by a finite cover if necessary, we may then reduce to the case that $N$ is Seifert simple.
Then, by Propositions~\ref{prop:virtphyp} and \ref{prop:seifert simple lower p-central}, for all but finitely many $p$, each JSJ component of $N$ admits a boundary compatible $p$-filtration of level $1$, and so by the previous lemma, the proposition follows. \qed

\backmatter

\bibliographystyle{amsalpha}

\printindex

\end{document}